\setlist[enumerate,1]{label=\textup{(\roman*)}}
\def\my@tag@font{\normalsize}
\def\maketag@@@#1{\hbox{\m@th\normalfont\my@tag@font#1}}
\let\amsmath@eqref\eqref
\renewcommand\eqref[1]{{\let\my@tag@font\relax\amsmath@eqref{#1}}}
\definecolor{darkblue}{rgb}{0.0,0.0,0.6}
\newcommand{\sectionbreak}{\vfill\pagebreak}
\renewcommand{\sectionbreak}{}
\theoremstyle{plain}
\newtheorem{theorem}{Theorem}[section]
\newtheorem*{theorem*}{Theorem}
\newtheorem{proposition}[theorem]{Proposition}
\newtheorem*{proposition*}{Proposition}
\newtheorem{lemma}[theorem]{Lemma}
\newtheorem{corollary}[theorem]{Corollary}
\theoremstyle{definition}
\newtheorem{definition}[theorem]{Definition}
\newtheorem*{question*}{Question}
\theoremstyle{remark}
\newtheorem{remark}[theorem]{Remark}
\newtheorem{example}[theorem]{Example}
\newtheorem*{example*}{Example}
\newtheorem*{examplectd*}{Example (continued)}
\numberwithin{equation}{section}
\newcommand{\cA}{\mathcal{A}}
\newcommand{\cB}{\mathcal{B}}
\newcommand{\cC}{\mathcal{C}}
\newcommand{\cD}{\mathcal{D}}
\newcommand{\cE}{\mathcal{E}}
\newcommand{\cP}{\mathcal{P}}
\newcommand{\cT}{\mathcal{T}}
\newcommand{\cU}{\mathcal{U}}
\newcommand{\cV}{\mathcal{V}}
\newcommand{\cTroot}{\mathcal{T}_{0}}
\newcommand{\cX}{\mathcal{X}}
\newcommand{\complex}{\ensuremath \mathbb{C}}
\newcommand{\bK}{\mathbb{K}} 
\newcommand{\nat}{\ensuremath \mathbb{N}}
\newcommand{\integ}{\ensuremath{\mathbb{Z}}}
\newcommand{\bP}{\mathbb{P}}
\newcommand{\bQ}{\mathbb{Q}}
\newcommand{\bbT}{\mathbb{T}}
\newcommand{\bV}{\mathbb{V}}
\newcommand{\bT}{\mathbf{T}}
\newcommand{\fm}{\mathfrak{m}}
\DeclareMathOperator{\fpmod}{mod}
\DeclareMathOperator{\lfd}{lfd}
\DeclareMathOperator{\Mod}{Mod}
\DeclareMathOperator{\pcMod}{pc}
\DeclareMathOperator{\fd}{fd}
\DeclareMathOperator{\proj}{proj}
\DeclareMathOperator{\coker}{Coker}
\DeclareMathOperator{\per}{per}
\newcommand{\add}{\operatorname{add}}
\newcommand{\addcat}[1]{{#1}^{\textup{\text{add}}}}
\newcommand{\adj}[1]{#1^{\dagger}}
\newcommand{\bdcat}[1]{\mathcal{D}^{\textup{b}}(#1)}
\newcommand{\bhcat}[1]{\mathcal{K}^{\textup{b}}(#1)}
\newcommand{\bigdsum}{\bigoplus}
\newcommand{\bigunion}{\bigcup}
\newcommand{\blank}{\mathord{\char123}}
\newcommand{\card}[1]{\lvert #1 \rvert}
\newcommand{\cind}[2]{\smash{\text{\textup{(co)ind}}\cindspace}{}_{#1}^{#2}}
\newcommand{\cindbar}[2]{\overline{\text{\textup{(co)ind}}\cindspace}{}_{#1}^{#2}}
\newcommand{\stabcind}[2]{\underline{\smash{\text{\textup{(co)ind}}}\cindspace}{}_{#1}^{#2}}
\newcommand{\stabcindbar}[2]{\underline{\smash{\overline{\text{\textup{(co)ind}}}}\cindspace}{}_{#1}^{#2}}
\newcommand{\coind}[2]{\text{\textup{coind}\cindspace}{}{}_{#1}^{#2}}
\newcommand{\coindbar}[2]{{\overline{\text{\textup{coind}}}\cindspace}{}{}_{#1}^{#2}}
\newcommand{\stabcoindbar}[2]{{\underline{\overline{\text{\textup{coind}}}}\cindspace}{}_{#1}^{#2}}
\newcommand{\stabCoindbar}[2]{{\underline{\overline{\text{\textup{Coind}}}}\cindspace}{}_{#1}^{#2}}
\DeclareMathOperator{\Coker}{Coker}
\newcommand{\colim}{\varprojlim}
\newcommand{\cpa}[2]{#1\langle\hspace{-0.1em}\langle #2\rangle\hspace{-0.1em}\rangle}
\newcommand{\cross}{\times}
\newcommand{\curly}[1]{\ensuremath{\mathcal{#1}}}
\newcommand{\curlyform}[2]{\ensuremath \{#1,#2\}}
\newcommand{\cvecset}[3]{\mathbf{c}^{#1}_{#2}(#3)}
\newcommand{\cvecplus}[2]{\cvecset{+}{#1}{#2}}
\newcommand{\cvecminus}[2]{\cvecset{-}{#1}{#2}}
\newcommand{\defeq}{\coloneqq}
\let\degsave\deg
\renewcommand{\deg}{\underline{\degsave}}
\newcommand{\divalg}[1]{D_{#1}}
\newcommand{\dimdivalg}[1]{d_{#1}}
\newcommand{\dsum}{\ensuremath{ \oplus}}
\newcommand{\dual}[1]{\ensuremath {#1}^{*}}
\newcommand{\ddual}[1]{\ensuremath {#1}^{**}}
\newcommand{\End}[2]{\ensuremath \operatorname{End}_{#1}(#2)}
\renewcommand{\epsilon}{\varepsilon}
\newcommand{\evform}[3][]{\ensuremath \ip{#2}{#3}^{#1}_{\mathrm{ev}}}
\newcommand{\exchmat}[1]{B_{#1}}
\newcommand{\exchmatentry}[1]{b_{#1}}
\newcommand{\exchmon}[3]{{#2}_{#1}^{#3}}
\newcommand{\Ext}[4]{\ensuremath \operatorname{Ext}^{#1}_{#2}(#3,#4)}
\newcommand{\ext}[5][]{\ensuremath \dim_{\bK}\operatorname{Ext}^{#2}_{#3}#1(#4,#5#1)}
\newcommand{\extbig}[4]{\ensuremath \dim_{\bK}\operatorname{Ext}^{#1}_{#2}\bigl(#3,#4\bigr)}
\newcommand{\Extfun}[1]{\functor{E}^{#1}}
\newcommand{\functor}[1]{\mathrm{#1}}
\newcommand{\Gabmat}[1]{C_{#1}}
\newcommand{\Gabmatentry}[1]{c_{#1}}
\newcommand{\Gabquiv}[1]{Q(#1)}
\newcommand{\gldim}{\operatorname{gldim}}
\newcommand{\gvecset}[3]{\mathbf{g}^{#1}_{#2}(#3)}
\newcommand{\gvecplus}[2]{\gvecset{+}{#1}{#2}}
\newcommand{\gvecminus}[2]{\gvecset{-}{#1}{#2}}
\newcommand{\halfinteg}{\tfrac{1}{2}\integ}
\newcommand{\Hom}[3]{\ensuremath \operatorname{Hom}_{#1}(#2,#3)}
\renewcommand{\hom}[3]{\ensuremath \dim_{\bK}\operatorname{Hom}_{#1}(#2,#3)}
\newcommand{\stabHom}[3]{\ensuremath \operatorname{\underline{Hom}}_{#1}(#2,#3)}
\newcommand{\Ltens}[1]{\mathbin{\stackrel{\mathbf{L}}{\otimes}_{#1}}}
\newcommand{\id}{\ensuremath \textup{id}}
\newcommand{\idcomp}[1]{\ensuremath{#1}^{\kappa}}
\renewcommand{\iff}{\ensuremath \Longleftrightarrow}
\DeclareMathOperator{\image}{im}
\newcommand{\ind}[2]{\text{\textup{ind}\cindspace}{}_{#1}^{#2}}
\newcommand{\indbar}[2]{{\overline{\text{\textup{ind}}}\cindspace}{}_{#1}^{#2}}
\newcommand{\stabind}[2]{{\underline{\text{\textup{ind}}}\cindspace}{}_{#1}^{#2}}
\newcommand{\stabindbar}[2]{{\underline{\overline{\text{\textup{ind}}}}\cindspace}{}_{#1}^{#2}}
\newcommand{\stabIndbar}[2]{{\underline{\overline{\text{\textup{Ind}}}}\cindspace}{}_{#1}^{#2}}
\newcommand{\inj}{\hookrightarrow}
\newcommand{\intersection}{\mathrel{\cap}}
\newcommand{\ip}[2]{\ensuremath \langle\;\!#1,#2\;\!\rangle}
\newcommand{\bigip}[2]{\ensuremath \bigl\langle\;\!#1,#2\;\!\bigr\rangle}
\newcommand{\irr}[3]{\ensuremath\operatorname{irr}_{#1}(#2,#3)}
\newcommand{\iso}{\ensuremath \cong}
\newcommand{\isoto}{\stackrel{\sim}{\to}}
\DeclareMathOperator{\Ker}{Ker}
\newcommand{\Kgp}[1]{\mathsf{K}_{0}(#1)}
\newcommand{\Kgpbig}[1]{\mathsf{K}_{0}\bigl(#1\bigr)}
\newcommand{\Kgpnum}[1]{\mathsf{K}^{\textup{num}}_{0}(#1)}
\newcommand{\Kgpsplit}[1]{\mathsf{K}_{0}^{\textup{split}}(#1)}
\DeclareMathOperator{\lcm}{lcm}
\newcommand{\modlift}[3]{{#1}_{#2}^{#3}}
\newcommand{\onto}{\twoheadrightarrow}
\newcommand{\op}[1]{#1^{\textup{op}}}
\newcommand{\projmod}[2]{\Yonfun{#1}{#2}}
\newcommand{\QGra}[2]{\operatorname{Gr}_{#1}({#2})}
\newcommand{\rad}[2][]{\operatorname{rad}^{#1}_{#2}}
\newcommand{\radfun}[2][]{\operatorname{rad}^{#1}_{#2}\functor{H}^{#2}}
\newcommand{\radHom}[4][]{\ensuremath \operatorname{rad}^{#1}_{#2}(#3,#4)}
\newcommand{\rank}{\ensuremath\operatorname{rank}}
\newcommand{\real}{\ensuremath \mathbb{R}}
\newcommand{\RHom}[3]{\ensuremath \operatorname{\mathbf{R}Hom}_{#1}(#2,#3)}
\newcommand{\seedlat}[1]{\mathsf{#1}}
\newcommand{\simpmod}[2]{S^{#1}_{#2}}
\newcommand{\Spec}{\ensuremath\operatorname{Spec}}
\newcommand{\stab}[1]{\underline{#1}}
\newcommand{\supp}[1]{\mathop{\mathrm{Supp}}( #1 )}
\newcommand{\tensor}{\ensuremath \otimes}
\newcommand{\uf}{\textup{uf}}
\newcommand{\union}{\mathrel{\cup}}
\newcommand{\Var}{\operatorname{Var}}
\newcommand{\vsim}{\rotatebox{90}{$\sim$}}
\newcommand{\Yonfun}[1]{\functor{H}^{#1}}
\newcommand{\opYonfun}[1]{\functor{H}_{#1}}
\newcommand{\Poly}[1]{\mathscr{P}(#1)}
\newcommand{\Laurent}[1]{\mathscr{L}(#1)}
\newcommand{\Frac}[1]{\mathscr{F}(#1)}
\newcommand{\Fracbar}[1]{\overline{\mathscr{F}}(#1)}
\newcommand{\ctsubcat}{\mathrel{\subseteq_{\mathrm{ct}\,}}}
\renewcommand{\leq}{\leqslant}
\renewcommand{\geq}{\geqslant}
\newcommand{\powser}[2]{#1\llbracket#2\rrbracket}
\newcommand{\indec}{\operatorname{indec}}
\newcommand{\exch}{\operatorname{mut}}
\newcommand{\leftapp}[2]{L_{#1}{#2}}
\newcommand{\leftcok}[2]{C_{#1}{#2}}
\newcommand{\rightapp}[2]{R_{#1}{#2}}
\newcommand{\rightker}[2]{K_{#1}{#2}}
\newcommand{\mut}[2]{\mu_{#1}{#2}}
\newcommand{\canform}[3]{\ip{#1}{#2}_{#3}}
\newcommand{\bigcanform}[3]{\bigip{#1}{#2}_{#3}}
\newcommand{\pform}[3]{\ip{#1}{#2}_{#3}^{\textup{p}}}
\newcommand{\sform}[3]{\ip{#1}{#2}_{#3}^{\textup{s}}}
\newcommand{\sinc}[1]{\iota_{#1}^{\textup{s}}}
\newcommand{\sproj}[1]{\pi_{#1}^{\textup{s}}}
\newcommand{\pproj}[1]{\pi_{#1}^{\textup{p}}}
\newcommand{\pdual}[1]{\delta_{#1}^{\textup{p}}}
\newcommand{\sdual}[1]{\delta_{#1}^{\textup{s}}}
\newcommand{\lefterror}[2]{\ell_{#1}^{#2}}
\newcommand{\righterror}[2]{r_{#1}^{#2}}
\newcommand{\lefterrormap}[4]{(\ell_{#1}^{#2})_{#3}^{#4}}
\newcommand{\righterrormap}[4]{(r_{#1}^{#2})_{#3}^{#4}}
\newcommand{\lefterrorold}[4]{\ell_{#1}^{#2}#4\ifthenelse{\equal{#3}{\blank}}{}{\ifthenelse{\equal{#3}{}}{}{(#3)}}}
\newcommand{\righterrorold}[4]{r_{#1}^{#2}#4\ifthenelse{\equal{#3}{\blank}}{}{\ifthenelse{\equal{#3}{}}{}{(#3)}}}
\newcommand{\Aside}{\mathcal{A}}
\newcommand{\Xside}{\mathcal{X}}
\newcommand{\clucha}[2][]{\mathrm{CC}_{#2}^{#1}}
\newcommand{\Fpoly}{\mathcal{F}}
\newcommand{\infl}{\rightarrowtail}
\newcommand{\defl}{\twoheadrightarrow}
\newcommand{\confl}{\dashrightarrow}
\newlength{\leftstackrelawd}
\newlength{\leftstackrelbwd}
\def\leftstackrel#1#2{\settowidth{\leftstackrelawd}{${{}^{#1}}$}\settowidth{\leftstackrelbwd}{$#2$}\addtolength{\leftstackrelawd}{-\leftstackrelbwd}\leavevmode\ifthenelse{\lengthtest{\leftstackrelawd>0pt}}{\kern-.5\leftstackrelawd}{}\mathrel{\mathop{#2}\limits^{#1}}}
\renewcommand{\subset}{\subseteq}
\let\midsave\mid
\newcommand{\divides}{\midsave}
\renewcommand{\mid}{:}
\newcommand{\cindspace}{} \newcommand{\cTU}{\cT\!,\,\cU} 
\title{Cluster structures via representation theory:\\ cluster ensembles, tropical duality, cluster characters and quantisation}
\author{Jan E. Grabowski\footnotemark[2] 
\\ \small{\textit{School of Mathematical Sciences, Lancaster University,}}
\\ \small{\textit{Lancaster, LA1 4YF, UK}}
\and Matthew Pressland\footnotemark[3]
\\ \small{\textit{School of Mathematics and Statistics, University of Glasgow,}}
\\ \small{\textit{Glasgow, G12 8QQ, United Kingdom}}
}
\date{\today}
\begin{document}

\maketitle
\vspace{-1em}

\renewcommand{\thefootnote}{\fnsymbol{footnote}}
\footnotetext[2]{Email: \url{j.grabowski@lancaster.ac.uk}.  Website: \url{http://www.maths.lancs.ac.uk/~grabowsj/}}
\footnotetext[3]
{Email: \url{matthew.pressland@unicaen.fr}. Website: \url{https://mdpressland.github.io}\newline
Current address: Laboratoire de Mathématiques Nicolas Oresme, Université de Caen Normandie, Boulevard Maréchal Juin, 14032 Caen Cedex 5, France}
\renewcommand{\thefootnote}{\arabic{footnote}}
\setcounter{footnote}{0}

\begin{abstract}
We develop a general theory of cluster categories, applying to a $2$-Calabi--Yau extriangulated category $\cC$ and cluster-tilting subcategory $\cT$ satisfying only mild finiteness conditions.
We show that the structure theory of $\cC$ and the representation theory of $\cT$ give rise to the rich combinatorial structures of seed data and cluster ensembles, via Grothendieck groups and homological algebra.
We demonstrate that there is a natural dictionary relating cluster-tilting subcategories and their tilting theory to $\cA$-side tropical cluster combinatorics and, dually, relating modules over $\stab{\cT}$ to the $\cX$-side; here $\stab{\cT}$ is the image of $\cT$ in the triangulated stable category of $\cC$.
Moreover, the exchange matrix associated to $\cT$ arises from a natural map $p_{\cT}\colon\Kgp{\fpmod \stab{\cT}}\to\Kgp{\cT}$ closely related to taking projective resolutions.

Via our approach, we categorify many key identities involving mutation, $\mathbf{g}$-vectors and $\mathbf{c}$-vectors, including in infinite rank cases and in the presence of loops and $2$-cycles.
We are also able to define $\cA$- and $\cX$-cluster characters, which yield $\cA$- and $\cX$-cluster variables when there are no loops or $2$-cycles, and which enable representation-theoretic proofs of cluster-theoretical statements.

Continuing with the same categorical philosophy, we give a definition of a quantum cluster category, as a cluster category together with the choice of a map closely related to the adjoint of $p_{\cT}$.
Our framework enables us to show that any Hom-finite exact cluster category admits a canonical quantum structure, generalising results of Geiß--Leclerc--Schröer.

\medskip
\noindent MSC (2020): 13F60 (Primary), 14T10, 18F30, 18N25 (Secondary)

\end{abstract}

\setcounter{tocdepth}{2}
\tableofcontents

\sectionbreak

\section{Introduction}

Cluster categories arose as structures of interest around 2004, when Fomin--Zelevinsky's theory of cluster algebras \cite{FZ-CA1} came to the attention of representation theorists studying tilting theory, and the notions of cluster-tilting objects and their mutations were introduced \cite{BMRRT} (see also \cite{Iyama-HigherAR,IyamaYoshino}).
There were very rapid developments in both finding the most general framework for this theory \cite{Amiot,FuKeller}, and in the identification of important families of examples \cite{GLS-Rigid,GLS-PFV}.

Simultaneously, Fock and Goncharov \cite{FockGoncharov} brought ideas from geometry, and in particular mirror symmetry, in the form of cluster varieties.  Cluster varieties live on one of two `sides', the $\Aside$-side or the $\Xside$-side (also known \cite{ShenWeng} as the $K_2$ side and the Poisson side, reflecting the natural geometric properties of the varieties).
The work on cluster categories referred to above relates most strongly to the $\Aside$-side in this philosophy.

In this work, we have two main goals.
The first of these is provide a treatment of the theory of cluster categories in the spirit of the Fock--Goncharov approach to cluster varieties, in which we see that the relationship between cluster-tilting subcategories and categories of modules over them gives rise to a tropical duality.
In particular, this allows us to use cluster categories to describe cluster-theoretic phenomena on the $\Xside$-side via `dual' or `mirror' results to those describing the $\Aside$-side.
We use these results to categorify a number of formulæ relating to $\cX$-variables, culminating in the construction of an $\cX$-cluster character.

The second goal is to identify the additional datum needed for quantisation, and hence to give a definition and examples of quantum cluster categories.
We are able to show that a large class of exact cluster categories have quantisations, significantly expanding the class of examples previously known, by showing that this is an emergent feature from the cluster categorical properties rather than specific to particular constructions.

There are a number of distinctive themes running through our approach:

\begin{itemize}
\item \emph{Duality} is at its heart, both as a guiding philosophy and in concrete statements, relating objects of interest (certain Grothendieck groups) and maps between them via adjunction.
\item Another theme is the use of a \emph{basis-free approach}, avoiding wherever possible indexing of elements and instead proving properties of maps and subspaces.
This removes a layer of complication from the presentation of many results.
\item The basis-free approach also enables us to prove many of our results without the assumption of finite rank (that is, additively finite cluster-tilting subcategories).
We identify a \emph{minimal amount of finiteness} needed for the desired results and show that this can be significantly weaker than the usual prevailing assumptions, e.g.\ of finite rank or Hom-finiteness.
\item We also treat the triangulated and (Frobenius) exact cases together, by working in the `greatest common generality' of \emph{extriangulated categories}.
We use homological arguments that are valid in this setting, and thus apply in particular to the special cases of triangulated or exact categories.
\item We work with $\bK$-linear categories but do not assume that $\bK$ is algebraically closed. By doing so, we are able to obtain cluster algebras having \emph{skew-symmetrizable} exchange matrices, rather than restricting to the skew-symmetric case.
\item We return to the tilting theory roots of cluster categories, by studying the relationships between \emph{arbitrary} pairs of cluster-tilting subcategories, not only those related by a single mutation.
We show that these relationships are given by index and coindex maps on the respective Grothendieck groups, which arise from a tilting phenomenon.  

We explain how the failure of these maps to be `transitive' (i.e.\ the composition of two (co)index maps is not equal to another (co)index map) is controlled, with error terms lying in the image of the exchange matrix. Transitivity is restored when we instead look at the mutation of a natural bilinear form associated to the exchange matrix and its image under index and coindex.

This also allows us to transcend the constraints of arguments based on iterated (Fomin--Zelevinsky) mutation.
Indeed, only very occasionally do we need our categorical mutation to align with Fomin--Zelevinsky mutation; essentially this is only needed when we wish to decategorify and make a statement about an associated Fomin--Zelevinsky cluster algebra.
In particular, we do not rely on any theorems about cluster algebras or their combinatorics for our main results.

\item Finally, we gain the freedom to be \emph{largely agnostic about the presence of loops or $2$-cycles}, again unless we wish to decategorify to a Fomin--Zelevinsky cluster algebra.
This opens up the applicability of our results, and hence many cluster-categorical theorems, to a much wider class of examples, such as those coming from geometry, where loops are abundant.
There may still be interesting decategorifications in these cases, and indeed we give an example which decategorifies to a generalised cluster algebra in the sense of Chekhov and Shapiro \cite{CheSha}.
\end{itemize}  

Consequently, our work provides a uniform approach to the methods of additive categorification in cluster theory, through which the various methods used to prove cluster algebraic conjectures may be implemented, often in wider generality than that in which they were originally stated.
For example, by applying the methods of \cite{CILF}, we can straightforwardly deduce the linear independence of cluster monomials via our cluster character, with fewer assumptions on the input cluster category and hence on the cluster algebra it decategorifies to.
We also expect to be able to use this framework to obtain new results, notably on quantum and generalised cluster algebras.

Note that we do not address the issue of constructing a suitable additive categorification of a given cluster algebra (or generalisation thereof); rather, our focus is on what can be deduced from such a categorification when it exists.
The additive categorification programme for cluster algebras associated to skew-symmetric exchange matrices is essentially complete, through work of many authors, originating in \cite{Amiot,BMRRT,Plamondon-ClustCat} for the case of no frozen variables, continuing with \cite{Pressland} for cluster algebras with `enough' frozen variables (to admit categorification via an exact category) and culminating in the most general results to date in \cite{Wu,KellerWu}, where arbitrary collections of frozen variables are handled via Nakaoka--Palu's extriangulated categories \cite{NakaokaPalu}.
Important families of examples, especially those arising in Lie theory, also admit more explicit additive categorifications that can be obtained independently of these general constructions \cite{GLS-PFV,DemonetIyama,JKS,PresslandPostnikov}.
We also note that the categorification problem is addressed via other methods in the monoidal setting, which we do not discuss at all; here, the most general result is to be found in \cite{KKOP}.

We now expand on the above summary, beginning with an explanation of how our cluster categories give rise to (most of) the data of a cluster ensemble, which is the starting point for the study of cluster varieties as instigated in \cite{FockGoncharov} and continued in \cite{GHK,GHKK}.

\subsection{Categorifying cluster ensembles}\label{ss:intro-cl-ensembles}

The notion of a cluster ensemble originated with Fock and Goncharov \cite{FockGoncharov} and is key to the connection between cluster algebras and geometry, in particular mirror symmetry.
Of course, cluster algebras and cluster categories are also intimately related, via (de)categorification.
In this paper, we will make direct connections between categorical and geometric information, helping to illuminate both sides and enabling generalisation of existing constructions.

In this section, we explain the dictionary between cluster ensembles and cluster categories, beginning by recalling the definition of the former \cite{FockGoncharov,GHK}.
To align with existing definitions in the literature and to simplify the exposition here, we will assume that we are in the \emph{finite rank} case; that is, the indexing sets of various data below will be finite.
We emphasise, though, that in the main body of this paper, this assumption is not made, unless stated explicitly, and consequently our results extend much of what follows to the infinite rank case.

A \emph{seed datum} consists of the following pieces of data:
\begin{enumerate}
\item\label{d:seed-datum-lattices} a lattice (that is, a free abelian group) $\seedlat{N}$ of finite rank, a distinguished saturated sublattice $\seedlat{N}_{\uf}$ and a sublattice $\seedlat{N}^{\circ}$ such that $\seedlat{N}/\seedlat{N}^{\circ}$ is torsion;
\item\label{d:seed-datum-forms} a skew-symmetric bilinear form $\curlyform{\blank}{\blank}\colon \seedlat{N}\cross \seedlat{N}\to \bQ$ such that $\curlyform{n_{1}}{n_{2}}=-\curlyform{n_{2}}{n_{1}}$ if $n_{1},n_{2}\in \seedlat{N}_{\uf}$, $\curlyform{\seedlat{N}^\circ}{\seedlat{N}_{\uf}}\subseteq \integ$ and $\curlyform{\seedlat{N}}{\seedlat{N}_{\uf}^{\circ}}\subseteq \integ$ where $\seedlat{N}_{\uf}^{\circ}=\seedlat{N}_{\uf}\intersection \seedlat{N}^{\circ}$; and
\item\label{d:seed-datum-bases} a basis $\{ e_{i} \mid i\in I \}$ of $\seedlat{N}$ such that $\{ e_{i} \mid i\in I_{\uf} \}$ is a basis of $\seedlat{N}_{\uf}$ for some $I_{\uf}\subseteq I$ and such that there exist $d_{i}\in \integ_{>0}$ such that $\{ d_{i}e_{i} \mid i\in I \}$ is a basis of $\seedlat{N}^{\circ}$.
\end{enumerate}

Here saturation of $\seedlat{N}_{\uf}$ inside $\seedlat{N}$ is a technical condition, meaning that $\seedlat{N}/\seedlat{N}_{\uf}$ is again free abelian.
Since $\seedlat{N}$ has finite rank, $\seedlat{N}/\seedlat{N}^{\circ}$ being torsion is equivalent to $\seedlat{N}^{\circ}$ being a finite index sublattice (but the former condition is weaker when $\seedlat{N}$ has infinite rank).

From a seed datum, one also obtains
\begin{enumerate}
\setcounter{enumi}{3}
\item\label{d:seed-datum-dual-lattices} $\seedlat{M}=\dual{\seedlat{N}}=\Hom{}{\seedlat{N}}{\integ}\subset \Hom{}{\seedlat{N}^{\circ}}{\integ}=\seedlat{M}^{\circ}$ and $\seedlat{M}_{\uf}=\seedlat{M}^{\circ}/\seedlat{N}_{\uf}^{\perp}$;
\item bases $\{ \dual{e_{i}} \mid i\in I \}$ of $\seedlat{M}$ and $\{f_{i}=d_{i}^{-1}\dual{e_{i}} \mid i\in I\}$ of $\seedlat{M}^{\circ}$;
\item\label{d:seed-datum-rescaled-form} a $\integ$-bilinear form $\ip{e_{i}}{e_{j}}=\curlyform{e_{i}}{e_{j}}d_{j}$ (also denoted $\epsilon_{ij}$); and
\item\label{d:seed-datum-maps} maps $\dual{p}_{1}\colon \seedlat{N}_{\uf}^{\circ} \to \seedlat{M},\ n\mapsto \curlyform{n}{\blank}$ and $\dual{p}_{2}\colon \seedlat{N}^{\circ} \to \seedlat{M}_{\uf},\ n\mapsto \curlyform{n}{\blank}|_{\seedlat{N}^{\circ}}$.
\end{enumerate}

We remark briefly that there is a small deviation here from \cite{GHK,GHKK}.
There, the maps are $\dual{p}_{1}\colon \seedlat{N}_{\uf}\to \seedlat{M}^{\circ}$ and $\dual{p}_{2}\colon \seedlat{N}\to \seedlat{M}_{\uf}^{\circ}$.
The shift of the ``$\blank^{\circ}$'' makes essentially no difference to the seed datum; it can be obtained by simultaneous rescaling using the lowest common multiple of the $d_{i}$.
However, we will see below that, in the categorical setting, it is more natural for us to make the other choice as in \ref{d:seed-datum-maps}.

A \emph{cluster ensemble} consists of the data \ref{d:seed-datum-lattices}--\ref{d:seed-datum-maps} above together with the choice of a map $\dual{p}\colon \seedlat{N}^{\circ}\to \seedlat{M}$ that yields $\dual{p}_{1}$ and $\dual{p}_{2}$ under composition with the natural inclusion and projection maps, respectively.

Data of this sort is familiar in toric geometry, since for any lattice $\seedlat{L}$, one has the algebraic torus $\bbT^{\seedlat{L}}\defeq \operatorname{Spec} \complex[\seedlat{L}]$, whose character lattice identifies naturally with $\seedlat{L}$.  In particular, given a cluster ensemble, one has a map of tori $p\colon \bbT^{\seedlat{M}}\to \bbT^{\seedlat{N}^{\circ}}$.

A key insight was that by defining mutation of seed data appropriately, one obtains birational maps between these associated tori, along which one may glue to obtain the \emph{cluster varieties}.
If $s$ is a seed datum, we write a subscript $s$ on $\seedlat{N}$, $\seedlat{M}$ etc., to indicate the lattices associated to that seed.
Then letting $\mathbf{s}$ denote the set of seeds obtained by iterated mutation from some chosen initial seed, we have $\cA=\bigunion_{s\in \mathbf{s}} \bbT^{\seedlat{M}_{s}}$ and $\cX=\bigunion_{s\in \mathbf{s}} \bbT^{\seedlat{N}^{\circ}_{s}}$.
Moreover, these varieties have well-defined \emph{positive parts} $\cA^{>0}$ and $\cX^{>0}$, and the maps $p_{s}$ glue to give $p\colon \cA\to \cX$, restricting to $p^{>0}\colon\cA^{>0}\to\cX^{>0}$.

The motivating class of examples in \cite{FockGoncharov,FockGoncharov-ClusterPoisson} are from (higher) Teichmüller theory: for suitable input data, $\cA^{>0}$ is a decorated Teichmüller space, $\cX^{>0}$ its undecorated analogue and $p^{>0}$ is the map that forgets the decoration.

\begin{remark}
The cluster variety $\cA$ defined in this way is a subset of $\Spec{\mathscr{A}}$ for Fomin--Zelevinsky's cluster algebra $\mathscr{A}$. The complement of $\cA$ in $\Spec{\mathscr{A}}$ is called the \emph{deep locus}, and is studied in \cite{BeyMul,CGSS}; while it has codimension at least $2$, it need not be empty.
It is sometimes $\Spec{\mathscr{A}}$, rather than $\cA$, which is referred to as the $\cA$-cluster variety.
\end{remark}

From seed data, one can extract cluster algebra seeds in the sense of Fomin--Zelevinsky; we will not explain this in detail now, as it will become evident from what follows.
However, we emphasise at this point that the connection back from geometry to cluster algebras is that the coordinate ring of $\cA$ is Fomin--Zelevinsky's upper cluster algebra, which contains the cluster algebra itself (identified as the subalgebra generated by \emph{global monomials}).

We now explain how cluster categories naturally give rise to cluster ensembles.  We first set some notation.  Readers unfamiliar with the level of generality below are encouraged to think of their favourite cluster categories, such as ``classical'' triangulated cluster categories \cite{BMRRT,Amiot}, the exact cluster categories associated to partial flag varieties \cite{BIRS1,GLS-KacMoody}, Grassmannians \cite{JKS} and positroids \cite{PresslandPostnikov}, or Higgs categories \cite{Wu,KellerWu}.

Let $\cC$ denote an algebraic Frobenius extriangulated category over a perfect field $\bK$ (Definitions~\ref{d:Frob-extri} and \ref{d:algebraic}) such that
\begin{itemize}
\item $\cC$ is Krull--Schmidt, enriched in pseudocompact vector spaces (e.g.\ Hom-finite) and $\dimdivalg{X}=\dim \op{\End{\cC}{X}}/\rad{}{\op{\End{\cC}{X}}}<\infty$ for all $X\in \cC$ (Definition~\ref{d:comp-clustcat}, \S\ref{ss:pseudocompact});
\item the stable category $\stab{\cC}$ is a (Hom-finite) 2-Calabi--Yau (triangulated) category (Definition~\ref{d:CY}); and
\item $\cC$ has a cluster-tilting subcategory $\cT$ and all such are radically pseudocompact (Definition~\ref{d:rad-pc}).
\end{itemize}
The final condition on radical pseudocompactness is not overly strong: for example, it holds if $\cT$ is additively finite (i.e.\ $\cT=\add T$ for some object $T$) and $\op{\End{\cC}{X}}$ has a finite Gabriel quiver.  We write $\cT \ctsubcat \cC$ to indicate that $\cT$ is a cluster-tilting subcategory of $\cC$ and denote by $\indec \cT$ the indecomposable objects of $\cT$.

In what follows, we call such categories \emph{compact cluster categories} (Definition~\ref{d:comp-clustcat}).
Some results also hold with fewer or different assumptions---see \S\ref{ss:clust-cats}---but for this exposition, we will assume our cluster categories to be compact and furthermore, to align with the above, we will also assume that the cluster-tilting subcategories of $\cC$ are additively finite (corresponding to the finite rank case).
In particular, $\cC$ has a weak cluster structure in the sense of Definition~\ref{d:weak-cluster-structure}, meaning that its cluster-tilting subcategories may be mutated at any indecomposable non-projective object (i.e.\ they are maximally mutable in the sense of Definition~\ref{d:max-mut}); this follows from Corollary~\ref{c:weak-clust-struct}.
If $\cC$ is a cluster category in this sense, so is its stable category $\stab{\cC}$.

The first key identifications of the data of a cluster ensemble are the lattices in \ref{d:seed-datum-lattices} and \ref{d:seed-datum-dual-lattices}.
For a cluster category $\cC$ as above and $\cT \ctsubcat \cC$, the Grothendieck group $\Kgp{\cT}$ of $\cT$ (as an additive category) is a lattice with basis $\{ [T] \mid T\in \indec \cT \}$.

Dually, one can consider the category $\fd \cT$ of finite-dimensional $\cT$-modules, that is, functors $M\colon \cT \to \fd \bK$ taking values in finite-dimensional vector spaces (\S\ref{s:modules}).  Then $\Kgp{\fd \cT}$ is also a lattice with basis $\{ [\simpmod{\cT}{T}] \mid T\in \indec \cT \}$, where $\simpmod{\cT}{T}$ is the simple functor supported at $T$ (Proposition~\ref{p:KS-simples}).  We also have a natural inclusion $\indec\stab{\cT} \subseteq \indec\cT$ and hence injection $\Kgp{\fd \stab{\cT}}\inj \Kgp{\fd \cT}$.

Furthermore, given a finite-dimensional $\cT$-module $M$ and an object $T$ of $\cT$, we may compute $\canform{M}{T}{\cT}\defeq \dim M(T)$.  Indeed, this yields (Proposition~\ref{p:K-duality}) a non-degenerate $\integ$-bilinear form 
\[ \canform{\blank}{\blank}{\cT}\colon \Kgp{\fd \cT}\cross \Kgp{\cT}\to \integ, \ \canform{[M]}{[T]}{\cT}=\dim M(T),\]
and hence injective maps
\begin{align*}
\pdual{\cT} & \colon \Kgp{{\cT}}\to\dual{\Kgp{\fd{\cT}}},\ \pdual{\cT}[T]=\canform{\blank}{[T]}{\cT}, \\
\sdual{\cT} & \colon \Kgp{\fd{\cT}}\to\dual{\Kgp{{\cT}}},\ \sdual{\cT}[M]=\canform{[M]}{\blank}{\cT},
\end{align*}
allowing us to begin to build our dictionary between cluster ensembles and cluster categories as follows:
\begin{center}
\begin{tabular}{lll} \toprule
\multicolumn{2}{l}{Cluster ensemble} & Cluster category \\ \midrule
$\seedlat{N}$ & lattice & $\dual{\Kgp{\cT}}$ \\
$\seedlat{N}_{\uf}$ & saturated sublattice & $\dual{\Kgp{\stab{\cT}}}$ \\
$\seedlat{N}^{\circ}$ & finite-index sublattice & $\Kgp{\fd \cT}$ \\
$\seedlat{N}_{\uf}^{\circ}$ & $\seedlat{N}_{\uf}\intersection \seedlat{N}^{\circ}$ & $\Kgp{\fd \stab{\cT}}$ \\
$e_{i}$ & basis of $\seedlat{N}$ & $\dual{[T]}$ \\
$I$ & indexing set for basis & $\indec \cT$ \\
$I_{\uf}$ & subset of $I$ & $\indec \stab{\cT}$ \\
$d_{i}$ & multiplier & $d_{T}$ \\
$d_{i}e_{i}$ & basis of $\seedlat{N}^{\circ}$ & $[\simpmod{\cT}{T}]$ \\
$\seedlat{M}$ & dual lattice to $\seedlat{N}$ & $\Kgp{\cT}$ \\
$\seedlat{M}^{\circ}$ & contains $\seedlat{M}$ as finite-index sublattice & $\dual{\Kgp{\fd \cT}}$ \\
$\dual{e_{i}}$ & basis of $\seedlat{M}$ dual to $\{e_{i}\}$ & $[T]$ \\
$f_{i}$ & basis of $\seedlat{M}^{\circ}$ such that $f_{i}=d_{i}^{-1}\dual{e}_{i}$ & $\dual{[\simpmod{\cT}{T}]}$ \\ \bottomrule		
\end{tabular}
\end{center}
In particular, for $T,U\in \indec \cT$, the fact that $\canform{[\simpmod{\cT}{T}]}{[U]}{\cT}=\dim \simpmod{\cT}{T}(U)=\dimdivalg{T}\delta_{TU}$ implies that  $\sdual{\cT}[\simpmod{\cT}{T}]=\dimdivalg{T}\dual{[T]}$ and $\pdual{\cT}[T]=\dimdivalg{T}\dual{[\simpmod{\cT}{T}]}$, so that $\dual{[\simpmod{\cT}{T}]}=\dimdivalg{T}^{-1}\pdual{\cT}[T]$ as required\footnote{In the cluster ensemble setup, there is no obvious reason to explicitly name the two lattice inclusions whereas the natural bases of the Grothendieck groups we consider do make it appropriate to do so.  We see this when comparing, for example, $d_{i}e_{i}$ with $\dimdivalg{T}\dual{[T]}=\sdual{\cT}[\simpmod{\cT}{T}]$. However, we do suppress the inclusions $\Kgp{\stab{\cT}}\subseteq \Kgp{\cT}$ and $\Kgp{\fd \stab{\cT}}\subseteq \Kgp{\fd \cT}$ corresponding to $\blank_{\uf}$, the unfrozen sublattices.}.

Extending results of Palu \cite{Palu} and Fu--Keller \cite{FuKeller}, we show in \S\ref{s:cat-ex-mat} that there is a $\integ$-bilinear form
\[ \sform{\blank}{\blank}{\cT}\colon \Kgp{\fd \cT} \cross \Kgp{\fd \stab{\cT}} \to \integ \]
defined by
\begin{multline}
\tag{\ref{eq:s-form-rewrite2}}
\sform{[M]}{[N]}{\cT}=-\hom{\cT}{M}{N}+\ext{1}{\cT}{M}{N}\\-\ext{1}{\cT}{N}{M}+\hom{\cT}{N}{M}
\end{multline}
and, moreover, the restriction of this form to $\Kgp{\fd \stab{\cT}}\cross \Kgp{\fd \stab{\cT}}$ is skew-symmetric (Lemma~\ref{l:s-form-skew-sym}).  On the basis of simple functors, we have  \begin{equation} \tag{\ref{eq:s-gram-matrix}} \sform{[\simpmod{\cT}{T}]}{[\simpmod{\cT}{U}]}{\cT}=\dimdivalg{T}\exchmatentry{T,U}
\end{equation} where $\exchmatentry{T,U}\in \integ$ is obtained from the Cartan matrix of $\cT$ (Definition~\ref{d:exch-mat}) and is the relevant entry of the usual exchange matrix. 

Let
\[ \curlyform{\blank}{\blank}\colon \dual{\Kgp{\cT}} \cross \dual{\Kgp{\cT}} \to \bQ \]
be defined by
\[\curlyform{\dual{[T]}}{\dual{[U]}}=d_{T}^{-1}d_{U}^{-1}\sform{[\simpmod{\cT}{T}]}{[\simpmod{\cT}{U}]}{\cT}.\] 
Strictly, this formula is not valid unless $U\in\indec \stab{\cT}$; however, we may extend to the case that $T\in\indec{\stab{\cT}}$ by requiring that $\{[T]^*,[U]^*\}=-\{[T]^*,[U]^*\}$, and extend arbitrarily to a skew-symmetric form on $\dual{\Kgp{\cT}}$.
Constructions involving seed data typically only use the restrictions of the form to $\dual{\Kgp{\stab{\cT}}}\times\dual{\Kgp{\cT}}$ and $\dual{\Kgp{\cT}}\times\dual{\Kgp{\stab{\cT}}}$, or equivalently (see below) the maps $p_1^*$ and $p_2^*$, and so are insensitive to this final choice of extension.

To see that $\curlyform{\dual{\Kgp{\stab{\cT}}}}{\sdual{\cT}\Kgp{\fd \cT}}\subseteq \integ$ and $\curlyform{\dual{\Kgp{\cT}}}{\sdual{\cT}\Kgp{\fd \stab{\cT}}}\subseteq \integ$, we calculate
\[\curlyform{\dual{[T]}}{\sdual{\cT}[\simpmod{\cT}{U}]}=\curlyform{\dual{[T]}}{\dimdivalg{U}\dual{[U]}}=\dimdivalg{T}^{-1}\sform{[\simpmod{\cT}{T}]}{[\simpmod{\cT}{U}]}{\cT}=\dimdivalg{T}^{-1}(\dimdivalg{T}\exchmatentry{T,U})=\exchmatentry{T,U}\in \integ, \]
valid provided that at least one of $T$ and $U$ belongs to $\indec \stab{\cT}$, hence the two desired containments.
Thus, $\curlyform{\blank}{\blank}$ is the desired form for \ref{d:seed-datum-forms} above.

In particular, the form $\ip{\dual{[T]}}{\dual{[U]}}\defeq \curlyform{\dual{[T]}}{\dual{[U]}}\dimdivalg{U}=\exchmatentry{T,U}$ is that appearing in \ref{d:seed-datum-rescaled-form}, and has the exchange matrix $\exchmat{\cT}$ as its Gram matrix.
Letting $D_{\cT}$ be the diagonal matrix with entries $\dimdivalg{T}$, we have that $D_{\cT}\exchmat{\cT}$ is the Gram matrix of $\sform{\blank}{\blank}{\cT}$, and in particular has skew-symmetric principal part.

The form $\curlyform{\blank}{\blank}\colon\seedlat{N}_{\uf}\times\seedlat{N}\to\mathbb{Q}$ restricted from that in \ref{d:seed-datum-forms} is equivalent data to the map $p_1^*\colon\seedlat{N}_{\uf}^\circ\to\seedlat{M}$, since $\seedlat{N}_{\uf}/\seedlat{N}_{\uf}^\circ$ is torsion.
From the categorical viewpoint, this map arises more naturally than the form, via a process related to taking projective resolutions.
Indeed, we obtain a map $p_{\cT}\colon\Kgp{\fd\stab{\cT}}\to\Kgp{\cT}$ (Definition~\ref{d:proj-res-map}) by taking projective resolutions of $\stab{\cT}$-modules, in an appropriately enlarged category in which they have finite projective dimension.
Defining $\beta_{\cT}=-p_{\cT}$ (to align with existing cluster theoretic conventions), we may then define
$\sform{\blank}{\blank}{\cT}=\canform{\blank}{\beta_{\cT}(\blank)}{\cT}$ (Definition~\ref{d:s-form}) and prove (Remark~\ref{r:s-form-rewrite}) that this form may also be expressed as above in terms of Hom and Ext spaces.
Under our dictionary, $\beta_{\cT}$ corresponds to the restriction of $p_1^*$ to $\seedlat{N}_{\uf}^\circ$, which uniquely determines $p_1^*$, using again that $\seedlat{N}_{\uf}/\seedlat{N}_{\uf}^\circ$ is torsion.

The map $\dual{p}_2$ is similarly related to an appropriate \emph{adjoint} $\adj{\beta}_{\cT}\colon \Kgp{\fd \cT}\to \Kgp{\stab{\cT}}$ to $\beta_{\cT}$ (Proposition~\ref{p:adjunction}); equivalently, it can be obtained from $\curlyform{\blank}{\blank}$ above as required. 
In this notation, $B_{\cT}$ having skew-symmetrizable principal part is expressed as $\adj{(\pdual{\stab{\cT}}\circ \beta_{\stab{\cT}})}=-(\pdual{\stab{\cT}}\circ \beta_{\stab{\cT}})$ (Corollary~\ref{c:beta-skew-symmetric}).

The above shows how to associate a seed datum to any cluster-tilting subcategory $\cT \ctsubcat \cC$ of a compact cluster category $\cC$.
To complete this to a cluster ensemble, we require a map
\[ \dual{p}\colon \Kgp{\fd \cT} \to \Kgp{\cT} \]
with the property that $\dual{p}\circ \iota_{\uf}^{\seedlat{N}^{\circ}}=\dual{p}_{1}$ and $\iota_{\uf}^{\seedlat{M}}\circ \dual{p}_{2}=\dual{p}$, as is required, where $\iota_{\uf}^{\blank}$ is the inclusion $\blank_{\uf} \inj \blank$.  
The natural candidate is a map defined using projective resolutions as above, now of arbitrary finite-dimensional $\cT$-modules rather than only $\stab{\cT}$-modules.
Various assumptions on $\cC$ and $\cT$ ensure that this is valid, e.g.\ when $\cC$ is exact, and $\cT$ is locally finite with finite global dimension.
If $\cC$ is obtained via a particular construction, such as that of Higgs categories, extensions $\dual{p}$ can also be made using natural dg enhancements.
Hence, in many cases, we obtain a cluster ensemble.

We remark, however, that throughout this work the map $\dual{p}$ will not be needed: it will suffice to study $\beta_{\cT}$ (corresponding to $\dual{p}_{1}$), with $\adj{\beta}_{\cT}$ (corresponding to $\dual{p}_{2}$) also appearing in some situations.

\subsection{Tropical cluster data}\label{ss:tropical-info}

An early observation \cite{FZ-CA4} in the theory of cluster algebras was that, to a cluster algebra, one can associate a number of collections of integer vectors and polynomials that capture key cluster-theoretic information.
These include $\mathbf{g}$-vectors, $\mathbf{c}$-vectors and $\curly{F}$-polynomials.  

As the understanding of the theory matured, it was realised that $\mathbf{g}$- and $\mathbf{c}$-vectors are \emph{tropical} in nature and this opened up the subject to the use of methods of tropical geometry.
Tropical geometry is closely related to toric geometry, and cluster varieties are built by gluing many tori, so many techniques become available; this observation is at the heart of the scattering diagram technology of \cite{GHKK}, which led to geometric (re-)proofs of many of the main conjectures on cluster algebras posed by Fomin and Zelevinsky. 

A key theorem states that the cluster monomials (that is, monomials in the cluster variables of a particular cluster) are distinguished by their $\mathbf{g}$-vectors.
This is a fundamental step in showing that cluster monomials are linearly independent, and hence understanding how they relate to canonical bases, which was a key motivation for their introduction. 

Results such as these were the inspiration for categorification of cluster algebras: indeed, categorification provides proofs of the above results on $\mathbf{g}$-vectors (see e.g.\ \cite{GLS-Rigid,DehyKeller,FuKeller}) and linear independence \cite{CKLP} for large classes of cluster algebras.
By widening the scope of cluster categorical methods in this paper, we are able to expand the scope of these techniques even further (e.g.\ Theorem~\ref{t:g-vectors-cl-mons}).

In the categorical setting, the fundamental observation is that $\cT$-approximations of objects in a cluster category $\cC$, where $\cT$ is a cluster-tilting subcategory, yield elements of $\Kgp{\cT}$ known as the \emph{index} and \emph{coindex}.
These are sometimes referred to as \emph{homological $\mathbf{g}$-vectors} since one can show (Theorem~\ref{t:c-vec-mut-formula}) that they recover precisely the ordinary $\mathbf{g}$-vectors under decategorification.
(One has both indices and coindices, corresponding to left and right approximations and to a sign choice in the definition of combinatorial $\mathbf{g}$-vectors.)
Then, by showing that rigid objects are determined by their indices (see Proposition~\ref{p:rigid-index}, building on \cite{DehyKeller} and \cite{FuKeller}), one obtains the above claim on cluster monomials.

In this work, we build on the use of indices and coindices, noting that they, and correspondingly $\mathbf{g}$-vectors, lie on the $\cA$-side.
We shift perspective from vectors to linear maps, introducing functions
\[
\ind{\cT}{\cU}\colon \Kgp{\cT} \isoto \Kgp{\cU},\quad
\coind{\cT}{\cU}\colon \Kgp{\cT} \isoto \Kgp{\cU}
\]
associated to pairs of cluster-tilting subcategories $\cTU \ctsubcat \cC$ (Definitions~\ref{d:index} and \ref{d:coindex}).
These functions are even isomorphisms (Proposition~\ref{p:ind-coind-inverse}), from which we see that, under mild assumptions, $\card{\indec \cT}=\card{\indec \cU}$ and $\card{\exch \cT}=\card{\exch \cU}$ (Corollary~\ref{c:clusters-same-size}) where the latter denotes the \emph{mutable} objects in $\cT$ (Definition~\ref{d:mutable}).

Using the non-degenerate form $\canform{\blank}{\blank}{\cT}$ above, we can take adjoints of these maps and obtain
\[
\indbar{\cU}{\cT}=\adj{(\coind{\cT}{\cU})}\colon \Kgp{\fd \cU} \isoto \Kgp{\fd \cT},\quad
\coindbar{\cU}{\cT}=\adj{(\ind{\cT}{\cU})}\colon \Kgp{\fd \cU} \isoto \Kgp{\fd \cT},
\]
which are also isomorphisms (Definition~\ref{d:ind-bar-def}).\footnote{
Recall that in this introductory exposition, we restrict to the finite rank case; outside this setting, more care is needed due to the fact that $\fd \cT$, $\fpmod \cT$ and $\lfd \cT$ (that is, the categories of finite-dimensional, finitely presented and locally finite-dimensional modules, respectively) need not coincide.
Duality statements also need to be refined, e.g.\ in the definition of the form $\canform{\blank}{\blank}{\cT}$, we need to replace $\Kgp{\fd \cT}$ by a different Grothendieck group, $\Kgpnum{\lfd \cT}$ (Definition~\ref{d:canform}).}

We study these maps and their compositions in detail, obtaining results that justify the claim that the maps $\cind{}{}$ and $\cindbar{}{}$ evaluated on the natural bases of indecomposable objects of $\cT$ (playing the role of projective objects via $\cT\simeq \proj \cT$) and simples respectively compute $\mathbf{g}$- and $\mathbf{c}$-vectors.  For example, we deduce (Theorem~\ref{t:c-vec-mut-formula}) the following categorification of a formula for the mutation of $\mathbf{g}$-vectors \cite{NakanishiBook,NakanishiZelevinsky}:
\begin{align}
\ind{\mut{U}{\cU}}{\cT}[\mut{\cU}{U}] & = -\ind{\cU}{\cT}[U]+\Bigl(\sum_{W\in \cU\setminus U} [\exchmatentry{W,U}]_{-}\ind{\cU}{\cT}[W]\Bigr)-\beta_{\cT}\bigl[\stabindbar{\cU}{\cT}[\simpmod{\cU}{U}]\bigr]_- \tag{\ref{eq:g-vec-mut-formula}}, \\
\ind{\mut{U}{\cU}}{\cT}[V] & =\ind{\cU}{\cT}[V], \notag
\end{align}
and the corresponding formula for mutation of $\mathbf{c}$-vectors:
\begin{align}
\indbar{\mut{U}{\cU}}{\cT}[\simpmod{\mut{U}{\cU}}{\mut{\cU}{U}}]& =-\indbar{\cU}{\cT}[\simpmod{\cU}{U}], \notag \\ \tag{\ref{eq:c-vec-mut-formula}}
\indbar{\mut{U}{\cU}}{\cT}[\simpmod{\mut{U}{\cU}}{V}] & =\indbar{\cU}{\cT}[\simpmod{\cU}{V}]+[\exchmatentry{U,V}^{\cU}]_{+}\indbar{\cU}{\cT}[\simpmod{\cU}{U}]+\exchmatentry{U,V}^{\cU}\bigl[\stabindbar{\cU}{\cT}[\simpmod{\cU}{U}]\bigr]_{-}. \end{align} 

Using this analysis, we are able to obtain the following result, from which a significant number of desirable consequences for mutation in cluster categories follow (Corollary~\ref{c:rank-invariant}, Proposition~\ref{p:mut-of-Euler-form}, Theorem~\ref{t:exch-mat-mutation-clust-str}).  
Here, $\beta_{\cT}$ is the map defined above, obtained via projective resolution.

\begin{theorem*}[Theorem~\ref{t:exch-isos}, Corollary~\ref{c:exch-isos}] Let $\cC$ be a compact cluster category.  Then we have commutative diagrams
\[\begin{tikzcd}
\Kgp{\fd \stab{\cT}} \arrow{r}{\beta_{\cT}} \arrow{d}[swap]{\stabindbar{\cT}{\cU}} & \Kgp{\cT} \arrow{d}{\ind{\cT}{\cU}} \\
\Kgp{ \fd \stab{\cU}} \arrow{r}{\beta_{\cU}} & \Kgp{\cU} 
\end{tikzcd}\qquad
\begin{tikzcd}
\Kgp{\fd \stab{\cT}} \arrow{r}{\beta_{\cT}} \arrow{d}[swap]{\stabcoindbar{\cT}{\cU}} & \Kgp{\cT} \arrow{d}{\coind{\cT}{\cU}} \\
\Kgp{ \fd \stab{\cU}} \arrow{r}{\beta_{\cU}} & \Kgp{\cU} 
\end{tikzcd}\]
for any $\cTU\ctsubcat\cC$.
\end{theorem*}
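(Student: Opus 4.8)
The plan is to reduce the commutativity of both squares, via the adjunctions that define the bar-maps and the characterisation of $\beta$, to a single identity relating the bilinear forms $\sform{\,\cdot\,}{\,\cdot\,}{\cT}$ and $\sform{\,\cdot\,}{\,\cdot\,}{\cU}$, and then to prove that identity by evaluating both sides on simple modules and invoking the exchange conflations. I would treat the first square in detail; the second (Corollary~\ref{c:exch-isos}) then follows by the identical argument with left $\cT$-approximations replaced by right ones --- equivalently, by applying the first square to the opposite category, under which $\ind{}{}$ and $\coind{}{}$, and $\stabindbar{}{}$ and $\stabcoindbar{}{}$, are interchanged. So the target is
\[
\ind{\cT}{\cU}\circ\beta_\cT=\beta_\cU\circ\stabindbar{\cT}{\cU}\colon\Kgp{\fd\stab{\cT}}\longrightarrow\Kgp{\cU}.
\]

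First I would pass to adjoints. As $\canform{\,\cdot\,}{\,\cdot\,}{\cU}$ is non-degenerate (Proposition~\ref{p:K-duality}), this equality of maps holds if and only if
\[
\canform{[M]}{\ind{\cT}{\cU}(\beta_\cT[N])}{\cU}=\canform{[M]}{\beta_\cU(\stabindbar{\cT}{\cU}[N])}{\cU}
\]
for all $[M]\in\Kgp{\fd\cU}$ and $[N]\in\Kgp{\fd\stab{\cT}}$. Applying $\adj{(\ind{\cT}{\cU})}=\coindbar{\cU}{\cT}$ (Definition~\ref{d:ind-bar-def}) on the left, and the defining relation $\sform{[A]}{[B]}{\cT}=\canform{[A]}{\beta_{\cT}[B]}{\cT}$ of $\beta$ (Definition~\ref{d:s-form}, for each cluster-tilting subcategory) on both sides, this becomes
\[
\sform{\coindbar{\cU}{\cT}[M]}{[N]}{\cT}=\sform{[M]}{\stabindbar{\cT}{\cU}[N]}{\cU}.
\]
Thus the theorem is equivalent to the statement that the isomorphisms $\coindbar{\cU}{\cT}$ and $\stabindbar{\cT}{\cU}$ intertwine $\sform{\,\cdot\,}{\,\cdot\,}{\cU}$ and $\sform{\,\cdot\,}{\,\cdot\,}{\cT}$. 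This is a useful reformulation because the latter forms are explicit: by \eqref{eq:s-form-rewrite2} they are skew-symmetrised $\operatorname{Hom}$/$\operatorname{Ext}$ pairings, and on simple modules they reduce (Definition~\ref{d:exch-mat}, \eqref{eq:s-gram-matrix}) to the exchange matrices $\exchmat{\cT}$ and $\exchmat{\cU}$.

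To prove the form identity I would work on the bases of simple modules (Proposition~\ref{p:KS-simples}), both sides being bilinear. The class $\beta_\cT[\simpmod{\stab{\cT}}{T}]$ is, by Definition~\ref{d:proj-res-map}, the alternating sum of the terms of a projective resolution of $\simpmod{\stab{\cT}}{T}$; that resolution is assembled from the exchange conflation of $T$ in $\cC$, since this produces the minimal projective presentation $\Hom{\cC}{\cT}{B}\to\Hom{\cC}{\cT}{T}\to\simpmod{\cT}{T}\to0$ with $B\to T$ the relevant right $\cT$-approximation, and the higher syzygies are resolved in the same way. Thus $\beta_\cT[\simpmod{\stab{\cT}}{T}]$ is a specific $\integ$-combination of the classes $[W]$, $W\in\indec\cT$, and applying $\ind{\cT}{\cU}$ replaces each $[W]$ by $\ind{\cT}{\cU}[W]=[U^{W}_{0}]-[U^{W}_{1}]$, where $U^{W}_{1}\rightarrowtail U^{W}_{0}\twoheadrightarrow W$ is a $\cU$-approximation conflation. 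On the other side, $\coindbar{\cU}{\cT}[\simpmod{\cU}{U}]$ and $\stabindbar{\cT}{\cU}[\simpmod{\stab{\cT}}{T}]$ are determined through their defining adjunctions by the values of the (co)index maps $\ind{\cT}{\cU}$ and $\stabcoind{\cU}{\cT}$, again expressed via $\cU$-approximation conflations, and $\beta_\cU$ applied to the resulting combination of simples unwinds, as above, into classes of terms of $\cU$-approximation conflations. Feeding all of this in, the form identity becomes a matrix identity relating $\exchmat{\cT}$ and $\exchmat{\cU}$ through the change-of-basis matrices of the (co)index maps, and the remaining task is to check it by comparing, for a single object, its $\cT$-approximation conflations with its $\cU$-approximation conflations: stacking a $\cU$-approximation conflation onto each outer term of an exchange conflation of $T$ (legitimate after the conflation manipulations provided by the extriangulated axioms together with the rigidity $\Ext{1}{\cC}{\cU}{\cU}=0$) yields a resolution of the middle term, and following Euler characteristics through this comparison should produce the matching of coefficients.

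The hard part is exactly this comparison, and the essential difficulty is that the (co)index maps are not transitive: stacking $\cU$-approximation conflations onto the terms of a conflation does \emph{not} compute the $\cU$-index additively, since the connecting maps obstruct the horseshoe construction, and extra terms appear whose classes lie in $\image\beta_{\cU}$ (this is the failure of $\ind{\cU}{\cV}\circ\ind{\cT}{\cU}=\ind{\cT}{\cV}$ noted in the introduction). The crux is to show that, for the particular combinations produced by $\beta_\cT$ on a simple module, these error terms cancel exactly against the corrections already built into $\stabindbar{\cT}{\cU}$ and $\coindbar{\cU}{\cT}$ --- corrections of the kind visible in the mutation formulas \eqref{eq:g-vec-mut-formula} and \eqref{eq:c-vec-mut-formula} --- so that the two sides agree term by term. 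This cancellation is the ``tilting phenomenon'' alluded to in the introduction and carries the bulk of the argument. One must also keep careful track of the inclusion of the stable summand $\stab{\cT}$, on which $\beta$, $\stabindbar{}{}$ and $\stabcoindbar{}{}$ are defined, into the ambient Grothendieck groups, and of the fact that $\sform{\,\cdot\,}{\,\cdot\,}{\cT}$ is only skew-symmetric after restriction to the stable part (Lemma~\ref{l:s-form-skew-sym}). Once the cancellation is established, the form identity --- and hence the commutativity of both squares --- follows.
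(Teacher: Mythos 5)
Your reduction to the form identity
\[
\sform{\coindbar{\cU}{\cT}[M]}{[N]}{\cT}=\sform{[M]}{\stabindbar{\cT}{\cU}[N]}{\cU}
\]
is correct and is equivalent to the commutativity of the square (modulo the usual care about $\Kgp{\fd}$ versus $\Kgp{\fpmod}$ versus $\Kgpnum{\lfd}$, which the paper handles via the lifts $\stabIndbar{}{}$ and Corollary~\ref{c:indbar-on-fd}). However, this is essentially Proposition~\ref{p:mut-of-Euler-form} in disguise --- apply $\indbar{\cT}{\cU}$ to the first argument and use Proposition~\ref{p:indbar-coindbar-inverse} --- and in the paper that proposition is \emph{deduced from} Theorem~\ref{t:exch-isos}, not used to prove it. So your reformulation is valid, but it does not make the problem easier; it simply relocates the work.

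The genuine gap is that you identify the crux (``these error terms cancel exactly against the corrections already built into $\stabindbar{\cT}{\cU}$ and $\coindbar{\cU}{\cT}$'') and then stop, declaring that this ``carries the bulk of the argument''. That cancellation is precisely the content of Theorem~\ref{t:ind-coind-additive-error-terms} (and its Corollary~\ref{c:ind-coind-mult-error-terms}), which give closed formulas for the failure of additivity of $\ind{}{}$ and $\coind{}{}$ on a conflation in terms of the modules $\lefterror{i}{\cU}{X}|_{\cT}$ and $\righterror{i}{\cU}{X}|_{\cT}$. Once one has those identities, the paper's proof is a short rearrangement: write $[N]=[\Extfun{\cU}X]$, use $\beta_{\cU}[N]=\coind{\cC}{\cU}[X]-\ind{\cC}{\cU}[X]$, and apply the two error-term formulas for $\ind{\cU}{\cT}\coind{\cC}{\cU}[X]$ and $\ind{\cU}{\cT}\ind{\cC}{\cU}[X]$; the error terms subtract to give exactly $\beta_{\cT}\stabIndbar{\cU}{\cT}[\Extfun{\cU}X]$, which is the right-hand side. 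Without that ingredient, your ``stacking of $\cU$-approximation conflations'' argument has no handle on the failure of the horseshoe construction.

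A second, subsidiary issue: your method insists on working in the basis of simple modules, identifying $\beta_{\cT}[\simpmod{\cT}{T}]$ with a difference of the middle terms of exchange conflations, and decategorifying the form identity to a matrix identity between $\exchmat{\cT}$, $\exchmat{\cU}$, and ``change-of-basis'' matrices for the (co)index maps. This requires a no-loop assumption at every $T$ (otherwise $\Extfun{\cT}(\mut{\cT}{T})\neq\simpmod{\cT}{T}$) and local finiteness at each indecomposable, neither of which is part of the definition of a compact cluster category. The paper's proof bypasses this entirely by working with general classes $[\Extfun{\cU}X]$, which requires no local conditions and also makes the argument basis-free in the spirit advertised in the introduction. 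Your approach can likely be made to work under the extra assumptions, but it is strictly more restrictive as well as being incomplete.
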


This categorifies similar diagrams first written down by Fock and Goncharov \cite{FockGoncharov} and is often referred to as \emph{tropical duality}.  However, we do not need to assume that $\cT$ and $\cU$ are reachable from each other by a sequence of mutations: the claims hold for any pair of cluster-tilting subcategories.

We obtain proofs of various properties of $\mathbf{g}$- and $\mathbf{c}$-vectors, analogous to earlier work with stronger assumptions on $\cC$.
In particular, the proof due to Dehy and Keller \cite{DehyKeller} that $\mathbf{g}$-vectors are (row) sign-coherent transfers across to our setting with minimal changes (Proposition~\ref{p:g-vector-sign-coherence}).
Since $\cindbar{}{}$ is adjoint to $\cind{}{}$, the (column) sign-coherence of $\mathbf{c}$-vectors is then an immediate corollary (Corollary~\ref{c:sign-coh-c-vectors}). Note also Theorem~\ref{t:g-vectors-cl-mons} and Remark~\ref{r:g-vecs-distinguish-cl-mons}, extending the categorical proofs of conjectures of Fomin and Zelevinsky to our generality.

As we describe in \S\ref{s:sign-coherence}, the above gives us the starting point for the construction of scattering diagrams \emph{\`{a} la} Gross--Hacking--Keel--Kontsevich \cite{GHKK} and we are able to use the categorical structure to deduce some basic properties, e.g.\ the interiors of the cones defined by $\mathbf{g}$-vectors do not intersect; see also Proposition~\ref{p:beta-maps-cones}, reproving a recent result of Melo and Nájera Chávez \cite{MNC}.

\subsection{Cluster characters}\label{ss:cl-cats-intro}

The aforementioned relationships among the functions $\cind{}{}$ and $\cindbar{}{}$ also play an important role in proving the required properties of cluster characters.
Defining these requires one more input, namely $\curly{F}$-polynomials, which give us the non-tropical part.
These should remind one of other classical polynomial invariants associated to moduli problems, such as Poincaré polynomials or their generalisations (Serre polynomials, mixed Hodge polynomials, etc.).
In our setting, the moduli problem is that of counting submodules of certain modules according to dimension vectors, where the existence of infinitely many such submodules is handled by taking the Euler characteristic of the corresponding (quiver) Grassmannian, the latter being defined as follows: for $M\in \fd \stab{\cT}$ and $[L]\in\Kgp{\fd\stab{\cT}}$, the \emph{quiver Grassmannian} $\QGra{[L]}{M}$ is the algebraic variety whose points parametrise submodules $L'\leq M$ with $[L']=[L]\in \Kgp{\fd \stab{\cT}}$.

Let $\cC$ be a cluster category, and fix a cluster-tilting subcategory $\cT\ctsubcat \cC$.
Let $\bK\Kgp{\cT}$ be the group $\bK$-algebra of $\Kgp{\cT}$, which we write as $\bK\Kgp{\cT}=\text{span}_{\bK}\{ a^{t} \mid t\in \Kgp{\cT} \}$.
Similarly, let $\bK\Kgp{\fd \stab{\cT}}$ be the group algebra of $\Kgp{\fd \stab{\cT}}$, with its canonical basis $\{ x^n \mid n\in \Kgp{\fd \stab{\cT}} \}$.
The letters `$a$' and `$x$' for the formal variables are chosen to be compatible with the $\cA$-side and the $\cX$-side. Note that taking the group algebra is an `exponentiation' operation, whose inverse `log' operation is, in this setting, tropicalisation (see Remark~\ref{r:clucha-A}\ref{r:clucha-A-trop} for a more precise statement).

Then for $M\in \fd \stab{\cT}$, define its \emph{$\Fpoly$-polynomial} to be
\[ \Fpoly(M)=\sum_{[L]\in \Kgp{\fd \stab{\cT}}} \chi(\QGra{[L]}{M})x^{[L]} \in \bK\Kgp{\fd \stab{\cT}},\]
so that $\curly{F}$-polynomials naturally live on the $\cX$-side.
However, when we need them on the $\cA$-side, we can use the map $\beta_{\cT}$ to transfer them.
Specifically,
$\beta_{\cT}\colon \Kgp{\fd \stab{\cT}}\to \Kgp{\cT}$ induces a map $(\beta_{\cT})_{*}\colon \bK\Kgp{\fd{\stab{\cT}}}\to \bK\Kgp{\cT}$ of group algebras, with $(\beta_{\cT})_{*}(x^{[L]})=a^{\beta_{\cT}[L]}$.
As always in the introduction, we are restricting to the finite rank case, outside which some additional technicalities are needed.

The $\Aside$-cluster character is a function from $\cC$ to $\bK\Kgp{\cT}$, which has two elements, tropical and non-tropical.  For $X\in \cC$, the tropical part is $a^{\ind{\cC}{\cT}[X]}$, recording the $\cT$-approximation of $X$. The non-tropical part is given by pushing forward the $\curly{F}$-polynomial of the $\stab{\cT}$-module $\Extfun{\cT}X\defeq \Ext{1}{\cC}{\blank}{X}|_{\cT}$.
Combining the two parts, the $\cA$-cluster character is given on objects $X$ of $\cC$ by
\begin{equation} \tag{\ref{eq:clucha-A-alt}} \clucha[\cT]{\Aside}(X)=a^{\ind{\cC}{\cT}[X]}(\beta_{\cT})_{*}\Fpoly(\Extfun{\cT}X). \end{equation}
This function has the property that
\begin{equation} \tag{\ref{eq:clucha-A-split}} \clucha[\cT]{\Aside}(X\oplus Y)=\clucha[\cT]{\Aside}(X)\clucha[\cT]{\Aside}(Y),\end{equation}
justifying the terminology `character'.

A fundamental fact we use throughout the paper is that the functor $\Extfun{\cT}\colon \cC/\cT \to \fpmod \stab{\cT}$ is an equivalence (Proposition~\ref{p:equiv-to-mod}), and as such modules of the form $\Extfun{\cT}X$ appear throughout our work, as in \eqref{eq:clucha-A-alt}.
In particular, when there is no loop at $T$, the mutant $\mut{\cT}{T}$ is characterised by $\Extfun{\cT}(\mut{\cT}{T})=\simpmod{\cT}{T}$, the latter being the simple functor at $T$, whose class we have seen as part of the basis of $\Kgp{\fd{\stab{\cT}}}$, corresponding to $\seedlat{N}^{\circ}$.

To obtain applications to an associated cluster algebra (or generalisation thereof), the cluster character should relate categorical mutations of cluster-tilting subcategories (Definition~\ref{d:mutable}) to mutations of cluster variables.
To this end, we show that if $X,Y\in\cC$ have the property that any two non-split conflations $Y\infl Z\defl X\confl$ have isomorphic middle term, and similarly for non-split conflations $X\infl Z'\defl Y\confl$, then
\begin{equation} \tag{\ref{eq:clucha-A-non-split}} \clucha[\cT]{\Aside}(X)\clucha[\cT]{\Aside}(Y)=\clucha[\cT]{\Aside}(Z)+\clucha[\cT]{\Aside}(Z'), \end{equation}
as in a cluster algebra exchange relation.
This property of conflations holds in the case that $\ext{1}{\cC}{X}{Y}=1$, but also more generally (Lemma~\ref{l:rk1-unique-middle-term}), and the proof is based on work of Palu \cite{Palu2}.
The fundamental theorem for $\cA$-cluster characters now follows.

\begin{theorem*}[Theorem~\ref{t:Aside-bijection}, based on \cite{BMRRT,FuKeller,PresslandPostnikov}]
If $\cC$ has a cluster structure, then $\clucha[\cT]{\Aside}$ is a bijection between indecomposable objects in cluster-tilting subcategories in the mutation class of $\cT$ and cluster (and frozen) $\Aside$-variables of the cluster algebra with initial exchange matrix $B_{\cT}$, with frozen variables corresponding to projective objects.
This induces a bijection between these cluster-tilting subcategories and the seeds of this cluster algebra, commuting with mutations.
\end{theorem*}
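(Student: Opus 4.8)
The plan is to transport Fomin--Zelevinsky seed mutation across \(\clucha[\cT]{\Aside}\) by an induction over the mutation class, in the spirit of \cite{BMRRT,FuKeller}; the needed inputs---multiplicativity \eqref{eq:clucha-A-split}, the exchange relation \eqref{eq:clucha-A-non-split}, the categorical mutation of exchange matrices (Theorem~\ref{t:exch-mat-mutation-clust-str}) and the fact that rigid objects are determined by their index (Proposition~\ref{p:rigid-index})---are already in place.

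First I would pin down the initial data and injectivity. For \(T\in\indec\cT\) one has \(\ind{\cC}{\cT}[T]=[T]\) and \(\Extfun{\cT}T=\Ext{1}{\cC}{\blank}{T}|_{\cT}=0\) by rigidity of \(\cT\), so \(\Fpoly(\Extfun{\cT}T)=1\) and \(\clucha[\cT]{\Aside}(T)=a^{[T]}\); thus \(\clucha[\cT]{\Aside}\) carries \(\indec\cT\) bijectively onto the initial cluster of the cluster algebra with exchange matrix \(B_{\cT}\) (Definition~\ref{d:exch-mat}), the projective objects \(P\in\proj\cC\)---which lie in every cluster-tilting subcategory and are never mutated---supplying the frozen variables. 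For injectivity on the set of indecomposable objects occurring in cluster-tilting subcategories in the mutation class of \(\cT\), I would use that the \(\mathbf{g}\)-vector \(\ind{\cC}{\cT}[X]\) is recoverable from \(\clucha[\cT]{\Aside}(X)\)---this is built into the proof of Theorem~\ref{t:g-vectors-cl-mons}, the point being that \(\Fpoly(\Extfun{\cT}X)\) has constant term \(\chi(\QGra{0}{\Extfun{\cT}X})=1\)---so that \(\clucha[\cT]{\Aside}(X)=\clucha[\cT]{\Aside}(X')\) forces \(\ind{\cC}{\cT}[X]=\ind{\cC}{\cT}[X']\), whence \(X\cong X'\) by Proposition~\ref{p:rigid-index} since objects of cluster-tilting subcategories are rigid.

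The heart of the argument is an induction on the mutation distance from \(\cT\), proving simultaneously that for every cluster-tilting subcategory \(\cV\) in the mutation class of \(\cT\) the set \(\clucha[\cT]{\Aside}(\indec\cV)\) is a cluster, that it forms a seed together with \(B_{\cV}\), and that categorical mutation \(\cV\mapsto\mut{U}{\cV}\) corresponds to Fomin--Zelevinsky mutation \(\mu_{U}\) of that seed; the base case is the previous paragraph. For the inductive step, assume the claim for \(\cV\), fix \(U\in\exch\cV\) and let \(U'=\mut{\cV}{U}\). Because \(\cC\) has a cluster structure, there are exchange conflations \(U'\infl E\defl U\confl\) and \(U\infl E'\defl U'\confl\) with \(E,E'\in\add\cV\) and uniquely determined middle terms, so \eqref{eq:clucha-A-non-split} (via Lemma~\ref{l:rk1-unique-middle-term}) applies and gives
\[
\clucha[\cT]{\Aside}(U)\,\clucha[\cT]{\Aside}(U')=\clucha[\cT]{\Aside}(E)+\clucha[\cT]{\Aside}(E').
\]
By \eqref{eq:clucha-A-split} and the inductive hypothesis, \(\clucha[\cT]{\Aside}(E)\) and \(\clucha[\cT]{\Aside}(E')\) are monomials in the cluster variables \(\clucha[\cT]{\Aside}(\indec\cV)\), and by the description of \(B_{\cV}\) through the Cartan matrix of \(\cV\) (Definition~\ref{d:exch-mat}, equation \eqref{eq:s-gram-matrix}) the multiplicities of the indecomposable summands of \(E\) and \(E'\) are precisely the positive and negative parts of the \(U\)-column of \(B_{\cV}\). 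Hence the displayed identity is the Fomin--Zelevinsky exchange relation at \(U\) for the seed attached to \(\cV\), so \(\clucha[\cT]{\Aside}(U')\) is the mutated cluster variable, \(\clucha[\cT]{\Aside}(\indec\mut{U}{\cV})=\mu_{U}(\clucha[\cT]{\Aside}(\indec\cV))\) as clusters, and \(B_{\mut{U}{\cV}}=\mu_{U}(B_{\cV})\) by Theorem~\ref{t:exch-mat-mutation-clust-str}; this closes the induction. (Tropical duality, Theorem~\ref{t:exch-isos}, moreover identifies the tropical part \(\ind{\cC}{\cT}[V]\) of \(\clucha[\cT]{\Aside}(V)\) with the expected \(\mathbf{g}\)-vector of that cluster variable in the \(\cV\)-seed, in line with Theorem~\ref{t:g-vectors-cl-mons}.)

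To finish, I would note that since \(\cC\) has a cluster structure every sequence of Fomin--Zelevinsky seed mutations from the initial seed is realised by a sequence of categorical mutations from \(\cT\) (mutation being available at every mutable object, Definition~\ref{d:mutable}), so by the induction the image of \(\clucha[\cT]{\Aside}\) contains all cluster variables, all frozen variables and all seeds of the cluster algebra with initial exchange matrix \(B_{\cT}\); combined with the injectivity above, this gives the asserted bijection on indecomposables, with projectives corresponding to the frozen variables. Finally, a cluster-tilting subcategory \(\cV\) is determined by \(\indec\cV\), so \(\cV\mapsto\clucha[\cT]{\Aside}(\indec\cV)\) is injective, is surjective onto seeds by the induction, and intertwines categorical mutation with seed mutation by the inductive step. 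I expect the main obstacle to be making the inductive step watertight: one needs that in a cluster structure the exchange conflations at a mutable object have middle terms in the additive span of the ambient cluster-tilting subcategory, with multiplicities equal to the corresponding column of its categorical exchange matrix, that this matrix is the one attached combinatorially to the seed, and that it obeys Fomin--Zelevinsky mutation---all of which amount to the hypothesis that \(\cC\) has a cluster structure together with Theorem~\ref{t:exch-mat-mutation-clust-str}, so that the remainder is bookkeeping; the one point requiring care is that \eqref{eq:clucha-A-non-split} is invoked for the \emph{same} fixed character \(\clucha[\cT]{\Aside}\) at exchange pairs lying in cluster-tilting subcategories other than \(\cT\), which is legitimate since that identity holds for any conflation pair with uniquely determined middle term.
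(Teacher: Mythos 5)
Your proposal is correct and matches the route the paper takes: the paper's proof simply defers to \cite[Thm.~6.10]{PresslandPostnikov}, whose argument is the mutation-induction you reconstruct, with the two key inputs being Remark~\ref{r:clucha-A}\ref{r:clucha-A-isoclasses} and Corollary~\ref{c:A-mutation} (your \eqref{eq:clucha-A-non-split} applied at exchange pairs in arbitrary reachable \(\cU\)), exactly the points you flag. Two small citations you might tighten: the identification of the multiplicities of the middle terms of the exchange conflations with \([\exchmatentry{\blank,U}]_{\pm}\) is Proposition~\ref{p:decomp-exch-terms} (rather than \eqref{eq:s-gram-matrix}), and for recoverability of \(\ind{\cC}{\cT}[X]\) from the cluster character the argument needs either the proper-Laurent property of Theorem~\ref{t:g-vectors-cl-mons} or the sign-coherence of Proposition~\ref{p:g-vector-sign-coherence} to know the \(L=0\) term is extremal, not merely that its coefficient is \(1\).
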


The cluster character is also compatible with partial stabilisation: the diagram
\[
\begin{tikzcd}
\cC \arrow{r}{\clucha[\cT]{\Aside}} \arrow{d}[left]{\pi_{\cC}^{\cC/\cP}} & \bK\Kgp{\cT} \arrow{d}{(\pi_{\cT}^{\cT/\cP})_*}\\
\cC/\cP \arrow{r}[below,yshift=-0.2em]{\clucha[\cT/\cP]{\Aside}} & \bK\Kgp{\cT/\cP}
\end{tikzcd}
\]
commutes (Proposition~\ref{p:A-cl-cat-partial-stab}). In particular, this tells us how to handle setting frozen variables equal to $1$ in the categorical setting.

Already at this point, we have extended the theory of cluster characters, due to the greater generality in which we are working and the removal of previously common assumptions (e.g.\ finite rank or no loops), via a single general construction.

However, the more significant progress afforded by the technical analysis carried out here is in defining a cluster character on the $\cX$-side.  Although ingredients of such a function had been categorified previously (with additional assumptions), we are able to obtain a multiplication formula for the $\cX$-cluster character, and a categorical proof of the separation formula, in broad generality, as we now describe.
We need some of our mild additional finiteness assumptions so that the adjoint maps $\cindbar{}{}$ are well-defined.

For each $\cU\ctsubcat\cC$, and $M\in\fd{\stab{\cU}}$, there exist $\modlift{M}{\cU}{\pm}\in\cU$ such that
\begin{equation*}
\beta_{\cU}[M]=[\modlift{M}{\cU}{+}]-[\modlift{M}{\cU}{-}]\in \Kgp{\cU};
\end{equation*}
any two choices differ only by the addition or removal of common direct summands and this ambiguity has no effect on what follows.

Let $\cC$ be a compact cluster category, and let $\cTU\ctsubcat\cC$.
Let $\Frac{\Kgp{\fd{\cT}}}$ be the field of fractions of the group algebra $\bK\Kgp{\fd{\stab{\cT}}}$.
We define the \emph{$\Xside$-cluster character} for $\cU$ with respect to $\cT$ to be the function 
$ \clucha[\cTU]{\Xside}\colon \fd \stab{\cU} \to \Frac{\Kgp{\fd\stab{\cT}}} $ defined by
\begin{equation} \tag{\ref{eq:clucha-X-alt}}
\clucha[\cTU]{\Xside}(M)=x^{\stabindbar{\cU}{\cT}{[M]}}\Fpoly(\Extfun{\cT}\modlift{M}{\cU}{+})\Fpoly(\Extfun{\cT}\modlift{M}{\cU}{-})^{-1}.
\end{equation}
Note that the image of $\clucha[\cT]{\Xside}$ visibly lies in $\Frac{\Kgp{\fd \cT}}$ and not $\bK\Kgp{\fd \cT}$ except possibly in degenerate situations, i.e.\ the values of the $\cX$-cluster character are (unavoidably) rational functions.
Also, $\clucha[\cTU]{\Xside}(M)$ has two natural tropicalisations which correspond to taking the minimal and maximal submodules in the two $\curly{F}$-polynomial factors.
Under the minimal convention, we obtain $x^{\indbar{\cU}{\cT}{[M]}}$, and under the maximal convention we obtain
\[x^{\indbar{\cU}{\cT}{[M]}+[\Extfun{\cT}\modlift{M}{\cU}{+}]-[\Extfun{\cT}\modlift{M}{\cU}{-}]}=x^{\coindbar{\cU}{\cT}{[M]}}.\]

One can show (Proposition~\ref{p:cluchaX-mult}) that if $[M]=[M_1]+[M_2]\in\Kgp{\fd\stab{\cU}}$, then
\[\clucha[\cTU]{\Xside}(M)=\clucha[\cTU]{\Xside}(M_1)\clucha[\cTU]{\Xside}(M_2)\]
and hence we have a well-defined character $ \clucha[\cTU]{\Xside}\colon \Kgp{\fd\stab{\cU}} \to \Frac{\Kgp{\fd \cT}}$.

We continue by showing that if $U\in \exch \cU$ and $M=\Extfun{\cU}{(\mut{\cU}{U})}$, we may choose the objects $\modlift{M}{\cU}{\pm}$ to be the middle terms $U_{\cU}^{\pm}$ of the corresponding exchange conflations.
We immediately obtain (Corollary~\ref{c:X-cc-on-simples}) that
\[\clucha[\cTU]{\Xside}(\Extfun{\cU}{(\mut{\cU}{U})})=x^{\indbar{\cU}{\cT}{[\Extfun{\cU}{(\mut{\cU}{U})}]}}\Fpoly(\Extfun{\cT}\exchmon{\cU}{U}{+})\Fpoly(\Extfun{\cT}\exchmon{\cU}{U}{-})^{-1}.\]
If we also assume there is no loop at $U$, so that $M=\simpmod{\cU}{U}$ is simple, then we may make this expression more explicit, giving us a categorification (Proposition~\ref{p:prod-form-of-X-cc-on-simples}) of the celebrated \emph{separation formula} for $\cX$-cluster variables:
\[ \clucha[\cTU]{\Xside}(\simpmod{\cU}{U})=x^{\indbar{\cU}{\cT}[\simpmod{\cU}{U}]}\prod_{V\in \indec \cU} \Fpoly(\Extfun{\cT}{V})^{\exchmatentry{V,U}^{\cU}}. \]

Our main theorem in relation to $\cX$-cluster characters is, as one would wish, that they satisfy the $\cX$-side mutation rules when there are no loops or $2$-cycles.

\begin{theorem*}[Theorem~\ref{t:X-clucha-mutation}]
Let $\cC$ be a compact or skew-symmetric cluster category. Let $\cTU\ctsubcat\cC$, and assume $\cU$ has no loops or $2$-cycles. Let $U\in\exch{\cU}$, with associated mutation $\mut{U}{\cU}$, also assumed to have no loops.
Then for $V\in\exch\mu_U\cU$, we have
\[\clucha[\cT,\,\mut{U}{\cU}]{\Xside}(\simpmod{\mut{U}{\cU}}{V})=\begin{cases}
\clucha[\cTU]{\Xside}(\simpmod{\cU}{U})^{-1}&\text{if}\ V=\mu_\cU U,\\
\clucha[\cTU]{\Xside}(\simpmod{\cU}{V})\clucha[\cTU]{\Xside}(\simpmod{\cU}{U})^{[\exchmatentry{U,V}^{\cU}]_{+}}(1+\clucha[\cTU]{\Xside}(\simpmod{\cU}{U}))^{-\exchmatentry{U,V}^{\cU}}&\text{otherwise.}
\end{cases}\]
\end{theorem*}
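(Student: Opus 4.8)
The plan is to separate each value of the $\Xside$-cluster character into its \emph{tropical part}, a monomial in the $x$-variables, and its \emph{non-tropical part}, a ratio of $\curly{F}$-polynomials, and to treat the two cases of the statement in turn; since $\clucha[\cT,\mut{U}{\cU}]{\Xside}$ is multiplicative on classes (Proposition~\ref{p:cluchaX-mult}) it suffices to verify the identity for each simple $\simpmod{\mut{U}{\cU}}{V}$. Throughout, the tropical bookkeeping will be done with the $\mathbf{c}$-vector mutation formula \eqref{eq:c-vec-mut-formula} (and its counterpart for $\stabindbar{}{}$) from Theorem~\ref{t:c-vec-mut-formula}, which expresses $\indbar{\mut{U}{\cU}}{\cT}[\simpmod{\mut{U}{\cU}}{V}]$ and $\stabindbar{\mut{U}{\cU}}{\cT}[\simpmod{\mut{U}{\cU}}{V}]$ in terms of classes over $\cU$; the non-tropical bookkeeping will use that $\Fpoly$ is multiplicative on direct sums together with the description $\exchmon{\cU}{U}{\pm}=\bigdsum_{W}W^{[\pm\exchmatentry{W,U}^{\cU}]_{+}}$ of the exchange-conflation middle terms (valid since there is no loop at $U$), and a categorical $\curly{F}$-polynomial exchange relation discussed at the end.

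\emph{Case $V=\mu_{\cU}U$.} By Corollary~\ref{c:X-cc-on-simples} we may evaluate $\clucha[\cT,\mut{U}{\cU}]{\Xside}(\simpmod{\mut{U}{\cU}}{\mu_{\cU}U})$ via \eqref{eq:clucha-X-alt}, taking $\modlift{\simpmod{\mut{U}{\cU}}{\mu_{\cU}U}}{\mut{U}{\cU}}{\pm}$ to be the middle terms of the two exchange conflations for $\mu_{\cU}U$ in $\mut{U}{\cU}$. But these are the same two conflations as the exchange conflations for $U$ in $\cU$, with the labels $+$ and $-$ interchanged; hence $\modlift{\simpmod{\mut{U}{\cU}}{\mu_{\cU}U}}{\mut{U}{\cU}}{+}=\exchmon{\cU}{U}{-}$ and $\modlift{\simpmod{\mut{U}{\cU}}{\mu_{\cU}U}}{\mut{U}{\cU}}{-}=\exchmon{\cU}{U}{+}$, so the $\curly{F}$-polynomial factor of $\clucha[\cT,\mut{U}{\cU}]{\Xside}(\simpmod{\mut{U}{\cU}}{\mu_{\cU}U})$ is the reciprocal of the one in $\clucha[\cTU]{\Xside}(\simpmod{\cU}{U})$. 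Since \eqref{eq:c-vec-mut-formula} gives $\stabindbar{\mut{U}{\cU}}{\cT}[\simpmod{\mut{U}{\cU}}{\mu_{\cU}U}]=-\stabindbar{\cU}{\cT}[\simpmod{\cU}{U}]$, the monomial factor inverts too, and $\clucha[\cT,\mut{U}{\cU}]{\Xside}(\simpmod{\mut{U}{\cU}}{\mu_{\cU}U})=\clucha[\cTU]{\Xside}(\simpmod{\cU}{U})^{-1}$.

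\emph{Case $V\neq\mu_{\cU}U$.} Here $V$ is also a mutable object of $\cU$. Evaluating \eqref{eq:clucha-X-alt} with $\modlift{\simpmod{\mut{U}{\cU}}{V}}{\mut{U}{\cU}}{\pm}=\exchmon{\mut{U}{\cU}}{V}{\pm}=\bigdsum_{W\in\indec\mut{U}{\cU}}W^{[\pm\exchmatentry{W,V}^{\mut{U}{\cU}}]_{+}}$, using multiplicativity of $\Fpoly$ and separating off the single index $\mu_{\cU}U$ (for which $\exchmatentry{\mu_{\cU}U,V}^{\mut{U}{\cU}}=-\exchmatentry{U,V}^{\cU}$), we obtain
\[ \clucha[\cT,\mut{U}{\cU}]{\Xside}(\simpmod{\mut{U}{\cU}}{V})=x^{\stabindbar{\mut{U}{\cU}}{\cT}[\simpmod{\mut{U}{\cU}}{V}]}\Fpoly(\Extfun{\cT}(\mu_{\cU}U))^{-\exchmatentry{U,V}^{\cU}}\prod_{W\in\indec\cU\setminus\{U\}}\Fpoly(\Extfun{\cT}W)^{\exchmatentry{W,V}^{\mut{U}{\cU}}}. \]
Substituting the Fomin--Zelevinsky mutation rule (Theorem~\ref{t:exch-mat-mutation-clust-str}) for $\exchmatentry{W,V}^{\mut{U}{\cU}}$ --- which, through the sign of its correction term, naturally splits the argument into the subcases $\exchmatentry{U,V}^{\cU}\geq0$ and $\exchmatentry{U,V}^{\cU}<0$ --- the $\exchmatentry{W,V}^{\cU}$ part of the exponent reconstitutes the $\curly{F}$-polynomial factor of $\clucha[\cTU]{\Xside}(\simpmod{\cU}{V})$ (times a power of $\Fpoly(\Extfun{\cT}U)$) and the correction part reconstitutes a power of $\Fpoly(\Extfun{\cT}\exchmon{\cU}{U}{+})$ or $\Fpoly(\Extfun{\cT}\exchmon{\cU}{U}{-})$. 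Rewriting the monomial via \eqref{eq:c-vec-mut-formula} and collecting, the whole identity reduces to the single relation
\[ \Fpoly(\Extfun{\cT}(\mu_{\cU}U))\,\Fpoly(\Extfun{\cT}U)=x^{\gamma}\bigl(\Fpoly(\Extfun{\cT}\exchmon{\cU}{U}{-})+x^{\stabindbar{\cU}{\cT}[\simpmod{\cU}{U}]}\Fpoly(\Extfun{\cT}\exchmon{\cU}{U}{+})\bigr) \]
for the $\gamma\in\Kgp{\fd\stab{\cT}}$ dictated by \eqref{eq:c-vec-mut-formula}; this is exactly what produces the factor $(1+\clucha[\cTU]{\Xside}(\simpmod{\cU}{U}))^{-\exchmatentry{U,V}^{\cU}}$, and since $[\exchmatentry{U,V}^{\cU}]_{+}=\exchmatentry{U,V}^{\cU}$ when $\exchmatentry{U,V}^{\cU}\geq0$ and $=0$ otherwise, both subcases collapse to the claimed formula.

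\emph{The main obstacle} is proving this $\curly{F}$-polynomial exchange relation, i.e.\ identifying $\Fpoly(\Extfun{\cT}(\mu_{\cU}U))$. One route is to specialise the $\Aside$-cluster character multiplication formula \eqref{eq:clucha-A-non-split} to the exchange pair $(U,\mu_{\cU}U)$ in $\cC$, whose exchange-conflation middle terms are $\exchmon{\cU}{U}{\pm}$, and to cancel the $\cT$-index monomials --- they match by the $\mathbf{g}$-vector mutation formula \eqref{eq:g-vec-mut-formula} applied to $\cU$; this a priori yields an identity only in $\bK\Kgp{\cT}$, obtained after applying $(\beta_{\cT})_{*}$, so one must then argue that it already holds in $\bK\Kgp{\fd\stab{\cT}}$, using the disjointness of the supports of the two $\curly{F}$-polynomial terms (after the monomial shift) together with sign-coherence of $\mathbf{c}$-vectors. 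A more self-contained route is to apply $\Ext{1}{\cC}{\cT}{\blank}$ to the two exchange conflations for $U$ in $\cU$, extract the resulting short exact sequences of $\stab{\cT}$-modules relating $\Extfun{\cT}(\mu_{\cU}U)$ to $\Extfun{\cT}U$ and $\Extfun{\cT}\exchmon{\cU}{U}{\pm}$, and compute the Euler characteristics of the associated quiver Grassmannians along these sequences, in the spirit of Palu's proof of the multiplication theorem. Once the relation is established, the two subcases above finish the proof; the skew-symmetric case of the hypothesis is handled verbatim, with only the setting in which \eqref{eq:c-vec-mut-formula}, \eqref{eq:clucha-A-non-split} and Proposition~\ref{p:cluchaX-mult} are established being different.
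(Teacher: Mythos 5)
Your two-case structure, use of the tropical/non-tropical split, the product formula from Proposition~\ref{p:prod-form-of-X-cc-on-simples}, the matrix mutation rule from Theorem~\ref{t:exch-mat-mutation-clust-str}, and the reduction to an $\curly{F}$-polynomial exchange relation all match the paper's proof; you have found essentially the same argument.

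Two comments. First, you frame the $\curly{F}$-polynomial exchange relation as the remaining obstacle, but it is already a proved result in the paper: Proposition~\ref{p:Fpoly-rels}\ref{p:F-poly-non-split}, specialised to the exchange pair $(U,\mut{\cU}{U})$, gives exactly
\[\Fpoly(\Extfun{\cT}U)\Fpoly(\Extfun{\cT}\mut{\cU}{U})=x^{[\Ker\Extfun{\cT}i_1]}\Fpoly(\Extfun{\cT}\exchmon{\cU}{U}{+})+x^{[\Ker\Extfun{\cT}i_2]}\Fpoly(\Extfun{\cT}\exchmon{\cU}{U}{-}),\]
and the identification $[\Ker\Extfun{\cT}i_1]=[\stabindbar{\cU}{\cT}[\simpmod{\cU}{U}]]_{+}$, $[\Ker\Extfun{\cT}i_2]=[\stabindbar{\cU}{\cT}[\simpmod{\cU}{U}]]_{-}$ is exactly the content of Theorem~\ref{t:Fpoly-mutation} (via the sign-coherence argument in the proof of Theorem~\ref{t:c-vec-mut-formula}). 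Your stated relation is the factorisation of this by $x^{[\stabindbar{\cU}{\cT}[\simpmod{\cU}{U}]]_{-}}$, so $\gamma=[\stabindbar{\cU}{\cT}[\simpmod{\cU}{U}]]_{-}$. Citing this directly is both shorter and cleaner.

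Second, your ``route 1'' for the exchange relation --- specialising \eqref{eq:clucha-A-non-split} and lifting the resulting identity in $\bK\Kgp{\cT}$ back to $\bK\Kgp{\fd\stab{\cT}}$ --- has a gap: $(\beta_\cT)_*$ is not injective in general (only when $\beta_\cT$ is), and disjoint supports plus sign-coherence alone do not obviously force the cancellation pattern in the fibres of $\beta_\cT$ needed to lift the identity. You could rescue this when $\beta_\cT$ is injective (e.g.\ in the exact Hom-finite case), but the general argument would need more care, and in any case it is unnecessary given route 2. Route 2 (the Palu-style argument applying $\Ext{1}{\cC}{\blank}{\blank}$ along exchange conflations and comparing Euler characteristics of quiver Grassmannians) is precisely how Proposition~\ref{p:chidentities} is proved, and is the sound choice.
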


From this, we may deduce the fundamental theorem for $\cX$-cluster characters.

\begin{theorem*}[Theorem~\ref{t:X-clucha}] Assume that $\cC$ has a cluster structure with respect to $\cT \ctsubcat \cC$.
Let $\cU$ be the cluster-tilting subcategory corresponding to a seed $s$ of the cluster algebra with initial exchange matrix $B_{\cT}$ under the bijection of Theorem~\ref{t:Aside-bijection}. Then the functions $\clucha[\cTU]{\Xside}(S)$, as $S$ runs over the simple $\stab{\cU}$-modules, are the $\Xside$-cluster variables of $s$ at mutable vertices.
\end{theorem*}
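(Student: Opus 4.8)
The plan is to argue by induction on the length of a mutation sequence from $\cT$ to $\cU$, using Theorem~\ref{t:X-clucha-mutation} for the inductive step and a direct computation for the base case, with Theorem~\ref{t:Aside-bijection} supplying the translation to the combinatorics of the cluster algebra $\mathscr{A}$ with initial exchange matrix $B_{\cT}$. Since $\cC$ has a cluster structure with respect to $\cT$, Theorem~\ref{t:Aside-bijection} gives a mutation-compatible bijection between the cluster-tilting subcategories in the mutation class of $\cT$ and the seeds of $\mathscr{A}$: $\cT$ corresponds to the initial seed, the indecomposables in $\indec\stab{\cU}$ label the mutable vertices of the seed corresponding to $\cU$, categorical mutation $\mut{U}{\cU}$ at $U\in\exch\cU$ corresponds to Fomin--Zelevinsky mutation at the vertex of $U$, and (by Theorem~\ref{t:exch-mat-mutation-clust-str}) $B_{\cU}$ is the exchange matrix of that seed. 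Having a genuine cluster structure moreover forces every cluster-tilting subcategory in the class to have no loops or $2$-cycles, so the hypotheses of Theorem~\ref{t:X-clucha-mutation} are met at every step of any mutation sequence within the class; this is what licenses the induction. As $\cU$ lies in the mutation class of $\cT$, fix a finite mutation sequence $\cT=\cU_0,\cU_1,\dots,\cU_n=\cU$, descending to a path $s_0,\dots,s_n=s$ of seeds.

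For the base case $\cU_0=\cT$: the objects $\modlift{M}{\cT}{\pm}$ lie in $\cT$, so $\Extfun{\cT}\modlift{M}{\cT}{\pm}=0$ by rigidity, whence $\Fpoly(\Extfun{\cT}\modlift{M}{\cT}{\pm})=\Fpoly(0)=1$; and $\stabindbar{\cT}{\cT}=\id$ on $\Kgp{\fd\stab{\cT}}$, the relevant (co)index maps being trivial when $\cU=\cT$. Hence $\clucha[\cT,\cT]{\Xside}(\simpmod{\cT}{U})=x^{[\simpmod{\cT}{U}]}$ for $U\in\indec\stab{\cT}$, which under the dictionary of \S\ref{ss:intro-cl-ensembles} (where $[\simpmod{\cT}{U}]$ plays the role of $d_U e_U\in\seedlat{N}^{\circ}$) is the initial $\cX$-cluster variable at the mutable vertex labelled by $U$. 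Letting $S$ range over the simple $\stab{\cU}$-modules is the same as letting $U$ range over $\indec\stab{\cU}$, hence over the mutable vertices, so the restriction in the statement to mutable vertices is automatic.

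For the inductive step, assume the claim for $\cU_{k-1}$, so the $\clucha[\cT,\cU_{k-1}]{\Xside}(\simpmod{\cU_{k-1}}{W})$, $W\in\indec\stab{\cU_{k-1}}$, are the $\cX$-cluster variables of $s_{k-1}$ at its mutable vertices, and write $\cU_{k}=\mut{U}{\cU_{k-1}}$ for the object mutated at this step. Theorem~\ref{t:X-clucha-mutation}, applied to this mutation, expresses the $\clucha[\cT,\cU_{k}]{\Xside}(\simpmod{\cU_{k}}{V})$ in terms of the $\clucha[\cT,\cU_{k-1}]{\Xside}(\simpmod{\cU_{k-1}}{W})$ by exactly the $\cX$-seed (equivalently $Y$-seed) mutation rule at the vertex of $U$, with respect to $B_{\cU_{k-1}}$. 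Since $B_{\cU_{k-1}}$ is the exchange matrix of $s_{k-1}$ and $\cU_{k}=\mut{U}{\cU_{k-1}}$ corresponds to mutating $s_{k-1}$ at the vertex of $U$ to yield $s_{k}$ (both by Theorem~\ref{t:Aside-bijection} and Theorem~\ref{t:exch-mat-mutation-clust-str}), the induction hypothesis identifies the $\clucha[\cT,\cU_{k}]{\Xside}(\simpmod{\cU_{k}}{V})$ with the $\cX$-cluster variables of $s_{k}$ at its mutable vertices (recall these are computed by iterating $\cX$-seed mutation along any path from the initial seed, such as the one we have fixed). Taking $k=n$ finishes the proof.

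Given Theorems~\ref{t:X-clucha-mutation} and~\ref{t:Aside-bijection}, this is a clean induction, and the only genuinely delicate point is the matching of conventions: one must check that the mutation rule in Theorem~\ref{t:X-clucha-mutation}, with the signs carried by the entries $\exchmatentry{U,V}^{\cU}$ of $B_{\cU}$, is literally the normalisation of $\cX$-seed mutation being used (equivalently Fock--Goncharov $\cX$-, or Fomin--Zelevinsky $Y$-mutation), and that the frozen directions---the projective summands, i.e.\ $\indec\cT\setminus\indec\stab{\cT}$, corresponding to the non-mutable vertices---are suppressed consistently on both the categorical and combinatorial sides. Once the conventions fixed in \S\ref{ss:intro-cl-ensembles} are in force this is routine bookkeeping, but it is where an error would most easily slip in.
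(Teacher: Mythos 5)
Your proposal is correct and follows essentially the same route as the paper's proof: the same base-case computation (at $\cU=\cT$, both $\Fpoly$-factors are $1$ since $\Extfun{\cT}$ kills objects of $\cT$, and $\stabindbar{\cT}{\cT}$ is the identity, giving $\clucha[\cT,\cT]{\Xside}(\simpmod{\cT}{U})=x^{[\simpmod{\cT}{U}]}$), followed by induction along a mutation path using Theorem~\ref{t:X-clucha-mutation}, with the cluster structure hypothesis guaranteeing the absence of loops and $2$-cycles at each step so that the mutation theorem applies. Your added remarks about the dictionary, Theorem~\ref{t:exch-mat-mutation-clust-str}, and the conventions are sound expansions of what the paper leaves implicit, but do not change the argument.
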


We also obtain a new relationship between the $\cA$- and $\cX$-cluster characters:

\begin{proposition*}[Proposition~\ref{p:cluchaX-ratio}]
Let $M\in\fd \stab{\cU}$. Then
\[(\beta_\cT)_*\clucha[\cTU]{\Xside}(M)=\frac{\clucha[\cT]{\Aside}(\modlift{M}{\cU}{+})}{\clucha[\cT]{\Aside}(\modlift{M}{\cU}{-})}.\]
\end{proposition*}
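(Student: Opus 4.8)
The plan is to unwind the definitions of both sides and match factors. Starting from the right-hand side, we apply the defining formula \eqref{eq:clucha-A-alt} for the \(\cA\)-cluster character to the objects \(\modlift{M}{\cU}{+}\) and \(\modlift{M}{\cU}{-}\):
\[
\clucha[\cT]{\Aside}(\modlift{M}{\cU}{\pm})=a^{\ind{\cC}{\cT}[\modlift{M}{\cU}{\pm}]}(\beta_{\cT})_{*}\Fpoly(\Extfun{\cT}\modlift{M}{\cU}{\pm}).
\]
Taking the ratio, the \(\Fpoly\)-factors contribute \((\beta_\cT)_*\bigl(\Fpoly(\Extfun{\cT}\modlift{M}{\cU}{+})\Fpoly(\Extfun{\cT}\modlift{M}{\cU}{-})^{-1}\bigr)\), since \((\beta_\cT)_*\) is an algebra homomorphism (so it is multiplicative and sends inverses to inverses). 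The tropical prefactors contribute \(a^{\ind{\cC}{\cT}[\modlift{M}{\cU}{+}]-\ind{\cC}{\cT}[\modlift{M}{\cU}{-}]}\). So the task reduces to showing this monomial equals \((\beta_\cT)_*\bigl(x^{\stabindbar{\cU}{\cT}[M]}\bigr)=a^{\beta_\cT\,\stabindbar{\cU}{\cT}[M]}\), after which comparing with \eqref{eq:clucha-X-alt} and applying \((\beta_\cT)_*\) to it will finish the argument.

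The key step is therefore the identity
\[
\ind{\cC}{\cT}[\modlift{M}{\cU}{+}]-\ind{\cC}{\cT}[\modlift{M}{\cU}{-}]=\beta_\cT\,\stabindbar{\cU}{\cT}[M]\in\Kgp{\cT}.
\]
Here I would use that \(\ind{\cC}{\cT}\) is additive (it comes from \(\cT\)-approximations, hence passes to a map on Grothendieck groups, or at worst is additive on direct sums which suffices since \(\modlift{M}{\cU}{\pm}\) are genuine objects), so the left-hand side equals \(\ind{\cC}{\cT}\) applied to the class \([\modlift{M}{\cU}{+}]-[\modlift{M}{\cU}{-}]=\beta_\cU[M]\in\Kgp{\cU}\), by the defining property of the objects \(\modlift{M}{\cU}{\pm}\). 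The restriction of \(\ind{\cC}{\cT}\) to \(\Kgp{\cU}\subseteq\Kgp{\cC}\) is exactly the map \(\ind{\cU}{\cT}\colon\Kgp{\cU}\isoto\Kgp{\cT}\) (this compatibility of the index with inclusions of cluster-tilting subcategories is part of the setup in Definitions~\ref{d:index}). Hence the left-hand side is \(\ind{\cU}{\cT}\beta_\cU[M]\), and the commutative square of the cited Theorem~\ref{t:exch-isos} gives \(\ind{\cU}{\cT}\circ\beta_\cU=\beta_\cT\circ\stabindbar{\cT}{\cU}\)—wait, one must be careful about the direction: the square reads \(\ind{\cT}{\cU}\circ\beta_\cT=\beta_\cU\circ\stabindbar{\cT}{\cU}\), so applying it with the roles of \(\cT\) and \(\cU\) swapped yields \(\ind{\cU}{\cT}\circ\beta_\cU=\beta_\cT\circ\stabindbar{\cU}{\cT}\), which is precisely what is needed.

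The main obstacle I anticipate is bookkeeping rather than anything deep: one must check that \(\ind{\cC}{\cT}\) restricted to \(\Kgp{\cU}\) really is \(\ind{\cU}{\cT}\) and that the tropical part \(a^{\ind{\cC}{\cT}[X]}\) in \eqref{eq:clucha-A-alt} behaves additively in the relevant sense (it does, on direct sums, by \eqref{eq:clucha-A-split}, but the identity I want is about a difference of classes, so I should phrase it at the level of Grothendieck groups where \(\ind{\cU}{\cT}\) is genuinely linear). In the infinite-rank setting there is the usual caveat about replacing \(\fd\) by \(\lfd\) or using \(\Kgpnum\) as flagged in the excerpt's footnote, and the objects \(\modlift{M}{\cU}{\pm}\) being well-defined only up to common summands—but as noted there, this ambiguity cancels in the ratio and does not affect the Grothendieck-group classes, so the argument is unaffected. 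Once the monomial identity is in place, the proof is a one-line comparison: apply \((\beta_\cT)_*\) to \eqref{eq:clucha-X-alt} and read off the result.
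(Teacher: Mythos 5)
Your proof is correct and follows essentially the same route as the paper: unwind \eqref{eq:clucha-A-alt} and \eqref{eq:clucha-X-alt}, use that \(\modlift{M}{\cU}{\pm}\in\cU\) so \(\ind{\cC}{\cT}\) on these objects is \(\ind{\cU}{\cT}\), reduce to the Grothendieck-group identity \(\ind{\cU}{\cT}\beta_\cU[M]=\beta_\cT\stabindbar{\cU}{\cT}[M]\), and close with the commutative square from Theorem~\ref{t:exch-isos} together with \eqref{eq:TM-pm-def}. The only difference is cosmetic: you work from the right-hand side and take the ratio explicitly, whereas the paper states the reduction directly.
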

This may be interpreted as showing that $(\beta_{\cT})_*$ is closely related to the change of variables from $y$ to $\hat{y}$ appearing in the original work of Fomin--Zelevinsky (\cite[Eq.~3.7]{FZ-CA4}).

\subsection{Quantisation}\label{ss:quantisation-intro}

Following the success of the use of categorification to answer questions in cluster theory, it is natural to ask whether it can be applied to understand \emph{quantum cluster algebras}.  These were introduced by Berenstein and Zelevinsky \cite{BZ-QCA} and give a noncommutative version of classical, commutative cluster algebras.
The motivation for doing so comes from \emph{noncommutative geometry}, which encompasses a wide variety of objects and techniques to study analogues of important classes of varieties such as Grassmannians and partial flag varieties.
Ultimately, the origin for these ideas comes from mathematical physics but very many questions of interest to mathematicians have arisen from it, notably the theory of quantum groups (quantised enveloping algebras and quantised coordinate rings) and their canonical bases; see for example \cite{LusztigBook}.
Indeed, the whole theory of cluster algebras came about---in large part---from a desire to understand better these canonical bases.

Quantum cluster algebras are unusual in the pantheon of noncommutative and quantum algebras, in that typically these are obtained by some deformation process that can lead to wildly different algebraic properties.
In favourable situations, some properties do persist, e.g.\ homological dimensions; there is the notion of a flat deformation, for example, that captures when a noncommutative analogue is not too far from the original algebra. When the noncommutative version is controlled by a deformation parameter $q$, one can also usually recover the unquantised algebra by some sort of classical limit, but this can be technically complex.
Even finding suitable noncommutative or quantum candidates is hard.

However, quantum cluster algebras are \emph{very flat deformations}: we replace the tori used in building the cluster algebra with quantum tori (Laurent polynomial rings in which the generators quasi-commute rather than commute, i.e.\ satisfy relations of the form $x_{i}x_{j}=q^{\lambda_{ij}}x_{j}x_{i}$) and because the Laurent phenomenon persists in the quantum setting, one can show that the cluster combinatorics is identical \cite{BZ-QCA} and, in many cases, simply setting $q=1$ recovers the original cluster algebra \cite{GLS-quantum-specialization}.
Conversely, if you are given a cluster algebra, finding a quantisation of it is a straightforward linear algebra problem; even the associated moduli problem of classifying all quantisations is controlled by a vector space \cite{GellertLampe}.
We also note that there is an intimate link between quantum cluster algebras and Poisson geometry: in fact, quantum cluster data is exactly the data of a log-canonical Poisson structure \cite{GSV-Book}.

One might think that this makes the quantum version of cluster theory uninteresting.
However, the reverse is true, because work of many authors (\cite{GLS-QuantumPFV,GradedQCAs,GoodearlYakimovQCA,GoodearlYakimovDoubleBruhat} and more) have shown that almost all known quantisations of varieties arising in Lie theory have quantum cluster algebra structures---Grassmannians and partial flag varieties, their Schubert cells, double Bruhat cells and more.  Consequently, we have the significant advantage that if we want to study these quantisations, for many of their properties, especially those encoded in the cluster combinatorics, we can reduce to the commutative case.

It is then natural to ask: given a categorification of a (commutative) cluster algebra, when is there a quantum categorification of an associated quantum cluster algebra?  This is another important question that we are able to address in this work, as detailed below, but before explaining this, we provide some context.  

The work of Geiß--Leclerc--Schröer \cite{GLS-QuantumPFV} showed that certain quantum coordinate rings associated to unipotent subgroups of Kac--Moody groups have quantum cluster algebra structures.
They did so by examining their categorification of the cluster structure they had identified on the corresponding commutative coordinate ring and observing two things: firstly, the cluster combinatorics from the category ought to remain the same (as we indicated above) and secondly, there is homological information in the category that encodes the quasi-commutation of the corresponding variables.
Later work by Jensen--King--Su \cite{JKS,JKS-Quantum-Gr} showed that this phenomenon is not limited to the specific categories studied by Geiß--Leclerc--Schröer.
In their work, Jensen--King--Su see the same pattern: their categorification of the Grassmannian cluster category extends to one of the quantum Grassmannian by using the same category and finding the quasi-commutation data there too.

In this work, we introduce a definition of a \emph{quantum cluster category} and explain how it too is tied up with the duality of the $\cA$- and $\cX$-sides as above.
We then use our technical results to establish the basic theory of quantum cluster categories, and we conclude by showing that a very large class of cluster categories admit quantisations, significantly expanding the work of Geiß--Leclerc--Schröer.
We are able to lay the foundations for a theory of quantum cluster characters, but a construction of these is currently out of reach due to profound difficulties in the algebraic geometry (i.e.\ the lack of a suitable quantum Euler characteristic for singular quiver Grassmannians). 

Now, we describe our construction.
The starting point is the following observation: to define a quantum cluster algebra, one chooses a skew-symmetric integer matrix $L$ that is required to be compatible with the exchange matrix $B$ by satisfying the equation $\adj{B}L=J$ where $J$ has diagonal principal part with positive integer entries and is zero otherwise.
Here $\adj{B}$ denotes the transpose of the matrix $B$.
The appearance of both skew-symmetry and the transposition in the compatibility condition is suggestive that we should look for a skew-symmetric form $\pform{\blank}{\blank}{\cT}$, akin to $\sform{\blank}{\blank}{\cT}$, and an associated map $\lambda_{\cT}$ such that there is a relationship between $\lambda_{\cT}$ and $\adj{\beta}_{\cT}$.
Due to the non-uniqueness in choosing quantisations, we do not expect a canonical map $\lambda_{\cT}$ to emerge from the categorical structure in general.

The definition we make is as follows (Definition~\ref{d:quantum-hom}).
We first define a \emph{quantum datum} for $\cT$ to be a map	$\lambda_{\cT}\colon \Kgp{\cT} \to \dual{\Kgp{\cT}}$ such that we have $\adj{\lambda_{\cT}}=-\lambda_{\cT}$ (skew-symmetry) and $\adj{\lambda_{\cT}}\circ\beta_{\cT}=2(\sdual{\cT}\circ\sinc{\cT})$ (compatibility).
Here, $\sinc{\cT}$ is the map induced by the inclusion of categories $\fd \stab{\cT}\subseteq \fd \cT$.
We choose the latter equation over its adjoint because it reveals an important feature: such a $\lambda_{\cT}$ can only exist if $\beta_{\cT}$ is injective, since $\sdual{\cT}$ and $\sinc{\cT}$ are.

The corresponding form is defined (Definition~\ref{d:p-form}) to be $\pform{\blank}{\blank}{\cT}\colon \Kgp{\cT}\cross \Kgp{\cT}\to \integ$,
\[ \pform{[T]}{[U]}{\cT}\defeq \evform{[T]}{\lambda_{\cT}[U]}. \]
The skew-symmetry of $\lambda_{\cT}$ immediately gives skew-symmetry of $\pform{\blank}{\blank}{\cT}$ (Lemma~\ref{l:p-form-skew-symmetric}).
Since $\lambda_{\cT}$ and $\pform{\blank}{\blank}{\cT}$ uniquely determine each other, we will also refer to the form as a quantum datum for $\cT$.

Given a quantum datum $\pform{\blank}{\blank}{\cT}$ for $\cT \ctsubcat \cC$, we may then transport it to a form $\mu_{\cT}^{\cU}\pform{\blank}{\blank}{\cT}\colon\Kgp{\cU}\cross \Kgp{\cU}\to \integ$ for a different cluster-tilting subcategory $\cU$  (Definition~\ref{d:mut-of-p-form}) by defining
\[ \mu_{\cT}^{\cU}\pform{\blank}{\blank}{\cT}=\pform{\ind{\cU}{\cT}(\blank)}{\ind{\cU}{\cT}(\blank)}{\cT}. \]
A choice of quantum datum $\pform{\blank}{\blank}{\cT}$ for every $\cT\ctsubcat\cC$ such that
\[\mu_{\cT}^{\cU}\pform{\blank}{\blank}{\cT}=\pform{\blank}{\blank}{\cU}\]
whenever $\cTU\ctsubcat\cC$ is called (Definition~\ref{d:quant-struct}) a \emph{quantum structure} for $\cC$.
In particular, a quantum structure is uniquely determined by any one of its quantum data.
Our main result is that this determination is `free', in the sense that there are no additional constraints needed on this initial choice for the transported forms $\mu_{\cT}^{\cU}\pform{\blank}{\blank}{\cT}$ to be quantum data, fitting together into a quantum structure.

The strategy for this is as follows.
Translating back from the form to the map, we obtain $\mu_{\cT}^{\cU}(\lambda_{\cT})[U]=\mu_{\cT}^{\cU}\pform{\blank}{[U]}{\cT}$ for each $\cU\ctsubcat \cC$, given one initial choice $\lambda_{\cT}$.
This enables us to give the analogous commutative diagrams (Proposition~\ref{p:lambda-square}) relating $\lambda_{\cT}$ and $\mu_{\cT}^{\cU}(\lambda_{\cT})$ as we had for $\beta_{\cT}$ and $\beta_{\cU}$ above:
\[\begin{tikzcd}
\dual{\Kgp{\cT}} \arrow{d}[swap]{\dual{(\coind{\cU}{\cT})}}& \Kgp{\cT} \arrow{l}[swap]{\lambda_{\cT}} \arrow{d}{\ind{\cT}{\cU}} \\
\dual{\Kgp{\cU}}   & \Kgp{\cU}, \arrow{l}[swap]{\mu_{\cT}^{\cU}(\lambda_{\cT})}
\end{tikzcd}\qquad 
\begin{tikzcd}
\dual{\Kgp{\cT}} \arrow{d}[swap]{\dual{(\ind{\cU}{\cT})}}& \Kgp{\cT} \arrow{l}[swap]{\lambda_{\cT}} \arrow{d}{\coind{\cT}{\cU}} \\
\dual{\Kgp{\cU}}   & \Kgp{\cU}. \arrow{l}[swap]{\mu_{\cT}^{\cU}(\lambda_{\cT})}
\end{tikzcd}\]
From this, we deduce that $\lambda_{\cU}\defeq \mu_{\cT}^{\cU}(\lambda_{\cT})$ is indeed a valid quantum datum for $\cU$ (Proposition~\ref{p:mut-of-lambda-ss-compat}), i.e.\ that $\lambda_{\cU}$ is also skew-symmetric and compatible with $\beta_{\cU}$.
We show that this transport of quantum data is transitive (Proposition~\ref{p:mut-of-p-form-transitive}), which is non-trivial since $\ind{}{}$ is \emph{not} transitive, and we conclude that the family of maps $\lambda_{\cU}$ obtained by transporting any initial quantum datum $\lambda_{\cT}$ in this way do form a quantum structure on $\cC$ as claimed (Corollary~\ref{c:mut-of-lambda-are-q-structure}).

While the above construction of a quantum structure from an initial choice of data did not involve mutation, it is nonetheless true that,  for $\cU\ctsubcat\cC$ and $U\in\exch{\cU}$, the quantum data $\lambda_{\cU}$ and $\lambda_{\mut{U}{\cU}}$ are related by mutation in the sense of Berenstein--Zelevinsky, as we prove in Proposition~\ref{p:BZ-lambda-mut}.

Our final result is the aforementioned extension of the construction of Geiß--Leclerc--Schröer.
Namely, assume $\cE$ is a Hom-finite exact cluster category.
For each $\cT\ctsubcat\cC$ and each $T_1,T_2\in\cT$, define
\[\pform{[T_1]}{[T_2]}{\cT}=\dim\Hom{\cE}{T_1}{T_2}-\dim\Hom{\cE}{T_2}{T_1}.\]
Then the forms $\pform{\blank}{\blank}{\cT}$ defined in this way are a quantum structure on $\cE$ (Theorem~\ref{t:canonical-p-form}).  

Note in particular that the claim is not only that the Hom-difference formula define a quantum datum for each cluster-tilting subcategory, but that making this choice for \emph{all} cluster-tilting subcategories is compatible with mutation (and indeed the more general transport of quantum data above).
Put differently, if we use the Hom-difference formula above to define a single quantum datum $\pform{\blank}{\blank}{\cT}$, for a particular cluster-tilting subcategory $\cT$, then the transported quantum data $\mu_{\cT}^{\cU}\pform{\blank}{\blank}{\cT}$ are given by the same Hom-difference formula, which was not a priori clear, and in particular, the induced quantum structure is independent of the choice of $\cT$.

We also briefly mention an alternative approach to quantum categorification, which is different from the one presented here.
Namely, in \cite{Hernandez-Leclerc-first-mon-cat}, Hernandez and Leclerc showed that by examining the Grothendieck \emph{ring} of certain categories of modules for quantum affine algebras, one can obtain \emph{monoidal} categorifications of cluster algebras, in which the multiplication of cluster variables corresponds to a tensor product operation in the category.
In this paper, we are only concerned with \emph{additive} categorifications: one observation on the difference between the two settings is that we see the tropical story very clearly here, whereas the monoidal setting is able to handle other types of non-tropical questions.
In particular, monoidal categorification has enabled the resolution of a number of important conjectures originating in the work of Lusztig and Kashiwara on canonical bases, in work of Kashiwara--Kim--Oh--Park \cite{KKOP} and subsequent work by a number of authors; see also \cite{Qin-duan-can-basis}.

\subsubsection*{Acknowledgements}

We owe a significant debt to many colleagues and their institutions who have supported, hosted and educated us during the gestation of this paper.  The following list is inevitably not exhaustive, but we would like to mention in particular Xiaofa Chen, Mikhail Gorsky, Sira Gratz, Norihiro Hanihara, Bernhard Keller, Sondre Kvamme, Tim Magee, Lang Mou, Yann Palu, David Pauksztello, Pierre-Guy Plamondon, Konstanze Rietsch, Antoine de Saint Germain and Michael Wemyss.

Over the course of this project, the second author was supported by a fellowship from the Max-Planck-Gesellschaft, the EPSRC Postdoctoral Fellowship EP/T001771/2, and Michael Wemyss' ERC Consolidator Grant 101001227 (MMiMMa).
Parts of this work were done at the \emph{Cluster algebras and representation theory} programme in 2021 at the Isaac Newton Institute for Mathematical Sciences (supported by EPSRC grant no EP/R014604/1).
We also acknowledge financial support from Lancaster University and Universität Stuttgart.

\sectionbreak
\section{Cluster categories}

Throughout the paper, we assume that the ground field $\bK$ is perfect (giving us access to the results of Section~\ref{s:approximations}), but do not assume that this field is algebraically closed unless otherwise stated.

The goal of this first section is to define the class of categories we will study, along with some additional properties one can impose for better behaviour, and to give some of the most immediate and general consequences of these definitions.

\subsection{Extriangulated categories}

In the interests of working in a wide level of generality, covering recent examples constructed by Yilin Wu \cite{Wu}, our cluster categories will be extriangulated in the sense of \cite{NakaokaPalu}.
However, we do not recall the full definition of an extriangulated category here, since we will only use particular examples with a simpler description in terms of exact categories (see Definition~\ref{d:algebraic} below).
What is important for us is that a $\bK$-linear extriangulated category $\cC$ comes with a functor $\Ext{1}{\cC}{\blank}{\blank}\colon\cC\times\op{\cC}\to\Mod{\bK}$ such that each element of $\Ext{1}{\cC}{X}{Y}$ may be realised as a kernel--cokernel pair
\[\begin{tikzcd}
Y\arrow[infl]{r}{i}&E\arrow[defl]{r}{p}&X,
\end{tikzcd}\]
up to isomorphisms of such pairs extending the identity maps on $X$ and $Y$.
A kernel--cokernel pair appearing in this way is called a \emph{conflation}, the map $i$ is called an \emph{inflation} and the map $\pi$ is called a \emph{deflation}.
The notation
\[\begin{tikzcd}
Y\arrow[infl]{r}{i}&E\arrow[defl]{r}{p}&X\arrow[confl]{r}{\delta}&\phantom{}
\end{tikzcd}\]
indicates that a conflation is realised by $\delta\in\Ext{1}{\cC}{X}{Y}$ (cf.~Example~\ref{eg:extri}\ref{eg:extri-tri} below).

\begin{example}\label{eg:extri} {\ }
\begin{enumerate}
\item\label{eg:extri-exact}
If $\cE$ is an exact category, it is naturally extriangulated with $\Ext{1}{\cE}{X}{Y}$ defined in the usual way.
The conflations, inflations and deflations of this extriangulated category are precisely those of the exact category $\cE$ (which are sometimes \cite{Buehler} referred to as \emph{admissible} short exact sequences, monomorphisms and epimorphisms, respectively).

\item\label{eg:extri-tri}If $\cC$ is a triangulated category with suspension functor $\Sigma$, it is extriangulated with $\Ext{1}{\cC}{X}{Y}=\Hom{\cC}{X}{\Sigma Y}$. Each $\delta\in\Ext{1}{\cC}{X}{Y}$ may be completed to a distinguished triangle
\[\begin{tikzcd}
Y\arrow{r}&E\arrow{r}&X\arrow{r}{\delta}&\Sigma Y,
\end{tikzcd}\]
and the distinguished triangles (or, more accurately, their first three terms) are the conflations in $\cC$. Every morphism of $\cC$ is both an inflation and a deflation.

\item\label{eg:extri-ext-closed}
Let $\cC$ be an extriangulated category, and $\cD\subset\cC$ a full subcategory. We say $\cD$ is \emph{extension-closed} if for any conflation $Y\infl E\defl X\confl$ of $\cC$ with $X,Y\in\cD$, the middle term $E$ also lies in $\cD$. In this case $\cD$ becomes extriangulated in its own right by defining $\Ext{1}{\cD}{\blank}{\blank}=\Ext{1}{\cC}{\blank}{\blank}|_{\op{\cD}\times\cD}$, the extension-closure condition ensuring that the realisations of elements of $\Ext{1}{\cD}{X}{Y}$ are kernel--cokernel pairs in $\cD$. If the original category $\cC$ is exact, then so is $\cD$, but if $\cC$ is triangulated then $\cD$ need not be exact or triangulated in general.
\end{enumerate}
\end{example}

As Example~\ref{eg:extri} suggests, many of the techniques for working with extriangulated categories are analogous to those for working with exact or triangulated categories. For example, we will use the following foundational result freely throughout the paper.

\begin{proposition}[{\cite[Cor.~3.12]{NakaokaPalu}}]
\label{p:extri-les}
Let $X\infl Y\defl Z\confl$ be a conflation in an extriangulated category $\cC$, and let $T\in\cC$. Then there are exact sequences
\[
\begin{tikzcd}[column sep=10pt,row sep=0pt,ampersand replacement=\&,font=\small]
\Hom{\cC}{T}{X}\arrow{r}\&\Hom{\cC}{T}{Y}\arrow{r}\&\Hom{\cC}{T}{Z}\arrow{r}\&\Ext{1}{\cC}{T}{X}\arrow{r}\&\Ext{1}{\cC}{T}{Y}\arrow{r}\&\Ext{1}{\cC}{T}{Z},\\
\Hom{\cC}{X}{T}\arrow{r}\&\Hom{\cC}{Y}{T}\arrow{r}\&\Hom{\cC}{Z}{T}\arrow{r}\&\Ext{1}{\cC}{X}{T}\arrow{r}\&\Ext{1}{\cC}{Y}{T}\arrow{r}\&\Ext{1}{\cC}{Z}{T}.
\end{tikzcd}\]
If $\cC$ is exact, then the left-most map in each sequence is injective.
\end{proposition}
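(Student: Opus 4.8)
The plan is to prove this as in its source, \cite[Cor.~3.12]{NakaokaPalu}, by unwinding the axioms of an extriangulated category. First I would observe that the two sequences are interchanged under passage to \(\op{\cC}\), which is again extriangulated, with \(\Ext{1}{\op{\cC}}{\blank}{\blank}\) obtained from \(\Ext{1}{\cC}{\blank}{\blank}\) by swapping the two arguments and with the given conflation \(X\infl Y\defl Z\) becoming a conflation \(Z\infl Y\defl X\) there; so it suffices to establish the first (covariant) sequence, the second following formally. Write \(\delta\in\Ext{1}{\cC}{Z}{X}\) for the class realising \(X\xrightarrow{i}Y\xrightarrow{p}Z\). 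The five maps of the first sequence are post-composition by \(i\) and by \(p\), the functorial maps \(\Ext{1}{\cC}{T}{i}\) and \(\Ext{1}{\cC}{T}{p}\), and the connecting map \(\Hom{\cC}{T}{Z}\to\Ext{1}{\cC}{T}{X}\) sending \(a\) to the pullback \(a^{*}\delta\).

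Next I would dispatch the ``composite is zero'' half of each exactness statement: \(p_{*}i_{*}=(pi)_{*}=0\) and \(\Ext{1}{\cC}{T}{p}\circ\Ext{1}{\cC}{T}{i}=0\) because \(pi=0\) in any conflation, and \(i_{*}(a^{*}\delta)=a^{*}(i_{*}\delta)=0\), \(b^{*}(p^{*}\delta)=0\) because pushing \(\delta\) out along its own inflation, or pulling it back along its own deflation, gives a split conflation, so \(i_{*}\delta=0\) and \(p^{*}\delta=0\) (a standard consequence of the axioms). There then remain five exactness assertions. At \(\Hom{\cC}{T}{Y}\) one uses that, as in \cite{NakaokaPalu}, a conflation induces an exact sequence of covariant \(\Hom\)-functors; at \(\Hom{\cC}{T}{Z}\) one uses that \(a^{*}\delta=0\) forces the pulled-back conflation \(X\to E\to T\) to split, so that a splitting \(T\to E\) of the deflation \(E\to T\), composed with the comparison map \(E\to Y\), realises \(a\) in the image of \(p_{*}\). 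The three \(\Ext{1}\)-positions are the substance: given a class annihilated by \(i_{*}\) (respectively \(p_{*}\)), realise it by a conflation, push that conflation out along \(i\) (respectively \(p\)) to obtain a split conflation, and invoke the axiom (ET4) of \cite{NakaokaPalu} (respectively its opposite) on the resulting data to produce a morphism of conflations exhibiting the class as \(a^{*}\delta\) for a suitable \(a\colon T\to Z\) (respectively as \(i_{*}\) of a class in \(\Ext{1}{\cC}{T}{X}\)).

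The main obstacle is exactly this exactness at the \(\Ext{1}\)-terms: it is the one point where the strong axiom (ET4)---the extriangulated substitute for the octahedral axiom and the \(3\times3\)-lemma---and its opposite are genuinely needed, and where the attendant diagram chase must be carried out; everything else is bookkeeping with bifunctoriality and the split identities \(i_{*}\delta=0\), \(p^{*}\delta=0\). For the categories used in this paper (Example~\ref{eg:extri}) one may shortcut the general argument: when \(\cC\) is exact or triangulated these are the classical long exact \(\Ext\)-sequences---in the triangulated case obtained by rotating the triangle that completes \(\delta\) and applying \(\Hom{\cC}{T}{\blank}\) or \(\Hom{\cC}{\blank}{T}\)---and when \(\cC\) is an extension-closed subcategory of such a category the groups occurring coincide with those of the ambient category, since \(\Ext{1}{\cC}{\blank}{\blank}\) is a restriction and extension-closure keeps the middle terms of realising conflations inside \(\cC\); so the statement descends. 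Finally, for the last assertion: if \(\cC\) is exact then the inflation \(i\) is an admissible monomorphism, hence the kernel of \(p\), so \(i_{*}\) is injective on \(\Hom\)-spaces; dually \(p\) is an admissible epimorphism, hence the cokernel of \(i\), so \(p^{*}\) is injective. These are the left-most maps of the two sequences.
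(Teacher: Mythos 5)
The paper does not prove this statement itself; it is cited directly from \cite[Cor.~3.12]{NakaokaPalu}, so there is no internal argument of the paper to compare yours against. What you have written is a reconstruction of the cited proof, and it is substantially correct: the reduction of the second sequence to the first via \(\op{\cC}\), the zeroing of composites via \(p_{*}i_{*}=(pi)_{*}=0\) together with \(i_{*}\delta=0\) and \(p^{*}\delta=0\), the splitting argument for exactness at \(\Hom{\cC}{T}{Z}\), and the appeal to (ET4) and its dual at the \(\Ext^1\)-terms are exactly the ingredients of the Nakaoka--Palu argument, and your deduction of the final injectivity claim from admissible monos and epis being kernels and cokernels in an exact category is correct.

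Two small remarks. Your bookkeeping is off by one: a six-term complex has four interior positions of exactness, not five, and of these only two land in the \(\Ext^1\)-range, not three; this does not affect the substance of the argument. Also note that you (correctly) read the second sequence in its natural contravariant orientation \(\Hom{\cC}{Z}{T}\to\Hom{\cC}{Y}{T}\to\Hom{\cC}{X}{T}\to\Ext{1}{\cC}{Z}{T}\to\Ext{1}{\cC}{Y}{T}\to\Ext{1}{\cC}{X}{T}\), identifying the left-most map as \(p^{*}\); the sequence as displayed in the paper transposes the roles of \(X\) and \(Z\), which is a transcription slip, since the arrows as printed have no natural interpretation and the final injectivity claim only makes sense in your reading.
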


\begin{definition}
Let $\cC$ be an extriangulated category. The \emph{Grothendieck group} $\Kgp{\cC}$ of $\cC$ is the free abelian group on generators $[X]$ indexed by objects $X\in\cC$, modulo relations $[X]-[Y]+[Z]$ for each conflation $X\infl Y\defl Z\confl$ in $\cC$.
\end{definition}

\begin{remark}
Since all split exact sequences in an extriangulated category $\cC$ are conflations, we have $[X\oplus Y]=[X]+[Y]\in\Kgp{\cC}$ for any $X,Y\in\cC$.
\end{remark}

\begin{definition}
\label{d:Frob-extri}
Let $\cC$ be an extriangulated category. An object $P\in\cC$ is \emph{projective} if $\Ext{1}{\cC}{P}{\blank}=0$ and \emph{injective} if $\Ext{1}{\cC}{\blank}{P}=0$. We say that $\cC$ has \emph{enough projectives} if for every $X\in\cC$ there exists a deflation $P\defl X$ with $P$ projective, and that $\cC$ has \emph{enough injectives} if for every $X\in\cC$ there exists an inflation $X\infl Q$ with $Q$ injective. We call $\cC$ \emph{Frobenius} if it has enough projectives and injectives, and the projective and injective objects coincide.
\end{definition}

\begin{example}
Definition~\ref{d:Frob-extri} specialises to the usual definition of a Frobenius exact category. Any triangulated category $\cC$ is Frobenius as an extriangulated category, because the only projective or injective object is $0$, and every morphism is both an inflation and a deflation; in particular, $X\infl 0$ is an inflation and $0\defl X$ a deflation for any $X\in\cC$.
\end{example}

\begin{definition}
\label{d:partial-stab}
Let $\cC$ be an extriangulated category, and let $\cP$ be a full and additively closed subcategory of projective-injective objects of $\cC$. Then the \emph{partial stabilisation} of $\cC$ by $\cP$ is the additive quotient category $\cC/\cP$, which is naturally extriangulated with extension groups
\[\Ext{1}{\cC/\cP}{X}{Y}\defeq\Ext{1}{\cC}{X}{Y},\]
by a special case of \cite[Prop.~3.30]{NakaokaPalu} (see also \cite[Thm.~2.8]{FGPPP}).
\end{definition}

\begin{remark}
\label{r:partial-stab-Frob}
The conflations, inflations and deflations of $\cC/\cP$ are the images of those of $\cC$ under the projection functor $\cC\to\cC/\cP$. Since $\cC/\cP$ has the same projective and injective objects as $\cC$, the partial stabilisation $\cC/\cP$ is Frobenius if and only if $\cC$ is.
\end{remark}

When $\cC$ is a Frobenius extriangulated category, one can take $\cP$ to be the full subcategory of all projective-injective objects in $\cC$. In this case we write $\underline{\cC}=\cC/\cP$ and call this the \emph{stable category} of $\cC$. The following result generalises Heller \cite{Heller} (see also Happel \cite[Th.~I.2.6]{Happelbook}) for the case that $\cC$ is exact.

\begin{theorem}[{\cite[Cor.~7.4, Rem.~7.5]{NakaokaPalu}}]
\label{t:extri-stabcat}
If $\cC$ is a Frobenius extriangulated category, then the stable category $\stab{\cC}$ is naturally triangulated. Writing $\Sigma$ for the suspension functor of $\stab{\cC}$, we have
\[\Ext{1}{\cC}{X}{Y}=\stabHom{\cC}{X}{\Sigma Y}.\]
\end{theorem}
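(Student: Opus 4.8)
This result is the extriangulated counterpart of Happel's theorem that the stable category of a Frobenius exact category carries a triangulated structure \cite{Happelbook}, and the plan is to follow Happel's argument essentially verbatim, with conflations in the role of admissible short exact sequences and the pushout--pullback calculus for extriangulated categories from \cite{NakaokaPalu} replacing the corresponding diagram lemmas. First I would construct the suspension functor. Since \(\cC\) has enough injectives and its injectives coincide with its projectives, for every \(X\in\cC\) I fix a conflation \(X\infl I_X\defl\Sigma X\) with \(I_X\in\cP\) projective-injective. The initial lemma to establish --- by the usual lifting argument --- is that \(\Sigma X\) depends on this choice only up to canonical isomorphism in \(\stab\cC\), and that \(\Sigma\) is a well-defined endofunctor of \(\stab\cC\): injectivity of \(I_X\) extends any \(f\colon X\to Y\) to a morphism between the chosen conflations, and any two extensions differ by a map \(\Sigma X\to\Sigma Y\) factoring through \(\cP\), so \(\Sigma f\in\stabHom{\cC}{\Sigma X}{\Sigma Y}\) is well defined. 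Dually, using enough projectives one builds a loop functor \(\Omega\) from conflations \(\Omega X\infl P_X\defl X\) with \(P_X\in\cP\). The Frobenius hypothesis is essential at exactly this point: it is the coincidence of projectives and injectives that forces \(\Sigma\) and \(\Omega\) to be mutually quasi-inverse autoequivalences of \(\stab\cC\) (in a general extriangulated category with enough projectives and injectives this fails).

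Next I would fix the class of distinguished triangles. Given a conflation \(X\infl Y\defl Z\confl\) realised by \(\delta\), extend the inflation \(X\infl I_X\) along the inflation \(X\infl Y\) to a morphism \(Y\to I_X\) (possible since \(I_X\) is injective); passing to cokernels produces a connecting morphism \(\partial\colon Z\to\Sigma X\), and I declare a sequence in \(\stab\cC\) distinguished if it is isomorphic there to \(X\to Y\to Z\xrightarrow{\partial}\Sigma X\) for some such conflation. Axioms (TR1)--(TR3) then go through formally: every morphism \(f\colon X\to Y\) of \(\stab\cC\) embeds in a distinguished triangle because, by the pushout--pullback axiom of \cite{NakaokaPalu}, the map \(X\to Y\oplus I_X\) with components \(f\) and the inflation \(X\infl I_X\) is itself an inflation, whose cofibre gives the required mapping cone; rotation of triangles, and the completion of a commutative square to a morphism of triangles, both reduce to standard manipulations of pushouts and pullbacks of conflations. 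I expect the octahedral axiom (TR4) to be the genuine obstacle: verifying it amounts to gluing two of these pushout squares compatibly, which requires the extriangulated \(3\times3\)-lemma / shifted octahedron of \cite{NakaokaPalu}; this is where the bookkeeping is heaviest, exactly as in the classical Frobenius case.

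Finally, for the homological identification, I would apply the contravariant long exact sequence of Proposition~\ref{p:extri-les} to the defining conflation \(Y\infl I_Y\defl\Sigma Y\). Since \(I_Y\) is injective, \(\Ext{1}{\cC}{X}{I_Y}=0\), so the sequence collapses to an isomorphism
\[
\Ext{1}{\cC}{X}{Y}\iso\coker\bigl(\Hom{\cC}{X}{I_Y}\to\Hom{\cC}{X}{\Sigma Y}\bigr).
\]
It remains to recognise the right-hand side as \(\stabHom{\cC}{X}{\Sigma Y}\), i.e.\ to check that a morphism \(X\to\Sigma Y\) factors through the deflation \(I_Y\defl\Sigma Y\) precisely when it factors through some object of \(\cP\). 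One implication is immediate; for the converse, a factorisation \(X\to P\to\Sigma Y\) with \(P\) projective lifts along the deflation \(I_Y\defl\Sigma Y\), hence also factors through \(I_Y\). This gives the claimed natural isomorphism \(\Ext{1}{\cC}{X}{Y}\iso\stabHom{\cC}{X}{\Sigma Y}\) and at the same time shows that the connecting morphism \(\partial\) used to define the triangles is exactly the element of \(\stabHom{\cC}{Z}{\Sigma X}\) corresponding to \(\delta\), closing the loop.
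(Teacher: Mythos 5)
Your proposal is correct and follows essentially the route taken in the cited source \cite[Cor.~7.4, Rem.~7.5]{NakaokaPalu}: the paper itself gives no proof but defers to Nakaoka--Palu, who adapt Happel's classical argument for Frobenius exact categories, exactly as you describe --- construct \(\Sigma\) and \(\Omega\) from conflations into projective-injectives, verify they are quasi-inverse using the Frobenius hypothesis, define distinguished triangles via the connecting morphism, and check the octahedral axiom via the extriangulated pushout/pullback diagram lemmas. Your derivation of \(\Ext{1}{\cC}{X}{Y}\cong\stabHom{\cC}{X}{\Sigma Y}\) from the long exact sequence of Proposition~\ref{p:extri-les} together with the projectivity of \(I_Y\) is exactly the right argument, and correctly identifies the connecting morphism with the given extension class.
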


\begin{definition}
\label{d:algebraic}
We say that an extriangulated category $\cC$ is \emph{algebraic} if it is equivalent as an extriangulated category to the partial stabilisation $\cE/\cP$ of an exact category $\cE$ by a full and additively closed subcategory $\cP$ of projective-injective objects.
\end{definition}

By Remark~\ref{r:partial-stab-Frob}, an algebraic extriangulated category $\cC$ is Frobenius if and only if it is equivalent to a partial stabilisation $\cE/\cP$ of a Frobenius exact category $\cE$.

\begin{remark}
\label{r:algebraic}
By choosing $\cP=\{0\}$ in Definition~\ref{d:algebraic}, we see that an exact category $\cE=\cE/\{0\}$ is itself an algebraic extriangulated category. For triangulated categories, algebraicity has its usual meaning. Because of the natural isomorphism
$(\cC/\cP)/\cP'=\cC/\add(\cP,\cP')$
for additive subcategories $\cP$ and $\cP'$ of $\cC$, the property of algebraicity is preserved under partial stabilisation as in Definition~\ref{d:partial-stab}. It also follows from this isomorphism that $\stab{\smash{\cC/\cP}}=\stab{\cC}$, so the stable category is invariant under partial stabilisation.

As for triangulated categories, algebraicity has several equivalent formulations \cite[Prop.-Def.~6.20]{ChenThesis}, of which that in terms of exact categories is most useful to us here. On the other hand, one of these reformulations makes it clearer that algebraicity is preserved under passing to extension-closed subcategories, and so in particular Yilin Wu's Higgs categories \cite{Wu} are also algebraic.
\end{remark}

\subsection{Cluster categories and their variations}
\label{ss:clust-cats}

\begin{definition}
\label{d:ct-subcat}
Let $\cC$ be an extriangulated category, and let $\cT\subset\cC$ be a full subcategory. We call $\cT$ \emph{weak cluster-tilting} if
\[\cT =\{ X\in \cC \mid \Ext{1}{\cC}{T}{X}=0\ \text{for all}\ T\in \cT \}=\{ X\in \cC \mid \Ext{1}{\cC}{X}{T}=0\ \text{for all}\ T\in \cT \}.\]
In particular, this means that $\cT$ is additively closed. We say $\cT$ is \emph{cluster-tilting} if it is also \emph{strongly functorially finite}, meaning that any object of $\cC$ admits both a left $\cT$-approximation which is an inflation and a right $\cT$-approximation which is a deflation. We call an object $T\in\cC$ (weak) cluster-tilting if its additive closure $\add{T}$ is a (weak) cluster-tilting subcategory.
\end{definition}

\begin{remark}
\label{r:strong-ff}
In most of the paper, $\cC$ will have enough projective and injective objects, and be (weakly) idempotent complete.
With these assumptions, if $\cT\subseteq\cC$ is a subcategory containing all projective or injective objects (such as a weak cluster-tilting subcategory), then all left $\cT$-approximations are inflations and all right $\cT$-approximations are deflations (cf.~Proposition~\ref{p:WIC} below).
In particular, any functorially finite subcategory containing all projective and injective objects is strongly functorially finite.
\end{remark}

We will often write $\cT\ctsubcat \cC$ to mean that $\cT$ is a cluster-tilting subcategory of $\cC$.
If $\cC$ is Hom-finite, then any subcategory of the form $\add{T}$ for $T\in\cC$ is functorially finite.
In particular, if $\cC$ is also weakly idempotent complete with enough projective and injective objects, then every weak cluster-tilting object in $\cC$ is cluster-tilting.

\begin{definition}\label{d:CY}
A triangulated $\bK$-category $\cC$ with suspension $\Sigma$ is \emph{$d$-Calabi--Yau}, for $d\in\integ$, if it is Hom-finite and $\Sigma^d$ is a Serre functor, meaning that there is a functorial isomorphism
\[\Hom{\cC}{X}{Y}=\dual{\Hom{\cC}{Y}{\Sigma^dX}}\]
for any $X,Y\in\cC$, where $\dual{(\blank)}=\Hom{\bK}{\blank}{\bK}$. We say a Frobenius extriangulated category $\cC$ is \emph{stably $d$-Calabi--Yau} if its triangulated stable category $\stab{\cC}$ is $d$-Calabi--Yau.
\end{definition}

If $\cC$ is a stably $2$-Calabi--Yau Frobenius category, then it follows from the natural identification $\stabHom{\cC}{X}{\Sigma Y}=\Ext{1}{\cC}{X}{Y}$ that
$\Ext{1}{\cC}{X}{Y}=\dual{\Ext{1}{\cC}{Y}{X}}$
for all $X,Y\in\cC$. Thus, the second equality in the definition of weak cluster-tilting is in fact satisfied for any subcategory of $\cC$.

\begin{remark}
For any extriangulated category $\cC$, a strongly functorially finite subcategory $\cT\subset\cC$ is cluster-tilting if and only if $\cT$ contains all projective objects of $\cC$ and
\begin{equation}
\label{eq:half-ct}
\cT =\{ X\in \cC \mid \Ext{1}{\cC}{T}{X}=0\ \text{for all}\ T\in \cT \}.
\end{equation}
We thank Norihiro Hanihara for pointing out the following argument, based on \cite[Prop.~2.2.2]{Iyama-HigherAR}.
The key step is to check that, under these assumptions, if $\Ext{1}{\cC}{X}{T}=0$ for all $T\in\cT$ then $X\in\cT$. Given such an object $X$, let $r\colon R\to X$ be a right $\cT$-approximation, which exists and is a deflation since $\cT$ is strongly functorially finite.
Thus, there is a conflation
\begin{equation}
\label{eq:half-ct-confl}
\begin{tikzcd}
K\arrow[infl]{r}&R\arrow[defl]{r}{r}&X\arrow[confl]{r}&\phantom{}
\end{tikzcd}
\end{equation}
in $\cC$. Since $r$ is a right $\cT$-approximation, $\Hom{\cT}{T}{r}$ is surjective for any $T\in\cT$. By \eqref{eq:half-ct}, we also have $\Ext{1}{\cC}{T}{R}=0$, and so conclude from Proposition~\ref{p:extri-les} that $\Ext{1}{\cC}{T}{K}=0$. By \eqref{eq:half-ct} again, this means that $K\in\cT$. But then $\Ext{1}{\cC}{X}{K}=0$ by the assumption on $X$, so the conflation \eqref{eq:half-ct-confl} splits and $X\in\cT$.
\end{remark}

To reduce the proliferation of adjectives throughout the paper, we make the following definitions by way of abbreviation.

\begin{definition}
\label{d:clustcat}
A \emph{cluster category} is an idempotent complete, algebraic and stably $2$-Calabi--Yau Frobenius extriangulated category $\cC$ with a cluster-tilting subcategory $\cT$.
\end{definition}

\begin{proposition}
\label{p:WIC}
Given morphisms $f\colon X\to Y$ and $g\colon Y\to Z$ in a cluster category,
\begin{enumerate}
\item if $g\circ f$ is an inflation, then $f$ is an inflation, and
\item if $g\circ f$ is a deflation, then $g$ is a deflation.
\end{enumerate}
\end{proposition}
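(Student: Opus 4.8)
The plan is to prove (ii) and to obtain (i) for free by duality: the opposite category $\op{\cC}$ is again a cluster category, since all of the defining conditions (idempotent completeness, algebraicity, the Frobenius and stably $2$-Calabi--Yau properties, and the existence of a cluster-tilting subcategory) are self-dual, and statement (i) for $\cC$ is precisely statement (ii) applied to $\op{\cC}$. I would stress at the outset that the argument uses only weak idempotent completeness of $\cC$ (a consequence of idempotent completeness) together with the extriangulated axioms; in particular the statement holds for any weakly idempotent complete extriangulated category, and none of the deeper cluster-categorical hypotheses play a role.

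Several standard facts will be used freely: in a weakly idempotent complete extriangulated category, split monomorphisms are inflations and split epimorphisms are deflations; deflations are closed under composition and under pre- or post-composition with isomorphisms; and direct sums of conflations are conflations. Now suppose $g\circ f$ is a deflation and fix a conflation $K\infl X\overset{gf}{\defl}Z$. Taking its direct sum with the split conflation $0\infl Y\overset{\id}{\defl}Y$ yields a conflation whose deflation is $\begin{pmatrix}\id_Y & 0\\ 0 & gf\end{pmatrix}\colon Y\oplus X\defl Y\oplus Z$; composing with the split epimorphism $(y,z)\mapsto z-g(y)$ then shows that $(y,x)\mapsto gf(x)-g(y)$ is a deflation $Y\oplus X\defl Z$. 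Pre-composing with the automorphism $\begin{pmatrix}\id_Y & f\\ 0 & \id_X\end{pmatrix}$ of $Y\oplus X$, which sends $(y,x)$ to $(y+f(x),x)$, converts this into the map $(y,x)\mapsto -g(y)$; hence $(g,0)\colon Y\oplus X\defl Z$ is a deflation.

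It therefore suffices to establish the cancellation lemma: if $(d,0)\colon A\oplus B\to Z$ is a deflation, then so is $d\colon A\to Z$ (and one then applies this with $A=Y$, $B=X$, $d=g$). For this I would fix a conflation $N\overset{n}{\infl}A\oplus B\overset{(d,0)}{\defl}Z$. Since $(d,0)$ kills the split inclusion $\iota_B\colon B\to A\oplus B$, the long exact sequence of Proposition~\ref{p:extri-les} lets it factor as $\iota_B=n\nu$ for some $\nu\colon B\to N$; as $\iota_B$ is a split monomorphism, so is its left factor $\nu$, which is therefore an inflation, with cokernel $C$, say. Applying the octahedral axiom to the composable inflations $B\overset{\nu}{\infl}N\overset{n}{\infl}A\oplus B$, whose composite $\iota_B$ has cokernel $A$, produces a conflation $C\infl A\overset{\bar d}{\defl}Z$ in which $\bar d\circ(\id_A,0)=(d,0)$; composing with the inclusion $A\to A\oplus B$ gives $\bar d=d$, so $d$ is a deflation. (Alternatively, after cleaning up $n$ by automorphisms of its source and target, the conflation realising $(d,0)$ becomes the direct sum of $B\overset{\id}{\infl}B\defl 0$ with a conflation whose deflation is $d$, and one appeals to closure of conflations under direct summands.) Unwinding, $g$ is a deflation, which is (ii).

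I expect the cancellation lemma to be the main obstacle, and in particular the recognition that it is genuinely needed: every attempt to write $g$ as a composite of $f$, $g\circ f$ and split maps is circular, because $g$ lands on the side that cannot be cancelled, and the actual content — that a zero direct summand of a deflation may be discarded — fails in extriangulated categories that are not weakly idempotent complete. Once the lemma is in hand, the reduction to it is the short matrix computation above. A possible alternative to the whole argument is to use algebraicity: write $\cC=\cE/\cP$ and invoke the classical statement for the weakly idempotent complete exact category $\cE$ \cite{Buehler}; but lifting a deflation of $\cC$ to a genuine deflation of $\cE$ through the partial-stabilisation quotient requires care, so the self-contained argument above seems cleaner.
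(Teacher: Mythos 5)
Your argument is correct, and it takes a genuinely different route from the paper's. The paper's own proof is a two-sentence citation: cluster categories are idempotent complete by definition, hence weakly idempotent complete by \cite[Lem.~A.6.2]{TT}, and the stated cancellation properties are equivalent to weak idempotent completeness in any extriangulated category by \cite[Prop.~2.7]{Klapproth}. What you have written is a self-contained proof of the direction of that equivalence which the paper needs, adapting the classical argument of B\"uhler from exact to extriangulated categories. Your reduction, via direct sums with split conflations and conjugation by an invertible upper-triangular matrix, to the cancellation lemma that $(d,0)\colon A\oplus B\to Z$ being a deflation forces $d$ to be one, and your proof of that lemma --- factoring $\iota_B$ through the inflation $n$ via the long exact sequence, observing that the factor $\nu$ is a split monomorphism and hence an inflation by weak idempotent completeness, and then applying (ET4) to $\nu$ and $n$ to produce a conflation with deflation $d$ --- is precisely the right argument. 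The duality reduction of (i) to (ii) via $\op{\cC}$ is also valid, since all the defining conditions of a cluster category are self-dual (a point the paper itself records just after Definition~\ref{d:comp-clustcat}). The trade-off between the two proofs is the usual one: the paper's citation is economical and delegates correctness to the literature, while your self-contained version makes visible exactly where weak idempotent completeness enters --- in upgrading the split monomorphism $\nu$ to an inflation so that the octahedral axiom applies --- and confirms, as you note, that none of the deeper cluster-categorical hypotheses play any role.
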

\begin{proof}
Since cluster categories are idempotent complete, they are weakly idempotent complete \cite[Lem.~A.6.2]{TT}, meaning every retraction has a kernel. Weak idempotent completeness is equivalent to the stated properties of morphisms by \cite[Prop.~2.7]{Klapproth}.
\end{proof}

In particular, the observations in Remark~\ref{r:strong-ff} apply to a cluster category $\cC$ to show that functorial finiteness and strong functorial finiteness are equivalent for weak cluster-tilting subcategories.

Since the stable category $\stab{\cC}$ of a cluster category $\cC$ is always Hom-finite, this property being included for us in the definition of a Calabi--Yau category, triangulated cluster categories in our sense are always Hom-finite. More general cluster categories may not be, and a fuller treatment of these cases requires stronger assumptions, making use of the notion of pseudocompactness; see Section~\ref{ss:pseudocompact} for definitions and background.

\begin{definition}
\label{d:comp-clustcat}
A \emph{compact cluster category} is a cluster category $\cC$ such that
\begin{enumerate}
\item $\cC$ is pseudocompact as an additive category,
\item\label{d:comp-clustcat-finite-ds} $\dimdivalg{X}=\dim_{\bK}\divalg{X}<\infty$ for any $X\in\cC$, where $\divalg{X}=\op{\End{\cC}{X}}/\rad{}\op{\End{\cC}{X}}$, and
\item any $\cT\ctsubcat\cC$ is radically pseudocompact.
\end{enumerate}
\end{definition}

We note that all of the properties in the definition of a (compact) cluster category are self-dual, so that if $\cC$ is a (compact) cluster category, so is $\op{\cC}$. Moreover, if $\cT\ctsubcat\cC$ then $\op{\cT}\ctsubcat\op{\cC}$.
The following proposition is an example of how the topological assumptions in Definition~\ref{d:clustcat} let us extend results for Hom-finite categories to a Hom-infinite setting.

\begin{proposition}
\label{p:KS}
A pseudocompact additive category $\cC$ such that $\dimdivalg{X}<\infty$ for all $X\in\cC$ is Krull--Schmidt.
This applies in particular to compact cluster categories.
\end{proposition}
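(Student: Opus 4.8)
The plan is to reduce to the standard characterisation of Krull--Schmidt categories: an additive category is Krull--Schmidt if and only if it is idempotent complete and the endomorphism ring of each of its objects is semiperfect (equivalently, under idempotent completeness, every object is a finite direct sum of objects with local endomorphism ring). Idempotent completeness is part of our conventions for pseudocompact additive categories, and for a compact cluster category it is in any case already guaranteed by Definition~\ref{d:clustcat}; so the content of the proposition is entirely the semiperfectness of the rings \(\End{\cC}{X}\).

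Accordingly, I would fix \(X\in\cC\) and set \(R\defeq\op{\End{\cC}{X}}\); then \(R\) is a pseudocompact \(\bK\)-algebra, its underlying topological vector space being the pseudocompact space \(\Hom{\cC}{X}{X}\) and its multiplication being the (continuous) composition of \(\cC\). The argument will rest on two standard properties of pseudocompact rings, both recorded in \S\ref{ss:pseudocompact}: (i) the Jacobson radical \(\operatorname{rad}R\) is a closed two-sided ideal, so that \(R/\operatorname{rad}R\) is again a Hausdorff pseudocompact \(\bK\)-algebra; and (ii) idempotents of \(R\) lift modulo any closed ideal, in particular modulo \(\operatorname{rad}R\). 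Property (ii) will be the delicate point: one lifts step by step along the defining inverse system of finite-length quotients of \(R\) — modulo each step the relevant ideal is nilpotent, so classical idempotent lifting applies — and then passes to the inverse limit, using completeness of \(R\). Granting (i) and (ii), the hypothesis \(\dimdivalg{X}<\infty\) says precisely that \(R/\operatorname{rad}R=\divalg{X}\) is a finite-dimensional \(\bK\)-algebra; since it has vanishing Jacobson radical it is semisimple artinian by Artin--Wedderburn. Together with (ii) this is the definition of \(R\) being semiperfect, and, semiperfectness being left--right symmetric, \(\End{\cC}{X}=\op{R}\) is semiperfect too.

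By the characterisation recalled at the start, \(\cC\) is therefore Krull--Schmidt. For the final assertion it then remains only to note that a compact cluster category is pseudocompact as an additive category and has \(\dimdivalg{X}<\infty\) for every object \(X\) by Definition~\ref{d:comp-clustcat}\ref{d:comp-clustcat-finite-ds}, so the first part of the proposition applies verbatim.

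The main obstacle, as indicated, is the lifting of idempotents modulo the closed ideal \(\operatorname{rad}R\): this is the one genuinely topological input and requires completeness of \(R\) as a pseudocompact ring, rather than merely the finite-dimensionality appearing in the hypothesis. Everything else — the Artin--Wedderburn structure of finite-dimensional algebras with vanishing radical, and the passage between semiperfect endomorphism rings and Krull--Schmidt decompositions — is formal once idempotent completeness of \(\cC\) is in hand.
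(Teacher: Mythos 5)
Your proof is correct in outline but reaches semiperfectness of \(R=\op{\End{\cC}{X}}\) by a genuinely different route than the paper. Both arguments funnel through Krause's characterisation of Krull--Schmidt categories (idempotent complete with semiperfect endomorphism rings), but the paper gets semiperfectness from the structure theorem for pseudocompact algebras (\cite[Prop.~2.13]{IMacQ-survey}): \(R\) decomposes as a product \(\prod_{i\in I}P_i\) of indecomposable projectives with local endomorphism rings, the associated orthogonal idempotents project to linearly independent elements of \(\divalg{X}\), so \(|I|\le\dimdivalg{X}<\infty\), the product is a finite direct sum, and \cite[Prop.~4.1]{KrauseKS} then gives semiperfectness. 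You instead verify the two-ingredient description directly: \(R/\operatorname{rad}R=\divalg{X}\) is semisimple artinian (by Artin--Wedderburn; this is the only place \(\dimdivalg{X}<\infty\) enters your argument), and idempotents lift modulo \(\operatorname{rad}R\). The second ingredient is where the real work hides: Definition~\ref{d:pseudocompact} hands you a fundamental system of open \emph{subspaces} with finite-dimensional quotients, not of open two-sided \emph{ideals}, so the level-by-level lifting you sketch first needs the standard but not purely formal fact that a pseudocompact algebra admits a cofinal system of open ideals before the nilpotent-ideal lifting and inverse-limit argument can run; a citation such as \cite{Gabriel-AbCats} or \cite{IMacQ-survey} is needed here, so your route is not actually lighter on external input, just differently organised. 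One small correction: idempotent completeness of \(\cC\) is \emph{not} ``part of our conventions for pseudocompact additive categories'' --- Definition~\ref{d:pseudocompact} makes no such demand --- and it is an implicit hypothesis shared by the paper's own appeal to \cite[Cor.~4.4]{KrauseKS}. Your observation that compact cluster categories are idempotent complete by Definition~\ref{d:clustcat} is, however, exactly what is needed for the application stated.
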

\begin{proof}
Since $\cC$ is pseudocompact, so is $A=\op{\End{\cC}{X}}$ for any object $X\in\cC$. Then (see e.g.\ \cite[Prop.~2.13]{IMacQ-survey}) there is a direct product decomposition $A=\prod_{i\in I}P_i$ into indecomposable $A$-modules $P_i$ (with local endomorphism rings). The projection maps $A\to P_i$ determine linearly independent elements of $\divalg{X}$. In particular, $|I|\leq\dimdivalg{X}<\infty$, so the product is finite and $A=\bigoplus_{i\in I}P_i$. Thus, $A$ is semi-perfect (see \cite[Prop.~4.1]{KrauseKS}), and so $\cC$ is Krull--Schmidt by \cite[Cor.~4.4]{KrauseKS}.
\end{proof}

\begin{remark}
\label{r:rad-pseudocompact}
While Definition~\ref{d:comp-clustcat} is somewhat technical, it simplifies drastically in many situations of interest. For example, if the cluster-tilting subcategories of $\cC$ are locally finite (Definition~\ref{d:loc-finite}) and additively finite, then their radical pseudocompactness is a consequence of the global assumptions on $\cC$, by Proposition~\ref{p:rad-compact-finite} and Proposition~\ref{p:KS}. In particular, a Hom-finite cluster category $\cC$ with additively finite cluster-tilting subcategories is a compact cluster category, and this includes all the examples of \cite{BMRRT,Amiot,Wu}.
\end{remark}

\begin{example}
The Grassmannian cluster categories \cite{JKS} and related categorifications of more general positroid cluster algebras \cite{PresslandPostnikov} admit a topology on their Hom-spaces making them compact cluster categories. (Strictly speaking, \cite[\S3]{JKS} describes two categories, and our assertion applies to that constructed using complete rings.)

Let $X$ and $Y$ be two objects from a Grassmannian cluster category $\cC$; by definition, these are free and finite rank modules over the power series ring $Z=\powser{\complex}{t}$ in one variable. Since $Z$ is a Noetherian principal ideal domain, it follows that $\Hom{\cC}{X}{Y}\subseteq\Hom{Z}{X}{Y}\iso Z^N$ is also a free and finitely generated $Z$-module. Being a complete local ring, $Z$ may be naturally equipped with the $\fm$-adic topology, where $\fm=(t)$ is the unique maximal ideal, and we extend this topology to all free $Z$-modules. This makes the free and finitely generated $Z$-modules pseudocompact, since this is true for $Z$ itself, as exhibited by the system $(\fm^n)_{n\geq0}$.
The cluster categories for connected positroids described in \cite{PresslandPostnikov} may be realised as full subcategories of Grassmannian cluster categories by \cite[Prop.~3.6]{CKP}. They are thus also enriched in free and finitely generated $Z$-modules, and are hence pseudocompact in the $\fm$-adic topology on these modules.

Now if $\cC$ is a positroid cluster category (including the Grassmannian cluster category), then for any $X\in\cC$, there is a surjection $\op{\End{\cC}{X}}/\fm\op{\End{\cC}{X}}\to\divalg{X}$, and the former is finite-dimensional since $\op{\End{\cC}{X}}$ has finite rank over $Z$.
Similarly, if $\cT\ctsubcat\cC$ and $X,Y\in\cT$, then there is a surjection $\Hom{\cT}{X}{Y}/\fm^2\Hom{\cT}{X}{Y}\to\Hom{\cT}{X}{Y}/\radHom[2]{\cT}{X}{Y}$ from a finite-dimensional vector space, and so $\cT$ is locally finite.
It is also additively finite, hence radically pseudocompact as in Remark~\ref{r:rad-pseudocompact}.

The extriangulated Higgs categories described in \cite{KellerWu} (extending \cite{Wu} from the Hom-finite setting) are enriched in pseudocompact vector spaces by construction. In the case of a Higgs category defined from a finite ice quiver with potential, it again follows as in Remark~\ref{r:rad-pseudocompact} that it is a compact cluster category.
\end{example}

\begin{example}
On the other hand, Igusa--Todorov's categories associated to discs with infinitely many marked points on their boundary \cite{IT} are cluster categories in the sense of Definition~\ref{d:clustcat}, but are not compact.
Although they are Hom-finite, they admit cluster-tilting subcategories $\cT$ with $\rad[\infty]{\cT}\ne0$, and which are thus not radically pseudocompact.
Paquette--Yıldırım's related triangulated categories for completed discs are not cluster categories in our sense at all, since they are not $2$-Calabi--Yau.
Modifying their extriangulated structures as in \cite{CKaP} does not resolve this, since the resulting extriangulated categories are not Frobenius.
\end{example}

\begin{remark}
Demonet \cite{Demonet} shows that if $\cC$ is a Hom-finite exact cluster category and $\Gamma$ is a finite group acting on $\cC$, then the skew group (or $\Gamma$-equivariant) category $\cC\#\Gamma$ is again a Hom-finite exact cluster category.
Demonet uses this construction to categorify cluster algebras with strictly skew-symmetrisable exchange matrices, but to do so uses the extra structure on $\cC$ and $\cC\#\Gamma$ coming from the $\Gamma$-action; this structure is used, for example, to write down a cluster character.
As such, while Demonet's categories are cluster categories in our sense, his results are not special cases of ours.
\end{remark}

\begin{definition}\label{d:skew-symmetric}
Let $\cC$ be a cluster category.  We say $\cC$ is \emph{skew-symmetric} if it is Krull--Schmidt and $\dimdivalg{X}=1$ for all $X\in\indec{\cC}$.
\end{definition}

\begin{remark}
In the sequel, we will take the approach of stating minimal assumptions on our input data, at the cost of making some statements slightly more involved.  To help parse the conditions, we give a brief overview of the relationships between the different assumptions we may need to impose.
\begin{description}
\item[Krull--Schmidt:] our cluster categories are assumed to be idempotent complete but not necessarily Krull--Schmidt; this is because, as we will see later, algebraicity implies that our cluster categories have lifts to exact cluster categories, but we cannot ensure these lifts are Krull--Schmidt.
The Krull--Schmidt property is very often an extra assumption for us, but this assumption is very mild---in particular, any Hom-finite cluster category is already Krull--Schmidt.
We will mainly use it in the standard way, i.e.\ to reduce claims to indecomposables and to have known spanning sets for Grothendieck groups.

\item[compact:] this is a stronger condition than Krull--Schmidt, by Proposition~\ref{p:KS}.  It provides a finiteness condition in the specific sense that simple modules are required to be finite-dimensional.
The pseudocompactness conditions are used to give explicit minimal approximations of objects, via constructions explained in Section~\ref{s:approximations}.

As noted above, it is not enough to be Hom-finite to guarantee compactness but many known examples of cluster categories (both Hom-finite and Hom-infinite) are compact.
A Hom-finite cluster category of finite rank (meaning every cluster-tilting subcategory has the form $\add{T}$ for a cluster-tilting object $T$) is always compact.
\item[skew-symmetric:] this is a different flavour of condition; it implies that the cluster-tilting categories have skew-symmetric exchange matrices.

By requiring a perfect duality of simples and projectives, this assumption allows us to make several key constructions easily, without the compactness assumptions needed in general.
\end{description}
\end{remark}

Given $\cT\ctsubcat\cC$, we write $\Yonfun{\cT}\colon\cC\to\Mod{\cT}$ for the restricted Yoneda functor, with $\Yonfun{\cT}{X}=\Hom{\cC}{\blank}{X}|_{\cT}$; see Section~\ref{s:modules}.
The following observation allows us to apply the results of Section~\ref{s:approximations} to deduce many properties of compact cluster categories (over the perfect ground field $\bK$).

\begin{proposition}
\label{p:ct-Yon-fp}
Let $\cC$ be a cluster category and let $\cT\ctsubcat\cC$. Then for any $X\in\cC$, the $\cT$-module $\Yonfun{\cT}{X}$ is finitely presented. In particular, if $\cC$ is a compact cluster category then $\Yonfun{\cT}{X}$ is radically pseudocompact.
\end{proposition}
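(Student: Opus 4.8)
The plan is to build an explicit projective presentation of the \(\cT\)-module \(\Yonfun{\cT}{X}\), exploiting the fact that a cluster-tilting subcategory is \emph{strongly} functorially finite (Definition~\ref{d:ct-subcat}), so that right \(\cT\)-approximations may be taken to be deflations. First I would choose a right \(\cT\)-approximation \(r\colon T_0\to X\) that is a deflation, and complete it to a conflation
\[\begin{tikzcd}[ampersand replacement=\&]
K\arrow[infl]{r}{i}\&T_0\arrow[defl]{r}{r}\&X\arrow[confl]{r}\&\phantom{}
\end{tikzcd}\]
in \(\cC\). Since \(K\in\cC\), it likewise admits a right \(\cT\)-approximation \(s\colon T_1\to K\) which is a deflation. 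I claim that
\[\begin{tikzcd}[ampersand replacement=\&]
\Yonfun{\cT}{T_1}\arrow{r}{\Yonfun{\cT}(is)}\&\Yonfun{\cT}{T_0}\arrow{r}{\Yonfun{\cT}(r)}\&\Yonfun{\cT}{X}\arrow{r}\&0
\end{tikzcd}\]
is exact in \(\Mod\cT\). Because \(T_0,T_1\in\cT\), the modules \(\Yonfun{\cT}{T_i}=\Hom{\cT}{\blank}{T_i}\) are (by Yoneda) finitely generated projective, so this exhibits \(\Yonfun{\cT}{X}\) as finitely presented.

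To prove the claimed exactness I would evaluate at an arbitrary \(T\in\cT\). Surjectivity of \(\Yonfun{\cT}(r)\) at \(T\) is precisely the statement that \(r\) is a right \(\cT\)-approximation. Applying Proposition~\ref{p:extri-les} to the conflation above gives the exact sequence \(\Hom{\cC}{T}{K}\xrightarrow{i_*}\Hom{\cC}{T}{T_0}\xrightarrow{r_*}\Hom{\cC}{T}{X}\to\Ext{1}{\cC}{T}{K}\), whence \(\Ker r_*=\image i_*\). Finally, since \(s\) is a right \(\cT\)-approximation, \(s_*\colon\Hom{\cC}{T}{T_1}\to\Hom{\cC}{T}{K}\) is surjective, so \(\image i_*=\image(i_*\circ s_*)=\image\Hom{\cC}{T}{is}\). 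All three identifications are natural in \(T\), giving exactness of the displayed sequence of \(\cT\)-modules. (In the non-exact case \(i_*\) need not be injective, but that plays no role here.)

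For the final sentence, assume \(\cC\) is a compact cluster category. Then \(\cT\) is radically pseudocompact by Definition~\ref{d:comp-clustcat} (see Definition~\ref{d:rad-pc}), and I would conclude that the finitely presented module \(\Yonfun{\cT}{X}\) is radically pseudocompact by the general properties of pseudocompact module categories established in Section~\ref{ss:pseudocompact}, applied to the presentation constructed above. The finite-presentation step itself is essentially routine once one has strong functorial finiteness and the long exact sequence of Proposition~\ref{p:extri-les}; the only genuine technical input is the deferred pseudocompactness machinery used for the second assertion, so I do not anticipate a serious obstacle.
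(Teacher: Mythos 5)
Your argument is correct, but it takes a slightly different route from the paper's. After forming the conflation \(K\infl T_0\defl X\confl\), you take a second right \(\cT\)-approximation \(T_1\to K\) and splice to get a projective presentation. The paper instead observes that the kernel object \(K\) already lies in \(\cT\): applying \(\Yonfun{\cT}\) to the conflation and continuing the long exact sequence one step further gives \(\Yonfun{\cT}T_0\to\Yonfun{\cT}X\to\Extfun{\cT}K\to\Extfun{\cT}T_0=0\), and surjectivity of \(\Yonfun{\cT}{r}\) forces \(\Extfun{\cT}K=0\); since \(\cT\) is cluster-tilting this means \(K\in\cT\), so \(\Yonfun{\cT}K\to\Yonfun{\cT}T_0\to\Yonfun{\cT}X\to 0\) is already a presentation by projectives with no further approximation needed. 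Your version trades this use of the defining property of a cluster-tilting subcategory for an extra approximation step, so it is marginally more general (it would work for any strongly functorially finite \(\cT\)) but slightly less efficient, and it misses an observation that the paper exploits repeatedly elsewhere (e.g.\ in the well-definedness of the index, Proposition~\ref{p:ind-well-def}). For the final assertion your pointer to Section~\ref{ss:pseudocompact} is a bit vague: the relevant result is Proposition~\ref{p:fp-radpc}, which says exactly that finitely presented modules over a radically pseudocompact category are radically pseudocompact.
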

\begin{proof}
Since $\cT$ is functorially finite by assumption, there is a right $\cT$-approximation $\varphi\colon R\to X$, with $R\in\cT$, and $\varphi$ is a deflation as in Remark~\ref{r:strong-ff}. We may thus complete $\varphi$ to a conflation
\[\begin{tikzcd}
K\arrow[infl]{r}&R\arrow[defl]{r}{\varphi}&X\arrow[confl]{r}&.
\end{tikzcd}\]
Applying $\Yonfun{\cT}$ yields the exact sequence
\[\begin{tikzcd}
\Yonfun{\cT}R\arrow{r}{\Yonfun{\cT}\varphi}&\Yonfun{\cT}X\arrow{r}&\Extfun{\cT}K\arrow{r}&\Extfun{\cT}R=0,
\end{tikzcd}\]
using that $R\in\cT$. Since $\varphi$ is a right $\cT$-approximation, $\Yonfun{\cT}\varphi$ is surjective, so $\Extfun{\cT}K=0$ and $K\in\cT$. Thus
\[\begin{tikzcd}
\Yonfun{\cT}K\arrow{r}&\Yonfun{\cT}R\arrow{r}&\Yonfun{\cT}X\arrow{r}&0
\end{tikzcd}\]
is a projective presentation, and so $\Yonfun{\cT}X\in\fpmod{\cT}$ is finitely presented.

The final assertion then follows from Proposition~\ref{p:fp-radpc}, using the fact that $\cT$ is radically pseudocompact (by definition) when $\cC$ is a compact cluster category.
\end{proof}

We next show that the partial stabilisation of a cluster category is another cluster category with `the same' cluster-tilting subcategories.
This will allow us to describe mutations of cluster-tilting subcategories in general cluster categories by passing to the $2$-Calabi--Yau triangulated stable category, for which there is a well-developed theory.

\begin{lemma}
\label{l:additive-closure}
Let $\cC$ be an additive category, and $\cP$ a full additively closed subcategory.
Then $\pi\colon\cC\to\cC/\cP$ induces a bijection between full additively closed subcategories of $\cC$ containing $\cP$ and full additively closed subcategories of $\cC/\cP$.
\end{lemma}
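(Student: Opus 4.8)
The plan is to write down the inverse bijection explicitly and then verify it is well defined. Since the projection functor \(\pi\colon\cC\to\cC/\cP\) is the identity on objects, a full subcategory of either category amounts to a choice of a class of objects; I would send a full additively closed \(\cD\) with \(\cP\subseteq\cD\) to the full subcategory \(\pi(\cD)\subseteq\cC/\cP\) on the objects of \(\cD\), and a full additively closed \(\cE\subseteq\cC/\cP\) to the full subcategory \(\pi^{-1}(\cE)\subseteq\cC\) on those objects of \(\cC\) that lie in \(\cE\). On classes of objects these two assignments are patently inverse to one another, so the entire content of the lemma is that they take admissible input to admissible output: \(\pi(\cD)\) is additively closed in \(\cC/\cP\), and \(\pi^{-1}(\cE)\) is additively closed in \(\cC\) and contains \(\cP\). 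The single point that is not purely formal is that \(\pi\) need not reflect isomorphisms on the nose.

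The step I would carry out first, and where essentially all of the work lies, is the standard observation that \(\pi\) reflects isomorphisms up to summands in \(\cP\): if \(\pi(X)\iso\pi(Y)\) in \(\cC/\cP\), then \(X\) is a direct summand of \(Y\oplus P\) for some \(P\in\cP\), and symmetrically. For this, lift a mutually inverse pair of morphisms of \(\cC/\cP\) to \(f\colon X\to Y\) and \(g\colon Y\to X\) in \(\cC\); then \(1_X-gf\) factors as \(X\xrightarrow{\alpha}P\xrightarrow{\beta}X\) with \(P\in\cP\), and the morphism \(X\to Y\oplus P\) with components \(f\) and \(\alpha\) is split by the morphism \(Y\oplus P\to X\) that restricts to \(g\) on \(Y\) and \(\beta\) on \(P\), since their composite is \(gf+\beta\alpha=1_X\). (In the cluster categories to which the lemma is applied \(\cC\) is idempotent complete, so this retract of \(Y\oplus P\) is a genuine biproduct summand; in general one upgrades the statement to an isomorphism \(X\oplus P\iso Y\oplus Q\) with \(P,Q\in\cP\) by the usual matrix manipulation, which is all that the rest of the argument needs.)

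The remaining verifications are then bookkeeping with the facts that \(\pi\) is bijective on objects and sends biproducts to biproducts. For \(\pi(\cD)\): biproducts in \(\cC/\cP\) are those of \(\cC\), so \(\pi(\cD)\) is closed under them; and if \(Z\in\cD\) and \(\pi(Z)\iso A\oplus B\) in \(\cC/\cP\) then, since \(A\oplus B\) is also \(\pi\) of the object \(A\oplus B\) of \(\cC\), the lifting observation makes \(A\) a summand of \(Z\oplus P\) for some \(P\in\cP\), which lies in \(\cD\) because \(\cP\subseteq\cD\) and \(\cD\) is additively closed; hence \(A\in\cD\), and \(\pi(\cD)\) is additively closed. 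For \(\pi^{-1}(\cE)\): it is closed under biproducts and summands because \(\pi\) preserves them and \(\cE\) is additively closed, and it contains \(\cP\) because every object of \(\cP\) is isomorphic in \(\cC/\cP\) to the zero object, which belongs to \(\cE\). Finally \(\pi(\pi^{-1}(\cE))=\cE\) is immediate from \(\pi\) being the identity on objects, and \(\pi^{-1}(\pi(\cD))=\cD\) follows once more from the lifting observation together with additive closure of \(\cD\), the reverse inclusion being trivial.

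The crux, and the only genuine obstacle, is the isomorphism-lifting statement of the second paragraph; granted that, the bijection falls out formally.
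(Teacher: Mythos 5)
Your proof is correct and takes essentially the same route as the paper's. The paper states the key step directly in terms of retracts — $X'$ is a summand of $X$ in $\cC/\cP$ if and only if $X'$ is a summand of $X\oplus P$ in $\cC$ for some $P\in\cP$ — and proves it by the same $2\times 2$ matrix computation you use (the paper's $(\begin{smallmatrix}p&-f\end{smallmatrix})(\begin{smallmatrix}i\\g\end{smallmatrix})=1_{X'}$ is your $gf+\beta\alpha=1_X$ up to a sign convention in where the minus is placed). Your formulation via isomorphism-reflection is slightly roundabout — you then have to pass through the object $A\oplus B$ to get back to the summand statement — and since your own computation only uses $\pi(g)\pi(f)=1$, not a two-sided inverse, you could have stated the lemma at the retract level from the start and saved the detour. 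The parenthetical about idempotent completeness is unnecessary: ``additively closed'' in this context already means closed under retracts, so $X$ being a retract of $Y\oplus P$ with $Y,P\in\cD$ immediately gives $X\in\cD$.
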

\begin{proof}
Full and additively closed subcategories are completely determined by their object sets, and $\pi$ acts as the identity on objects, so we need only show that a set of objects is additively closed in $\cC/\cP$ if and only if it is additively closed in $\cC$ and contains the objects of $\cP$.

Any additively closed subcategory of $\cC/\cP$ contains $\cP$, since objects of $\cP$ are isomorphic to $0$.
Now $X'$ is a summand of $X$ in $\cC/\cP$ if and only if there exists $P\in\cP$ such that $X'$ is a summand of $X\oplus P$ in $\cC$, as follows. The reverse implication holds since $\pi$ preserves split monomorphisms and epimorphisms. For the forward implication, our assumption is that there is an object $P\in\cP$ and maps
\[\begin{tikzcd}
P\arrow[shift left]{r}{f}&X'\arrow[shift left]{l}{g}\arrow[shift right,swap]{r}{i}&X\arrow[shift right,swap]{l}{p}
\end{tikzcd}\]
in $\cC$ such that $pi-fg=1_{X'}$. But then $(\begin{smallmatrix}p&-f\end{smallmatrix})(\begin{smallmatrix}i\\g\end{smallmatrix})=1_{X'}$, and so $X'$ is a summand of $X\oplus P$ in $\cC$.
\end{proof}

\begin{lemma}
\label{l:ct-bijection}
Let $\cC$ be an extriangulated category, and let $\cP\subset\cC$ be a full and additively closed subcategory of projective-injective objects.
Then the quotient functor $\pi\colon\cC\to\cC/\cP$ induces a bijection from (weak) cluster-tilting subcategories of $\cC$ to (weak) cluster-tilting subcategories of $\cC/\cP$.
\end{lemma}
\begin{proof}
We show that the bijection of additively closed subcategories from Lemma~\ref{l:additive-closure} restricts to a bijection of (weak) cluster-tilting subcategories, as follows.
Since $\pi$ is exact and essentially surjective, if $\cT\subseteq\cC$ is weak cluster-tilting then so is $\pi\cT\subseteq\cC/\cP$. Conversely, let $\cT$ be a full and additively closed subcategory of $\cC$ containing $\cP$. If $\pi\cT$ is weak cluster-tilting, then the object set
{\small\begin{equation*}
\{X\in\cC:\Ext{1}{\cC}{T}{X}=0\text{ for all }T\in\cT\}=\{X\in\cC/\cP:\Ext{1}{\cC/\cP}{\pi T}{\pi X}=0\text{ for all }T\in\cT\}
\end{equation*}}coincides with that of $\pi\cT$, and hence with that of $\cT$.
Since both are full and additively closed subcategories of $\cC$, we thus have $\cT=\{X\in\cC:\Ext{1}{\cC}{T}{X}=0\text{ for all }T\in\cT\}$ as categories.
The same argument shows that $\cT=\{X\in\cC:\Ext{1}{\cC}{X}{T}=0\text{ for all }T\in\cT\}$ as well, and so $\cT$ is weak cluster-tilting as required.

Since $\pi$ is full, if $\cT\subset\cC$ is functorially finite then so is $\pi\cT\subset\cC/\cP$; we obtain the required approximations in $\cC/\cP$ by projecting those in $\cC$.
Conversely, assume $\pi\cT\subseteq\cC/\cP$ is functorially finite. Then for any $X\in\cC$ there is a morphism $f\colon T\to X$, with $T\in\cT$, such that $\pi(f)$ is a right $\pi\cT$-approximation of $\pi X$. Since $\cC$ has enough projectives, $X$ has a projective cover $p\colon P\to X$.

Now consider the morphism $P\oplus T\stackrel{(\begin{smallmatrix}p&f\end{smallmatrix})}{\longrightarrow}X$.
We claim this is a right $\cT$-approximation of $X$.
Indeed, if $g\colon T'\to X$ is a morphism such that $T'\in\cT$, then $\pi(g)$ factors over $\pi(f)$.
In other words, there is some morphism $q\colon T'\to X$ such that $\pi(q)=0$ and $g-q$ factors over $f$.
But $\pi(q)=0$ implies that $q$ factors over an object of $\cP$, which is projective.
Thus $q$ factors over $p$, and so $g$ factors over $\cramped{(\begin{smallmatrix}p&f\end{smallmatrix})}$ as required. The existence of left $\cT$-approximations is proved dually.
\end{proof}

Since $\cP\subseteq\cT$ when $\cT\ctsubcat\cC$, we have $\pi\cT\simeq\cT/\cP$, and we will sometimes prefer the latter notation to emphasise the chosen subcategory $\cP$.

For completeness, we now discuss the extent to which our various classes of cluster category are closed under taking partial stabilisations.
For general cluster categories, a partial stabilisation may fail to be idempotent complete, but this is the only obstruction.

\begin{proposition}
\label{p:partialstab}
Let $\cC$ be a cluster category, and let $\cP\subset\cC$ be a full additively closed subcategory of projective-injective objects.
If the partial stabilisation $\cC/\cP$ is idempotent complete, then it is itself a cluster category, and the quotient functor $\pi\colon\cC\to\cC/\cP$ induces a bijection from (weak) cluster-tilting subcategories of $\cC$ to (weak) cluster-tilting subcategories of $\cC/\cP$.
\end{proposition}
\begin{proof}
As in Remark~\ref{r:algebraic}, the category $\cC/\cP$ is algebraic and $\stab{\smash{\cC/\cP}}=\stab{\cC}$, so this stable category is $2$-Calabi--Yau.
Moreover, $\cC/\cP$ is idempotent complete by assumption.
The rest of the statement then follows from Lemma~\ref{l:ct-bijection}; in particular, since $\cC$ has a cluster-tilting subcategory by assumption, so does $\cC/\cP$.
\end{proof}

\begin{remark}
For a cluster category $\cC$ and $\cP\subseteq\cC$ a full and additively closed subcategory of projective-injective objects, we may consider the idempotent completion $\idcomp{(\cC/\cP)}$ of $\cC/\cP$ (also known as the Karoubi envelope, motivating our notation).
This carries a natural Frobenius extriangulated structure \cite{WWZZ}, is idempotent complete by construction, and is $2$-Calabi--Yau.
This last fact can be shown by direct calculation using the explicit description of $\idcomp{\stab{\cC/\cP}}$ by Balmer--Schlichting \cite{BalmerSchlichting}, this triangulated category being equivalent to the stable category of $\idcomp{(\cC/\cP)}$ (that is, idempotent completion commutes with stabilisation; see Msapato \cite[\S3]{Msapato} for more details on results of this kind).
However, it is not clear why $\idcomp{(\cC/\cP)}$ should have a cluster-tilting subcategory in general, since it may have rigid objects which are not direct summands of rigid objects in $\cC/\cP$.
\end{remark}

\begin{proposition}
\label{p:partialstab-KS}
In the setting of Proposition~\ref{p:partialstab}, if $\cC$ is a Krull--Schmidt cluster category, then so is $\cC/\cP$.
\end{proposition}
\begin{proof}
It follows from \cite[Cor.~4.4]{KrauseKS} that $\cC/\cP$ is a Krull--Schmidt category.
Since this means in particular that it is idempotent complete, we may deduce the result from Proposition~\ref{p:partialstab}.
\end{proof}

\begin{proposition}
\label{p:radproj}
In the setting of Proposition~\ref{p:partialstab}, if the cluster category $\cC$ is Krull--Schmidt then $\rad{\cC/\cP}=\pi(\rad{\cC})$.
\end{proposition}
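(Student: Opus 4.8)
The plan is to prove the two inclusions \(\pi(\rad{\cC})\subseteq\rad{\cC/\cP}\) and \(\rad{\cC/\cP}\subseteq\pi(\rad{\cC})\) separately. The first is a purely formal consequence of \(\pi\) being a full additive functor; the Krull--Schmidt hypothesis is only needed for the second.

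For the inclusion \(\pi(\rad{\cC})\subseteq\rad{\cC/\cP}\) I would use that any full additive functor sends radical morphisms to radical morphisms. Given \(r\in\radHom{\cC}{X}{Y}\) and an arbitrary morphism \(\bar g\colon\pi Y\to\pi X\) of \(\cC/\cP\), fullness lets me write \(\bar g=\pi(g)\) for some \(g\colon Y\to X\), and then \(1_{\pi X}-\bar g\,\pi(r)=\pi(1_X-gr)\) is an isomorphism because \(1_X-gr\) is (since \(r\) lies in the radical) and functors preserve isomorphisms; hence \(\pi(r)\in\radHom{\cC/\cP}{\pi X}{\pi Y}\).

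For the reverse inclusion, the idea is that an arbitrary lift of a radical morphism of \(\cC/\cP\) can be corrected, by a morphism factoring through \(\cP\), to one genuinely in \(\rad{\cC}\). I would take \(\bar f\in\radHom{\cC/\cP}{\pi X}{\pi Y}\), lift it (using fullness again) to some \(f\colon X\to Y\) in \(\cC\), and put \(f\) into Gaussian-elimination normal form: since \(\cC\) is Krull--Schmidt there are direct sum decompositions \(X=X_1\oplus X_0\) and \(Y=Y_1\oplus Y_0\) with respect to which \(f\) takes block-diagonal form \(\phi\oplus\psi\), where \(\phi\colon X_1\to Y_1\) is an isomorphism and \(\psi\in\radHom{\cC}{X_0}{Y_0}\). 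Applying \(\pi\) and using that \(\rad{\cC/\cP}\) is a two-sided ideal, the \((1,1)\)-block \(\pi(\phi)\) of \(\bar f=\pi(f)\) also lies in \(\rad{\cC/\cP}\); but \(\pi(\phi)\) is an isomorphism, so \(1_{\pi X_1}=\pi(\phi)^{-1}\pi(\phi)\) lies in \(\radHom{\cC/\cP}{\pi X_1}{\pi X_1}\), the Jacobson radical of \(\End{\cC/\cP}{\pi X_1}\). Hence that ring is zero, so \(\pi X_1\iso 0\) in \(\cC/\cP\), and by the argument used in the proof of Lemma~\ref{l:additive-closure} this means \(1_{X_1}\) factors through an object of \(\cP\); thus \(X_1\) is a direct summand of an object of \(\cP\), so \(X_1\in\cP\). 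I would then set \(g=0\oplus\psi\colon X\to Y\): this lies in \(\rad{\cC}\) because \(\rad{\cC}\) is an ideal and \(\psi\) lies in it, while \(f-g\), being the block \(\phi\) regarded as a map \(X\to Y\), factors through \(X_1\in\cP\) and so vanishes in \(\cC/\cP\). Therefore \(\bar f=\pi(f)=\pi(g)\in\pi(\rad{\cC})\).

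The step I expect to carry the real content --- and hence the main obstacle --- is showing that the isomorphism part \(\phi\) of an arbitrary lift of a radical morphism must have domain in \(\cP\), so that discarding it is invisible to \(\pi\) yet produces a morphism honestly in \(\rad{\cC}\). This is precisely where the Krull--Schmidt hypothesis enters (to split off \(\phi\) as a genuine direct summand of \(f\)), together with the identification of the morphisms killed by \(\pi\) with the ideal of morphisms factoring through \(\cP\) (as in Lemma~\ref{l:additive-closure}). The remaining manipulations are routine, relying only on the radical being a two-sided ideal and on \(\pi\) being full, additive, and the identity on objects.
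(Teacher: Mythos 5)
Your proof is correct but takes a genuinely different route from the paper's. The paper handles the reverse inclusion by first restricting to indecomposable \(X,Y\): it lifts a radical morphism of \(\cC/\cP\) to \(f\in\Hom{\cC}{X}{Y}\), observes that \(f\) cannot be an isomorphism (since its image is radical), and then invokes Bautista's characterisation (for Krull--Schmidt categories the radical between indecomposables consists exactly of the non-isomorphisms) to conclude \(f\in\rad{\cC}\). The general case is then assembled by direct sum decomposition, using that \(\cC/\cP\) is itself Krull--Schmidt (Proposition~\ref{p:partialstab-KS}) and that the radical is determined componentwise. You instead work with arbitrary \(X,Y\) from the start, using a Gaussian-elimination (Fitting-style) normal form to split a lift as \(\phi\oplus\psi\) with \(\phi\) an isomorphism and \(\psi\) radical, and showing \(\phi\) has domain in \(\cP\) because \(\pi(\phi)\) is simultaneously an isomorphism and radical. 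Both approaches hinge on the same Krull--Schmidt input (the componentwise description of the radical underlies your Gaussian elimination just as it underlies the paper's reduction to indecomposables), but yours is more constructive, explicitly exhibiting a radical lift of the given morphism, while the paper's is shorter because it offloads the assembly step to a cited lemma. Your approach also avoids explicitly invoking the Krull--Schmidt property of the quotient \(\cC/\cP\).
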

\begin{proof}
The inclusion $\pi(\rad{\cC})\subseteq\rad{\cC/\cP}$ follows directly from the definition since $\pi$ is an additive functor.
For the reverse inclusion, first let $X,Y\in\cC$ be indecomposable.
Since $\pi$ is full, any morphism in $\radHom{\cC/\cP}{X}{Y}$ has the form $\pi(f)$ for some $f\in\Hom{\cC}{X}{Y}$.
Since $\pi(f)$ is in the radical, it is not an isomorphism, and so $f$ cannot be either.
Because $\cC$ is Krull--Schmidt, this means that $f\in\radHom{\cC}{X}{Y}$ \cite[Prop.~2.1(b)]{Bautista}, and hence $\radHom{\cC/\cP}{X}{Y}=\pi(\radHom{\cC}{X}{Y})$.

Now $\cC/\cP$ is also Krull--Schmidt by Proposition~\ref{p:partialstab-KS}, and the isoclasses of indecomposable objects in $\cC/\cP$ are a subset of those for $\cC$ (consisting of the isoclasses of indecomposable objects of $\cC$ which are not in $\cP$).
Thus we deduce that $\radHom{\cC/\cP}{X}{Y}=\pi(\radHom{\cC}{X}{Y})$ for arbitrary $X,Y\in\cC$ via direct sum decomposition (cf.~\cite[Lem.~3.4(b)]{ASS-Book}).
\end{proof}

\begin{remark}
\label{r:D_X}
If $\cC$ is Krull--Schmidt and $X\in\cC$ is indecomposable and not in $\cP$, then every morphism $X\to X$ factoring over $\cP$ lies in $\rad{}\op{\End{\cC}{X}}$. Since the projection $\cC\to\cC/\cP$ respects radicals by Proposition~\ref{p:radproj},
it induces a canonical isomorphism
\[\op{\End{\cC}{X}}/\rad{}\op{\End{\cC}{X}}\isoto\op{\End{\cC/\cP}{X}}/\rad{}\op{\End{\cC/\cP}{X}}.\]
Thus, we may write $\divalg{X}$ unambiguously for either algebra, and $\dimdivalg{X}$ for its dimension, independent of whether we view $X$ as an object of $\cC$ or of $\cC/\cP$.
\end{remark}

\begin{proposition}
\label{p:partialstab-compact}
In the setting of Proposition~\ref{p:partialstab}, if the cluster category $\cC$ is compact and $\cP$ is functorially finite, then the cluster category $\cC/\cP$ is also compact.
\end{proposition}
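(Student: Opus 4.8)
The plan is to verify the three conditions of Definition~\ref{d:comp-clustcat} for $\cC/\cP$, which is already a cluster category by Proposition~\ref{p:ct-bijection}. First I would record that $\cC$ is Krull--Schmidt by Proposition~\ref{p:KS}, hence so is $\cC/\cP$ by Proposition~\ref{p:partialstab-KS}, which makes Remark~\ref{r:D_X} and Proposition~\ref{p:radproj} available. Condition~\ref{d:comp-clustcat-finite-ds} is then immediate: writing an object $X$ of $\cC/\cP$ as a finite direct sum of indecomposables, each summand is either isomorphic to $0$ (if it lies in $\cP$), in which case $\divalg{X}=0$, or an indecomposable of $\cC$ not in $\cP$, in which case Remark~\ref{r:D_X} identifies $\divalg{X}$ computed in $\cC/\cP$ with the one computed in $\cC$, which is finite-dimensional by hypothesis; so $\dimdivalg{X}<\infty$ for all $X\in\cC/\cP$.

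The substance of the proof lies in the two pseudocompactness conditions, and the key step I would isolate is that for $X,Y\in\cC$ the subspace $\cP(X,Y)\subseteq\Hom{\cC}{X}{Y}$ of morphisms factoring through an object of $\cP$ is closed. Since $\cP$ is functorially finite, choose a left $\cP$-approximation $q\colon X\to Q$; any morphism $X\to Y$ factoring through $\cP$ then factors through $q$, so $\cP(X,Y)$ is exactly the image of $\Hom{\cC}{Q}{Y}$ under precomposition with $q$, that is, the continuous linear image of a pseudocompact vector space. Such an image is linearly compact, hence closed, so the quotient $\Hom{\cC/\cP}{X}{Y}=\Hom{\cC}{X}{Y}/\cP(X,Y)$ is again pseudocompact; checking that composition remains continuous and bilinear for the quotient topologies is routine, giving condition~(i). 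I expect the two general facts used here --- continuous images of pseudocompact spaces are closed, and quotients by closed subspaces are pseudocompact --- to be available already from \S\ref{ss:pseudocompact}.

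For condition~(iii), Proposition~\ref{p:ct-bijection} reduces us to the subcategories of the form $\cT/\cP$ with $\cT\ctsubcat\cC$; each such $\cT$ is Krull--Schmidt (a summand-closed full subcategory of $\cC$) and radically pseudocompact because $\cC$ is compact. I would extend the identity of Proposition~\ref{p:radproj} to all powers of the radical, proving by induction on $n$ (using bilinearity of composition) that $\radHom[n]{\cT/\cP}{X}{Y}=\pi(\radHom[n]{\cT}{X}{Y})$ for all $X,Y$. This identifies the radical-filtration topology on $\Hom{\cT/\cP}{X}{Y}$ with the quotient of the radical topology on $\Hom{\cT}{X}{Y}$ by $\cP(X,Y)$. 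As $\cP$ is functorially finite inside $\cT$ as well --- a left $\cP$-approximation in $\cC$ of an object of $\cT$ already lands in $\cP\subseteq\cT$, and a full subcategory does not change the relevant Hom-functors --- the argument of the previous paragraph shows $\cP(X,Y)$ is closed in the pseudocompact space $\Hom{\cT}{X}{Y}$, so this quotient is pseudocompact and $\cT/\cP$ is radically pseudocompact.

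The steps that need the most care, and where the hypothesis that $\cP$ is functorially finite is used essentially, are the closedness of $\cP(X,Y)$ (needed to realise it as a continuous image of a pseudocompact space) and the promotion of Proposition~\ref{p:radproj} to all radical powers, which is what lets the radical topology on the quotient be identified with a quotient topology; everything else is bookkeeping with the Krull--Schmidt decomposition and the general properties of pseudocompact spaces.
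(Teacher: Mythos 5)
Your proposal is correct and follows essentially the same strategy as the paper's proof: realise $\cP(X,Y)$ as the image of a continuous map from a pseudocompact Hom-space to get condition~(i), dispose of condition~(ii) via Remark~\ref{r:D_X}, and for condition~(iii) relate the radical topology on $\pi\cT$ to the quotient of the radical topology on $\cT$. The only variations are cosmetic: you use a left $\cP$-approximation where the paper uses a right one, and you make explicit (by induction) the identity $\radHom[n]{\cT/\cP}{X}{Y}=\pi(\radHom[n]{\cT}{X}{Y})$ that the paper invokes somewhat tacitly after citing Proposition~\ref{p:radproj} for the case $n=1$ --- a welcome clarification rather than a departure.
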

\begin{proof} We verify the three conditions in the definition of compactness.
\begin{enumerate}
\item\label{p:partialstab-compact-pc} Let $X,Y\in\cC$, and write $\cP(X,Y)$ for the subspace of $\Hom{\cC}{X}{Y}$ consisting of morphisms factoring over $\cP$.
Let $p\colon P\to Y$ be a right $\cP$-approximation of $Y$.
Then $\cP(X,Y)=p_*(\Hom{\cC}{X}{P})$ is the image of the continuous function $p_*$.
Thus it is closed, and so $\Hom{\cC/\cP}{X}{P}=\Hom{\cC}{X}{P}/\cP(X,Y)$ is pseudocompact in the quotient topology (cf.~Proposition~\ref{p:pc-abelian}).
Composition in $\cC/\cP$ is continuous in this topology by functoriality of the projection $\cC\to\cC/\cP$.
\item\label{p:partialstab-compact-dX} This follows immediately from the fact that $\op{\End{\cC/\cP}{X}}/\rad{}{\op{\End{\cC/\cP}{X}}}$ is a quotient of $\op{\End{\cC}{X}}/\rad{}{\op{\End{\cC}{X}}}$ (cf.~Remark~\ref{r:D_X}).
\item
By Lemma~\ref{l:ct-bijection}, every cluster-tilting subcategory of $\cC/\cP$ has the form $\pi\cT$ for some $\cT\ctsubcat\cC$.
Note that $\pi\cT\ctsubcat\cC/\cP$ is pseudocompact in the quotient topology induced from the radical topology on $\cT$; this is proved analogously to part \ref{p:partialstab-compact-pc}, using that $\cT$ is radically pseudocompact and $\cP\subset\cT$.
We claim that this quotient topology is coarser than the radical topology, so any system of open sets exhibiting pseudocompactness of $\Hom{\pi\cT}{X}{Y}$ in the quotient topology also exhibits pseudocompactness in the radical topology, demonstrating that $\pi\cT$ is radically pseudocompact as required.

To prove the claim, let $V\subset\Hom{\pi\cT}{X}{Y}$ be closed in the quotient topology, so $\pi^{-1}(V)\subset\Hom{\cT}{X}{Y}$ is closed in the radical topology on $\cT$. Thus, \begin{equation}
\label{eq:closure}
\pi^{-1}(V)=\overline{\pi^{-1}(V)}=\bigcap_{n\in\nat}V+\radHom[n]{\cT}{X}{Y}.
\end{equation}
Since $\cC$ is Krull--Schmidt by Proposition~\ref{p:KS}, and $\cT\subseteq\cC$ and $\pi\cT\subseteq\cC/\cP$ are full, we have $\radHom{\pi\cT}{X}{Y}=\pi(\radHom{\cT}{X}{Y})$ for all $X,Y\in\cT$ by Proposition~\ref{p:radproj}.
Thus applying $\pi$ to \eqref{eq:closure} gives
\[V=\bigcap_{n\in\nat}V+\pi(\radHom[n]{\cT}{X}{Y})=\bigcap_{n\in\nat}V+\radHom[n]{\pi\cT}{X}{Y}=\overline{V},\]
and so $V$ is closed in the radical topology on $\pi\cT$. \qedhere
\end{enumerate}
\end{proof}

A technique we will apply repeatedly is to deduce a statement about general cluster categories via algebraicity: by the following result, it suffices to prove the statement for exact cluster categories and then show that it is preserved under partial stabilisation.

\begin{proposition}\label{p:cl-cat-exact-lift}
For any cluster category $\cC$, there is an exact cluster category $\cE$ and a full and additively closed subcategory $\cP\subseteq\cE$ of projective objects such that $\cE/\cP\simeq\cC$.
\end{proposition}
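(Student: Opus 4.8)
The plan is to produce the required exact model by upgrading the exact category underlying the algebraicity of $\cC$ to an exact cluster category. Since $\cC$ is algebraic and Frobenius, it is equivalent, as an extriangulated category, to the partial stabilisation $\cE_0/\cP_0$ of a Frobenius exact category $\cE_0$ by a full, additively closed subcategory $\cP_0\subseteq\cE_0$ of projective-injective objects (Definition~\ref{d:algebraic} together with Remark~\ref{r:partial-stab-Frob}). The only clause of Definition~\ref{d:clustcat} that $\cE_0$ might fail is idempotent completeness, so I would first replace $\cE_0$ by its idempotent completion $\cE$ and set $\cP\defeq\add_\cE\cP_0$. By standard facts about idempotent completions (cf.\ \cite{Buehler}), $\cE$ is again a Frobenius exact category whose projective objects (equivalently, injective objects) are precisely the summands of those of $\cE_0$, so $\cP$ is a full, additively closed subcategory of projective objects of $\cE$. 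Moreover the functor $\cE_0/\cP_0\to\cE/\cP$ induced by the inclusion $\cE_0\inj\cE$ is fully faithful --- a morphism of $\cE_0$ factors through a summand of an object of $\cP_0$ if and only if it factors through that object --- and every object of $\cE/\cP$ is a summand of one in its image. Since $\cC\simeq\cE_0/\cP_0$ is idempotent complete, and any fully faithful functor out of an idempotent complete category which is dense up to summands is an equivalence, we get $\cE/\cP\simeq\cC$ as extriangulated categories. Hence we may assume from the outset that $\cE$ is an idempotent complete Frobenius exact category with $\cE/\cP\simeq\cC$.

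It then remains to check the remaining axioms of Definition~\ref{d:clustcat} for $\cE$. Algebraicity is automatic, as an exact category is its own partial stabilisation by $\{0\}$ (Remark~\ref{r:algebraic}), and $\cE$ is Frobenius as an exact, hence extriangulated, category and idempotent complete by construction. For the stable $2$-Calabi--Yau property I would use that the stable category is invariant under partial stabilisation (Remark~\ref{r:algebraic}), so $\stab\cE=\stab{\cE/\cP}\simeq\stab\cC$, which is $2$-Calabi--Yau because $\cC$ is stably $2$-Calabi--Yau. To exhibit a cluster-tilting subcategory, let $\cS\subseteq\cE$ be the full, additively closed subcategory corresponding under Lemma~\ref{l:additive-closure} to the given $\cT\ctsubcat\cC=\cE/\cP$; thus $\cP\subseteq\cS$ and $\cS$ has the same objects as $\cT$. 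Since partial stabilisation leaves $\operatorname{Ext}^1$ unchanged (Definition~\ref{d:partial-stab}), and since in the Frobenius category $\cE$ every projective is injective and hence lies in $\cS$, the weak cluster-tilting identity for $\cS$ in $\cE$ reduces to that for $\cT$ in $\cE/\cP$ (the stable $2$-Calabi--Yau property collapsing its two halves). Finally, $\cS$ is strongly functorially finite in $\cE$: this is exactly the `conversely' direction in the proof of Proposition~\ref{p:ct-bijection}, which uses only that $\cE$ has enough projectives and injectives, that the objects of $\cP$ are projective, and that $\cE\to\cE/\cP$ is full and exact, all of which hold here. Concretely, given $X\in\cE$ one lifts a right $\cT$-approximation $T\to X$ (which exists in $\cC$) along the full functor $\cE\to\cE/\cP$, chooses a deflation $P\to X$ with $P$ projective, and checks that $T\oplus P\to X$ is a right $\cS$-approximation using that any morphism sent to $0$ in $\cE/\cP$ factors through $\cP\subseteq\proj\cE$ and hence through $P$; since $\cS$ is then functorially finite and contains every projective and injective object of the weakly idempotent complete category $\cE$, Remark~\ref{r:strong-ff} upgrades this to strong functorial finiteness, and left approximations are dual. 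Thus $\cS\ctsubcat\cE$, and $\cE$ is an exact cluster category with $\cE/\cP\simeq\cC$.

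The step I expect to be the main obstacle is the production of the cluster-tilting subcategory $\cS$ of $\cE$. One cannot simply invoke Proposition~\ref{p:ct-bijection} to transport cluster-tilting subcategories backwards along $\cE\to\cC$, since that proposition assumes its input is already a cluster category, whereas the cluster-category status of $\cE$ is precisely what is at stake; so one must isolate the relevant half of its proof and verify it still runs under the weaker hypotheses in force. The idempotent-completion bookkeeping in the first paragraph (Frobenius-exactness of the completion, and the equivalence $\cE/\cP\simeq\cC$) is a secondary technical point, but routine.
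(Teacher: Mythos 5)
Your argument follows essentially the same route as the paper's: replace the exact category witnessing algebraicity by its idempotent completion, observe that the quotient by (the idempotent completion of) the subcategory of projectives is still equivalent to $\cC$, and then verify each clause of Definition~\ref{d:clustcat} for the completed category $\cE$. The paper closes by simply citing Proposition~\ref{p:ct-bijection} to produce the cluster-tilting subcategory of $\cE$.

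The one place where you diverge is the point you yourself flag: the paper invokes Proposition~\ref{p:ct-bijection} with $\cE$ in the role of the ambient category, even though that proposition's \emph{statement} assumes its ambient category is already a cluster category --- which is precisely what is being established. You are right that this citation is, read literally, circular, and right that the fix is to unfold the relevant half of the proof of Proposition~\ref{p:ct-bijection} (the direction lifting weak cluster-tilting subcategories and functorial finiteness from $\cE/\cP$ up to $\cE$) and check that those arguments only use facts already in hand --- Frobenius-ness, preservation of $\operatorname{Ext}^1$ under the quotient, enough projectives, and fullness of the projection --- none of which presuppose a cluster-tilting subcategory in $\cE$. Your version makes explicit the lifting of approximations via a projective cover, which is exactly the argument buried in the proof of Proposition~\ref{p:ct-bijection}. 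So your proof is correct and, if anything, more scrupulous than the paper's on this one point.
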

\begin{proof}
Because $\cC$ is algebraic, there is an exact category $\cE$ and a full and additively closed subcategory $\cP$ of projectives such that $\cC\simeq\cE/\cP$. Let $\idcomp{\cE}$ be the idempotent completion of $\cE$, which is still exact \cite[\S6.1]{Buehler}, and naturally contains the idempotent completion $\idcomp{\cP}$, objects of which are projective in $\idcomp{\cE}$. Using the universal properties of quotient categories and idempotent completion, we see that $\idcomp{\cE}/\idcomp{\cP}$ is the idempotent completion of $\cC$ (as an extriangulated category \cite{WWZZ}). But $\cC$ is idempotent complete, being a cluster category, and so $\cC\simeq\idcomp{\cE}/\idcomp{\cP}$. We may thus assume, without loss of generality, that $\cE$ is idempotent complete.

Now since $\cC$ is Frobenius, so is $\cE$, as pointed out in Definition~\ref{d:partial-stab}. Since $\cE$ is exact, it is automatically algebraic, and it is stably $2$-Calabi--Yau as in Remark~\ref{r:algebraic}. By Lemma~\ref{l:ct-bijection}, it has a cluster-tilting subcategory, and is hence a cluster category.
\end{proof}

\begin{remark}
We do not currently know if the analogous statement for compact cluster categories---namely, that every compact cluster category has the form $\cE/\cP$ for a compact exact cluster category $\cE$ and a full and additively closed subcategory $\cP$ of projectives---is also true.
We also do not have analogous statements for Krull--Schmidt cluster categories, or skew-symmetric cluster categories (except when $\bK$ is algebraically closed).
\end{remark}

\subsection{Cluster structures}
\label{s:cluster-structures}
To begin to relate our categories to Fomin--Zelevinsky's theory of cluster algebras, we will require that our cluster-tilting subcategories have a well-defined concept of mutation.
For the closest possible relationship, under which we may decategorify to such a cluster algebra, we will also need categorical mutation to be compatible with Fomin--Zelevinsky mutation of skew-symmetrisable matrices.

Let $\cA$ be a Krull--Schmidt category. For $X,Y\in\indec{\cA}$, we may define
\[\irr{\cA}{X}{Y}=\radHom{\cA}{X}{Y}/\radHom[2]{\cA}{X}{Y}.\]
Elements of this space are sometimes referred to as irreducible maps from $X$ to $Y$, although strictly they are equivalence classes of maps. If these spaces are finite-dimensional (for example, if $\cA$ is locally finite as in Definition~\ref{d:loc-finite}), then we may further define
\[\Gabmatentry{X,Y}=\rank_{\divalg{X}}\irr{\cA}{X}{Y}<\infty\]
and thus obtain a (possibly infinite) integer-valued $\indec{\cA}\times\indec{\cA}$ matrix $\Gabmat{\cA}=(\Gabmatentry{X,Y})$, the \emph{Cartan matrix} of $\cA$.

\begin{proposition}
\label{p:loc-finite-quiver}
A Krull--Schmidt category $\cA$ is locally finite at $X\in\indec\cA$ if and only if
\begin{enumerate}
\item\label{p:loc-finite-quiver-dX-finite} $\dimdivalg{X}<\infty$,
\item\label{p:loc-finite-quiver-cXY-finite} $\dimdivalg{Y}\Gabmatentry{X,Y},\dimdivalg{Y}\Gabmatentry{Y,X}<\infty$ for all $Y\in\indec{\cA}$, and
\item\label{p:loc-finite-quiver-cXY-zero} $\Gabmatentry{X,Y}=0=\Gabmatentry{Y,X}$ for all but finitely many $Y\in\indec{\cA}$.
\end{enumerate}
\end{proposition}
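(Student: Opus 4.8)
The plan is to prove the equivalence by unwinding Definition~\ref{d:loc-finite} at the indecomposable $X$ and matching the resulting conditions term by term with \ref{p:loc-finite-quiver-dX-finite}--\ref{p:loc-finite-quiver-cXY-zero}; the content is a translation between $\bK$-dimensions of radical layers at $X$ and the Cartan entries $\Gabmatentry{X,Y}$, everything else being formal. Since $\cA$ is Krull--Schmidt, $\op{\End{\cA}{Z}}$ is local and hence $\divalg{Z}$ is a division ring for every $Z\in\indec\cA$; thus every $\divalg{Z}$-module is free, and the $\bK$-dimension of such a module is the product of its rank with $\dimdivalg{Z}$.

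The one non-formal ingredient is the following. For $X,Y\in\indec\cA$ with $Y\not\iso X$ we have $\Hom{\cA}{X}{Y}=\radHom{\cA}{X}{Y}$, so $\irr{\cA}{X}{Y}=\Hom{\cA}{X}{Y}/\radHom[2]{\cA}{X}{Y}$ carries commuting $\divalg{X}$- and $\divalg{Y}$-module structures via pre- and post-composition, and computing its $\bK$-dimension with respect to the first of these gives
\[\dim_{\bK}\irr{\cA}{X}{Y}=\Gabmatentry{X,Y}\dimdivalg{X},\]
and symmetrically $\dim_{\bK}\irr{\cA}{Y}{X}=\Gabmatentry{Y,X}\dimdivalg{Y}$. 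The diagonal layer must be handled separately: $\Hom{\cA}{X}{X}/\radHom[2]{\cA}{X}{X}$ is an extension of $\divalg{X}$ by $\irr{\cA}{X}{X}$, so its finite-dimensionality is equivalent to \ref{p:loc-finite-quiver-dX-finite} together with the diagonal instance $\dimdivalg{X}\Gabmatentry{X,X}<\infty$ of \ref{p:loc-finite-quiver-cXY-finite}. From these identities one reads off that, under \ref{p:loc-finite-quiver-dX-finite}, finiteness of $\Gabmatentry{X,Y}$ is equivalent to that of $\dim_{\bK}\irr{\cA}{X}{Y}$; the clause $\dimdivalg{Y}\Gabmatentry{X,Y}<\infty$ of \ref{p:loc-finite-quiver-cXY-finite} records in addition the finiteness of $\dimdivalg{Y}$ precisely at the targets $Y$ of arrows out of $X$ (and dually at the sources of arrows into $X$), being vacuous for all other $Y$ since the product is then $0$.

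With these identifications the equivalence is bookkeeping: local finiteness at $X$ unwinds, using that the radical-layer modules $\radHom{\cA}{X}{\blank}/\radHom[2]{\cA}{X}{\blank}$ and $\radHom{\cA}{\blank}{X}/\radHom[2]{\cA}{\blank}{X}$ are built from simples $\simpmod{\cA}{Y}$ of $\bK$-dimension $\dimdivalg{Y}$, into finite-dimensionality of the layers $\irr{\cA}{X}{Y}$ and $\irr{\cA}{Y}{X}$ together with finiteness of $\dimdivalg{Y}$ whenever such a layer is nonzero, and the vanishing of these layers outside a finite set of $Y$; reading this through the displayed identities and the diagonal analysis yields \ref{p:loc-finite-quiver-dX-finite} from the $Y\iso X$ layer, \ref{p:loc-finite-quiver-cXY-finite} from the finite-dimensionality-plus-$\divalg{Y}$ clause, and \ref{p:loc-finite-quiver-cXY-zero} from the finite-support clause, while conversely \ref{p:loc-finite-quiver-dX-finite}--\ref{p:loc-finite-quiver-cXY-zero} feed back through the same identities. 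I expect the only real obstacle to be the division-ring arithmetic in the degenerate regimes: one must draw the implications without ever multiplying by or dividing through an infinite $\dimdivalg{}$ or $\Gabmatentry{}{}$, keep track of the vacuous cases (namely $\Gabmatentry{X,Y}=0$ imposing no constraint on $\dimdivalg{Y}$, and $\indec\cA$ finite rendering \ref{p:loc-finite-quiver-cXY-zero} automatic), and never identify the left $\divalg{X}$-rank and the right $\divalg{Y}$-rank of $\irr{\cA}{X}{Y}$ directly, comparing them only through the common value $\dim_{\bK}\irr{\cA}{X}{Y}$.
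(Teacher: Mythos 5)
Your proposal is correct and follows the same route as the paper's: both unwind Definition~\ref{d:loc-finite} by computing $\dim_{\bK}$ of the radical-square quotients of $\Yonfun{\cA}{X}$ and $\opYonfun{\cA}{X}$ at each $Y\in\indec\cA$ in terms of $\dimdivalg{Y}$ and the Cartan entries, then note that membership in $\fd{\cA}$ amounts to finiteness of the resulting sum over $Y$. Your extra attention to the bimodule structure on $\irr{\cA}{X}{Y}$ and to the $0\cdot\infty$ cases is in fact not just prudence: the paper's displayed expression for the second sum, $\sum_Y\dimdivalg{Y}(\Gabmatentry{X,Y}+\delta_{Y,X})$, does not match the definition $\Gabmatentry{X,Y}=\rank_{\divalg{X}}\irr{\cA}{X}{Y}$, under which the pointwise dimension is $\dimdivalg{X}(\Gabmatentry{X,Y}+\delta_{Y,X})$; the stated equivalence with \ref{p:loc-finite-quiver-dX-finite}--\ref{p:loc-finite-quiver-cXY-zero} nonetheless holds precisely because, as you observe, $\irr{\cA}{X}{Y}$ is also free over $\divalg{Y}$ via postcomposition, so that its finite-dimensionality forces $\dimdivalg{Y}<\infty$ whenever $\Gabmatentry{X,Y}\ne 0$, and this is what condition~\ref{p:loc-finite-quiver-cXY-finite} encodes. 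One cosmetic remark: in the paper's Gabriel-quiver convention, $\Gabmatentry{X,Y}$ counts arrows from $Y$ to $X$ in $Q(\cA)$, so your ``targets of arrows out of $X$'' / ``sources of arrows into $X$'' phrasing is worth cross-checking against that orientation, though it has no effect on the substance of the argument.
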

\begin{proof}
As in Section~\ref{s:modules}, we write $\Yonfun{\cA}{X}=\Hom{\cA}{\blank}{X}$, and $\opYonfun{\cA}{X}=\Hom{\cA}{X}{\blank}$ for each $X\in\cA$.
We have
\[\dim_{\bK}(\Yonfun{\cA}{X}/\rad[2]{\cA}{\Yonfun{\cA}{X}})(Y)= \dimdivalg{Y}(\Gabmatentry{Y,X}+\delta_{Y,X}),\quad
\text{for }\delta_{Y,X}=\begin{cases}1&\text{if}\ X\iso Y,\\0&\text{otherwise.}\end{cases}\]
Indeed, $\irr{\cA}{Y}{X}=\radHom{\cA}{Y}{X}/\radHom[2]{\cA}{Y}{X}$ is free of rank $\Gabmatentry{Y,X}$ over the $\dimdivalg{Y}$-dimensional division algebra $\divalg{Y}$, and
$\dim_{\bK}\Hom{\cA}{Y}{X}/\radHom{\cA}{Y}{X}=\dimdivalg{Y}\delta_{Y,X}$ since $X$ and $Y$ are indecomposable.

Thus
\[
\sum_{Y\in\indec{\cA}}\dim_{\bK}(\Yonfun{\cA}{X}/\rad[2]{\cA}{\Yonfun{\cA}{X}})(Y)=\sum_{Y\in\indec{\cA}}\dimdivalg{Y}(\Gabmatentry{Y,X}+\delta_{Y,X}),\]
and similarly
\[\sum_{Y\in\indec{\cA}}\dim_{\bK}(\opYonfun{\cA}{X}/\rad[2]{\cA}{\opYonfun{\cA}{X}})(Y)=\sum_{Y\in\indec{\cA}}\dimdivalg{Y}(\Gabmatentry{X,Y}+\delta_{Y,X}).\]
Now the local finiteness condition at $X$ is equivalent to the finiteness of the two sums on the left of these expressions, while finiteness of the two sums on the right is equivalent to conditions \ref{p:loc-finite-quiver-dX-finite}--\ref{p:loc-finite-quiver-cXY-zero}, giving the result.
\end{proof}

In the case that $\divalg{X}=\bK$ for all $X\in\indec{\cA}$, such as if $\bK$ is algebraically closed, it is natural to associate to $\cA$ the \emph{Gabriel quiver} $\Gabquiv{\cA}$, with vertex set $\indec{\cA}$ and $\Gabmatentry{X,Y}=\dim_{\bK}\irr{\cA}{X}{Y}$ arrows from $Y$ to $X$. The matrix $\Gabmat{\cA}$ is thus the adjacency matrix of this Gabriel quiver (in a convention compatible with Fomin--Zelevinsky's for exchange matrices of quivers, as appearing below, whereby the arrows leaving vertex $X$ are indicated in the column labelled $X$, rather than the row). By Proposition~\ref{p:loc-finite-quiver}, local finiteness of $\cA$ is then equivalent to local finiteness of $\Gabquiv{\cA}$, where a quiver is locally finite if each of its vertices is incident with finally many arrows.

\begin{remark}
\label{r:Gab-quiv}
The terminology here stems from the fact that if $\cA$ is radically pseudocompact and additively finite with basic additive generator $E$, then $Q(\cA)$ is the usual Gabriel quiver of the radically pseudocompact algebra $A=\op{\End{\cA}{E}}$, with $\proj{A}\simeq\cA$. (This motivates the orientation of arrows in $Q(\cA)$ being opposite to the direction of morphisms in $\cA$.) While the construction of $Q(\cA)$ makes sense without the assumption that $\divalg{X}=\bK$ for all $X$, under this assumption the algebra $A$ is isomorphic to the complete path algebra of $Q(\cA)$ over $\bK$, modulo a closed ideal contained in the square of the arrow ideal.
\end{remark}

\begin{definition}\label{d:exch-mat}
Let $\cC$ be a Krull--Schmidt cluster category, and let $\cT\ctsubcat\cC$. The set of isomorphism classes of indecomposable non-projective objects of $\cT$ is naturally identified with $\indec{\stab{\cT}}$, where $\stab{\cT}$ denotes the image of $\cT$ in the stable category $\stab{\cC}$. If $\irr{\cT}{X}{Y}$ has finite rank over $\divalg{X}$ and $\op{\divalg{Y}}$ whenever $X,Y\in\indec{\cT}$ and at least one of $X$ or $Y$ lies in $\indec{\stab{\cT}}$, then we define the \emph{exchange matrix} $\exchmat{\cT}$ to be the $\indec{\cT}\times\indec{\stab{\cT}}$ matrix with entries $(\exchmatentry{X,Y})_{X\in \indec \cT, Y\in \indec \stab{\cT}}$, where
\begin{equation}
\label{eq:exch-mat-def}
\exchmatentry{X,Y}=\rank_{\divalg{X}}\irr{\cT}{X}{Y}-\rank_{\op{\divalg{X}}}\irr{\cT}{Y}{X}.
\end{equation}
We say that $\cT$ \emph{has an exchange matrix} when the finite rank assumptions necessary to define $B_{\cT}$ are satisfied.
\end{definition}

Any locally finite cluster-tilting subcategory $\cT\ctsubcat\cC$ has an exchange matrix by Proposition~\ref{p:loc-finite-quiver}.
If $\cC$ is skew-symmetric (Definition~\ref{d:skew-symmetric}), then $B_{\stab{\cT}}=(\exchmatentry{X,Y})_{X,Y\in \indec \stab{\cT}}$ is a skew-symmetric matrix; this is the reason for the terminology.
This upper $\indec{\stab{\cT}}\times\indec{\stab{\cT}}$ submatrix of $B_{\cT}$ is always well-defined (i.e.\ has finite entries) by the assumption that $\stab{\cC}$ is Hom-finite, noting that $(\cP)(X,Y)\subseteq\radHom[2]{\cC}{X}{Y}$ for $X,Y\in\indec{\stab{\cT}}$.

The entries of $\exchmat{\cT}$ are related to those of $\Gabmat{\cT}$ by the formula
\begin{equation}
\label{eq:exch-mat-vs-Gab-mat}
\dimdivalg{X}\exchmatentry{X,Y}=\dimdivalg{X}\Gabmatentry{X,Y}-\dimdivalg{Y}\Gabmatentry{Y,X},
\end{equation}
where $X\in \indec \cT$ and $Y\in \indec \stab{\cT}$, recalling that \[\dimdivalg{X}\rank_{\op{\divalg{X}}}\irr{\cT}{Y}{X}=\dim_{\bK}\irr{\cT}{Y}{X}=\dimdivalg{Y}\rank_{\divalg{Y}}\irr{\cT}{Y}{X}.\] 
In particular, $\exchmatentry{X,X}=0$ for any $X\in \indec \stab{\cT}$ and $\dimdivalg{X}\exchmatentry{X,Y}=-\dimdivalg{Y}\exchmatentry{Y,X}$.
The $\indec{\stab{\cT}}\times\indec{\stab{\cT}}$ matrix with $(X,Y)$-th entry $\dimdivalg{X}\exchmatentry{X,Y}$ is therefore skew-symmetric; in other words, the $\indec{\stab{\cT}}\times\indec{\stab{\cT}}$ submatrix of $B_{\cT}$ is skew-symmetrizable.

\begin{definition}
\label{d:no-loops}
Let $\cC$ be a Krull--Schmidt cluster category, and let $\cT\ctsubcat\cC$. 
We say $\cT$ has \emph{no loops} at $X\in \indec{\cT}$ if $\irr{\cT}{X}{X}=0$.  We say $\cT$ has \emph{no $2$-cycles} at $X$ if for every $Y\in\indec{\cT}$, either $\irr{\cT}{X}{Y}=0$ or $\irr{\cT}{Y}{X}=0$.
We further say that $\cT$ has no loops (respectively, no $2$-cycles) if it has no loops at $X$ (resp., no $2$-cycles at $X$) for any $X\in \indec{\stab{\cT}}$.
\end{definition}

\begin{proposition}
\label{p:exch-mat}
If $\cT$ has an exchange matrix and has no loops or $2$-cycles, then
\[\dimdivalg{X}\exchmatentry{X,Y}=\begin{cases}\dimdivalg{X}\Gabmatentry{X,Y}&\text{if}\ \exchmatentry{X,Y}\geq0,\\-\dimdivalg{Y}\Gabmatentry{Y,X}&\text{if}\ \exchmatentry{X,Y}<0\end{cases}\]
for any $X\in\indec{\cT}$ and $Y\in\indec{\stab{\cT}}$.
\end{proposition}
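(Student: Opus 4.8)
The plan is to deduce the statement directly from identity~\eqref{eq:exch-mat-vs-Gab-mat}, which reads $\dimdivalg{X}\exchmatentry{X,Y}=\dimdivalg{X}\Gabmatentry{X,Y}-\dimdivalg{Y}\Gabmatentry{Y,X}$ for $X\in\indec{\cT}$ and $Y\in\indec\stab{\cT}$, all three terms being finite integers since $\cT$ has an exchange matrix (finiteness of the relevant ranks and division-algebra dimensions being exactly what that hypothesis, via Proposition~\ref{p:loc-finite-quiver}, provides). The only structural input needed beyond this is the elementary fact that $\divalg{X}$ is a nonzero division algebra, so that $\dimdivalg{X}\geq 1$ and hence $\exchmatentry{X,Y}$ has the same sign as $\dimdivalg{X}\exchmatentry{X,Y}$; this is what lets me read off from $\Gabmatentry{X,Y}$ and $\Gabmatentry{Y,X}$ which branch of the claimed formula is in force.

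Next I would observe that the no-loops-or-$2$-cycles hypothesis forces at least one of $\Gabmatentry{X,Y}$, $\Gabmatentry{Y,X}$ to vanish. When $X\not\cong Y$, the no-$2$-cycles condition at $Y$ (legitimate since $Y\in\indec\stab{\cT}$), applied to the object $X\in\indec{\cT}$, gives $\irr{\cT}{X}{Y}=0$ or $\irr{\cT}{Y}{X}=0$; as a zero module has zero $\divalg{}$-rank, this yields $\Gabmatentry{X,Y}=0$ or $\Gabmatentry{Y,X}=0$. When $X\cong Y$, the no-loops condition at $X$ gives $\irr{\cT}{X}{X}=0$, so $\Gabmatentry{X,Y}=\Gabmatentry{Y,X}=0$ and moreover $\exchmatentry{X,X}=0\geq 0$, whence the claimed formula reduces to $0=\dimdivalg{X}\Gabmatentry{X,X}=0$.

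It then remains to treat the two cases. If $\Gabmatentry{Y,X}=0$, then \eqref{eq:exch-mat-vs-Gab-mat} gives $\dimdivalg{X}\exchmatentry{X,Y}=\dimdivalg{X}\Gabmatentry{X,Y}\geq 0$, so $\exchmatentry{X,Y}\geq 0$ and the first branch holds on the nose. If instead $\Gabmatentry{X,Y}=0$, then $\dimdivalg{X}\exchmatentry{X,Y}=-\dimdivalg{Y}\Gabmatentry{Y,X}\leq 0$: when $\Gabmatentry{Y,X}>0$ we get $\exchmatentry{X,Y}<0$ and the second branch holds, and when $\Gabmatentry{Y,X}=0$ we get $\exchmatentry{X,Y}=0$, with both branches evaluating to $0$. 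These cases are exhaustive, which completes the argument. I do not anticipate a genuine obstacle here: all the content sits in~\eqref{eq:exch-mat-vs-Gab-mat} and in the finiteness already guaranteed, and the only point requiring a moment's care is the boundary case $\exchmatentry{X,Y}=0$, where one must check that it falls under the ``$\geq 0$'' branch and that both formulas remain consistent there.
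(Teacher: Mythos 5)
Your proof is correct and follows the same route as the paper's: both start from identity~\eqref{eq:exch-mat-vs-Gab-mat}, use the no-loops/no-\(2\)-cycles hypothesis to conclude that at most one of \(\Gabmatentry{X,Y}\), \(\Gabmatentry{Y,X}\) is non-zero, and finish by reading off the sign using the fact that the \(\dimdivalg{}\) and \(\Gabmatentry{}\) quantities are non-negative. The paper states this very tersely; you supply the explicit case split (including the boundary case \(\exchmatentry{X,Y}=0\)), but there is no substantive difference.
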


\begin{proof}
If $\cT$ has no loops or $2$-cycles, then only one term on the right-hand side of \eqref{eq:exch-mat-vs-Gab-mat} can be non-zero.
The result then follows since all of $\dimdivalg{X}$, $\dimdivalg{Y}$, $\Gabmatentry{X,Y}$ and $\Gabmatentry{Y,X}$ are dimensions of vector spaces, ergo non-negative.
\end{proof}

\begin{remark}
If $\divalg{X}=\bK$ for all $X\in\indec{\cT}$ (in particular, if $\cC$ is skew-symmetric), so that it makes sense to discuss the Gabriel quiver $\Gabquiv{\cT}$, then we partition the vertices of this quiver into frozen vertices, corresponding to indecomposable projectives in $\cT$, and mutable vertices, corresponding to elements of $\indec{\stab{\cT}}$.
Then $\cT$ has no loops or $2$-cycles if and only if the quiver $\Gabquiv{\cT}$ has no loops or $2$-cycles \emph{at its mutable vertices}, and in this case $B_{\cT}$ is the usual exchange matrix associated to the (ice) quiver $\Gabquiv{\cT}$.
\end{remark}

\begin{definition}
For $a\in \integ$, set $[a]_{+}=\max\{a,0\}$ and $[a]_{-}=\max\{0,-a\}$.
\end{definition}

\begin{corollary}\label{c:exch-mat-plus-minus}
If $\cT$ has an exchange matrix and has no loops or $2$-cycles, then $[\exchmatentry{X,Y}]_{+}=\Gabmatentry{X,Y}$ and $[\exchmatentry{X,Y}]_{-}=\frac{\dimdivalg{Y}}{\dimdivalg{X}}\Gabmatentry{Y,X}$. \qed
\end{corollary}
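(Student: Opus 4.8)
The plan is to read this off directly from Proposition~\ref{p:exch-mat}, splitting into the two cases $\exchmatentry{X,Y}\geq 0$ and $\exchmatentry{X,Y}< 0$ that appear there. Throughout, $X\in\indec{\cT}$ and $Y\in\indec{\stab{\cT}}$ are as in the statement, so that $\cT$ having an exchange matrix guarantees $\Gabmatentry{X,Y}$ and $\Gabmatentry{Y,X}$ are finite, while $\dimdivalg{X},\dimdivalg{Y}>0$.

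Suppose first that $\exchmatentry{X,Y}\geq 0$. Then by definition $[\exchmatentry{X,Y}]_{+}=\exchmatentry{X,Y}$ and $[\exchmatentry{X,Y}]_{-}=0$, and Proposition~\ref{p:exch-mat} gives $\dimdivalg{X}\exchmatentry{X,Y}=\dimdivalg{X}\Gabmatentry{X,Y}$; cancelling the positive integer $\dimdivalg{X}$ yields $[\exchmatentry{X,Y}]_{+}=\Gabmatentry{X,Y}$. To match the other formula I would check that $\Gabmatentry{Y,X}=0$: by the no-loops and no-$2$-cycles hypotheses at most one of the two terms on the right-hand side of \eqref{eq:exch-mat-vs-Gab-mat} is non-zero, and if it were $\dimdivalg{Y}\Gabmatentry{Y,X}$ that survived then \eqref{eq:exch-mat-vs-Gab-mat} would force $\exchmatentry{X,Y}\leq 0$, hence $\exchmatentry{X,Y}=0$ and $\Gabmatentry{Y,X}=0$ after all. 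Either way $\tfrac{\dimdivalg{Y}}{\dimdivalg{X}}\Gabmatentry{Y,X}=0=[\exchmatentry{X,Y}]_{-}$, as required. The case $\exchmatentry{X,Y}<0$ is symmetric: now $[\exchmatentry{X,Y}]_{+}=0$ and $[\exchmatentry{X,Y}]_{-}=-\exchmatentry{X,Y}$, while Proposition~\ref{p:exch-mat} gives $\dimdivalg{X}\exchmatentry{X,Y}=-\dimdivalg{Y}\Gabmatentry{Y,X}$, so $[\exchmatentry{X,Y}]_{-}=\tfrac{\dimdivalg{Y}}{\dimdivalg{X}}\Gabmatentry{Y,X}$; and the same argument with the two terms of \eqref{eq:exch-mat-vs-Gab-mat} interchanged shows $\Gabmatentry{X,Y}=0=[\exchmatentry{X,Y}]_{+}$.

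The only point needing any care is the boundary value $\exchmatentry{X,Y}=0$, where both $[\,\cdot\,]_{+}$ and $[\,\cdot\,]_{-}$ vanish and one must confirm that both $\Gabmatentry{X,Y}$ and $\Gabmatentry{Y,X}$ do too; this is covered by the argument above (or seen directly: if $X=Y$ the no-loops hypothesis kills $\Gabmatentry{X,X}$, while if $X\ne Y$ the no-$2$-cycles hypothesis kills one of the two Cartan entries and then \eqref{eq:exch-mat-vs-Gab-mat}, together with $\dimdivalg{X}\Gabmatentry{X,Y}=\dimdivalg{Y}\Gabmatentry{Y,X}$, kills the other). So there is no real obstacle here: the corollary is genuinely immediate from Proposition~\ref{p:exch-mat}, and I would present it with a one-line proof.
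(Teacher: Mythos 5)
Your proof is correct and takes exactly the approach the paper intends: the corollary is stated with an immediate \(\qed\), meaning it is to be read off directly from Proposition~\ref{p:exch-mat} (together with \eqref{eq:exch-mat-vs-Gab-mat}) by the sign split you carry out. Your extra care at the boundary value \(\exchmatentry{X,Y}=0\) is a useful sanity check but not a departure from the paper's reasoning.
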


We note too that $[\exchmatentry{Y,X}]_{+}=\Gabmatentry{Y,X}=\frac{\dimdivalg{X}}{\dimdivalg{Y}}[\exchmatentry{X,Y}]_{-}$, and similarly $[\exchmatentry{Y,X}]_{-}=\frac{\dimdivalg{X}}{\dimdivalg{Y}}[\exchmatentry{X,Y}]_{+}$, so that $[\exchmatentry{X,Y}]_{\pm}=\frac{\dimdivalg{Y}}{\dimdivalg{X}}[\exchmatentry{Y,X}]_{\mp}$.

\begin{definition}
\label{d:mutable}
Let $\cC$ be a Krull--Schmidt cluster category, let $\cT\ctsubcat\cC$ and let $T\in\indec{\cT}$. We write $\cT\setminus T$ as shorthand for the additively closed subcategory with indecomposables $\indec{\cT}\setminus\{T\}$. We say that $T$ is \emph{mutable} in $\cT$ if
\begin{enumerate}
\item\label{d:cluster-structure-mutation} there is an indecomposable object $\mut{\cT}{T}\in\cC$, not isomorphic to $T$, such that $\mut{T}{\cT}\defeq \add(\cT\setminus T \union \{\mut{\cT}{T}\})$ is cluster-tilting, and
\item\label{d:cluster-structure-exch-seq}there are non-split conflations
\begin{equation}
\label{eq:exchange-confs}
\begin{tikzcd}
\mut{\cT}{T}\arrow[infl]{r}&\exchmon{\cT}{T}{+}\arrow[defl]{r}{\varphi^+}&T\arrow[confl]{r}&,
\end{tikzcd}\quad
\begin{tikzcd}
T\arrow[infl]{r}{\varphi^-}&\exchmon{\cT}{T}{-}\arrow[defl]{r}&\mut{\cT}{T}\arrow[confl]{r}&,
\end{tikzcd}
\end{equation}
which we call \emph{exchange conflations}, such that $\varphi^+$ and $\varphi^-$ are, respectively, right and left $(\cT\setminus T)$-approximations of $T$.
\end{enumerate}
In this situation, $(T,\mut{\cT}{T})$ is called an \emph{exchange pair}, the cluster-tilting subcategory $\mu_T\cT$ is called the \emph{mutation} of $\cT$ at $T$ (noting that $\mu_{\cT}T$ is unique up to isomorphism by \ref{d:cluster-structure-exch-seq}, so $\mu_T\cT$ is well-defined), and the \emph{mutation class} of $\cT$ consists of those cluster-tilting subcategories obtainable via some sequence of mutations starting at $\cT$. Cluster-tilting subcategories in this mutation class, and their objects, are said to be \emph{reachable} from $\cT$. We write $\exch{\cT}$ for the set of mutable indecomposable objects of $\cT$.
\end{definition}

The notation $\mu_{\cT}T$, for the mutation of $T$ in the cluster-tilting subcategory $\cT$, should not be confused with the similar notation $\mu_{T}\cT$, for the mutation of the cluster-tilting subcategory $\cT$ at the indecomposable $T$.

If $P\in\indec\cC$ is projective-injective, then it is an object of $\indec{\cT}$ for all $\cT\ctsubcat\cC$, but it is never mutable, since it cannot satisfy either part of  Definition~\ref{d:mutable}.
Thus $\exch{\cT}\subseteq\indec{\stab{\cT}}$, the latter set identifying with the indecomposable non-projective objects of $\cT$ as above.

\begin{definition}
\label{d:max-mut}
We say that $\cT\ctsubcat\cC$ is \emph{maximally mutable} if $\exch{\cT}=\indec{\stab{\cT}}$.
\end{definition}

\begin{proposition}
\label{p:mut-v-lf}
If $\cC$ is a Krull--Schmidt cluster category and $\cT\ctsubcat\cC$, then $\cT$ is locally finite at all $T\in\exch{\cT}$.
\end{proposition}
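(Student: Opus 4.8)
The plan is to verify, for $T\in\exch{\cT}$, the three conditions of Proposition~\ref{p:loc-finite-quiver}. Condition~\ref{p:loc-finite-quiver-dX-finite}, that $\dimdivalg{T}<\infty$, is immediate: since $T\in\exch{\cT}\subseteq\indec{\stab{\cT}}$ the object $T$ is not projective, so by Remark~\ref{r:D_X} (with $\cP$ the subcategory of all projective-injectives, giving $\cC/\cP=\stab{\cC}$) the division algebra $\divalg{T}$ may be computed in $\stab{\cC}$; and $\stab{\cC}$ is Hom-finite by Definition~\ref{d:CY}, so $\dimdivalg{T}\leq\dim_{\bK}\End{\stab{\cC}}{T}<\infty$.

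For the remaining conditions I would use the exchange conflations~\eqref{eq:exchange-confs}, provided by Definition~\ref{d:mutable} because $T$ is mutable, in which $\varphi^{+}$ is a right and $\varphi^{-}$ a left $(\cT\setminus T)$-approximation of $T$. In particular $\exchmon{\cT}{T}{\pm}\in\add(\cT\setminus T)$, so $T$ is not a direct summand of $\exchmon{\cT}{T}{+}$ and hence $\varphi^{+}\in\radHom{\cT}{\exchmon{\cT}{T}{+}}{T}$, and dually $\varphi^{-}\in\radHom{\cT}{T}{\exchmon{\cT}{T}{-}}$; moreover, since $\cC$ is Krull--Schmidt, each of $\exchmon{\cT}{T}{+}$ and $\exchmon{\cT}{T}{-}$ has only finitely many indecomposable direct summands. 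The key observation is that for $Y\in\indec{\cT}$ with $Y\not\cong T$ every morphism $Y\to T$ factors through $\varphi^{+}$ (as $Y\in\cT\setminus T$), so, $\varphi^{+}$ lying in the radical, post-composition with $\varphi^{+}$ yields a surjection of $\bK$-vector spaces
\[
\Hom{\cT}{Y}{\exchmon{\cT}{T}{+}}/\radHom{\cT}{Y}{\exchmon{\cT}{T}{+}}\;\twoheadrightarrow\;\radHom{\cT}{Y}{T}/\radHom[2]{\cT}{Y}{T}=\irr{\cT}{Y}{T},
\]
whose source is isomorphic to $(\op{\divalg{Y}})^{m}$ with $m$ the multiplicity of $Y$ in $\exchmon{\cT}{T}{+}$. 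Thus $\irr{\cT}{Y}{T}\neq0$ forces $Y$ to be a summand of $\exchmon{\cT}{T}{+}$, with $\dim_{\bK}\irr{\cT}{Y}{T}\leq m\dimdivalg{Y}$; dually, $\irr{\cT}{T}{Y}\neq0$ forces $Y$ to be a summand of $\exchmon{\cT}{T}{-}$, with $\dim_{\bK}\irr{\cT}{T}{Y}\leq m'\dimdivalg{Y}$. Since $\exchmon{\cT}{T}{\pm}$ have finitely many summands, only finitely many $Y$ have $\Gabmatentry{Y,T}\neq0$ or $\Gabmatentry{T,Y}\neq0$, which is condition~\ref{p:loc-finite-quiver-cXY-zero}.

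It remains to establish condition~\ref{p:loc-finite-quiver-cXY-finite}, namely $\dimdivalg{Y}\Gabmatentry{Y,T},\dimdivalg{Y}\Gabmatentry{T,Y}<\infty$ for all $Y$. For $Y$ a summand of neither $\exchmon{\cT}{T}{+}$ nor $\exchmon{\cT}{T}{-}$, both Cartan entries vanish, so only the finitely many summands of $\exchmon{\cT}{T}{\pm}$ need attention. If such a $Y$ is non-projective then $Y\in\indec{\stab{\cC}}$ and $\dimdivalg{Y}<\infty$ by Hom-finiteness of $\stab{\cC}$; together with the bounds above and $\dimdivalg{T}<\infty$ this gives $\dimdivalg{Y}\Gabmatentry{Y,T}=\dim_{\bK}\irr{\cT}{Y}{T}<\infty$ and $\dimdivalg{Y}\Gabmatentry{T,Y}=\tfrac{\dimdivalg{Y}}{\dimdivalg{T}}\dim_{\bK}\irr{\cT}{T}{Y}<\infty$. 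The delicate case, which I expect to be the main obstacle, is a \emph{projective-injective} summand $Y$ of $\exchmon{\cT}{T}{\pm}$, for which $\dimdivalg{Y}$ is not a priori controlled by the standing hypotheses; here one must still show $\dimdivalg{Y}<\infty$. The plan for this point is to exploit the injectivity of $Y$ — so that $\Hom{\cC}{\blank}{Y}$ is exact on the exchange conflation and morphisms into $Y$ factor through an injective envelope — together with the fact that $\mut{\cT}{T}$, being indecomposable and non-projective, lies in $\indec{\stab{\cC}}$ and hence has finite-dimensional local division algebra, in order to bound $\Hom{\cC}{\exchmon{\cT}{T}{+}}{Y}$, and so the relevant $\irr$-spaces at $Y$, by finite-dimensional stable data; the projective case is handled dually. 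Granting this, the bounds of the second paragraph again yield $\dimdivalg{Y}\Gabmatentry{Y,T},\dimdivalg{Y}\Gabmatentry{T,Y}<\infty$, completing the verification of all three conditions and hence the proof.
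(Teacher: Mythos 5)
Your argument parallels the paper's proof closely, reducing to the factorisation of morphisms through the approximations \(\varphi^{\pm}\), but two points need attention. The first is a genuine error at \(Y\cong T\): the factorisation of \(Y\to T\) through \(\varphi^{+}\) is only available for \(Y\in\cT\setminus T\), so your deduction that ``\(\irr{\cT}{Y}{T}\neq0\) forces \(Y\) to be a summand of \(\exchmon{\cT}{T}{+}\)'', and the resulting claim that the Cartan entries vanish for \(Y\) a summand of neither \(\exchmon{\cT}{T}{\pm}\), both tacitly assume \(Y\not\cong T\). Since \(T\) is never a summand of \(\exchmon{\cT}{T}{\pm}\) while \(\Gabmatentry{T,T}\) may be non-zero (cluster-tilting subcategories can have loops at \(T\)), your argument does not bound \(\dimdivalg{T}\Gabmatentry{T,T}\). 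The paper closes this case by observing that every endomorphism of \(T\) factoring through a projective-injective lies in \(\radHom[2]{\cT}{T}{T}\), so that \(\Hom{\cT}{T}{T}/\radHom[2]{\cT}{T}{T}\) is a quotient of the finite-dimensional \(\stabHom{\cC}{T}{T}\); this gives \(\dimdivalg{T}<\infty\) and the loop bound simultaneously, and is a step you should import.

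Secondly, the ``delicate case'' of a projective-injective summand \(Y\) of \(\exchmon{\cT}{T}{\pm}\) is a real subtlety: precomposition by units of \(\op{\End{\cC}{Y}}\) makes \(\irr{\cT}{Y}{T}\) a free right \(\divalg{Y}\)-module, so non-vanishing forces \(\dim_{\bK}\irr{\cT}{Y}{T}\geq\dimdivalg{Y}\), and for projective-injective \(Y\) the finiteness of \(\dimdivalg{Y}\) is not a consequence of Hom-finiteness of \(\stab{\cC}\). You are right that the paper's own phrase---that the relevant dimension ``counts the number of appearances of \(U\)''---tacitly absorbs the factor \(\dimdivalg{U}\) and so glosses over the same point. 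However, the repair you sketch does not close the gap. Applying \(\Hom{\cC}{\blank}{Y}\) to the exchange conflation relates \(\Hom{\cC}{\exchmon{\cT}{T}{+}}{Y}\) to \(\Hom{\cC}{T}{Y}\) and \(\Hom{\cC}{\mut{\cT}{T}}{Y}\), but these are Hom-spaces in \(\cC\), not in \(\stab{\cC}\): for \(Y\) projective-injective, every morphism into \(Y\) factors through the projective-injective \(Y\) itself, so all of the corresponding stable Hom-spaces are identically zero and Hom-finiteness of \(\stab{\cC}\) imposes no constraint on them. The dual argument for \(\irr{\cT}{T}{Y}\) meets the same obstacle. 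You have correctly located where the argument is incomplete, but the bound on \(\dimdivalg{Y}\) you need does not come from the stable category in the way you describe.
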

\begin{proof}
Since $\cT$ contains all projective-injective objects of $\cC$, but $T\in\exch{\cT}$ cannot be such an object, every endomorphism of $T$ factoring over a projective-injective lies in $\radHom[2]{\cT}{T}{T}$. It follows that $\Hom{\cT}{T}{T}/\radHom[2]{\cT}{T}{T}$ is a quotient of $\stabHom{\cC}{T}{T}$, and is hence finite-dimensional since $\cC$ is a cluster category.

Now let $U\in\indec{\cT}$ with $U\ne T$. Since $T\in\exch{\cT}$, there is a right $(\cT\setminus T)$-approximation $\varphi^+\colon\exchmon{\cT}{T}{+}\to T$. In particular, $\varphi^+\in\radHom{\cT}{\exchmon{\cT}{T}{+}}{T}$, since it does not split, and every morphism $U\to T$ factors over $\varphi^+$ since $U\in\cT\setminus T$.
As a result, $\dim_{\bK}\Hom{\cT}{U}{T}/\radHom[2]{\cT}{U}{T}\leq\dim_{\bK}\Hom{\cT}{U}{\exchmon{\cT}{T}{+}}/\radHom{\cT}{U}{\exchmon{\cT}{T}{+}}$.
But the latter dimension counts the number of appearances of $U$ in a direct sum decomposition of $\exchmon{\cT}{T}{+}$. Since $\cC$ is Krull--Schmidt, this number is finite, and even zero for all but finitely many $U\in\indec{\cU}$, and hence $\Yonfun{\cT}{T}/\rad[2]{\cT}\Yonfun{\cT}{T}\in\fd{\cT}$.
An analogous argument using the left approximation $\varphi^-\colon T\to\exchmon{\cT}{T}{-}$ demonstrates that $\opYonfun{\cT}{T}/\rad[2]{\cT}\opYonfun{\cT}{T}\in\fd{\op{\cT}}$, completing the proof. 
\end{proof}

\begin{corollary}
\label{c:KS-max-mut}
In a Krull--Schmidt cluster category $\cC$, any maximally mutable cluster-tilting subcategory has an exchange matrix.\qed
\end{corollary}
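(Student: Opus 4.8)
The plan is to obtain the statement immediately from Proposition~\ref{p:mut-v-lf} and Proposition~\ref{p:loc-finite-quiver}, so that the proof really is the one-line deduction the $\qed$ above the statement suggests. Let $\cT\ctsubcat\cC$ be maximally mutable, so that $\exch{\cT}=\indec{\stab{\cT}}$ by Definition~\ref{d:max-mut}. By Proposition~\ref{p:mut-v-lf}, $\cT$ is then locally finite at every $X\in\indec{\stab{\cT}}$. First I would record the following consequence via Proposition~\ref{p:loc-finite-quiver}: if $X\in\indec{\stab{\cT}}$, then for every $Y\in\indec{\cT}$ both $\irr{\cT}{X}{Y}$ and $\irr{\cT}{Y}{X}$ are finite-dimensional over $\bK$. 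Indeed, local finiteness at $X$ gives $\dimdivalg{X}<\infty$ together with $\dimdivalg{Y}\Gabmatentry{X,Y}<\infty$ and $\dimdivalg{Y}\Gabmatentry{Y,X}<\infty$; since each $\divalg{Y}$ is a nonzero division algebra we have $\dimdivalg{Y}\geq1$, so the first bound forces $\Gabmatentry{X,Y}<\infty$, whence $\dim_{\bK}\irr{\cT}{X}{Y}=\dimdivalg{X}\Gabmatentry{X,Y}<\infty$, while $\dim_{\bK}\irr{\cT}{Y}{X}=\dimdivalg{Y}\Gabmatentry{Y,X}<\infty$ directly (each $\irr$-space being free over the relevant division algebra, exactly as in the proof of Proposition~\ref{p:loc-finite-quiver}).

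Now take any $X,Y\in\indec{\cT}$ with at least one of them in $\indec{\stab{\cT}}$. Applying the previous paragraph with the mutable object in the role of the first argument (and noting that it supplies the finiteness of $\irr{\cT}{X}{Y}$ and $\irr{\cT}{Y}{X}$ in either order), we conclude that both $\irr{\cT}{X}{Y}$ and $\irr{\cT}{Y}{X}$ are finite-dimensional over $\bK$. Their ranks over $\divalg{X}$, $\op{\divalg{X}}$, $\divalg{Y}$ and $\op{\divalg{Y}}$ are therefore all finite, being bounded above by the corresponding $\bK$-dimensions. This is precisely the finiteness required in Definition~\ref{d:exch-mat} for the entries $\exchmatentry{X,Y}=\rank_{\divalg{X}}\irr{\cT}{X}{Y}-\rank_{\op{\divalg{X}}}\irr{\cT}{Y}{X}$ to be defined, so $\cT$ has an exchange matrix.

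I do not expect any genuine obstacle here. The only two points worth a moment's attention are that local finiteness at $X$ controls the products $\dimdivalg{Y}\Gabmatentry{X,Y}$ rather than $\Gabmatentry{X,Y}$ on its own (so one must use $\dimdivalg{Y}\geq1$, together with the separately-guaranteed finiteness of $\dimdivalg{X}$, to pass to finiteness of $\dim_{\bK}\irr{\cT}{X}{Y}$), and that the index pairs allowed in Definition~\ref{d:exch-mat} are asymmetric---only one of $X,Y$ need lie in $\indec{\stab{\cT}}$---which is dealt with simply by invoking local finiteness at whichever of the two is mutable. Alternatively, the second point can be dispatched by the self-duality observed after Definition~\ref{d:comp-clustcat}, applying the case already treated to $\op{\cT}\ctsubcat\op{\cC}$, which is again a maximally mutable cluster-tilting subcategory.
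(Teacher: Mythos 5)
Your proof is correct and is exactly the one-line deduction the paper's \(\qed\) signals: maximal mutability gives \(\exch{\cT}=\indec{\stab{\cT}}\), Proposition~\ref{p:mut-v-lf} gives local finiteness at those objects, and Proposition~\ref{p:loc-finite-quiver} unpacks this into precisely the finiteness needed in Definition~\ref{d:exch-mat}. Your care in distinguishing ``at least one of \(X,Y\) mutable'' from full local finiteness of \(\cT\), and in passing from the products \(\dimdivalg{Y}\Gabmatentry{X,Y}\) to the individual ranks via \(\dimdivalg{Y}\geq1\), is exactly the (small) content the paper leaves implicit.
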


\begin{proposition}
\label{p:mut-v-ff}
If $\cC$ is a Krull--Schmidt cluster category, $\cT\ctsubcat\cC$, and $T\in\indec{\cT}$ is not projective, then $T$ is mutable in $\cT$ if and only if $\cT\setminus T$ is functorially finite in $\cC$.
\end{proposition}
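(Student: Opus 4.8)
The plan is to prove the two implications separately. The forward implication is elementary and takes place entirely in $\cC$. Assume $T$ is mutable in $\cT$, with exchange conflations as in \eqref{eq:exchange-confs}; their middle terms $\exchmon{\cT}{T}{+}$ and $\exchmon{\cT}{T}{-}$ lie in $\cT\setminus T$, being respectively the source and target of $(\cT\setminus T)$-approximations of $T$. Given an arbitrary $X\in\cC$, take a right $\cT$-approximation $\rho\colon R\to X$, which is a deflation by Remark~\ref{r:strong-ff}, and split off the $T$-isotypic part, writing $R=R_0\oplus T^{\oplus n}$ with $R_0\in\cT\setminus T$, using that $\cC$ is Krull--Schmidt. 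Precomposing the restriction $T^{\oplus n}\to X$ of $\rho$ with $(\varphi^+)^{\oplus n}$ produces a morphism $R_0\oplus(\exchmon{\cT}{T}{+})^{\oplus n}\to X$ whose source lies in $\cT\setminus T$, and I would check that it is a right $(\cT\setminus T)$-approximation: any morphism $U\to X$ with $U\in\cT\setminus T\subseteq\cT$ factors through $\rho$, and the components of this factorisation landing in the $T$-summands factor through $\varphi^+$ by the approximation property of $\varphi^+$. The dual argument, using $\varphi^-$ and left $\cT$-approximations, produces left $(\cT\setminus T)$-approximations, so $\cT\setminus T$ is functorially finite.

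For the converse I would pass to the stable category $\stab{\cC}$, which is $2$-Calabi--Yau, triangulated and Krull--Schmidt, and in which $T$ corresponds to a nonzero indecomposable object of the cluster-tilting subcategory $\stab{\cT}\ctsubcat\stab{\cC}$ (nonzero precisely because $T$ is not projective), by Proposition~\ref{p:ct-bijection}. Since $T$ is non-projective, $\cT\setminus T$ contains all projective-injective objects of $\cC$, so its image under $\cC\to\stab{\cC}$ is exactly $\stab{\cT}\setminus T$, and the argument of Proposition~\ref{p:ct-bijection} --- combining approximations in the quotient with projective covers in $\cC$ --- applies verbatim to this additively closed subcategory to show that functorial finiteness of $\cT\setminus T$ in $\cC$ forces functorial finiteness of $\stab{\cT}\setminus T$ in $\stab{\cC}$. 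In the $2$-Calabi--Yau triangulated category $\stab{\cC}$, the well-developed theory of mutation of cluster-tilting subcategories, originating with \cite{IyamaYoshino}, then provides an indecomposable $\mut{\stab{\cT}}{T}\not\iso T$ and exchange triangles, with $\add(\stab{\cT}\setminus T\union\{\mut{\stab{\cT}}{T}\})$ cluster-tilting in $\stab{\cC}$; it remains to lift this data back to $\cC$.

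The lifting step is where the real work lies. By Proposition~\ref{p:ct-bijection} the cluster-tilting subcategory $\add(\stab{\cT}\setminus T\union\{\mut{\stab{\cT}}{T}\})$ of $\stab{\cC}$ lifts to a cluster-tilting subcategory of $\cC$ that contains $\cT\setminus T$ together with exactly one further indecomposable, which I call $\mut{\cT}{T}$; it is non-projective and not isomorphic to $T$ via the bijection between indecomposables of $\cC$ and of $\stab{\cC}$, establishing Definition~\ref{d:mutable}\ref{d:cluster-structure-mutation}. For the exchange conflations, I would use Theorem~\ref{t:extri-stabcat}: the element of $\stabHom{\cC}{T}{\Sigma\mut{\cT}{T}}=\Ext{1}{\cC}{T}{\mut{\cT}{T}}$ realising the exchange triangle is realised by a conflation $\mut{\cT}{T}\infl E\defl T\confl$ in $\cC$, whose stabilisation recovers that exchange triangle up to isomorphism; hence $E$ agrees with a lift of $\exchmon{\stab{\cT}}{T}{+}$ up to projective-injective summands, so $E\in\cT\setminus T$, and I would upgrade the stable approximation property of $E\defl T$ to a genuine right $(\cT\setminus T)$-approximation by noting that morphisms from projective-injective objects into $T$ lift along any deflation onto $T$. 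This conflation is non-split because $T$ is indecomposable and is not a direct summand of any object of $\cT\setminus T$; the dual construction, starting from the left exchange triangle, yields the other exchange conflation. Thus Definition~\ref{d:mutable}\ref{d:cluster-structure-exch-seq} holds and $T$ is mutable in $\cT$.

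The main obstacle is this converse direction, and in particular the lifting step: one must ensure that the $2$-Calabi--Yau mutation theorem is invoked at the level of cluster-tilting \emph{subcategories} rather than objects (so that functorial finiteness, not additive finiteness, is the relevant hypothesis), and that the stable exchange triangles genuinely lift to conflations in $\cC$ whose middle terms lie in $\cT\setminus T$ and whose deflations are honest, not merely stable, $(\cT\setminus T)$-approximations. Carefully tracking projective-injective direct summands through each of these steps is the delicate part of the argument.
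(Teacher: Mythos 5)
Your proof is correct and follows the same strategy as the paper: the forward implication by decomposing a right (or left) \(\cT\)-approximation and replacing its \(T\)-summands using the exchange conflations, and the converse by passing to the triangulated stable category, invoking Iyama--Yoshino's mutation theory there, and then lifting the cluster-tilting subcategory and the exchange triangles back to \(\cC\) via Proposition~\ref{p:ct-bijection} and the lifting-along-deflations-from-projectives argument you describe. One small descriptive slip: the implication from functorial finiteness of \(\cT\setminus T\) in \(\cC\) to functorial finiteness of its image in \(\stab{\cC}\) is the easy direction of the argument in Proposition~\ref{p:ct-bijection}, obtained simply by projecting approximations because \(\pi\) is full, whereas the device of ``combining approximations in the quotient with projective covers in \(\cC\)'' is what proves the converse direction (quotient to \(\cC\)), which you do not need at this step.
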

\begin{proof}
The forward implication is a direct consequence of Definition~\ref{d:mutable}\ref{d:cluster-structure-exch-seq}.
Conversely, assume that $\cD=\cT\setminus T$ is functorially finite in $\cT$. In the case that $\cC$ is a triangulated category, \ref{d:cluster-structure-mutation} follows from work of Iyama and Yoshino \cite[Thm.~5.3]{IyamaYoshino} once we show that $\cD$ is an almost-complete $2$-cluster-tilting subcategory in the sense of \cite[Def.~5.2]{IyamaYoshino}. We first observe that the autoequivalence $\mathbb{S}_2\colon\mathcal{C}\to\mathcal{C}$ referred to in \cite{IyamaYoshino} is the identity in our case, since $\mathcal{C}$ is $2$-Calabi--Yau. Thus $\cD$ is automatically closed under this functor, and $\indec{\cT}\setminus\indec{\cD}=\{T\}$ is a singleton (i.e.\ a single $\mathbb{S}_2$-orbit) by construction. Since $\cD$ is functorially finite in $\cT$, and $\cT$ is functorially finite in $\cC$ since it is cluster-tilting, it follows that $\cD$ is functorially finite in $\cC$, and hence is almost complete $2$-cluster-tilting.

Since $(\cT,\mu_T\cT)$ and $(\mu_T\cT,\cT)$ are both $\cD$-mutation pairs by \cite[Thm.~5.3]{IyamaYoshino}, property \ref{d:cluster-structure-exch-seq} follows from Iyama--Yoshino's results on such pairs \cite[\S2]{IyamaYoshino} (especially \cite[Prop.~2.6]{IyamaYoshino}).

Now if $\cC$ is a general cluster category, the above argument shows that properties \ref{d:cluster-structure-mutation} and \ref{d:cluster-structure-exch-seq} hold for the stable category $\stab{\cC}$. Property \ref{d:cluster-structure-mutation} for $\cC$ then follows by applying Lemma~\ref{l:ct-bijection} in the case that $\cC/\cP=\stab{\cC}$. For property \ref{d:cluster-structure-exch-seq}, it follows from the definition of the triangulated structure on $\stab{\cC}$ that there are conflations
\[\begin{tikzcd}
\mut{\cT}{T}\arrow[infl]{r}&\exchmon{\cT}{T}{+}\arrow[defl]{r}{\varphi^+}&T\arrow[confl]{r}&\phantom{},
\end{tikzcd}\quad
\begin{tikzcd}
T\arrow[infl]{r}{\varphi^-}&\exchmon{\cT}{T}{-}\arrow[defl]{r}&\mut{\cT}{T}\arrow[confl]{r}&\phantom{},
\end{tikzcd}\]
in $\cC$ which project to the exchange triangles in $\stab{\cC}$, and we claim that these are the desired exchange conflations.

Minimality of $\varphi^+$ follows from the fact that the class of $\varphi^+$ in $\stab{\cC}$ is minimal, together with the fact that $\mut{\cT}{T}$ is indecomposable non-projective (and so in particular has no projective direct summands). Let $f\colon T'\to T$ be a morphism with $T'\in\cT\setminus T$. Since $\varphi^+$ projects to a right $(\cT\setminus T)$-approximation in $\stab{\cC}$, there exists a morphism $f'\colon T'\to T$ such that $f-\varphi^+f'$ factors over a projective object in $\cC$. So write $f-\varphi^+f'=qp$, where $p\colon T'\to P$, $q\colon P\to T$ and $P$ is projective. Since $\varphi^+$ is a deflation in $\cC$ and $P$ is projective, there exists $q'\colon P\to\exchmon{\cT}{T}{+}$ such that $q=\varphi^+q'$. It follows that $f=\varphi^+(f'+q'p)$ factors over $\varphi^+$, and so $\varphi^+$ is a right $(\cT\setminus T)$-approximation as required. The proof that $\varphi^-$ is a minimal left $(\cT\setminus T)$-approximation is similar.
\end{proof}

\begin{proposition}
\label{p:mutability-stab}
Let $\cC$ be a Krull--Schmidt cluster category, $\cP\subseteq\cC$ a full additively closed subcategory of projective-injective objects with associated quotient $\pi\colon\cC\to\cC/\cP$, and $\cT\ctsubcat\cC$, so $\pi\cT\ctsubcat\cC/\cP$. Then $T\in\exch\cT$ if and only if $T\in\exch{\pi\cT}$.
\end{proposition}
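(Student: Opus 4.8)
The strategy is to reduce the mutability of $T$, on each side, to the functorial finiteness of the subcategory $\cT\setminus T$ by means of Proposition~\ref{p:mut-v-ff}, and then to transfer this functorial finiteness between $\cC$ and $\cC/\cP$ using an instance of the argument already given in the proof of Proposition~\ref{p:ct-bijection}. The only genuinely new ingredient is the bookkeeping needed to see that the hypotheses of Proposition~\ref{p:mut-v-ff} hold in exactly the same way on both sides.

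First I would record the following. If $T$ lies in $\exch{\cT}$ or in $\exch{\pi\cT}$, then $T$ is indecomposable and not projective, since these sets are contained in $\indec{\stab{\cT}}$ and $\indec{\stab{\pi\cT}}$ respectively, which by the remarks following Definition~\ref{d:mutable} consist of indecomposable non-projective objects. By Definition~\ref{d:partial-stab} we have $\Ext{1}{\cC/\cP}{T}{\blank}=\Ext{1}{\cC}{T}{\blank}$, so $T$ is projective in $\cC$ if and only if it is projective in $\cC/\cP$; and the indecomposable objects of $\cC/\cP$ are precisely those of $\cC$ not lying in $\cP$ (as recorded in the proof of Proposition~\ref{p:radproj}), so $T$ is indecomposable in $\cC$ if and only if it is indecomposable in $\cC/\cP$. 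Since $\cC$ is stably $2$-Calabi--Yau, a non-projective object is also non-injective, and hence does not lie in $\cP$; therefore $\cP\subseteq\cT\setminus T$, whence also $\pi(\cT\setminus T)=\pi\cT\setminus T$ as additively closed subcategories of $\cC/\cP$. We may thus assume from now on that $T\in\indec{\cT}$ is not projective, equivalently that $T\in\indec{\pi\cT}$ is not projective.

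By Propositions~\ref{p:ct-bijection} and~\ref{p:partialstab-KS}, the partial stabilisation $\cC/\cP$ is again a Krull--Schmidt cluster category, with $\pi\cT\ctsubcat\cC/\cP$. Applying Proposition~\ref{p:mut-v-ff} in $\cC$ and in $\cC/\cP$ then gives that $T\in\exch{\cT}$ if and only if $\cT\setminus T$ is functorially finite in $\cC$, and that $T\in\exch{\pi\cT}$ if and only if $\pi(\cT\setminus T)$ is functorially finite in $\cC/\cP$. It therefore remains only to prove that these two functorial finiteness conditions are equivalent. The key point is that $\cT\setminus T$ contains all projective and injective objects of $\cC$, since $\cT$ does (being cluster-tilting) and $T$ is neither projective nor injective; in particular $\cP\subseteq\cT\setminus T$. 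Consequently the portion of the proof of Proposition~\ref{p:ct-bijection} establishing the correspondence of functorially finite subcategories --- which uses of the cluster-tilting subcategory there only that it is full, additively closed, and contains all projective and injective objects of $\cC$ --- applies verbatim with $\cT$ replaced by $\cT\setminus T$, and this yields precisely the desired equivalence. Thus the whole argument is bootstrapped from results already in hand; the one step demanding a little care is the verification in the second paragraph that projectivity and indecomposability are detected identically in $\cC$ and in $\cC/\cP$, so that Proposition~\ref{p:mut-v-ff} is genuinely applicable on both sides.
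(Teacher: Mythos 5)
Your proof is correct. The forward direction is essentially identical to the paper's: apply Proposition~\ref{p:mut-v-ff}, project approximations, apply Proposition~\ref{p:mut-v-ff} again. Where you diverge is in the converse. The paper verifies Definition~\ref{d:mutable} directly in $\cC$: it uses the bijection of cluster-tilting subcategories from Proposition~\ref{p:ct-bijection} to see that $\add(\cT\setminus T\union\{\mut{\pi\cT}{T}\})$ is cluster-tilting, and then explicitly lifts the exchange conflations from $\cC/\cP$ to $\cC$, adding projective summands $P_T$ to the middle terms and checking by hand that the lifted maps are still $(\cT\setminus T)$-approximations. You instead lift \emph{functorial finiteness} from $\cC/\cP$ to $\cC$, observing that the functorial-finiteness portion of the proof of Proposition~\ref{p:ct-bijection} uses only that the subcategory is full, additively closed, and contains all projective and injective objects --- properties enjoyed by $\cT\setminus T$ precisely because $T$ is non-projective-injective --- and then invoke Proposition~\ref{p:mut-v-ff} a second time. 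Your route is more uniform (the same reduction handles both directions) and arguably cleaner; the paper's converse is more explicit in that it actually produces the exchange conflations in $\cC$, information that is implicitly re-derived inside the proof of Proposition~\ref{p:mut-v-ff} when you invoke it. One tiny inaccuracy: you cite the stably $2$-Calabi--Yau property to conclude that a non-projective object is non-injective; the relevant fact is simply that $\cC$ is Frobenius, so projective and injective objects coincide.
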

\begin{proof}
If $T\in\exch{\cT}$, then $T\notin\cP$, and so $T\in\indec{\pi\cT}$. Moreover, $\cT\setminus T$ is functorially finite in $T$ by Proposition~\ref{p:mut-v-ff}.
Therefore $\pi\cT\setminus T$ is functorially finite in $\pi T$, since the required approximations can be obtained by projection from $\cC$, and so $T$ is mutable in $\pi\cT$ by Proposition~\ref{p:mut-v-ff} again.

For the converse implication, we may compute $\mut{\pi\cT}{T}\in\cC/\cP$.
Then $\add(\cT\setminus T\cup\{\mut{\pi\cT}{T}\})$ corresponds to $\mut{T}{(\pi{\cT})}$ under the bijection of Lemma~\ref{l:ct-bijection}, and hence is cluster-tilting as required by Definition~\ref{d:mutable}\ref{d:cluster-structure-mutation}.
The required exchange conflations are obtained by lifting those in $\cC/\cP$, exploiting the fact that $\cC$ and $\cC/\cP$ have the same extension spaces.
Indeed, let
\[\begin{tikzcd}
\mut{\pi\cT}{T}\arrow[infl]{r}&\exchmon{\pi\cT}{T}{+}\oplus P_T\arrow[defl]{r}{\varphi^+}&T\arrow[confl]{r}&\phantom{}
\end{tikzcd}\]
be a conflation in $\cC$ obtained by lifting an exchange conflation in $\cC/\cP$, so $P_T\in\cP$.
Then any morphism $P\to T$ with $P\in\cP$ factors over $\varphi^+$ since this morphism is a deflation and $P$ is projective (see Proposition~\ref{p:extri-les}).
Given this, the fact that $\varphi^+$ is a right $(\cT\setminus T)$-approximation follows from the fact that it projects to a right $(\pi\cT\setminus T)$-approximation in $\cC/\cP$.
The statement for the other exchange conflation is proved similarly, using that objects of $\cP$ are injective.
\end{proof}
\begin{corollary}
\label{c:max-mutable}
If $\cC$ is a Krull--Schmidt cluster category, then $\cT\ctsubcat\cC$ is maximally mutable if and only if $\stab{\cT}\ctsubcat\stab{\cC}$ is maximally mutable.\qed
\end{corollary}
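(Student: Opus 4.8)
The plan is to deduce this immediately from Proposition~\ref{p:mutability-stab}. Take $\cP$ to be the full subcategory of \emph{all} projective-injective objects of $\cC$, so that $\cC/\cP=\stab{\cC}$ and, writing $\pi\colon\cC\to\stab{\cC}$ for the quotient functor, $\pi\cT=\stab{\cT}$; by Proposition~\ref{p:ct-bijection} this $\stab{\cT}$ is indeed cluster-tilting in $\stab{\cC}$. Since $\pi$ is the identity on objects and annihilates exactly the projective-injective objects, $\indec\stab{\cT}$ is unambiguous: it is simultaneously the set of (isoclasses of) indecomposable non-projective objects of $\cT$ inside $\cC$ and the set of indecomposable objects of $\stab{\cT}$ inside $\stab{\cC}$.

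Next I would record the two inclusions $\exch\cT\subseteq\indec\stab{\cT}$ and $\exch\stab{\cT}\subseteq\indec\stab{\cT}$. The first is the observation made just after Definition~\ref{d:max-mut} that a projective-injective object is never mutable. For the second, note that $\stab{\cC}$, being a triangulated (hence Frobenius extriangulated) category, has $0$ as its only projective-injective object, so its stable category is $\stab{\cC}$ itself and $\stab{\stab{\cT}}=\stab{\cT}$; thus the same observation gives $\exch\stab{\cT}\subseteq\indec\stab{\stab{\cT}}=\indec\stab{\cT}$.

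Then I would apply Proposition~\ref{p:mutability-stab} with the above $\cP$: for every $T\in\indec\cT$ we have $T\in\exch\cT$ if and only if $T\in\exch{\pi\cT}=\exch\stab{\cT}$. Restricting attention to $T\in\indec\stab{\cT}\subseteq\indec\cT$ and combining with the two inclusions above shows $\exch\cT=\exch\stab{\cT}$ as subsets of $\indec\stab{\cT}$. Finally, by Definition~\ref{d:max-mut}, $\cT$ is maximally mutable exactly when $\exch\cT=\indec\stab{\cT}$, while $\stab{\cT}$ is maximally mutable exactly when $\exch\stab{\cT}=\indec\stab{\cT}$; since the left-hand sides coincide, the two conditions are equivalent.

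There is no real obstacle here: the content is entirely contained in Proposition~\ref{p:mutability-stab}, and the only point one must verify by hand is the harmless bookkeeping that the ``$\stab{\phantom{X}}$'' operation is idempotent on $\stab{\cC}$ and on $\stab{\cT}$, so that maximal mutability of $\stab{\cT}$ is literally the equality $\exch\stab{\cT}=\indec\stab{\cT}$ rather than a statement about some further stabilisation.
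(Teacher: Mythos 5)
Your proof is correct and is exactly the argument the paper has in mind: the corollary is stated with a \qed because it follows immediately from Proposition~\ref{p:mutability-stab} applied with $\cP$ the full subcategory of all projective-injective objects, so that $\cC/\cP=\stab{\cC}$ and $\pi\cT=\stab{\cT}$. Your extra care with the idempotence of stabilisation (so that $\exch\stab{\cT}\subseteq\indec\stab{\cT}$ is the right target) is a reasonable bit of bookkeeping that the paper leaves implicit.
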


Given some additional assumptions local to $T\in\exch{\cT}$, we may describe the middle terms of the exchange conflations precisely, as follows.

\begin{proposition}\label{p:decomp-exch-terms}
Let $\cC$ be a compact cluster category, $\cT \ctsubcat \cC$, and $T\in\exch{\cT}$.
If $\cT$  has no loop at $T$, then the middle terms of the exchange conflations have the following decompositions into indecomposable objects:
\[
\exchmon{\cT}{T}{+}  = \bigdsum_{U\in \indec \cT\setminus T} U^{\Gabmatentry{U,T}}, \quad
\exchmon{\cT}{T}{-}  = \bigdsum_{U\in \indec \cT\setminus T} U^{\frac{\dimdivalg{T}}{\dimdivalg{U}}\Gabmatentry{T,U}}.
\]
In particular, $\exchmatentry{U,T}^\cT=[\exchmon{\cT}{T}{+}:U]-[\exchmon{\cT}{T}{-}:U]$.
If additionally there is no $2$-cycle at $T$, then we even have $\exchmon{\cT}{T}{+}=\bigdsum_{U\in \indec \cT\setminus T} U^{[\exchmatentry{U,T}]_{+}}$ and $\exchmon{\cT}{T}{-}=\bigdsum_{U\in \indec \cT\setminus T} U^{[\exchmatentry{U,T}]_{-}}$.
\end{proposition}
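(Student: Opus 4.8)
The plan is to read off both decompositions from the restricted Yoneda functor $\Yonfun{\cT}\colon\cC\to\Mod{\cT}$ (Proposition~\ref{p:ct-Yon-fp}), which restricts to a fully faithful embedding $\cT\hookrightarrow\proj{\cT}$ sending $U\in\indec{\cT}$ to the indecomposable projective $\Yonfun{\cT}U=:P_{U}$. One first records that the maps $\varphi^{+},\varphi^{-}$ of the exchange conflations (Definition~\ref{d:mutable}) are, respectively, a \emph{minimal} right $(\cT\setminus T)$-approximation and a \emph{minimal} left one: right-minimality of $\varphi^{+}$ is forced by $\mut{\cT}{T}$ being indecomposable, since otherwise a direct summand of $\exchmon{\cT}{T}{+}$ killed by $\varphi^{+}$ would split off $\mut{\cT}{T}$ inside $\exchmon{\cT}{T}{+}\in\add(\cT\setminus T)$, forcing $\mu_{T}\cT=\add(\cT\setminus T)$, properly contained in $\cT$ and hence not cluster-tilting; the argument for $\varphi^{-}$ is dual. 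Recall also that, since $T\in\exch{\cT}$, Proposition~\ref{p:mut-v-lf} makes $\cT$ locally finite at $T$, so the Cartan numbers $\Gabmatentry{U,T},\Gabmatentry{T,U}$ below are finite and vanish for all but finitely many $U$, making all direct sums genuinely finite.

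First I would handle $\exchmon{\cT}{T}{+}$. Writing $\exchmon{\cT}{T}{+}=\bigdsum_{U\in\indec{\cT}\setminus T}U^{m_{U}}$, the claim is $m_{U}=\Gabmatentry{U,T}$. Applying $\Yonfun{\cT}$ to the first exchange conflation and using Proposition~\ref{p:extri-les} (with $\Extfun{\cT}\exchmon{\cT}{T}{+}=0$ as $\exchmon{\cT}{T}{+}\in\cT$) gives an exact sequence
\[
\Yonfun{\cT}\exchmon{\cT}{T}{+}\xrightarrow{\ \Yonfun{\cT}\varphi^{+}\ }\Yonfun{\cT}T\too\Extfun{\cT}(\mut{\cT}{T})\too 0 .
\]
The key step is to identify $\image(\Yonfun{\cT}\varphi^{+})$ with $\rad{\cT}(P_{T})$. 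At any $V\in\indec{\cT}$ with $V\neq T$ its value is all of $\Hom{\cT}{V}{T}=\radHom{\cT}{V}{T}$ by the approximation property, while at $T$ its value lies in $\varphi^{+}\circ\Hom{\cT}{T}{\exchmon{\cT}{T}{+}}\subseteq\radHom{\cT}{T}{T}$ because $\varphi^{+}$ is a radical morphism; equality also at $T$ follows from the fact that, when $\cT$ has no loop at $T$, one has $\Extfun{\cT}(\mut{\cT}{T})\iso\simpmod{\cT}{T}$, so the cokernel has $\bK$-dimension exactly $\dimdivalg{T}$ and is supported at $T$, pinning down $\image(\Yonfun{\cT}\varphi^{+})(T)=\radHom{\cT}{T}{T}$. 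Hence $\Yonfun{\cT}\varphi^{+}$ is a (right-minimal) projective cover of $\rad{\cT}(P_{T})$, so $\Yonfun{\cT}\exchmon{\cT}{T}{+}$ and $\rad{\cT}(P_{T})$ have isomorphic semisimple tops, i.e.\ $\bigdsum_{U}\simpmod{\cT}{U}^{m_{U}}\iso\rad{\cT}(P_{T})/\rad[2]{\cT}(P_{T})$. Evaluating the right-hand side at $V$ gives $\irr{\cT}{V}{T}$, free of rank $\Gabmatentry{V,T}$ over $\divalg{V}$ and zero for $V=T$ by no loop; comparing multiplicities of the pairwise non-isomorphic simples $\simpmod{\cT}{V}$ (Proposition~\ref{p:KS-simples}) yields $m_{U}=\Gabmatentry{U,T}$.

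For $\exchmon{\cT}{T}{-}$ I would apply the case just proved to $\op{\cC}$, again a compact cluster category with $\op{\cT}\ctsubcat\op{\cC}$ and $\op{T}\in\exch{\op{\cT}}$ without loop, noting that the conflation $T\infl\exchmon{\cT}{T}{-}\defl\mut{\cT}{T}$ in $\cC$ is precisely the first exchange conflation at $\op{T}$ in $\op{\cT}$. This gives $[\exchmon{\cT}{T}{-}:U]$ as the corresponding Cartan number of $\op{\cT}$, which by the standard translation $\dim_{\bK}\irr{\cT}{T}{U}=\dimdivalg{T}\Gabmatentry{T,U}=\dimdivalg{U}\cdot\rank_{\op{\divalg{U}}}\irr{\cT}{T}{U}$ equals $\tfrac{\dimdivalg{T}}{\dimdivalg{U}}\Gabmatentry{T,U}$, the second displayed decomposition. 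The identity $\exchmatentry{U,T}^{\cT}=[\exchmon{\cT}{T}{+}:U]-[\exchmon{\cT}{T}{-}:U]$ is then immediate from \eqref{eq:exch-mat-vs-Gab-mat}. Finally, if $\cT$ also has no $2$-cycle at $T$, then for each $U$ at most one of $\Gabmatentry{U,T},\Gabmatentry{T,U}$ is non-zero, so $\exchmatentry{U,T}^{\cT}=\Gabmatentry{U,T}-\tfrac{\dimdivalg{T}}{\dimdivalg{U}}\Gabmatentry{T,U}$ has a single non-zero term; hence $[\exchmatentry{U,T}^{\cT}]_{+}=\Gabmatentry{U,T}$ and $[\exchmatentry{U,T}^{\cT}]_{-}=\tfrac{\dimdivalg{T}}{\dimdivalg{U}}\Gabmatentry{T,U}$ (cf.\ Corollary~\ref{c:exch-mat-plus-minus}), giving the last pair of decompositions.

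The main obstacle is the identification $\image(\Yonfun{\cT}\varphi^{+})=\rad{\cT}(P_{T})$, equivalently $\Extfun{\cT}(\mut{\cT}{T})\iso\simpmod{\cT}{T}$: this is the one place ``no loop at $T$'' is genuinely used, and proving it rigorously combines the approximation property of $\varphi^{+}$ with the vanishing of $\radHom{\cT}{T}{T}$ in sufficiently high radical powers, which is where radical pseudocompactness of $\cT$ enters. Everything afterwards is the standard ``projective cover of the radical'' computation together with bookkeeping of the division-algebra dimensions $\dimdivalg{U}$, which is what produces the asymmetric exponent $\tfrac{\dimdivalg{T}}{\dimdivalg{U}}\Gabmatentry{T,U}$ on the $\exchmon{\cT}{T}{-}$ side.
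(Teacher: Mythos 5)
Your proposal is correct and takes essentially the same route as the paper: both proofs identify $\varphi^{+}$ (resp.\ $\varphi^{-}$) as a minimal sink (resp.\ source) map for $T$ in $\cT$ and then read off the multiplicities from the semisimple top of $\rad{\cT}\Yonfun{\cT}T$, i.e.\ from the irreducible-morphism spaces $\irr{\cT}{U}{T}$, with the $\dimdivalg{T}/\dimdivalg{U}$ factor on the $\exchmon{\cT}{T}{-}$ side coming from passing to $\op{\cT}$. The only real difference is expository: the paper invokes Lemma~\ref{l:sink-map-construction} directly, which hands you the minimal sink map together with its decomposition (this is where radical pseudocompactness enters), whereas you re-derive the same fact from the Yoneda embedding, citing Lemma~\ref{l:props-of-K0-mod-T}\ref{l:props-mod-T-simple-eq-E} for $\Extfun{\cT}(\mut{\cT}{T})\iso\simpmod{\cT}{T}$ and then running the projective-cover-of-the-radical computation by hand. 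Your minimality argument for $\varphi^{+}$ is a bit obliquely phrased but fine; the cleaner version is that a summand killed by $\varphi^{+}$ would be a summand of the indecomposable $\mut{\cT}{T}$, hence isomorphic to it, contradicting $\mut{\cT}{T}\notin\cT$.
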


\begin{proof}
Since $\cT$ is locally finite at $T$ by Proposition~\ref{p:mut-v-lf}, we may apply Lemma~\ref{l:sink-map-construction} to see that the object $T$ has a source map in $\cT$ with codomain $\bigdsum_{U\in \indec \cT\setminus T} U^{\frac{\dimdivalg{T}}{\dimdivalg{U}}\Gabmatentry{T,U}}$ and a sink map in $\cT$ with domain $\bigdsum_{U\in \indec \cT\setminus T} U^{\Gabmatentry{U,T}}$.
Here we use again that $\dimdivalg{U}\rank_{\op{\divalg{U}}}\irr{\cT}{T}{U}=\dim_{\bK}\irr{\cT}{T}{U}=\dimdivalg{T}\rank_{\divalg{T}}\irr{\cT}{T}{U}$ to write the codomain of the source map in terms of the Cartan matrix, this argument also showing that the exponent is a non-negative integer (indeed, it is the $(U,T)$-entry for the Cartan matrix of $\op{\cT}$).
Since there is no loop at $T$, these maps are also minimal left and right $(\cT\setminus T)$-approximations, and so are isomorphic to $\exchmon{\cT}{T}{-}$ and $\exchmon{\cT}{T}{+}$ respectively, as required.

Now $[\exchmon{\cT}{T}{+}:U]-[\exchmon{\cT}{T}{-}:U]=\Gabmatentry{U,T}-\frac{\dimdivalg{T}}{\dimdivalg{U}}\Gabmatentry{T,U}=\exchmatentry{U,T}^\cU$ by \eqref{eq:exch-mat-vs-Gab-mat}, and the final statement follows by Corollary~\ref{c:exch-mat-plus-minus}.
\end{proof}

\begin{corollary}\label{c:exch-mat-at-mut}
In the setting of Proposition~\ref{p:decomp-exch-terms}, we have
\[ \exchmatentry{U,\mut{\cT}{T}}^{\mut{T}{\cT}}=-\exchmatentry{U,T}^{\cT}. \]
\end{corollary}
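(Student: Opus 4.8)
The plan is to compute both exchange--matrix entries from the decompositions of the middle terms of the exchange conflations supplied by Proposition~\ref{p:decomp-exch-terms}, exploiting the fact that mutation is an involution, so that the exchange conflations of \(\mut{T}{\cT}\) at \(\mut{\cT}{T}\) are precisely those of \(\cT\) at \(T\) with the roles of the \(``+''\) and \(``-''\) conflations interchanged. Concretely, I would first show \(\exchmon{\mut{T}{\cT}}{\mut{\cT}{T}}{+}\cong\exchmon{\cT}{T}{-}\) and \(\exchmon{\mut{T}{\cT}}{\mut{\cT}{T}}{-}\cong\exchmon{\cT}{T}{+}\), and then read off the result.

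To make the interchange precise, write \(\cD\) for the additively closed subcategory of \(\cC\) with \(\indec\cD=\indec\cT\setminus\{T\}\); since \(\mut{T}{\cT}=\add(\cT\setminus T\cup\{\mut{\cT}{T}\})\) we also have \(\indec\cD=\indec(\mut{T}{\cT})\setminus\{\mut{\cT}{T}\}\), and \(\mut{\mut{T}{\cT}}{(\mut{\cT}{T})}=T\). By the Iyama--Yoshino description of mutation underlying the proof of Proposition~\ref{p:mut-v-ff}, both \((\cT,\mut{T}{\cT})\) and \((\mut{T}{\cT},\cT)\) are \(\cD\)-mutation pairs, and in such a pair every structure map of an exchange conflation---the inflation as well as the deflation---is a \(\cD\)-approximation of the relevant end term. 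Thus the conflation \(\mut{\cT}{T}\infl\exchmon{\cT}{T}{+}\defl T\confl\) of \(\cT\) at \(T\), read from the point of view of the pair \((\mut{T}{\cT},\cT)\) with distinguished object \(\mut{\cT}{T}\), is a non-split conflation \(\mut{\cT}{T}\infl E\defl T\confl\) whose deflation is a right \(\cD\)-approximation of \(T\); by uniqueness of the exchange conflations (Definition~\ref{d:mutable}, via the mutation-pair framework) it is the \(``-''\) exchange conflation of \(\mut{T}{\cT}\) at \(\mut{\cT}{T}\), so \(\exchmon{\mut{T}{\cT}}{\mut{\cT}{T}}{-}\cong\exchmon{\cT}{T}{+}\). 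The symmetric argument applied to \(T\infl\exchmon{\cT}{T}{-}\defl\mut{\cT}{T}\confl\) gives \(\exchmon{\mut{T}{\cT}}{\mut{\cT}{T}}{+}\cong\exchmon{\cT}{T}{-}\).

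Granting this, the conclusion is immediate from the final assertion of Proposition~\ref{p:decomp-exch-terms}. Applied to \(\cT\) at \(T\) (which has no loop by hypothesis) it yields \(\exchmatentry{U,T}^{\cT}=[\exchmon{\cT}{T}{+}:U]-[\exchmon{\cT}{T}{-}:U]\); applied to \(\mut{T}{\cT}\) at \(\mut{\cT}{T}\) it yields, for \(U\in\indec(\mut{T}{\cT})\setminus\{\mut{\cT}{T}\}=\indec\cT\setminus\{T\}\),
\[\exchmatentry{U,\mut{\cT}{T}}^{\mut{T}{\cT}}=[\exchmon{\mut{T}{\cT}}{\mut{\cT}{T}}{+}:U]-[\exchmon{\mut{T}{\cT}}{\mut{\cT}{T}}{-}:U]=[\exchmon{\cT}{T}{-}:U]-[\exchmon{\cT}{T}{+}:U]=-\exchmatentry{U,T}^{\cT},\]
noting that \(U\not\cong\mut{\cT}{T}\) (since \(\mut{\cT}{T}\notin\cT\)) and that the entry \(\exchmatentry{U,\mut{\cT}{T}}^{\mut{T}{\cT}}\) is defined because \(\mut{T}{\cT}\) is locally finite at \(\mut{\cT}{T}\in\exch{\mut{T}{\cT}}\) by Proposition~\ref{p:mut-v-lf}.

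The step I expect to be the main obstacle is this second application of Proposition~\ref{p:decomp-exch-terms}, since that result requires \(\mut{T}{\cT}\) to have no loop at \(\mut{\cT}{T}\), which is not literally among the hypotheses of the corollary. So the crux is to verify that absence of a loop at \(T\) in \(\cT\) forces absence of a loop at \(\mut{\cT}{T}\) in \(\mut{T}{\cT}\)---morally, that mutating at a loop-free indecomposable does not create a loop there. I would establish this by analysing radical endomorphisms of \(\mut{\cT}{T}\) in \(\mut{T}{\cT}\) relative to \(\cD\): a radical endomorphism of \(\mut{\cT}{T}\) not factoring through \(\add\cD\) would, via the exchange conflations together with the \(2\)-Calabi--Yau duality on \(\stab{\cC}\), transport to a non-trivial element of \(\irr{\cT}{T}{T}\), contradicting the hypothesis. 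Alternatively one can bypass Proposition~\ref{p:decomp-exch-terms} for \(\mut{T}{\cT}\) altogether: by \eqref{eq:exch-mat-vs-Gab-mat} it suffices to identify \(\dim_{\bK}\irr{\mut{T}{\cT}}{U}{\mut{\cT}{T}}\) and \(\dim_{\bK}\irr{\mut{T}{\cT}}{\mut{\cT}{T}}{U}\) with the multiplicities of \(U\) in \(\exchmon{\cT}{T}{-}\) and \(\exchmon{\cT}{T}{+}\) respectively, using that these middle terms lie in \(\cD\) and are minimal \(\cD\)-approximations of \(\mut{\cT}{T}\); loop-freeness of \(\mut{\cT}{T}\) is exactly what makes these identifications equalities rather than inequalities.
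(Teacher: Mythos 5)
Your proposal follows essentially the same route as the paper's proof: first identify \(\exchmon{\mut{T}{\cT}}{(\mut{\cT}{T})}{\pm}\cong\exchmon{\cT}{T}{\mp}\) by observing that the inflation and deflation in each exchange conflation for the pair \((T,\mut{\cT}{T})\) are automatically left and right \((\cT\setminus T)\)-approximations of the opposite end term, and then read off the two exchange-matrix entries from the formula in Proposition~\ref{p:decomp-exch-terms}. The paper derives the approximation property by the short direct observation that \(\Ext{1}{\cC}{T}{T'}=0\) for all \(T'\in\mut{T}{\cT}\setminus\mut{\cT}{T}=\cT\setminus T\), whereas you route through the Iyama--Yoshino \(\cD\)-mutation-pair formalism already invoked in Proposition~\ref{p:mut-v-ff}; this is equivalent and correct, if marginally less self-contained.

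Your flag about the second application of Proposition~\ref{p:decomp-exch-terms} is well placed: that proposition requires that \(\mut{T}{\cT}\) has no loop at \(\mut{\cT}{T}\), which is not literally among the hypotheses of the corollary, and the paper's own proof is silent on the point. The needed implication is exactly the paper's Corollary~\ref{c:no-loops-mutates} (no loop at \(T\in\cT\) if and only if no loop at \(\mut{\cT}{T}\in\mut{T}{\cT}\)), which is proved later, via the \(2\)-Calabi--Yau identification \(\Ext{1}{\cC}{T}{\mut{\cT}{T}}\cong\dual{\Ext{1}{\cC}{\mut{\cT}{T}}{T}}\) together with \(\dimdivalg{T}=\dimdivalg{\mut{\cT}{T}}\) from Lemma~\ref{l:d-of-mutant}---so there is a small forward reference implicit in the paper's argument, and the sketch you give for closing the gap is essentially the paper's proof of that later corollary. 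Your second suggested workaround does not actually avoid the issue, as you yourself note: identifying the irreducible-map ranks with the multiplicities in \(\exchmon{\cT}{T}{\mp}\) is precisely what requires loop-freeness of \(\mut{\cT}{T}\). So the first route (propagating the no-loop condition) is the one to use.
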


\begin{proof}
The exchange conflations for the pair $T$ and $\mut{\cT}{T}$ are \[\begin{tikzcd}
\mut{\cT}{T}\arrow[infl]{r}{\psi^-}&\exchmon{\cT}{T}{+}\arrow[defl]{r}{\varphi^+}&T\arrow[confl]{r}&\phantom{},
\end{tikzcd}\quad
\begin{tikzcd}
T\arrow[infl]{r}{\varphi^-}&\exchmon{\cT}{T}{-}\arrow[defl]{r}{\psi^+}&\mut{\cT}{T}\arrow[confl]{r}&\phantom{}.
\end{tikzcd}\]
Since $\Ext{1}{\cC}{T}{T'}=0$ for any $T'\in\mu_T\cT\setminus \mu_\cT T = \cT\setminus T$, the map $\psi^-$ is a left $(\mu_T\cT\setminus \mu_\cT T)$-approximation of $\mu_\cT T$. Similarly, $\psi^+$ is a right $(\mu_T\cT\setminus \mu_\cT T)$-approximation, and hence $\exchmon{\mut{T}{\cT}}{(\mut{\cT}{T})}{\pm}=\exchmon{\cT}{T}{\mp}$.
Calculating $\exchmatentry{U,T}^{\cT}$ and $\exchmatentry{U,\mut{\cT}{T}}^{\mut{T}{\cT}}$ using the formula from Proposition~\ref{p:decomp-exch-terms} then gives the result.	
\end{proof}

The following lemma, which will be useful in Section~\ref{s:clust-char}, applies in particular to the case that $\cC$ is a compact cluster category, $\cT\ctsubcat\cC$ and $(X,Y)=(T,\mut{\cT}{T})$ for $T\in\exch{\cT}$ such that $\cT$ has no loop at $T$.

\begin{lemma}
\label{l:rk1-unique-middle-term}
Let $\cC$ be a pseudocompact $\bK$-linear category and let $X,Y\in\cC$. Assume that $X$ is indecomposable, $\dimdivalg{X}<\infty$, and $\rank_{\divalg{X}}\Ext{1}{\cC}{X}{Y}=1$, where the $\divalg{X}$-structure derives from a choice of splitting as in Proposition~\ref{p:Wedderburn-splitting}.
Then any $\epsilon_1,\epsilon_2\in\Ext{1}{\cC}{X}{Y}\setminus\{0\}$ realise isomorphic conflations in $\cC$.
\end{lemma}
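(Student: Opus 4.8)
The plan is to reduce the statement to the case where $\Ext^1_{\cC}(X,Y)$ is one-dimensional over $\divalg{X}$, exploit the division-ring structure to show that any two nonzero elements lie in the same $\divalg{X}$-orbit under left multiplication, and then invoke the fact that scaling an extension class by a unit produces an isomorphic conflation. Concretely, since $\divalg{X}$ is a division ring and $\Ext^1_{\cC}(X,Y)$ is a left $\divalg{X}$-module that is free of rank one (by hypothesis, via a Wedderburn splitting as in Proposition~\ref{p:Wedderburn-splitting}), every nonzero element is a basis element, so for $\epsilon_1,\epsilon_2\in\Ext^1_{\cC}(X,Y)\setminus\{0\}$ there is a unit $u\in\divalg{X}^\times$ with $\epsilon_2=u\cdot\epsilon_1$. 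Here the $\divalg{X}$-action on $\Ext^1_{\cC}(X,Y)$ is the one induced by precomposition with (lifts of) endomorphisms of $X$: if $\bar a\in\divalg{X}=\op{\End{\cC}{X}}/\rad{}\op{\End{\cC}{X}}$ is represented by $a\in\op{\End{\cC}{X}}$, then $\bar a\cdot\epsilon = \Ext^1_{\cC}(a,Y)(\epsilon)=a^*\epsilon$, which is well-defined on $\Ext^1$ because radical endomorphisms act as ... — actually one must be slightly careful, so the first real step is to pin down exactly which $\divalg{X}$-module structure Proposition~\ref{p:Wedderburn-splitting} equips $\Ext^1_{\cC}(X,Y)$ with, and to observe that $u\in\divalg{X}^\times$ lifts to an \emph{automorphism} $\tilde u$ of $X$ in $\cC$ (a unit of $\divalg{X}$ lifts to a unit of $\op{\End{\cC}{X}}$ since the latter is a pseudocompact, hence complete, local-by-semisimple... in fact here $X$ indecomposable with $\dimdivalg{X}<\infty$ means $\op{\End{\cC}{X}}$ is local, so a non-nilpotent element, i.e.\ one with nonzero image in $\divalg{X}$, that becomes a unit mod radical is already a unit).

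With the automorphism $\tilde u\colon X\isoto X$ in hand, the second step is the standard homological fact: if $\delta\in\Ext^1_{\cC}(X,Y)$ is realised by a conflation $Y\infl E\overset{p}{\defl}X$, then $\tilde u^{*}\delta=\delta\circ\tilde u$ (functoriality of $\Ext^1$ in the first variable, via the pullback/base-change along $\tilde u$) is realised by the pullback conflation, and since $\tilde u$ is an isomorphism the pullback $Y\infl E'\defl X$ is isomorphic to the original one via an isomorphism of conflations whose components on $Y$ and $E$ are identities and whose component on $X$ is $\tilde u$. In the extriangulated setting this is exactly the statement that base change along an isomorphism yields an isomorphic conflation, which follows from the axioms of an extriangulated category (e.g.\ from the functoriality asserted in the excerpt: elements of $\Ext^1_{\cC}(X,Y)$ are realised by kernel--cokernel pairs ``up to isomorphisms of such pairs extending the identity maps on $X$ and $Y$'', combined with the fact that $\Ext^1_{\cC}(\tilde u,Y)$ is a bijection). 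Hence the conflation realising $\epsilon_2=u\cdot\epsilon_1$ is isomorphic to that realising $\epsilon_1$, and in particular the middle terms are isomorphic, which is what is needed in the application.

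The step I expect to be the main obstacle is the bookkeeping in the first paragraph: making precise that the ``rank one'' hypothesis, stated with respect to a choice of Wedderburn (Proposition~\ref{p:Wedderburn-splitting}) splitting $\divalg{X}\hookrightarrow\op{\End{\cC}{X}}$, really does say that $\Ext^1_{\cC}(X,Y)$ is a free rank-one left $\divalg{X}$-module \emph{for the action by the lifted subalgebra}, and that one can then pass from an abstract unit $u\in\divalg{X}$ to an honest automorphism of $X$ acting as $u$ on the $\Ext$-group. Once $X$ is indecomposable with $\dimdivalg{X}<\infty$, $\op{\End{\cC}{X}}$ is local with residue division ring $\divalg{X}$, the splitting embeds $\divalg{X}$ as a subfield (skew subfield) mapping isomorphically onto the residue, and any $u\ne 0$ in that copy is a unit of $\op{\End{\cC}{X}}$ because it is not in the maximal ideal $\rad{}\op{\End{\cC}{X}}$; so the lift is automatic and no further finiteness is needed beyond what Lemma~\ref{l:rk1-unique-middle-term} already assumes. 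After that, everything is a one-line application of functoriality of conflations under pullback along an isomorphism.
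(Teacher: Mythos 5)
Your proposal is correct and follows essentially the same route as the paper: write one nonzero class as a $\divalg{X}$-multiple of the other (using rank one over the division algebra), lift that scalar along the Wedderburn splitting to an endomorphism of $X$, observe it is invertible since it is a unit modulo the radical, and conclude via functoriality that the resulting map of conflations is an isomorphism. The only cosmetic difference is that the paper verifies invertibility directly by lifting $\phi^{-1}$ as well and computing $\overline{\psi}\circ\overline{\phi}=1_X+\alpha$ with $\alpha$ radical, whereas you invoke locality of $\op{\End{\cC}{X}}$ — both are valid in this setting.
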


\begin{proof}
Let $\epsilon_1,\epsilon_2\in\Ext{1}{\cC}{X}{Y}\setminus\{0\}$.
Since $\rank_{\divalg{X}}\Ext{1}{\cC}{X}{Y}=1$, we have $\epsilon_1=\phi\cdot\epsilon_2$ for some $\phi\in\divalg{X}$; in particular $\phi$ is non-zero, hence invertible, in the division algebra $\divalg{X}$.
Letting $\overline{\phi}\in\op{\End{\cC}{X}}$ be the image of $\phi$ under our chosen splitting, there is a map
\begin{equation}
\label{eq:conf-map}
\begin{tikzcd}
Y\arrow[infl]{r}\arrow[equal]{d}&Z_1\arrow[defl]{r}\arrow{d}{\zeta}&X\arrow{d}{\overline{\phi}}\arrow[confl]{r}{\epsilon_1}&\phantom{}\\
Y\arrow[infl]{r}&Z_2\arrow[defl]{r}&X\arrow[confl]{r}{\epsilon_2}&\phantom{}
\end{tikzcd}
\end{equation}
of conflations.
Letting $\overline{\psi}\in\op{\End{\cC}{X}}$ be the image of $\phi^{-1}$ under our chosen splitting, we have $\overline{\psi}\circ\overline{\phi}=1_X+\alpha$ for $\alpha\in\rad{}\op{\End{\cC}{X}}$, hence this composition is invertible.
Thus $\overline{\phi}$ is an isomorphism, and so \eqref{eq:conf-map} is an isomorphism of conflations by \cite[Cor.~3.6]{NakaokaPalu}.
\end{proof}

For our cluster categories to decategorify to Fomin--Zelevinsky's cluster algebras, we will need them to have a cluster structure as in \cite[\S II.1]{BIRS1}, \cite[Def.~2.4]{FuKeller}. The conditions in Definition~\ref{d:mutable} are precisely those used in these sources to define a \emph{weak} cluster structure, as follows.

\begin{definition}
\label{d:weak-cluster-structure}
Let $\cC$ be a Krull--Schmidt cluster category. We say that $\cC$ has a \emph{weak cluster structure} if $\cT$ is maximally mutable for every $\cT\ctsubcat\cC$.
\end{definition}

By Corollary~\ref{c:max-mutable}, a Krull--Schmidt cluster category $\cC$ has a weak cluster structure if and only if its stable category $\stab{\cC}$ does.

In the next definition, we follow \cite[Def.~6.2]{PresslandPostnikov} by restricting the definitions from \cite{BIRS1,FuKeller} to a single mutation class of cluster-tilting subcategories.

\begin{definition}
\label{d:cluster-structure}
Let $\cC$ be a Krull--Schmidt cluster category and let $\cT\ctsubcat\cC$. We say that $(\cC,\cT)$ has a \emph{cluster structure} if $\cC$ has a weak cluster structure and additionally
\begin{enumerate}
\item\label{d:cluster-structure-no-loops} if $\cU$ is reachable from $\cT$, then $\cU$ has no loops or $2$-cycles, and
\item\label{d:cluster-structure-FZ-mutation} if $\cU$ is reachable from $\cT$ and $U\in\exch{\cU}$, then the exchange matrix $\exchmat{\mu_U\cU}$ agrees with the Fomin--Zelevinsky mutation $\mu_U\exchmat{\cU}$ of $\exchmat{\cU}$ at the column indexed by $U$.
\end{enumerate}
If $T$ is a cluster-tilting object, then the additive closure $\add{T}$ is a cluster-tilting subcategory, and we say that $(\cC,T)$ has a cluster structure if $(\cC,\add{T})$ does.
\end{definition}

In Definition~\ref{d:cluster-structure}\ref{d:cluster-structure-FZ-mutation}, the exchange matrices are well-defined by Corollary~\ref{c:KS-max-mut}.
Moreover, the fact that $U\in\exch{\cU}$ means that $\cU$ is locally finite at $U$ by Proposition~\ref{p:mut-v-lf}, so $\sum_{V\in\indec{\cU}}\Gabmatentry{U,V}<\infty$ and $\sum_{V\in\indec{\cU}}\Gabmatentry{V,U}<\infty$.
Together with Definition~\ref{d:cluster-structure}\ref{d:cluster-structure-no-loops}, this ensures that Definition~\ref{d:cluster-structure}\ref{d:cluster-structure-FZ-mutation} makes sense, i.e.\ that the computation of the Fomin--Zelevinsky mutation involves only finite sums.

While it is convenient for us to define cluster structures as above (in part to aid comparison with the original definition by Buan--Iyama--Reiten--Scott \cite{BIRS1}), some of the required properties are implied by only very mild additional assumptions on $\cC$.
For example, we have already seen in Proposition~\ref{p:mut-v-ff} that the property of having a weak cluster structure reduces for cluster categories to the statement that $\cT\setminus T$ is functorially finite in $\cT$ for any cluster-tilting subcategory $\cT$ and any $T\in\indec{\stab{\cT}}$.

\begin{definition}
We say that $\cC$ has \emph{finite rank} if $\stab{\cT}$ is additively finite for all $\cT\ctsubcat\cC$.
\end{definition}

It will turn out (Corollary~\ref{c:clusters-same-size}) that all cluster-tilting subcategories of $\stab{\cC}$ have the same cardinality, so in fact $\cC$ has finite rank if $\stab{\cT}$ is additively finite for some $\cT\ctsubcat\cC$.

\begin{corollary}
\label{c:weak-clust-struct}
Let $\cC$ be a Krull--Schmidt cluster category. If either
\begin{enumerate}
\item\label{c:weak-clust-struct-finite} $\cC$ has finite rank, or
\item\label{c:weak-clust-struct-compl} the cluster-tilting subcategories of $\stab{\cC}$ are locally finite and have no loops,
\end{enumerate}
then $\cC$ has a weak cluster structure.
\end{corollary}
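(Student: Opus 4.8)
The plan is to reduce to the stable (triangulated) category and then verify, in each of the two cases, the functorial-finiteness criterion of Proposition~\ref{p:mut-v-ff}.

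First I would use that, by Corollary~\ref{c:max-mutable} (equivalently, by the remark following Definition~\ref{d:weak-cluster-structure}), $\cC$ has a weak cluster structure if and only if $\stab\cC$ does. Now $\stab\cC$ is itself a cluster category by Proposition~\ref{p:ct-bijection} (applied with $\cP$ the subcategory of all projective-injectives), it is Krull--Schmidt by Proposition~\ref{p:partialstab-KS}, and it is Hom-finite since, being the stable category of the stably $2$-Calabi--Yau category $\cC$, it is $2$-Calabi--Yau in the sense of Definition~\ref{d:CY}. By Proposition~\ref{p:ct-bijection}, its cluster-tilting subcategories are exactly the $\stab\cT$ with $\cT\ctsubcat\cC$, so hypothesis (i) becomes ``every cluster-tilting subcategory of $\stab\cC$ is additively finite'' and (ii) retains its stated form. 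Fix a cluster-tilting subcategory $\cS\ctsubcat\stab\cC$ and an indecomposable $T\in\indec\cS$ (so $T\neq0$, hence $T$ is non-projective in $\stab\cC$). Since $\stab\cC$ has no non-zero projective-injective objects, $\stab\cS=\cS$ and $\indec{\stab\cS}=\indec\cS$; thus it suffices to show that every such $T$ is mutable in $\cS$, for then $\exch\cS=\indec{\stab\cS}$ and $\cS$ is maximally mutable (Definition~\ref{d:max-mut}), and as $\cS$ is arbitrary this gives a weak cluster structure on $\stab\cC$, hence on $\cC$. By Proposition~\ref{p:mut-v-ff} applied to $\stab\cC$, mutability of $T$ in $\cS$ is equivalent to $\cS\setminus T$ being functorially finite in $\stab\cC$; since $\cS$ is cluster-tilting, hence functorially finite in $\stab\cC$, it suffices by composition of approximations to show that $\cS\setminus T$ is functorially finite in $\cS$.

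In case (i), $\cS$ is additively finite, so $\cS\setminus T$ has the form $\add M$ for a single object $M$, and subcategories of this form are functorially finite in the Hom-finite category $\stab\cC$ (as recorded just after Remark~\ref{r:strong-ff}); restricting the resulting approximations to objects of $\cS$ settles this case. In case (ii), $\cS$ is locally finite at $T$ and has no loop at $T$, so $\irr{\cS}{T}{T}=0$. Exactly as in the proof of Proposition~\ref{p:decomp-exch-terms}, Lemma~\ref{l:sink-map-construction} then supplies a source map and a sink map of $T$ in $\cS$, with codomain $\bigdsum_{U\in\indec\cS\setminus T}U^{\frac{\dimdivalg{T}}{\dimdivalg{U}}\Gabmatentry{T,U}}$ and domain $\bigdsum_{U\in\indec\cS\setminus T}U^{\Gabmatentry{U,T}}$ respectively (the exponents being non-negative integers, and all but finitely many vanishing, by local finiteness at $T$). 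No loop at $T$ ensures that these middle objects lie in $\cS\setminus T$, and that the sink map $g\colon E\to T$ is a right $(\cS\setminus T)$-approximation: any morphism $V\to T$ with $V\in\indec\cS\setminus T$ is not a split epimorphism, since $V\not\cong T$, and so factors through $g$, the general case following by direct sum decomposition; dually the source map is a left $(\cS\setminus T)$-approximation. Finally, for an arbitrary $X\in\cS$, write $X=T^{m}\oplus X'$ with $X'\in\cS\setminus T$; combining the $(\cS\setminus T)$-approximation of $T^{m}$ obtained above with the identity on $X'$ yields right and left $(\cS\setminus T)$-approximations of $X$. Hence $\cS\setminus T$ is functorially finite in $\cS$, as required.

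In both cases Proposition~\ref{p:mut-v-ff} now gives that $T$ is mutable in $\cS$, completing the argument. The step I expect to be the main obstacle is case (ii): one must check that the source and sink maps produced by Lemma~\ref{l:sink-map-construction} genuinely upgrade to $(\cS\setminus T)$-approximations of $T$ once loop-freeness at $T$ is imposed, and that the hypothesis ``$\cS$ is locally finite'' is exactly what makes that lemma applicable inside the Hom-finite category $\cS$; everything else is bookkeeping around the reduction to $\stab\cC$ and the equivalences already recorded in Propositions~\ref{p:ct-bijection}, \ref{p:partialstab-KS} and~\ref{p:mut-v-ff} and Corollary~\ref{c:max-mutable}.
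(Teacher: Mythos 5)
Your proposal is correct and follows essentially the same route as the paper's own proof: reduce to the Hom-finite stable category via Corollary~\ref{c:max-mutable} (the paper cites Proposition~\ref{p:mutability-stab}, which is the same step one inference earlier), then verify the functorial-finiteness criterion of Proposition~\ref{p:mut-v-ff}, handling case~(i) by additive finiteness and case~(ii) by combining Lemma~\ref{l:sink-map-construction} with the no-loop hypothesis to upgrade the sink and source maps to $(\cS\setminus T)$-approximations. Your version is a little more explicit about why loop-freeness turns sink/source maps into approximations, but the strategy is identical.
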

\begin{proof}
We may reduce to the stable category $\stab{\cC}$ by Proposition~\ref{p:mutability-stab}, and then use the characterisation of mutability from Proposition~\ref{p:mut-v-ff}.
In case \ref{c:weak-clust-struct-finite}, each category $\stab{\cT}\setminus T$ is additively finite, hence functorially finite in the Hom-finite category $\stab{\cT}$.
Since $\stab{\cC}$ is Hom-finite and hence compact, in case \ref{c:weak-clust-struct-compl} we may use Lemma~\ref{l:sink-map-construction} to see that each $T\in\indec{\stab{\cT}}$ admits a sink map and a source map in $\stab{\cT}$. Because $\stab{\cT}$ has no loops, the minimal sink and source maps from this lemma are also right and left $(\stab{\cT}\setminus T)$-approximations respectively, so $\stab{\cT}\setminus T$ is functorially finite in $\cT$ as required.
\end{proof}

\begin{remark}
Between Proposition~\ref{p:mut-v-lf} and Corollary~\ref{c:weak-clust-struct}, we have shown that in a compact cluster category $\cC$ with $\cT\ctsubcat\cC$, a non-projective object $T\in\indec{\cT}$ such that $\cT$ has no loop at $T$ is mutable if and only if $\stab{\cT}$ is locally finite at $T$.
One can remove the no loop condition at the cost of more technical assumptions, allowing Lemma~\ref{l:approx-construction} to be used in place of Lemma~\ref{l:sink-map-construction} to construct the necessary approximations.
\end{remark}

There are also various natural assumptions on a pair $(\cC,\cT)$ which imply that this pair has a cluster structure; see for example \cite[\S5]{BIRS2}, \cite[\S5]{Presslandmutation} and \cite{Presslandcorrigendum}. A consequence of Theorem~\ref{t:exch-mat-mutation-clust-str} below is that \ref{d:cluster-structure-no-loops} implies \ref{d:cluster-structure-FZ-mutation} in Definition~\ref{d:cluster-structure} when $\cC$ is either compact or skew-symmetric.

While we need cluster structures to enable us to ultimately link back to cluster algebras and related constructions, many of our categorical results (such as those in Section~\ref{s:mutation}) will not require them.

\subsection{Modules over cluster-tilting subcategories}\label{ss:mods-over-cts}

Given a cluster category $\cC$ with $\cT\ctsubcat\cC$, we will often be led to consider $\cT$-modules, or representations of $\cT$.
These are, by definition, contravariant functors on $\cT$ with values in the category $\Mod{\bK}$ of (arbitrary) $\bK$-vector spaces.
Background on representations of additive categories in general may be found in Section~\ref{s:modules}.
The category of all $\cT$-modules is denoted by $\Mod{\cT}$, and we will be interested in three important subcategories:
\begin{enumerate}
\item $\lfd{\cT}$, the category of locally finite-dimensional modules, defined to be those with values in the category $\fd{\bK}$ of finite-dimensional vector spaces,
\item $\fpmod{\cT}$, the category of finitely presented $\cT$-modules (Definition~\ref{d:fp}), and
\item $\fd{\cT}$, the category of finite-dimensional $\cT$-modules (Definition~\ref{d:fd}).
When $\cT$ is Krull--Schmidt, these are the locally finite-dimensional $\cT$-modules which are zero on all but finitely many objects in $\indec{\cT}$.
\end{enumerate}

It is immediate from the definition that $\fd{\cT}\subseteq\lfd{\cT}$, with equality if $\cT$ is additively finite.
Moreover, $\cT$ is Hom-finite if and only if the projective $\cT$-modules are locally finite-dimensional, if and only if $\fpmod{\cT}\subseteq\lfd{\cT}$.

If $\cC/\cP$ is a partial stabilisation of $\cC$, then there is also a natural inclusion $\Mod(\cT/\cP)\subseteq\Mod{\cT}$ by viewing $\cT/\cP$-modules as $\cT$-modules which vanish on $\cP\subseteq\cT$.
One sees directly from the definitions that this restricts to inclusions $\lfd(\cT/\cP)\subseteq\lfd{\cT}$ and $\fd(\cT/\cP)\subseteq\fd{\cT}$.
The equivalent statement for finitely presented modules is only slightly more involved.

\begin{proposition}
\label{p:fpmod-inclusion}
Let $\cC$ be a cluster category and $\cT\ctsubcat\cC$.
If $\cP\subseteq\cC$ is a contravariantly finite subcategory of $\cT$, then the inclusion $\Mod{\cT/\cP}\subseteq\Mod{\cT}$ restricts to an inclusion $\fpmod(\cT/\cP)\subseteq\fpmod{\cT}$.
\end{proposition}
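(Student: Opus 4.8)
The plan is to reduce the statement to the single fact that for every object $X$ of $\cT/\cP$ --- equivalently, of $\cT$, since the projection is the identity on objects --- the representable $\cT/\cP$-module $\Hom{\cT/\cP}{\blank}{X}$ is finitely presented when viewed as a $\cT$-module. This is the only point at which the contravariant finiteness of $\cP$ in $\cT$ is used, and where I expect any real difficulty to lie; everything else is bookkeeping.

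To prove that fact, fix $X\in\cT$ and choose a right $\cP$-approximation $p\colon P\to X$, which exists by hypothesis and has $P\in\cP\subseteq\cT$. By definition of the additive quotient, the kernel of the natural epimorphism of $\cT$-modules $\Hom{\cT}{\blank}{X}\onto\Hom{\cT/\cP}{\blank}{X}$ is the submodule $\cP(\blank,X)$ of morphisms into $X$ that factor over an object of $\cP$; and because $p$ is a right $\cP$-approximation, a one-line diagram chase (any $b\colon P'\to X$ with $P'\in\cP$ factors through $p$) identifies $\cP(\blank,X)$ with the image of $p_*\colon\Hom{\cT}{\blank}{P}\to\Hom{\cT}{\blank}{X}$. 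Hence there is an exact sequence
\[\Hom{\cT}{\blank}{P}\xrightarrow{p_*}\Hom{\cT}{\blank}{X}\to\Hom{\cT/\cP}{\blank}{X}\to0\]
of $\cT$-modules whose first two terms are finitely generated projective, so $\Hom{\cT/\cP}{\blank}{X}\in\fpmod{\cT}$.

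Now let $M\in\fpmod(\cT/\cP)$. Since $\cT/\cP$ is additive, finite generation and finite presentation of $M$ over $\cT/\cP$ yield objects $T_0,T_1\in\cT/\cP$ and an exact sequence $\Hom{\cT/\cP}{\blank}{T_1}\to\Hom{\cT/\cP}{\blank}{T_0}\to M\to0$ in $\Mod(\cT/\cP)$. As $\Mod(\cT/\cP)$ is the full subcategory of $\Mod{\cT}$ of modules vanishing on $\cP$, and this subcategory is closed under sub- and quotient modules, the inclusion $\Mod(\cT/\cP)\subseteq\Mod{\cT}$ is exact and the displayed sequence remains exact in $\Mod{\cT}$. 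Thus $M$ is the cokernel of a map between finitely presented $\cT$-modules (by the previous paragraph), and since $\fpmod{\cT}$ is closed under cokernels in $\Mod{\cT}$ --- a surjection onto $M$ from a finitely generated projective has kernel an extension of a finitely generated module by a finitely generated one --- we conclude $M\in\fpmod{\cT}$. The main obstacle remains the middle step: without contravariant finiteness of $\cP$ one cannot control the module of relations $\cP(\blank,X)$, and $\Hom{\cT/\cP}{\blank}{X}$ could fail to be finitely presented over $\cT$; the right $\cP$-approximation is precisely what exhibits this module as finitely generated.
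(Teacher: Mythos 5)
Your proof is correct and follows essentially the same route as the paper's: take a right $\cP$-approximation $p\colon P\to X$, observe that $p_*$ has image $\cP(\blank,X)$ so that $\Hom{\cT}{\blank}{P}\to\Hom{\cT}{\blank}{X}\to\Hom{\cT/\cP}{\blank}{X}\to0$ is a finite presentation of the representable $(\cT/\cP)$-module over $\cT$, and then conclude for general $M\in\fpmod(\cT/\cP)$ by closure of $\fpmod{\cT}$ under cokernels. Your extra remarks about exactness of the inclusion $\Mod(\cT/\cP)\subseteq\Mod{\cT}$ are details the paper leaves implicit but do not change the argument.
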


\begin{proof}
Let $T\in\cT$, and let $p\colon P\to T$ be a right $\cP$-approximation of $T$.
Then there is an exact sequence
\[\begin{tikzcd}
\Yonfun{\cT}{P}\arrow{r}{\Yonfun{\cT}f}&\Yonfun{\cT}T\arrow{r}&\Yonfun{\cT/\cP}T\arrow{r}&0,
\end{tikzcd}\]
since, by definition, the image of $\Yonfun{\cT}f$ is the subfunctor of $\Yonfun{\cT}T=\Hom{\cC}{\blank}{T}|_{\cT}$ consisting of maps factoring over $\cP$.
Thus the projective $(\cT/\cP)$-modules $\Yonfun{\cT/\cP}{T}$ lie in $\fpmod{\cT}$ when viewed as $\cT$-modules.
The result follows since $\fpmod{\cT}$ is closed under taking cokernels.
\end{proof}

Proposition~\ref{p:fpmod-inclusion} applies in particular when $\cP$ is the full subcategory of all projective-injective objects of $\cC$, which is functorially finite since $\cC$ is a Frobenius extriangulated category.
We thus always have an inclusion $\fpmod{\stab{\cT}}\subseteq\fpmod{\cT}$, for $\stab{\cT}=\cT/\cP$ the image of $\cT$ in the triangulated stable category $\stab{\cC}$.
While $\fpmod{\stab{\cT}}$ is always an abelian category \cite[Prop.~2.1(a)]{KellerReiten}, this may not be the case for $\fpmod{\cT}$.

The next statement has been proved several times by various authors (see the references in the proof) in the case that $\cC$ is a triangulated category.
For general cluster categories, we simply reduce the statement to this case and apply their results.
Given some of the results of Section~\ref{s:mutation}, the statement may also be deduced from \cite[Thm.~1.1]{ZSLWW}.

\begin{proposition}\label{p:equiv-to-mod}
Let $\cC$ be a cluster category, let $\cT\ctsubcat\cC$, and let $\stab{\cT}$ be the image of $\cT$ in the stable category $\stab{\cC}$.
Then there is an equivalence
\[\Extfun{\cT}\colon\cC/\cT\simeq\fpmod{\stab{\cT}},\ \]
defined by $\Extfun{\cT}X=\Ext{1}{\cC}{\blank}{X}|_{\cT}$.
\end{proposition}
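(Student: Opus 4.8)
The plan is to reduce the statement to the already-understood case where $\cC$ is a triangulated category, and then transport it along the stabilisation functor $\cC\to\stab{\cC}$.

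First I would record the relevant identifications. Let $\cP\subseteq\cC$ be the full subcategory of all projective-injective objects, so that $\stab{\cC}=\cC/\cP$ and, since $\cP\subseteq\cT$, also $\stab{\cT}=\cT/\cP$. The iterated-quotient identity $(\cC/\cP)/(\cT/\cP)=\cC/\cT$ — a special case of the isomorphism recalled in Remark~\ref{r:algebraic}, valid because $\cP\subseteq\cT$ and $\cT$ is additively closed — then furnishes a canonical identification $\cC/\cT=\stab{\cC}/\stab{\cT}$. By Proposition~\ref{p:ct-bijection}, $\stab{\cT}$ is a cluster-tilting subcategory of $\stab{\cC}$, which is a Hom-finite $2$-Calabi--Yau triangulated category; thus $(\stab{\cC},\stab{\cT})$ is precisely the input to which the triangulated form of the proposition applies.

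Next I would check that the functor $\Extfun{\cT}X=\Ext{1}{\cC}{\blank}{X}|_{\cT}$ descends compatibly along both quotients. For any object $X$ the $\cT$-module $\Extfun{\cT}X$ vanishes on $\cP$, because objects of $\cP$ are projective, and hence (by additivity) on every morphism of $\cT$ factoring through $\cP$; so $\Extfun{\cT}X$ is in fact a $\stab{\cT}$-module. On the other hand, since $\cT$ is cluster-tilting we have $\Ext{1}{\cC}{\blank}{T}|_{\cT}=0$ for every $T\in\cT$, so the functor $X\mapsto\Extfun{\cT}X$ kills all objects of $\cT$ and therefore factors through $\cC/\cT$. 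Finally, Definition~\ref{d:partial-stab} identifies the extension bifunctor of $\stab{\cC}$ with that of $\cC$, and Theorem~\ref{t:extri-stabcat} gives a natural isomorphism $\Ext{1}{\cC}{\blank}{X}=\stabHom{\cC}{\blank}{\Sigma X}$; so under the identification $\cC/\cT=\stab{\cC}/\stab{\cT}$ the induced functor is exactly $X\mapsto\Hom{\stab{\cC}}{\blank}{\Sigma X}|_{\stab{\cT}}=\Ext{1}{\stab{\cC}}{\blank}{X}|_{\stab{\cT}}$, the functor appearing in the triangulated statement.

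It then remains to invoke the triangulated case: for a Hom-finite $2$-Calabi--Yau triangulated category $\stab{\cC}$ with a cluster-tilting subcategory $\stab{\cT}$, the functor $\Hom{\stab{\cC}}{\blank}{\Sigma\blank}|_{\stab{\cT}}$ induces an equivalence $\stab{\cC}/\stab{\cT}\isoto\fpmod{\stab{\cT}}$; this is due to several authors (see \cite{KellerReiten} and the references therein), and, as noted after the statement, it can alternatively be recovered from \cite[Thm.~1.1]{ZSLWW} together with the results of Section~\ref{s:mutation}. Composing with the identifications above yields the asserted equivalence $\Extfun{\cT}\colon\cC/\cT\simeq\fpmod{\stab{\cT}}$. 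I do not expect a genuine obstacle here: the argument is essentially bookkeeping, and the only steps needing a word of care are the iterated-quotient identity $\cC/\cT=\stab{\cC}/\stab{\cT}$ and the verification that, under it, $\Extfun{\cT}$ goes over to the classical functor $\Ext{1}{\stab{\cC}}{\blank}{\blank}|_{\stab{\cT}}$; the substantive input — the triangulated case — is imported wholesale.
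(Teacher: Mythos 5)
Your proof is correct and follows essentially the same route as the paper: both arguments show that $\Extfun{\cT}$ factors through $\cC/\cT$ and lands in $\stab{\cT}$-modules, identify $\cC/\cT=\stab{\cC}/\stab{\cT}$, and then import the triangulated case from Koenig--Zhu/Keller--Reiten. The only difference is that you spell out the iterated-quotient bookkeeping and the comparison of $\Extfun{\cT}$ with $\Ext^1_{\stab{\cC}}(\blank,\blank)|_{\stab{\cT}}$ a little more explicitly, which is a fine expository choice but not a different argument.
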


\begin{proof}
A priori, $\Extfun{\cT}$ is a functor $\cC\to\Mod{\cT}$. However, since $\cT$ is cluster-tilting we have $\Extfun{\cT}{T}=0$ for all $T\in\cT$, and so $\Extfun{\cT}$ factors over the quotient $\cC\to\cC/\cT$.
Moreover, for any $X\in\cC$, the $\cT$-module $\Extfun{\cT}{X}$ vanishes on the projective objects of $\cC$, and so is a $\stab{\cT}$-module.
Thus we obtain an induced functor $\Extfun{\cT}\colon\cC/\cT\to\Mod{\stab{\cT}}$.
Since $\cT$ contains all projective objects of $\cC$, there is a natural equivalence $\cC/\cT=\stab{\cC}/\stab{\cT}$, and work of various authors (e.g.\ \cite[Cor.~4.4]{KoenigZhu}, see also \cite[Prop.~2.1]{KellerReiten}, \cite[Thm.~A]{BMR}, \cite{IyamaYoshino}) shows that $\Extfun{\cT}\colon\stab{\cC}/\stab{\cT}\to\Mod{\stab{\cT}}$ is an equivalence onto its image $\fpmod{\stab{\cT}}$, as claimed.
\end{proof}

\begin{corollary}
\label{c:fp-fcp}
Let $\cC$ be a cluster category and $\cT\ctsubcat\cC$. Then any $M\in\fpmod{\stab{\cT}}$ is a finitely copresented $\cT$-module, by which we mean that there is an exact sequence
\begin{equation}
\label{eq:copres}
\begin{tikzcd}
0\arrow{r}&M\arrow{r}&\dual{(\opYonfun{\cT}T_0)}\arrow{r}&\dual{(\opYonfun{\cT}T_1)}
\end{tikzcd}
\end{equation}
with $T_i\in\cT$.
\end{corollary}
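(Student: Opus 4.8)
The plan is to realise $M$ as the $\bK$-linear dual of a finitely presented $\op{\cT}$-module and then dualise a projective presentation of that module. First I would invoke Proposition~\ref{p:equiv-to-mod}: since $\Extfun{\cT}\colon\cC/\cT\to\fpmod{\stab{\cT}}$ is an equivalence, and in particular essentially surjective, we may write $M\cong\Extfun{\cT}X=\Ext{1}{\cC}{\blank}{X}|_{\cT}$ for some $X\in\cC$.

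Next I would use the stable $2$-Calabi--Yau property. Combining Theorem~\ref{t:extri-stabcat} with the Serre duality of $\stab{\cC}$ gives, exactly as recorded after Definition~\ref{d:CY}, a natural isomorphism $\Ext{1}{\cC}{T}{X}\cong\dual{\Ext{1}{\cC}{X}{T}}$, natural in $T\in\cT$. Hence, as $\cT$-modules, $M\cong\dual{N}$, where $N$ is the $\op{\cT}$-module sending $T\in\cT$ to $\Ext{1}{\cC}{X}{T}=\Ext{1}{\op{\cC}}{T}{X}$, i.e.\ $N=\Extfun{\op{\cT}}X$. Here I am using that the contravariant dual of an $\op{\cT}$-module is a $\cT$-module in the obvious way, and that the isomorphism above is natural enough to identify the two functors, not merely their values.

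Now, all of the defining properties of a cluster category are self-dual, so $\op{\cC}$ is again a cluster category with $\op{\cT}\ctsubcat\op{\cC}$. Applying Proposition~\ref{p:equiv-to-mod} to this data shows $N=\Extfun{\op{\cT}}X\in\fpmod{\stab{\op{\cT}}}$, and Proposition~\ref{p:fpmod-inclusion}, applied to $\op{\cC}$ with $\cP$ the (functorially finite) subcategory of projective-injectives, gives $\fpmod{\stab{\op{\cT}}}\subseteq\fpmod{\op{\cT}}$, so $N\in\fpmod{\op{\cT}}$. Choose a projective presentation in $\Mod{\op{\cT}}$,
\[\begin{tikzcd}
\opYonfun{\cT}{T_1}\arrow{r}&\opYonfun{\cT}{T_0}\arrow{r}&N\arrow{r}&0,
\end{tikzcd}\]
with $T_0,T_1\in\cT$. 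Applying the exact contravariant functor $\dual{(\blank)}=\Hom{\bK}{\blank}{\bK}$ objectwise yields an exact sequence of $\cT$-modules
\[\begin{tikzcd}
0\arrow{r}&\dual{N}\arrow{r}&\dual{(\opYonfun{\cT}{T_0})}\arrow{r}&\dual{(\opYonfun{\cT}{T_1})},
\end{tikzcd}\]
and since $\dual{N}\cong M$ this is the desired copresentation.

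I expect the only delicate points to be (i) checking naturality of the $2$-Calabi--Yau isomorphism carefully enough that it identifies $M$ with $\dual{N}$ as objects of $\Mod{\cT}$, and (ii) confirming that exactness of the dualised presentation can be verified objectwise in $\Mod{\cT}$; neither is a genuine obstacle. The load-bearing inputs are Proposition~\ref{p:equiv-to-mod} (used both for $\cC$ and for $\op{\cC}$), the self-duality of the cluster category axioms, Proposition~\ref{p:fpmod-inclusion}, and the exactness of $\bK$-linear duality.
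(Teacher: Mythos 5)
Your argument is correct and is essentially the paper's own proof: write \(M=\Extfun{\cT}X\), pass to \(\op{\cC}\) to see \(\Extfun{\op{\cT}}X\in\fpmod{\stab{\op{\cT}}}\subseteq\fpmod{\op{\cT}}\) via Propositions~\ref{p:equiv-to-mod} and \ref{p:fpmod-inclusion}, take a projective presentation of \(\Extfun{\op{\cT}}X\), and dualise using the stable \(2\)-Calabi--Yau identification \(\dual{(\Extfun{\op{\cT}}X)}=\Extfun{\cT}X\). Your extra commentary on naturality of the Serre duality isomorphism and objectwise exactness of \(\dual{(\blank)}\) only makes explicit the points the paper leaves implicit.
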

\begin{proof}
By Proposition~\ref{p:equiv-to-mod}, we have $M=\Extfun{\cT}X$ for some $X\in\cC$.
Applying Proposition~\ref{p:equiv-to-mod} to $\op{\cC}$ and $\op{\cT}$, which are also a cluster category and cluster-tilting subcategory respectively, we see that $\Extfun{\op{\cT}}X\in\fpmod{\op{\stab{\cT}}}$.
By Proposition~\ref{p:fpmod-inclusion} we may thus choose a presentation
\[\begin{tikzcd}
\Yonfun{\op{\cT}}T_1\arrow{r}&\Yonfun{\op{\cT}}T_0\arrow{r}&\Extfun{\op{\cT}}X\arrow{r}&0.
\end{tikzcd}\]
The result then follows by duality, observing that $\Yonfun{\op{\cT}}T_i=\opYonfun{\cT}T_i$ by definition, and that $\dual{(\Extfun{\op{\cT}}X)}=\Extfun{\cT}X=M$ since $\stab{\cC}$ is $2$-Calabi--Yau.
\end{proof}

In practice, we will typically use Proposition~\ref{p:equiv-to-mod} to replace a $\stab{\cT}$-module by an object of $\cC$, unique up to summands in $\cT$, via the inverse of the equivalence $\Extfun{\cT}$.
To do this, we will need the relevant module to be finitely presented, for which the following results will be useful.
Recall that $\exch \cT$ is the set of mutable indecomposable objects of $\cT$.

\begin{lemma}\label{l:props-of-K0-mod-T}
Let $\cC$ be a Krull--Schmidt cluster category and $\cT\ctsubcat\cC$.
Then
\begin{enumerate}
\item The set $\{\simpmod{\cT}{T}: T\in\indec\cT\}$ of simple functors (Definition~\ref{d:simple-functor}) is a complete set of representatives of the isomorphism classes of simple $\cT$-modules,
\item if $T\in \exch \cT$, then $\Extfun{\cT}(\mut{\cT}{T})\in\fd{\cT}\cap\fpmod{\cT}$, and
\item\label{l:props-mod-T-simple-eq-E} there is no loop at $T\in \exch \cT$ if and only if $\Extfun{\cT}(\mut{\cT}{T})=\simpmod{\cT}{T}$.
\end{enumerate}
\end{lemma}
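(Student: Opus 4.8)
The plan is to treat the three assertions in turn; (i) is a standard fact and (iii) carries the real content. For (i): since $\cC$, hence $\cT$, is Krull--Schmidt, for $T\in\indec{\cT}$ the representable $\Yonfun{\cT}{T}=\Hom{\cT}{\blank}{T}$ is an indecomposable projective $\cT$-module with local endomorphism ring, so it has a unique maximal subfunctor $\operatorname{rad}(\Yonfun{\cT}{T})\colon U\mapsto\radHom{\cT}{U}{T}$, with simple quotient $\simpmod{\cT}{T}$ supported only at $T$, where its value is $\divalg{T}$; the $\simpmod{\cT}{T}$ are therefore pairwise non-isomorphic, and any simple $\cT$-module is a nonzero, hence surjective, quotient of some $\Yonfun{\cT}{T}$ by Yoneda, so is one of them. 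This is the content of Section~\ref{s:modules}, which I would simply cite.

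For (ii): $T\in\exch{\cT}\subseteq\indec{\stab{\cT}}$, and Definition~\ref{d:mutable} provides the exchange conflation $\mut{\cT}{T}\infl\exchmon{\cT}{T}{+}\defl T\confl$ whose deflation $\varphi^+$ is a right $(\cT\setminus T)$-approximation. Applying $\Yonfun{\cT}$ and using Proposition~\ref{p:extri-les} with $\Extfun{\cT}\exchmon{\cT}{T}{+}=0$ (as $\exchmon{\cT}{T}{+}\in\cT$) yields an exact sequence $\Yonfun{\cT}\exchmon{\cT}{T}{+}\to\Yonfun{\cT}{T}\to\Extfun{\cT}(\mut{\cT}{T})\to 0$ whose first map is $\Yonfun{\cT}\varphi^+$; since $\exchmon{\cT}{T}{+}$ is a finite sum of indecomposables, this is a finite projective presentation, so $\Extfun{\cT}(\mut{\cT}{T})\in\fpmod{\cT}$ (equivalently, combine Proposition~\ref{p:equiv-to-mod} with the inclusion $\fpmod{\stab{\cT}}\subseteq\fpmod{\cT}$). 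Evaluating the presentation at $U\in\indec{\cT}$ identifies $\Extfun{\cT}(\mut{\cT}{T})(U)$ with $\coker(\Hom{\cC}{U}{\varphi^+})$, which vanishes for $U\not\iso T$ exactly because $\varphi^+$ is a right $(\cT\setminus T)$-approximation, and for $U=T$ equals $\Ext{1}{\cC}{T}{\mut{\cT}{T}}\iso\stabHom{\cC}{T}{\Sigma\mut{\cT}{T}}$ (Theorem~\ref{t:extri-stabcat}), which is finite-dimensional since $\stab{\cC}$ is Hom-finite. Hence $\Extfun{\cT}(\mut{\cT}{T})$ is locally finite-dimensional and supported at the single indecomposable $T$, so lies in $\fd{\cT}$.

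For (iii): set $I=\image(\Hom{\cC}{T}{\varphi^+})=\varphi^+\circ\Hom{\cC}{T}{\exchmon{\cT}{T}{+}}$; as $\exchmon{\cT}{T}{+}$ has no summand isomorphic to $T$, both $\Hom{\cC}{T}{\exchmon{\cT}{T}{+}}$ and $\varphi^+$ are radical, so $I$ is a right ideal of $\End{\cC}{T}$ contained in $\radHom{\cT}{T}{T}$. By (ii), $\Extfun{\cT}(\mut{\cT}{T})$ is the quotient of the indecomposable projective $\Yonfun{\cT}{T}$ by $\image(\Yonfun{\cT}\varphi^+)$, whose value at $U$ is $\varphi^+\circ\Hom{\cC}{U}{\exchmon{\cT}{T}{+}}$; by the right-approximation property this agrees with $\operatorname{rad}(\Yonfun{\cT}{T})(U)=\radHom{\cT}{U}{T}$ for all $U\not\iso T$, so $\operatorname{rad}(\Yonfun{\cT}{T})\subseteq\image(\Yonfun{\cT}\varphi^+)$ if and only if $I=\radHom{\cT}{T}{T}$. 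Since $\Extfun{\cT}(\mut{\cT}{T})\ne 0$ (the exchange conflation is non-split) and is a quotient of $\Yonfun{\cT}{T}$, it is isomorphic to $\simpmod{\cT}{T}$ if and only if it is simple, i.e.\ if and only if $I=\radHom{\cT}{T}{T}$.

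It remains to identify this with the absence of a loop at $T$. The key computation is $\radHom[2]{\cT}{T}{T}=(\radHom{\cT}{T}{T})^{2}+I$: the $U=T$ summand of $\radHom[2]{\cT}{T}{T}=\sum_{U\in\indec{\cT}}\radHom{\cT}{U}{T}\circ\radHom{\cT}{T}{U}$ is $(\radHom{\cT}{T}{T})^{2}$, while the remaining summands add up to $I$, because every map $U\to T$ with $U\in\cT\setminus T$ factors through $\varphi^+$, and conversely. By Definition~\ref{d:no-loops}, no loop at $T$ means $\radHom{\cT}{T}{T}=\radHom[2]{\cT}{T}{T}=(\radHom{\cT}{T}{T})^{2}+I$; this is automatic when $I=\radHom{\cT}{T}{T}$, and conversely it gives $\radHom{\cT}{T}{T}\cdot(\radHom{\cT}{T}{T}/I)=\radHom{\cT}{T}{T}/I$, so, since $\radHom{\cT}{T}{T}/I$ is finitely generated over $\End{\cC}{T}$ (it embeds in the finite-dimensional $\Ext{1}{\cC}{T}{\mut{\cT}{T}}$), Nakayama's lemma over the local ring $\End{\cC}{T}$ forces $I=\radHom{\cT}{T}{T}$.

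I expect this last step to be the main obstacle: pinning down $\radHom[2]{\cT}{T}{T}=(\radHom{\cT}{T}{T})^{2}+I$ requires care with the approximation property and with the fact that radicals are computed inside $\cT$ rather than $\cC$, and the concluding Nakayama argument is available only because $\Ext{1}{\cC}{T}{\mut{\cT}{T}}$ is finite-dimensional, which is the sole place where Hom-finiteness of $\stab{\cC}$ enters.
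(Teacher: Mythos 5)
Your proof is correct and follows essentially the same route as the paper's: for (iii) you apply \(\Yonfun{\cT}\) to the exchange conflation to identify \(\Extfun{\cT}(\mut{\cT}{T})\) with \(\Yonfun{\cT}T/\image(\Yonfun{\cT}\varphi^+)\), observe that this image agrees with \(\radHom{\cT}{\blank}{T}\) on all indecomposables \(U\not\iso T\) by the approximation property, and reduce the question to the comparison at \(T\) itself. The one substantive difference is that you make explicit a step the paper asserts without justification, namely that the no-loop condition \(\radHom{\cT}{T}{T}=\radHom[2]{\cT}{T}{T}\) forces every radical endomorphism of \(T\) to factor through \(\cT\setminus T\). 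Your decomposition \(\radHom[2]{\cT}{T}{T}=(\radHom{\cT}{T}{T})^2+I\), where \(I\) is the ideal of endomorphisms factoring through \(\cT\setminus T\) (equivalently through \(\varphi^+\)), followed by Nakayama's lemma over the local ring \(\End{\cC}{T}\) applied to \(\radHom{\cT}{T}{T}/I\)---finitely generated because it embeds in the finite-dimensional \(\Ext{1}{\cC}{T}{\mut{\cT}{T}}\)---is a valid way to supply this, and your remark that this is precisely where Hom-finiteness of \(\stab{\cC}\) enters is accurate. Parts (i) and (ii) match the paper (Proposition~\ref{p:KS-simples} for (i); a projective presentation from the exchange conflation plus Hom-finiteness of \(\stab{\cC}\) for (ii)), with your direct presentation over \(\cT\) being marginally more self-contained than the paper's route through \(\fpmod{\stab{\cT}}\) and Proposition~\ref{p:fpmod-inclusion}.
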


\begin{proof} {\ }
\begin{enumerate}
\item This is Proposition~\ref{p:KS-simples}.
\item We have $\Extfun{\cT}{\mut{\cT}{T}}\in\fpmod{\cT}$ by Proposition~\ref{p:equiv-to-mod}.
Since $\Ext{1}{\cC}{T'}{T}=0$ for any $T'\in\indec{\cT\setminus T}$, finite-dimensionality of $\Extfun{\cT}(\mut{\cT}{T})$ follows from that of $\Ext{1}{\cT}{T}{\mut{\cT}{T}}=\Hom{\stab{\cC}}{T}{\Sigma\mut{\cT}{T}}$, which follows from Hom-finiteness of $\stab{\cC}$.
\item Applying $\Yonfun{\cT}$ to the exchange conflation $\mut{\cT}{T}\infl \exchmon{\cT}{T}{+}\stackrel{\varphi^{+}}{\defl} T \confl$ in $\cC$, which exists since $T$ is mutable, we obtain the exact sequence
\begin{equation}\label{eq:ET-pres}
\begin{tikzcd} \projmod{\cT}{\exchmon{\cT}{T}{+}}\arrow{r}{\Yonfun{\cT}\varphi^{+}}& \projmod{\cT}{T}\arrow{r}& \Extfun{\cT}(\mut{\cT}{T})\arrow{r}& 0, \end{tikzcd}
\end{equation}
of $\cT$-modules, using that $\Extfun{\cT}\exchmon{\cT}{T}{+}=0$ because $\cT$ is cluster-tilting.
Thus $\Extfun{\cT}(\mut{\cT}{T})=\simpmod{\cT}{T}:=\projmod{\cT}{T}/\radHom{\cT}{\blank}{T}$ if and only if the image of $\Yonfun{\cT}\varphi^+$ is equal to $\radHom{\cT}{\blank}{T}$.

Recall that $\varphi^{+}$ is a right $(\cT\intersection \mut{T}{\cT})$-approximation of $T$, so if $U\in\indec{\cT\intersection\mut{T}{\cT}}$, the image of $\Yonfun{\cT}{\varphi^+}$ evaluates on $U$ to $\Hom{\cT}{U}{T}=\radHom{\cT}{U}{T}$.
Thus we need only consider the evaluation at $T$.

If $\cT$ has no loop at $T$, then any morphism in $\radHom{\cT}{T}{T}$ factors over an object of $\cT\intersection\mut{T}{\cT}=\cT\setminus T$, hence over $\varphi^{+}$, and $\Yonfun{\cT}\varphi^+$ has the desired image.
Conversely, if the image of $\Yonfun{\cT}\varphi^{+}$ evaluates to $\radHom{\cT}{T}{T}$, then every morphism in $\radHom{\cT}{T}{T}$ factors over $T_{\cT}^+\in\cT\setminus T$, and hence $\cT$ has no loop at $T$.
\qedhere
\end{enumerate}
\end{proof}

A consequence of Lemma~\ref{l:props-of-K0-mod-T}\ref{l:props-mod-T-simple-eq-E} is that if $\cT$ has no loop at $T\in\exch{\cT}$, then $\simpmod{\cT}{T}$ is finitely presented over both $\stab{\cT}$ and $\cT$.
In fact, this is true even if there is a loop at $T$, by combining the next result with \cite[Prop.~4]{KellerReiten} (see Proposition~\ref{p:KR-pdim3} below), which shows that $\fpmod{\stab{\cT}}\subseteq\fpmod{\cT}$.

\begin{proposition}
\label{p:fp-simp}
Let $\cC$ be a cluster category and let $T\in\exch{\stab{\cT}}$.
Then $\simpmod{\cT}{T}\in\fpmod{\stab{\cT}}$.
\end{proposition}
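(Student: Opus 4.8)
The plan is to build a finite projective presentation of $\simpmod{\cT}{T}$ by hand, working in the triangulated $2$-Calabi--Yau stable category $\stab{\cC}$ with its cluster-tilting subcategory $\stab{\cT}$. The first step is to note that, since $T\in\exch{\stab{\cT}}$ is non-projective, the simple $\cT$-module $\simpmod{\cT}{T}$ vanishes on $\cP$ and therefore agrees, as a $\stab{\cT}$-module, with the simple $\stab{\cT}$-module at $T$, which is the top of the indecomposable projective $\stab{\cT}$-module $P=\Yonfun{\stab{\cT}}{T}$. So it suffices to show that the kernel $\radHom{\stab{\cT}}{\blank}{T}$ of the canonical surjection $P\to\simpmod{\cT}{T}$ is a finitely generated $\stab{\cT}$-module: any surjection onto it from a finitely generated projective then yields the required presentation.

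Next I would invoke the exchange conflation $\mut{\stab{\cT}}{T}\infl\exchmon{\stab{\cT}}{T}{+}\xrightarrow{\varphi^+}T$ available because $T\in\exch{\stab{\cT}}$, with $\varphi^+$ a right $(\stab{\cT}\setminus T)$-approximation, and apply $\Hom_{\stab{\cC}}(\blank,\blank)$ as in Proposition~\ref{p:extri-les}. Since $\exchmon{\stab{\cT}}{T}{+}\in\stab{\cT}$ we have $\Extfun{\stab{\cT}}\exchmon{\stab{\cT}}{T}{+}=0$, so exactly as in the derivation of \eqref{eq:ET-pres} we obtain an exact sequence $\Yonfun{\stab{\cT}}{\exchmon{\stab{\cT}}{T}{+}}\xrightarrow{\Yonfun{\stab{\cT}}\varphi^+}P\to\Extfun{\stab{\cT}}(\mut{\stab{\cT}}{T})\to0$. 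Writing $R$ for the image of $\Yonfun{\stab{\cT}}\varphi^+$, the approximation property identifies $R$ with the subfunctor of $P$ of morphisms into $T$ that factor through $\stab{\cT}\setminus T$; in particular $R\subseteq\radHom{\stab{\cT}}{\blank}{T}$, since no such morphism is an isomorphism (as $\stab{\cC}$ is Krull--Schmidt and $\exchmon{\stab{\cT}}{T}{+}$ has no summand isomorphic to $T$). Moreover $R$ is finitely generated, being a quotient of the finitely generated projective $\Yonfun{\stab{\cT}}{\exchmon{\stab{\cT}}{T}{+}}$.

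The crux of the argument, and the step I expect to need the most care, is to control the quotient $N=\radHom{\stab{\cT}}{\blank}{T}/R$. The key observation is that $N$ is supported only at $T$: for any indecomposable $U\not\cong T$, every morphism $U\to T$ already factors through $\stab{\cT}\setminus T$ (through $U$ itself), so $R(U)=\Hom_{\stab{\cC}}(U,T)=\radHom{\stab{\cT}}{U}{T}$ and $N(U)=0$; the only possibly non-zero value $N(T)$ is a subquotient of $\Hom_{\stab{\cC}}(T,T)$, which is finite-dimensional since $\stab{\cC}$ is Hom-finite. Hence $N$ is a finite-dimensional $\stab{\cT}$-module with one-point support, and so is finitely generated (a quotient of $P^{\oplus\dim_{\bK}N(T)}$, using that $\stab{\cC}$ is Krull--Schmidt). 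Therefore $\radHom{\stab{\cT}}{\blank}{T}$ is an extension of the finitely generated module $N$ by the finitely generated module $R$, hence finitely generated, which completes the argument. (When there is no loop at $T$ one has $R=\radHom{\stab{\cT}}{\blank}{T}$ and $N=0$, so $\simpmod{\cT}{T}=\Extfun{\stab{\cT}}(\mut{\stab{\cT}}{T})$ and the result is immediate from Lemma~\ref{l:props-of-K0-mod-T} and Proposition~\ref{p:equiv-to-mod}; the substance is in the loop case, where $\simpmod{\cT}{T}$ is a proper quotient of $\Extfun{\stab{\cT}}(\mut{\stab{\cT}}{T})$.)
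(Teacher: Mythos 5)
Your proof is correct, but it takes a genuinely different route from the paper's. The paper proves this via the sink-map machinery of the appendix: from \(T\in\exch{\stab{\cT}}\) it deduces that \(\stab{\cT}\) is locally finite at \(T\) (Proposition~\ref{p:mut-v-lf}), applies Lemma~\ref{l:sink-map-construction} to construct a minimal sink map for \(T\) using radical pseudocompactness and Wedderburn-type splittings, and then cites Proposition~\ref{p:sink-v-fp-simp}, which says that the existence of a sink map at \(T\) is \emph{equivalent} to \(\simpmod{\cT}{T}\) being finitely presented. Your argument bypasses all of this. Instead you use the exchange conflation directly, note that the resulting map \(\Yonfun{\stab{\cT}}\varphi^+\) already surjects onto the subfunctor \(R\subseteq\radfun{\stab{\cT}}{T}\) of morphisms factoring through \(\stab{\cT}\setminus T\), and then observe that the correction term \(N=\radfun{\stab{\cT}}{T}/R\) vanishes off \(T\) and is finite-dimensional at \(T\) by Hom-finiteness of \(\stab{\cC}\), hence is finitely generated (via Yoneda and Krull--Schmidt). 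Since \(R\) is a quotient of a representable and \(N\) is finitely generated, \(\radfun{\stab{\cT}}{T}\) is finitely generated, which is exactly what a finite presentation of \(\simpmod{\cT}{T}\) requires. The trade-off: the paper's argument is two lines once the appendix machinery is in place, but that machinery (radical pseudocompactness, perfect-field Wedderburn splittings, topological limits in Lemma~\ref{l:approx-construction}) is heavier. Your proof is longer to write out but uses only Hom-finiteness, the Krull--Schmidt property, and the exchange conflation, making the dependence on the ambient structure more transparent; in particular it makes visible why the statement works even in the presence of a loop at \(T\), which is precisely the case where \(\simpmod{\cT}{T}\ne\Extfun{\stab{\cT}}(\mut{\stab{\cT}}T)\) and the easy argument fails.
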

\begin{proof}
Since $\stab{\cC}$ is Hom-finite, it is in particular compact, and $\stab{\cT}$ is locally finite at $T$ by Proposition~\ref{p:mut-v-lf}.
Hence $T$ admits a sink map in $\stab{\cT}$ by Lemma~\ref{l:sink-map-construction}, which is equivalent to the statement that $\simpmod{\cT}{T}\in\fpmod{\stab{\cT}}$ by Proposition~\ref{p:sink-v-fp-simp}.
\end{proof}

\begin{corollary}
\label{c:fp=fd}
Let $\cC$ be a cluster category and $\cT\ctsubcat\cC$.
Then if $\stab{\cT}$ is maximally mutable, we have $\fd{\stab{\cT}}\subseteq\fpmod{\stab{\cT}}$.
In particular, any $M\in\fd{\stab{\cT}}$ is isomorphic to $\Extfun{\cT}X$ for some $X\in\cC$.
\end{corollary}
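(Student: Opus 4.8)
The plan is to reduce the inclusion \(\fd{\stab{\cT}}\subseteq\fpmod{\stab{\cT}}\) to the statement for \emph{simple} \(\stab{\cT}\)-modules, which is exactly Proposition~\ref{p:fp-simp}, and then to bootstrap along finite composition series.

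First I would record the relevant finiteness. Since \(\stab{\cC}\) is Hom-finite it is Krull--Schmidt, and hence so is the cluster-tilting subcategory \(\stab{\cT}\ctsubcat\stab{\cC}\); in particular \(\divalg{T}\) is finite-dimensional for each \(T\in\indec{\stab{\cT}}\), so every simple \(\stab{\cT}\)-module is finite-dimensional. By Lemma~\ref{l:props-of-K0-mod-T} (using the identification of the simples \(\simpmod{\cT}{T}\), for \(T\in\indec{\stab{\cT}}\), as \(\stab{\cT}\)-modules, as in Proposition~\ref{p:fp-simp}), the simple \(\stab{\cT}\)-modules are precisely \(\{\simpmod{\cT}{T}:T\in\indec{\stab{\cT}}\}\). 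Now the hypothesis that \(\stab{\cT}\) is maximally mutable says exactly that \(\indec{\stab{\cT}}=\exch{\stab{\cT}}\), so Proposition~\ref{p:fp-simp} applies to \emph{every} such \(T\) and gives that every simple \(\stab{\cT}\)-module lies in \(\fpmod{\stab{\cT}}\).

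Next I would invoke the standard fact that \(\fpmod{\stab{\cT}}\) is closed under extensions inside \(\Mod{\stab{\cT}}\): given a short exact sequence with finitely presented sub- and quotient modules, one lifts the projective cover of the quotient along the surjection, takes its direct sum with the projective cover of the submodule to obtain a finitely generated projective surjecting onto the middle term, and checks via a snake-lemma (``horseshoe'') argument that the module of relations is again finitely generated. Then, for arbitrary \(M\in\fd{\stab{\cT}}\), I would note that \(M\) is locally finite-dimensional and supported on finitely many indecomposables (as \(\stab{\cT}\) is Krull--Schmidt), so \(\dim_{\bK}M=\sum_{T\in\indec{\stab{\cT}}}\dim_{\bK}M(T)\) is finite; consequently every strictly descending chain of submodules of \(M\) terminates, and \(M\) has a finite composition series whose subquotients are simple \(\stab{\cT}\)-modules. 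Each subquotient lies in \(\fpmod{\stab{\cT}}\) by the previous step, so applying extension-closure finitely many times up the filtration yields \(M\in\fpmod{\stab{\cT}}\). This establishes \(\fd{\stab{\cT}}\subseteq\fpmod{\stab{\cT}}\); the ``in particular'' clause then follows at once from Proposition~\ref{p:equiv-to-mod}, since \(\Extfun{\cT}\colon\cC/\cT\to\fpmod{\stab{\cT}}\) is an equivalence, so any \(M\in\fd{\stab{\cT}}\subseteq\fpmod{\stab{\cT}}\) is isomorphic to \(\Extfun{\cT}X\) for a preimage \(X\in\cC\).

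I do not anticipate a genuine obstacle: all of the substance is in Proposition~\ref{p:fp-simp}, and the remaining steps are bookkeeping. The two points deserving a moment's care are that maximal mutability is precisely what makes \emph{all} simple \(\stab{\cT}\)-modules (rather than merely some) eligible for Proposition~\ref{p:fp-simp}, and that the \(\fd\) condition genuinely forces a finite composition series, so that the extension-closure of \(\fpmod{\stab{\cT}}\) need only be applied finitely often.
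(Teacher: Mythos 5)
Your proposal is correct and matches the paper's own proof in both structure and substance: Proposition~\ref{p:fp-simp} handles the simples (with maximal mutability ensuring all of them are covered), Krull--Schmidtness of the Hom-finite category \(\stab{\cT}\) classifies the simple modules via Proposition~\ref{p:KS-simples}, the Jordan--H\"older filtration plus the horseshoe-lemma argument for extension-closure of \(\fpmod{\stab{\cT}}\) finishes the inclusion, and Proposition~\ref{p:equiv-to-mod} gives the final clause. The only cosmetic difference is that the paper cites Proposition~\ref{p:KS-simples} directly for the classification of simple \(\stab{\cT}\)-modules, whereas you route through Lemma~\ref{l:props-of-K0-mod-T}; both are the same fact.
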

\begin{proof}
By Proposition~\ref{p:fp-simp}, we have $\simpmod{\cT}{T}\in\fpmod{\stab{\cT}}$ for all $T\in\exch{\cT}=\indec{\stab{\cT}}$.
Since $\stab{\cT}$ is Krull--Schmidt (being Hom-finite), the $\simpmod{\cT}{T}$ for $T\in\indec{\stab{\cT}}$ are a complete set of representatives of simple $\stab{\cT}$-modules by Proposition~\ref{p:KS-simples}, and so $\fd{\stab{\cT}}\subseteq\fpmod{\stab{\cT}}$ by the Jordan--Hölder theorem and horseshoe lemma.
The final statement is then a direct consequence of Proposition~\ref{p:equiv-to-mod}.
\end{proof}

Corollary~\ref{c:fp=fd} applies whenever $\stab{\cC}$ has a weak cluster structure, for example if it has finite rank (see Corollary~\ref{c:weak-clust-struct}\ref{c:weak-clust-struct-finite}).
In the finite rank case we have $\lfd{\stab{\cT}}=\fd{\stab{\cT}}$, and so it follows from Corollary~\ref{c:fp=fd} that in fact $\fd{\stab{\cT}}=\fpmod{\stab{\cT}}=\lfd{\stab{\cT}}$.

\sectionbreak
\section{Indices and coindices}
\label{s:mutation}

In what follows, we let $\cC$ be a cluster category, and discuss how to relate its various cluster-tilting subcategories via the \emph{index} and \emph{coindex}, which are certain isomorphisms between their Grothendieck groups.
The values of these maps on indecomposable rigid objects are categorical analogues of $\mathbf{g}$-vectors in cluster theory, since they behave in the same way under mutation (Theorem~\ref{t:c-vec-mut-formula}).
Via a natural duality, we will also define adjoint maps, which are to $\mathbf{c}$-vectors what the index and coindex are to $\mathbf{g}$-vectors; that is, they produce cluster-theoretic $\mathbf{c}$-vectors when evaluated on the appropriate objects, in this case simple modules.
Lastly, we show how these and other important concepts are linked by looking at projective resolutions of certain modules.

While our claims relating the index and coindex maps and their adjoints to $\mathbf{g}$- and $\mathbf{c}$-vectors will not be fully justified until Section~\ref{s:exch-mat}, they have already been established in many special cases, such as for triangulated categories.
We will give direct homological proofs of several properties of these maps which are necessary for this relationship to $\mathbf{g}$-vectors and $\mathbf{c}$-vectors to hold, see in particular Section~\ref{s:sign-coherence}.

We will often prove a result in the case that $\cC$ is exact and deduce it for general categories via partial stabilisation, so this process is also explained in detail.

\subsection{Definitions and first properties}\label{s:defs-and-props-of-ind-coind}

Let $\Kgp{\addcat{\cC}}$ denote the Grothendieck group of $\cC$ as an \emph{additive} category (i.e.\ ignoring the given extriangulated structure and using the split exact structure instead); we use this notation instead of $\Kgpsplit{\cC}$ \cite{JorgensenPalu} so that $\Kgp{\blank}$ consistently means `Grothendieck group with respect to the natural structure'.
The inclusion $\cT\to\cC$ induces a homomorphism 
\begin{equation}\label{eq:inj-T-C-add} \iota_{\cT}^{\addcat{\cC}}\colon\Kgp{\cT}\to\Kgp{\addcat{\cC}}. \end{equation}
Note that $\cT$ possesses the structure of an extriangulated category (inherited from that of $\cC$) but with all conflations split, so the notation $\Kgp{\cT}$ is unambiguous.

With this in mind, for an object $X$ of a category $\curly{X}$, we will use $[X]$ to refer to the class of $X$ in $\Kgp{\curly{X}}$---it will usually be clear from the context which meaning is intended, and many (but not all) of our Grothendieck groups will be of additive categories having no further extriangulated structure, in which case $[X]$ is nothing but the isomorphism class of $X$. When $X$ can be viewed as an object in several different categories, and there is a risk of confusion, we will indicate the relevant category via a subscript, writing $[X]_{\cA}\in\Kgp{\cA}$. For example, $X\in\cT\ctsubcat \cC$, has classes $[X]_{\cT}$, $[X]_{\cC}$ and $[X]_{\addcat{\cC}}$ in three different Grothendieck groups. 

Given a cluster category $\cC$ and $\cT\ctsubcat \cC$, the restricted Yoneda functor $\Yonfun{\cT}\colon\cC\to\Mod{\cT}$ defined by $X\mapsto\Hom{\cC}{\blank}{X}|_{\cT}$ restricts to the Yoneda equivalence $\cT\isoto\proj{\cT}$  and so induces an isomorphism of Grothendieck groups. In what follows, we will usually prefer to work in $\Kgp{\cT}$ rather than the isomorphic group $\Kgp{\proj{\cT}}$.

\begin{definition}\label{d:Yoneda-iso}
We denote by
\begin{equation}\label{eq:Yoneda-iso} h^{\cT}\colon\Kgp{\cT}\isoto\Kgp{\proj{\cT}},\ h^{\cT}[T]=[\projmod{\cT}{T}]
\end{equation}
the isomorphism of Grothendieck groups induced by the Yoneda equivalence.
\end{definition}

Next we recall the definition of the index and coindex of an object of a cluster category $\cC$ with respect to a cluster-tilting subcategory $\cT$.
This builds on work of a number of authors, principally Jørgensen--Palu \cite{JorgensenPalu} in the triangulated case and Fu--Keller \cite{FuKeller} in the exact case, with further references in both.
These definitions are foundational for cluster category theory, as they encode the $\cT$-approximations of objects of $\cC$ and, as we will see, they recover tropicalised cluster algebra mutation.

\begin{proposition}
\label{p:ind-well-def}
Let $\cC$ be a cluster category with cluster-tilting subcategory $\cT$, and let $X\in\cC$. Then there exists a conflation
\begin{equation}
\label{eq:index-seq}
\begin{tikzcd}
\rightker{\cT}{X}\arrow[infl]{r}&\rightapp{\cT}{X}\arrow[defl]{r}{\varphi}&{X}\arrow[confl]{r}&\phantom{}
\end{tikzcd}
\end{equation}
such that $\rightker{\cT}{X},\rightapp{\cT}{X}\in \cT$. Moreover, the value of $[{\rightapp{\cT}{X}}]-[{\rightker{\cT}{X}}]\in \Kgp{\cT}$ is independent of the choice of conflation \eqref{eq:index-seq}.
\end{proposition}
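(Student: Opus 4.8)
The plan is to prove both assertions using only that $\cT$ is strongly functorially finite (so that approximations which are (de)flations exist) and that $\cT$ is rigid, i.e.\ $\Ext{1}{\cC}{T}{T'}=0$ for all $T,T'\in\cT$ — the latter being immediate from the defining property of a weak cluster-tilting subcategory. For existence, I would take a right $\cT$-approximation $\varphi\colon\rightapp{\cT}{X}\to X$ which is a deflation (this is part of the definition of cluster-tilting) and complete it to a conflation $\rightker{\cT}{X}\infl\rightapp{\cT}{X}\xrightarrow{\varphi}X\confl$. To see that $\rightker{\cT}{X}\in\cT$, apply $\Hom{\cC}{T}{\blank}$ for an arbitrary $T\in\cT$ and invoke Proposition~\ref{p:extri-les}: in the resulting long exact sequence the map $\Hom{\cC}{T}{\rightapp{\cT}{X}}\to\Hom{\cC}{T}{X}$ is surjective because $\varphi$ is a right $\cT$-approximation, and $\Ext{1}{\cC}{T}{\rightapp{\cT}{X}}=0$ by rigidity, so $\Ext{1}{\cC}{T}{\rightker{\cT}{X}}=0$; as this holds for all $T\in\cT$, the defining property of (weak) cluster-tilting forces $\rightker{\cT}{X}\in\cT$. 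This is essentially the construction already used in the proof of Proposition~\ref{p:ct-Yon-fp}.

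For the independence statement, suppose we are given two conflations of the form \eqref{eq:index-seq}, which I write as $K_i\infl R_i\xrightarrow{\varphi_i}X\confl$ with $K_i,R_i\in\cT$ for $i=1,2$; the goal is $[R_1]-[K_1]=[R_2]-[K_2]$ in $\Kgp{\cT}$. The idea is a Schanuel-type argument. Since $\varphi_1$ and $\varphi_2$ are deflations, I would take the pullback $P$ of $\varphi_1$ along $\varphi_2$, which exists in the extriangulated category $\cC$ by the base-change results of \cite{NakaokaPalu}: there is an object $P\in\cC$ with maps $P\defl R_1$ and $P\defl R_2$ forming a weak pullback square over $X$, both of which are deflations, with the kernel of $P\defl R_2$ isomorphic to the kernel $K_1$ of $\varphi_1$ and the kernel of $P\defl R_1$ isomorphic to the kernel $K_2$ of $\varphi_2$. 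This produces two conflations $K_1\infl P\defl R_2\confl$ and $K_2\infl P\defl R_1\confl$. Both split, since $\Ext{1}{\cC}{R_2}{K_1}=0=\Ext{1}{\cC}{R_1}{K_2}$ by rigidity of $\cT$, so $K_1\oplus R_2\cong P\cong K_2\oplus R_1$ in $\cC$; as all these objects lie in the full subcategory $\cT$, the isomorphisms live in $\cT$, whence $[K_1]+[R_2]=[P]=[K_2]+[R_1]$ in $\Kgp{\cT}$. Rearranging yields the claim.

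The only non-formal ingredient, and the step I would be most careful about, is the extriangulated pullback: I would cite the precise form of the base-change statement in \cite{NakaokaPalu} guaranteeing that pulling a deflation back along a deflation is again a deflation with an isomorphic kernel object (equivalently, that the square obtained by base change is a weak pullback fitting into the two displayed conflations). Everything else is formal — the long exact sequence of Proposition~\ref{p:extri-les} together with rigidity of $\cT$ — and uses neither the Krull--Schmidt property nor any finiteness hypothesis, so the statement holds for all cluster categories in the sense of Definition~\ref{d:clustcat}.
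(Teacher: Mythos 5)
Your proof is correct, and your existence argument coincides with the paper's (approximation which is a deflation, long exact sequence, and the characterising Ext-vanishing property of cluster-tilting). For the well-definedness, however, you take a genuinely different route: you perform a Schanuel-type argument directly inside the extriangulated category $\cC$, forming the pullback of the two deflations $\varphi_1,\varphi_2$ over $X$ and exploiting rigidity of $\cT$ to split both resulting conflations. The paper's main proof instead invokes algebraicity to lift to a Frobenius exact category $\cE$ with $\cE/\cP\simeq\cC$, lifts the conflations to admissible short exact sequences there, applies the restricted Yoneda functor to get projective presentations of $\Yonfun{\widehat{\cT}}X$, and then uses the ordinary Schanuel lemma. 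Your argument is close in spirit to the alternative proof sketched in the remark following the proposition, which cites Schanuel's lemma for extriangulated categories \cite{Yin-Schanuel}; you have essentially inlined that lemma's pullback proof. Each route has a trade-off: the paper's lift is short and reduces everything to the classical Schanuel, but uses algebraicity (which is built into the definition of cluster category); your version avoids choosing a lift and stays internal to $\cC$, at the cost of the more delicate extriangulated base-change machinery.

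On the one step you flagged as needing care: the claim you need is that pulling a conflation back along a \emph{deflation} produces a pair of conflations $K_1\infl P\defl R_2$ and $K_2\infl P\defl R_1$ with the \emph{same} middle term $P$. This is precisely the content of the ``homotopy cartesian'' statements in \cite[\S3]{NakaokaPalu} (in particular Prop.~3.15 together with its refinement for the case that the base map is a deflation), so the citation is correct; but when writing this up you should quote the exact proposition and verify that the cokernel identification it provides really gives $K_2$ as the kernel of $P\to R_1$, rather than merely asserting a weak pullback square. You are also implicitly using weak idempotent completeness (Proposition~\ref{p:WIC}) to know that the sum map $R_1\oplus R_2\to X$ is a deflation; this holds since cluster categories are idempotent complete, but is worth flagging. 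With those references made explicit the argument is complete, and, as you note, it does not use algebraicity, Krull--Schmidt, or any finiteness beyond what the definitions already supply.
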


\begin{proof}
The existence of a conflation of the form \eqref{eq:index-seq}, obtained by choosing the map $\varphi$ to be a right $\cT$-approximation, follows from the argument in the proof of Proposition~\ref{p:ct-Yon-fp}.

Let $K\infl R\defl X\confl$ be another conflation of the form \eqref{eq:index-seq}. Since $\cC$ is algebraic, we may choose a Frobenius exact category $\cE$ such that $\cC\simeq\cE/\cP$ for a full and additively closed subcategory $\cP$ of projective-injective objects. By Lemma~\ref{l:ct-bijection}, there is $\widehat{\cT}\ctsubcat\cE$ with $\pi\widehat{\cT}=\cT$.
Lifting to $\cE$, we find admissible short exact sequences
\[\begin{tikzcd}[row sep=0pt]
0\arrow{r}&\rightker{\cT}{X}\arrow{r}&\rightapp{\cT}{X}\oplus P\arrow{r}&{X}\arrow{r}&0,\\
0\arrow{r}&K\arrow{r}&R\arrow{r}\oplus Q&{X}\arrow{r}&0,
\end{tikzcd}\]
with $P,Q\in\cE$ projective-injective, so we have short exact sequences
\[\begin{tikzcd}[row sep=0pt]
0\arrow{r}&\Yonfun{\widehat{\cT}}(\rightker{\cT}{X})\arrow{r}&\Yonfun{\widehat{\cT}}(\rightapp{\cT}{X}\oplus P)\arrow{r}&\Yonfun{\widehat{\cT}}{X}\arrow{r}&0,\\
0\arrow{r}&\Yonfun{\widehat{\cT}}(K)\arrow{r}&\Yonfun{\widehat{\cT}}(R\arrow{r}\oplus Q)&\Yonfun{\widehat{\cT}}{X}\arrow{r}&0
\end{tikzcd}\]
of $\widehat{\cT}$-modules, using that $\rightker{\cT}{X}$ and $K$ lie in $\widehat{\cT}$ for exactness on the right. Each is a projective presentation of $\Yonfun{\widehat{\cT}}X$, and so we have
\[\rightker{\cT}{X}\oplus P\oplus K \iso R\oplus Q\oplus\rightker{\cT}{X}\]
by Schanuel's lemma. Projecting back to $\cC$, we find that $\rightker{\cT}{X}\oplus K \iso R\oplus\rightker{\cT}{X}$, and hence $[\rightapp{\cT}{X}]-[\rightker{\cT}{X}]=[R]-[K]$ in $\Kgp{\cT}$.
\end{proof}

\begin{remark}
An alternative proof that $[{\rightapp{\cT}{X}}]-[{\rightker{\cT}{X}}]$ is independent of the choice of conflation is to use \cite[Prop.~5.1]{FGPPP} to realise conflations of the form \eqref{eq:index-seq} as projective resolutions of $X$ in the appropriate relative extriangulated structure on $\cC$, in which objects of $\cT$ are projective. Then the result follows from Schanuel's lemma for extriangulated categories \cite{Yin-Schanuel} (see also \cite{MatRos} for exact categories).
\end{remark}

\begin{definition}
\label{d:index}
Let $\cC$ be a cluster category and $\cT\ctsubcat\cC$. For each $X\in\cC$, choose a conflation as in \eqref{eq:index-seq}, and define the \emph{index} of $X$ in $\cC$ with respect to $\cT$ to be
\[ \ind{\cC}{\cT}(X)=[{\rightapp{\cT}{X}}]-[{\rightker{\cT}{X}}]\in \Kgp{\cT}. \]
We call \eqref{eq:index-seq} a \textit{$\cT$-index conflation} (or sequence, or triangle, if appropriate) for $X$.
\end{definition}

Proposition~\ref{p:ind-well-def} shows that the index is well-defined.
We also use the dual construction, beginning with a conflation
\begin{equation}
\label{eq:coindex-seq}
\begin{tikzcd}
X\arrow[infl]{r}&\leftapp{\cT}{X}\arrow[defl]{r}&\leftcok{\cT}{X}\arrow[confl]{r}&\phantom{}
\end{tikzcd}
\end{equation}
with $\leftapp{\cT}{X},\leftcok{\cT}{X}\in\cT$, in which the map $X\to \leftapp{\cT}{X}$ is a left $\cT$-approximation of $X$.

\begin{definition}\label{d:coindex} Given a conflation \eqref{eq:coindex-seq}, which we call a \emph{$\cT$-coindex conflation} for $X$, we define the \emph{coindex} of $X$ in $\cC$ with respect to $\cT$ to be
\[ \coind{\cC}{\cT}(X)=[{\leftapp{\cT}{X}}]-[{\leftcok{\cT}{X}}]\in\Kgp{\cT}. \]
\end{definition}

In particular, if $T\in\cT\ctsubcat\cC$ is mutable, the exchange conflations \eqref{eq:exchange-confs} are $\cT$-index and $\cT$-coindex conflations for $\mut{\cT}{T}$, implying that
\begin{align} \ind{\cC}{\cT}{(\mut{\cT}{T})}&=[{\rightapp{\cT}{(\mut{\cT}{T})}}]-[{T}]=[\exchmon{\cT}{T}{-}]-[T]\label{eq:ind-on-mut-T},\\
\coind{\cC}{\cT}{(\mut{\cT}{T})}&=[{\leftapp{\cT}{(\mut{\cT}{T})}}]-[{T}]=[\exchmon{\cT}{T}{+}]-[T].\label{eq:coind-on-mut-T}
\end{align}

\begin{remark}
\label{r:ind-op}
An alternative perspective on the coindex is that it is the index for the cluster category $\op{\cC}$, as follows.
A $\cT$-index conflation \eqref{eq:coindex-seq} for $X$ may be viewed as a conflation in $\op{\cC}$, with morphisms in the opposite direction, where it is a $\op{\cT}$-coindex conflation for $X$.
Thus for any $X\in\cC$, we have
\[\coind{\cC}{\cT}(X)=\ind{\op{\cC}}{\op{\cT}}(X).\]
Here we identify $\Kgp{\cT}=\Kgp{\op{\cT}}$, both groups having the same generators and relations.
\end{remark}

The next result is immediate, since the process of constructing $\cT$-approximations is additive on split exact sequences.

\begin{remark}
\label{r:no-common-summands}
We call a $\cT$-index conflation \emph{minimal} if the right approximation $\varphi\colon\rightapp{\cT}{X}\onto X$ is a minimal map (and adopt similar terminology for $\cT$-coindex conflations).
The objects $\rightapp{\cT}{X}$ and $\rightker{\cT}{X}$ in a minimal $\cT$-index conflation are determined up to (non-unique) isomorphism by $X$, but in much of the paper we will not need to assume our $\cT$-index conflations are minimal.
However, a minimal $\cT$-index conflation does have one useful extra property when $\cC$ is Krull--Schmidt and $X$ is rigid (i.e.\ $\Ext{1}{\cC}{X}{X}=0$), namely that the objects $\rightapp{\cT}{X}$ and $\rightker{\cT}{X}$ have no direct summands in common.
Indeed, this is true in $\stab{\cC}$ by \cite[Prop.~2.1]{DehyKeller}, and so any common summands must be projective-injective in $\cC$.
But $\rightker{\cT}X$ cannot have projective-injective summands since these would split, contradicting minimality of $\varphi$.
The analogous statement for minimal $\cT$-coindex conflations follows by considering $\op{\cC}$ and using Remark~\ref{r:ind-op}.
\end{remark}

\begin{proposition}\label{p:ind-coind-homs} For $\cT\ctsubcat\cC$, index and coindex with respect to $\cT$  define group homomorphisms
\[
\ind{\cC}{\cT}  \colon \Kgp{\addcat{\cC}} \to \Kgp{\cT}, \quad
\coind{\cC}{\cT}  \colon \Kgp{\addcat{\cC}} \to \Kgp{\cT}. \tag*{\qed}
\]
\end{proposition}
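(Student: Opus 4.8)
The plan is to exploit the presentation of the split Grothendieck group. By definition $\Kgp{\addcat{\cC}}$ is the free abelian group on symbols $[X]$ indexed by the objects $X\in\cC$, modulo the relations coming from the conflations of the split extriangulated structure on $\cC$; since such a conflation is, up to isomorphism, a split short exact sequence $A\infl A\oplus C\defl C$, these relations are generated by the elements $[A\oplus C]-[A]-[C]$, and in particular they force $[X]=[X']$ whenever $X\iso X'$. Hence, to see that the assignment $X\mapsto\ind{\cC}{\cT}(X)$ --- a well-defined element of $\Kgp{\cT}$ by Proposition~\ref{p:ind-well-def} --- extends to a group homomorphism $\ind{\cC}{\cT}\colon\Kgp{\addcat{\cC}}\to\Kgp{\cT}$, it suffices to verify (i) that $\ind{\cC}{\cT}(X)$ depends only on the isomorphism class of $X$, and (ii) that $\ind{\cC}{\cT}(X\oplus Y)=\ind{\cC}{\cT}(X)+\ind{\cC}{\cT}(Y)$ for all $X,Y\in\cC$. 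The same two checks will give the statement for $\coind{\cC}{\cT}$.

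For (i), given an isomorphism $\theta\colon X\isoto X'$ and a conflation $\rightker{\cT}{X}\infl\rightapp{\cT}{X}\defl X\confl$ of the form \eqref{eq:index-seq}, I would note that post-composing the deflation with $\theta$ produces a conflation $\rightker{\cT}{X}\infl\rightapp{\cT}{X}\defl X'\confl$ again of that form and with the same outer terms, so Proposition~\ref{p:ind-well-def} gives $\ind{\cC}{\cT}(X')=[\rightapp{\cT}{X}]-[\rightker{\cT}{X}]=\ind{\cC}{\cT}(X)$.

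For (ii), I would choose $\cT$-index conflations $\rightker{\cT}{X}\infl\rightapp{\cT}{X}\defl X\confl$ and $\rightker{\cT}{Y}\infl\rightapp{\cT}{Y}\defl Y\confl$ and take their direct sum
\[
\rightker{\cT}{X}\oplus\rightker{\cT}{Y}\infl\rightapp{\cT}{X}\oplus\rightapp{\cT}{Y}\defl X\oplus Y\confl.
\]
This is again a conflation --- realised by the direct sum of the two realising classes, using biadditivity of $\Ext{1}{\cC}{\blank}{\blank}$ --- and its outer terms lie in $\cT$ since $\cT$ is additively closed, so it is a $\cT$-index conflation for $X\oplus Y$. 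Proposition~\ref{p:ind-well-def} then yields
\[
\ind{\cC}{\cT}(X\oplus Y)=[\rightapp{\cT}{X}\oplus\rightapp{\cT}{Y}]-[\rightker{\cT}{X}\oplus\rightker{\cT}{Y}]=\ind{\cC}{\cT}(X)+\ind{\cC}{\cT}(Y),
\]
by additivity of $[\blank]$ on direct sums in $\Kgp{\cT}$. For $\coind{\cC}{\cT}$ one repeats this argument verbatim using coindex conflations \eqref{eq:coindex-seq} in place of \eqref{eq:index-seq}, or deduces it from Remark~\ref{r:ind-op}: since $\op{\cC}$ is again a cluster category with cluster-tilting subcategory $\op{\cT}$, the homomorphism property of $\ind{\op{\cC}}{\op{\cT}}$ just established transfers to $\coind{\cC}{\cT}$ under the identifications $\Kgp{\addcat{\op{\cC}}}=\Kgp{\addcat{\cC}}$ and $\Kgp{\op{\cT}}=\Kgp{\cT}$.

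I expect no substantive obstacle here: the content is entirely carried by Proposition~\ref{p:ind-well-def}, and what remains is bookkeeping. The two points meriting a moment's attention are the precise form of the defining relations of $\Kgp{\addcat{\cC}}$ --- so that ``additive on direct sums'' is exactly what has to be checked --- and the fact that a direct sum of conflations is a conflation; neither is difficult. I would also take care to use Proposition~\ref{p:ind-well-def} in its full strength, namely that \emph{any} conflation of the shape \eqref{eq:index-seq} computes the same index, not only those built from a chosen or minimal right $\cT$-approximation.
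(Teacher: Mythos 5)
Your proposal is correct and follows exactly the route the paper intends: the paper offers no separate proof, merely remarking just before the statement that ``the next result is immediate, since the process of constructing $\cT$-approximations is additive on split exact sequences,'' which is precisely the content of your step (ii), with your step (i) the obvious accompanying check that index depends only on the isomorphism class (both of which together handle all the defining relations of $\Kgp{\addcat{\cC}}$). You correctly lean on Proposition~\ref{p:ind-well-def} in its full strength --- that \emph{any} conflation with outer terms in $\cT$ computes the index --- so that the direct sum of two $\cT$-index conflations, whose deflation may not be a \emph{minimal} right $\cT$-approximation but is nonetheless such a conflation, still computes the index of $X\oplus Y$.
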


\begin{remark}\label{r:cind-not-additive}
It is crucial in the above that we use $\Kgp{\addcat{\cC}}$ and not $\Kgp{\cC}$ as the domain: indeed, the index and coindex are \emph{not} additive on conflations in general.
Which conflations they are additive on is an important and non-trivial question, considered by the second author and others in \cite{FGPPP}.
We return to this question ourselves in Proposition~\ref{p:beta-add-confl}.
\end{remark}

Any bounded complex of finitely generated projective $\cT$-modules has a class in $\Kgp{\proj{\cT}}$ given by the alternating sum of classes of its terms. When $\cC$ is an exact category, we can compute $\ind{\cC}{\cT}[X]$ directly in this language, since it is related by the Yoneda isomorphism to the class of a projective resolution of a particular $\cT$-module.

\begin{proposition}
\label{p:ind-proj-res}
If $\cC$ is an exact cluster category, then $\Yonfun{\cT}X$ has projective dimension~$1$, and $h^{\cT}\ind{\cC}{\cT}[X]\in\Kgp{\proj{\cT}}$ is the class of any projective resolution of $\Yonfun{\cT}(X)$.
\end{proposition}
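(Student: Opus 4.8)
The plan is to combine the existence of a $\cT$-index conflation from Proposition~\ref{p:ct-Yon-fp} (or the argument in Proposition~\ref{p:ind-well-def}) with the observation that in the exact setting, applying the Yoneda functor $\Yonfun{\cT}$ to such a conflation yields precisely a length-one projective resolution. First I would fix $X\in\cC$ and choose a $\cT$-index conflation
\[
\begin{tikzcd}
\rightker{\cT}{X}\arrow[infl]{r}&\rightapp{\cT}{X}\arrow[defl]{r}{\varphi}&X\arrow[confl]{r}&\phantom{}
\end{tikzcd}
\]
with $\rightapp{\cT}{X},\rightker{\cT}{X}\in\cT$, as produced in the proof of Proposition~\ref{p:ct-Yon-fp}; there the right $\cT$-approximation $\varphi$ is a deflation and its kernel automatically lies in $\cT$. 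Since $\cC$ is exact, Proposition~\ref{p:extri-les} tells us the Hom-sequence obtained by applying $\Yonfun{\cT}=\Hom{\cC}{\blank}{\blank}|_{\cT}$ is exact on the left as well, so we obtain a short exact sequence of $\cT$-modules
\[
\begin{tikzcd}
0\arrow{r}&\projmod{\cT}{\rightker{\cT}{X}}\arrow{r}&\projmod{\cT}{\rightapp{\cT}{X}}\arrow{r}&\Yonfun{\cT}X\arrow{r}&0,
\end{tikzcd}
\]
using that $\varphi$ is a right $\cT$-approximation for surjectivity on the right and that $\rightker{\cT}{X}\in\cT$ together with left-exactness from Proposition~\ref{p:extri-les} for exactness on the left. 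The two outer terms are finitely generated projective $\cT$-modules via the Yoneda equivalence $\cT\isoto\proj{\cT}$, so this is a projective resolution of $\Yonfun{\cT}X$ of length at most one; hence $\Yonfun{\cT}X$ has projective dimension at most $1$ (it has dimension exactly $1$ unless $\Yonfun{\cT}X$ is already projective, which is the degenerate case $X\in\cT$, consistent with the stated claim).

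Next I would read off the Grothendieck class. By construction $\ind{\cC}{\cT}[X]=[\rightapp{\cT}{X}]-[\rightker{\cT}{X}]\in\Kgp{\cT}$, so applying the Yoneda isomorphism $h^{\cT}$ of Definition~\ref{d:Yoneda-iso} gives $h^{\cT}\ind{\cC}{\cT}[X]=[\projmod{\cT}{\rightapp{\cT}{X}}]-[\projmod{\cT}{\rightker{\cT}{X}}]\in\Kgp{\proj{\cT}}$, which is exactly the alternating sum of the terms of the resolution above, i.e.\ its class in $\Kgp{\proj{\cT}}$. Finally, to see that \emph{any} projective resolution of $\Yonfun{\cT}X$ gives the same class, I would invoke Schanuel's lemma: given two projective resolutions $0\to P_1\to P_0\to\Yonfun{\cT}X\to0$ and $0\to P_1'\to P_0'\to\Yonfun{\cT}X\to0$, Schanuel's lemma yields $P_0\oplus P_1'\iso P_0'\oplus P_1$, so $[P_0]-[P_1]=[P_0']-[P_1']$ in $\Kgp{\proj{\cT}}$ — this is already the calculation done inside the proof of Proposition~\ref{p:ind-well-def}, so I would simply cite it.

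The only genuine subtlety — and the step I would be most careful about — is the left-exactness needed to conclude that $\projmod{\cT}{\rightker{\cT}{X}}\to\projmod{\cT}{\rightapp{\cT}{X}}$ is injective as a map of $\cT$-modules, i.e.\ that evaluating at each $T\in\cT$ gives an injection $\Hom{\cC}{T}{\rightker{\cT}{X}}\inj\Hom{\cC}{T}{\rightapp{\cT}{X}}$. This is where exactness of $\cC$ is essential (it fails for triangulated $\cC$, which is precisely why the statement is restricted to the exact case), and it is supplied by the final sentence of Proposition~\ref{p:extri-les}: for a conflation in an exact category, the left-most map in the induced long exact Hom-sequence is injective. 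With that in hand everything else is formal. I would also remark in passing that this gives an alternative, cleaner route to the well-definedness half of Proposition~\ref{p:ind-well-def} in the exact case, although that is not needed here.
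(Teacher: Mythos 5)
Your argument is correct and follows essentially the same route as the paper: apply $\Yonfun{\cT}$ to a $\cT$-index conflation, use left-exactness (supplied for exact $\cC$ by the final sentence of Proposition~\ref{p:extri-les}) to get a length-one projective resolution, and read off the class. The only cosmetic divergence is in the final step: the paper observes that any other bounded projective resolution is homotopic to this one and hence has the same class in $\Kgp{\proj{\cT}}$, whereas you invoke Schanuel's lemma, which was indeed the mechanism used in the proof of Proposition~\ref{p:ind-well-def}. Either works; the Schanuel argument as you write it treats only length-one resolutions, so if one wants to cover bounded resolutions of arbitrary length one would use the long-form Schanuel lemma or fall back to the homotopy argument, but this is a routine fix and does not constitute a gap.
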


\begin{proof}
Recall from Proposition~\ref{p:ct-Yon-fp} that $\Yonfun{\cT}X$ has projective presentation
\begin{equation}
\label{eq:ind-proj-res}
\begin{tikzcd}
\projmod{\cT}{\rightker{\cT}{X}}\arrow{r}&\projmod{\cT}{\rightapp{\cT}{X}}\arrow{r}& \Yonfun{\cT}X\arrow{r}&0,
\end{tikzcd}
\end{equation}
obtained by applying $\Yonfun{\cT}$ to a $\cT$-index conflation \eqref{eq:index-seq}.
If $\cC$ is exact, then conflations are short exact sequences, and the leftmost map in \eqref{eq:ind-proj-res} is a monomorphism since $\Yonfun{\cT}$ is left exact. This makes \eqref{eq:ind-proj-res} a projective resolution of the $\cT$-module $\Yonfun{\cT}X$, which thus has projective dimension $1$, and by construction the class of this resolution in $\Kgp{\proj{\cT}}$ is precisely $h^{\cT}\ind{\cC}{\cT}{[X]}\in\Kgp{\proj\cT}$. 
Since any other bounded projective resolution of $\Yonfun{\cT}X$ is homotopic to this one, it has the same class in $\Kgp{\proj{\cT}}$.
\end{proof}

\begin{remark}\label{r:coind-proj-res}
The analogue for the coindex is that the isomorphism $h_{\cT}\colon\Kgp{\cT}\isoto\Kgp{\proj{\op{\cT}}}$, induced from the contravariant Yoneda functor $\opYonfun{\cT}=\Yonfun{\op{\cT}}$, takes $\coind{\cC}{\cT}{[X]}$ to the class of a projective resolution of $\opYonfun{\cT}{X}\in\fpmod{\op{\cT}}$ (cf.~Remark~\ref{r:ind-op}).
Later, in Proposition~\ref{p:beta-proj-res}, we will relate a projective resolution of the $\cT$-module $\Extfun{\cT}X$ to both the index and coindex of $X$.
\end{remark}

The Grothendieck group $\Kgp{\cC}$ of the extriangulated category $\cC$ is naturally a quotient of $\Kgp{\addcat{\cC}}$, since there is a split conflation $X\infl Y\defl Z\confl$ in $\cC$ whenever $Y\cong X\oplus Z$. Hence there is a quotient map
\begin{equation}\label{eq:quot-C-add-C}
\pi_{\addcat{\cC}}^{\cC}\colon \Kgp{\addcat{\cC}}\to \Kgp{\cC},\ \pi_{\addcat{\cC}}^{\cC}([X]_{\addcat{\cC}})=[X]_{\cC},
\end{equation}
with kernel generated by all relations coming from (not necessarily split) conflations.
For $\cC$ a cluster category and $\cT \ctsubcat \cC$, we therefore obtain a homomorphism
\begin{equation}\label{eq:proj-T-C}
\pi_{\cT}^{\cC}\colon \Kgp{{\cT}} \to \Kgp{\cC},\ \pi_{\cT}^{\cC}([{T}]_{\cT})=[T]_{\cC}    
\end{equation}
by pre-composing $\pi_{\addcat{\cC}}^{\cC}$ with $\iota_{\cT}^{\addcat{\cC}}\colon\Kgp{\cT}\to\Kgp{\addcat{\cC}}$.

\begin{lemma}\label{l:pi-T-ind-is-pi-C} We have $\pi_{\cT}^{\cC} \circ \ind{\cC}{\cT}=\pi_{\addcat{\cC}}^{\cC}=\pi_{\cT}^{\cC} \circ \coind{\cC}{\cT}$.
\end{lemma}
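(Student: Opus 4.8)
The plan is to unwind the definitions of all three maps and check equality on the generators $[T]$ of $\Kgp{\cT}$, exploiting the fact that a $\cT$-index (resp.\ $\cT$-coindex) conflation for an object already in $\cT$ can be chosen trivially. First I would recall that $\pi_\cT^\cC$ is by definition $\pi_{\addcat\cC}^\cC \circ \iota_\cT^{\addcat\cC}$, so $\pi_\cT^\cC([T]_\cT) = [T]_\cC$, and that it suffices to verify the two claimed equalities on each class $[T]_\cT$ for $T \in \cT$, since these generate $\Kgp{\cT}$ and all maps in sight are group homomorphisms (Proposition~\ref{p:ind-coind-homs} for $\cind{}{}$ and $\cindbar{}{}$, and $\pi_\cT^\cC$ is a composite of homomorphisms).

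Next I would evaluate $\ind{\cC}{\cT}$ on $[T]_{\addcat\cC}$ for $T \in \cT$: the split conflation $0 \infl T \stackrel{\id}{\defl} T \confl$ is a valid $\cT$-index conflation (the identity is trivially a right $\cT$-approximation, with $R_\cT T = T$ and $K_\cT T = 0$), so $\ind{\cC}{\cT}[T]_{\addcat\cC} = [T]_\cT - [0]_\cT = [T]_\cT$. Dually, using $T \stackrel{\id}{\infl} T \defl 0 \confl$, we get $\coind{\cC}{\cT}[T]_{\addcat\cC} = [T]_\cT$. Hence on generators coming from $\cT$, both $\ind{\cC}{\cT}$ and $\coind{\cC}{\cT}$ (restricted along $\iota_\cT^{\addcat\cC}$) act as the identity $\Kgp{\cT} \to \Kgp{\cT}$, so $\pi_\cT^\cC \circ \ind{\cC}{\cT} \circ \iota_\cT^{\addcat\cC} = \pi_\cT^\cC = \pi_\cT^\cC \circ \coind{\cC}{\cT} \circ \iota_\cT^{\addcat\cC}$ on $\Kgp{\cT}$.

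To finish, I would upgrade this from a statement about the restriction along $\iota_\cT^{\addcat\cC}$ to the stated identity of maps on all of $\Kgp{\addcat\cC}$. For a general $X \in \cC$, take a $\cT$-index conflation $K_\cT X \infl R_\cT X \defl X \confl$ as in \eqref{eq:index-seq}; this is in particular a conflation in $\cC$, so in $\Kgp{\cC}$ we have $[X]_\cC = [R_\cT X]_\cC - [K_\cT X]_\cC = \pi_\cT^\cC([R_\cT X]_\cT - [K_\cT X]_\cT) = \pi_\cT^\cC(\ind{\cC}{\cT}[X])$. Since also $[X]_\cC = \pi_{\addcat\cC}^\cC([X]_{\addcat\cC})$ by definition of $\pi_{\addcat\cC}^\cC$ in \eqref{eq:quot-C-add-C}, and the classes $[X]_{\addcat\cC}$ generate $\Kgp{\addcat\cC}$, we conclude $\pi_\cT^\cC \circ \ind{\cC}{\cT} = \pi_{\addcat\cC}^\cC$. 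The coindex case is identical, using a $\cT$-coindex conflation $X \infl L_\cT X \defl C_\cT X \confl$ and the relation $[X]_\cC = [L_\cT X]_\cC - [C_\cT X]_\cC$ in $\Kgp{\cC}$; alternatively one can invoke Remark~\ref{r:ind-op} to deduce it from the index statement applied to $\op\cC$. There is no real obstacle here: the only point requiring a moment's care is confirming that the trivial/split conflations are admissible choices of index and coindex conflations (so that $\ind{\cC}{\cT}$ and $\coind{\cC}{\cT}$ are genuinely computed by them), which is immediate since split conflations are conflations in any extriangulated category and identity maps are approximations.
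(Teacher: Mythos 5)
Your third paragraph is exactly the paper's proof: take a $\cT$-index conflation $\rightker{\cT}{X}\infl\rightapp{\cT}{X}\defl X\confl$ for general $X$, read off the relation $[X]_{\cC}=[\rightapp{\cT}{X}]_{\cC}-[\rightker{\cT}{X}]_{\cC}$ in $\Kgp{\cC}$, and compare with $\pi_{\cT}^{\cC}(\ind{\cC}{\cT}[X])$, with coindex handled dually. Your first two paragraphs (evaluating on $[T]_{\cT}$ after restricting along $\iota_{\cT}^{\addcat{\cC}}$) are harmless but redundant, since they treat only the special case $X\in\cT$ of what your general argument already covers; the paper dispenses with them.
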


\begin{proof} We give the details to demonstrate how evaluating expressions in the different Grothendieck groups yields equalities of this type. Since $\rightker{\cT}{X}\infl\rightapp{\cT}{X}\defl X\confl$ is a conflation in $\cC$, we have $[X]_{\cC}=[\rightapp{\cT}{X}]_{\cC}-[\rightker{\cT}{X}]_{\cC}$ in $\Kgp{\cC}$. By definition, $\ind{\cC}{\cT}{[X]_{\addcat{\cC}}}=[{\rightapp{\cT}{X}}]_{\cT}-[{\rightker{\cT}{X}}]_{\cT}$. Hence,
\[\pi_{\cT}^{\cC}(\ind{\cC}{\cT}{[X]_{\addcat{\cC}}})  = \pi_{\cT}^{\cC}([{\rightapp{\cT}{X}}]_{\cT}-[{\rightker{\cT}{X}}]_{\cT})  = [\rightapp{\cT}{X}]_{\cC}-[\rightker{\cT}{X}]_{\cC} = [X]_{\cC} = \pi_{\addcat{\cC}}^{\cC}([X]_{\addcat{\cC}}).\]
From this and the similar argument for coindex, we conclude the result.
\end{proof}

\begin{corollary}\label{c:pi-T-surj} The homomorphism $\pi_{\cT}^{\cC}$ is surjective. \qed
\end{corollary}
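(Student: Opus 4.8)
The statement to prove is Corollary~\ref{c:pi-T-surj}, that $\pi_{\cT}^{\cC}\colon\Kgp{\cT}\to\Kgp{\cC}$ is surjective.

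The plan is to read off surjectivity directly from Lemma~\ref{l:pi-T-ind-is-pi-C}. That lemma asserts the factorisation $\pi_{\cT}^{\cC}\circ\ind{\cC}{\cT}=\pi_{\addcat{\cC}}^{\cC}$ of homomorphisms $\Kgp{\addcat{\cC}}\to\Kgp{\cC}$. So it suffices to observe that $\pi_{\addcat{\cC}}^{\cC}$ is surjective: if $\pi_{\addcat{\cC}}^{\cC}$ is onto, then since it equals $\pi_{\cT}^{\cC}\circ\ind{\cC}{\cT}$, the map $\pi_{\cT}^{\cC}$ must also be onto (a map through which a surjection factors is itself a surjection).

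Now $\pi_{\addcat{\cC}}^{\cC}$ is surjective essentially by construction: by \eqref{eq:quot-C-add-C} it is the canonical quotient map sending the additive class $[X]_{\addcat{\cC}}$ to the extriangulated class $[X]_{\cC}$. Since $\Kgp{\cC}$ is generated as an abelian group by the classes $[X]_{\cC}$ for $X\in\cC$ (this is immediate from the definition of $\Kgp{\cC}$ as a free abelian group on such generators modulo the conflation relations), and each such generator is the image of $[X]_{\addcat{\cC}}$, the map $\pi_{\addcat{\cC}}^{\cC}$ is surjective. Hence so is $\pi_{\cT}^{\cC}$.

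There is no real obstacle here; the only point worth being careful about is simply to invoke the correct fact, namely that a surjection factoring through a composite forces the second (outer) map to be surjective, and to note that $\pi_{\addcat{\cC}}^{\cC}$ is surjective by virtue of being a quotient map onto a group generated by images of the objects. The proof is therefore a one-line deduction from Lemma~\ref{l:pi-T-ind-is-pi-C}, as already indicated by the \texttt{\textbackslash qed} at the end of the corollary statement.

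Concretely, I would write:

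\begin{proof}
By Lemma~\ref{l:pi-T-ind-is-pi-C}, $\pi_{\addcat{\cC}}^{\cC}=\pi_{\cT}^{\cC}\circ\ind{\cC}{\cT}$. The map $\pi_{\addcat{\cC}}^{\cC}$ of \eqref{eq:quot-C-add-C} is surjective, since $\Kgp{\cC}$ is generated by the classes $[X]_{\cC}=\pi_{\addcat{\cC}}^{\cC}([X]_{\addcat{\cC}})$ for $X\in\cC$. Hence $\pi_{\cT}^{\cC}$ is surjective.
\end{proof}
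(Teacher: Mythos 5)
Your proof is correct and is exactly the argument the paper intends (the corollary carries a \qed precisely because it is immediate from Lemma~\ref{l:pi-T-ind-is-pi-C}): since $\pi_{\addcat{\cC}}^{\cC}$ is a quotient map hence surjective, and it factors as $\pi_{\cT}^{\cC}\circ\ind{\cC}{\cT}$, surjectivity of $\pi_{\cT}^{\cC}$ follows.
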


\begin{corollary}\label{c:coind-minus-ind-in-ker-pi-T} For all $X$, $\coind{\cC}{\cT}[X]-\ind{\cC}{\cT}[X]\in \ker \pi_{\cT}^{\cC}$. \qed
\end{corollary}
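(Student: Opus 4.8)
The plan is to read this off directly from Lemma~\ref{l:pi-T-ind-is-pi-C}, which already does all the work. First I would recall that $\pi_{\cT}^{\cC}\colon\Kgp{\cT}\to\Kgp{\cC}$ is a group homomorphism, so it respects differences. Then I would apply $\pi_{\cT}^{\cC}$ to the element $\coind{\cC}{\cT}[X]-\ind{\cC}{\cT}[X]\in\Kgp{\cT}$ and invoke both halves of Lemma~\ref{l:pi-T-ind-is-pi-C}, namely $\pi_{\cT}^{\cC}\circ\coind{\cC}{\cT}=\pi_{\addcat{\cC}}^{\cC}=\pi_{\cT}^{\cC}\circ\ind{\cC}{\cT}$, to obtain
\[
\pi_{\cT}^{\cC}\bigl(\coind{\cC}{\cT}[X]-\ind{\cC}{\cT}[X]\bigr)=\pi_{\addcat{\cC}}^{\cC}([X]_{\addcat{\cC}})-\pi_{\addcat{\cC}}^{\cC}([X]_{\addcat{\cC}})=0.
\]
Hence $\coind{\cC}{\cT}[X]-\ind{\cC}{\cT}[X]$ lies in $\ker\pi_{\cT}^{\cC}$, which is exactly the asserted statement; since $X\in\cC$ was arbitrary, this completes the argument.

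There is essentially no obstacle here: the corollary is a purely formal consequence of Lemma~\ref{l:pi-T-ind-is-pi-C}, and this is why it is presented with a \qed rather than a separate proof environment. The only point that needs any attention is the bookkeeping of Grothendieck groups — the index and coindex of $X$ are elements of $\Kgp{\cT}$, whereas the coincidence is only detected after pushing forward along $\pi_{\cT}^{\cC}$ into $\Kgp{\cC}$ — but this is already built into the formulation of the cited lemma, so nothing further is required. Conceptually, the content is simply that both a $\cT$-index conflation and a $\cT$-coindex conflation for $X$ witness the same relation $[X]_{\cC}=[\rightapp{\cT}{X}]_{\cC}-[\rightker{\cT}{X}]_{\cC}$ (respectively with $L$ and $C$) in $\Kgp{\cC}$, so their difference maps to zero there.
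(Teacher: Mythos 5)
Your argument is exactly the one the paper intends: apply $\pi_{\cT}^{\cC}$ to the difference, invoke both equalities from Lemma~\ref{l:pi-T-ind-is-pi-C}, and conclude that the result is zero. This is why the paper presents the corollary with a \qed and no separate proof, so there is nothing to add.
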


The importance of the index (and coindex) for the theory of cluster categories is emphasised by the following key result, essentially due to Dehy and Keller.
Recall that an object $X$ in an extriangulated category $\cC$ is \emph{rigid} if $\Ext{1}{\cC}{X}{X}=0$.

\begin{proposition}
\label{p:rigid-index}
Let $\cC$ be a Krull--Schmidt cluster category, let $\cT\ctsubcat\cC$ and let $X,X'\in\cC$ be rigid objects.
Then $\ind{\cC}{\cT}[X]=\ind{\cC}{\cT}[X']$ if and only if $X\iso X'$.
\end{proposition}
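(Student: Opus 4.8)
\emph{Proof strategy.} The ``if'' direction is immediate, since isomorphic objects have equal classes in $\Kgp{\addcat{\cC}}$ and $\ind{\cC}{\cT}$ is a well-defined homomorphism on $\Kgp{\addcat{\cC}}$ (Propositions~\ref{p:ind-well-def} and \ref{p:ind-coind-homs}). For the converse, the plan has two steps: (i) from the index, recover the two end terms of a minimal $\cT$-index conflation of a rigid object; (ii) show that a rigid object is in turn recovered from these end terms.

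For step (i), let $X$ be rigid and take a minimal $\cT$-index conflation $\rightker{\cT}{X}\infl\rightapp{\cT}{X}\defl X\confl$. By Remark~\ref{r:no-common-summands} the objects $\rightapp{\cT}{X}$ and $\rightker{\cT}{X}$ have no direct summands in common. Since $\cC$ is Krull--Schmidt, so is $\cT$, and hence $\Kgp{\cT}$ is free abelian on $\{[T]:T\in\indec{\cT}\}$. Writing $[\rightapp{\cT}{X}]=\sum_T a_T[T]$ and $[\rightker{\cT}{X}]=\sum_T b_T[T]$ with $a_T,b_T\in\nat$, the no-common-summand property means $a_Tb_T=0$ for all $T$, so in $\ind{\cC}{\cT}[X]=\sum_T(a_T-b_T)[T]$ one has $a_T=[a_T-b_T]_+$ and $b_T=[a_T-b_T]_-$. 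Thus $\ind{\cC}{\cT}[X]$ determines $\rightapp{\cT}{X}$ and $\rightker{\cT}{X}$ up to isomorphism, and in particular $\ind{\cC}{\cT}[X]=\ind{\cC}{\cT}[X']$ forces $\rightapp{\cT}{X}\iso\rightapp{\cT}{X'}$ and $\rightker{\cT}{X}\iso\rightker{\cT}{X'}$ for minimal $\cT$-index conflations.

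Step (ii) is the substantive point, and I would carry it out by reducing to the triangulated case. Let $\cP$ be the subcategory of all projective-injective objects of $\cC$, so that $\stab{\cC}=\cC/\cP$ is a Hom-finite $2$-Calabi--Yau triangulated category with $\stab{\cT}\ctsubcat\stab{\cC}$ (Proposition~\ref{p:ct-bijection}), and let $\bar{\pi}\colon\Kgp{\cT}\to\Kgp{\stab{\cT}}$ be the homomorphism induced by the quotient $\pi\colon\cC\to\stab{\cC}$. The image under $\pi$ of a $\cT$-index conflation of $X$ is a conflation of $\stab{\cC}$ (Remark~\ref{r:partial-stab-Frob}) whose end terms lie in $\stab{\cT}$, and since $\cP\subseteq\cT$ the image of the right $\cT$-approximation $\rightapp{\cT}{X}\onto X$ is still a right $\stab{\cT}$-approximation (every morphism $\pi T\to\pi X$ in $\stab{\cC}$ is the image of a morphism $T\to X$ in $\cC$, which factors over $\rightapp{\cT}{X}$); hence $\bar{\pi}\,\ind{\cC}{\cT}[X]=\ind{\stab{\cC}}{\stab{\cT}}[X]$. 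Now if $\ind{\cC}{\cT}[X]=\ind{\cC}{\cT}[X']$ with $X,X'$ rigid, then $\ind{\stab{\cC}}{\stab{\cT}}[X]=\ind{\stab{\cC}}{\stab{\cT}}[X']$, and $X,X'$ are rigid also in $\stab{\cC}$ because $\Ext{1}{\cC}{\blank}{\blank}=\stabHom{\cC}{\blank}{\Sigma\blank}$ (Theorem~\ref{t:extri-stabcat}). The theorem of Dehy--Keller \cite{DehyKeller} (whose proof applies verbatim to cluster-tilting subcategories; cf.\ \cite{FuKeller}) then gives $X\iso X'$ in $\stab{\cC}$, so that $X\oplus P\iso X'\oplus P'$ in $\cC$ for some $P,P'\in\cP\subseteq\cT$ (using that $\cC$ is Krull--Schmidt). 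Comparing indices and using $\ind{\cC}{\cT}[P]=[P]$ and $\ind{\cC}{\cT}[P']=[P']$ yields $[P]=[P']$ in $\Kgp{\cT}$, hence $P\iso P'$ as $\cT$ is Krull--Schmidt; cancelling the common summand $P$ gives $X\iso X'$.

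The main obstacle is step (ii): reconstructing $X$ from the end terms of its minimal index conflation genuinely requires rigidity, and the argument above outsources this to the classical Dehy--Keller theorem for the stable category. A self-contained alternative would be to transfer the homological argument of \cite{DehyKeller,FuKeller} directly to the extriangulated setting, using the long exact sequences of Proposition~\ref{p:extri-les}, the vanishing of $\Ext{1}{\cC}{T}{T'}$ for $T,T'\in\cT$, and the $2$-Calabi--Yau property to show that, for rigid $X$, the inflation $\rightker{\cT}{X}\to\rightapp{\cT}{X}$ of a minimal $\cT$-index conflation is determined up to isomorphism by its source and target, so that the conflations for $X$ and $X'$ are isomorphic and hence $X\iso X'$.
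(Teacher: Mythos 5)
Your proof is correct, but it takes a genuinely different route from the paper's. The paper uses algebraicity (Proposition~\ref{p:cl-cat-exact-lift}) to \emph{lift} \(\cC\) to an exact cluster category \(\cE\) and then appeals to (a Hom-infinite-compatible re-examination of) Fu--Keller's argument \cite[Lem.~4.2]{FuKeller}, checking that their proof is compatible with the partial stabilisation \(\cE/\cP\simeq\cC\). You go in the opposite direction: you \emph{stabilise} to the triangulated quotient \(\stab{\cC}\), apply Dehy--Keller there (using Proposition~\ref{p:stabind} to transport the index, and the \(2\)-Calabi--Yau structure to transport rigidity), and then lift the conclusion \(\pi X\iso\pi X'\) back to \(\cC\) by noting that the discrepancy lives in projective-injective summands, whose index contribution \(\ind{\cC}{\cT}[P]=[P]\) is detected exactly and forces \(P\iso P'\). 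This descent-then-lift argument is arguably cleaner than the paper's lift-then-descend, since it avoids having to re-examine the internals of Fu--Keller's lemma; the trade-off is that you invoke Dehy--Keller for cluster-tilting \emph{subcategories} rather than objects, which you flag as an unverified but plausible extension, and you use that \(\cC\) Krull--Schmidt implies that \(\pi X\iso\pi X'\) in \(\cC/\cP\) forces \(X\oplus P''\iso X'\oplus P\) for suitable \(P,P''\in\cP\) (the key point being that the Krull--Schmidt decomposition in \(\cC/\cP\) is the non-\(\cP\) part of the one in \(\cC\); cf.\ Propositions~\ref{p:partialstab-KS} and \ref{p:radproj}). Your step (i) is a correct observation but is not used in the final argument: the reduction to \(\stab{\cC}\) plus Dehy--Keller already suffices, so you could drop it entirely.
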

\begin{proof}
The non-trivial implication is the forward one.
When $\cC$ is exact, this is a result of Fu--Keller \cite[\S4]{FuKeller} (proved by reducing to the case that $\cC$ is triangulated, for which the result is due to Dehy--Keller \cite[\S2.3]{DehyKeller}).
While Fu--Keller have Hom-finiteness as a standing assumption, the proof of \cite[Lem.~4.2]{FuKeller} does not rely on this.

By further examination of the proof of the aforementioned lemma, we also see that it is compatible with partial stabilisation from the exact case so, by algebraicity, we obtain the statement for Krull--Schmidt (extriangulated) cluster categories in the full generality we consider here.
\end{proof}

The following lemma will turn out to be rather powerful: it expresses an equality between two alternating sums of dimensions of Ext-spaces whose terms involve the objects in a $\cT$-index \eqref{eq:index-seq} and $\cT$-coindex conflation \eqref{eq:coindex-seq} for $X$.  More specifically, recall that
\[
\ind{\cC}{\cT}[X]  =[{\rightapp{\cT}{X}}]-[{\rightker{\cT}{X}}], \quad
\coind{\cC}{\cT}[X]  =[{\leftapp{\cT}{X}}]-[{\leftcok{\cT}{X}}],\]
so that
$ \coind{\cC}{\cT}[X]-\ind{\cC}{\cT}[X]=[{\leftapp{\cT}{X}}]-[{\leftcok{\cT}{X}}]+[{\rightker{\cT}{X}}]-[{\rightapp{\cT}{X}}]$.

\begin{lemma}
\label{l:ind-coind-adjointness}
Let $\cC$ be a cluster category and $\cT \ctsubcat \cC$. Then for any $X,Y\in\cC$ we have
{\small\begin{multline*}
\ext{1}{\cC}{\leftapp{\cT}{X}}{Y}-\ext{1}{\cC}{\leftcok{\cT}{X}}{Y}+\ext{1}{\cC}{\rightker{\cT}{X}}{Y}-\ext{1}{\cC}{\rightapp{\cT}{X}}{Y}\\
=\ext{1}{\cC}{X}{\leftcok{\cT}{Y}}-\ext{1}{\cC}{X}{\leftapp{\cT}{Y}}+\ext{1}{\cC}{X}{\rightapp{\cT}{Y}}-\ext{1}{\cC}{X}{\rightker{\cT}{Y}}.
\end{multline*}}If moreover $\Ext{1}{\cC}{X}{Y}=0$, then this is a consequence of the stronger equalities
{\small\begin{align*}
\ext{1}{\cC}{X}{\leftapp{\cT}{Y}}-\ext{1}{\cC}{X}{\leftcok{\cT}{Y}}&=\ext{1}{\cC}{\rightapp{\cT}{X}}{Y}-\ext{1}{\cC}{\rightker{\cT}{X}}{Y},\\
\ext{1}{\cC}{\leftapp{\cT}{X}}{Y}-\ext{1}{\cC}{\leftcok{\cT}{X}}{Y}&=\ext{1}{\cC}{X}{\rightapp{\cT}{Y}}-\ext{1}{\cC}{X}{\rightker{\cT}{Y}}.
\end{align*}}
\end{lemma}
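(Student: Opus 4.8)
The main equation is obtained by combining the two displayed stronger equalities (add the second to the negative of the first), so once $\Ext{1}{\cC}{X}{Y}=0$ those two imply it; I will prove the stronger pair in that case and then the main equation with no hypothesis on $\Ext{1}{\cC}{X}{Y}$. For the stronger pair, assume $\Ext{1}{\cC}{X}{Y}=0$, equivalently $\Ext{1}{\cC}{Y}{X}=0$ by the $2$-Calabi--Yau isomorphism $\Ext{1}{\cC}{A}{B}\cong\dual{\Ext{1}{\cC}{B}{A}}$. A left $\cT$-approximation $X\to\leftapp{\cT}{X}$ realises the morphisms $X\to Y$ factoring through $\cT$ as the image of $\Hom{\cC}{\leftapp{\cT}{X}}{Y}\to\Hom{\cC}{X}{Y}$, and dually a right $\cT$-approximation $\rightapp{\cT}{Y}\to Y$ realises them as the image of $\Hom{\cC}{X}{\rightapp{\cT}{Y}}\to\Hom{\cC}{X}{Y}$. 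Hence, applying the long exact sequences of Proposition~\ref{p:extri-les} to the $\cT$-coindex conflation of $X$ against $\Hom{\cC}{\blank}{Y}$ and to the $\cT$-index conflation of $Y$ against $\Hom{\cC}{X}{\blank}$, and using $\Ext{1}{\cC}{X}{Y}=0$ to turn the relevant connecting maps into surjections, one finds that both sides of the second stronger equality equal $-\hom{\cC/\cT}{X}{Y}$, a finite quantity since $\Hom{\cC/\cT}{X}{Y}$ is a quotient of $\stabHom{\cC}{X}{Y}$. The first stronger equality then follows by applying the second with $X$ and $Y$ interchanged and invoking the $2$-Calabi--Yau identity on both sides.

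For the main equation in general, reduce to the exact case: by algebraicity (Proposition~\ref{p:cl-cat-exact-lift}) write $\cC=\cE/\cP$ with $\cE$ an exact cluster category, and note that partial stabilisation preserves $\Ext^1$-spaces and sends $\cT$-index and $\cT$-coindex conflations to $\pi\cT$-index and $\pi\cT$-coindex conflations; since the combinations appearing in the statement are independent of the chosen conflations (Schanuel's lemma), it suffices to work in $\cE$. Assume then $\cC=\cE$ exact. Applying $\Yonfun{\cT}=\Hom{\cE}{\blank}{\blank}|_{\cT}$ to a $\cT$-index, resp.\ $\cT$-coindex, conflation of $X$, and using that $\cE$ is exact and $\Ext{1}{\cE}{T}{T'}=0$ for $T,T'\in\cT$, yields exact sequences of $\cT$-modules
\[
0\to\Yonfun{\cT}(\rightker{\cT}{X})\to\Yonfun{\cT}(\rightapp{\cT}{X})\to\Yonfun{\cT}X\to0,\qquad
0\to\Yonfun{\cT}X\to\Yonfun{\cT}(\leftapp{\cT}{X})\to\Yonfun{\cT}(\leftcok{\cT}{X})\to\Extfun{\cT}X\to0,
\]
the first being Proposition~\ref{p:ind-proj-res}. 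Introduce the Euler form $\langle M,N\rangle=\sum_{i\geq0}(-1)^i\ext{i}{\cT}{M}{N}$, defined whenever $M$ has a finite resolution by finitely generated projective $\cT$-modules and $N$ is locally finite-dimensional. By Proposition~\ref{p:KR-pdim3} the modules $\Yonfun{\cT}Z$ and $\Extfun{\cT}Z$ have such resolutions (of length $\leq1$, resp.\ $\leq3$), so $\langle\blank,\blank\rangle$ is well-defined and additive in the first argument on short exact sequences of such modules; moreover $\langle\Yonfun{\cT}T,N\rangle=\dim N(T)$ for $T\in\cT$ by the Yoneda lemma, and $\Extfun{\cT}Y$ is locally finite-dimensional because $\stab{\cC}$ is Hom-finite. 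In particular $\ext{1}{\cE}{T}{Y}=\langle\Yonfun{\cT}T,\Extfun{\cT}Y\rangle$ for $T\in\cT$, so the left-hand side of the main equation is $\langle\blank,\Extfun{\cT}Y\rangle$ applied to the alternating combination $[\Yonfun{\cT}\leftapp{\cT}{X}]-[\Yonfun{\cT}\leftcok{\cT}{X}]+[\Yonfun{\cT}\rightker{\cT}{X}]-[\Yonfun{\cT}\rightapp{\cT}{X}]$; the two exact sequences above identify this combination with $-[\Extfun{\cT}X]$ in the Grothendieck group over which $\langle\blank,\Extfun{\cT}Y\rangle$ factors, whence the left-hand side equals $-\langle\Extfun{\cT}X,\Extfun{\cT}Y\rangle$.

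Carrying out the same computation in the exact cluster category $\op{\cE}$ (with $\op{\cT}\ctsubcat\op{\cE}$) and with the roles of $X$ and $Y$ exchanged shows that the right-hand side of the main equation equals $-\langle\Extfun{\op{\cT}}Y,\Extfun{\op{\cT}}X\rangle$, where the Euler form is now taken over $\op{\cT}$-modules. The $2$-Calabi--Yau property gives a natural isomorphism $\Extfun{\op{\cT}}Z\cong\dual{(\Extfun{\cT}Z)}$, and the exact contravariant duality $\dual{(\blank)}$ between locally finite-dimensional $\cT$- and $\op{\cT}$-modules carries the finite projective resolution of a module $M$ to a finite injective coresolution of $\dual{M}$, so that $\ext{i}{\op{\cT}}{\dual{M'}}{\dual{M}}=\ext{i}{\cT}{M}{M'}$ and hence $\langle\dual{M'},\dual{M}\rangle=\langle M,M'\rangle$. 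Taking $M'=\Extfun{\cT}Y$ and $M=\Extfun{\cT}X$ identifies the right-hand side with $-\langle\Extfun{\cT}X,\Extfun{\cT}Y\rangle$ as well, and the main equation follows.

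The delicate point throughout is the bookkeeping for the Euler form when $\cC$ (hence $\cE$) need not be Hom-finite: one must know that every $\Ext$-group in sight is finite-dimensional and vanishes in high degree, which is precisely what the projective-dimension bound of Proposition~\ref{p:KR-pdim3} and the local finite-dimensionality of $\Extfun{\cT}Y$ provide. Granting that, the two sides of the identity are computed by mirror-image arguments — one running through the (co)index conflations of $X$, the other through those of $Y$ — which meet at the single Euler form $\langle\Extfun{\cT}X,\Extfun{\cT}Y\rangle$; this coincidence is the adjointness the lemma records.
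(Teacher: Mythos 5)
Your proof is correct, but it takes a genuinely different route from the paper. The paper lifts to the exact cluster category $\cE$, builds the $4\times 4$ double complex of Hom-spaces with rows and columns coming from the four-term exact sequences attached to the index and coindex conflations of $X$ and $Y$, and reads off both the main identity and the stronger identities at once by comparing the two spectral sequences of this double complex (the stronger identities correspond to the complexes splitting when $\Ext{1}{\cC}{X}{Y}=0$). Your argument is bipartite. For the stronger identities you work directly in the extriangulated category $\cC$, extracting from the six-term sequences of Proposition~\ref{p:extri-les} that both sides equal $-\hom{\cC/\cT}{X}{Y}$; this is a more elementary route than the paper's and has the pleasant side-effect of identifying the common value (which the paper does not do). For the general identity you reduce to $\cE$ and repackage the paper's double-complex computation into the Euler form $\langle\blank,\blank\rangle$ on $\cT$-modules, showing the left side equals $-\langle\Extfun{\cT}X,\Extfun{\cT}Y\rangle$ via Propositions~\ref{p:ind-proj-res} and \ref{p:KR-pdim3}, then handle the right side by passing to $\op{\cE}$ and using the $\dual{(\blank)}$-duality. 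What each approach buys: the paper's single double-complex argument is shorter and treats both sides symmetrically in one stroke; yours exposes the value $-\hom{\cC/\cT}{X}{Y}$ in the special case and makes the role of the Euler pairing (later encoded in $\sform{\blank}{\blank}{\cT}$, cf.~Remark~\ref{r:s-form-rewrite}) visible already here.

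One point where you should be a little more careful: the step asserting $\ext{i}{\op{\cT}}{\dual{M'}}{\dual{M}}=\ext{i}{\cT}{M}{M'}$ by saying the duality ``between locally finite-dimensional $\cT$- and $\op{\cT}$-modules carries the finite projective resolution of $M$ to a finite injective coresolution of $\dual{M}$'' is slightly misstated, since the terms $\Yonfun{\cT}T_i$ of a projective resolution of $\Extfun{\cT}X$ need not themselves be locally finite-dimensional when $\cT$ is not Hom-finite. The correct justification is that $\dual{(\blank)}$ is exact on \emph{all} modules and sends the flat (in particular projective) modules $\Yonfun{\cT}T$ to injective $\op{\cT}$-modules, by the tensor--Hom adjunction $\Hom{\op{\cT}}{N}{\dual{L}}\cong\dual{(N\otimes_{\cT}L)}$; combining this with the Yoneda computation $\Hom{\op{\cT}}{\dual{M'}}{\dual{\Yonfun{\cT}T}}\cong\dual{(\dual{M'}(T))}\cong M'(T)$ (valid since $M'$ is lfd) then gives the desired equality of $\Ext$-groups degree by degree. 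This is not a gap in substance, only in precision.
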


\begin{proof}
Throughout the proof, we drop $\cT$ in subscripts, writing $KX=\rightker{\cT}{X}$ and so on.
Since $\cC$ is algebraic, we may pick a Frobenius exact category $\cE$ such that $\cC=\cE/\cP$. For any $T\in\cT$, lifting the conflations \eqref{eq:index-seq} and \eqref{eq:coindex-seq} to $\cE$ and applying Hom-functors yields exact sequences
\[
\begin{tikzcd}[column sep=10pt,row sep=0pt,ampersand replacement=\&,font=\footnotesize]
0\arrow{r}\&\Hom{\cE}{CX}{T}\arrow{r}\&\Hom{\cE}{LX}{T}\arrow{r}\&\Hom{\cE}{RX}{T}\arrow{r}\&\Hom{\cE}{KX}{T}\arrow{r}\&\Ext{1}{\cC}{X}{T}\arrow{r}\&0,\\
0\arrow{r}\&\Hom{\cE}{T}{KY}\arrow{r}\&\Hom{\cE}{T}{RY}\arrow{r}\&\Hom{\cE}{T}{LY}\arrow{r}\&\Hom{\cE}{T}{CY}\arrow{r}\&\Ext{1}{\cC}{T}{Y}\arrow{r}\&0,
\end{tikzcd}
\]
noting that $\operatorname{Ext}^1_{\cE}=\operatorname{Ext}^1_{\cC}$; it is important here that we take Hom-functors in the exact category $\cE$ for exactness at the left in the above sequences.
Combining these sequences, for various values of $T\in\cT$, we construct the double complex
\[
\begin{tikzcd}[column sep=10pt,row sep=10pt,ampersand replacement=\&,font=\small]
\&0\arrow{d}\&0\arrow{d}\&0\arrow{d}\&0\arrow{d}\&\\
0\arrow{r}\&\Hom{\cE}{CX}{KY}\arrow{r}\arrow{d}\&\Hom{\cE}{CX}{RY}\arrow{r}\arrow{d}\&\Hom{\cE}{CX}{LY}\arrow{r}\arrow{d}\&\Hom{\cE}{CX}{CY}\arrow{r}\arrow{d}\&0\\
0\arrow{r}\&\Hom{\cE}{LX}{KY}\arrow{r}\arrow{d}\&\Hom{\cE}{LX}{RY}\arrow{r}\arrow{d}\&\Hom{\cE}{LX}{LY}\arrow{r}\arrow{d}\&\Hom{\cE}{LX}{CY}\arrow{r}\arrow{d}\&0\\
0\arrow{r}\&\Hom{\cE}{RX}{KY}\arrow{r}\arrow{d}\&\Hom{\cE}{RX}{RY}\arrow{r}\arrow{d}\&\Hom{\cE}{RX}{LY}\arrow{r}\arrow{d}\&\Hom{\cE}{RX}{CY}\arrow{r}\arrow{d}\&0\\
0\arrow{r}\&\Hom{\cE}{KX}{KY}\arrow{r}\arrow{d}\&\Hom{\cE}{KX}{RY}\arrow{r}\arrow{d}\&\Hom{\cE}{KX}{LY}\arrow{r}\arrow{d}\&\Hom{\cE}{KX}{CY}\arrow{r}\arrow{d}\&0\\
\&0\&0\&0\&0\&\end{tikzcd}
\]
The rows and columns of this complex are not exact, but rather there is cohomology (given by an extension group) on the right-most non-zero column (using the horizontal derivatives) and on the lowest non-zero row (using the vertical derivatives).
Indeed, differentiating horizontally, we obtain the complex
\[\begin{tikzcd}[column sep=17pt]
0\arrow{r}&\Ext{1}{\cC}{CX}{Y}\arrow{r}&\Ext{1}{\cC}{LX}{Y}\arrow{r}&\Ext{1}{\cC}{RX}{Y}\arrow{r}&\Ext{1}{\cC}{KX}{Y}\arrow{r}&0,
\end{tikzcd}\]
and differentiating vertically we obtain the complex
\[\begin{tikzcd}[column sep=17pt]
0\arrow{r}&\Ext{1}{\cC}{X}{KY}\arrow{r}&\Ext{1}{\cC}{X}{RY}\arrow{r}&\Ext{1}{\cC}{X}{LY}\arrow{r}&\Ext{1}{\cC}{X}{CY}\arrow{r}&0.
\end{tikzcd}\]
These complexes are again not exact, but both have cohomology equal to the total cohomology of the double complex we started with (both of the spectral sequences of this double complex converging on the second page).
Thus, the alternating sum of the dimensions of the terms of these two complexes coincides, being the alternating sum of dimensions of their common cohomology groups, and we obtain the first desired equality.

Now the middle map in each of these sequences factors over $\Ext{1}{\cC}{X}{Y}$, since the middle map in each column of the double complex factors over $\Hom{\cE}{X}{T}$, and the middle map in each row over $\Hom{\cT}{T}{Y}$, for the relevant $T\in\cT$.
Thus, if this extension space is zero, the two relevant complexes of extension spaces split as a direct sum, with non-zero terms of the summands in different degrees.
In this case we can compare only the left-hand half of each complex, and conclude that
\[\ext{1}{\cC}{RX}{Y}-\ext{1}{\cC}{KX}{Y}=\ext{1}{\cC}{X}{LY}-\ext{1}{\cC}{X}{CY},\]
as claimed.
The second such truncated equality may be obtained similarly by comparing the right-hand halves of the relevant complexes, or deduced from the first by using the fact that the stable category $\stab{\cC}$ is $2$-Calabi--Yau to swap the role of $X$ and $Y$.
\end{proof}

\begin{remark}
\label{r:magic-lemma}
Exploiting the fact that $\cT$ is split exact, a useful shorthand for the result of Lemma~\ref{l:ind-coind-adjointness} is
\[\extbig{1}{\cC}{(\coind{\cC}{\cT}-\ind{\cC}{\cT})[X]}{[Y]}=\extbig{1}{\cC}{[X]}{(\ind{\cC}{\cT}-\coind{\cC}{\cT})[Y]},\]
or, when $\Ext{1}{\cC}{X}{Y}=0$,
\begin{align*}
\ext{1}{\cC}{\coind{\cC}{\cT}[X]}{[Y]}&=\ext{1}{\cC}{[X]}{\ind{\cC}{\cT}[Y]},\\
\ext{1}{\cC}{\ind{\cC}{\cT}[X]}{[Y]}&=\ext{1}{\cC}{[X]}{\coind{\cC}{\cT}[Y]}.
\end{align*}
\end{remark}

\subsection{Stabilisation}\label{s:part-stab}

In this subsection, we consider the relationship between a cluster category $\cC$ and a partial stabilisation $\cC/\cP$.
While $\cC$ and $\cC/\cP$ have the same objects and extension groups, and hence the same cluster-tilting subcategories, we will write $\cT/\cP$ for the cluster-tilting subcategory in $\cC/\cP$ with the same objects as $\cT\ctsubcat\cC$, to be clear about which category we are working in.
This will also help to distinguish the Grothendieck groups $\Kgp{\cT}$ and $\Kgp{\cT/\cP}$, which are non-isomorphic when $\cP\ne0$.
There is, however, a surjective homomorphism
\begin{equation}
\label{eq:T-stab-hom}
\pi_{\cT}^{\cT/\cP}\colon\Kgp{\cT}\to\Kgp{\cT/\cP},\ \pi_{\cT}^{\cT/\cP}[T]_{\cT}=[T]_{\cT/\cP}
\end{equation}
induced by the restriction $\pi_{\cT}^{\cT/\cP}\colon\cT\to\cT/\cP$ of the quotient functor $\cC\to\cC/\cP$.
These surjections allow us to relate the index and coindex maps for $\cT\ctsubcat\cC$ to the index and coindex for $\cT/\cP\ctsubcat\cC/\cP$, as follows.

\begin{proposition}
\label{p:stabind}
Let $\cC$ be a cluster category, let $\cT\ctsubcat\cC$, and let $\cP$ be a full and additively closed subcategory of projective objects in $\cC$.
Write 
\begin{equation}\label{eq:C-add-stab-hom}  \pi_{\addcat{\cC}}^{\addcat{(\cC/\cP)}}\colon\Kgp{\addcat{\cC}}\to\Kgpbig{\addcat{(\cC/\cP)}},\  [X]_{\addcat{\cC}}=[X]_{\addcat{(\cC/\cP)}}
\end{equation}
for the natural projection.
Then
\[
\ind{\cC/\cP}{\cT/\cP}\circ\pi_{\addcat{\cC}}^{\addcat{(\cC/\cP)}}=\pi_{\cT}^{\cT/\cP}\circ\ind{\cC}{\cT},\quad
\coind{\cC/\cP}{\cT/\cP}\circ\pi_{\addcat{\cC}}^{\addcat{(\cC/\cP)}}=\pi_{\cT}^{\cT/\cP}\circ\coind{\cC}{\cT}.
\]
\end{proposition}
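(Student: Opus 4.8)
The plan is to reduce everything to the observation, recorded in Remark~\ref{r:partial-stab-Frob}, that the conflations of $\cC/\cP$ are precisely the images under the quotient functor $\pi\colon\cC\to\cC/\cP$ of the conflations of $\cC$, combined with the flexibility afforded by Proposition~\ref{p:ind-well-def}: the index of an object may be computed from \emph{any} conflation with outer terms in the cluster-tilting subcategory, not only from one built from a right $\cT$-approximation. Note first that $\cC/\cP$ is again a cluster category with cluster-tilting subcategory $\cT/\cP$ by Proposition~\ref{p:ct-bijection} (projective objects of the Frobenius category $\cC$ being injective), so the maps $\ind{\cC/\cP}{\cT/\cP}$ and $\coind{\cC/\cP}{\cT/\cP}$ are defined.

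For the index identity, I would fix $X\in\cC$ and choose a $\cT$-index conflation \eqref{eq:index-seq}, so that $\rightker{\cT}{X},\rightapp{\cT}{X}\in\cT$ and $\ind{\cC}{\cT}[X]=[\rightapp{\cT}{X}]_{\cT}-[\rightker{\cT}{X}]_{\cT}$ by Definition~\ref{d:index}. Applying $\pi_{\cT}^{\cT/\cP}$ and using its defining formula \eqref{eq:T-stab-hom} gives $\pi_{\cT}^{\cT/\cP}\ind{\cC}{\cT}[X]=[\rightapp{\cT}{X}]_{\cT/\cP}-[\rightker{\cT}{X}]_{\cT/\cP}$. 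On the other hand, applying $\pi$ to \eqref{eq:index-seq} produces a conflation $\rightker{\cT}{X}\infl\rightapp{\cT}{X}\defl X\confl$ in $\cC/\cP$ whose outer terms lie in $\cT/\cP$; by Proposition~\ref{p:ind-well-def} applied in $\cC/\cP$, this conflation computes $\ind{\cC/\cP}{\cT/\cP}[X]=[\rightapp{\cT}{X}]_{\cT/\cP}-[\rightker{\cT}{X}]_{\cT/\cP}$. Since $\pi_{\addcat{\cC}}^{\addcat{(\cC/\cP)}}$ acts as the identity on classes of objects \eqref{eq:C-add-stab-hom}, this last expression equals $\ind{\cC/\cP}{\cT/\cP}\bigl(\pi_{\addcat{\cC}}^{\addcat{(\cC/\cP)}}[X]_{\addcat{\cC}}\bigr)$. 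Comparing the two computations shows the first asserted identity holds on every generator $[X]_{\addcat{\cC}}$ of $\Kgp{\addcat{\cC}}$, and since both sides are group homomorphisms by Proposition~\ref{p:ind-coind-homs}, they agree. The coindex identity I would obtain verbatim, replacing \eqref{eq:index-seq} by a $\cT$-coindex conflation \eqref{eq:coindex-seq} and citing Definition~\ref{d:coindex}; alternatively, one deduces it from the index case applied to $\op{\cC}$, $\op{\cT}$ and $\op{\cC}/\op{\cP}$ via Remark~\ref{r:ind-op}, the maps $\pi_{\cT}^{\cT/\cP}$ and $\pi_{\addcat{\cC}}^{\addcat{(\cC/\cP)}}$ being unchanged under passage to opposite categories as they are determined by their values on objects.

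There is no real obstacle here, but the one point demanding care is that the middle term $\rightapp{\cT}{X}$, which is a right $\cT$-approximation of $X$ in $\cC$, need \emph{not} project to a right $\cT/\cP$-approximation of $X$ in $\cC/\cP$; the argument therefore genuinely relies on the fact from Proposition~\ref{p:ind-well-def} that the index is well-defined by any conflation with outer terms in the cluster-tilting subcategory, rather than only by those coming from approximations. This is exactly the feature that makes the image conflation in $\cC/\cP$ usable despite the approximation property being lost.
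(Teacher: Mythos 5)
Your argument is essentially the paper's proof: compute $\ind{\cC}{\cT}[X]$ from a chosen $\cT$-index conflation, project it to $\cC/\cP$, and observe that the projected conflation has both outer terms in $\cT/\cP$ and hence computes $\ind{\cC/\cP}{\cT/\cP}[X]$. The one place you part company with the paper is the concluding caution, and there you are mistaken: the image of a right $\cT$-approximation $\varphi\colon\rightapp{\cT}{X}\to X$ under $\pi\colon\cC\to\cC/\cP$ \emph{is} a right $\cT/\cP$-approximation of $X$. Indeed, since $\pi$ is full, any morphism $g\colon T'\to X$ in $\cC/\cP$ with $T'\in\cT/\cP$ lifts to some $\tilde g\colon T'\to X$ in $\cC$, which factors over $\varphi$; projecting shows $g$ factors over $\pi(\varphi)$. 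This is precisely the observation recorded in the proof of Proposition~\ref{p:ct-bijection} (``we obtain the required approximations in $\cC/\cP$ by projecting those in $\cC$''). In any case, the paper's notion of a $\cT$-index conflation (Definition~\ref{d:index}, built on Proposition~\ref{p:ind-well-def}) requires only that the outer terms lie in $\cT$, not that the deflation be a right $\cT$-approximation, so the projected conflation qualifies immediately and your appeal to well-definedness, while perfectly correct, is already built into the definition.
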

\begin{proof}
Let $X\in\cC$, and choose a $\cT$-index conflation \eqref{eq:index-seq} for $X$, so that $\ind{\cC}{\cT}{([X]_{\addcat{\cC}})}=[{\rightapp{\cT}{X}}]_{\cT}-[{\rightker{\cT}{X}}]_{\cT}$.
Projecting this conflation to $\cC/\cP$ gives a $\cT/\cP$-index conflation for $X$, and so $\ind{\cC/\cP}{\cT/\cP}{([X]_{\addcat{(\cC/\cP)}})}=[{\rightapp{\cT}{X}}]_{\cT/\cP}-[\rightker{\cT}{X}]_{\cT/\cP}$.
Since $\pi_{\cT}^{\cT/\cP}[T]_{\cT}=[T]_{\cT/\cP}$ for all $T\in\cT$, the result follows.
The proof for coindices is completely analogous.
\end{proof}

\begin{remark}
\label{rem:ind-abbrev1}
When $\cC/\cP$ is the stable category $\stab{\cC}$, we will abbreviate $\stabind{\cC}{\cT}\defeq\ind{\cC/\cP}{\cT/\cP}$,
and similarly for the coindex.
\end{remark}

We will also use later the fact that
\begin{equation}
\label{eq:stab-restrict} 
\iota_{\cT/\cP}^{\addcat{(\cC/\cP)}}\circ \pi_{\cT}^{\cT/\cP}=\pi_{\addcat{\cC}}^{\addcat{(\cC/\cP)}}\circ \iota_{\cT}^{\addcat{\cC}}
\end{equation}
where $\iota_{\cT}^{\addcat{\cC}}$ is the homomorphism induced by $\cT\inj \addcat{\cC}$. Together with Proposition~\ref{p:stabind}, this means that we have commutative diagrams
\begin{equation}
\label{eq:C-add-T-stable-comm-diags}
\begin{tikzcd}[column sep=50pt]
\Kgp{\addcat{\cC}} \arrow{r}{\cind{\cC}{\cT}} \arrow[->>]{d}[left]{\pi_{\addcat{\cC}}^{\addcat{(\cC/\cP)}}} & \Kgp{\cT} \arrow[->>]{d}{\pi_{\cT}^{\cT/\cP}}\\
\Kgpbig{\addcat{(\cC/\cP)}} \arrow{r}[below]{\cind{\cC/\cP}{\cT/\cP}} & \Kgp{\cT/\cP}
\end{tikzcd}\qquad
\begin{tikzcd}[column sep=40pt]
\Kgp{\addcat{\cC}}  \arrow[->>]{d}[left]{\pi_{\addcat{\cC}}^{\addcat{(\cC/\cP)}}} & \Kgp{\cT} \arrow[->]{l}[above]{\iota_{\cT}^{\addcat{\cC}}} \arrow[->>]{d}{\pi_{\cT}^{\cT/\cP}} \\
\Kgpbig{\addcat{(\cC/\cP)}}  & \Kgp{\cT/\cP} \arrow[->]{l}{\iota_{\cT/\cP}^{\addcat{(\cC/\cP)}}}
\end{tikzcd}
\end{equation}

\subsection{Duality}\label{s:duality}

In this section and subsequently, we will see that there is a duality between a cluster-tilting subcategory and modules over it. This relationship is at the heart of much of what follows.
For us, $\cT$ is associated with the $\cA$-side, and the category of modules over $\cT$ with the $\cX$-side, in the Fock--Goncharov philosophy.

For any additive category $\cT$, viewed as a split exact category, there is a bilinear form $\ip{\blank}{\blank}\colon\Kgp{\lfd{\cT}}\times\Kgp{\cT}\to\integ$ given by
\begin{equation}
\label{eq:fundamental-pairing}
\ip{[M]}{[T]}=\dim_{\bK} M(T)
\end{equation}
for objects $M\in \lfd \cT$ and $T\in \cT$ (and extended linearly to differences of classes of objects).
Linearity in $[M]$ uses that evaluation on $T\in\cT$ is an exact functor $\lfd{\cT}\to\fpmod{\bK}$, while linearity in $[T]$ uses that every additive functor on $\cT$ is exact, since $\cT$ has no non-split conflations.

\begin{definition}
\label{d:canform}
The \emph{numerical Grothendieck group} $\Kgpnum{\lfd{\cT}}$ is the quotient of $\Kgp{\lfd{\cT}}$ by the kernel of the form \eqref{eq:fundamental-pairing}, that is,
\[\Kgpnum{\lfd{\cT}}=\Kgp{\lfd{\cT}}/\{v\in\Kgp{\lfd{\cT}}:\ip{v}{[T]}=0\text{ for all }T\in\cT\}.\]
We write $\canform{\blank}{\blank}{\cT}\colon\Kgpnum{\lfd{\cT}}\times\Kgp{\cT}\to\integ$ for the form induced from \eqref{eq:fundamental-pairing}, so that in particular we still have $\canform{[M]}{[T]}{\cT}=\dim_{\bK} M(T)$ when evaluating on classes of objects.
\end{definition}

Recall that $\divalg{T}=\op{\End{\cT}{T}}/\rad{}{\op{\End{\cT}{T}}}$, and $\dimdivalg{T}=\dim_{\bK}\divalg{T}$ when this dimension is finite.
If $\cT$ is Krull--Schmidt and $T\in\indec{\cT}$, then by definition we have $\simpmod{\cT}{T}(T)=\divalg{T}$ (as vector spaces), so in particular $\dimdivalg{T}=\dim_{\bK}\simpmod{\cT}{T}(T)$.
\begin{proposition}\label{p:K-duality}
Let $\cT$ be a Krull--Schmidt $\bK$-linear category such that $\simpmod{\cT}{T}\in\fd{\cT}$ for every $T\in \indec \cT$. Then the pairing $\canform{\blank}{\blank}{\cT}\colon\Kgpnum{\lfd {\cT}}\times\Kgp{{\cT}}\to\integ$ is non-degenerate. 
That is, writing $\dual{(\blank)}=\Hom{\integ}{\blank}{\integ}$, the induced maps
\begin{align*}
\pdual{\cT} & \colon \Kgp{{\cT}}\to\dual{\Kgpnum{\lfd{\cT}}},\ \pdual{\cT}[T]=\canform{\blank}{[T]}{\cT} \\
\sdual{\cT} & \colon \Kgpnum{\lfd{\cT}}\to\dual{\Kgp{{\cT}}},\ \sdual{\cT}[M]=\canform{[M]}{\blank}{\cT}
\end{align*}
are injective. Moreover, computing adjoints with respect to the evaluation forms, we have that $\adj{(\pdual{\cT})}=\sdual{\cT}$ and $\adj{(\sdual{\cT})}=\pdual{\cT}$. 
\end{proposition}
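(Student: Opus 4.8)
The plan is to handle the three assertions separately, since they are of quite different flavour: injectivity of $\sdual{\cT}$ is immediate from the definition of the numerical Grothendieck group, injectivity of $\pdual{\cT}$ needs the simple functors, and the adjointness is purely formal.

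Injectivity of $\sdual{\cT}$ first. By Definition~\ref{d:canform}, $\Kgpnum{\lfd{\cT}}$ is by construction the quotient of $\Kgp{\lfd{\cT}}$ by the left radical $\{v : \ip{v}{[T]} = 0\text{ for all }T\in\cT\}$ of the form \eqref{eq:fundamental-pairing}. So if $\sdual{\cT}[M] = 0$, i.e.\ $\canform{[M]}{[T]}{\cT} = 0$ for every $T\in\cT$, then $[M]$ is already $0$ in $\Kgpnum{\lfd{\cT}}$. Equivalently, $\canform{\blank}{\blank}{\cT}$ has trivial left radical.

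For $\pdual{\cT}$ I would use the hypotheses. Since $\cT$ is Krull--Schmidt, $\Kgp{\cT}$ is free abelian on $\{[T] : T\in\indec{\cT}\}$, so any element is a \emph{finite} sum $v = \sum_i n_i[U_i]$ with the $U_i$ pairwise non-isomorphic indecomposables. The computation to carry out is that for $U\in\indec{\cT}$ the simple functor $\simpmod{\cT}{U}$ --- which lies in $\fd{\cT}\subseteq\lfd{\cT}$ by hypothesis, hence has a class in $\Kgpnum{\lfd{\cT}}$ --- is supported only at $U$, with $\simpmod{\cT}{U}(U) = \divalg{U}$, giving $\canform{[\simpmod{\cT}{U}]}{[T]}{\cT} = \dimdivalg{U}\,\delta_{U,T}$ for $T\in\indec{\cT}$. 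Hence $\canform{[\simpmod{\cT}{U_j}]}{v}{\cT} = n_j\dimdivalg{U_j}$; if this vanishes for every $j$ then, since each $\dimdivalg{U_j}$ is finite (because $\simpmod{\cT}{U_j}\in\fd{\cT}$) and nonzero (because $\divalg{U_j}$ is a division ring), we get $n_j = 0$ for all $j$, so $v = 0$. Thus $\pdual{\cT}$ is injective, i.e.\ $\canform{\blank}{\blank}{\cT}$ has trivial right radical. In passing this shows the Gram matrix of $\canform{\blank}{\blank}{\cT}$ in the bases $\{[\simpmod{\cT}{T}]\}$ and $\{[T]\}$ is diagonal with positive entries $\dimdivalg{T}$, which simultaneously re-proves both injectivity statements and records the identities $\sdual{\cT}[\simpmod{\cT}{T}] = \dimdivalg{T}\dual{[T]}$ and $\pdual{\cT}[T] = \dimdivalg{T}\dual{[\simpmod{\cT}{T}]}$ quoted in the introduction.

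Finally the adjointness is a tautology about the canonical evaluation pairings. By definition $\adj{(\pdual{\cT})}\colon\Kgpnum{\lfd{\cT}}\to\dual{\Kgp{\cT}}$ is characterised by $\evform{[T]}{\adj{(\pdual{\cT})}[M]} = \evform{[M]}{\pdual{\cT}[T]}$ for all $[T]\in\Kgp{\cT}$ and $[M]\in\Kgpnum{\lfd{\cT}}$; but the right-hand side equals $\pdual{\cT}[T]([M]) = \canform{[M]}{[T]}{\cT} = \sdual{\cT}[M]([T]) = \evform{[T]}{\sdual{\cT}[M]}$, so $\adj{(\pdual{\cT})}[M] = \sdual{\cT}[M]$ for all $[M]$, i.e.\ $\adj{(\pdual{\cT})} = \sdual{\cT}$; then $\adj{(\sdual{\cT})} = \adj{\adj{(\pdual{\cT})}} = \pdual{\cT}$. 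The only genuinely delicate point --- and the closest thing to an obstacle --- is the input to the $\pdual{\cT}$ argument: away from the finite-rank setting one cannot take for granted that the simple functors are objects of $\lfd{\cT}$, which is exactly why the hypothesis $\simpmod{\cT}{T}\in\fd{\cT}$ is imposed, and it is also what forces the integers $\dimdivalg{T}$ to be finite so that the diagonalisation is legitimate.
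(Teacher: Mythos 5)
Your proposal is correct and follows essentially the same route as the paper's proof: injectivity of \(\sdual{\cT}\) is immediate from the definition of \(\Kgpnum{\lfd{\cT}}\), injectivity of \(\pdual{\cT}\) is established by pairing against the classes \([\simpmod{\cT}{T}]\) and using the Krull--Schmidt basis of \(\Kgp{\cT}\) together with \(\dimdivalg{T}\neq 0\), and the adjointness is the formal tautology about the two maps induced by one bilinear form. The paper phrases the last step by citing \eqref{eq:evform-genform} and Corollary~\ref{c:identify-adj}, which your direct verification simply unpacks.
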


\begin{proof}
The fact that $\cT$ is Krull--Schmidt means that $\indec{\cT}$ indexes a $\integ$-basis $\{[{T}]\}$ of $\Kgp{\cT}$.
We have $\simpmod{\cT}{U}(T)=0$ when $T\not\cong U$, whereas $\dim_{\bK}\simpmod{\cT}{T}(T)=\dimdivalg{T}\ne0$.
Thus any $v\in\Kgp{\cT}$ satisfies
\[v=\sum_{T\in\indec{\cT}}\frac{1}{\dimdivalg{T}}\canform{[\simpmod{\cT}{T}]}{v}{\cT}[T],\]
and injectivity of $\pdual{\cT}$ follows from this.
On the other hand, $\Kgpnum{\lfd{\cT}}$ is defined precisely in order to make $\sdual{\cT}$ injective.
Since $\pdual{\cT}$ and $\sdual{\cT}$ are obtained from the same non-degenerate bilinear form, they are adjoint by \eqref{eq:evform-genform} and Corollary~\ref{c:identify-adj}.
\end{proof}

\begin{remark}
One reason for using $\Kgp{\cT}$, rather than the isomorphic group $\Kgp{\proj{\cT}}$, is the simple description of the form \eqref{eq:fundamental-pairing}, which tells us that the functional $\canform{[M]}{\blank}{\cT}$ is the dimension vector of $M$.
Under the isomorphism $h^{\cT}\colon\Kgp{\cT}\to\Kgp{\proj{\cT}}$, the corresponding form is
\[\canform{[M]}{[P]}{\cT}=\dim\Hom{\cT}{P}{M},\]
since $\Hom{\cT}{\projmod{\cT}{T}}{M}=M(T)$ by Yoneda's lemma.
\end{remark}

In the context of Proposition~\ref{p:K-duality}, assume further that $\cT$ is additively finite, so $\lfd{\cT}=\fd{\cT}$.
Then, by the Jordan--Hölder theorem, the classes of simple modules $\simpmod{\cT}{T}$ are a basis for $\Kgp{\fd{\cT}}$, and so in this case the form $\ip{\blank}{\blank}\colon\Kgp{\fd{\cT}}\times\Kgp{\cT}\to\integ$ is already non-degenerate.
In particular, $\Kgp{\fd{\cT}}=\Kgpnum{\fd{\cT}}$ and $\ip{\blank}{\blank}=\canform{\blank}{\blank}{\cT}$, so the numerical Grothendieck group construction is not needed.

\begin{lemma}
\label{l:compact-module-division}
In the context of Proposition~\ref{p:K-duality}, assume that $\cT$ is pseudocompact (Definition~\ref{d:pseudocompact}), and let $M\in{\lfd{\cT}}$.
Then $\dimdivalg{T}\divides\dim_{\bK}M(T)$ for all $T\in\indec{\cT}$.
\end{lemma}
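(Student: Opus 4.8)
The plan is to recognise $M(T)$ as a module over the local ring $\op{\End{\cT}{T}}$, whose unique simple module has $\bK$-dimension exactly $\dimdivalg{T}$, and then to count composition factors.

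First I would observe that, since $M\colon\cT\to\Mod{\bK}$ is a contravariant functor, applying $M$ to the endomorphisms of $T$ makes $M(T)$ a left module over $A\defeq\op{\End{\cT}{T}}$ (composition being reversed by contravariance). Because $\cT$ is Krull--Schmidt and $T\in\indec{\cT}$, the ring $\End{\cT}{T}$ is local, hence so is $A$; its Jacobson radical is $\rad{}\op{\End{\cT}{T}}=\radHom{\cT}{T}{T}$, and the quotient $A/\rad{}A=\divalg{T}$ is a division ring. Under the standing hypotheses of Proposition~\ref{p:K-duality}, namely $\simpmod{\cT}{T}\in\fd{\cT}$, this division ring is finite-dimensional, with $\dim_{\bK}\divalg{T}=\dimdivalg{T}=\dim_{\bK}\simpmod{\cT}{T}(T)$. (Pseudocompactness of $\cT$ is available but the argument only needs that $A$ is local.)

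Next, since $M\in\lfd{\cT}$ the space $M(T)$ is finite-dimensional over $\bK$, and every $A$-submodule of $M(T)$ is in particular a $\bK$-subspace; hence any chain of $A$-submodules stabilises for dimension reasons, so $M(T)$ has finite length, say $\ell$, as an $A$-module. As $A$ is local, its only simple module up to isomorphism is $A/\rad{}A=\divalg{T}$, so every composition factor of $M(T)$ is isomorphic to $\divalg{T}$ and thus has $\bK$-dimension $\dimdivalg{T}$. Adding $\bK$-dimensions along a composition series gives $\dim_{\bK}M(T)=\ell\cdot\dimdivalg{T}$, which is exactly the assertion $\dimdivalg{T}\divides\dim_{\bK}M(T)$.

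The only point needing care — rather than a genuine obstacle, as the lemma is elementary — is the identification of $A/\rad{}A$ with the division ring $\divalg{T}$, together with the fact that $\radHom{\cT}{T}{T}$ coincides with the Jacobson radical of $\End{\cT}{T}$. This is standard for an indecomposable object of a Krull--Schmidt category (the non-isomorphisms $T\to T$ form the unique maximal two-sided ideal, and the quotient by it is a division ring), so I would simply cite it; the remainder is routine length and dimension bookkeeping.
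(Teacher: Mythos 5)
Your proof is correct, but it takes a genuinely different route from the paper. The paper invokes the Wedderburn principal theorem for pseudocompact algebras (Proposition~\ref{p:Wedderburn-splitting}) to split the projection \(\op{\End{\cT}{T}}\to\divalg{T}\), thereby equipping \(M(T)\) with a (non-canonical) \(\divalg{T}\)-module structure which, \(\divalg{T}\) being a division algebra, must be free; divisibility is then immediate. This is precisely why the pseudocompactness hypothesis (together with the standing assumption that \(\bK\) is perfect) is built into the statement: Proposition~\ref{p:Wedderburn-splitting} needs both. You instead treat \(M(T)\) as a finite-length module over the local ring \(A=\op{\End{\cT}{T}}\) and count composition factors, each isomorphic to the unique simple \(A\)-module \(A/\rad{}A=\divalg{T}\), so that \(\dim_\bK M(T)\) is an integer multiple of \(\dimdivalg{T}\). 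You are right that this bypasses pseudocompactness (and perfectness of \(\bK\)) entirely; the argument really only needs that \(A\) is local, which already follows from \(\cT\) being Krull--Schmidt and \(T\) indecomposable. The trade-off is that the Wedderburn route produces an actual free \(\divalg{T}\)-module structure on \(M(T)\), a stronger piece of data the paper reuses elsewhere (e.g.\ Lemma~\ref{l:rk1-unique-middle-term}, Corollary~\ref{c:approx-count}), whereas your composition-series argument yields only the numerical divisibility claimed by the lemma. Both are valid; yours is the more elementary proof of the statement as written, and shows that the pseudocompactness hypothesis could be dropped from it.
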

\begin{proof}
Since $M$ is a $\cT$-module, $M(T)$ is an $\op{\End{\cT}{T}}$-module.
By Proposition~\ref{p:Wedderburn-splitting}, we may therefore choose a (non-canonical) $\divalg{T}$-module structure on $M(T)$, which must be free since $\divalg{T}$ is a division algebra, and so $\dimdivalg{T}\divides\dim_{\bK}M(T)$.
\end{proof}

\begin{proposition}
\label{p:im-delta}
In the setting of Proposition~\ref{p:K-duality}, assume either that $\cT$ is pseudocompact or that $\dimdivalg{T}=1$ for all $T\in\indec{\cT}$. Then
\begin{align*}
\image(\sdual{\cT})&=\{\psi\in\dual{\Kgp{\cT}}:\text{$d_T$ divides $\psi[T]$ for all $T\in\indec{\cT}$}\}.
\end{align*}
In particular, $\image(\sdual{\cT})\tensor_{\integ} \bK\iso \dual{\Kgp{\cT}}\tensor_{\integ} \bK$.
\end{proposition}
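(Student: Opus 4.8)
The plan is to pin down the subgroup \(\image(\sdual{\cT})\subseteq\dual{\Kgp{\cT}}\) exactly by proving the two inclusions separately, and then to deduce the final clause by an elementary rescaling. We work throughout in the setting of Proposition~\ref{p:K-duality}, so \(\cT\) is Krull--Schmidt and \(\simpmod{\cT}{T}\in\fd{\cT}\) for all \(T\in\indec{\cT}\); in particular each \(\dimdivalg{T}=\dim_{\bK}\simpmod{\cT}{T}(T)\) is finite, \(\simpmod{\cT}{T}(U)=0\) whenever \(U\not\iso T\), and \(\{[T]:T\in\indec{\cT}\}\) is a \(\integ\)-basis of \(\Kgp{\cT}\), so \(\dual{\Kgp{\cT}}\) identifies with \(\prod_{T\in\indec{\cT}}\integ\).

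For the inclusion ``\(\subseteq\)'' I would combine the defining formula \(\sdual{\cT}[M]([T])=\canform{[M]}{[T]}{\cT}=\dim_{\bK}M(T)\) with the divisibility \(\dimdivalg{T}\divides\dim_{\bK}M(T)\), which holds by Lemma~\ref{l:compact-module-division} when \(\cT\) is pseudocompact and is vacuous when \(\dimdivalg{T}=1\) for all \(T\). Since every element of \(\image(\sdual{\cT})\) has the form \(\sdual{\cT}([M]-[N])\) with \(M,N\in\lfd{\cT}\), it follows that \(\dimdivalg{T}\divides\psi[T]\) for all \(T\in\indec{\cT}\) and all \(\psi\in\image(\sdual{\cT})\).

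For the reverse inclusion, given \(\psi\in\dual{\Kgp{\cT}}\) with \(\psi[T]=\dimdivalg{T}\,n_T\) and \(n_T\in\integ\), the point is to realise \(\psi\) as a difference of classes of honest locally finite-dimensional modules assembled from simple functors. I would set
\[
M_{\psi}=\bigdsum_{T\in\indec{\cT},\,n_T>0}(\simpmod{\cT}{T})^{n_T},\qquad
N_{\psi}=\bigdsum_{T\in\indec{\cT},\,n_T<0}(\simpmod{\cT}{T})^{-n_T},
\]
and observe that each of these functors takes the finite-dimensional value \((\simpmod{\cT}{U})(U)^{\lvert n_U\rvert}\) (or \(0\)) at \(U\in\indec{\cT}\), hence lies in \(\lfd{\cT}\). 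The class \([M_{\psi}]-[N_{\psi}]\) then determines an element of \(\Kgpnum{\lfd{\cT}}\) whose image under \(\sdual{\cT}\) sends \([T]\) to \(\dim_{\bK}M_{\psi}(T)-\dim_{\bK}N_{\psi}(T)=\dimdivalg{T}\,n_T=\psi[T]\). This yields the displayed equality and identifies \(\image(\sdual{\cT})\) with \(\prod_{T\in\indec{\cT}}\dimdivalg{T}\integ\) inside \(\prod_{T\in\indec{\cT}}\integ=\dual{\Kgp{\cT}}\).

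The final clause is then immediate: the map \((x_T)_T\mapsto(\dimdivalg{T}\,x_T)_T\) is an isomorphism of abelian groups from \(\prod_T\integ\) onto \(\prod_T\dimdivalg{T}\integ\) (injective as each \(\dimdivalg{T}\neq0\), surjective by construction), i.e.\ an isomorphism \(\dual{\Kgp{\cT}}\isoto\image(\sdual{\cT})\), which persists after \(\blank\tensor_{\integ}\bK\) to give \(\image(\sdual{\cT})\tensor_{\integ}\bK\iso\dual{\Kgp{\cT}}\tensor_{\integ}\bK\). The one genuinely delicate step is the reverse inclusion when \(\indec{\cT}\) is infinite: a functional on \(\Kgp{\cT}\) may be nonzero on infinitely many \([T]\), which is exactly why one works in \(\lfd{\cT}\), where arbitrary direct sums of the \(\simpmod{\cT}{T}\) are allowed, rather than in \(\fd{\cT}\); the finiteness hypotheses on \(\cT\) enter only to make the simple functors finite-dimensional and, via Lemma~\ref{l:compact-module-division}, to pin down the divisibility in the first inclusion.
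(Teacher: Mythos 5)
Your proof is correct and follows essentially the same route as the paper's: the forward inclusion is Lemma~\ref{l:compact-module-division} (or triviality when \(\dimdivalg{T}=1\)), the reverse inclusion builds a preimage out of simple functors, and the final clause is a rescaling isomorphism. One small difference worth noting: the paper writes the preimage as a single product \(\prod_{T}(\simpmod{\cT}{T})^{n_T}\) with \(n_T=\psi[T]/\dimdivalg{T}\), silently allowing \(n_T\) to be negative, whereas you split by sign into \(M_\psi\) and \(N_\psi\) and work with the class \([M_\psi]-[N_\psi]\) in \(\Kgpnum{\lfd{\cT}}\); your version is the more scrupulous reading of what is meant. Your parenthetical remark about why one needs \(\lfd{\cT}\) rather than \(\fd{\cT}\) in the infinite-rank case is also a correct and helpful observation that the paper leaves implicit.
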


\begin{proof}
Let $M\in\lfd{\cT}$ and $T\in\indec{\cT}$.
Then
$\sdual{\cT}[M](T)=\canform{[M]}{[T]}{\cT}=\dim_{\bK}M(T)$
is divisible by $\dimdivalg{T}$, either by Lemma~\ref{l:compact-module-division} or because $\dimdivalg{T}=1$.
This shows that $\image(\sdual{\cT})$ is contained in the claimed subspace of $\dual{\Kgp{\cT}}$.

Conversely, let $\psi\in\dual{\Kgp{\cT}}$ be such that $\dimdivalg{T}\divides\psi[T]$ for all $T\in\indec{\cT}$, and let $n_T=\psi[T]/\dimdivalg{T}\in\integ$.
Let $M=\prod_{T\in\indec{\cT}}(\simpmod{\cT}{T})^{n_T}$.
Then for any $T\in\indec{\cT}$, we have $M(T)=(\simpmod{\cT}{T}(T))^{n_T}$, since $\simpmod{\cT}{T}(U)=0$ whenever $U\ne T$, and so
$\dim_{\bK}M(T)=n_T\dimdivalg{T}=\psi[T]$.
In particular, $M\in\lfd{\cT}$, and $\psi=\sdual{\cT}[M]$ is in the image of $\sdual{\cT}$, as required.
\end{proof}

\begin{corollary}
\label{c:Kgpnum-prod}
In the setting of Proposition~\ref{p:im-delta}, $\Kgpnum{\lfd{\cT}}\iso\prod_{T\in\indec{\cT}}\integ[\simpmod{\cT}{T}]$.\qed
\end{corollary}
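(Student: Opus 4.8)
The plan is to deduce Corollary~\ref{c:Kgpnum-prod} directly from Proposition~\ref{p:im-delta}, using that $\sdual{\cT}$ is injective (Proposition~\ref{p:K-duality}) and that its image has been explicitly computed. First I would observe that injectivity of $\sdual{\cT}\colon\Kgpnum{\lfd{\cT}}\to\dual{\Kgp{\cT}}$ means that $\Kgpnum{\lfd{\cT}}$ is isomorphic to $\image(\sdual{\cT})$ as an abelian group, via $\sdual{\cT}$ itself.

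Next I would identify the image. By Proposition~\ref{p:im-delta}, $\image(\sdual{\cT})=\{\psi\in\dual{\Kgp{\cT}}:\dimdivalg{T}\text{ divides }\psi[T]\text{ for all }T\in\indec{\cT}\}$. Since $\cT$ is Krull--Schmidt, $\Kgp{\cT}$ is free on the basis $\{[T]:T\in\indec{\cT}\}$, so $\dual{\Kgp{\cT}}\iso\prod_{T\in\indec{\cT}}\integ$, where a functional $\psi$ corresponds to the tuple $(\psi[T])_T$. Under this identification, the divisibility condition says precisely that $\psi$ lies in $\prod_{T\in\indec{\cT}}\dimdivalg{T}\integ$, which is isomorphic to $\prod_{T\in\indec{\cT}}\integ$ via coordinate-wise division by $\dimdivalg{T}$. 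Composing, we get $\Kgpnum{\lfd{\cT}}\iso\prod_{T\in\indec{\cT}}\integ$.

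Finally I would upgrade this to the stated form $\prod_{T\in\indec{\cT}}\integ[\simpmod{\cT}{T}]$ by checking that the isomorphism can be chosen so that the class $[\simpmod{\cT}{T}]\in\Kgpnum{\lfd{\cT}}$ maps to the generator of the $T$-th factor. This is immediate from the computation in the proof of Proposition~\ref{p:im-delta}: $\sdual{\cT}[\simpmod{\cT}{T}]$ is the functional $[U]\mapsto\dim_{\bK}\simpmod{\cT}{T}(U)=\dimdivalg{T}\delta_{TU}$, i.e.\ $\dimdivalg{T}$ times the $T$-th coordinate functional, so after dividing by $\dimdivalg{T}$ it becomes exactly the $T$-th standard generator. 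Hence $[\simpmod{\cT}{T}]\mapsto(0,\dots,0,1,0,\dots)$ with the $1$ in position $T$, and the isomorphism $\Kgpnum{\lfd{\cT}}\iso\prod_{T\in\indec{\cT}}\integ[\simpmod{\cT}{T}]$ sends $[\simpmod{\cT}{T}]$ to the generator of the corresponding factor.

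There is no real obstacle here: this is a bookkeeping consequence of the two preceding results, the only point requiring a moment's care being the (possibly infinite) product rather than direct sum, which is why the argument must be phrased via $\dual{\Kgp{\cT}}=\prod_{T}\integ$ rather than via a basis of $\Kgpnum{\lfd{\cT}}$ — the latter need not be a free abelian group on $\indec{\cT}$ when $\indec{\cT}$ is infinite, whereas the product description is correct in all cases.
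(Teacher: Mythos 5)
Your proof is correct and is exactly the deduction the paper has in mind (the paper omits the proof, marking it \qed, since it is an immediate consequence of Propositions~\ref{p:K-duality} and~\ref{p:im-delta}). Your closing remark about why one must work with the product inside $\dual{\Kgp{\cT}}$ rather than a free generating set is the right thing to keep in mind in the infinite-rank case.
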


The preceding results will be used below primarily in the case that $\cT\ctsubcat\cC$ and $\cC$ is a compact or skew-symmetric cluster category, in which case the assumptions of Proposition~\ref{p:im-delta} are satisfied by definition.
They may also be applied to $\stab{\cT}\ctsubcat\stab{\cC}$ for any cluster category $\cC$, since $\stab{\cT}$ is Hom-finite (because by assumption $\stab{\cC}$ is) and therefore pseudocompact.

\begin{corollary}\label{c:ss-implies-isos}
If $\cC$ is a skew-symmetric cluster category, then $\sdual{\cT}$ is an isomorphism for any $\cT\ctsubcat\cC$. \qed
\end{corollary}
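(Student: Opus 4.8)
The plan is to read off the corollary directly from Proposition~\ref{p:K-duality} and Proposition~\ref{p:im-delta}, after observing that the skew-symmetry hypothesis forces all of the multipliers $\dimdivalg{T}$ to be $1$. First I would note that, since $\cC$ is skew-symmetric, it is Krull--Schmidt and every indecomposable object has $\dimdivalg{X}=1$; as $\cT\ctsubcat\cC$ we have $\indec{\cT}\subseteq\indec{\cC}$, so $\cT$ is Krull--Schmidt with $\dimdivalg{T}=1$ for all $T\in\indec{\cT}$. In particular the simple functor $\simpmod{\cT}{T}$, which vanishes on every indecomposable of $\cT$ other than $T$ and takes the value $\divalg{T}$ of dimension $\dimdivalg{T}=1$ at $T$, lies in $\fd{\cT}$; hence the standing hypothesis of Proposition~\ref{p:K-duality} is satisfied and $\sdual{\cT}$ is injective.

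Next I would apply Proposition~\ref{p:im-delta}: its hypotheses hold here precisely because $\dimdivalg{T}=1$ for all $T\in\indec{\cT}$, and it identifies $\image(\sdual{\cT})$ with the set of $\psi\in\dual{\Kgp{\cT}}$ such that $\dimdivalg{T}$ divides $\psi[T]$ for every $T\in\indec{\cT}$. Since each $\dimdivalg{T}$ equals $1$, this divisibility condition is vacuous, so $\image(\sdual{\cT})=\dual{\Kgp{\cT}}$; that is, $\sdual{\cT}$ is surjective. Combining with the injectivity just established, $\sdual{\cT}$ is an isomorphism, as claimed.

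There is essentially no obstacle here: the corollary is pure bookkeeping once one spots that the skew-symmetry assumption makes all the $\dimdivalg{T}$ trivial, so that both the hypothesis of Proposition~\ref{p:K-duality} and the image description of Proposition~\ref{p:im-delta} become transparent. The only point worth a line of justification is that $\simpmod{\cT}{T}\in\fd{\cT}$, so that Proposition~\ref{p:K-duality} genuinely applies; this is immediate from the description of simple functors over a Krull--Schmidt category together with $\dimdivalg{T}=1$.
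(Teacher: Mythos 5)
Your proof is correct and is precisely the chain of reasoning the paper intends, which is why the corollary is stated with a terminal \qed and no further argument; indeed, the paragraph immediately following Corollary~\ref{c:Kgpnum-prod} in the paper remarks that the hypotheses of Proposition~\ref{p:im-delta} are satisfied by definition in the skew-symmetric case. You correctly combine the injectivity from Proposition~\ref{p:K-duality} with the image description from Proposition~\ref{p:im-delta}, and you are right that the only point deserving a word of justification is that \(\simpmod{\cT}{T}\in\fd{\cT}\), which follows because \(\dimdivalg{T}=1<\infty\) in the skew-symmetric setting.
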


\begin{remark}
As explained in Section~\ref{ss:intro-cl-ensembles}, in the construction of cluster algebras from a seed datum, see e.g.\ \cite{GHKK}, the starting point includes a pair of dual lattices $\seedlat{N}$ and $\seedlat{M}$, together with a finite index sublattice $\seedlat{N}^\circ\leq \seedlat{N}$, so that $\seedlat{M}$ is naturally a finite index sublattice of $\seedlat{M}^\circ\defeq\dual{(\seedlat{N}^\circ)}$.

For us, the role of $\seedlat{N}^\circ$ is played by $\Kgpnum{\lfd{\cT}}$, and that of $\seedlat{M}$ by $\Kgp{\cT}$. Thus, $\seedlat{N}$ corresponds to $\dual{\Kgp{\cT}}$, which in the finite rank case contains $\Kgpnum{\lfd{\cT}}=\Kgp{\fd{\cT}}$ as a finite index sublattice by Proposition~\ref{p:im-delta}. Similarly, $\Kgp{\cT}$ is a finite index sublattice of $\dual{\Kgp{\fd{\cT}}}$, corresponding to $\seedlat{M}^\circ$.
\end{remark}

Let $\cC$ be a Krull--Schmidt cluster category with $\cT\ctsubcat\cC$, and let $\cP$ be a full and additively closed subcategory of projective-injective objects.
Let $\pi_{\cT}^{\cT/\cP}  \colon \Kgp{\cT}\to \Kgp{\cT/\cP}$ be induced by the quotient functor, as in Section~\ref{s:part-stab}.
Treating $M\in\lfd{\cT/\cP}$ as a $\cT$-module vanishing on $\cP$, as in Section~\ref{ss:mods-over-cts}, yields an inclusion $\Kgp{\lfd(\cT/\cP)}\to \Kgp{\lfd \cT}$.
Since the vector space $M(T)$, for $T\in\indec{\cT}$, does not depend on whether we view $M$ as a $(\cT/\cP)$-module or a $\cT$-module, this induces a further inclusion
\[\iota_{\cT/\cP}^{\cT}  \colon \Kgpnum{\lfd(\cT/\cP)}\to \Kgpnum{\lfd \cT}.\]
Since $\cC/\cP$ is also Krull--Schmidt cluster category (Proposition~\ref{p:partialstab-KS}), there is a form
\[ \canform{\blank}{\blank}{\cT/\cP}\colon\Kgpnum{\lfd{(\cT/\cP)}}\times\Kgp{\cT/\cP}\to\integ \]
as in Definition~\ref{d:canform}.
We may check that for $T\in \cT$ and $M\in \lfd(\cT/\cP)$, we have
\begin{equation}\label{eq:pi-p-iota-s-adjoint} \canform{\iota_{\cT/\cP}^{\cT}[M]}{[T]}{\cT}=\dim M(T)=\canform{[M]}{\pi_{\cT}^{\cT/\cP}[T]}{\cT/\cP}, \end{equation}
and therefore $\adj{(\pi_{\cT}^{\cT/\cP})}=\iota_{\cT/\cP}^{\cT}$ and $\adj{(\iota_{\cT/\cP}^{\cT})}=\pi_{\cT}^{\cT/\cP}$. 

\begin{remark}
\label{rem:pi-iota-shorthand}
We will be most interested in the case that $\cP$ consists of all projective objects, so that $\cC/\cP=\stab{\cC}$ is the triangulated stable category, and when we are in that context we will abbreviate
\[\pproj{\cT}\defeq\pi_{\cT}^{\cT/\cP},\quad\sinc{\cT}\defeq\iota_{\cT/\cP}^{\cT}.\]
The superscripts `$\mathrm{p}$' and `$\mathrm{s}$' appearing here stand for `projective' and `simple' respectively, and we will elaborate on the reasons for this choice below.
\end{remark}

\subsection{Relating cluster-tilting subcategories}

To transport data between different cluster-tilting subcategories, we restrict the index and coindex maps to these subcategories.
For $\cU\ctsubcat\cC$, recall that there is a map $\iota_{\cU}^{\addcat{\cC}}\colon\Kgp{\cU}\to\Kgp{\cC^{\add}}$ arising from the inclusion of categories, which we can postcompose with $\ind{\cC}{\cT}$ for a cluster-tilting subcategory $\cT$ to obtain a map 
\begin{equation}\label{eq:ind-T-T'} \ind{\cU}{\cT}\defeq \ind{\cC}{\cT}\circ \iota_{\cU}^{\addcat{\cC}}\colon\Kgp{\cU}\to\Kgp{\cT}.
\end{equation}
Completely analogously, we obtain another map
\begin{equation}\label{eq:coind-T-T'} \coind{\cU}{\cT}\defeq \coind{\cC}{\cT}\circ \iota_{\cU}^{\addcat{\cC}}\colon\Kgp{\cU}\to\Kgp{\cT},
\end{equation}
with the same domain and codomain.
In particular, $\ind{\cT}{\cT}=\coind{\cT}{\cT}=\id_{\Kgp{\cT}}$.  

Given $\cTU\ctsubcat\cC$, there are four potentially different endomorphisms of $\Kgp{\cT}$ taking the form of compositions $\Kgp{\cT}\to\Kgp{\cU}\to\Kgp{\cT}$ with each map being either an index or a coindex.
 For later purposes, we would like to compute these maps explicitly.
 Two of them turn out to be the identity: this was shown by Dehy--Keller \cite{DehyKeller} in the triangulated case, and their argument adapts to our setting in a straightforward way, as follows.

\begin{proposition}
\label{p:ind-coind-inverse}
Let $\cC$ be a cluster category and $\cTU\ctsubcat\cC$. Then $\ind{\cT}{\cU}\colon \Kgp{{\cT}}\isoto\Kgp{{\cU}}$ is an isomorphism, with inverse $\coind{\cU}{\cT}$.
\end{proposition}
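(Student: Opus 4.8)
The plan is to establish the two identities $\coind{\cU}{\cT}\circ\ind{\cT}{\cU}=\id_{\Kgp{\cT}}$ and $\ind{\cT}{\cU}\circ\coind{\cU}{\cT}=\id_{\Kgp{\cU}}$, which together say exactly that $\ind{\cT}{\cU}$ and $\coind{\cU}{\cT}$ are mutually inverse isomorphisms. By Remark~\ref{r:ind-op} we have $\ind{\cT}{\cU}=\coind{\op{\cT}}{\op{\cU}}$ and $\coind{\cU}{\cT}=\ind{\op{\cU}}{\op{\cT}}$ under the identifications $\Kgp{\cT}=\Kgp{\op{\cT}}$ and $\Kgp{\cU}=\Kgp{\op{\cU}}$, and $\op{\cC}$ is again a cluster category with $\op{\cT},\op{\cU}$ cluster-tilting; so the second identity is the first applied to $\op{\cC}$, and it suffices to prove $\coind{\cU}{\cT}\circ\ind{\cT}{\cU}=\id_{\Kgp{\cT}}$ for an arbitrary cluster category. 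Since both sides are group homomorphisms, it is enough to check $\coind{\cU}{\cT}(\ind{\cT}{\cU}[T])=[T]$ for each object $T$ of $\cT$ (no Krull--Schmidt assumption is needed here).

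First I would treat the case that $\cC$ is an \emph{exact} cluster category. Fix $T\in\cT$ and choose a $\cU$-index conflation $\rightker{\cU}{T}\infl\rightapp{\cU}{T}\defl T\confl$, which is an admissible short exact sequence, so $\ind{\cT}{\cU}[T]=[\rightapp{\cU}{T}]-[\rightker{\cU}{T}]\in\Kgp{\cU}$; then choose $\cT$-coindex conflations $\rightker{\cU}{T}\infl L_1\defl C_1$ and $\rightapp{\cU}{T}\infl L_0\defl C_0$ with $L_j,C_j\in\cT$, so $\coind{\cU}{\cT}(\ind{\cT}{\cU}[T])=([L_0]-[C_0])-([L_1]-[C_1])$. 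Now form two successive pushouts in $\cC$: let $P$ be the pushout of the admissible monomorphisms $\rightker{\cU}{T}\to\rightapp{\cU}{T}$ and $\rightker{\cU}{T}\to L_1$, and let $Q$ be the pushout of the admissible monomorphisms $\rightapp{\cU}{T}\to P$ and $\rightapp{\cU}{T}\to L_0$. The usual behaviour of pushouts of admissible monomorphisms in an exact category yields conflations $L_1\infl P\defl T$, $P\infl Q\defl C_0$ and $L_0\infl Q\defl C_1$ (among others we will not use). Since $\cT$ is extension-closed in $\cC$, the first conflation forces $P\in\cT$ and the third then forces $Q\in\cT$, so these three conflations lie in $\cT$ and give relations $[P]=[L_1]+[T]$, $[Q]=[P]+[C_0]$, $[Q]=[L_0]+[C_1]$ in $\Kgp{\cT}$; eliminating $[P]$ and $[Q]$ gives $[L_0]-[C_0]-[L_1]+[C_1]=[T]$, which is the required equality.

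To pass to a general cluster category $\cC$, use Proposition~\ref{p:cl-cat-exact-lift} to write $\cC\simeq\cE/\cP$ for an exact cluster category $\cE$ and a full additively closed subcategory $\cP$ of projectives, and lift $\cT,\cU$ to $\widehat{\cT},\widehat{\cU}\ctsubcat\cE$ via Proposition~\ref{p:ct-bijection}. Combining Proposition~\ref{p:stabind} with the identity \eqref{eq:stab-restrict} and the definitions \eqref{eq:ind-T-T'}, \eqref{eq:coind-T-T'} gives $\ind{\cT}{\cU}\circ\pi_{\widehat{\cT}}^{\cT}=\pi_{\widehat{\cU}}^{\cU}\circ\ind{\widehat{\cT}}{\widehat{\cU}}$ and, dually, $\coind{\cU}{\cT}\circ\pi_{\widehat{\cU}}^{\cU}=\pi_{\widehat{\cT}}^{\cT}\circ\coind{\widehat{\cU}}{\widehat{\cT}}$, where $\pi_{\widehat{\cT}}^{\cT}\colon\Kgp{\widehat{\cT}}\to\Kgp{\cT}$ and $\pi_{\widehat{\cU}}^{\cU}$ are the surjections \eqref{eq:T-stab-hom}. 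Hence $\coind{\cU}{\cT}\circ\ind{\cT}{\cU}\circ\pi_{\widehat{\cT}}^{\cT}=\pi_{\widehat{\cT}}^{\cT}\circ\coind{\widehat{\cU}}{\widehat{\cT}}\circ\ind{\widehat{\cT}}{\widehat{\cU}}=\pi_{\widehat{\cT}}^{\cT}$ by the exact case, and surjectivity of $\pi_{\widehat{\cT}}^{\cT}$ gives $\coind{\cU}{\cT}\circ\ind{\cT}{\cU}=\id_{\Kgp{\cT}}$.

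I expect the main obstacle to be the exact-case computation. Working modulo conflations in $\Kgp{\cC}$ the identity $\coind{\cU}{\cT}(\ind{\cT}{\cU}[T])=[T]$ is immediate; the real content is upgrading it to an equality in $\Kgp{\cT}$, and this is precisely where one must manufacture the conflations $L_1\infl P\defl T$ and $L_0\infl Q\defl C_1$ \emph{entirely inside} $\cT$, using extension-closedness of $\cT$ together with the pushout calculus for admissible monomorphisms — this is the heart of the Dehy--Keller argument being adapted. The remaining ingredients (the opposite-category symmetry and the reduction from the extriangulated to the exact setting via partial stabilisation) are formal consequences of results already in place.
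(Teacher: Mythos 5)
Your proof is correct, but it takes a different route from the paper. The paper's proof works \emph{directly} in the extriangulated category, adapting the Dehy--Keller argument using the extriangulated axiom (ET4) together with \cite[Lem.~3.14]{NakaokaPalu}: it fixes a single $\cU$-index conflation $\rightker{\cU}{T}\stackrel{f}{\infl}\rightapp{\cU}{T}\defl T\confl$ and a single $\cT$-coindex conflation $\rightapp{\cU}{T}\stackrel{g}{\infl}\leftapp{\cT}\rightapp{\cU}{T}\defl\leftcok{\cT}\rightapp{\cU}{T}\confl$, then observes that the composite $gf$ is an inflation and a left $\cT$-approximation of $\rightker{\cU}{T}$, so the same object $\leftapp{\cT}\rightapp{\cU}{T}$ serves as the middle term of a $\cT$-coindex conflation for $\rightker{\cU}{T}$ as well; the two middle terms cancel in $\coind{\cU}{\cT}\ind{\cT}{\cU}[T]$, and the final identity drops out of the octahedral-type axiom. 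In contrast, you choose \emph{independent} $\cT$-coindex sequences for $\rightapp{\cU}{T}$ and $\rightker{\cU}{T}$ and compare them by two successive pushouts; and because your bicartesian-square argument is native to exact categories, you first lower to an exact lift $\cE$ via Proposition~\ref{p:cl-cat-exact-lift} and propagate the result back through the partial stabilisation. Both routes are adaptations of the same Dehy--Keller idea; the paper's version avoids the detour through exact categories and the pushout calculus by being more careful about which approximation to take, while yours is arguably the more ``obvious'' computation once one commits to the exact setting, at the cost of needing the reduction step (for which you correctly use Propositions~\ref{p:cl-cat-exact-lift}, \ref{p:ct-bijection}, \ref{p:stabind}, and surjectivity of $\pi_{\widehat{\cT}}^{\cT}$). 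One small remark: your exact-case computation implicitly uses that $\cT$ is extension-closed in $\cE$, which holds because $\Ext{1}{\cE}{T}{\blank}$ applied to a conflation with outer terms in $\cT$ sandwiches $\Ext{1}{\cE}{T}{P}$ between two zeros --- worth stating, as it is also why the conflations $L_1\infl P\defl T$ etc.\ split and therefore contribute the asserted relations in the split Grothendieck group $\Kgp{\cT}$.
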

\begin{proof}
Let $T\in\cT$, and let
\begin{equation}
\label{eq:conflation-f}
\begin{tikzcd}
\rightker{\cU}{T}\arrow[infl]{r}{f}&\rightapp{\cU}{T}\arrow[defl]{r}&T\arrow[confl]{r} & \phantom{}
\end{tikzcd}
\end{equation}
be a $\cU$-index conflation of $T$, so $\ind{\cT}{\cU}[{T}]=[{\rightapp{\cU}{T}}]-[{\rightker{\cU}{T}}]$. Pick a $\cT$-coindex conflation
\begin{equation}
\label{eq:conflation-g}
\begin{tikzcd}
\rightapp{\cU}{T}\arrow[infl]{r}{g}&\leftapp{\cT}{\rightapp{\cU}{T}}\arrow[defl]{r}&\leftcok{\cT}{\rightapp{\cU}{T}} \arrow[confl]{r} & \phantom{}
\end{tikzcd}
\end{equation}
of $\rightapp{\cU}{T}$, so $g$ is a left $\cT$-approximation of $\rightapp{\cU}{T}$, and $\coind{\cU}{\cT}[{\rightapp{\cU}{T}}]=[{\leftapp{\cT}{\rightapp{\cU}{T}}}]-[{\leftcok{\cT}{\rightapp{\cU}{T}}}]$.
Now we claim that $gf\colon \rightker{\cU}{T}\to\leftapp{\cT}{\rightapp{\cU}{T}}$ is a left $\cT$-approximation of $\rightker{\cU}{T}$.
Indeed, if $T'\in\cT$, it follows from the vanishing of $\Ext{1}{\cC}{T}{T'}$ that any morphism $h\colon \rightker{\cU}{T}\to T'$ factors through $f$.
Since $g$ is a left $\cT$-approximation of $\rightapp{\cU}{T}$, we see that $h$ even factors through $gf$, as required.
Since both $f$ and $g$ are inflations, it follows from the definition of an extriangulated category (axiom (ET4) in \cite[Def.~2.12]{NakaokaPalu}) that $gf$ is also an inflation.
Thus there is a conflation
\begin{equation}
\label{eq:conflation-gf}
\begin{tikzcd}
\rightker{\cU}{T}\arrow[infl]{r}{gf}&\leftapp{\cT}{\rightapp{\cU}{T}}\arrow[defl]{r}&\leftcok{\cT}{\rightker{\cU}{T}} \arrow[confl]{r} & \phantom{}
\end{tikzcd}
\end{equation}
in which $\leftcok{\cT}{\rightker{\cU}{T}}\in\cT$ since $\cT$ is cluster-tilting and $gf$ is a left $\cT$-approximation. We thus have $\coind{\cU}{\cT}{[{\rightker{\cU}{T}}]}=[{\leftapp{\cT}{\rightapp{\cU}{T}}}]-[{\leftcok{\cT}{\rightker{\cU}{T}}}]$, and wish to show that
\begin{align*}
\coind{\cU}{\cT}{\ind{\cT}{\cU}{[{T}}]}&=\coind{\cU}{\cT}{([{\rightapp{\cU}{T}}]-[{\rightker{\cU}{T}}])}\\
&=[{\leftapp{\cT}{\rightapp{\cU}{T}}}]-[{\leftcok{\cT}{\rightapp{\cU}{T}}}]-[{\leftapp{\cT}{\rightapp{\cU}{T}}}]+[{\leftcok{\cT}{\rightker{\cU}{T}}}]\\
&=[{\leftcok{\cT}{\rightker{\cU}{T}}}]-[{\leftcok{\cT}{\rightapp{\cU}{T}}}]
\end{align*}
is equal to $[{T}]$. But applying \cite[Lem.~3.14]{NakaokaPalu} to the conflations \eqref{eq:conflation-f}, \eqref{eq:conflation-g} and \eqref{eq:conflation-gf} yields a conflation $T\infl\leftcok{\cT}{\rightker{\cU}{T}}\defl\leftcok{\cT}{\rightapp{\cU}{T}}\confl$,
and the result follows.
\end{proof}

\begin{corollary}\label{c:clusters-same-size}
For any Krull--Schmidt cluster category $\cC$, if $\cTU\ctsubcat\cC$ then $\indec \cT$ and $\indec \cU$ have the same cardinality; in particular, if one cluster-tilting subcategory is additively finite, then all are.
If $\cC$ has a weak cluster structure, then $\exch{\cT}$ and $\exch{\cU}$ have the same cardinality.
\end{corollary}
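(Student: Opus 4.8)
The plan is to read off both claims from the single fact, established in Proposition~\ref{p:ind-coind-inverse}, that \(\ind{\cT}{\cU}\) is an isomorphism \(\Kgp{\cT}\isoto\Kgp{\cU}\). Since \(\cC\) is Krull--Schmidt, \(\Kgp{\cT}\) is the free abelian group on the set \(\indec{\cT}\) of isomorphism classes of indecomposable objects of \(\cT\), and likewise \(\Kgp{\cU}\) is free on \(\indec{\cU}\). The rank of a free abelian group---the cardinality of any basis---is an isomorphism invariant, including in the infinite-rank case: tensoring the isomorphism \(\Kgp{\cT}\isoto\Kgp{\cU}\) with \(\bQ\) yields an isomorphism of \(\bQ\)-vector spaces, and the dimension of a vector space over a field is a well-defined cardinal. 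Hence \(\card{\indec{\cT}}=\card{\indec{\cU}}\). The parenthetical consequence is then automatic, since \(\cT\ctsubcat\cC\) is additively finite exactly when \(\indec{\cT}\) is finite: if some cluster-tilting subcategory is additively finite, then every \(\indec{\cU}\) has the same finite cardinality, so every cluster-tilting subcategory is additively finite.

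For the statement about mutable objects, I would first invoke the hypothesis that \(\cC\) has a weak cluster structure: by Definition~\ref{d:weak-cluster-structure} every \(\cT\ctsubcat\cC\) is then maximally mutable, so \(\exch{\cT}=\indec{\stab{\cT}}\) and \(\exch{\cU}=\indec{\stab{\cU}}\) by Definition~\ref{d:max-mut}. It therefore suffices to compare \(\indec{\stab{\cT}}\) and \(\indec{\stab{\cU}}\). Taking \(\cP\) to be the subcategory of all projective-injective objects of \(\cC\), Proposition~\ref{p:ct-bijection} shows that \(\stab{\cC}=\cC/\cP\) is a cluster category with \(\stab{\cT},\stab{\cU}\ctsubcat\stab{\cC}\), and Proposition~\ref{p:partialstab-KS} shows that \(\stab{\cC}\) is again Krull--Schmidt. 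Applying the first part of the corollary, already proved, to the Krull--Schmidt cluster category \(\stab{\cC}\) and its cluster-tilting subcategories \(\stab{\cT}\) and \(\stab{\cU}\) yields \(\card{\indec{\stab{\cT}}}=\card{\indec{\stab{\cU}}}\), and hence \(\card{\exch{\cT}}=\card{\exch{\cU}}\).

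I do not expect a genuine obstacle here: all the mathematical content sits in Proposition~\ref{p:ind-coind-inverse}, and what remains is bookkeeping---identifying the standard bases of the Grothendieck groups, reducing the mutable-object count to the non-projective indecomposables via the weak cluster structure, and transferring the first statement to \(\stab{\cC}\). The only point that warrants an explicit sentence is the rank invariance of free abelian groups in infinite rank, which the tensor-with-\(\bQ\) argument settles uniformly; everything else is routine.
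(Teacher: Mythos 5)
Your first paragraph is essentially identical to the paper's proof: identify \(\card{\indec{\cT}}\) with the rank of the free abelian group \(\Kgp{\cT}\) (using the Krull--Schmidt hypothesis) and then invoke the isomorphism \(\ind{\cT}{\cU}\) from Proposition~\ref{p:ind-coind-inverse}. The tensor-with-\(\bQ\) remark for rank invariance in infinite rank is a reasonable justification which the paper leaves implicit.

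For the second claim you take a genuinely different route. The paper argues directly: both \(\exch{\cT}=\indec{\stab{\cT}}\) and \(\exch{\cU}=\indec{\stab{\cU}}\) are obtained from \(\indec{\cT}\) and \(\indec{\cU}\) by removing the same fixed set of isoclasses of indecomposable projective-injectives, and then concludes from \(\card{\indec{\cT}}=\card{\indec{\cU}}\). You instead observe that \(\stab{\cC}\) is again a Krull--Schmidt cluster category (Propositions~\ref{p:ct-bijection} and~\ref{p:partialstab-KS}) with \(\stab{\cT},\stab{\cU}\ctsubcat\stab{\cC}\), and apply the first part of the corollary to \(\stab{\cC}\). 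Both arguments ultimately rest on Proposition~\ref{p:ind-coind-inverse}, but your version is the more robust of the two: the paper's set-complement argument implicitly uses that one can cancel the cardinality of the projective-injectives, which is unproblematic in the finite rank or finite \(\cP\) case but requires care with infinite cardinals (one cannot in general cancel across cardinal addition), whereas your reduction to \(\stab{\cC}\) sidesteps this entirely by producing an isomorphism of Grothendieck groups at the stable level. Your proof is correct.
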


\begin{proof}
Since $\cT$ is Krull--Schmidt, the cardinality of $\indec{\cT}$ is equal to the rank of $\Kgp{\cT}$, and so the result follows directly from Proposition~\ref{p:ind-coind-inverse}.
Then since $\cC$ having a weak cluster structure precisely means that $\exch{\cT}=\indec{\stab{\cT}}$ for any $\cT\ctsubcat\cC$, and $\indec{\cT}\setminus \indec{\stab{\cT}}$ is the set of isoclasses of indecomposable projective-injectives, independent of $\cT$, the second claim follows.
\end{proof}

\begin{definition}
The \emph{rank} of a cluster category $\cC$ is the common cardinality of $\indec{\stab{\cT}}$ for $\cT\ctsubcat\cC$.
\end{definition}

\begin{remark}
In making this definition, we use that $\stab{\cT}\ctsubcat\stab{\cC}$ and $\stab{\cC}$ is Krull--Schmidt since it is Hom-finite.
It is compatible with the definition of the rank of a cluster algebra in \cite[Def.~2.1.6]{Marsh-book}, for example, where it refers to the number of \emph{mutable} variables in a cluster, ignoring any frozen variables.
\end{remark}

In contrast to the classical theory of cluster algebras, in which definitions are made iteratively via mutations, the index and coindex isomorphisms are defined directly for \emph{any} pair of cluster-tilting subcategories.
This means that, until one attempts to decategorify, the question of reachability---whether any two cluster-tilting subcategories are linked by a sequence of mutations---does not arise.

\begin{example}\label{ex:markov-cl-cat} Let $\cC$ be the (triangulated) cluster category associated to the Markov quiver with its Labardini potential \cite{Labardini-QPs1}.
The Markov quiver has three vertices (say, $1$, $2$, $3$) and a pair of arrows from $i$ to $i+1$ (modulo $3$) for each $i$, forming a `double oriented cycle', and the potential can be found in \cite[Ex.~4.3]{Plamondon-Generic}.

As shown in loc.\ cit., $\cC$ has two mutation classes of cluster-tilting subcategories.
In particular, choosing a root cluster-tilting subcategory $\cTroot$, its shift $\Sigma\cTroot$ lies in the other class, as it is not reachable by iterated mutations from $\cTroot$.
The argument presented in \cite{Plamondon-Generic} is another neat use of indices: one checks that the sum of coefficients of the index of a cluster-tilting object is a mutation invariant, but this sum is $3$ for the cluster-tilting object generating $\cTroot$ and $-3$ for the cluster-tilting object generating $\Sigma\cTroot$.

However, so far we treat $\Sigma\cTroot$ no differently from any other cluster-tilting subcategory.
For example, with respect to the natural bases (with the natural bijection between them), the map $\ind{\cTroot}{\Sigma\cTroot}\colon\Kgp{\cTroot}\to \Kgp{\Sigma\cTroot}$ is represented by the matrix $-I_{3}$.
\end{example}

The next proposition shows that the index and coindex maps behave well under partial stabilisation.

\begin{proposition}\label{p:stable-ind-coind}
If $\cC$ is a cluster category, $\cTU\ctsubcat\cC$, and $\cP$ is a full and additively closed subcategory of projectives in $\cC$, then we have
\[\cind{\cT/\cP}{\cU/\cP}\circ \pi_{\cT}^{\cT/\cP} =\pi_{\cU}^{\cU/\cP}\circ \cind{\cT}{\cU}.\]
\end{proposition}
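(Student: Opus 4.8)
The plan is to deduce this from results already established, namely Proposition~\ref{p:stabind} together with the compatibility \eqref{eq:stab-restrict}; in fact the two commutative squares displayed in \eqref{eq:C-add-T-stable-comm-diags} already package exactly the ingredients needed, and the proof amounts to gluing them along their common column $\Kgp{\addcat{\cC}}\to\Kgp{\addcat{(\cC/\cP)}}$. Since both Proposition~\ref{p:stabind} and \eqref{eq:stab-restrict} are stated uniformly for the index and the coindex, the argument below applies verbatim to either, so I write $\cind{}{}$ throughout with the convention that it denotes a fixed one of the two.

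First I would unwind the definitions \eqref{eq:ind-T-T'} and \eqref{eq:coind-T-T'} of the restricted maps, which give $\cind{\cT}{\cU}=\cind{\cC}{\cU}\circ\iota_{\cT}^{\addcat{\cC}}$ and, applied to the cluster category $\cC/\cP$ with its cluster-tilting subcategories $\cT/\cP$ and $\cU/\cP$ (recall these are cluster-tilting by Proposition~\ref{p:ct-bijection}), $\cind{\cT/\cP}{\cU/\cP}=\cind{\cC/\cP}{\cU/\cP}\circ\iota_{\cT/\cP}^{\addcat{(\cC/\cP)}}$. Then the desired identity follows from the chain
\begin{align*}
\cind{\cT/\cP}{\cU/\cP}\circ\pi_{\cT}^{\cT/\cP}
&=\cind{\cC/\cP}{\cU/\cP}\circ\iota_{\cT/\cP}^{\addcat{(\cC/\cP)}}\circ\pi_{\cT}^{\cT/\cP}\\
&=\cind{\cC/\cP}{\cU/\cP}\circ\pi_{\addcat{\cC}}^{\addcat{(\cC/\cP)}}\circ\iota_{\cT}^{\addcat{\cC}}\\
&=\pi_{\cU}^{\cU/\cP}\circ\cind{\cC}{\cU}\circ\iota_{\cT}^{\addcat{\cC}}\\
&=\pi_{\cU}^{\cU/\cP}\circ\cind{\cT}{\cU},
\end{align*}
in which the first and last equalities are the definitions just recalled, the second equality is \eqref{eq:stab-restrict}, and the third is Proposition~\ref{p:stabind}.

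I do not expect any genuine obstacle: every step is an appeal to a previously proven identity, and the only thing requiring care is keeping track of which of the four Grothendieck groups $\Kgp{\addcat{\cC}}$, $\Kgp{\addcat{(\cC/\cP)}}$, $\Kgp{\cT}$, $\Kgp{\cU}$ (and their analogues over $\cC/\cP$) each map lives on — precisely the bookkeeping already done in \eqref{eq:C-add-T-stable-comm-diags}. As an alternative I could instead verify the statement directly on the basis elements $[T]_{\cT}\in\Kgp{\cT}$ by choosing a $\cU$-index (resp.\ $\cU$-coindex) conflation for $T$ in $\cC$, projecting it to $\cC/\cP$ to obtain a $\cU/\cP$-index (resp.\ coindex) conflation, and comparing, but this merely re-runs the proof of Proposition~\ref{p:stabind} and so the diagram chase above is preferable.
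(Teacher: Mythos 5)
Your proof is correct and is essentially identical to the paper's: both unwind the definitions of $\cind{\cT}{\cU}$ and $\cind{\cT/\cP}{\cU/\cP}$, apply \eqref{eq:stab-restrict} to commute the inclusion past the projection, and then invoke Proposition~\ref{p:stabind}, with only a cosmetic difference in notation ($\cind{\cC/\cP}{\cU/\cP}$ versus $\cind{\addcat{(\cC/\cP)}}{\cU/\cP}$ for the same map). Nothing to add.
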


\begin{proof} By the definitions, \eqref{eq:stab-restrict} and Proposition~\ref{p:stabind} we have
\begin{align*}
\cind{\cT/\cP}{\cU/\cP}\circ\pi_{\cT}^{\cT/\cP}&=\cind{\addcat{(\cC/\cP)}}{\cU/\cP}\circ\iota_{\cT/\cP}^{\addcat{(\cC/\cP)}}\circ\pi_{\cT}^{\cT/\cP}\\
&=\cind{\addcat{(\cC/\cP)}}{\cU/\cP}\circ\pi_{\addcat{\cC}}^{\addcat{(\cC/\cP)}}\circ\iota_{\cT}^{\addcat{\cC}}\\
&=\pi_{\cU}^{\cU/\cP}\circ\cind{\addcat{\cC}}{\cU}\circ\iota_{\cT}^{\addcat{\cC}}\\
&=\pi_{\cU}^{\cU/\cP}\circ\cind{\cT}{\cU}.\qedhere
\end{align*}
\end{proof}

\begin{remark}
In the case that $\cC/\cP=\stab{\cC}$, we abbreviate
$\stabcind{\cT}{\cU}\defeq\cind{\cT/\cP}{\cU/\cP}$.
Together with the abbreviations from Remark~\ref{rem:pi-iota-shorthand}, the statement of Proposition~\ref{p:stable-ind-coind} becomes
\[\stabcind{\cT}{\cU}\circ\pproj{\cT}=\pproj{\cU}\circ\cind{\cT}{\cU}.\]
\end{remark}

We would like to have analogues of the index and coindex maps relating the numerical Grothendieck groups $\Kgpnum{\lfd{\cT}}$ and $\Kgpnum{\lfd{\cU}}$ for $\cTU\ctsubcat\cC$.
We will obtain these via adjunction, so we need to assume that $\cC$ is compact or skew-symmetric.

\begin{lemma}
\label{l:cind-divisibility}
Let $\cC$ be a compact or skew-symmetric cluster category, and let $\cTU\ctsubcat\cC$.
Then for any $T\in\indec{\cT}$ and $M\in\lfd{\cU}$, we have
\[\dimdivalg{T}\divides \canform{[M]}{\cind{\cT}{\cU}[T]}{\cU}.\]
\end{lemma}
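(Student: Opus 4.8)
The plan is to realise $\canform{[M]}{\cind{\cT}{\cU}[T]}{\cU}$ as an Euler characteristic of $\operatorname{Hom}$- and $\operatorname{Ext}$-spaces carrying a natural action of the division algebra $\divalg{T}$, so that divisibility by $\dimdivalg{T}$ is immediate. When $\cC$ is skew-symmetric, $\dimdivalg{T}=1$ and there is nothing to prove, so assume $\cC$ is compact. By Remark~\ref{r:ind-op} the statement for the coindex in $\cC$ is the statement for the index in $\op{\cC}$ (replacing $M$ by its pointwise $\bK$-dual, an $\op{\cU}$-module of the same dimension vector), so it suffices to treat $\ind{\cT}{\cU}$. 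By bilinearity of $\canform{\blank}{\blank}{\cU}$ together with Corollary~\ref{c:Kgpnum-prod}, and since $\ind{\cT}{\cU}[T]\in\Kgp{\cU}$ involves only finitely many indecomposables, one may even assume $M=\simpmod{\cU}{U}$ is a single simple module, though this is not needed below.

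First I would reduce to the exact case, as in the proof of Lemma~\ref{l:ind-coind-adjointness}: choose a Frobenius exact category $\cE$ with $\cC\simeq\cE/\cP$ and cluster-tilting subcategories $\widehat{\cT},\widehat{\cU}\ctsubcat\cE$ projecting to $\cT,\cU$ (Proposition~\ref{p:ct-bijection}), and view $M$ as an object of $\lfd{\widehat{\cU}}$ vanishing on $\cP$. Lifting a $\cU$-index conflation for $T$ to an admissible short exact sequence $0\to\rightker{\widehat{\cU}}{T}\to\rightapp{\widehat{\cU}}{T}\to T\to0$ in $\cE$ with $\rightapp{\widehat{\cU}}{T},\rightker{\widehat{\cU}}{T}\in\widehat{\cU}$, and using that $M$ vanishes on $\cP$ together with the adjunction \eqref{eq:pi-p-iota-s-adjoint}, one obtains
\[\canform{[M]}{\ind{\cT}{\cU}[T]}{\cU}=\dim_{\bK}M(\rightapp{\widehat{\cU}}{T})-\dim_{\bK}M(\rightker{\widehat{\cU}}{T}).\]
Since $\cE$ is an exact cluster category, Proposition~\ref{p:ind-proj-res} shows that applying the restricted Yoneda functor $\Yonfun{\widehat{\cU}}$ to this sequence gives a projective resolution $0\to\Yonfun{\widehat{\cU}}\rightker{\widehat{\cU}}{T}\to\Yonfun{\widehat{\cU}}\rightapp{\widehat{\cU}}{T}\to\Yonfun{\widehat{\cU}}{T}\to0$ of the $\widehat{\cU}$-module $\Yonfun{\widehat{\cU}}{T}$, so this module has projective dimension at most $1$. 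Applying $\Hom{\widehat{\cU}}{\blank}{M}$ and using $\Hom{\widehat{\cU}}{\Yonfun{\widehat{\cU}}{V}}{M}=M(V)$, the long exact sequence yields
\[\canform{[M]}{\ind{\cT}{\cU}[T]}{\cU}=\dim_{\bK}\Hom{\widehat{\cU}}{\Yonfun{\widehat{\cU}}{T}}{M}-\dim_{\bK}\Ext{1}{\widehat{\cU}}{\Yonfun{\widehat{\cU}}{T}}{M},\]
both terms being finite, as a subobject of $M(\rightapp{\widehat{\cU}}{T})$ and a quotient of $M(\rightker{\widehat{\cU}}{T})$ respectively.

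Now $\Hom{\widehat{\cU}}{\Yonfun{\widehat{\cU}}{T}}{M}$ and $\Ext{1}{\widehat{\cU}}{\Yonfun{\widehat{\cU}}{T}}{M}$ are right modules over $\End{\widehat{\cU}}{\Yonfun{\widehat{\cU}}{T}}$, and functoriality of $\Yonfun{\widehat{\cU}}$ gives a ring homomorphism $\op{\End{\cE}{T}}\to\op{\End{\widehat{\cU}}{\Yonfun{\widehat{\cU}}{T}}}$. Composing with a Wedderburn splitting $\divalg{T}\inj\op{\End{\cE}{T}}$ of the surjection onto $\divalg{T}=\op{\End{\cE}{T}}/\rad{}\op{\End{\cE}{T}}$ — and noting the resulting map $\divalg{T}\to\op{\End{\widehat{\cU}}{\Yonfun{\widehat{\cU}}{T}}}$ is unital, hence injective since $\divalg{T}$ is a division ring — both spaces become (left) $\divalg{T}$-vector spaces. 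Hence $\dimdivalg{T}$ divides both $\bK$-dimensions, and therefore their difference, as required.

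The step I expect to be the main obstacle is securing this division-algebra action: it rests on a Wedderburn splitting for $\op{\End{\cE}{T}}$, and the exact lift $\cE$ need not be compact, so this algebra need not be pseudocompact and Proposition~\ref{p:Wedderburn-splitting} does not directly apply. One must instead transport the splitting available for $\op{\End{\cC}{T}}$ (by compactness of $\cC$, and yielding the same $\divalg{T}$ by Remark~\ref{r:D_X}) along $\op{\End{\cE}{T}}\twoheadrightarrow\op{\End{\cC}{T}}$ — for which a mild Krull--Schmidt-type assumption on the lift is convenient — or else run the whole computation through $\Yonfun{\cU}\colon\cC\to\Mod{\cU}$ directly, which carries the $\op{\End{\cC}{T}}$-action with no extra work, using the exact lift only to certify that the projective resolution above is genuinely a resolution (so that the second displayed identity holds). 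Reconciling these viewpoints, and keeping track of all the Grothendieck-group classes and module structures across $\cC\simeq\cE/\cP$, is where the care is needed; the homological input itself is routine.
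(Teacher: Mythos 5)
Your proposal is correct in spirit but takes a genuinely different route from the paper, and as written it contains one gap that you yourself flag but do not fully close.

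The paper's proof stays entirely inside the (possibly extriangulated) category \(\cC\): it chooses a \emph{minimal} \(\cU\)-index conflation for \(T\), expands \(\canform{[M]}{\ind{\cT}{\cU}[T]}{\cU}\) in terms of multiplicities \([\rightapp{\cU}T:U]\) and \([\rightker{\cU}T:U]\) over \(U\in\indec{\cU}\), reduces via Lemma~\ref{l:compact-module-division} to the divisibility \(\dimdivalg{T}\mid\dimdivalg{U}[\blank:U]\), and settles that by Corollary~\ref{c:approx-count}, first for \(\rightapp{\cU}T\) directly and then for \(\rightker{\cU}T\) by passing to the stable category and noting that it is the source of a minimal left \(\cU\)-approximation of \(\Sigma^{-1}T\), with \(\dimdivalg{\Sigma^{-1}T}=\dimdivalg{T}\). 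Your proof instead lifts to an exact category \(\cE\), realises the pairing as \(\dim\Hom{\widehat\cU}{\Yonfun{\widehat\cU}T}{M}-\dim\Ext{1}{\widehat\cU}{\Yonfun{\widehat\cU}T}{M}\) using Proposition~\ref{p:ind-proj-res}, and aims to read off divisibility from a \(\divalg{T}\)-module structure on these two spaces. Your approach is more directly homological and avoids the minimal-approximation counting and the suspension trick; the price is that you move to \(\cE\), where compactness (and even Krull--Schmidtness) may fail, and this is exactly where the gap sits. The paper's approach pays in combinatorics of multiplicities but keeps everything in the compact category \(\cC\), where all the needed structure is available.

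The gap is the one you flag: you need a \(\divalg{T}\)-action on the \(\Hom\) and \(\Ext^1\) spaces, but \(\op{\End{\cE}{T}}\) is not known to be pseudocompact, so Proposition~\ref{p:Wedderburn-splitting} does not apply to it, and a splitting of \(\op{\End{\cC}{T}}\to\divalg{T}\) does not lift along \(\op{\End{\cE}{T}}\onto\op{\End{\cC}{T}}\) without further input. Your ``run the whole computation through \(\Yonfun{\cU}\)'' alternative also does not go through as stated, because for extriangulated \(\cC\) the sequence \(\Yonfun{\cU}\rightker{\cU}T\to\Yonfun{\cU}\rightapp{\cU}T\to\Yonfun{\cU}T\to0\) need not be left exact (see the final sentence of Proposition~\ref{p:extri-les}), so the \(\Ext^1\)-identification fails in \(\Mod{\cU}\). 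The fix, which is short but you do not quite reach, is to observe that the \(\op{\End{\cE}{T}}\)-action on both spaces already factors through \(\op{\End{\cC}{T}}=\op{\End{\cE}{T}}/\cP(T,T)\). Indeed, if \(f\in\End{\cE}{T}\) factors through \(P\in\cP\), then the endomorphism of \(\Hom{\widehat\cU}{\Yonfun{\widehat\cU}T}{M}\) induced by \(f\) factors through \(\Hom{\widehat\cU}{\Yonfun{\widehat\cU}P}{M}=M(P)=0\), since \(M\) vanishes on \(\cP\); and the induced endomorphism of \(\Ext{1}{\widehat\cU}{\Yonfun{\widehat\cU}T}{M}\) factors through \(\Ext{1}{\widehat\cU}{\Yonfun{\widehat\cU}P}{M}=0\), since \(\Yonfun{\widehat\cU}P\) is projective. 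Now \(\op{\End{\cC}{T}}\) is pseudocompact with \(\dimdivalg{T}<\infty\) by compactness of \(\cC\), so Proposition~\ref{p:Wedderburn-splitting} furnishes a splitting \(\divalg{T}\inj\op{\End{\cC}{T}}\), and restricting along the composite ring map makes both spaces free \(\divalg{T}\)-modules. With this addendum your argument is complete.
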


\begin{proof}
We give the proof for the index, that for the coindex being similar.
If $\cC$ is skew-symmetric, then $\dimdivalg{T}=1$ by definition, and so there is nothing to prove.
So assume $\cC$ is compact, and choose a \emph{minimal} $\cU$-index conflation
\[\begin{tikzcd}
\rightker{\cU}{T}\arrow[infl]{r}&\rightapp{\cU}{T}\arrow[defl]{r}{\varphi}&T\arrow[confl]{r} & \phantom{}
\end{tikzcd}\]
for $T$.
We may then compute
\[\canform{[M]}{\ind{\cT}{\cU}[T]}{\cU}=\sum_{U\in\indec{\cU}}\dim_{\bK}M(U)([\rightapp{\cU}{T}:U]-[\rightker{\cU}{T}:U]).\]
Since $\dimdivalg{U}|\dim_{\bK}M(U)$ by Lemma~\ref{l:compact-module-division}, it is therefore sufficient to show that
\[\dimdivalg{T}\divides\dimdivalg{U}([\rightapp{\cU}{T}:U]-[\rightker{\cU}{T}:U])\]
for each $U\in\indec{\cU}$.
Since $\dimdivalg{T}\divides \dimdivalg{U}[\rightapp{\cU}{T}:U]$ by Corollary~\ref{c:approx-count}, it is even enough to show that $\dimdivalg{T} \divides \dimdivalg{U}[\rightker{\cU}{T}:U]$.

In the stable category $\stab{\cC}$, there is a triangle
\[\begin{tikzcd}
\Sigma^{-1}T\arrow{r}{\psi}&\rightker{\cU}{T}\arrow{r}&\rightapp{\cU}{T}\arrow{r}{\varphi}&T.
\end{tikzcd}\]
Because $\rightker{\cU}{T},\rightapp{\cU}{T}\in\cU$, the map $\psi$ is a left $\cU$-approximation of $\Sigma^{-1}T$ in the cluster category $\stab{\cC}$, and it is minimal because $\varphi$ is.
Moreover, the multiplicity $[\rightker{\cU}{T}:U]$ for $U\in\indec{\stab{\cU}}$ is the same in either $\stab{\cC}$ or $\cC$; the two multiplicities may only differ if $U$ is projective, but in this case both are zero by minimality of $\varphi$.

Now either $T$ is projective, so $\rightker{\cU}{T}=0$ and there is nothing to prove, or both $T$ and $\Sigma^{-1}T$ are indecomposable in $\stab{\cC}$.
In the second case, we have $\dimdivalg{\Sigma^{-1}T}\divides \dimdivalg{U}[\rightker{\cU}{T}:U]$ by Corollary~\ref{c:approx-count} again, but $\dimdivalg{\Sigma^{-1}T}=\dimdivalg{T}$ because $\Sigma$ is an autoequivalence.
\end{proof}

Recall from Proposition~\ref{p:K-duality} that the non-degenerate form $\canform{\blank}{\blank}{\cT}$ induces an injection $\sdual{\cT}\colon\Kgpnum{\lfd{\cT}}\to\dual{\Kgp{\cT}}$ for each $\cT\ctsubcat\cC$ in a Krull--Schmidt cluster category $\cC$.

\begin{proposition}
\label{p:ind-coind-bar}
Let $\cC$ be a compact or skew-symmetric cluster category and let $\cTU\ctsubcat\cC$. Then for all $M\in\lfd{\cU}$, we have
$\dual{(\cind{\cT}{\cU})}\circ\sdual{\cU}[M]\in\image(\sdual{\cT})$.
\end{proposition}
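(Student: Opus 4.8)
The plan is to reduce the statement to the divisibility already isolated in Lemma~\ref{l:cind-divisibility}, using the concrete description of the image of $\sdual{\cT}$. First I would invoke Proposition~\ref{p:im-delta}: since $\cC$ is compact or skew-symmetric, the hypotheses of that proposition hold for $\cT\ctsubcat\cC$ (as noted just before Corollary~\ref{c:ss-implies-isos}), so
\[
\image(\sdual{\cT})=\{\psi\in\dual{\Kgp{\cT}}:\dimdivalg{T}\divides\psi[T]\ \text{for all}\ T\in\indec{\cT}\}.
\]
Thus it is enough to verify, for each $T\in\indec{\cT}$, that $\dimdivalg{T}$ divides the integer $\bigl(\dual{(\cind{\cT}{\cU})}\circ\sdual{\cU}[M]\bigr)[T]$.

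Next I would unwind this evaluation. By definition of the transpose map, $\dual{(\cind{\cT}{\cU})}\bigl(\sdual{\cU}[M]\bigr)=\sdual{\cU}[M]\circ\cind{\cT}{\cU}$, and evaluating at $[T]$ gives
\[
\bigl(\dual{(\cind{\cT}{\cU})}\circ\sdual{\cU}[M]\bigr)[T]=\sdual{\cU}[M]\bigl(\cind{\cT}{\cU}[T]\bigr)=\canform{[M]}{\cind{\cT}{\cU}[T]}{\cU},
\]
the last equality being the definition of $\sdual{\cU}[M]=\canform{[M]}{\blank}{\cU}$. The same computation applies verbatim whether $\cind{}{}$ denotes the index $\ind{\cT}{\cU}$ or the coindex $\coind{\cT}{\cU}$, so both cases of the claim are treated simultaneously.

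Finally, Lemma~\ref{l:cind-divisibility} asserts precisely that $\dimdivalg{T}\divides\canform{[M]}{\cind{\cT}{\cU}[T]}{\cU}$ for every $T\in\indec{\cT}$ and every $M\in\lfd{\cU}$, which is exactly the divisibility required above; combining this with the displayed description of $\image(\sdual{\cT})$ yields the result. I do not expect any genuine obstacle at this stage: all of the substantive work—counting multiplicities in a minimal $\cU$-index (resp.\ coindex) conflation via Corollary~\ref{c:approx-count}, and passing to the stable category $\stab{\cC}$ together with the autoequivalence $\Sigma$ to control the kernel (resp.\ cokernel) term—has already been carried out in the proof of Lemma~\ref{l:cind-divisibility}, and the present proposition is essentially the formal packaging of that lemma through the duality $\sdual{\cT}$.
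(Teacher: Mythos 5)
Your proof is correct and follows essentially the same route as the paper: unwind the adjoint-composite to $\canform{[M]}{\cind{\cT}{\cU}[T]}{\cU}$, invoke Lemma~\ref{l:cind-divisibility} for the divisibility by $\dimdivalg{T}$, and conclude via the characterisation of $\image(\sdual{\cT})$ in Proposition~\ref{p:im-delta}. No discrepancies.
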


\begin{proof}
We compute
$\dual{(\cind{\cT}{\cU})}\circ\sdual{\cU}[M]=\canform{[M]}{\cind{\cT}{\cU}(\blank)}{\cU}$.
Evaluating on $[T]$ for $T\in\indec\cT$ gives $\canform{[M]}{\cind{\cT}{\cU}[T]}{\cU}$, which is divisible by $\dimdivalg{T}$ by Lemma~\ref{l:cind-divisibility}.
The functional $\canform{[M]}{\cind{\cT}{\cU}(\blank)}{\cU}$ thus lies in $\image(\sdual{\cT})$ by the characterisation of this image in Proposition~\ref{p:im-delta}.
\end{proof}

We may thus apply Proposition~\ref{p:adjunction} to take adjoints of the index and coindex isomorphisms.

\begin{definition}\label{d:ind-bar-def}
For a compact or skew-symmetric cluster category $\cC$ and $\cTU\ctsubcat\cC$, we define 
\begin{align*}
\coindbar{\cU}{\cT}  =\adj{(\ind{\cT}{\cU})}&\colon \Kgpnum{\lfd \cU} \to \Kgpnum{\lfd \cT}, \\
\indbar{\cU}{\cT} =\adj{(\coind{\cT}{\cU})}&\colon \Kgpnum{\lfd \cU} \to \Kgpnum{\lfd \cT},
\end{align*}
by taking adjoints with respect to $\canform{\blank}{\blank}{\cT}$ and $\canform{\blank}{\blank}{\cU}$.
\end{definition}

Here, adjunction tells us that
\begin{equation}\label{eq:ind-coindbar-adj}
\canform{\coindbar{\cU}{\cT}[M]}{[T]}{\cT}=\canform{[M]}{\ind{\cT}{\cU}[T]}{\cU},
\end{equation}
for all $T\in\cT$ and $M\in\lfd{\cU}$, cf.\ Proposition~\ref{p:adjunction}, and similarly for $\indbar{}{}$ and $\coind{}{}$.

\begin{remark}
\label{r:adj-restr}
Analysing the construction of the adjoint in Proposition~\ref{p:adjunction}, we see that we can also take adjoints to $\cind{\cT}{\cU}$ using the standard form $\canform{\blank}{\blank}{\cT}$ for $\cT$, and the restricted form $\canform{(\blank)|_{\fd{\cU}}}{\blank}{\cU}\colon\Kgp{\fd{\cU}}\times\Kgp{\cU}\to\integ$ for $\cU$.
The resulting adjoints are given simply by the restrictions of $\cindbar{\cU}{\cT}$ to $\Kgp{\fd{\cU}}\leq\Kgpnum{\lfd{\cU}}$.
If one attempts to restrict both forms, the adjoints exist if and only if the restrictions $\cindbar{\cU}{\cT}\colon\Kgp{\fd{\cU}}\to\Kgpnum{\lfd{\cT}}$ take values in $\Kgp{\fd{\cT}}\leq\Kgpnum{\lfd{\cT}}$ (in which case the adjoints are precisely these maps, with appropriately restricted codomain), but this is not always the case.
\end{remark}

\begin{proposition}
\label{p:indbar-coindbar-inverse}
Let $\cC$ be a compact or skew-symmetric cluster category and $\cTU\ctsubcat\cC$. Then $\indbar{\cU}{\cT}\colon \Kgpnum{\lfd{\cU}}\isoto\Kgpnum{\lfd{\cT}}$ is an isomorphism with inverse $\coindbar{\cT}{\cU}$.
\end{proposition}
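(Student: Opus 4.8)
The plan is to deduce this statement directly from Proposition~\ref{p:ind-coind-inverse} by adjunction, with no new ingredients. Recall from that proposition that $\coind{\cU}{\cT}\circ\ind{\cT}{\cU}=\id_{\Kgp{\cT}}$ and $\ind{\cT}{\cU}\circ\coind{\cU}{\cT}=\id_{\Kgp{\cU}}$, and from Definition~\ref{d:ind-bar-def} that $\indbar{\cT}{\cU}=\adj{(\coind{\cU}{\cT})}\colon\Kgpnum{\lfd{\cT}}\to\Kgpnum{\lfd{\cU}}$ and $\coindbar{\cU}{\cT}=\adj{(\ind{\cT}{\cU})}\colon\Kgpnum{\lfd{\cU}}\to\Kgpnum{\lfd{\cT}}$, where these adjoints exist by Proposition~\ref{p:ind-coind-bar} (combined with Proposition~\ref{p:adjunction}).

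First I would fix $[M]\in\Kgpnum{\lfd{\cT}}$ and $T\in\cT$ and compute, using the defining adjunction relation \eqref{eq:ind-coindbar-adj} (and its analogue for $\indbar{}{}$ and $\coind{}{}$, extended linearly in the second argument),
\[
\canform{\coindbar{\cU}{\cT}(\indbar{\cT}{\cU}[M])}{[T]}{\cT}=\canform{\indbar{\cT}{\cU}[M]}{\ind{\cT}{\cU}[T]}{\cU}=\canform{[M]}{\coind{\cU}{\cT}(\ind{\cT}{\cU}[T])}{\cT}=\canform{[M]}{[T]}{\cT},
\]
the last step being Proposition~\ref{p:ind-coind-inverse}. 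Since this holds for every $T\in\cT$, and $\sdual{\cT}$ is injective by Proposition~\ref{p:K-duality} (so that an element of $\Kgpnum{\lfd{\cT}}$ is determined by its pairings against all $[T]$), it follows that $\coindbar{\cU}{\cT}\circ\indbar{\cT}{\cU}=\id_{\Kgpnum{\lfd{\cT}}}$. The reverse composite $\indbar{\cT}{\cU}\circ\coindbar{\cU}{\cT}=\id_{\Kgpnum{\lfd{\cU}}}$ is obtained in exactly the same way, now pairing against $[U]$ for $U\in\cU$, using $\ind{\cT}{\cU}\circ\coind{\cU}{\cT}=\id_{\Kgp{\cU}}$ and injectivity of $\sdual{\cU}$. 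Hence $\indbar{\cT}{\cU}$ is an isomorphism with inverse $\coindbar{\cU}{\cT}$, as claimed.

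There is no genuine obstacle here: the statement is a formal consequence of Proposition~\ref{p:ind-coind-inverse} together with the fact that forming adjoints with respect to the non-degenerate forms $\canform{\blank}{\blank}{\cT}$ is contravariantly functorial, i.e.\ $\adj{(g\circ f)}=\adj{f}\circ\adj{g}$ and $\adj{\id}=\id$ whenever the adjoints in question are defined (which could alternatively be packaged as a one-line lemma and then applied to the two identities of Proposition~\ref{p:ind-coind-inverse} to conclude). The only point worth stating carefully is the swap of \emph{index} and \emph{coindex} under adjunction built into Definition~\ref{d:ind-bar-def}, so that the adjoint identities pair $\indbar{}{}$ with $\coindbar{}{}$ rather than $\indbar{}{}$ with itself; once this bookkeeping is kept straight, the computation above finishes the proof immediately.
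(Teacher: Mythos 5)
Your proof is correct and takes exactly the same route as the paper: the paper's proof is the one-liner ``This follows from Proposition~\ref{p:ind-coind-inverse} via adjunction,'' and your computation is simply that argument spelled out in full, including the bookkeeping swap between index and coindex built into Definition~\ref{d:ind-bar-def}.
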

\begin{proof}
Let $M\in\lfd{\cU}$, and $U\in\cU$.
Then
\begin{align*}
\canform{\indbar{\cT}{\cU}\coindbar{\cU}{\cT}[M]}{[U]}{\cU}&=\canform{\coindbar{\cU}{\cT}[M]}{\coind{\cU}{\cT}[U]}{\cT}\\
&=\canform{[M]}{\ind{\cT}{\cU}\coind{\cU}{\cT}[U]}{\cU}\\
&=\canform{[M]}{[U]}{\cU}
\end{align*}
by adjunction and Proposition~\ref{p:ind-coind-inverse}.
Since $\canform{\blank}{\blank}{\cU}$ is non-degenerate, it follows that $\indbar{\cT}{\cU}\circ\coindbar{\cU}{\cT}=\id_{\Kgpnum{\lfd{\cU}}}$.
The analogous calculation for the other composition gives the result.
\end{proof}

\begin{remark}
As we have already remarked, and will return to below, the map $\ind{\cU}{\cT}$ gives the categorical analogue of $\mathbf{g}$-vectors in cluster theory (as does $\coind{\cU}{\cT}$, for a different convention).
The maps $\indbar{\cU}{\cT}$ and $\coindbar{\cU}{\cT}$ will provide $\mathbf{c}$-vectors under the corresponding conventions.
Indeed, it is the tropical duality between $\mathbf{g}$-vectors and $\mathbf{c}$-vectors \cite{NakanishiZelevinsky} that motivates the definition of $\cindbar{\cU}{\cT}$.
This connection will be made more precise in Theorem~\ref{t:c-vec-mut-formula}.
\end{remark}

Given a Krull--Schmidt cluster category $\cC$ with $\cT\ctsubcat\cC$, and $\cP$ a full and additively closed subcategory of projectives, there is an injection $\iota_{\cT/\cP}^{\cT}\colon \Kgpnum{\lfd(\cT/\cP)}\to \Kgpnum{\lfd \cT}$ and a surjection $\pi_{\cT}^{\cT/\cP}\colon\Kgp{\cT}\to\Kgp{\cT/\cP}$ (see Section~\ref{s:duality}), and moreover $\iota_{\cT/\cP}^{\cT}=\adj{(\pi_{\cT}^{\cT/\cP})}$  \eqref{eq:pi-p-iota-s-adjoint}.
The next statement is adjoint to Proposition~\ref{p:stable-ind-coind}.

\begin{proposition}\label{p:iota-coind}
Let $\cC$ be a compact or skew-symmetric cluster category, and let $\cP$ be a full and additively closed subcategory of projectives.  Then
\[\iota_{\cT/\cP}^{\cT}\circ\cindbar{\cU/\cP}{\cT/\cP}=\cindbar{\cU}{\cT}\circ\iota_{\cU/\cP}^{\cU}\]
for any $\cTU\ctsubcat\cC$.\qed
\end{proposition}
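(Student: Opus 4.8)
The plan is to deduce Proposition~\ref{p:iota-coind} by dualising Proposition~\ref{p:stable-ind-coind} with respect to the canonical forms $\canform{\blank}{\blank}{\cT}$, $\canform{\blank}{\blank}{\cU}$ and their analogues for $\cT/\cP$ and $\cU/\cP$. Recall that $\coindbar{\cU}{\cT}=\adj{(\ind{\cT}{\cU})}$ and $\indbar{\cU}{\cT}=\adj{(\coind{\cT}{\cU})}$ by Definition~\ref{d:ind-bar-def}, while $\adj{(\pi_{\cT}^{\cT/\cP})}=\iota_{\cT/\cP}^{\cT}$ and $\adj{(\iota_{\cU/\cP}^{\cU})}=\pi_{\cU}^{\cU/\cP}$ by \eqref{eq:pi-p-iota-s-adjoint}. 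Thus the two composites in Proposition~\ref{p:iota-coind} are precisely the adjoints of the two composites in Proposition~\ref{p:stable-ind-coind} (in its index incarnation), and since that proposition asserts those composites are equal, so are their adjoints. Because the same argument applies verbatim to the coindex incarnation of Proposition~\ref{p:stable-ind-coind}, one gets both the $\indbar{}{}$ and the $\coindbar{}{}$ cases of the claim, which is what the common notation $\cindbar{}{}$ abbreviates.

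Concretely, I would make this rigorous element-wise. Both sides of the asserted identity are maps $\Kgpnum{\lfd(\cU/\cP)}\to\Kgpnum{\lfd\cT}$, and $\sdual{\cT}$ is injective by Proposition~\ref{p:K-duality}, so it suffices to check agreement after pairing against $[T]$ for each $T\in\cT$. Fix $[M]\in\Kgpnum{\lfd(\cU/\cP)}$ and $T\in\cT$. Moving $\iota_{\cT/\cP}^{\cT}$ across to $\pi_{\cT}^{\cT/\cP}$ via \eqref{eq:pi-p-iota-s-adjoint} and then using the adjunction \eqref{eq:ind-coindbar-adj} for $\coindbar{\cU/\cP}{\cT/\cP}$ gives
\[\canform{\iota_{\cT/\cP}^{\cT}\coindbar{\cU/\cP}{\cT/\cP}[M]}{[T]}{\cT}=\canform{\coindbar{\cU/\cP}{\cT/\cP}[M]}{\pi_{\cT}^{\cT/\cP}[T]}{\cT/\cP}=\canform{[M]}{\ind{\cT/\cP}{\cU/\cP}\pi_{\cT}^{\cT/\cP}[T]}{\cU/\cP},\]
while the adjunction $\coindbar{\cU}{\cT}=\adj{(\ind{\cT}{\cU})}$ together with $\adj{(\iota_{\cU/\cP}^{\cU})}=\pi_{\cU}^{\cU/\cP}$ gives
\[\canform{\coindbar{\cU}{\cT}\iota_{\cU/\cP}^{\cU}[M]}{[T]}{\cT}=\canform{\iota_{\cU/\cP}^{\cU}[M]}{\ind{\cT}{\cU}[T]}{\cU}=\canform{[M]}{\pi_{\cU}^{\cU/\cP}\ind{\cT}{\cU}[T]}{\cU/\cP}.\]
These two values coincide because $\ind{\cT/\cP}{\cU/\cP}\circ\pi_{\cT}^{\cT/\cP}=\pi_{\cU}^{\cU/\cP}\circ\ind{\cT}{\cU}$ by Proposition~\ref{p:stable-ind-coind}. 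Repeating the computation with the roles of index and coindex exchanged throughout, and invoking the coindex version of Proposition~\ref{p:stable-ind-coind} with $\indbar{\cU}{\cT}=\adj{(\coind{\cT}{\cU})}$, yields $\iota_{\cT/\cP}^{\cT}\circ\indbar{\cU/\cP}{\cT/\cP}=\indbar{\cU}{\cT}\circ\iota_{\cU/\cP}^{\cU}$, completing the proof.

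There is essentially no substantive obstacle here: the content is entirely in Proposition~\ref{p:stable-ind-coind}, and the present statement is the formal adjoint of it. The only points needing a line of care are the bookkeeping of the index/coindex and $\mathrm p$/$\mathrm s$ decorations when passing to adjoints, and the observation that all the adjoint maps in sight are in fact defined — the $\iota$-maps by \eqref{eq:pi-p-iota-s-adjoint}, and the bar-maps $\coindbar{\cU/\cP}{\cT/\cP}$, $\indbar{\cU/\cP}{\cT/\cP}$ because the partial stabilisation $\cC/\cP$ inherits whichever of the compact or skew-symmetric hypotheses $\cC$ enjoys (in particular when $\cP$ is the subcategory of all projectives, so that $\cC/\cP=\stab\cC$), so that Proposition~\ref{p:ind-coind-bar} and Definition~\ref{d:ind-bar-def} apply to it as well.
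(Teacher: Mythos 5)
Your argument is correct and is precisely the intended one: the paper supplies no explicit proof, but the sentence preceding the statement (``The next statement is adjoint to Proposition~\ref{p:stable-ind-coind}'') makes clear that the proof is formal adjunction of that proposition, which is exactly what you carry out. Your element-wise verification via \(\canform{\blank}{\blank}{\cT}\) and injectivity of \(\sdual{\cT}\) is a clean unpacking of the one-line adjoint computation \(\adj{(\cind{\cT/\cP}{\cU/\cP}\circ\pi_{\cT}^{\cT/\cP})}=\adj{(\pi_{\cU}^{\cU/\cP}\circ\cind{\cT}{\cU})}\), with the index/coindex swap under adjunction correctly tracked.

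One small caveat is worth flagging more sharply than you do. To invoke Definition~\ref{d:ind-bar-def} for \(\cC/\cP\), one needs \(\cC/\cP\) itself to be compact or skew-symmetric. Skew-symmetry passes to \(\cC/\cP\) unconditionally, and Proposition~\ref{p:partialstab-compact} does show compactness passes, but only under the extra hypothesis that \(\cP\) is functorially finite, which the statement of Proposition~\ref{p:iota-coind} does not explicitly impose. This is harmless in practice: the only case the paper uses is \(\cP\) equal to all projective-injectives, so \(\cC/\cP=\stab{\cC}\), and this \(\cP\) is functorially finite since \(\cC\) is Frobenius. Your parenthetical shows you are aware of this, but the blanket assertion that ``\(\cC/\cP\) inherits whichever of the compact or skew-symmetric hypotheses \(\cC\) enjoys'' is not quite accurate for the compact case with an arbitrary \(\cP\).
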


\begin{remark}
In the usual way, when $\cC/\cP=\stab{\cC}$ we abbreviate
$\stabcindbar{\cT'}{\cT}\defeq\cindbar{\cT'/\cP}{\cT/\cP}$.
While the notation in Proposition~\ref{p:iota-coind} is a little heavy, it amounts to the fact that if we view each $\Kgpnum{\lfd{\cT/\cP}}$ as a subgroup of $\Kgpnum{\lfd{\cT}}$ in the natural way, then the maps $\cindbar{\cU}{\cT}$ restrict to the maps $\cindbar{\cU/\cP}{\cT/\cU}$ between these subgroups.
\end{remark}

By Proposition~\ref{p:equiv-to-mod}, each $\cT\ctsubcat\cC$ determines a functor $\Extfun{\cT}\colon\cC\to\fpmod{\stab{\cT}}\subseteq\lfd{\stab{\cT}}$, with $\Extfun{\cT}X=\Ext{1}{\cC}{\blank}{X}|_{\cT}$, recalling for the inclusion of categories that $\stab{\cT}$ is Hom-finite.
The next lemma, another application of Lemma~\ref{l:ind-coind-adjointness}, demonstrates the extent to which the induced maps $\Extfun{\cT}\colon\Kgp{\cC^{\add}}\to\Kgpnum{\lfd\stab{\cT}}$ commute with the index and coindex.

\begin{lemma}
\label{ind-coind-ext}
Let $\cC$ be a compact or skew-symmetric cluster category and let $\cTU\ctsubcat\cC$.
Then for any $X\in\cC$, we have
\[ (\stabcoindbar{\cU}{\cT}-\stabindbar{\cU}{\cT})[\Extfun{\cU}X]=\Extfun{\cT}((\coind{\cC}{\cU}-\ind{\cC}{\cU})[X]).\]
If $X\in\cT$, so $\ind{\cC}{\cU}[X]=\ind{\cT}{\cU}[{X}]$, and similarly for the coindex, then we even have
\[\stabcindbar{\cU}{\cT}[\Extfun{\cU}X]=\Extfun{\cT}(\cind{\cT}{\cU}[{X}]).\]
\end{lemma}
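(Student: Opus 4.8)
The plan is to verify both identities in $\Kgpnum{\lfd\stab{\cT}}$ by pairing against the canonical form and then invoking Lemma~\ref{l:ind-coind-adjointness}. First I would observe that $\stab{\cC}$ is again a compact (respectively, skew-symmetric) cluster category — compactness being inherited from $\cC$ via Proposition~\ref{p:partialstab-compact}, since the subcategory $\cP$ of all projective-injectives is functorially finite in a Frobenius extriangulated category — so that the maps $\stabcoindbar{\cU}{\cT}$, $\stabindbar{\cU}{\cT}$ and $\Extfun{\cT}\colon\Kgp{\addcat{\cC}}\to\Kgpnum{\lfd\stab{\cT}}$ are all defined. Since $\stab{\cT}$ is Hom-finite, Proposition~\ref{p:K-duality} applies and the form $\canform{\blank}{\blank}{\stab{\cT}}$ is non-degenerate, so it suffices to prove each claimed equality after pairing with $[T']$ for an arbitrary $T'\in\indec\stab{\cT}$, which I regard simultaneously as an object of $\cT\subseteq\cC$.

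For the first identity I would pair the left-hand side with $[T']$ and transform it as follows. By Definition~\ref{d:ind-bar-def}, $\stabcoindbar{\cU}{\cT}=\adj{(\stabind{\cT}{\cU})}$ and $\stabindbar{\cU}{\cT}=\adj{(\stabcoind{\cT}{\cU})}$ with respect to $\canform{\blank}{\blank}{\stab{\cT}}$ and $\canform{\blank}{\blank}{\stab{\cU}}$, so the pairing equals $\canform{[\Extfun{\cU}X]}{(\stabind{\cT}{\cU}-\stabcoind{\cT}{\cU})(\pproj{\cT}[T'])}{\stab{\cU}}$. Now Proposition~\ref{p:stable-ind-coind} gives $\stabind{\cT}{\cU}\circ\pproj{\cT}=\pproj{\cU}\circ\ind{\cT}{\cU}$ together with its coindex analogue, and then \eqref{eq:pi-p-iota-s-adjoint}, the formula $\canform{[M]}{[U]}{\cU}=\dim_{\bK}M(U)$, and the definition $\Extfun{\cU}X=\Ext{1}{\cC}{\blank}{X}|_{\cU}$ identify this with the right-hand side of
\[
\canform{(\stabcoindbar{\cU}{\cT}-\stabindbar{\cU}{\cT})[\Extfun{\cU}X]}{[T']}{\stab{\cT}}=\dim_{\bK}\Ext{1}{\cC}{(\ind{\cC}{\cU}-\coind{\cC}{\cU})[T']}{X},
\]
where the right-hand side is the alternating sum over any representation of the class by objects of $\cU$; this is well-defined and unchanged by the passage from $\Kgp{\cU}$ to $\Kgp{\stab{\cU}}$ because $\Ext{1}{\cC}{\blank}{X}$ annihilates projectives, and here I also use that $\ind{\cT}{\cU}$, $\coind{\cT}{\cU}$ are the restrictions of $\ind{\cC}{\cU}$, $\coind{\cC}{\cU}$.

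Next I would apply Lemma~\ref{l:ind-coind-adjointness} — in the shorthand of Remark~\ref{r:magic-lemma}, with cluster-tilting subcategory $\cU$ and the objects $T'$ and $X$ — to rewrite the right-hand side above as $\dim_{\bK}\Ext{1}{\cC}{T'}{(\coind{\cC}{\cU}-\ind{\cC}{\cU})[X]}$. Unwinding the induced homomorphism $\Extfun{\cT}$ and the definition of $\canform{\blank}{\blank}{\stab{\cT}}$ then shows that this is precisely $\canform{\Extfun{\cT}((\coind{\cC}{\cU}-\ind{\cC}{\cU})[X])}{[T']}{\stab{\cT}}$, which gives the first identity. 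For the refined statement, if $X\in\cT$ then $\Ext{1}{\cC}{T'}{X}=0$ because $\cT$ is cluster-tilting, so the \emph{stronger} pair of equalities in Remark~\ref{r:magic-lemma} is available; running the same chain with these in place of the combined identity, and using $\ind{\cC}{\cU}[X]=\ind{\cT}{\cU}[X]$ and $\coind{\cC}{\cU}[X]=\coind{\cT}{\cU}[X]$, separates the argument into the two term-by-term identities $\stabcoindbar{\cU}{\cT}[\Extfun{\cU}X]=\Extfun{\cT}(\coind{\cT}{\cU}[X])$ and $\stabindbar{\cU}{\cT}[\Extfun{\cU}X]=\Extfun{\cT}(\ind{\cT}{\cU}[X])$, which is the claim.

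The essential input is Lemma~\ref{l:ind-coind-adjointness}, already proved, so I do not expect a genuinely new difficulty. The main obstacle is organisational: keeping straight the four Grothendieck groups $\Kgp{\cT}$, $\Kgp{\stab{\cT}}$, $\Kgpnum{\lfd{\cT}}$, $\Kgpnum{\lfd\stab{\cT}}$ and their $\cU$-counterparts, tracking which bilinear form each adjoint is taken with respect to, and justifying that $\Extfun{\cT}$ — although not additive on conflations, cf.~Remark~\ref{r:cind-not-additive} — nevertheless induces a well-defined homomorphism on $\Kgp{\addcat{\cC}}$ whose values satisfy $\canform{\Extfun{\cT}(\blank)}{[T']}{\stab{\cT}}=\dim_{\bK}\Ext{1}{\cC}{T'}{\blank}$ on classes of objects.
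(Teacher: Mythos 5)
Your proposal is correct and follows essentially the same route as the paper: reduce to checking equality after pairing (via non-degeneracy from Proposition~\ref{p:K-duality}), unwind both sides to alternating sums of Ext-dimensions, and close with Lemma~\ref{l:ind-coind-adjointness}, using its stronger form when $X\in\cT$. The only cosmetic difference is that you keep the bookkeeping entirely over $\stab{\cT}$ and spell out the adjunction and Proposition~\ref{p:stable-ind-coind} step explicitly, whereas the paper works with $\sdual{\cT}$ and is terser; the essential content is identical.
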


\begin{proof}
By Proposition~\ref{p:K-duality}, we need only check that the equality holds after applying the injective map $\sdual{\cT}$ to each side; this gives us functions on $\Kgp{{\cT}}$, which we compare by evaluating on some class $[T]$.
Evaluating the right-hand side is straightforward, and gives
\[\ext{1}{\cC}{T}{\rightker{\cU}{X}}-\ext{1}{\cC}{T}{\rightapp{\cU}{X}}+\ext{1}{\cC}{T}{\leftapp{\cU}{X}}-\ext{1}{\cC}{T}{\leftcok{\cU}{X}}.\]
On the left-hand side, unpacking the definitions tells us that we should precompose the function $[\Extfun{\cU}X]=\ext{1}{\cC}{\blank}{X}$ on $\cU$ with $\ind{\cT}{\cU}$ and $\coind{\cT}{\cU}$, and then take the difference---evaluating the resulting function on $[{T}]$ gives
\[\ext{1}{\cC}{\leftcok{\cU}{T}}{X}-\ext{1}{\cC}{\leftapp{\cU}{T}}{X}+\ext{1}{\cC}{\rightapp{\cU}{T}}{X}-\ext{1}{\cC}{\rightker{\cU}{T}}{X}.\]
By Lemma~\ref{l:ind-coind-adjointness}, these two values agree.
The statements for $X\in\cT$ are proved similarly, again using Lemma~\ref{l:ind-coind-adjointness}, noting that in this case $\Ext{1}{\cC}{T}{X}=0$.
\end{proof}

The value of $\cindbar{\cU}{\cT}$ on simple $\cU$-modules is of course particularly important for calculations.
At first we do not exclude loops or $2$-cycles, but the slightly simpler statements under this extra assumption are given in Corollary~\ref{c:one-step-X-mut-from-root} below.

\begin{lemma}\label{l:d-of-mutant}
Let $\cC$ be a compact or skew-symmetric cluster category, let $\cT\ctsubcat\cC$, and let $T\in\exch{\cT}$.
Then $\dimdivalg{T}=\dimdivalg{\mut{\cT}{T}}$.
\end{lemma}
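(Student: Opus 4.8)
\emph{Proof strategy.} The skew-symmetric case is immediate, since then $\dimdivalg{X}=1$ for every indecomposable $X$ by Definition~\ref{d:skew-symmetric}, and both $T$ and $\mut{\cT}{T}$ are indecomposable. So assume $\cC$ is compact, hence Krull--Schmidt by Proposition~\ref{p:KS}; in particular $\op{\End{\cC}{X}}$ is local for each indecomposable $X\in\cC$. The plan is to compare both of $\divalg{T}$ and $\divalg{\mut{\cT}{T}}$ with the endomorphism ring of the simple functor $\simpmod{\cT}{T}$, which has $\bK$-dimension $\dimdivalg{T}$ because $\simpmod{\cT}{T}(T)=\divalg{T}$.

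Set $M=\Extfun{\cT}(\mut{\cT}{T})$. By Lemma~\ref{l:props-of-K0-mod-T} we have $M\in\fd{\cT}$, and applying $\Yonfun{\cT}$ to an exchange conflation \eqref{eq:exchange-confs} for $T$ yields the presentation \eqref{eq:ET-pres}, namely $\projmod{\cT}{\exchmon{\cT}{T}{+}}\xrightarrow{\Yonfun{\cT}\varphi^{+}}\projmod{\cT}{T}\to M\to0$. Since $\exchmon{\cT}{T}{+}\in\cT\setminus T$ and $T$ is indecomposable, $\varphi^{+}$ is not a split epimorphism, hence $\varphi^{+}\in\radHom{\cT}{\exchmon{\cT}{T}{+}}{T}$; consequently the image of $\Yonfun{\cT}\varphi^{+}$ is contained in the radical of $\projmod{\cT}{T}$, and therefore $M$ has simple top $\simpmod{\cT}{T}$. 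First I would record that $\divalg{\mut{\cT}{T}}$ agrees, up to taking opposites, with $\End{\cT}{M}/\rad{}\End{\cT}{M}$: any endomorphism of $\mut{\cT}{T}$ factoring through $\cT$ is a non-isomorphism (as $\mut{\cT}{T}\notin\cT$ and $\cT$ is additively closed), so the quotient map $\op{\End{\cC}{\mut{\cT}{T}}}\to\op{\End{\cC/\cT}{\mut{\cT}{T}}}$ has kernel inside the radical, and then the ring isomorphism $\End{\cC/\cT}{\mut{\cT}{T}}\iso\End{\cT}{M}$ coming from the equivalence $\Extfun{\cT}\colon\cC/\cT\simeq\fpmod{\stab{\cT}}$ of Proposition~\ref{p:equiv-to-mod} completes the identification.

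It then remains to show $\dim_{\bK}\End{\cT}{M}/\rad{}\End{\cT}{M}=\dimdivalg{T}$, and the key step is to produce a surjective ring homomorphism $\End{\cT}{M}\to\End{\cT}{\simpmod{\cT}{T}}$. Because $M$ is finite-dimensional with simple top, it has a unique maximal submodule $\rad{}M$, and every non-unit of the local ring $\End{\cT}{M}$ is non-surjective, hence has image inside $\rad{}M$; so acting on the top gives a well-defined ring homomorphism killing the radical of $\End{\cT}{M}$. For surjectivity---where the mutation hypothesis is used, and which is the main point---I would exploit that $\varphi^{+}$ is a right $(\cT\setminus T)$-approximation of $T$: given $\alpha\in\End{\cC}{T}$, the composite $\alpha\circ\varphi^{+}\colon\exchmon{\cT}{T}{+}\to T$ factors through $\varphi^{+}$ since $\exchmon{\cT}{T}{+}\in\cT\setminus T$, so $\Yonfun{\cT}\alpha$ preserves the image of $\Yonfun{\cT}\varphi^{+}$ and descends to an endomorphism of $M$ inducing on $\simpmod{\cT}{T}$ the endomorphism determined by $\alpha$. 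As $\op{\End{\cC}{T}}\to\End{\cT}{\simpmod{\cT}{T}}$ is already surjective (with kernel the radical), so is $\End{\cT}{M}\to\End{\cT}{\simpmod{\cT}{T}}$. A surjection between the residue division rings $\End{\cT}{M}/\rad{}\End{\cT}{M}$ and $\End{\cT}{\simpmod{\cT}{T}}$ is an isomorphism, whence $\dim_{\bK}\End{\cT}{M}/\rad{}\End{\cT}{M}=\dim_{\bK}\simpmod{\cT}{T}(T)=\dimdivalg{T}$, and therefore $\dimdivalg{\mut{\cT}{T}}=\dimdivalg{T}$.

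The main obstacle is the possible presence of a loop at $T$: without assuming no loop one cannot invoke $M=\simpmod{\cT}{T}$ (cf.\ Lemma~\ref{l:props-of-K0-mod-T}) and shortcut the argument, but must instead run the ``simple top'' comparison above, whose only non-formal ingredient is the right-approximation property of $\varphi^{+}$ furnished by Definition~\ref{d:mutable}.
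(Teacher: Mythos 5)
Your proof is correct, but it is a genuinely different argument from the one in the paper. The paper argues by divisibility: it applies Lemma~\ref{l:cind-divisibility} (whose proof rests on the pseudocompactness machinery of Lemma~\ref{l:approx-construction} and Corollary~\ref{c:approx-count}) to the index $\ind{\cC}{\cT}[\mut{\cT}{T}]=[\exchmon{\cT}{T}{-}]-[T]$, paired against $[\simpmod{\cT}{T}]$, to obtain $\dimdivalg{\mut{\cT}{T}}\divides\dimdivalg{T}$, and then concludes by symmetry. Your argument instead works in the module category via the equivalence $\Extfun{\cT}\colon\cC/\cT\simeq\fpmod{\stab{\cT}}$, identifying $\divalg{\mut{\cT}{T}}$ with the residue of $\End{\cT}{M}$ for $M=\Extfun{\cT}(\mut{\cT}{T})$, showing $M$ has simple top $\simpmod{\cT}{T}$, and comparing residue division rings directly. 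The paper itself remarks immediately after the lemma that a purely Krull--Schmidt argument is possible via Iyama--Yoshino's subquotient description of $\mut{\cT}{T}$ as a shift; your proof realises this intent but by a cleaner route that avoids the subquotient. One consequence worth noting: your argument uses only that $\cC$ is Krull--Schmidt (plus the exchange conflations), so it establishes the conclusion under a strictly weaker hypothesis than ``compact or skew-symmetric'' and does not need the divisibility infrastructure at all.

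Two small points of exposition. First, you invoke ``the local ring $\End{\cT}{M}$'' when arguing that non-units have image in $\rad M$; this is true but should be spelled out, e.g.\ via your earlier identification $\End{\cT}{M}\iso\End{\cC/\cT}{\mut{\cT}{T}}$, a non-zero quotient of the local ring $\End{\cC}{\mut{\cT}{T}}$. Second, the parenthetical justification ``which has $\bK$-dimension $\dimdivalg{T}$ because $\simpmod{\cT}{T}(T)=\divalg{T}$'' is a slight shortcut: the needed fact is $\End{\Mod\cT}{\simpmod{\cT}{T}}\iso\End{\cT}{T}/\rad\End{\cT}{T}$, which you in effect reprove later via the Yoneda surjection from $\End{\cC}{T}$, so the outcome is fine, but as written the sentence conflates a dimension count of a representation space with a dimension count of an endomorphism ring.
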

\begin{proof}
Set $\cT'=\mut{T}\cT$ and $T'=\mut{\cT}T$.
By Lemma~\ref{l:cind-divisibility} and \eqref{eq:ind-on-mut-T}, we have 
\[
\dimdivalg{T'}\divides \canform{[\simpmod{\cT}{T}]}{\ind{\cT}{\cT}[T']}{\cT}=\canform{[\simpmod{\cT}{T}]}{[\rightapp{\cT}{T'}]-[T]}{\cT}
=\canform{[\simpmod{\cT}{T}]}{[\exchmon{\cT}{T}{-}]-[T]}{\cT}
=-\dimdivalg{T},
\]
recalling that $\exchmon{\cT}{T}{-}$ is a $(\cT\setminus T)$-approximation of $T$ and hence has no summand isomorphic to $T$.
By a symmetric argument, $\dimdivalg{T}\divides-\dimdivalg{T'}$.
Since both $\dimdivalg{T}$ and $\dimdivalg{T'}$ are positive integers, they must therefore be equal.
\end{proof}

\begin{remark}
An alternative argument, requiring only that $\cC$ is Krull--Schmidt, is to use Iyama--Yoshino's construction \cite{IyamaYoshino} of $\mut{\cT}{T}$ as the shift of $T$ in an appropriate triangulated subquotient of $\cC$.
This strategy requires showing that passing to this subquotient does not change the values of either $\dimdivalg{T}$ or $\dimdivalg{\mut{\cT}{T}}$, but this can be done.

There is also a simple argument under the assumption that there is no loop at either $T\in\cT$ or $\mut{\cT}{T}\in\mut{T}{\cT}$.
Indeed, in this situation we have $\dimdivalg{T}=\dim_{\bK}\Ext{1}{\cC}{T}{\mut{\cT}{T}}$ and $\dimdivalg{\mut{\cT}{T}}=\dim_{\bK}\Ext{1}{\cC}{\mut{\cT}{T}}{T}$ by Lemma~\ref{l:props-of-K0-mod-T}\ref{l:props-mod-T-simple-eq-E}, but these dimensions are equal since $\cC$ is stably 2-Calabi--Yau.
\end{remark}

\begin{corollary}
\label{c:no-loops-mutates}
Let $\cC$ be a Krull--Schmidt cluster category, let $\cT\ctsubcat\cC$, and let $T\in\exch{\cT}$.
Then there is no loop at $T\in\cT$ if and only if there is no loop at $\mut{\cT}{T}\in\mut{T}{\cT}$.
\end{corollary}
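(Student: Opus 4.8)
Write $\cT'=\mut{T}{\cT}$ and $T'=\mut{\cT}{T}$. The plan is to reduce both ``no loop'' conditions to a single numerical statement about an extension space and then exploit the symmetry of the exchange pair $(T,T')$. First I would recall, as in the proof of Corollary~\ref{c:exch-mat-at-mut}, that mutation is involutive here: the exchange conflations for the pair $(T',T)$ inside $\cT'$ are precisely those for $(T,T')$ inside $\cT$ with the two conflations exchanged, so in particular $\mut{\cT'}{T'}=T$ and $\cT'\setminus T'=\cT\setminus T$. Thus it suffices to prove the forward implication (no loop at $T\in\cT$ implies no loop at $T'\in\cT'$), the converse being this statement with the roles of $(\cT,T)$ and $(\cT',T')$ interchanged.

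The technical core is the following equivalence, for any Krull--Schmidt cluster category $\cD$, any $\cU\ctsubcat\cD$ and any $U\in\exch{\cU}$: there is no loop at $U\in\cU$ if and only if $\dim_{\bK}\Ext{1}{\cD}{U}{\mut{\cU}{U}}=\dimdivalg{U}$. To prove ``$\Leftarrow$'', apply $\Hom{\cD}{U}{\blank}$ to the exchange conflation $\mut{\cU}{U}\infl\exchmon{\cU}{U}{+}\defl U\confl$ (which exists since $U\in\exch{\cU}$); because $\exchmon{\cU}{U}{+}\in\cU$ we have $\Ext{1}{\cD}{U}{\exchmon{\cU}{U}{+}}=0$, so Proposition~\ref{p:extri-les} identifies $\Ext{1}{\cD}{U}{\mut{\cU}{U}}$ with the cokernel of $\Hom{\cU}{U}{\exchmon{\cU}{U}{+}}\to\Hom{\cU}{U}{U}$. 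Since the deflation $\varphi^+\colon\exchmon{\cU}{U}{+}\to U$ factors through $\cU\setminus U$ and $\exchmon{\cU}{U}{+}$ has no summand isomorphic to $U$, the image of this map lies in $\radHom[2]{\cU}{U}{U}\subseteq\radHom{\cU}{U}{U}$; composing with the projection onto $\divalg{U}$ gives $\dim_{\bK}\Ext{1}{\cD}{U}{\mut{\cU}{U}}\geq\dimdivalg{U}$, and equality forces the image to be all of $\radHom{\cU}{U}{U}$, whence $\radHom{\cU}{U}{U}=\radHom[2]{\cU}{U}{U}$, i.e.\ $\irr{\cU}{U}{U}=0$. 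Conversely, if there is no loop at $U$ then Lemma~\ref{l:props-of-K0-mod-T}\ref{l:props-mod-T-simple-eq-E} gives $\Extfun{\cU}(\mut{\cU}{U})=\simpmod{\cU}{U}$, and evaluating at $U$ yields $\dim_{\bK}\Ext{1}{\cD}{U}{\mut{\cU}{U}}=\dim_{\bK}\simpmod{\cU}{U}(U)=\dimdivalg{U}$. All spaces involved are finite-dimensional: $\Extfun{\cU}(\mut{\cU}{U})\in\fd{\cU}$ by Lemma~\ref{l:props-of-K0-mod-T}, and $\cU$ is locally finite at $U$ by Proposition~\ref{p:mut-v-lf}.

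Finally I would combine the two instances of this equivalence. Applying it to $(\cT,T)$ and, using $\mut{\cT'}{T'}=T$, to $(\cT',T')$, the corollary reduces to showing that
\[\dim_{\bK}\Ext{1}{\cC}{T}{T'}=\dimdivalg{T}\quad\text{is equivalent to}\quad\dim_{\bK}\Ext{1}{\cC}{T'}{T}=\dimdivalg{T'}.\]
These numerical conditions agree because $\cC$ is stably $2$-Calabi--Yau, giving $\Ext{1}{\cC}{T}{T'}\iso\dual{\Ext{1}{\cC}{T'}{T}}$ and hence equal dimensions, and because $\dimdivalg{T}=\dimdivalg{T'}$; for the latter equality, in the Krull--Schmidt generality assumed here I would invoke the argument in the Remark following Lemma~\ref{l:d-of-mutant} (Lemma~\ref{l:d-of-mutant} itself requires compactness or skew-symmetry, which we do not have). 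I expect the main obstacle to be the ``$\Leftarrow$'' direction of the local equivalence, namely squeezing ``$\irr{\cU}{U}{U}=0$'' out of the dimension equality via the containment of the image in $\radHom[2]{\cU}{U}{U}$, together with the care needed to keep track of finite-dimensionality of the relevant subquotients even when $\cC$ is not Hom-finite.
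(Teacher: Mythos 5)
Your proof is correct and follows essentially the same route as the paper's: both no-loop conditions are reduced to the numerical statements $\dim_{\bK}\Ext{1}{\cC}{T}{T'}=\dimdivalg{T}$ and $\dim_{\bK}\Ext{1}{\cC}{T'}{T}=\dimdivalg{T'}$, which one then matches via the stable $2$-Calabi--Yau property together with $\dimdivalg{T}=\dimdivalg{T'}$. Two small points are worth flagging. First, of the two arguments in the Remark after Lemma~\ref{l:d-of-mutant}, you must invoke the first (the Iyama--Yoshino subquotient argument), not the ``simple argument'': the latter establishes $\dimdivalg{T}=\dimdivalg{T'}$ only under the hypothesis that there is no loop at \emph{both} $T$ and $T'$, so using it here would be circular when proving one no-loop condition from the other. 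Second, the ``$\Leftarrow$'' direction of your technical core essentially re-derives the reverse implication of Lemma~\ref{l:props-of-K0-mod-T}\ref{l:props-mod-T-simple-eq-E} together with the observation that $\Extfun{\cT}{T'}$ is concentrated at $T$ (so that it surjects onto $\simpmod{\cT}{T}$); the paper's proof treats that lemma as a black box and expresses the dimension count as $\rank_{\divalg{T}}\Extfun{\cT}{T'}(T)=1$, which bypasses your explicit radical-filtration computation at the cost of needing a $\divalg{T}$-action on $\Ext{1}{\cC}{T}{T'}$, supplied by working in the Hom-finite stable category.
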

\begin{proof}
Abbreviating $T'=\mut{\cT}{T}$ and $\cT'=\mut{T}{\cT}$, we have
\[\Extfun{\cT}{T'}(T)=\Ext{1}{\cC}{T}{T'}=\dual{\Ext{1}{\cC}{T'}{T}}=\dual{\Extfun{\cT'}{T}(T')},\]
since $\cC$ is stably $2$-Calabi--Yau, and so $\dim_{\bK}\Extfun{\cT}{T'}(T)=\dim_{\bK}\Extfun{\cT}{T}(T')$.
Since $\dimdivalg{T}=\dimdivalg{T'}$ by Lemma~\ref{l:d-of-mutant}, it follows that $\rank_{\divalg{T}}\Extfun{\cT}{T'}(T)=\rank_{\divalg{T'}}\Extfun{\cT}{T}(T')$, these values being obtained by dividing the $\bK$-dimensions by $\dimdivalg{T}$ and $\dimdivalg{T'}$ respectively.
Since $\Extfun{\cT}{T'}=\simpmod{\cT}{T}$ if and only if $\rank_{\divalg{T}}\Extfun{\cT}{T'}(T)=1$, and similarly with the roles of $(\cT,T)$ and $(\cT',T')$ swapped, $\Extfun{\cT}{T'}$ is simple if and only if $\Extfun{\cT'}{T}$ is simple.
By Lemma~\ref{l:props-of-K0-mod-T}\ref{l:props-mod-T-simple-eq-E}, the simplicity of these respective functors is equivalent to the respective no loop conditions.
\end{proof}

The following proposition gives the analogous expressions to \eqref{eq:ind-on-mut-T} and \eqref{eq:coind-on-mut-T}.

\begin{proposition}\label{p:one-step-X-mut-from-root}
Let $\cC$ be a compact or skew-symmetric cluster category, $\cT\ctsubcat\cC$ and $T\in \exch{\cT}$.  Let $\cT'=\mut{T}{\cT}$ with associated exchange conflations 
\[\begin{tikzcd}
\mut{\cT}{T}\arrow[infl]{r}&\exchmon{\cT}{T}{+}\arrow[defl]{r}&T\arrow[confl]{r}&,
\end{tikzcd}\quad
\begin{tikzcd}
T\arrow[infl]{r}&\exchmon{\cT}{T}{-}\arrow[defl]{r}&\mut{\cT}{T}\arrow[confl]{r}&,
\end{tikzcd}\]
and let $U\in\indec(\cT'\setminus\mut{\cT}{T})$. Then in $\Kgp{\fd{\cT}}$, we have
\begin{enumerate}
\item\label{p:one-step-X-mut-from-root-indbar-at-mut} $\indbar{\cT'}{\cT}[\simpmod{\cT'}{\mut{\cT}{T}}]=-[\simpmod{\cT}{T}]$,
\item\label{p:one-step-X-mut-from-root-indbar-away-from-mut} $\indbar{\cT'}{\cT}[\simpmod{\cT'}{U}]=[\simpmod{\cT}{U}]+\dimdivalg{T}^{-1}\canform{[\simpmod{\cT'}{U}]}{[\exchmon{\cT}{T}{-}]}{\cT'}[\simpmod{\cT}{T}]$,
\item\label{p:one-step-X-mut-from-root-coindbar-at-mut} $\coindbar{\cT'}{\cT}[\simpmod{\cT'}{\mut{\cT}{T}}]=-[\simpmod{\cT}{T}]$, and
\item\label{p:one-step-X-mut-from-root-coindbar-away-from-mut} $\coindbar{\cT'}{\cT}[\simpmod{\cT'}{U}]=[\simpmod{\cT}{U}]+\dimdivalg{T}^{-1}\canform{[\simpmod{\cT'}{U}]}{[\exchmon{\cT}{T}{+}]}{\cT'}[\simpmod{\cT}{T}]$.
\end{enumerate}
\end{proposition}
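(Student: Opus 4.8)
The strategy is to reduce both families of identities, via the defining adjunctions $\indbar{\cT'}{\cT}=\adj{(\coind{\cT}{\cT'})}$ and $\coindbar{\cT'}{\cT}=\adj{(\ind{\cT}{\cT'})}$ of Definition~\ref{d:ind-bar-def} (where I abbreviate $\cT'=\mut{T}{\cT}$), to explicit formulas for $\ind{\cT}{\cT'},\coind{\cT}{\cT'}\colon\Kgp{\cT}\to\Kgp{\cT'}$ on the basis of $\Kgp{\cT}$ indexed by $\indec{\cT}$. Since $\indbar{\cT'}{\cT}$ and $\coindbar{\cT'}{\cT}$ take values in $\Kgpnum{\lfd{\cT}}$, and $\canform{\blank}{\blank}{\cT}$ is non-degenerate with $\canform{[\simpmod{\cT}{T_0}]}{[T_0']}{\cT}=\dimdivalg{T_0}\delta_{T_0,T_0'}$ (Proposition~\ref{p:K-duality}, Corollary~\ref{c:Kgpnum-prod}), it suffices to check for each $T_0\in\indec{\cT}$ that the two sides of each asserted equality pair to the same integer against $[T_0]$. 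By \eqref{eq:ind-coindbar-adj}, the left-hand sides pair as $\canform{\indbar{\cT'}{\cT}[M]}{[T_0]}{\cT}=\canform{[M]}{\coind{\cT}{\cT'}[T_0]}{\cT'}$ and $\canform{\coindbar{\cT'}{\cT}[M]}{[T_0]}{\cT}=\canform{[M]}{\ind{\cT}{\cT'}[T_0]}{\cT'}$, so everything reduces to identifying the classes $\coind{\cT}{\cT'}[T_0]$ and $\ind{\cT}{\cT'}[T_0]$ in $\Kgp{\cT'}$.

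Computing those classes is the next step. For $T_0\in\indec{\cT}\setminus\{T\}$ we have $T_0\in\indec{\cT'}$, so $\ind{\cT}{\cT'}[T_0]=\coind{\cT}{\cT'}[T_0]=[T_0]$. For $T_0=T$, I would use that $\cT$ is the mutation of $\cT'$ at $\mut{\cT}{T}$, with $\mut{\cT'}{(\mut{\cT}{T})}=T$: as recorded in the proof of Corollary~\ref{c:exch-mat-at-mut}, the exchange conflations for $\cT'$ at $\mut{\cT}{T}$ are $T\infl\exchmon{\cT}{T}{-}\defl\mut{\cT}{T}\confl$ and $\mut{\cT}{T}\infl\exchmon{\cT}{T}{+}\defl T\confl$, that is, $\exchmon{\cT'}{(\mut{\cT}{T})}{\pm}=\exchmon{\cT}{T}{\mp}$ (the relevant approximation properties following, via Proposition~\ref{p:extri-les}, from the rigidity of $T$ and $\mut{\cT}{T}$, which hold since both lie in cluster-tilting subcategories). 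Feeding this into \eqref{eq:ind-on-mut-T} and \eqref{eq:coind-on-mut-T} applied to $\cT'$ gives $\ind{\cT}{\cT'}[T]=[\exchmon{\cT}{T}{+}]-[\mut{\cT}{T}]$ and $\coind{\cT}{\cT'}[T]=[\exchmon{\cT}{T}{-}]-[\mut{\cT}{T}]$ in $\Kgp{\cT'}$.

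The four identities then follow by bookkeeping with the pairing $\canform{\blank}{\blank}{\cT'}$. Pairing $[\simpmod{\cT'}{V}]$ against $\coind{\cT}{\cT'}[T_0]$ yields $\dimdivalg{V}\delta_{V,T_0}$ when $T_0\ne T$, and $\canform{[\simpmod{\cT'}{V}]}{[\exchmon{\cT}{T}{-}]}{\cT'}-\dimdivalg{V}\delta_{V,\mut{\cT}{T}}$ when $T_0=T$; the same holds for $\ind{\cT}{\cT'}[T_0]$ with $\exchmon{\cT}{T}{-}$ replaced by $\exchmon{\cT}{T}{+}$. For $V=\mut{\cT}{T}$, the term $\canform{[\simpmod{\cT'}{\mut{\cT}{T}}]}{[\exchmon{\cT}{T}{\pm}]}{\cT'}$ vanishes because $\exchmon{\cT}{T}{\pm}\in\cT\setminus T$ has no summand isomorphic to $\mut{\cT}{T}$, and $\dimdivalg{\mut{\cT}{T}}=\dimdivalg{T}$ by Lemma~\ref{l:d-of-mutant}, so the left-hand side pairs to $-\dimdivalg{T}\delta_{T,T_0}$, matching the pairing of $-[\simpmod{\cT}{T}]$; this proves \ref{p:one-step-X-mut-from-root-indbar-at-mut} and \ref{p:one-step-X-mut-from-root-coindbar-at-mut}. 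For $V=U\in\indec(\cT'\setminus\mut{\cT}{T})=\indec(\cT\setminus T)$ the Kronecker term vanishes since $U\ne\mut{\cT}{T}$, and the left-hand side pairs to $\dimdivalg{U}\delta_{U,T_0}+\canform{[\simpmod{\cT'}{U}]}{[\exchmon{\cT}{T}{-}]}{\cT'}\delta_{T,T_0}$, which (using $\canform{[\simpmod{\cT}{T}]}{[T_0]}{\cT}=\dimdivalg{T}\delta_{T,T_0}$) is the pairing of $[\simpmod{\cT}{U}]+\dimdivalg{T}^{-1}\canform{[\simpmod{\cT'}{U}]}{[\exchmon{\cT}{T}{-}]}{\cT'}[\simpmod{\cT}{T}]$ against $[T_0]$; this gives \ref{p:one-step-X-mut-from-root-indbar-away-from-mut}, and \ref{p:one-step-X-mut-from-root-coindbar-away-from-mut} is the same computation with $\exchmon{\cT}{T}{-}$ replaced by $\exchmon{\cT}{T}{+}$.

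The one point needing genuine care is the middle step: that the exchange conflations of $\cT$ at $T$, which are \emph{a priori} only $(\cT\setminus T)$-(co)index data, serve as $\cT'$-index and $\cT'$-coindex conflations for $T$. This is precisely what is already established in the proof of Corollary~\ref{c:exch-mat-at-mut}, so once it is invoked the remainder of the argument is the routine linear algebra above.
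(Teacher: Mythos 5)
Your proof is correct and matches the paper's argument essentially step for step: both reduce the four identities, via the defining adjunctions $\indbar{\cT'}{\cT}=\adj{(\coind{\cT}{\cT'})}$ and $\coindbar{\cT'}{\cT}=\adj{(\ind{\cT}{\cT'})}$ and the non-degeneracy of $\canform{\blank}{\blank}{\cT}$, to the computation $\ind{\cT}{\cT'}[T]=[\exchmon{\cT}{T}{+}]-[\mut{\cT}{T}]$, $\coind{\cT}{\cT'}[T]=[\exchmon{\cT}{T}{-}]-[\mut{\cT}{T}]$, obtained by recognising the exchange conflations for $T$ as $\cT'$-index and $\cT'$-coindex conflations. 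The only cosmetic difference is that the paper phrases the final bookkeeping via $(\sdual{\cT})^{-1}$ applied to an expansion in the dual basis $[V]^*$, while you pair both sides against each $[T_0]$ directly; these are the same calculation.
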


\begin{proof}
Using the definitions, we compute that for $U\in\indec{\cT'}$ we have
\begin{align*} \indbar{\cT'}{\cT}[\simpmod{\cT'}{U}]&=(\delta_{\cT}^{s})^{-1}\bigl(\sum\nolimits_{V \in \indec{\cT}} \canform{[\simpmod{\cT'}{U}]}{\coind{\cT}{\cT'}[V]}{\cT'}[V]^{*}\bigr), \\
\coindbar{\cT'}{\cT}[\simpmod{\cT'}{U}]&=(\delta_{\cT}^{s})^{-1}\bigl(\sum\nolimits_{V \in \indec{\cT}} \canform{[\simpmod{\cT'}{U}]}{\ind{\cT}{\cT'}[V]}{\cT'}[V]^{*}\bigr),
\end{align*}
where $\canform{\blank}{\blank}{\cT'}$ is the non-degenerate form of Proposition~\ref{p:K-duality}. Now let $V\in\indec{\cT}$ and $U'\in\indec{\cT'}$. If $V\neq T$, then $V\in \cT \intersection \cT'$, so $\ind{\cT}{\cT'}[V]=\coind{\cT}{\cT'}[V]=[V]$ and we have 
\begin{equation}
\label{eq:simp-vs-cind} \canform{[\simpmod{\cT'}{U'}]}{\cind{\cT}{\cT'}[V]}{\cT'}=\canform{[\simpmod{\cT'}{U'}]}{[V]}{\cT'}=\delta_{U'V}\dimdivalg{U'}.
\end{equation}
On the other hand, if $V=T$, we compute using \eqref{eq:ind-on-mut-T} and \eqref{eq:coind-on-mut-T} that
\begin{align*} \canform{[\simpmod{\cT'}{U'}]}{\ind{\cT}{\cT'}[T]}{\cT'} & =\canform{[\simpmod{\cT'}{U'}]}{[\rightapp{\cT'}{T}]-[\mut{\cT}{T}]}{\cT'} 
=\canform{[\simpmod{\cT'}{U'}]}{[\exchmon{\cT}{T}{+}]-[\mut{\cT}{T}]}{\cT'}\\
\canform{[\simpmod{\cT'}{U'}]}{\coind{\cT}{\cT'}[T]}{\cT'} & =\canform{[\simpmod{\cT'}{U'}]}{[\leftapp{\cT'}{T}]-[\mut{\cT}{T}]}{\cT'} 
=\canform{[\simpmod{\cT'}{U'}]}{[\exchmon{\cT}{T}{-}]-[\mut{\cT}{T}]}{\cT'}.\end{align*}
Now for $U=\mut{\cT}{T}$,    
\begin{align*} \indbar{\cT'}{\cT}[\simpmod{\cT'}{\mut{\cT}{T}}] & =(\delta_{\cT}^{s})^{-1}\bigl(\sum\nolimits_{V \in \indec{\cT}} \canform{[\simpmod{\cT'}{\mut{\cT}{T}}]}{\coind{\cT}{\cT'}[V]}{\cT'}[V]^{*} \bigr)\\
& = (\delta_{\cT}^{s})^{-1}(\canform{[\simpmod{\cT'}{\mut{\cT}{T}}]}{\coind{\cT}{\cT'}{[T]}}{\cT'}[T]^{*}) \\
& = (\delta_{\cT}^{s})^{-1}(-\dimdivalg{\mut{\cT}{T}}[T]^{*}) \\
& = (\delta_{\cT}^{s})^{-1}(-\dimdivalg{T}[T]^{*}) \\
& = -[\simpmod{\cT}{T}]
\end{align*}
by \eqref{eq:simp-vs-cind} and Lemma~\ref{l:d-of-mutant} (and the calculation in its proof).
Repeating the computation for $\coindbar{}{}$, the only change is the appearance of $T_{\cT}^{+}$ in place of $T_{\cT}^{-}$, and the rest of the argument is identical. This gives us \ref{p:one-step-X-mut-from-root-indbar-at-mut} and \ref{p:one-step-X-mut-from-root-coindbar-at-mut}.

For the remaining identities, we have that for $U\neq \mut{\cT}{T}$,
\begin{align*} \indbar{\cT'}{\cT}[\simpmod{\cT'}{U}] & =(\delta_{\cT}^{s})^{-1}\bigl(\sum\nolimits_{V \in \indec{\cT}} \canform{[\simpmod{\cT'}{U}]}{\coind{\cT}{\cT'}[V]}{\cT'}[V]^{*}\bigr) \\
& = (\delta_{\cT}^{s})^{-1}(d_{U}[U]^{*}+\canform{[\simpmod{\cT'}{U}]}{[\exchmon{\cT}{T}{-}]-[\mut{\cT}{T}]}{\cT'}[T]^{*}) \\
&=[\simpmod{\cT}{U}]+(\delta_{\cT}^{s})^{-1}(\canform{[\simpmod{\cT'}{U}]}{[\exchmon{\cT}{T}{-}]}{\cT'}[T]^{*}) \\
& = [\simpmod{\cT}{U}]+d_{T}^{-1}\canform{[\simpmod{\cT'}{U}]}{[\exchmon{\cT}{T}{-}]}{\cT'}[\simpmod{\cT}{T}].
\end{align*}
The $\coindbar{}{}$ computation is the same but with $T_{\cT}^{+}$ instead of $T_{\cT}^{+}$, giving \ref{p:one-step-X-mut-from-root-indbar-away-from-mut} and \ref{p:one-step-X-mut-from-root-coindbar-away-from-mut}.
\end{proof}

\begin{proposition}\label{p:no-loops-simp}
Let $\cC$ be a compact cluster category, $\cT\ctsubcat\cC$ and assume that $\cT$ has no loop or $2$-cycle at $T\in\exch{\cT}$, and let $\cT'=\mut{T}{\cT}$.
Then
\[\dimdivalg{T}^{-1}\canform{[\simpmod{\cT'}{U}]}{[\exchmon{\cT}{T}{\mp}]}{\cT'}=[\exchmatentry{T,U}^{\cT}]_{\pm}.\]
\end{proposition}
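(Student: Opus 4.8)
The plan is to reduce the claim to the explicit decomposition of the exchange conflation middle terms — which is available precisely under the no-loop-or-$2$-cycle hypothesis — and then to a one-line divisibility bookkeeping. Write $\cT'=\mut{T}{\cT}$ and $T'=\mut{\cT}{T}$, so that the object $U$ ranges over $\indec(\cT'\setminus T')=\indec(\cT\setminus T)$ as in Proposition~\ref{p:one-step-X-mut-from-root}. First I would invoke Proposition~\ref{p:decomp-exch-terms}, which (since $\cT$ has no loop or $2$-cycle at $T$) gives $\exchmon{\cT}{T}{\mp}=\bigdsum_{V\in\indec\cT\setminus T}V^{[\exchmatentry{V,T}^{\cT}]_{\mp}}$. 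In particular every indecomposable summand of $\exchmon{\cT}{T}{\mp}$ lies in $\indec\cT\setminus T$, and this set coincides with $\indec(\cT\cap\cT')$ (since $\mut{\cT}{T}\notin\indec\cT$ by Corollary~\ref{c:clusters-same-size}), so each such summand is an object of $\cT'$.

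Next I would evaluate the simple $\cT'$-module $\simpmod{\cT'}{U}$ on this object. Since $\simpmod{\cT'}{U}(V)=\divalg{U}$ when $V\cong U$ and $\simpmod{\cT'}{U}(V)=0$ for every other indecomposable object $V$ of $\cT'$, the pairing collapses to a single term:
\[\canform{[\simpmod{\cT'}{U}]}{[\exchmon{\cT}{T}{\mp}]}{\cT'}=\dim_{\bK}\simpmod{\cT'}{U}(\exchmon{\cT}{T}{\mp})=[\exchmatentry{U,T}^{\cT}]_{\mp}\,\dimdivalg{U},\]
using that $U$ really is one of the summand types $V\in\indec\cT\setminus T$, which holds by the choice of $U$.

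Finally I would divide by $\dimdivalg{T}$ and apply the identity $[\exchmatentry{T,U}^{\cT}]_{\pm}=\tfrac{\dimdivalg{U}}{\dimdivalg{T}}[\exchmatentry{U,T}^{\cT}]_{\mp}$ recorded immediately after Corollary~\ref{c:exch-mat-plus-minus}, which is exactly the skew-symmetrisability of $B_{\cT}$ expressed on $\pm$-truncations; this yields $\dimdivalg{T}^{-1}\dimdivalg{U}[\exchmatentry{U,T}^{\cT}]_{\mp}=[\exchmatentry{T,U}^{\cT}]_{\pm}$, the asserted equality. The argument is entirely mechanical once Proposition~\ref{p:decomp-exch-terms} is in hand, so I do not expect a genuine obstacle; the only point meriting care is keeping the $\divalg{}$-dimensions straight in the genuinely skew-symmetrisable (rather than skew-symmetric) setting, and — should a summand $U$ happen to be projective — reading $\exchmatentry{T,U}^{\cT}$ through the same identity, so that the stated formula still makes sense.
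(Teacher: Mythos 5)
Your proof is correct and follows essentially the same route as the paper: decompose $\exchmon{\cT}{T}{\mp}$ via Proposition~\ref{p:decomp-exch-terms}, evaluate $\simpmod{\cT'}{U}$ to collapse the pairing to $\dimdivalg{U}[\exchmatentry{U,T}^{\cT}]_{\mp}$, then rescale using skew-symmetrisability. The paper's version inserts an intermediate rewrite through $\cT'$-exchange-matrix entries $\exchmatentry{U,\mut{\cT}{T}}^{\mut{T}\cT}$ via Corollary~\ref{c:exch-mat-at-mut}, but this is a presentational detour and the underlying calculation is identical to yours.
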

\begin{proof}
By Proposition~\ref{p:decomp-exch-terms} and Corollary~\ref{c:exch-mat-at-mut}, no loop or $2$-cycle at $T$ implies that
\begin{align*}
\dimdivalg{T}^{-1}\canform{[\simpmod{\cT'}{U}]}{[\exchmon{\cT}{T}{\mp}]}{\cT'} & = \dimdivalg{T}^{-1}\canform{[\simpmod{\cT'}{U}]}{[\exchmon{\mut{T}{\cT}}{(\mut{\cT}{T})}{\pm}]}{\cT'} \\
& = \dimdivalg{T}^{-1}(\dimdivalg{U}[\exchmatentry{U,\mut{\cT}{T}}^{\mut{T}{\cT}}]_{\pm}) \\
& = \dimdivalg{T}^{-1}\dimdivalg{U}[\exchmatentry{U,T}^{\cT}]_{\mp} \\
& = [\exchmatentry{T,U}^{\cT}]_{\pm}. \qedhere
\end{align*}
\end{proof}

Combining the two previous results gives us categorical analogues of the formulæ
for the two (signed) tropical mutations of Fock--Goncharov \cite[Eq.~(7)]{FockGoncharov}, as follows.

\begin{corollary}\label{c:one-step-X-mut-from-root}
In the setting of Proposition~\ref{p:one-step-X-mut-from-root}, if $\cC$ is compact and $\cT$ has no loop or $2$-cycle at $T\in\exch{\cT}$ then
\begin{enumerate}
\item\label{c:one-step-X-mut-from-root-indbar-at-mut} $\indbar{\cT'}{\cT}[\simpmod{\cT'}{\mut{\cT}{T}}]=-[\simpmod{\cT}{T}]$,
\item\label{c:one-step-X-mut-from-root-indbar-away-from-mut} for $U\neq \mut{\cT}{T}$ indecomposable, $\indbar{\cT'}{\cT}[\simpmod{\cT'}{U}]=[\simpmod{\cT}{U}]+[\exchmatentry{T,U}^{\cT}]_{+}[\simpmod{\cT}{T}]$,
\item\label{c:one-step-X-mut-from-root-coindbar-at-mut} $\coindbar{\cT'}{\cT}[\simpmod{\cT'}{\mut{\cT}{T}}]=-[\simpmod{\cT}{T}]$, and
\item\label{c:one-step-X-mut-from-root-coindbar-away-from-mut} for $U\neq \mut{\cT}{T}$ indecomposable, $\coindbar{\cT'}{\cT}[\simpmod{\cT'}{U}]=[\simpmod{\cT}{U}]+[\exchmatentry{T,U}^{\cT}]_{-}[\simpmod{\cT}{T}]$.\qed
\end{enumerate}
\end{corollary}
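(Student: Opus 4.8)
The plan is to obtain this corollary by combining Proposition~\ref{p:one-step-X-mut-from-root} with Proposition~\ref{p:no-loops-simp}, so that essentially no new argument is required beyond substituting one formula into another and checking that the hypotheses match.

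First, parts~\ref{c:one-step-X-mut-from-root-indbar-at-mut} and~\ref{c:one-step-X-mut-from-root-coindbar-at-mut} require no work at all: they coincide verbatim with parts~\ref{p:one-step-X-mut-from-root-indbar-at-mut} and~\ref{p:one-step-X-mut-from-root-coindbar-at-mut} of Proposition~\ref{p:one-step-X-mut-from-root}, which hold for any compact (or skew-symmetric) cluster category, with no hypothesis on loops or \(2\)-cycles. So I would simply invoke that proposition. For part~\ref{c:one-step-X-mut-from-root-indbar-away-from-mut}, I would start from the identity
\[\indbar{\cT'}{\cT}[\simpmod{\cT'}{U}]=[\simpmod{\cT}{U}]+\dimdivalg{T}^{-1}\canform{[\simpmod{\cT'}{U}]}{[\exchmon{\cT}{T}{-}]}{\cT'}[\simpmod{\cT}{T}]\]
of Proposition~\ref{p:one-step-X-mut-from-root}\ref{p:one-step-X-mut-from-root-indbar-away-from-mut}, valid for \(U\in\indec(\cT'\setminus\mut{\cT}{T})\), which is exactly the set of indecomposable objects of \(\cT'\) other than \(\mut{\cT}{T}\) appearing in the corollary. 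Since \(\cC\) is compact and \(\cT\) has no loop or \(2\)-cycle at \(T\), Proposition~\ref{p:no-loops-simp} applies and gives \(\dimdivalg{T}^{-1}\canform{[\simpmod{\cT'}{U}]}{[\exchmon{\cT}{T}{-}]}{\cT'}=[\exchmatentry{T,U}^{\cT}]_{+}\) (the instance \(\exchmon{\cT}{T}{\mp}=\exchmon{\cT}{T}{-}\), \([\exchmatentry{T,U}^{\cT}]_{\pm}=[\exchmatentry{T,U}^{\cT}]_{+}\) of that proposition); substituting this yields the claimed formula. Part~\ref{c:one-step-X-mut-from-root-coindbar-away-from-mut} is obtained identically from Proposition~\ref{p:one-step-X-mut-from-root}\ref{p:one-step-X-mut-from-root-coindbar-away-from-mut}, now using the complementary sign \(\dimdivalg{T}^{-1}\canform{[\simpmod{\cT'}{U}]}{[\exchmon{\cT}{T}{+}]}{\cT'}=[\exchmatentry{T,U}^{\cT}]_{-}\) from Proposition~\ref{p:no-loops-simp}.

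There is no genuine obstacle here; the proof is a two-line deduction, and the only things to verify are the routine matchings already noted — that the indexing set \(\indec(\cT'\setminus\mut{\cT}{T})\) of Proposition~\ref{p:one-step-X-mut-from-root} agrees with the range of \(U\) in the corollary, and that the standing hypotheses of Proposition~\ref{p:no-loops-simp} (compactness of \(\cC\), no loop or \(2\)-cycle at \(T\in\exch{\cT}\)) are precisely those imposed here. I would write the statement up as a single short paragraph referencing these two propositions and end with \qed.
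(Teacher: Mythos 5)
Your proposal is correct and takes exactly the same approach as the paper: the paper itself introduces Corollary~\ref{c:one-step-X-mut-from-root} with the sentence ``Combining the two previous results\ldots'' and gives no proof body, since it is the immediate substitution of Proposition~\ref{p:no-loops-simp} into Proposition~\ref{p:one-step-X-mut-from-root} that you describe, with the signs matched as you have them.
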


\subsection{Sign-coherence}\label{s:sign-coherence}

An important phenomenon in cluster theory is the sign-coherence (in two dual senses) of the $\mathbf{g}$-vectors and $\mathbf{c}$-vectors.
Given our claim (still to be fully justified) that the values of the index and coindex maps and their adjoints are the homological analogues of these vectors, these maps should exhibit matching sign-coherence properties.
We now establish this, adapting arguments of Jørgensen--Yakimov \cite{JorgensenYakimov}.

\begin{definition}\label{d:g-vectors} Let $\cC$ be a cluster category and $\cTU \ctsubcat \cC$.  Define
\[ 
\gvecplus{\cT}{\cU}  =\{ \ind{\cU}{\cT}[U] : U\in \indec{\cU} \},\quad
\gvecminus{\cT}{\cU}  =\{ \coind{\cU}{\cT}[U] : U\in \indec{\cU} \}. 
\]

\end{definition}

By Proposition~\ref{p:ind-coind-inverse}, $\gvecplus{\cT}{\cU}$ and $\gvecminus{\cT}{\cU}$ are bases for $\Kgp{\cT}$, being the images of the standard basis for $\Kgp{\cU}$ under the isomorphisms $\ind{\cU}{\cT}$ and $\coind{\cU}{\cT}$.

\begin{proposition}
\label{p:g-vector-sign-coherence}The sets $\gvecplus{\cT}{\cU}$ and $\gvecminus{\cT}{\cU}$ are sign-coherent. That is, for each $T\in \indec{\cT}$ and any $U,V\in\indec{\cU}$, we have
\begin{align*} \canform{[\simpmod{\cT}{T}]}{\ind{\cU}{\cT}[U]}{\cT} \geq 0 &\iff \canform{[\simpmod{\cT}{T}]}{\ind{\cU}{\cT}[V]}{\cT} \geq 0, \\
\canform{[\simpmod{\cT}{T}]}{\coind{\cU}{\cT}[U]}{\cT} \geq 0 &\iff \canform{[\simpmod{\cT}{T}]}{\coind{\cU}{\cT}[V]}{\cT} \geq 0.
\end{align*}
\end{proposition}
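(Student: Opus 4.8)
The plan is to reduce the assertion to the sign-coherence of $\mathbf{g}$-vectors in a triangulated $2$-Calabi--Yau category and then to adapt the argument of Jørgensen--Yakimov \cite{JorgensenYakimov} to our generality. Throughout one may assume $\cC$ is Krull--Schmidt, as is implicit in the statement (via $\simpmod{\cT}{T}$ and the form $\canform{\blank}{\blank}{\cT}$).

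\emph{Reductions.} First I would observe that the coindex half is the index half for $\op{\cC}$: a $\cT$-index conflation in $\cC$ is a $\op{\cT}$-coindex conflation in $\op{\cC}$ (Remark~\ref{r:ind-op}), $\op{\cC}$ is again a cluster category, and the two forms agree under $\Kgp{\cT}=\Kgp{\op{\cT}}$ since $\dim\simpmod{\cT}{T}(X)=\dim\simpmod{\op{\cT}}{T}(X)$ for all $X$. So it suffices to treat $\ind{\cU}{\cT}$. If $T\in\indec\cT$ is projective-injective, then in a minimal $\cT$-index conflation $\rightker{\cT}{U}\infl\rightapp{\cT}{U}\defl U\confl$ the term $\rightker{\cT}{U}$ has no projective-injective summand (Remark~\ref{r:no-common-summands}), so $[\rightker{\cT}{U}:T]=0$ and hence $\canform{[\simpmod{\cT}{T}]}{\ind{\cU}{\cT}[U]}{\cT}=\dimdivalg{T}[\rightapp{\cT}{U}:T]\geq0$ for every $U$; sign-coherence at such $T$ is immediate. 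If $T$ is not projective-injective, then $\simpmod{\cT}{T}$ vanishes on the projective-injectives, so $[\simpmod{\cT}{T}]=\sinc{\cT}[\simpmod{\stab{\cT}}{T}]$, and combining \eqref{eq:pi-p-iota-s-adjoint} with Proposition~\ref{p:stable-ind-coind} gives
\[\canform{[\simpmod{\cT}{T}]}{\ind{\cU}{\cT}[U]}{\cT}=\canform{[\simpmod{\stab{\cT}}{T}]}{\stabind{\cU}{\cT}\pproj{\cU}[U]}{\stab{\cT}},\]
which is $0$ when $U$ is projective-injective and $\canform{[\simpmod{\stab{\cT}}{T}]}{\stabind{\cU}{\cT}[U]}{\stab{\cT}}$ otherwise. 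Thus the assertion for $\cC$ follows from the same assertion in the triangulated $2$-Calabi--Yau category $\stab{\cC}$ for $\stab{\cT},\stab{\cU}$.

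\emph{The triangulated case.} Here I would fix $T\in\indec\stab{\cT}$ and, for each $U\in\indec\stab{\cU}$, a \emph{minimal} $\stab{\cT}$-index triangle $\rightker{\stab{\cT}}{U}\to\rightapp{\stab{\cT}}{U}\to U\to\Sigma\rightker{\stab{\cT}}{U}$. Since $U$ is rigid, Remark~\ref{r:no-common-summands} shows $\rightapp{\stab{\cT}}{U}$ and $\rightker{\stab{\cT}}{U}$ have no common summand, so $\canform{[\simpmod{\stab{\cT}}{T}]}{\stabind{\cU}{\cT}[U]}{\stab{\cT}}$ has the sign of $[\rightapp{\stab{\cT}}{U}:T]-[\rightker{\stab{\cT}}{U}:T]$, at most one term of which is nonzero. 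Sign-coherence therefore becomes the dichotomy: for fixed $T$, it is impossible that $T$ is a summand of $\rightapp{\stab{\cT}}{U}$ for some $U\in\indec\stab{\cU}$ and also a summand of $\rightker{\stab{\cT}}{V}$ for some $V\in\indec\stab{\cU}$. Rotating the triangle and using rigidity of $\stab{\cT}$, $\rightapp{\stab{\cT}}{U}\to U$ is a minimal right $\stab{\cT}$-approximation and $\Sigma^{-1}U\to\rightker{\stab{\cT}}{U}$ a minimal left $\stab{\cT}$-approximation, so the dichotomy reads: $T$ never occurs both in the top of $\Hom{\stab{\cC}}{\blank}{U}|_{\stab{\cT}}$ for some $U$ and in the top of $\Hom{\stab{\cC}}{\blank}{\Sigma^{-1}V}|_{\op{\stab{\cT}}}\cong\Extfun{\op{\stab{\cT}}}{V}$ for some $V$. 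This is exactly what the Jørgensen--Yakimov argument delivers \cite{JorgensenYakimov} (in its $\mathbf{c}$-vector form, which by adjunction is equivalent to the $\mathbf{g}$-vector form above): from a putative counterexample $U,V\in\indec\stab{\cU}$ one derives a contradiction with the rigidity of $\stab{\cU}$, using the $2$-Calabi--Yau duality $\Ext{1}{\stab{\cC}}{X}{Y}=\dual{\Ext{1}{\stab{\cC}}{Y}{X}}$ and a torsion-type decomposition of $\stab{\cC}$ attached to $\stab{\cU}$. I would transcribe this argument, replacing $\bK$-dimensions by $\divalg{T}$-ranks, and check that it uses only Hom-finiteness of $\stab{\cC}$ and local finiteness of $\stab{\cU}$ at the relevant objects, never a finite-rank or algebraically-closed hypothesis.

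\emph{Main obstacle.} Everything before the dichotomy is bookkeeping (duality, stabilisation, minimality of approximations); the genuine work is reconstructing the Jørgensen--Yakimov impossibility of a mixed-sign row in the present basis-free setting---without Hom-finiteness of $\cC$ itself, without assuming a finite cluster, and with division algebras in place of the ground field. A subsidiary point to record explicitly is that a right-minimal $\stab{\cT}$-index triangle is also left-minimal on its kernel term, so that $[\rightker{\stab{\cT}}{U}:T]$ really is the multiplicity of $T$ in the minimal left $\stab{\cT}$-approximation of $\Sigma^{-1}U$; this follows from Remark~\ref{r:no-common-summands} together with right-minimality.
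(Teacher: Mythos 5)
Your proposal is a plan rather than a proof: you acknowledge at the end that ``the genuine work is reconstructing the Jørgensen--Yakimov impossibility of a mixed-sign row'' and defer exactly that step (``I would transcribe this argument... and check...''). The preparatory reductions you carry out (coindex to index via \(\op{\cC}\), projective-injective case for \(T\), passage to the stable category) are all correct, but they are not where the difficulty lies, and the central step is left unproved.

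The paper's own proof is much shorter and uses a different key observation which you do not mention: consider the object \(U\oplus V\). Since \(U,V\in\indec{\cU}\) lie in a common cluster-tilting subcategory, \(U\oplus V\) is rigid, and the direct sum of two minimal \(\cT\)-index conflations (one for \(U\), one for \(V\)) is a minimal \(\cT\)-index conflation for \(U\oplus V\). Remark~\ref{r:no-common-summands} (already established in the paper from Dehy--Keller's \cite[Prop.~2.1]{DehyKeller}) then says that \(\rightker{\cT}{U}\oplus\rightker{\cT}{V}\) and \(\rightapp{\cT}{U}\oplus\rightapp{\cT}{V}\) have no common indecomposable summands, so \(T\) cannot appear in both \([\rightker{\cT}{U}]\) and \([\rightapp{\cT}{V}]\) (or vice versa), which is exactly sign-coherence of \(\gvecplus{\cT}{\cU}\). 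No stabilisation, no torsion-pair argument, no separate treatment of projective-injective \(T\), and no transcription of the longer Jørgensen--Yakimov machinery is needed. If you want to complete your own route, you must actually carry out the ``mixed-sign row'' contradiction; but the observation that \(U\oplus V\) is rigid and Remark~\ref{r:no-common-summands} applies to it short-circuits all of that. You should also note that your sentence ``Rotating the triangle..., \(\Sigma^{-1}U\to\rightker{\stab{\cT}}{U}\) a minimal left \(\stab{\cT}\)-approximation'' is correct (it is what the paper uses in Lemma~\ref{l:cind-divisibility}) but is an unnecessary detour here, since the statement is about summand multiplicities, not approximations.
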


\begin{proof} We follow the proof in \cite{DehyKeller} in our language. Let
\[\begin{tikzcd} \rightker{\cT}{U}\arrow[infl]{r}&\rightapp{\cT}{U}\arrow[defl]{r}&U\arrow[confl]{r} &,
\end{tikzcd}\quad\begin{tikzcd}
\rightker{\cT}{V}\arrow[infl]{r}&\rightapp{\cT}{V}\arrow[defl]{r}&V\arrow[confl]{r} & \phantom{}
\end{tikzcd}\]
be minimal $\cT$-index conflations of $U$ and $V$, so that $\ind{\cU}{\cT}[U]=[\rightapp{\cT}{U}]-[\rightker{\cT}{U}]$, and similarly for $V$.
The deflation in the $\cT$-index conflation
\[\begin{tikzcd} \rightker{\cT}{U}\dsum \rightker{\cT}{V}\arrow[infl]{r}&\rightapp{\cT}{U}\dsum \rightapp{\cT}{V}\arrow[defl]{r}&U\dsum V\arrow[confl]{r} & \phantom{}
\end{tikzcd}\]
for $U\oplus V$ is again minimal and so, since $U\oplus V$ is rigid, $\rightker{\cT}{U}\dsum \rightker{\cT}{V}$ and $\rightapp{\cT}{U}\dsum \rightapp{\cT}{V}$ have no common summands by Remark~\ref{r:no-common-summands}.

Therefore, if $\canform{[\simpmod{\cT}{T}]}{[\rightker{\cT}{U}]}{\cT}>0$ we must have $\canform{[\simpmod{\cT}{T}]}{[\rightapp{\cT}{V}]}{\cT}=0$, and similarly for the other combinations, and the result follows for $\gvecplus{\cT}{\cU}$.
The corresponding argument using $\cT$-coindex conflations yields the result for $\gvecminus{\cT}{\cU}$.
\end{proof}

\begin{definition}
Let $\cC$ be a compact or skew-symmetric cluster category and let $\cTU \ctsubcat \cC$.
Define
\[\cvecplus{\cT}{\cU}  =\bigl\{ \indbar{\cU}{\cT}[\simpmod{\cU}{U}] : U\in \indec{\cU} \bigr\}, \quad
\cvecminus{\cT}{\cU}  =\bigl\{ \coindbar{\cU}{\cT}[\simpmod{\cU}{U}] : U\in \indec{\cU}\bigr\}.
\]
\end{definition}

\begin{corollary}\label{c:sign-coh-c-vectors} For any $\cU\ctsubcat\cC$ and $U\in \indec \cU$, the vectors $\indbar{\cU}{\cT}[\simpmod{\cU}{U}]$ and $\coindbar{\cU}{\cT}[\simpmod{\cU}{U}]$ are sign-coherent. That is, for any $T,T'\in\indec{\cT}$, we have
\begin{align*}
\canform{\indbar{\cU}{\cT}[\simpmod{\cU}{U}]}{[T]}{\cT} \geq 0 &\iff \canform{\indbar{\cU}{\cT}[\simpmod{\cU}{U}]}{[T']}{\cT} \geq 0,\\
\canform{\coindbar{\cU}{\cT}[\simpmod{\cU}{U}]}{[T]}{\cT} \geq 0 &\iff \canform{\coindbar{\cU}{\cT}[\simpmod{\cU}{U}]}{[T']}{\cT} \geq 0.
\end{align*}
\end{corollary}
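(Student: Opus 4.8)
The plan is to deduce the sign-coherence of \(\mathbf{c}\)-vectors directly from the sign-coherence of \(\mathbf{g}\)-vectors (Proposition~\ref{p:g-vector-sign-coherence}) by exploiting the adjointness built into Definition~\ref{d:ind-bar-def}. Recall that \(\indbar{\cU}{\cT}=\adj{(\coind{\cT}{\cU})}\) and \(\coindbar{\cU}{\cT}=\adj{(\ind{\cT}{\cU})}\), where adjoints are taken with respect to the forms \(\canform{\blank}{\blank}{\cT}\) and \(\canform{\blank}{\blank}{\cU}\). So for \(T\in\indec{\cT}\) and \(U\in\indec{\cU}\),
\[
\canform{\indbar{\cU}{\cT}[\simpmod{\cU}{U}]}{[T]}{\cT}=\canform{[\simpmod{\cU}{U}]}{\coind{\cT}{\cU}[T]}{\cU},
\]
using \eqref{eq:ind-coindbar-adj}. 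The point is that \(\coind{\cT}{\cU}=(\ind{\cU}{\cT})^{-1}\) by Proposition~\ref{p:ind-coind-inverse}, so the vectors \(\coind{\cT}{\cU}[T]\), as \(T\) ranges over \(\indec{\cT}\), are exactly the \emph{dual basis} of \(\gvecplus{\cU}{\cT}=\{\ind{\cT}{\cU}[T']:T'\in\indec{\cT}\}\) inside \(\Kgp{\cU}\) — more precisely, writing \(\ind{\cT}{\cU}[T']=\sum_U g^{T'}_U[U]\), the matrix \((g^{T'}_U)\) is invertible over \(\integ\) and \(\coind{\cT}{\cU}\) is its inverse. Thus \(\canform{[\simpmod{\cU}{U}]}{\coind{\cT}{\cU}[T]}{\cU}=\dimdivalg{U}\) times the \((U,T)\)-entry of this inverse matrix.

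Concretely, I would argue as follows. Fix \(U\in\indec{\cU}\). By Proposition~\ref{p:g-vector-sign-coherence}, for each fixed \(T\in\indec{\cT}\) the integers \(\canform{[\simpmod{\cU}{U'}]}{\ind{\cT}{\cU}[T]}{\cU}\), as \(U'\) ranges over \(\indec{\cU}\), all have the same sign (are all \(\geq0\) or all \(\leq0\)); equivalently, writing \(G\) for the \(\indec{\cU}\times\indec{\cT}\) matrix with \((U',T)\)-entry \(\canform{[\simpmod{\cU}{U'}]}{\ind{\cT}{\cU}[T]}{\cU}\), each column of \(G\) is sign-coherent. Since \(\{\ind{\cT}{\cU}[T]\}\) is a basis of \(\Kgp{\cU}\) and the \(\dimdivalg{U'}^{-1}[\simpmod{\cU}{U'}]\) form (up to the non-degenerate pairing of Proposition~\ref{p:K-duality}) the dual setup, \(G\) is, after the diagonal rescaling by the \(\dimdivalg{U'}\), the transition matrix of a basis and hence invertible over \(\bQ\); its inverse, again up to diagonal rescaling, has \((T,U)\)-entry proportional to \(\canform{\indbar{\cU}{\cT}[\simpmod{\cU}{U}]}{[T]}{\cT}\). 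So what must be shown is: \textbf{if a square integer matrix has sign-coherent columns, then its inverse has sign-coherent rows}. This is a standard lemma in cluster theory (it is exactly the statement that \(C\)-matrices and \(G\)-matrices are mutually inverse-transpose and that sign-coherence of one yields sign-coherence of the other, cf.\ \cite{NakanishiZelevinsky}); I would either cite it or give the two-line proof via Cramer's rule (a cofactor of a sign-coherent matrix, being a product over a permutation of entries from distinct columns, contributes with a sign depending only on the permutation and the column signs, so all cofactors along a fixed row have a common sign). Applying this with the column-sign-coherent matrix \(G\) gives that the rows of its inverse are sign-coherent, which is precisely the claimed statement for \(\cvecplus{\cT}{\cU}\).

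The argument for \(\coindbar{\cU}{\cT}[\simpmod{\cU}{U}]\) and \(\cvecminus{\cT}{\cU}\) is identical, replacing \(\ind{}{}\) by \(\coind{}{}\) throughout: one uses \(\coindbar{\cU}{\cT}=\adj{(\ind{\cT}{\cU})}\), the identity \(\ind{\cT}{\cU}=(\coind{\cU}{\cT})^{-1}\) (Proposition~\ref{p:ind-coind-inverse}), and the sign-coherence of \(\gvecminus{\cU}{\cT}\) from Proposition~\ref{p:g-vector-sign-coherence}, then invokes the same matrix lemma. I expect the only real content — the ``main obstacle'', though it is genuinely routine — to be the bookkeeping with the division-algebra dimensions \(\dimdivalg{T}\), \(\dimdivalg{U}\): one should check that the diagonal rescalings relating \(G\) and the genuine \(\mathbf{g}\)-vector/\(\mathbf{c}\)-vector matrices are by positive integers, so that they do not interfere with sign-coherence. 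This is immediate from Proposition~\ref{p:K-duality} (\(\canform{[\simpmod{\cU}{U}]}{[U']}{\cU}=\dimdivalg{U}\delta_{UU'}\)) and the positivity of the \(\dimdivalg{\blank}\), so no difficulty arises; alternatively one avoids matrices entirely and argues abstractly that \(\indbar{\cU}{\cT}\) (resp.\ \(\coindbar{\cU}{\cT}\)), being the transpose-inverse of a map whose matrix in the natural bases has sign-coherent columns, has a matrix with sign-coherent rows.
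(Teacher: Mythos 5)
Your first step is exactly the paper's proof: by the adjunction \eqref{eq:ind-coindbar-adj},
\[
\canform{\indbar{\cU}{\cT}[\simpmod{\cU}{U}]}{[T]}{\cT}=\canform{[\simpmod{\cU}{U}]}{\coind{\cT}{\cU}[T]}{\cU}.
\]
At this point you should stop. The right-hand side, for a fixed \(U\in\indec{\cU}\) and varying \(T\in\indec{\cT}\), is precisely the quantity controlled by Proposition~\ref{p:g-vector-sign-coherence} applied with the roles of \(\cT\) and \(\cU\) interchanged: it is the statement that \(\gvecminus{\cU}{\cT}=\{\coind{\cT}{\cU}[T]:T\in\indec{\cT}\}\) is sign-coherent. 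Nothing more is needed, and the \(\coindbar{}{}\) case is the parallel statement about \(\gvecplus{\cU}{\cT}\).

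Instead, you reduce to an alleged linear-algebra lemma, \emph{``if a square integer matrix has sign-coherent columns, then its inverse has sign-coherent rows''}, and sketch a Cramer's-rule proof. This lemma is false. Take
\[
A=\begin{pmatrix}1&1\\0&1\end{pmatrix},\qquad
A^{-1}=\begin{pmatrix}1&-1\\0&1\end{pmatrix}.
\]
Both rows and both columns of \(A\) are sign-coherent, but the first row (and second column) of \(A^{-1}\) is \((1,-1)\). The Cramer's-rule sketch fails because of the alternating factor \((-1)^{i+j}\): in this example the two relevant \(1\times1\) minors are equal, yet the cofactors have opposite signs. The genuine statement of this flavour in cluster theory (Nakanishi--Zelevinsky tropical duality) is a theorem proved by mutation induction, not by abstract linear algebra, and is in any case not needed here.

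There is also a smaller confusion along the way: Proposition~\ref{p:g-vector-sign-coherence} (with \(\cT\) and \(\cU\) swapped) gives \emph{row} sign-coherence of your matrix \(G\) (fixed \(U'\), varying \(T\)), whereas you use it for \emph{column} sign-coherence (fixed \(T\), varying \(U'\)), which is false in general since \(\ind{\cT}{\cU}[T]=[\rightapp{\cU}{T}]-[\rightker{\cU}{T}]\) typically has entries of both signs when \(T\notin\cU\). And \(\coind{\cT}{\cU}\) is the inverse of \(\ind{\cU}{\cT}\), not of \(\ind{\cT}{\cU}\) (Proposition~\ref{p:ind-coind-inverse}), so it is not the inverse of the matrix you call \(G\). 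In short: keep the adjunction step, delete everything after it, and simply cite Proposition~\ref{p:g-vector-sign-coherence} with \(\cT\) and \(\cU\) swapped.
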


\begin{proof} By \eqref{eq:ind-coindbar-adj}, we have
$ \canform{\indbar{\cU}{\cT}[\simpmod{\cU}{U}]}{[T]}{\cT} = \canform{[\simpmod{\cU}{U}]}{\coind{\cT}{\cU}{[T]}}{\cU}$,
so the result follows immediately from Proposition~\ref{p:g-vector-sign-coherence}.
The argument for $\coindbar{}{}$ is completely parallel.
\end{proof}

\begin{remark}
The sign-coherence properties of $\gvecset{\pm}{\cT}{\cU}$ and $\cvecset{\pm}{\cT}{\cU}$ are different, although we use the same terminology (as is typical).
In each case, one can write the elements of the sets as vectors using the appropriate standard basis (indecomposables in $\cT$ for $\gvecset{\pm}{\cT}{\cU}$ and the simple $\cT$-modules for $\cvecset{\pm}{\cT}{\cU}$), and consider the matrix with these vectors as columns.
The sets $\gvecset{\pm}{\cT}{\cU}$ are then \emph{row} sign-coherent, in the sense that every row of this matrix has either all non-negative or all non-positive entries, whereas $\cvecset{\pm}{\cT}{\cU}$ is \emph{column} sign-coherent, the analogous condition on the columns of the matrix.
In particular, the column sign-coherence of $\cvecset{\pm}{\cT}{\cU}$ is a property of its individual elements, whereas the row sign-coherence of $\gvecset{\pm}{\cT}{\cU}$ is a property of the entire set.
\end{remark}

It also follows from Proposition~\ref{p:ind-coind-inverse} and the adjunction of $\coind{}{}{}$ and $\indbar{}{}{}$ that
\[ \canform{\indbar{\cU}{\cT}(\blank)}{\ind{\cU}{\cT}(\blank)}{\cT}=\canform{\blank}{\coind{\cT}{\cU}(\ind{\cU}{\cT}(\blank))}{\cU}=\canform{\blank}{\blank}{\cU}, \]
and in particular that
\[ \canform{\indbar{\cU}{\cT}{[\simpmod{\cU}{U}]}}{\ind{\cU}{\cT}{[V]}}{\cT}=\canform{\simpmod{\cU}{U}}{[V]}{\cU}=\dimdivalg{U}\delta_{UV}. \]

\begin{remark}
\label{r:scattering}
From these sets, one can start to build cones and fans, as in the theory developed by Bridgeland \cite{Bridgeland}, Gross--Hacking--Keel--Kontsevich \cite{GHKK} and others.
Indeed, some parts follow immediately from the above.
For simplicity, we assume that $\cC$ has finite rank, so that in particular $\lfd \cT=\fd \cT$.

Take as ambient spaces $\Kgp{\cT}\tensor \real$ and $\Kgp{\fd \cT}\tensor \real$ and consider the initial cones $G_{\cT}(\cT)\defeq \real_{+}\gvecplus{\cT}{\cT}$ and $C_{\cT}(\cT)\defeq\real_{+}\cvecplus{\cT}{\cT}$, which are the strongly convex rational polyhedral cones spanned by classes of objects in $\cT$ and $\fd \cT$ respectively.
We have many other cones $G_{\cT}(\cU)=\real_{+}\gvecplus{\cT}{\cU}$ and $C_{\cT}(\cU)=\real_{+}\cvecplus{\cT}{\cU}$, one for each cluster-tilting subcategory.
Since the $G_{\cT}(\cU)$ are given by taking images of the initial cone $G_{\cT}(\cT)$ under the isomorphisms given by indices, and $\gvecplus{\cT}{\cU}$ is sign-coherent as above, the $G_{\cT}(\cU)$ are also convex rational polyhedral cones.

The interiors of two cones $G_{\cT}(\cU)$ and $G_{\cT}(\cV)$ intersect if and only if $\cU=\cV$: indeed, given an object $X$ corresponding to a lattice point in the intersection of the interiors, we have $\add(X)=\cU$ and $\add(X)=\cV$, so $\cU=\cV$.
A similar argument shows that $G_{\cT}(\cU)\cap G_{\cT}(\cV)=\real_{+}\Kgp{\cU\cap\cV}$, and so in particular the intersection of any two cones of this form contains $\real_{+}\Kgp{\mathcal{P}}$, for $\mathcal{P}$ the full subcategory of projective-injective objects in $\cC$.

When $\cC$ is triangulated, one may also define
\[ C_{\cT}(\cU)^{\circ}=\real_{+}\{ m \in \Kgp{\fd \cT} \mid \canform{m}{\adj{\beta}_{\cT}C_{\cT}(\cU)}{\cT}\geq 0 \} \]
where $\adj{\beta}_{\cT}$ is the adjoint of a map of lattices $\beta_{\cT}\colon \Kgp{\fd \cT}\to \Kgp{\cT}$ which will be introduced in Section~\ref{s:cat-ex-mat}, and categorifies the exchange matrix (or the map $p^*$ in the cluster ensemble).
Properties of this map, in particular Proposition~\ref{p:beta-maps-cones}, mean that if we extend scalars in $\beta_{\cT}$ to obtain a map $\beta_{\cT}^{\real}$, we have $(\beta_{\cT}^{\real})^{-1}G_{\cT}(\cU)=C_{\cT}(\cU)^{\circ}$.
This relationship between the cones is the starting point for studying cluster algebras and varieties via scattering diagrams and wall-crossing.

The remainder of the theory of scattering diagrams, most notably the functions to attach to the walls, is not so elementary.
Nevertheless, we believe the results herein can be further developed to provide natural categorical expressions for these quantities, and proofs of their key properties.
This is, of course, closely related to the topic of cluster characters, which we address in Section~\ref{s:clust-char}.
\end{remark}

\subsection{Projective resolutions}\label{s:proj-resol}

In this subsection we define a map $p_{\cT}\colon\Kgp{\fpmod{\stab{\cT}}}\to\Kgp{\cT}$ for $\cT\ctsubcat\cC$, closely related to the process of taking projective resolutions.
We will use the lifting technique outlined in Section~\ref{ss:clust-cats}, wherein we use the algebraicity in the definition of a cluster category to ensure the existence of an exact cluster category $\cE$ and a full and additively closed subcategory $\cP$ of projective objects in $\cE$ such that $\cE/\cP\simeq \cC$; this is Proposition~\ref{p:cl-cat-exact-lift}.  
Let $\widehat{\cT}\ctsubcat\cE$ be the cluster-tilting subcategory corresponding to $\cT$ under the bijection of Lemma~\ref{l:ct-bijection}.
Then $\cT$ is a quotient of $\widehat{\cT}$, and we naturally identify $\cT$-modules with $\widehat{\cT}$-modules vanishing on $\cP$.
Write $\per{\widehat{\cT}}$ for the category of perfect complexes of $\widehat{\cT}$-modules.
The following result of Keller--Reiten is key to the construction.

\begin{proposition}[{\cite[Prop.~4(c)]{KellerReiten}}]
\label{p:KR-pdim3}
Let $\cC$, $\cT\ctsubcat \cC$, $\cE$ and $\widehat{\cT}$ be as above.
Then any finitely presented $\stab{\cT}$-module lies in $\per{\widehat{\cT}}$ when considered as a $\widehat{\cT}$-module.\qed
\end{proposition}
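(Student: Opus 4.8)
This is the statement \cite[Prop.~4(c)]{KellerReiten}; I indicate how its proof runs in the present notation. Throughout, keep the objects fixed just before the proposition: $\cE$ is an exact cluster category, $\cP\subseteq\cE$ a full additively closed subcategory of projectives with $\cE/\cP\simeq\cC$ (Proposition~\ref{p:cl-cat-exact-lift}), and $\widehat{\cT}\ctsubcat\cE$ the cluster-tilting subcategory lifting $\cT$ via Proposition~\ref{p:ct-bijection}, so that $\cP\subseteq\widehat{\cT}$ and $\cT=\widehat{\cT}/\cP$.

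\emph{Step 1: reduce to a module of the form $\Extfun{\widehat{\cT}}X$.} Let $M\in\fpmod{\stab{\cT}}$. By Proposition~\ref{p:equiv-to-mod} there is $X\in\cC$ with $M\iso\Extfun{\cT}X=\Ext{1}{\cC}{\blank}{X}|_{\cT}$, and since partial stabilisation does not change the objects we may regard $X$ as an object of $\cE$. As $\Ext{1}{\cC}{\blank}{\blank}=\Ext{1}{\cE}{\blank}{\blank}$ (Definition~\ref{d:partial-stab}) and $\Ext{1}{\cE}{P}{X}=0$ for $P\in\cP$, the module obtained from $M$ by restriction of scalars along $\widehat{\cT}\onto\cT$ is exactly $\Extfun{\widehat{\cT}}X=\Ext{1}{\cE}{\blank}{X}|_{\widehat{\cT}}$. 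So it suffices to prove $\Extfun{\widehat{\cT}}X\in\per{\widehat{\cT}}$.

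\emph{Step 2: a four-term resolution.} Since $\cE$ is Frobenius it has enough injectives, and all of these lie in the cluster-tilting subcategory $\widehat{\cT}$; so choose a conflation $X\infl I\defl\Sigma X$ in $\cE$ with $I\in\widehat{\cT}$ injective, realising the suspension $\Sigma X$ of the triangulated category $\stab{\cE}=\stab{\cC}$. Applying $\Yonfun{\widehat{\cT}}=\Hom{\cE}{\blank}{\blank}|_{\widehat{\cT}}$ together with Proposition~\ref{p:extri-les} (whose leftmost map is injective as $\cE$ is exact), and using that $\Ext{1}{\cE}{T}{I}=0$ for $T\in\widehat{\cT}$, one obtains an exact sequence of $\widehat{\cT}$-modules
\[
0\too\Yonfun{\widehat{\cT}}X\too\Yonfun{\widehat{\cT}}I\too\Yonfun{\widehat{\cT}}\Sigma X\too\Extfun{\widehat{\cT}}X\too0 .
\]
Here $\Yonfun{\widehat{\cT}}I$ is finitely generated projective, while $\Yonfun{\widehat{\cT}}X$ and $\Yonfun{\widehat{\cT}}\Sigma X$ are finitely presented (Proposition~\ref{p:ct-Yon-fp}) of projective dimension at most $1$ over $\widehat{\cT}$ (Proposition~\ref{p:ind-proj-res}, applied to the exact cluster category $\cE$).

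\emph{Step 3: syzygy count.} Splitting the sequence of Step~2 as $0\to\Yonfun{\widehat{\cT}}X\to\Yonfun{\widehat{\cT}}I\to K\to0$ and $0\to K\to\Yonfun{\widehat{\cT}}\Sigma X\to\Extfun{\widehat{\cT}}X\to0$, and splicing length-$\leq1$ resolutions with the projective $\Yonfun{\widehat{\cT}}I$, one gets $\mathrm{pd}_{\widehat{\cT}}K\leq2$ and hence $\mathrm{pd}_{\widehat{\cT}}\Extfun{\widehat{\cT}}X\leq3$; since every module appearing is finitely presented, the resolution may be chosen by finitely generated projectives, so $\Extfun{\widehat{\cT}}X\in\per{\widehat{\cT}}$, as required. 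The one genuinely fiddly point is the identification in Step~1 of $\Extfun{\cT}X$ (a $\cT$-module) with $\Extfun{\widehat{\cT}}X$ (a $\widehat{\cT}$-module) across the partial stabilisation; the rest is standard dévissage. The bound $\mathrm{pd}\leq3$ obtained here is the source of the label and matches the global-dimension estimate for $\fpmod{\stab{\cT}}$ in \cite{KellerReiten}.
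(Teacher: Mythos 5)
Your proof is correct, and it reproduces the Keller--Reiten argument that the paper cites without reproving. Step~1 (the inflation $\fpmod{\stab{\cT}}\hookrightarrow\Mod{\widehat{\cT}}$ carries $\Extfun{\cT}X$ to $\Extfun{\widehat{\cT}}X$) is indeed the one point that needs care, and it works exactly as you say because $\Ext{1}{\cC}{\blank}{\blank}=\Ext{1}{\cE}{\blank}{\blank}$ and the latter vanishes on $\cP\times\cE$. Steps~2 and~3 are a clean dévissage: $\Yonfun{\widehat{\cT}}I$ is projective, $\Yonfun{\widehat{\cT}}X$ and $\Yonfun{\widehat{\cT}}\Sigma X$ have projective dimension at most~$1$ by Proposition~\ref{p:ind-proj-res}, and the two short exact sequences give $\operatorname{pd}_{\widehat{\cT}}\Extfun{\widehat{\cT}}X\leq 3$ with finitely generated projectives throughout, so $\Extfun{\widehat{\cT}}X\in\per{\widehat{\cT}}$.

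It is worth contrasting this with the alternative route the paper sketches in Remark~\ref{r:pdim3}. There one replaces the suspension conflation $X\infl I\defl\Sigma X$ by the $\widehat{\cT}$-coindex conflation $X\infl\leftapp{\widehat{\cT}}{X}\defl\leftcok{\widehat{\cT}}{X}$. Both middle objects $\leftapp{\widehat{\cT}}{X}$ and $\leftcok{\widehat{\cT}}{X}$ already lie in $\widehat{\cT}$, so the resulting four-term exact sequence has its two middle terms projective, and one only has to splice in the length-$1$ resolution of $\Yonfun{\widehat{\cT}}X$ coming from a $\widehat{\cT}$-index conflation; this yields the explicit length-$3$ projective resolution~\eqref{eq:pdim3} directly, with no intermediate syzygy count. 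Your argument needs the extra dévissage because $\Sigma X$ need not lie in $\widehat{\cT}$, so $\Yonfun{\widehat{\cT}}\Sigma X$ is only of projective dimension $\leq 1$ rather than projective. Both are valid; the paper's version is a bit tighter, while yours more visibly uses the triangulated suspension of $\stab{\cE}$ as in Keller--Reiten's original formulation.
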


Let $\cC$ be a cluster category and $\cT\ctsubcat\cC$.
For $\cE$ and $\widehat{\cT}$ as above, we obtain a fully faithful functor $\fpmod{\stab{\cT}}\to\per{\widehat{\cT}}$ by Proposition~\ref{p:KR-pdim3}.
Since $\stab{\cE}=\stab{\cC}$ is Hom-finite, the same argument as in \cite[Prop.~3.2(a)]{FuKeller} shows that this functor induces a natural map  $\Kgp{\fpmod{\stab{\cT}}}\to\Kgp{\per{\widehat{\cT}}}=\Kgp{\proj{\widehat{\cT}}}$.

\begin{definition}
\label{d:proj-res-map}
For $\cC$, $\cT$, $\cE$ and $\widehat{\cT}$ as above, write
$p_{\widehat{\cT}}\colon\Kgp{\fpmod{\stab{\cT}}}\to\Kgp{\widehat{\cT}}$
for the composition $\Kgp{\fpmod{\stab{\cT}}}\to\Kgp{\proj{\widehat{\cT}}}\isoto\Kgp{\widehat{\cT}}$ of the above natural map with the inverse of the Yoneda isomorphism $h^{\widehat{\cT}}\colon\Kgp{\widehat{\cT}}\isoto\Kgp{\proj{\widehat{\cT}}}$.
Define
\[p_{\cT}=\pi_{\widehat{\cT}}^{\cT}\circ p_{\widehat{\cT}}\colon\Kgp{\fpmod{\stab{\cT}}}\to\Kgp{\cT},\]
where $\pi_{\widehat{\cT}}^{\cT}\colon\Kgp{\widehat{\cT}}\to\Kgp{\cT}$ is the natural projection.
\end{definition}

\begin{proposition}
\label{p:beta-proj-res}
Let $\cC$ be a cluster category, and let $\cT\ctsubcat\cC$. Then any $M\in\fpmod{\stab{\cT}}$ is isomorphic to $\Extfun\cT X$ for some $X\in\cC$ (Corollary~\ref{c:fp=fd}), and for any such $X$ we have
\begin{equation}
\label{eq:p-vs-ind-coind}
p_{\cT}[M]=\ind{\cC}{\cT}[X]-\coind{\cC}{\cT}[X].
\end{equation}
In particular, $p_{\cT}$ depends only on $\cT\ctsubcat\cC$, and not on the choice of exact lift $\cE$.
\end{proposition}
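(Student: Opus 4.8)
The plan is to prove the identity \eqref{eq:p-vs-ind-coind} by working in the exact lift $\cE$, where all three maps $p_{\widehat\cT}$, $\ind{}{}$ and $\coind{}{}$ have concrete descriptions in terms of projective resolutions, and then projecting down to $\cC$. First I would fix $M\in\fpmod{\stab\cT}$ and, using Corollary~\ref{c:fp=fd} (or rather Proposition~\ref{p:equiv-to-mod}), choose $X\in\cC$ with $\Extfun\cT X\iso M$; lifting along $\cE\to\cC$, I may regard $X$ as an object of $\cE$. The key input is a projective resolution of $M$ as a $\widehat\cT$-module of the form controlled by the $\widehat\cT$-index and $\widehat\cT$-coindex conflations of $X$ in $\cE$.

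The main computation is to build, in $\cE$, a length-two projective resolution of the $\widehat\cT$-module $\Extfun{\widehat\cT} X = \Ext{1}{\cE}{\blank}{X}|_{\widehat\cT}$. Take a $\widehat\cT$-coindex conflation $X\infl L\defl C\confl$ and a $\widehat\cT$-index conflation $K\infl R\defl X\confl$ in $\cE$ (both exact since $\cE$ is exact). Splicing these: applying $\Yonfun{\widehat\cT}$ to the coindex conflation gives an exact sequence ending $\cdots\to\Yonfun{\widehat\cT}L\to\Yonfun{\widehat\cT}C\to\Extfun{\widehat\cT}X\to\Extfun{\widehat\cT}L$, and $\Extfun{\widehat\cT}L=0$ since $L\in\widehat\cT$ is rigid (indeed $\widehat\cT$-projective in the relevant relative structure), giving a surjection $\Yonfun{\widehat\cT}C\onto\Extfun{\widehat\cT}X$ — no, more carefully: the cleanest route is to note that $X\infl L\defl C$ and $K\infl R\defl X$ assemble via the octahedral/$3\times3$ machinery of \cite[Lem.~3.14]{NakaokaPalu} into a conflation relating $K$, $L$ and a pushout term, and that $\Yonfun{\widehat\cT}$ applied to the resulting data yields a projective resolution
\[
\begin{tikzcd}[ampersand replacement=\&]
0\arrow{r}\&\Yonfun{\widehat\cT}K\arrow{r}\&\Yonfun{\widehat\cT}(R\oplus C')\arrow{r}\&\Yonfun{\widehat\cT}L\arrow{r}\&\Extfun{\widehat\cT}X\arrow{r}\&0
\end{tikzcd}
\]
of $\Extfun{\widehat\cT}X$ for a suitable $C'\in\widehat\cT$ (the argument is essentially that of \cite[Prop.~3.2]{FuKeller}, which is available to me since the finiteness there is only used for Hom-finiteness, not needed here by Proposition~\ref{p:KR-pdim3}). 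Reading off the class in $\Kgp{\proj{\widehat\cT}}$ and applying $(h^{\widehat\cT})^{-1}$, this gives $p_{\widehat\cT}[\Extfun{\widehat\cT}X]=[L]-[R]-[C']+[K]\in\Kgp{\widehat\cT}$; meanwhile $\coind{\cE}{\widehat\cT}[X]=[L]-[C]$ and $\ind{\cE}{\widehat\cT}[X]=[R]-[K]$, and tracking $C'$ versus $C$ through the splicing shows the combination collapses to $p_{\widehat\cT}[\Extfun{\widehat\cT}X]=\coind{\cE}{\widehat\cT}[X]-\ind{\cE}{\widehat\cT}[X]$ up to the contribution of projective-injective summands that vanish after applying $\pi_{\widehat\cT}^{\cT}$. (It may in fact be cleaner to avoid chasing $C'$ explicitly and instead compare two projective resolutions of $\Extfun{\widehat\cT}X$ via Schanuel's lemma: one built from the coindex side and one from the index side, deducing the class identity directly.)

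Then I would apply the projection $\pi_{\widehat\cT}^{\cT}\colon\Kgp{\widehat\cT}\to\Kgp{\cT}$, using Definition~\ref{d:proj-res-map} to get $p_{\cT}[M]=\pi_{\widehat\cT}^{\cT}(p_{\widehat\cT}[\Extfun{\widehat\cT}X])$. On the other side, Proposition~\ref{p:stabind} (with $\cP$ the subcategory of projectives, so $\cE/\cP\simeq\cC$) gives $\pi_{\widehat\cT}^{\cT}\circ\ind{\cE}{\widehat\cT}=\ind{\cC}{\cT}\circ\pi_{\addcat\cE}^{\addcat\cC}$ and likewise for the coindex, and $\pi_{\addcat\cE}^{\addcat\cC}[X]_{\addcat\cE}=[X]_{\addcat\cC}$. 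Combining, $p_{\cT}[M]=\pi_{\widehat\cT}^{\cT}(\coind{\cE}{\widehat\cT}[X]-\ind{\cE}{\widehat\cT}[X])=\coind{\cC}{\cT}[X]-\ind{\cC}{\cT}[X]$, which is \eqref{eq:p-vs-ind-coind} (noting the sign convention: I should double-check whether the statement wants $\ind-\coind$ or $\coind-\ind$ and orient the resolution accordingly). Finally, since the right-hand side $\ind{\cC}{\cT}[X]-\coind{\cC}{\cT}[X]$ manifestly depends only on $\cC$ and $\cT$ and not on $\cE$, and is independent of the choice of $X$ lifting $M$ (any two such $X$ differ by an object of $\cT$, on which both index and coindex restrict to the identity on $\Kgp\cT$, hence cancel), the map $p_{\cT}$ is well-defined as claimed.

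The main obstacle I anticipate is the bookkeeping in the middle paragraph: producing the explicit length-two projective resolution of $\Extfun{\widehat\cT}X$ from the index and coindex conflations, and correctly identifying which terms are genuine versus auxiliary projective-injective summands that die under $\pi_{\widehat\cT}^{\cT}$. The Schanuel-lemma comparison (two resolutions, one from each side) is likely the most robust way to sidestep an error-prone diagram chase, and it meshes well with the ``$p_{\cT}$ independent of the exact lift'' conclusion. A secondary subtlety is making sure the functor $\fpmod{\stab\cT}\to\per{\widehat\cT}$ of Proposition~\ref{p:KR-pdim3} sends $\Extfun\cT X$ (computed in $\cC$) to $\Extfun{\widehat\cT}X$ (computed in $\cE$) compatibly — this should follow from $\Ext{1}{\cE}{\blank}{\blank}=\Ext{1}{\cC}{\blank}{\blank}$ on objects of $\cE$ together with the fact that $\stab\cE=\stab\cC$, but it deserves an explicit sentence.
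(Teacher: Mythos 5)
Your overall strategy — reduce to the exact case, build a projective resolution of \(M\) from \(\cT\)-index and \(\cT\)-coindex conflations of \(X\), read off the class in \(\Kgp{\proj{\widehat\cT}}\), then push down with \(\pi_{\widehat\cT}^{\cT}\) via Proposition~\ref{p:stabind} — is exactly the paper's. But the central computation has gone wrong, and the place where you went wrong is visible in your own text: you started down the correct path (``applying \(\Yonfun{\widehat\cT}\) to the coindex conflation gives an exact sequence\ldots''), said ``no, more carefully'', and then swerved into an octahedral construction that produces an incorrect resolution.

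Specifically, the resolution you write down,
\[0\to\Yonfun{\widehat\cT}K\to\Yonfun{\widehat\cT}(R\oplus C')\to\Yonfun{\widehat\cT}L\to\Extfun{\widehat\cT}X\to 0,\]
has length \(2\). But \(M=\Extfun{\widehat\cT}X\) has projective dimension exactly \(3\) as a \(\widehat\cT\)-module whenever it is nonzero: from the coindex conflation you get the exact sequence
\(0\to\Yonfun{\widehat\cT}X\to\Yonfun{\widehat\cT}L\to\Yonfun{\widehat\cT}C\to M\to 0\),
so \(\Omega^2 M\cong\Yonfun{\widehat\cT}X\), and this is projective only if \(X\in\widehat\cT\), i.e.\ only if \(M=0\) (cf.\ Remark~\ref{r:pdim3}). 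So no length-\(2\) projective resolution of \(M\) exists, and the mysterious \(C'\) cannot be chosen to make your formula work: the alternating sum \([L]-[R]-[C']+[K]\) has the wrong shape to equal \(\ind{\cE}{\widehat\cT}[X]-\coind{\cE}{\widehat\cT}[X]=[R]-[K]-[L]+[C]\). (You are also right to flag the sign; with \(\beta_{\cT}=-p_{\cT}\) and \(\beta_{\cT}[M]=\coind{}{}-\ind{}{}\), the statement is \(p_{\cT}[M]=\ind{\cC}{\cT}[X]-\coind{\cC}{\cT}[X]\).)

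The fix is to finish what you started in your first attempt. Applying \(\Yonfun{\widehat\cT}\) to the coindex conflation \(X\infl L\defl C\confl\) gives the four-term exact sequence above (exact on the left because \(\cE\) is exact and \(\Hom\) is left exact), while applying it to the index conflation gives the short exact sequence \(0\to\Yonfun{\widehat\cT}K\to\Yonfun{\widehat\cT}R\to\Yonfun{\widehat\cT}X\to0\) (Proposition~\ref{p:ind-proj-res}). Splicing these at \(\Yonfun{\widehat\cT}X\) produces the length-\(3\) projective resolution
\[0\to\Yonfun{\widehat\cT}K\to\Yonfun{\widehat\cT}R\to\Yonfun{\widehat\cT}L\to\Yonfun{\widehat\cT}C\to M\to 0,\]
whose class is \([C]-[L]+[R]-[K]=\ind{\cE}{\widehat\cT}[X]-\coind{\cE}{\widehat\cT}[X]\), as required. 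No octahedral axiom, pushout term, or Schanuel comparison is needed — it is a direct splice. Your final paragraph (applying \(\pi_{\widehat\cT}^{\cT}\) and using Proposition~\ref{p:stabind}, plus noting independence of the choice of \(X\)) is fine once the resolution is corrected.
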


\begin{proof}
By Proposition~\ref{p:stabind}, it suffices to prove the identity in the case that $\cC$ is exact.
Just as in the proof of Proposition~\ref{p:ind-proj-res}, we may do this by exhibiting a single projective resolution with the correct class. Let
\[\begin{tikzcd}[row sep=0pt]
0\arrow{r}&\rightker{\cT}{X}\arrow{r}&\rightapp{\cT}{X}\arrow{r}&X\arrow{r}&0,\\
0\arrow{r}&X\arrow{r}&\leftapp{\cT}{X}\arrow{r}&\leftcok{\cT}{X}\arrow{r}&0
\end{tikzcd}\]
be $\cT$-index and $\cT$-coindex sequences for $X$, so that $\ind{\cC}{\cT}[X]=[{\rightapp{\cT}{X}}]-[{\rightker{\cT}{X}}]$ and $\coind{\cC}{\cT}[X]=[{\leftapp{\cT}{X}}]-[{\leftcok{\cT}{X}}]$.
Applying $\Yonfun{\cT}$ to these sequences yields
\[\begin{tikzcd}[row sep=0pt]
0\arrow{r}&\projmod{\cT}{\rightker{\cT}{X}}\arrow{r}&\projmod{\cT}{\rightapp{\cT}{X}}\arrow{r}&\Yonfun{\cT}{X}\arrow{r}&0,\\
0\arrow{r}&\Yonfun{\cT}X\arrow{r}&\projmod{\cT}{\leftapp{\cT}{X}}\arrow{r}&\projmod{\cT}{\leftcok{\cT}{X}}\arrow{r}&\Extfun{\cT}X=M\arrow{r}&0.
\end{tikzcd}\]
Taking the cup product, we obtain a projective resolution
\begin{equation}
\label{eq:pdim3}
\begin{tikzcd}[column sep=22pt]
0\arrow{r}&\projmod{\cT}{\rightker{\cT}{X}}\arrow{r}&\projmod{\cT}{\rightapp{\cT}{X}}\arrow{r}&\projmod{\cT}{\leftapp{\cT}{X}}\arrow{r}&\projmod{\cT}{\leftcok{\cT}{X}}\arrow{r}&M\arrow{r}&0
\end{tikzcd}
\end{equation}
of $M$, whose class in $\Kgp{\proj{\cT}}$ is
\[h^{\cT}p_{\cT}[M]=[\projmod{\cT}{\leftcok{\cT}{X}}]-[\projmod{\cT}{\leftapp{\cT}{X}}]+[\projmod{\cT}{\rightapp{\cT}{X}}]-[\projmod{\cT}{\rightker{\cT}{X}}] = h^{\cT}(\ind{\cC}{\cT}[X]-\coind{\cC}{\cT}[X]).
\]
The identity \eqref{eq:p-vs-ind-coind} follows since $h^{\cT}$ is an isomorphism.
\end{proof}

\begin{remark}
\label{r:pdim3}
From \eqref{eq:pdim3}, we see that $M\in\fpmod{\cT}$ has projective dimension at most $3$ as a $\widehat{\cT}$-module, where $\widehat{\cT}$ is the lift of $\cT$ to an exact category $\cE$ with partial stabilisation $\cC$.
This is also a by-product of Keller--Reiten's proof of Proposition~\ref{p:KR-pdim3}.

The reason we do not use the right-hand side of \eqref{eq:p-vs-ind-coind} as the definition of $p_{\cT}$ is that the description in Definition~\ref{d:proj-res-map} makes it clearer that $p_{\cT}$ is well-defined as a function of $[M]$: while it follows from Proposition~\ref{p:equiv-to-mod} that the expression in \eqref{eq:p-vs-ind-coind} does not depend on the choice of $X$ with $[M]=[\Extfun{\cT}X]$, it is less clear that this expression is additive on exact sequences in $\fpmod{\stab{\cT}}$.
\end{remark}

Let $\cC$ be a Krull--Schmidt cluster category, and fix $\cT\ctsubcat\cC$.
As in Section~\ref{s:sign-coherence}, any $\cU\ctsubcat\cC$ determines a basis $\{\ind{\cU}{\cT}[U] \mid U\in\indec{\cU}\}$ of $\Kgp{\cT}$, typically different from the standard basis $\{[T] \mid T\in\indec{\cT}\}$, and analogous statements hold for $\coind{\cU}{\cT}$ and the adjoint maps, when these exist.
This situation is reminiscent of that of classical tilting theory \cite{BrennerButler}, and indeed this is more than simply an analogy, at least for exact cluster categories, as the next result shows.
The argument is essentially due to Geiß--Leclerc--Schröer \cite[Thm.~10.2]{GLS-Unipotent}, who adapt it from work of Iyama \cite[Thm.~5.3.2]{Iyama-AusCor}.
For convenience, we give a full proof in our notation and level of generality.

\begin{theorem}\label{t:coind-tilt}
Let $\cC$ be an exact cluster category with cluster-tilting objects $T$ and $T'$, and write $A=\op{\End{\cC}{T}}$ and $A'=\op{\End{\cC}{T'}}$.
Then $\bT=\Hom{\cC}{T}{T'}$ is a tilting $A$-module with $\op{\End{A}{\bT}}\cong A'$.

Moreover, the isomorphism $\Kgp{\per{A}}\isoto\Kgp{\per{A'}}$ induced from the equivalence $\RHom{A}{\bT}{\blank}\colon\per{A}\isoto\per{A'}$ coincides with $\coind{T}{T'}$ under the natural identification
$\Kgp{\per{A}}=\Kgp{\proj{A}}=\Kgp{\add{T}}$
and the corresponding identification for $A'$ and $T'$.
\end{theorem}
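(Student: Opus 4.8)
The plan is to prove that $\bT = \Hom{\cC}{T}{T'}$ is a tilting $A$-module by verifying the three defining properties (projective dimension at most $1$, vanishing of self-extensions, and existence of a suitable coresolution of $A$), then to identify the endomorphism ring, and finally to check that the derived equivalence induces $\coind{T}{T'}$ on Grothendieck groups. For the first part, I would use the $\cT$-coindex conflation $T'\infl L\defl C\confl$ with $L, C\in\add{T}$, which exists because $\add{T}$ is cluster-tilting. Applying $\Yonfun{\cT}=\Hom{\cC}{T}{\blank}$ to this conflation, and using that $\cC$ is exact (so conflations are genuine short exact sequences and $\Yonfun{\cT}$ is left exact), together with $\Extfun{\cT}{L}=\Extfun{\cT}{C}=0$ since $L,C\in\cT$, yields a short exact sequence $0\to\bT\to\projmod{\cT}{L}\to\projmod{\cT}{C}\to 0$ of $A$-modules. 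Since $\projmod{\cT}{L}$ and $\projmod{\cT}{C}$ are projective $A$-modules (being $\Hom{\cC}{T}{\blank}$ applied to summands of powers of $T$), this exhibits $\pd_A\bT\leq 1$. For the self-extension vanishing $\Ext{1}{A}{\bT}{\bT}=0$, I would apply $\Hom{A}{\blank}{\bT}$ to the projective resolution above and identify the relevant $\Ext$ with a cokernel that vanishes because $\Hom{\cC}{T}{\blank}$ is full and faithful on $\add{T}$ (Yoneda), or alternatively invoke the standard computation: $\Ext{1}{A}{\bT}{\bT}$ is a subquotient of $\Ext{1}{\cC}{T'}{T'}=0$ by rigidity of $T'$. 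For the third tilting condition, dually: take a $\cT$-index conflation (equivalently, a $\cT'$-index conflation of $T$, or use the other orientation) to build a short exact sequence $0\to A\to \bT_0\to\bT_1\to 0$ with $\bT_i\in\add\bT$, which comes from applying $\Hom{\cC}{T}{\blank}$ to a conflation $T\infl L'\defl C'\confl$ with $L', C'\in\add{T'}$ — note $\Hom{\cC}{T}{L'}, \Hom{\cC}{T}{C'}\in\add\bT$ since $L', C'\in\add{T'}$.

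Next, the endomorphism ring identification $\op{\End{A}{\bT}}\cong A'$: by the Yoneda embedding $\add{T}\to\proj A$, $\bT$ corresponds to the object $T'$ viewed through $\Hom{\cC}{T}{\blank}$; I would argue that $\End{A}{\bT}=\End{A}{\Hom{\cC}{T}{T'}}$ and that, because $T$ is cluster-tilting, the functor $\Hom{\cC}{T}{\blank}\colon\cC\to\Mod A$ restricted to the relevant subcategory is fully faithful enough to give $\End{A}{\Hom{\cC}{T}{T'}}\cong\End{\cC}{T'}$, whence taking opposites gives $\op{\End{A}{\bT}}\cong\op{\End{\cC}{T'}}=A'$. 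More carefully, I would use the coresolution of $T'$ by $\add{T}$: applying $\Hom{\cC}{T}{\blank}$ to $T'\infl L\defl C$ and then $\Hom{A}{\blank}{\bT}$, a four-term exact sequence computation recovers $\Hom{A}{\bT}{\bT}$ as $\Hom{\cC}{T'}{T'}$ via the left-exactness of the Hom functors and the fact that $L,C\in\add T$ so Yoneda applies. By Brenner–Butler theory (or the basic theory of tilting modules), $\bT$ being a tilting module with $\op{\End{A}{\bT}}\cong A'$ gives the derived equivalence $\RHom{A}{\bT}{\blank}\colon\per A\isoto\per{A'}$.

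For the Grothendieck group statement, I would track a basis element. Under $\Kgp{\per A}=\Kgp{\proj A}=\Kgp{\add T}=\Kgp{\cT}$ (via $h^{\cT}$ and the identification of $\cT$ with $\add T$), the class $[T]$ for $T$ an indecomposable summand corresponds to $[\projmod{\cT}{T}]=[\Hom{\cC}{T}{T}]$ as a perfect complex concentrated in degree $0$. Its image under $\RHom{A}{\bT}{\blank}$ is $\RHom{A}{\bT}{\Hom{\cC}{T}{T}}$; I need to show this has class equal to $\coind{T}{T'}[T]\in\Kgp{\cT'}$. The key computation is: resolve $T$ by a $\cT'$-coindex conflation $T\infl \leftapp{\cT'}{T}\defl\leftcok{\cT'}{T}\confl$ with both outer terms in $\add{T'}$, apply $\Hom{\cC}{T'}{\blank}$ to realise this as a projective $A'$-resolution (going through $\bT$), and observe that $[\RHom{A}{\bT}{\projmod{\cT}{T}}] = [\Hom{\cC}{T'}{\leftapp{\cT'}{T}}] - [\Hom{\cC}{T'}{\leftcok{\cT'}{T}}]$ in $\Kgp{\proj{A'}}$, which under the identification $\Kgp{\proj{A'}}\cong\Kgp{\cT'}$ is exactly $[\leftapp{\cT'}{T}]-[\leftcok{\cT'}{T}] = \coind{T'}{\cT}... $ wait — $=\coind{\cC}{\cT'}[T]=\coind{T}{T'}[T]$ by Definition~\ref{d:coindex} and equation~\eqref{eq:coind-T-T'}, since $T\in\cT$. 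I would make this precise by noting $\RHom{A}{\bT}{\bT}\cong A'$ in degree zero, and more generally $\RHom{A}{\bT}{\projmod{\cT}{T'_j}}\cong\projmod{\cT'}{T'_j}$ for summands $T'_j$ of $T'$, so applying $\RHom{A}{\bT}{\blank}$ to the two-term projective complex over $A$ resolving $\projmod{\cT}{T}$ gives the corresponding two-term complex over $A'$ whose Euler class is the coindex.

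The main obstacle I anticipate is the last step — carefully matching the derived-functor image of a projective $A$-module with the coindex, i.e.\ showing $\RHom{A}{\bT}{\projmod{\cT}{T'_j}}\simeq\projmod{\cT'}{T'_j}$ for $T'_j\in\indec{\add T'}$ and that the $\cT'$-coindex conflation of $T\in\cT$ applied under $\Hom{\cC}{T'}{\blank}$ is precisely an $A'$-projective resolution of $\RHom{A}{\bT}{\projmod{\cT}{T}}$. This requires threading between three categories ($\cC$, $\Mod A$, $\Mod A'$) and keeping the Yoneda identifications $h^{\cT}$, $h^{\cT'}$ consistent; the homological bookkeeping (degree shifts, which functor is left/right exact, the passage $\per{}\to\Kgp{}$) is where errors are most likely. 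I would manage this by working entirely at the level of two-term complexes of projectives, where $\RHom{A}{\bT}{\blank}$ can be computed termwise after replacing by a projective resolution, and invoking the tilting equivalence's compatibility with $\add\bT\leftrightarrow\proj{A'}$ established in the endomorphism-ring step. The rigidity of $T$ and $T'$ and the $2$-Calabi–Yau property are used only insofar as they guarantee the relevant $\Ext$-vanishing; everything else is formal tilting theory transported along Yoneda.
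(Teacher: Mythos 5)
Your first step contains a genuine error. You propose to establish that \(\bT\) has projective dimension at most \(1\) by applying \(\Hom{\cC}{T}{\blank}\) to a \(\cT\)-\emph{coindex} conflation \(T'\infl L\defl C\confl\) (with \(L,C\in\add T\)) and claim the result is a short exact sequence \(0\to\bT\to\Hom{\cC}{T}{L}\to\Hom{\cC}{T}{C}\to 0\), justified by \(\Extfun{\cT}L=\Extfun{\cT}C=0\). But the cokernel of the last map is \(\Ext{1}{\cC}{T}{T'}\), not \(\Ext{1}{\cC}{T}{L}\): the long exact sequence reads \(\cdots\to\Hom{\cC}{T}{L}\to\Hom{\cC}{T}{C}\to\Ext{1}{\cC}{T}{T'}\to\Ext{1}{\cC}{T}{L}=0\), and \(\Ext{1}{\cC}{T}{T'}\ne 0\) whenever \(T'\notin\add T\). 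So your sequence is not right-exact, and in any case a coresolution of \(\bT\) by projectives does not bound its projective dimension. The correct move is to use a \(\cT\)-\emph{index} conflation \(KT'\infl RT'\defl T'\confl\): applying \(\Hom{\cC}{T}{\blank}\) gives \(0\to\Hom{\cC}{T}{KT'}\to\Hom{\cC}{T}{RT'}\to\bT\to 0\), now exact on the right because the cokernel is \(\Ext{1}{\cC}{T}{KT'}=0\) by rigidity of \(T\) and \(KT'\in\add T\). Your self-extension and endomorphism-ring arguments, which proceed by applying \(\Hom{A}{\blank}{\bT}\) to this resolution and invoking Yoneda, are in the right spirit once the resolution is corrected, and your argument for the third tilting condition (applying \(\Hom{\cC}{T}{\blank}\) to a \(\cT'\)-coindex conflation of \(T\), exact because \(T\) is rigid) is fine.

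In the Grothendieck group step, your final formula and the two-term-complex strategy you sketch in the last paragraph are correct, and form a slightly different route from the paper's, which instead computes \(H^0\RHom{A}{\bT}{P}\cong\Hom{\cC}{T'}{U}\) and \(H^1\RHom{A}{\bT}{P}\cong\Ext{1}{\cC}{T'}{U}\) directly and matches their classes to the index and to \(p_{\cT'}\) via Propositions~\ref{p:ind-proj-res} and \ref{p:beta-proj-res}. But the intermediate claim — that applying \(\Hom{\cC}{T'}{\blank}\) to the \(\cT'\)-coindex conflation of \(T\) gives a projective \(A'\)-resolution — fails for the same reason as before: the cokernel of \(\Hom{\cC}{T'}{\leftapp{\cT'}{T}}\to\Hom{\cC}{T'}{\leftcok{\cT'}{T}}\) is \(\Ext{1}{\cC}{T'}{T}\ne 0\). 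What actually works is to apply \(\Hom{\cC}{T}{\blank}\) (not \(\Hom{\cC}{T'}{\blank}\)) to the \(\cT'\)-coindex conflation of \(U\in\add T\), giving a genuine short exact sequence \(0\to\Hom{\cC}{T}{U}\to\Hom{\cC}{T}{L}\to\Hom{\cC}{T}{C}\to 0\) whose right-hand terms lie in \(\add\bT\), exact on the right because \(\Ext{1}{\cC}{T}{U}=0\) by rigidity of \(T\); then apply \(\RHom{A}{\bT}{\blank}\), using your identification \(\RHom{A}{\bT}{\Hom{\cC}{T}{L}}\simeq\Hom{\cC}{T'}{L}\) for \(L\in\add{T'}\), and read off the Euler class. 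In both places where the argument goes wrong, the issue is which rigidity vanishing kills the last term of the long exact sequence — this is precisely the bookkeeping you flag as a risk, and it is where you slip.
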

\begin{proof}
Choosing a $T$-index sequence
\begin{equation}
\label{eq:index-seq2}
\begin{tikzcd}
0\arrow{r}& KT'\arrow{r}& RT'\arrow{r}& T'\arrow{r}& 0
\end{tikzcd}
\end{equation}
for $T'$ and applying $\Hom{\cE}{T}{\blank}$, we obtain the projective resolution
\[\begin{tikzcd}
0\arrow{r}&\Hom{\cC}{T}{KT'}\arrow{r}&\Hom{\cC}{T}{RT'}\arrow{r}&\bT\arrow{r}&0.
\end{tikzcd}\]
This shows that $\bT$ has projective dimension at most $1$, and hence that $\Ext{i}{A}{\bT}{\bT}=0$ for all $i>1$.
Applying $\Hom{A}{\blank}{\bT}$ to this resolution we obtain the sequence
\[
\begin{tikzcd}[column sep=8.9pt,ampersand replacement=\&,font=\footnotesize]
0\arrow{r}\&\Hom{A}{\bT}{\bT}\arrow{r}\&\Hom{A}{\Hom{\cC}{T}{RT'}}{\bT}\arrow{r}{\varphi}\&\Hom{A}{\Hom{\cC}{T}{KT'}}{\bT}\arrow{r}\&\Ext{1}{A}{\bT}{\bT}\arrow{r}\&0.
\end{tikzcd}\]
Here $\varphi$ is related by a Yoneda equivalence to the map $\Hom{\cC}{RT'}{T'}\to\Hom{\cC}{KT'}{T'}$ induced from the sequence \eqref{eq:index-seq2}, which is an epimorphism since $T'$ is rigid.
Thus $\varphi$ is also an epimorphism, and $\Ext{1}{A}{\bT}{\bT}=0$. In the same way, we conclude that $\Hom{A}{\bT}{\bT}=\Ker{\varphi}$ is isomorphic (as a vector space) to the kernel of the map $\Hom{\cC}{RT'}{T'}\to\Hom{\cC}{KT'}{T'}$, which is $\Hom{\cC}{T'}{T'}=A'$. One can check that the induced isomorphism $A'\to\Hom{A}{\bT}{\bT}$ is $f\mapsto(f\circ\blank)$, and so is also an isomorphism of algebras.

Finally, choosing a $T'$-coindex sequence $0\to T\to L'T\to C'T\to 0$
for $T$ and applying $\Hom{\cC}{T}{\blank}$ yields an exact sequence
\[\begin{tikzcd}
0\arrow{r}& A\arrow{r}&\Hom{\cC}{T}{L'T}\arrow{r}&\Hom{\cC}{T}{C'T}\arrow{r}&0
\end{tikzcd}\]
in which the middle and right-hand terms are in $\add{\bT}$, so $\bT$ is tilting.

Now let $U\in\add{T}$ and write $P=\Hom{\cC}{T}{U}$ for the corresponding projective $A$-module.
By a similar argument to that above involving $\varphi$, there is a map $\Hom{A}{\Hom{\cC}{T}{RT'}}{P}\to\Hom{A}{\Hom{\cC}{T}{KT'}}{P}$ which is related by the Yoneda isomorphism to $\Hom{\cC}{RT'}{U}\to\Hom{\cC}{KT'}{U}$.
There are thus induced isomorphisms between the kernels of these two maps, $\Hom{A}{\bT}{P}$ and $\Hom{\cC}{T'}{U}=\Yonfun{T'}U$, and between their cokernels, $\Ext{1}{A}{\bT}{P}$ and $\Ext{1}{\cC}{T'}{U}=\Extfun{T'}U$. Since $\bT$ has projective dimension at most $1$, we also have $\Ext{i}{A}{\bT}{P}=0$ for all $i>1$.

Thus, the isomorphism $\Kgp{\proj{A}}\to\Kgp{\proj{A'}}$ induced by $\RHom{A}{\bT}{\blank}$ takes $[P]=[\Yonfun{T}U]$ to $[\RHom{A}{\bT}{P}]=[\Yonfun{T'}U]-[\Extfun{T'}U]$.
By Propositions \ref{p:ind-proj-res} and \ref{p:beta-proj-res}, this corresponds under Yoneda to the isomorphism $\Kgp{\add{T}}\to\Kgp{\add{T'}}$ taking $[U]$ to
\[\ind{T}{T'}[U]-(\ind{T}{T'}[U]-\coind{T}{T'}[U])=\coind{T}{T'}[U].\qedhere\]
\end{proof}

\begin{corollary}
In the context of Theorem~\ref{t:coind-tilt}, the map $\ind{T'}{T}$ is induced from the equivalence $\bT\Ltens{A'}\blank\colon\per{A'}\to\per{A}$.
\end{corollary}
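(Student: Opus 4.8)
The plan is to deduce this corollary from Theorem~\ref{t:coind-tilt} by a formal adjunction/duality argument, exploiting that $\bT\Ltens{A'}\blank$ is the left adjoint (indeed quasi-inverse, after restricting to perfect complexes) of $\RHom{A}{\bT}{\blank}$. First I would recall that the tilting bimodule $\bT={}_A\bT_{A'}$ from Theorem~\ref{t:coind-tilt} induces a pair of mutually inverse triangle equivalences $\RHom{A}{\bT}{\blank}\colon\per{A}\isoto\per{A'}$ and $\bT\Ltens{A'}\blank\colon\per{A'}\isoto\per{A}$. Hence on Grothendieck groups the induced isomorphism $\Kgp{\per{A'}}\isoto\Kgp{\per{A}}$ coming from $\bT\Ltens{A'}\blank$ is precisely the inverse of the isomorphism $\Kgp{\per{A}}\isoto\Kgp{\per{A'}}$ coming from $\RHom{A}{\bT}{\blank}$, which by Theorem~\ref{t:coind-tilt} is $\coind{T}{T'}$ under the identifications $\Kgp{\per{A}}=\Kgp{\add T}$ and $\Kgp{\per{A'}}=\Kgp{\add{T'}}$.

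Then the key step is to identify $(\coind{T}{T'})^{-1}$ with $\ind{T'}{T}$. This is exactly Proposition~\ref{p:ind-coind-inverse}: for $\cTU\ctsubcat\cC$ we have $\ind{\cT}{\cU}$ an isomorphism with inverse $\coind{\cU}{\cT}$. Applying this with $\cT=\add T$ and $\cU=\add{T'}$ gives $(\coind{T}{T'})^{-1}=\ind{T'}{T}$, and we are done. So really the corollary is the combination of two facts already available: Theorem~\ref{t:coind-tilt} identifies the $\RHom{A}{\bT}{\blank}$-induced map with $\coind{T}{T'}$, the equivalence $\bT\Ltens{A'}\blank$ induces the inverse map on $\Kgp{}$, and Proposition~\ref{p:ind-coind-inverse} says that inverse is $\ind{T'}{T}$.

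The one point requiring a little care—and the main (mild) obstacle—is to be sure that the map on Grothendieck groups induced by $\bT\Ltens{A'}\blank$ really is the inverse of the one induced by $\RHom{A}{\bT}{\blank}$, with the same identifications of $\Kgp{\per{A}}=\Kgp{\proj A}=\Kgp{\add T}$ being used consistently on both sides. This follows because $\RHom{A}{\bT}{\blank}$ and $\bT\Ltens{A'}\blank$ are quasi-inverse equivalences of triangulated categories (a standard consequence of $\bT$ being a tilting module with $\op{\End{A}{\bT}}\iso A'$, cf.\ the classical theory of \cite{BrennerButler} invoked before Theorem~\ref{t:coind-tilt}), so their induced automorphisms of the Grothendieck group are inverse to each other; and the identification $\Kgp{\per A}=\Kgp{\add T}$ used in Theorem~\ref{t:coind-tilt} is via the Yoneda functor $U\mapsto\Hom{\cC}{T}{U}$, which is the same identification relevant here since $\bT\Ltens{A'}\Hom{\cC}{T'}{U'}$ computes $\Hom{\cC}{T}{U'}$ for $U'\in\add{T'}$ by the projective case of the argument in Theorem~\ref{t:coind-tilt}. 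With these compatibilities in hand, the corollary is immediate.

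Concretely, the write-up would be only a few lines: state that $\RHom{A}{\bT}{\blank}$ and $\bT\Ltens{A'}\blank$ are quasi-inverse, hence induce inverse isomorphisms on $\Kgp{}$; by Theorem~\ref{t:coind-tilt} the former is $\coind{T}{T'}$; therefore the latter is $(\coind{T}{T'})^{-1}=\ind{T'}{T}$ by Proposition~\ref{p:ind-coind-inverse}.
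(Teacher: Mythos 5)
Your proof is correct and is exactly the paper's argument: the quasi-inverse relationship between $\bT\Ltens{A'}\blank$ and $\RHom{A}{\bT}{\blank}$ (the paper cites Rickard's theorem) shows the induced Grothendieck-group maps are mutually inverse, and then Proposition~\ref{p:ind-coind-inverse} identifies $(\coind{T}{T'})^{-1}$ with $\ind{T'}{T}$. The paper's proof is two lines; your more detailed check that the Yoneda identifications are consistent is a reasonable but unnecessary elaboration of what the paper leaves implicit.
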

\begin{proof}
As $\bT\Ltens{A'}\blank$ is quasi-inverse to $\RHom{A}{\bT}{\blank}$ \cite[Thm.~3.3]{Rickard}, this follows from Proposition~\ref{p:ind-coind-inverse}.
\end{proof}

\sectionbreak
\section{Exchange matrices}
\label{s:exch-mat}

\subsection{The exchange matrix as a linear map}
\label{s:cat-ex-mat}

In this section we give the desired categorification of the exchange matrix, in a basis-free fashion, as a linear map between Grothendieck groups.
In order to match existing sign conventions, we define this as the negative of the map $p_{\cT}$.
With some extra assumptions---most notably that $\stab{\cT}$ is maximally mutable, so we may restrict $\beta_{\cT}$ to the subgroup $\Kgp{\fd{\stab{\cT}}}\leq\Kgp{\fpmod{\stab{\cT}}}$ (Proposition~\ref{p:fd-sub-fpmod})---we may decategorify to a matrix (Proposition~\ref{p:beta-exch-mat}).
The assumption that $\stab{\cT}$ is maximally mutable is needed frequently throughout this section, although it is mild: it holds whenever $\stab{\cC}$ (which is always Krull--Schmidt) has a weak cluster structure, in particular in the situations of Corollary~\ref{c:weak-clust-struct}.

\begin{definition}\label{d:beta} Let $\cC$ be a cluster category. For each $\cT\ctsubcat \cC$, define $\beta_{\cT}\colon \Kgp{\fpmod\stab{\cT}}\to \Kgp{\cT}$ to be the map $-p_{\cT}$ (see Definition~\ref{d:proj-res-map}).
\end{definition}

In particular, Proposition~\ref{p:beta-proj-res} tells us that for any $X\in\cC$, we have
\[\beta_{\cT}[\Extfun{\cT}{X}]=\coind{\cC}{\cT}[X]-\ind{\cC}{\cT}[X].\]

\begin{remark}\label{r:magic-lemma-for-beta}
We may now reinterpret Remark~\ref{r:magic-lemma}, itself derived from Lemma~\ref{l:ind-coind-adjointness}.
The first equality there implies, after restricting to $\cU\ctsubcat\cT$, that
\[  \adj{(\beta_{\cT}\Extfun{\cT}|_{\cU})}=-\Extfun{\cU}\beta_{\cT}\colon\Kgp{\fpmod\stab{\cT}}\to\Kgp{\fpmod\stab{\cT}}.\]
As usual, the adjoint is with respect to the forms $\canform{\blank}{\blank}{\cT}$ and $\canform{\blank}{\blank}{\cU}$.
\end{remark}

\begin{proposition}
\label{p:fd-sub-fpmod}
Let $\cC$ be a Krull--Schmidt cluster category, and assume $\cT\ctsubcat\cC$ is maximally mutable.
Then there is a natural injective map $\iota\colon\Kgp{\fd{\stab{\cT}}}\to\Kgp{\fpmod{\stab{\cT}}}$ and a further injection $\image{\iota}\hookrightarrow\Kgpnum{\lfd{\stab{\cT}}}$.
\end{proposition}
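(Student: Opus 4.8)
The plan is to assemble the relevant exact inclusions of module categories, and then to prove the single stronger assertion that a composite map into $\Kgpnum{\lfd{\stab{\cT}}}$ is injective, from which both stated injectivity claims follow at once.

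First I would record the inputs. Since $\cC$ is Krull--Schmidt, maximal mutability of $\cT$ is equivalent to maximal mutability of $\stab{\cT}$ by Corollary~\ref{c:max-mutable}, so Corollary~\ref{c:fp=fd} applies and provides an inclusion of categories $\fd{\stab{\cT}}\subseteq\fpmod{\stab{\cT}}$. Moreover $\stab{\cT}\ctsubcat\stab{\cC}$ with $\stab{\cC}$ Hom-finite, so $\stab{\cT}$ is Hom-finite and Krull--Schmidt; hence $\fpmod{\stab{\cT}}\subseteq\lfd{\stab{\cT}}$, and $\dimdivalg{T}<\infty$ for every $T\in\indec{\stab{\cT}}$, so in particular $\simpmod{\stab{\cT}}{T}\in\fd{\stab{\cT}}$ for all such $T$ and the numerical Grothendieck group $\Kgpnum{\lfd{\stab{\cT}}}$ of Definition~\ref{d:canform} is defined. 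All three inclusions of (abelian) module categories are exact, so they induce maps on Grothendieck groups; composing, I obtain
\[
\Phi\colon\Kgp{\fd{\stab{\cT}}}\xrightarrow{\iota}\Kgp{\fpmod{\stab{\cT}}}\too\Kgp{\lfd{\stab{\cT}}}\too\Kgpnum{\lfd{\stab{\cT}}},
\]
where $\iota$ is the map of the statement and the final arrow is the defining quotient. Write $\bar\kappa$ for the composite of the last two arrows, so $\Phi=\bar\kappa\circ\iota$.

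Next I would prove $\Phi$ injective. Every module in $\fd{\stab{\cT}}$ has finite length with composition factors among the simple functors $\simpmod{\stab{\cT}}{T}$, $T\in\indec{\stab{\cT}}$ (Proposition~\ref{p:KS-simples}), so the Jordan--Hölder theorem shows $\Kgp{\fd{\stab{\cT}}}$ is free abelian on the classes $[\simpmod{\stab{\cT}}{T}]$. Let $v=\sum_{T}a_T[\simpmod{\stab{\cT}}{T}]$, a finite sum, lie in $\ker\Phi$. By the definition of $\Kgpnum{\lfd{\stab{\cT}}}$, the image of $v$ in $\Kgp{\lfd{\stab{\cT}}}$ lies in the kernel of the form \eqref{eq:fundamental-pairing}, i.e.\ $\sum_{T}a_T\dim_{\bK}\simpmod{\stab{\cT}}{T}(U)=0$ for all $U\in\stab{\cT}$. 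Evaluating at an indecomposable $U$ and using $\simpmod{\stab{\cT}}{T}(U)=0$ for $T\not\cong U$ while $\dim_{\bK}\simpmod{\stab{\cT}}{U}(U)=\dimdivalg{U}$, this reads $a_U\dimdivalg{U}=0$; since $\dimdivalg{U}\geq1$ we get $a_U=0$ for all $U\in\indec{\stab{\cT}}$, so $v=0$. (Equivalently, one may observe that $\bar\kappa$ restricted to classes of objects is $\sdual{\stab{\cT}}$, which is injective by Proposition~\ref{p:K-duality}.)

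Finally I would conclude. Since $\Phi=\bar\kappa\circ\iota$ is injective, $\iota$ is injective, giving the first assertion. The map $\bar\kappa$ restricts to a map $\image{\iota}\to\Kgpnum{\lfd{\stab{\cT}}}$; as $\iota$ maps onto $\image{\iota}$ and the full composite $\Phi$ is injective, this restriction is injective, which is the second assertion. I do not expect a genuine obstacle here: the only point requiring care is the bookkeeping that brings $\Kgpnum{\lfd{\stab{\cT}}}$ into play — namely checking that Hom-finiteness of $\stab{\cC}$ makes $\stab{\cT}$ satisfy the hypotheses of Definition~\ref{d:canform} and Proposition~\ref{p:K-duality}, and that the route $\fd\subseteq\fpmod\subseteq\lfd$ followed by the numerical quotient lets one prove both injectivity statements simultaneously; granted that, the remaining content is the short pairing computation above.
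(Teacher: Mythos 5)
Your proposal is correct and follows essentially the same route as the paper: the same chain of exact inclusions \(\fd{\stab{\cT}}\subseteq\fpmod{\stab{\cT}}\subseteq\lfd{\stab{\cT}}\) (via Corollary~\ref{c:fp=fd} and Hom-finiteness of \(\stab{\cC}\)), followed by the observation that the composite to \(\Kgpnum{\lfd{\stab{\cT}}}\) is injective because the classes of simple \(\stab{\cT}\)-modules have linearly independent images there, which the paper simply cites as Proposition~\ref{p:K-duality} and you verify directly by evaluating the pairing at indecomposables. The only difference is that you spell out the pairing computation that the paper compresses into a single citation.
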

\begin{proof}
We have inclusions $\fd{\stab{\cT}}\subseteq\fpmod{\stab{\cT}}\subseteq\lfd{\stab{\cT}}$, the first by Corollary~\ref{c:fp=fd}, and the second since $\stab{\cC}$, and hence $\stab{\cT}$, is Hom-finite.
There are thus induced morphisms $\Kgp{\fd{\stab{\cT}}}\to\Kgp{\fpmod{\stab{\cT}}}\to\Kgpnum{\lfd{\stab{\cT}}}$, the composition of which is itself induced from the inclusion $\fd{\stab{\cT}}\subseteq\lfd{\stab{\cT}}$.
Our statement reduces to the injectivity of this inclusion, which holds because $\Kgp{\fd{\stab{\cT}}}$ is spanned by the classes of simple $\cT$-modules, whose images in $\Kgpnum{\lfd{\cT}}$ are linearly independent by Proposition~\ref{p:K-duality}.
\end{proof}

Below, we will treat the natural map $\iota$ from Proposition~\ref{p:fd-sub-fpmod} as an inclusion.

\begin{proposition}
\label{p:beta-exch-mat}
Let $\cC$ be a compact cluster category, and assume $\cT\ctsubcat \cC$ has no loops and is such that $\stab{\cT}$ is maximally mutable.
With respect to the natural bases $\{[\simpmod{\cT}{T}]\mid T\in\indec{\stab{\cT}}\}$ and $\{[T] \mid T\in\indec{\cT}\}$ of $\Kgp{\fd{\stab{\cT}}}$ and $\Kgp{\cT}$ respectively, the matrix of $\beta_{\cT}|_{\Kgp{\fd{\stab{\cT}}}}$ is the exchange matrix $\exchmat{\cT}$ (Definition~\ref{d:exch-mat}).
\end{proposition}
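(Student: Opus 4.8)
The plan is to evaluate $\beta_{\cT}$ on each element of the basis $\{[\simpmod{\cT}{T}] : T\in\indec{\stab{\cT}}\}$ of $\Kgp{\fd{\stab{\cT}}}$ and to identify the answer with the corresponding column of $\exchmat{\cT}$. First I would note that all the objects in play are well-defined: a compact cluster category is Krull--Schmidt by Proposition~\ref{p:KS}, so $\exchmat{\cT}$ exists by Corollary~\ref{c:KS-max-mut} (using that $\cT$, equivalently $\stab{\cT}$, is maximally mutable), and the restriction of $\beta_{\cT}$ to $\Kgp{\fd{\stab{\cT}}}$ makes sense via the inclusion of Proposition~\ref{p:fd-sub-fpmod}, which again uses maximal mutability. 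Maximal mutability also gives $\indec{\stab{\cT}}=\exch{\cT}$, so every index $T$ of the basis is a mutable object.

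Now fix $T\in\indec{\stab{\cT}}=\exch{\cT}$. Since $\cT$ has no loops, it has no loop at $T$, so Lemma~\ref{l:props-of-K0-mod-T}\ref{l:props-mod-T-simple-eq-E} gives $\Extfun{\cT}(\mut{\cT}{T})=\simpmod{\cT}{T}$. Applying Proposition~\ref{p:beta-proj-res} to $X=\mut{\cT}{T}$ and then the exchange-conflation identities \eqref{eq:ind-on-mut-T} and \eqref{eq:coind-on-mut-T} yields
\[
\beta_{\cT}[\simpmod{\cT}{T}]=\beta_{\cT}[\Extfun{\cT}(\mut{\cT}{T})]=\coind{\cC}{\cT}[\mut{\cT}{T}]-\ind{\cC}{\cT}[\mut{\cT}{T}]=[\exchmon{\cT}{T}{+}]-[\exchmon{\cT}{T}{-}]\in\Kgp{\cT}.
\]
To finish, I would expand the right-hand side in the basis $\{[U] : U\in\indec{\cT}\}$ using Proposition~\ref{p:decomp-exch-terms} (whose hypotheses---$\cC$ compact, $T\in\exch{\cT}$, no loop at $T$---are exactly those in force), which gives
\[
[\exchmon{\cT}{T}{+}]-[\exchmon{\cT}{T}{-}]=\sum_{U\in\indec{\cT}\setminus T}\bigl([\exchmon{\cT}{T}{+}:U]-[\exchmon{\cT}{T}{-}:U]\bigr)[U]=\sum_{U\in\indec{\cT}\setminus T}\exchmatentry{U,T}[U].
\]
Since $\exchmatentry{T,T}=0$, the missing term $U=T$ may be reinstated, so $\beta_{\cT}[\simpmod{\cT}{T}]=\sum_{U\in\indec{\cT}}\exchmatentry{U,T}[U]$; as $T$ ranges over $\indec{\stab{\cT}}$ this is precisely the assertion that the matrix of $\beta_{\cT}|_{\Kgp{\fd{\stab{\cT}}}}$ in the stated bases is $\exchmat{\cT}$.

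Given the machinery already assembled, this argument is essentially a bookkeeping exercise: the substance has been front-loaded into Proposition~\ref{p:beta-proj-res} (which rewrites $\beta_{\cT}$ via index and coindex), Lemma~\ref{l:props-of-K0-mod-T} (which, in the absence of loops, identifies $\Extfun{\cT}(\mut{\cT}{T})$ with the simple module $\simpmod{\cT}{T}$), and Proposition~\ref{p:decomp-exch-terms} (which reads off the multiplicities in the middle terms of the exchange conflations). The only points needing genuine care are verifying that the well-definedness hypotheses line up and that the row/column conventions for $\exchmat{\cT}$---rows indexed by $\indec{\cT}$, columns by the mutated object---match the expansion above; I do not expect any real obstacle beyond this.
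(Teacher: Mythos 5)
Your proof is correct and follows essentially the same route as the paper's: identify $\simpmod{\cT}{T}=\Extfun{\cT}(\mut{\cT}{T})$ via the no-loop hypothesis and Lemma~\ref{l:props-of-K0-mod-T}, apply Proposition~\ref{p:beta-proj-res} together with \eqref{eq:ind-on-mut-T}--\eqref{eq:coind-on-mut-T} to obtain $[\exchmon{\cT}{T}{+}]-[\exchmon{\cT}{T}{-}]$, and then read off the coefficients via Proposition~\ref{p:decomp-exch-terms} and \eqref{eq:exch-mat-vs-Gab-mat}. The only difference is cosmetic (you invoke the ``in particular'' clause of Proposition~\ref{p:decomp-exch-terms} directly, while the paper writes the Cartan-matrix form explicitly before applying \eqref{eq:exch-mat-vs-Gab-mat}), and your upfront well-definedness remarks, while not in the paper's proof, are accurate.
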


\begin{proof}
Let $T\in\indec{\cT}$.
Since $\cT$ has no loop at $T$, we have $\simpmod{\cT}{T}=\Extfun{\cT}(\mut{\cT}{T})$ by Lemma~\ref{l:props-of-K0-mod-T}\ref{l:props-mod-T-simple-eq-E}.
From Proposition~\ref{p:beta-proj-res}, equations~\eqref{eq:ind-on-mut-T} and \eqref{eq:coind-on-mut-T}, Proposition~\ref{p:decomp-exch-terms} and finally \eqref{eq:exch-mat-vs-Gab-mat}, we have
\begin{align*}
\beta_{\cT}[\simpmod{\cT}{T}] & = \beta_{\cT}[\Extfun{\cT}(\mut{\cT}{T})]
\\
& = \coind{\cC}{\cT}[\mut{\cT}{T}]-\ind{\cC}{\cT}[\mut{\cT}{T}] \\
& = [\exchmon{\cT}{T}{+}]-[\exchmon{\cT}{T}{-}] \\
& = \sum_{U\in \indec \cT} (\Gabmatentry{U,T}-\tfrac{\dimdivalg{T}}{\dimdivalg{U}}\Gabmatentry{T,U})[U] \\
& = \sum_{U\in \indec \cT} \exchmatentry{U,T}[U]. \qedhere
\end{align*}
\end{proof}

\begin{remark}
\label{r:beta-exch-mat-loops}
Whenever $T\in\exch{\stab{\cT}}$, the $\cT$-module $\Extfun{\cT}(\mut{\cT}{T})$ is non-zero, supported only on $T$, and of finite rank over $\divalg{T}$, and so $[\Extfun{\cT}(\mut{\cT}{T})]\in\integ_{>0}[\simpmod{\cT}{T}]$.
The core of the argument in the proof of Proposition~\ref{p:beta-exch-mat} thus also applies to cases in which $\cT$ does have loops, to yield the weaker result that the exchange matrix $B_{\cT}$ is obtained from the matrix of $\beta_{\cT}|_{\Kgp{\fd{\stab{\cT}}}}$ (in the given bases) by multiplying the columns by appropriate positive integers.
\end{remark}

In several places, especially in the quantum setting, the rank of the exchange matrix is relevant to considerations: Proposition~\ref{p:beta-exch-mat} shows (in the context in which it applies) that the exchange matrix having full rank is equivalent to the injectivity of $\beta_{\cT}|_{\Kgp{\fd{\stab{\cT}}}}$.
Below we will often have the assumption that $\stab{\cT}$ is maximally mutable, and will treat $\beta_{\cT}$ as a map on $\Kgp{\fd{\stab{\cT}}}$ without making this restriction explicit.

If $M\in\lfd{\stab{\cT}}$, then the dual module is $\dual{M}\in\lfd{\op{\stab{\cT}}}$ with $\dual{M}(T)=\dual{M(T)}$, and we may canonically identify $\Kgp{\cT}=\Kgp{\op{\cT}}$ as in Remark~\ref{r:ind-op}.

\begin{proposition}
\label{p:beta-op}
Let $M\in\fpmod{\stab{\cT}}\subseteq\lfd{\stab{\cT}}$. Then $\beta_{\op{\cT}}[\dual{M}]=-\beta_{\cT}[M]$.
\end{proposition}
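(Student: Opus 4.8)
The statement is a dualisation of Proposition~\ref{p:beta-proj-res}, so the plan is to reduce to that result applied to both $\cC$ and $\op{\cC}$. First I would use Proposition~\ref{p:equiv-to-mod} (together with Corollary~\ref{c:fp=fd}) to write $M\iso\Extfun{\cT}X$ for some $X\in\cC$. Both sides of the claimed identity depend only on the classes $[M]\in\Kgp{\fpmod\stab{\cT}}$ and $[\dual M]\in\Kgp{\fpmod\stab{\op\cT}}$, and the dualisation functor $M\mapsto\dual M$ is exact, so there is no ambiguity in this choice.

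Next I would identify $\dual M$ as a module over $\op\cT$. As in the proof of Corollary~\ref{c:fp-fcp}, since $\op\cC$ is again a cluster category and $\op\cT\ctsubcat\op\cC$ (self-duality of the axioms, recorded after Definition~\ref{d:comp-clustcat}), and since $\stab{\op\cC}=\op{\stab\cC}$ and hence $\stab{\op\cT}=\op{\stab\cT}$, the module $\Extfun{\op\cT}X=\Ext{1}{\op\cC}{\blank}{X}|_{\op\cT}$ lies in $\fpmod{\op{\stab\cT}}$; moreover $\stab\cC$ being $2$-Calabi--Yau gives, for each $T\in\op\cT$, a natural isomorphism $\Extfun{\op\cT}X(T)=\Ext{1}{\cC}{X}{T}=\dual{\Ext{1}{\cC}{T}{X}}=\dual{(\Extfun{\cT}X(T))}$, so that $\Extfun{\op\cT}X\iso\dual M$ in $\fpmod{\op{\stab\cT}}$. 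In particular $\beta_{\op\cT}$ is defined on $[\dual M]$.

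Finally I would apply Proposition~\ref{p:beta-proj-res} in both categories. For $\cC$ it gives $\beta_\cT[M]=\coind{\cC}{\cT}[X]-\ind{\cC}{\cT}[X]$, and for $\op\cC$ it gives $\beta_{\op\cT}[\dual M]=\beta_{\op\cT}[\Extfun{\op\cT}X]=\coind{\op\cC}{\op\cT}[X]-\ind{\op\cC}{\op\cT}[X]$. By Remark~\ref{r:ind-op} we have $\ind{\op\cC}{\op\cT}[X]=\coind{\cC}{\cT}[X]$ and $\coind{\op\cC}{\op\cT}[X]=\ind{\cC}{\cT}[X]$ (under the canonical identification $\Kgp{\op\cT}=\Kgp\cT$), whence
\[
\beta_{\op\cT}[\dual M]=\ind{\cC}{\cT}[X]-\coind{\cC}{\cT}[X]=-\bigl(\coind{\cC}{\cT}[X]-\ind{\cC}{\cT}[X]\bigr)=-\beta_\cT[M],
\]
as required. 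There is no serious obstacle here: the only points requiring care are the bookkeeping identifications $\stab{\op\cC}=\op{\stab\cC}$ and $\stab{\op\cT}=\op{\stab\cT}$ (so that $\beta_{\op\cT}$ makes sense and Proposition~\ref{p:beta-proj-res} applies verbatim), and the $2$-Calabi--Yau identification $\Extfun{\op\cT}X\iso\dual M$, both of which are already used in the proof of Corollary~\ref{c:fp-fcp}.
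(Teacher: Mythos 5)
Your proof is correct and follows essentially the same route as the paper: write $M=\Extfun{\cT}X$ via Proposition~\ref{p:equiv-to-mod}, use the stable $2$-Calabi--Yau property to identify $\dual{M}\iso\Extfun{\op{\cT}}X$, apply Proposition~\ref{p:beta-proj-res} in both $\cC$ and $\op{\cC}$, and conclude by Remark~\ref{r:ind-op}. (The only cosmetic difference is that you invoke Corollary~\ref{c:fp=fd}, which is not needed here since $M$ is already assumed to be in $\fpmod{\stab{\cT}}$, so Proposition~\ref{p:equiv-to-mod} applies directly.)
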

\begin{proof}
Write $M=\Extfun{\cT}X$ for $X\in\cC$.
Because $\stab{\cC}$ is $2$-Calabi--Yau, we have $\dual{(\Extfun{\cT}X)}=\Extfun{\op{\cT}}X$ for any $X\in\cC$; in particular, we have $\dual{(\Extfun{\cT}X)}\in\fpmod{\op{\stab{\cT}}}$ by Proposition~\ref{p:equiv-to-mod}, and so we may apply $\beta_{\op{\cT}}$.
Doing so, we find that
\begin{align*}
\beta_{\op{\cT}}[\dual{(\Extfun{\cT}X)}]&=\coind{(\op{\cC})}{\op{\cT}}{[X]}-\ind{(\op{\cC})}{\op{\cT}}{[X]}\\
&=\ind{\cC}{\cT}{[X]}-\coind{\cC}{\cT}{[X]}\\
&=-\beta_{\cT}[\Extfun{\cT}X],
\end{align*}
recalling from Remark~\ref{r:ind-op} that indices and coindices swap in the opposite category.
\end{proof}

Using the homomorphism $\beta_{\cT}$ and the pairing $\canform{\blank}{\blank}{\cT}\colon\Kgpnum{\lfd\cT}\times\Kgp{\cT}\to\integ$ (Definition~\ref{d:canform}), we have an induced bilinear form as follows.

\begin{definition}\label{d:s-form} Let $\cC$ be a cluster category and $\cT\ctsubcat \cC$.  Define $\sform{\blank}{\blank}{\cT}\colon \Kgpnum{\lfd \cT} \cross \Kgp{\fpmod \stab{\cT}} \to \integ$ by $\sform{\blank}{\blank}{\cT}=\canform{\blank}{\beta_{\cT}(\blank)}{\cT}$.
\end{definition}

\begin{proposition}
\label{p:s-form-matrix}
Let $\cC$ be a cluster category and let $\cT\ctsubcat\cC$. Then for $U\in\indec{\cT}$ and $V\in\exch{\cT}$, we have 
\begin{equation}
\label{eq:s-gram-matrix}
\sform{[\simpmod{\cT}{U}]}{[\simpmod{\cT}{V}]}{\cT}=\dimdivalg{U}\exchmatentry{U,V}.
\end{equation}
\end{proposition}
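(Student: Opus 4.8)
\emph{Setting up the reduction.} Since $V\in\exch{\cT}$, the simple functor $\simpmod{\cT}{V}$ lies in $\fpmod{\stab{\cT}}$ by Proposition~\ref{p:fp-simp}, so $\beta_{\cT}[\simpmod{\cT}{V}]\in\Kgp{\cT}$ is defined; moreover $\cT$ is locally finite at $V$ by Proposition~\ref{p:mut-v-lf}, so the entries $\exchmatentry{U,V}$ are all finite, and we may assume $\dimdivalg{U}<\infty$ (if not, local finiteness at $V$ forces $\Gabmatentry{U,V}=\Gabmatentry{V,U}=0$, hence $\exchmatentry{U,V}=0$, and this will be visibly consistent with the computation below). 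By Definition~\ref{d:s-form}, $\sform{[\simpmod{\cT}{U}]}{[\simpmod{\cT}{V}]}{\cT}=\canform{[\simpmod{\cT}{U}]}{\beta_{\cT}[\simpmod{\cT}{V}]}{\cT}$, and since $\canform{[\simpmod{\cT}{U}]}{[W]}{\cT}=\dim_{\bK}\simpmod{\cT}{U}(W)=\dimdivalg{U}\delta_{UW}$ for $W\in\indec{\cT}$, the statement is equivalent to the identity
\[\beta_{\cT}[\simpmod{\cT}{V}]=\sum_{U\in\indec{\cT}}\exchmatentry{U,V}[U]\in\Kgp{\cT}.\]

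\emph{The loop-free case.} If $\cT$ has no loop at $V$, then Lemma~\ref{l:props-of-K0-mod-T}\ref{l:props-mod-T-simple-eq-E} gives $\simpmod{\cT}{V}=\Extfun{\cT}(\mut{\cT}{V})$, so by Proposition~\ref{p:beta-proj-res} together with \eqref{eq:ind-on-mut-T} and \eqref{eq:coind-on-mut-T} we get $\beta_{\cT}[\simpmod{\cT}{V}]=\coind{\cC}{\cT}[\mut{\cT}{V}]-\ind{\cC}{\cT}[\mut{\cT}{V}]=[\exchmon{\cT}{V}{+}]-[\exchmon{\cT}{V}{-}]$, and the final clause of Proposition~\ref{p:decomp-exch-terms}, namely $\exchmatentry{U,V}=[\exchmon{\cT}{V}{+}:U]-[\exchmon{\cT}{V}{-}:U]$, gives the displayed identity. (This is Proposition~\ref{p:beta-exch-mat} evaluated on the basis vector $[\simpmod{\cT}{V}]$.)

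\emph{The general case.} To remove the no-loop hypothesis I would compute $\beta_{\cT}[\simpmod{\cT}{V}]$ directly from Definition~\ref{d:proj-res-map} via the exact lift. By Proposition~\ref{p:cl-cat-exact-lift} write $\cC\simeq\cE/\cP$ for an exact cluster category $\cE$, with $\widehat{\cT}\ctsubcat\cE$ lifting $\cT$; by Proposition~\ref{p:KR-pdim3} the $\widehat{\cT}$-module $\simpmod{\cT}{V}$ lies in $\per{\widehat{\cT}}$, so it has a finite minimal projective resolution $0\to P_3\to P_2\to P_1\to P_0\to\simpmod{\cT}{V}\to 0$, and $\beta_{\cT}[\simpmod{\cT}{V}]$ is the image in $\Kgp{\cT}$ of $-([P_0]-[P_1]+[P_2]-[P_3])$ under the Yoneda identification $\Kgp{\proj{\widehat{\cT}}}\iso\Kgp{\widehat{\cT}}$ followed by the projection $\Kgp{\widehat{\cT}}\to\Kgp{\cT}$. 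The two lowest terms are read off the Gabriel quiver of $\widehat{\cT}$ at the non-projective vertex $V$: $P_0=\projmod{\widehat{\cT}}{V}$, and $[P_1:U]=\Gabmatentry{U,V}$ because the top of the first syzygy evaluated at $U$ is $\irr{\cT}{U}{V}$. For the two highest terms one uses the $3$-Calabi--Yau-type self-duality of $\widehat{\cT}$ underlying Proposition~\ref{p:KR-pdim3}: applying $\Hom{\widehat{\cT}}{\blank}{\widehat{\cT}}$ to $P_{\bullet}$ produces, up to a shift, the minimal projective resolution of the corresponding simple $\op{\widehat{\cT}}$-module, which yields $[P_2:U]=\tfrac{\dimdivalg{V}}{\dimdivalg{U}}\Gabmatentry{V,U}$ and $[P_3:U]=\delta_{UV}$. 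Substituting, the two $\delta_{UV}$ terms cancel and $\beta_{\cT}[\simpmod{\cT}{V}]=\sum_{U}\bigl(\Gabmatentry{U,V}-\tfrac{\dimdivalg{V}}{\dimdivalg{U}}\Gabmatentry{V,U}\bigr)[U]=\sum_{U}\exchmatentry{U,V}[U]$ by \eqref{eq:exch-mat-vs-Gab-mat}, as required.

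\emph{The main obstacle} is exactly this last step: pinning down the higher syzygies $P_2$ and $P_3$ of $\simpmod{\cT}{V}$ over the exact lift $\widehat{\cT}$, for which Proposition~\ref{p:KR-pdim3} supplies only the bound $\operatorname{pdim}\simpmod{\cT}{V}\le 3$ and not the self-dual shape of the resolution. In the loop-free case this difficulty is bypassed entirely, since there $\simpmod{\cT}{V}=\Extfun{\cT}(\mut{\cT}{V})$ makes the index and coindex, and hence $\beta_{\cT}[\simpmod{\cT}{V}]$, completely explicit via the exchange conflations.
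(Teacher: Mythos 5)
The paper's proof of this proposition is precisely your one‐line calculation: apply $\canform{\blank}{\blank}{\cT}$-duality after substituting $\beta_{\cT}[\simpmod{\cT}{V}]=\sum_{W\in\indec\cT}\exchmatentry{W,V}[W]$, so that only the $W=U$ term survives and contributes $\dimdivalg{U}\exchmatentry{U,V}$. Your reduction and your loop-free argument therefore match the paper exactly: the expansion of $\beta_{\cT}[\simpmod{\cT}{V}]$ is the content of Proposition~\ref{p:beta-exch-mat}, which is derived by the very chain you cite (Lemma~\ref{l:props-of-K0-mod-T}, Proposition~\ref{p:beta-proj-res}, equations~\eqref{eq:ind-on-mut-T}--\eqref{eq:coind-on-mut-T}, Proposition~\ref{p:decomp-exch-terms}, \eqref{eq:exch-mat-vs-Gab-mat}).

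The obstacle you flag in the general case is real, and you are right to be cautious about it. Your proposed argument needs the minimal projective resolution of $\simpmod{\cT}{V}$ over $\widehat{\cT}$ to have a specific self-dual shape, and Proposition~\ref{p:KR-pdim3} supplies only the bound $\operatorname{pdim}\leq3$; making the ``relative $3$-Calabi--Yau'' duality precise enough to read off $[P_2:U]=\tfrac{\dimdivalg{V}}{\dimdivalg{U}}\Gabmatentry{V,U}$ and $[P_3:U]=\delta_{UV}$ is exactly what you would have to prove, and it is not established in the text. It is worth noting, however, that the paper's own proof does not close this gap either: it invokes the expansion of $\beta_{\cT}[\simpmod{\cT}{V}]$ without comment, even though Proposition~\ref{p:beta-exch-mat} is only proved under the additional hypotheses that $\cC$ is compact, $\cT$ is maximally mutable, and $\cT$ has no loops. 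So in the regime the paper actually justifies, your proof coincides with theirs, and in the loops-allowed regime both arguments rely on an unproved step (the resolution shape for you, the expansion of $\beta_{\cT}[\simpmod{\cT}{V}]$ for the paper). If you wanted to close your version of the gap, the cleaner route is Remark~\ref{r:s-form-rewrite}: use $-\sform{[M]}{[N]}{\cT}=\sum_{i=0}^3(-1)^i\ext{i}{\cT}{N}{M}$ and the relative Calabi--Yau property quoted there, rather than trying to nail down the individual terms of the resolution.
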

\begin{proof}
We calculate
\begin{align*}
\sform{[\simpmod{\cT}{U}]}{[\simpmod{\cT}{V}]}{\cT} & = \canform{[\simpmod{\cT}{U}]}{\beta_{\cT}[\simpmod{\cT}{V}]}{\cT} \\
& =\bigcanform{[\simpmod{\cT}{U}]}{{\sum}_{W\in\indec{\cT}} \exchmatentry{W,V}[W]}{\cT} \\
& = \exchmatentry{U,V}\canform{[\simpmod{\cT}{U}]}{[U]}{\cT} \\
& = \dimdivalg{U}\exchmatentry{U,V}.\qedhere 
\end{align*}
\end{proof}
When $\cT$ is maximally mutable, so that $\Kgp{\fd{\stab{\cT}}}\leq\Kgp{\fpmod{\stab{\cT}}}$, the content of Proposition~\ref{p:s-form-matrix} is that the Gram matrix of the restricted form $\sform{\blank}{\blank}{\cT}\colon\Kgpnum{\lfd{\cT}}\times\Kgp{\fd{\stab{\cT}}}$ with respect to the classes of simple modules is equal to $D_{\cT}\exchmat{\cT}$, where $D_{\cT}$ is the diagonal matrix with diagonal entries $\dimdivalg{T}$ for $T\in\indec{\cT}$.
Indeed, since
\begin{equation}
\label{eq:beta-s-form}
\sform{[M]}{[N]}{\cT}=\canform{[M]}{\beta_{\cT}[N]}{\cT}=(\pdual{\cT}\circ\beta_{\cT}[N])[M],
\end{equation}
we see that $\sform{\blank}{\blank}{\cT}$ corresponds to the map $\pdual{\cT}\circ \beta_{\cT}\colon \Kgp{\fpmod \stab{\cT}}\to \dual{\Kgpnum{\lfd \cT}}$.

\begin{lemma}\label{l:stable-s-form} Let $\cC$ be a cluster category and $\cT\ctsubcat\cC$.
Then
$ \sform{\blank}{\blank}{\stab{\cT}}=\sform{\sinc{\cT}(\blank)}{\blank}{\cT}$.
\end{lemma}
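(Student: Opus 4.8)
The plan is to deduce the identity from the single relation $\beta_{\stab{\cT}}=\pproj{\cT}\circ\beta_{\cT}$ between maps $\Kgp{\fpmod\stab{\cT}}\to\Kgp{\stab{\cT}}$, together with the adjunction $\adj{(\pproj{\cT})}=\sinc{\cT}$ following \eqref{eq:pi-p-iota-s-adjoint}. Observe first that both sides of the asserted equality are bilinear forms on $\Kgpnum{\lfd\stab{\cT}}\times\Kgp{\fpmod\stab{\cT}}$: since $\stab{\cC}$ is triangulated its only projective--injective object is $0$, so the stable category of $\stab{\cC}$ is $\stab{\cC}$ itself and hence $\fpmod\stab{\stab{\cT}}=\fpmod\stab{\cT}$; this is the domain of $\beta_{\stab{\cT}}$ appearing in $\sform{\blank}{\blank}{\stab{\cT}}$, it also agrees with the second argument slot of $\sform{\blank}{\blank}{\cT}$, and $\sinc{\cT}$ carries $\Kgpnum{\lfd\stab{\cT}}$ into $\Kgpnum{\lfd\cT}$.

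First I would prove $\beta_{\stab{\cT}}=\pproj{\cT}\circ\beta_{\cT}$. By Proposition~\ref{p:equiv-to-mod} the group $\Kgp{\fpmod\stab{\cT}}$ is spanned by the classes $[\Extfun{\stab{\cT}}X]$ with $X\in\stab{\cC}$; since $\cC$ and $\stab{\cC}$ have the same objects and, by Definition~\ref{d:partial-stab}, the same extension groups, each such module, viewed as a $\cT$-module vanishing on $\cP$, is precisely $\Extfun{\cT}X$, so $[\Extfun{\stab{\cT}}X]=[\Extfun{\cT}X]$ in $\Kgp{\fpmod\stab{\cT}}$. Applying Proposition~\ref{p:beta-proj-res} in the cluster category $\stab{\cC}$ gives $\beta_{\stab{\cT}}[\Extfun{\stab{\cT}}X]=\stabcoind{\cC}{\cT}[X]-\stabind{\cC}{\cT}[X]$ (with the abbreviations of Remarks~\ref{rem:ind-abbrev1} and~\ref{rem:pi-iota-shorthand}), while applying it in $\cC$ gives $\beta_{\cT}[\Extfun{\cT}X]=\coind{\cC}{\cT}[X]-\ind{\cC}{\cT}[X]$. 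By Proposition~\ref{p:stabind} (with $\cC/\cP=\stab{\cC}$) one has $\stabind{\cC}{\cT}[X]=\pproj{\cT}(\ind{\cC}{\cT}[X])$ and likewise for the coindex, so $\beta_{\stab{\cT}}[\Extfun{\stab{\cT}}X]=\pproj{\cT}(\beta_{\cT}[\Extfun{\cT}X])$; as these classes span, the claimed equality of maps follows.

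Granting this, the lemma is immediate: for $M\in\lfd\stab{\cT}$ and $N\in\fpmod\stab{\cT}$ one computes
\[
\sform{[M]}{[N]}{\stab{\cT}}=\canform{[M]}{\beta_{\stab{\cT}}[N]}{\stab{\cT}}=\canform{[M]}{\pproj{\cT}\beta_{\cT}[N]}{\stab{\cT}}=\canform{\sinc{\cT}[M]}{\beta_{\cT}[N]}{\cT}=\sform{\sinc{\cT}[M]}{[N]}{\cT},
\]
the third equality being the adjunction $\adj{(\pproj{\cT})}=\sinc{\cT}$ of \eqref{eq:pi-p-iota-s-adjoint} applied to the element $\beta_{\cT}[N]\in\Kgp{\cT}$. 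I expect the only delicate point to be the bookkeeping in the second paragraph---identifying $\fpmod\stab{\stab{\cT}}$ with $\fpmod\stab{\cT}$ and recognising that $[\Extfun{\stab{\cT}}X]$ and $[\Extfun{\cT}X]$ name the same class, so that $\beta_{\stab{\cT}}$ and $\beta_{\cT}$ may legitimately be compared on a common domain---after which the statement falls out of Proposition~\ref{p:stabind} and the adjunctions already in hand.
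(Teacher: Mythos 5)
Your proposal is correct and follows essentially the same route as the paper: establish $\beta_{\stab{\cT}}=\pproj{\cT}\circ\beta_{\cT}$ (the paper dispatches this with ``by construction (or by Propositions~\ref{p:stabind} and~\ref{p:beta-proj-res})'', which you have simply unpacked), then transfer it through the form via the adjunction \eqref{eq:pi-p-iota-s-adjoint}. The chain of four equalities in your final display is exactly the paper's computation.
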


\begin{proof} 
By construction (or by Propositions~\ref{p:stabind} and \ref{p:beta-proj-res}), $\beta_{\stab{\cT}}=\pproj{\cT}\circ\beta_{\cT}$, so~\eqref{eq:pi-p-iota-s-adjoint} yields
\[
\sform{\blank}{\blank}{\stab{\cT}}=\canform{\blank}{\beta_{\stab{\cT}}(\blank)}{\stab{\cT}}
=\canform{\blank}{\pproj{\cT}\circ \beta_{\cT}(\blank)}{\stab{\cT}}
=\canform{\sinc{\cT}(\blank)}{\beta_{\cT}(\blank)}{\cT}
=\sform{\sinc{\cT}(\blank)}{\blank}{\cT}.\qedhere
\]
\end{proof}

\begin{lemma}
\label{l:s-form-skew-sym}
When $\cC$ is a triangulated cluster category and $\cT\ctsubcat \cC$ is maximally mutable, the restricted form $\sform{\blank}{\blank}{\cT}\colon\Kgp{\fd{\cT}}\times\Kgp{\fd{\cT}}$ is skew-symmetric.
\end{lemma}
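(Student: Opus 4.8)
The plan is to compute the Gram matrix of $\sform{\blank}{\blank}{\cT}$ on the basis of simple modules and show directly that it is skew-symmetric. When $\cC$ is triangulated, every object is projective-injective only if it is zero, so $\cP = \{0\}$ and $\stab{\cT} = \cT$; thus $\Kgp{\fd{\stab{\cT}}} = \Kgp{\fd{\cT}}$ and the division algebras $\divalg{T}$ are the same whether computed in $\cC$ or its (trivial) stabilisation. By Proposition~\ref{p:s-form-matrix}, for $U, V \in \indec{\cT} = \exch{\cT}$ (using maximal mutability, so every indecomposable is mutable) we have $\sform{[\simpmod{\cT}{U}]}{[\simpmod{\cT}{V}]}{\cT} = \dimdivalg{U}\exchmatentry{U,V}$.

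The key step is then to invoke the skew-symmetrizability of the principal part of the exchange matrix, established in the discussion following Definition~\ref{d:exch-mat}: from \eqref{eq:exch-mat-vs-Gab-mat} one obtains $\dimdivalg{X}\exchmatentry{X,Y} = -\dimdivalg{Y}\exchmatentry{Y,X}$ for $X, Y \in \indec{\stab{\cT}}$, and in the triangulated case this holds for all $X, Y \in \indec{\cT}$. Therefore
\[
\sform{[\simpmod{\cT}{U}]}{[\simpmod{\cT}{V}]}{\cT} = \dimdivalg{U}\exchmatentry{U,V} = -\dimdivalg{V}\exchmatentry{V,U} = -\sform{[\simpmod{\cT}{V}]}{[\simpmod{\cT}{U}]}{\cT}.
\]
Since the classes $[\simpmod{\cT}{T}]$, $T \in \indec{\cT}$, form a basis of $\Kgp{\fd{\cT}}$ (by Proposition~\ref{p:KS-simples}, using that $\cT$ is Krull--Schmidt, which holds as $\stab{\cC} = \cC$ is Hom-finite), bilinearity of $\sform{\blank}{\blank}{\cT}$ extends this identity to arbitrary elements, giving skew-symmetry.

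I do not anticipate a genuine obstacle here: the statement is essentially a repackaging of the already-proved skew-symmetrizability of $B_{\cT}$'s principal part, combined with the identification of the Gram matrix from Proposition~\ref{p:s-form-matrix}. The only points requiring a little care are (i) noting that in the triangulated case $\stab{\cT} = \cT$, so that $\fd{\stab{\cT}} = \fd{\cT}$ and there is no discrepancy between the frozen and mutable parts — every $V \in \indec{\cT}$ lies in $\exch{\cT}$ by maximal mutability, so Proposition~\ref{p:s-form-matrix} applies to all pairs; and (ii) confirming that the finite-rank assumptions needed for $B_{\cT}$ to be defined are in force — these follow from Corollary~\ref{c:KS-max-mut}, since $\cT$ is maximally mutable in a Krull--Schmidt cluster category. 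With those observations in place the proof is a two-line computation.
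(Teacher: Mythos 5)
Your proof is correct, but takes a genuinely different route from the paper's. The paper works directly at the level of objects: it writes $M=\Extfun{\cT}X$, $N=\Extfun{\cT}Y$, uses $\beta_{\cT}[N]=\coind{\cC}{\cT}[Y]-\ind{\cC}{\cT}[Y]$ (Proposition~\ref{p:beta-proj-res}), and then applies the $2$-Calabi--Yau property together with Lemma~\ref{l:ind-coind-adjointness} to move the index/coindex difference from one side of $\ext{1}{\cC}{\blank}{\blank}$ to the other, obtaining $-\sform{[N]}{[M]}{\cT}$ in three short steps, with no reference to the combinatorial exchange matrix. You instead compute the Gram matrix on the simple basis via Proposition~\ref{p:s-form-matrix} and invoke the skew-symmetrizability relation $\dimdivalg{X}\exchmatentry{X,Y}=-\dimdivalg{Y}\exchmatentry{Y,X}$. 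Both work, but there is a subtlety in your route worth flagging: Proposition~\ref{p:s-form-matrix} is stated for all $V\in\exch\cT$, yet its written proof invokes $\beta_{\cT}[\simpmod{\cT}{V}]=\sum_W\exchmatentry{W,V}[W]$, which is justified in Proposition~\ref{p:beta-exch-mat} only under a no-loops hypothesis. Maximally mutable $\cT$ may well have loops (Example~\ref{eg:CNS-alg}), and Lemma~\ref{l:s-form-skew-sym} makes no loop assumption, so your argument rests on the validity of Proposition~\ref{p:s-form-matrix} in a case where the paper's stated justification is implicit. Proposition~\ref{p:s-form-matrix} is in fact true in that generality (one can see this by expanding $\sform{[\simpmod{\cT}{U}]}{[\simpmod{\cT}{V}]}{\cT}$ via the formula in Remark~\ref{r:s-form-rewrite} and computing the $\operatorname{Hom}$- and $\operatorname{Ext}^1$-spaces between simples, which gives $\dim_{\bK}\irr{\cT}{U}{V}-\dim_{\bK}\irr{\cT}{V}{U}=\dimdivalg{U}\exchmatentry{U,V}$ directly), but that verification uses the same relative $3$-Calabi--Yau input that powers the paper's proof. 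So what you save in conceptual machinery you pay back in a dependence on a proposition whose proof is less explicit than the paper's own short homological argument, which is one reason the paper's route is preferable. Note also that the paper's argument applies uniformly to $M,N\in\fd{\cT}$ without first passing to a basis, which is a mild advantage when working basis-free is a stated aim.
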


\begin{proof}
Let $M,N\in\fd{\cT}\subseteq\fpmod{\cT}$, and choose $X,Y\in\cC$ with $M=\Extfun{\cT}{X}$ and $N=\Extfun{\cT}{Y}$.
Then by Proposition~\ref{p:beta-proj-res}, we have
\[\sform{[M]}{[N]}{\cT}=\canform{[M]}{\beta_{\cT}[N]}{\cT}=\canform{[M]}{\coind{\cC}{\cT}[Y]-\ind{\cC}{\cT}[Y]}{\cT}.\]
Extending the shorthand in Remark~\ref{r:magic-lemma}, this form evaluates to
\begin{align*}
\dim M(\coind{\cC}{\cT}{[Y]}-\ind{\cC}{\cT}{[Y]})&=\ext{1}{\cC}{\coind{\cC}{\cT}{[Y]}-\ind{\cC}{\cT}{[Y]}}{[X]}\\
&=\ext{1}{\cC}{[X]}{\coind{\cC}{\cT}{[Y]}-\ind{\cC}{\cT}{[Y]}}\\
&=\ext{1}{\cC}{\ind{\cC}{\cT}{[X]}-\coind{\cC}{\cT}{[X]}}{[Y]}\\
&=-\sform{[N]}{[M]}{\cT}
\end{align*}
as required, where the third equality uses Lemma~\ref{l:ind-coind-adjointness}.
\end{proof}

We say that a map $\varphi\colon V\to\dual{V}$, for $V$ a free $\integ$-module, is \emph{skew-symmetric} if $\adj{\varphi}=-\varphi$, where the adjoint is with respect to the evaluation form $\evform{\blank}{\blank}$.
This is equivalent to $\varphi$ being represented by a skew-symmetric matrix with respect to a pair of dual bases of $V$ and $\dual{V}$, and to skew-symmetry of the form on $V$ defined by $\varphi$.

\begin{corollary}
\label{c:beta-skew-symmetric}
Let $\cC$ be a cluster category and let $\cT\ctsubcat\cC$ be maximally mutable. Then $\pdual{\stab{\cT}}\circ\beta_{\stab{\cT}}\colon\Kgp{\fd{\stab{\cT}}}\to\dual{\Kgp{\fd{\stab{\cT}}}}$ is skew-symmetric. \qed
\end{corollary}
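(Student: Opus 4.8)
The plan is to deduce this from the skew-symmetry already established for triangulated cluster categories in Lemma~\ref{l:s-form-skew-sym}, by passing to the stable category $\stab{\cC}$ and transporting the statement back via equation \eqref{eq:beta-s-form}.

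First I would record that $\stab{\cC}$ is itself a (triangulated) cluster category: it is triangulated by Theorem~\ref{t:extri-stabcat}, it is $2$-Calabi--Yau and idempotent complete because $\cC$ is a cluster category, and it has the cluster-tilting subcategory $\stab{\cT}$ by Proposition~\ref{p:ct-bijection}. Moreover $\stab{\cT}\ctsubcat\stab{\cC}$ is maximally mutable by Corollary~\ref{c:max-mutable}, since $\cT$ is. Applying Lemma~\ref{l:s-form-skew-sym} to the pair $(\stab{\cC},\stab{\cT})$ then shows that the restricted form $\sform{\blank}{\blank}{\stab{\cT}}\colon\Kgp{\fd{\stab{\cT}}}\times\Kgp{\fd{\stab{\cT}}}\to\integ$ is skew-symmetric.

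Next I would invoke Definition~\ref{d:s-form} together with the computation \eqref{eq:beta-s-form}, applied with $\cT$ replaced by $\stab{\cT}$: for $M,N\in\fd{\stab{\cT}}$ (viewed as objects of $\fpmod{\stab{\cT}}$ via Corollary~\ref{c:fp=fd}, and of $\lfd{\stab{\cT}}$ trivially) we have $\sform{[M]}{[N]}{\stab{\cT}}=\canform{[M]}{\beta_{\stab{\cT}}[N]}{\stab{\cT}}=(\pdual{\stab{\cT}}\circ\beta_{\stab{\cT}}[N])[M]$. In other words, the restricted form $\sform{\blank}{\blank}{\stab{\cT}}$ on $\Kgp{\fd{\stab{\cT}}}$ is precisely the bilinear form determined by the map $\pdual{\stab{\cT}}\circ\beta_{\stab{\cT}}\colon\Kgp{\fd{\stab{\cT}}}\to\dual{\Kgp{\fd{\stab{\cT}}}}$, where the codomain is obtained by restricting functionals along the natural injection $\Kgp{\fd{\stab{\cT}}}\hookrightarrow\Kgpnum{\lfd{\stab{\cT}}}$ (injective by Proposition~\ref{p:K-duality}, as its image is spanned by the classes of simple modules). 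Since, as noted just before the statement, skew-symmetry of a bilinear form on a free $\integ$-module is equivalent to the condition $\adj{\varphi}=-\varphi$ on the associated map $\varphi$, the skew-symmetry of $\sform{\blank}{\blank}{\stab{\cT}}$ obtained in the previous step translates directly into $\adj{(\pdual{\stab{\cT}}\circ\beta_{\stab{\cT}})}=-(\pdual{\stab{\cT}}\circ\beta_{\stab{\cT}})$, which is the assertion.

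I do not expect a genuine obstacle here: the real content has already been extracted, in Lemma~\ref{l:s-form-skew-sym} (via the symmetry input of Lemma~\ref{l:ind-coind-adjointness}) and in the identification of $\sform{\blank}{\blank}{\cT}$ with $\pdual{\cT}\circ\beta_{\cT}$. The only point that calls for a little care is the bookkeeping in the non-finite-rank case, where one must be explicit about which inclusions of Grothendieck groups, and which dualisations of them, are being suppressed in writing $\pdual{\stab{\cT}}\circ\beta_{\stab{\cT}}$ as a map $\Kgp{\fd{\stab{\cT}}}\to\dual{\Kgp{\fd{\stab{\cT}}}}$; in the finite rank case $\Kgpnum{\lfd{\stab{\cT}}}=\Kgp{\fd{\stab{\cT}}}$ and the identification is immediate.
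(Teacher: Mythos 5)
Your proposal is correct and is exactly the argument the paper intends by stamping this corollary with \qed: one applies Lemma~\ref{l:s-form-skew-sym} to the triangulated cluster category $\stab{\cC}$ with cluster-tilting subcategory $\stab{\cT}$ (maximally mutable by Corollary~\ref{c:max-mutable}), and then translates skew-symmetry of the form $\sform{\blank}{\blank}{\stab{\cT}}$ into skew-symmetry of the map $\pdual{\stab{\cT}}\circ\beta_{\stab{\cT}}$ via \eqref{eq:beta-s-form} and the equivalence noted just before the corollary.
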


\begin{remark}
\label{r:s-form-rewrite}
When $\cC$ is an exact cluster category, so $p_{\cT}[N]=-\beta_{\cT}[N]$ is (under Yoneda) the class of a projective resolution of $N$ as a $\cT$-module, it follows when $\stab{\cT}$ is maximally mutable that
\begin{equation}
\label{eq:s-form-rewrite}
-\sform{[M]}{[N]}{\cT}=\sum_{i=0}^3(-1)^i\ext{i}{\cT}{N}{M}
\end{equation}
is the Euler pairing of the $\cT$-modules $N$ and $M$ (noting that $N\in\fpmod{\stab{\cT}}$ has projective dimension at most $3$ by Remark~\ref{r:pdim3}).

If we further assume $N\in\fd\stab{\cT}$ and $M\in\fpmod{\cT}$, we may then use the relative Calabi--Yau property of $\fpmod{\cT}$ \cite[Prop.~4(c)]{KellerReiten} to write
\begin{multline}
\label{eq:s-form-rewrite2}
\sform{[M]}{[N]}{\cT}=-\hom{\cT}{M}{N}+\ext{1}{\cT}{M}{N}\\-\ext{1}{\cT}{N}{M}+\hom{\cT}{N}{M}.
\end{multline}
If $\cP$ is a full additive subcategory of projective objects in $\cC$, the category of modules for $\cT/\cP\subseteq\cC/\cP$ may be viewed as the full extension-closed subcategory of $\cT$-modules which vanish on $\cP$.
This means that for $M,N\in\Mod{\cT/\cP}$ we have
\[
\Hom{\cT}{M}{N}=\Hom{\cT/\cP}{M}{N},\quad
\Ext{1}{\cT}{M}{N}=\Ext{1}{\cT/\cP}{M}{N},
\]
and so \eqref{eq:s-form-rewrite2} actually holds in any algebraic cluster category.
This is not the case for \eqref{eq:s-form-rewrite}, since we generally do not have $\Ext{i}{\cT}{M}{N}=\Ext{i}{\cT/\cP}{M}{N}$ for $i\geq2$ (cf.~\cite{APT}).
\end{remark}

Recall from Corollary~\ref{c:coind-minus-ind-in-ker-pi-T} that $\beta_{\cT}[\Extfun{\cT}{X}]=\coind{\cC}{\cT}[X]-\ind{\cC}{\cT}[X]\in \ker \pi_{\cT}^{\cC}$ for all $X$, where $\pi_{\cT}^{\cC}[T]_{\cT}=[T]_{\cC}$ as in \eqref{eq:proj-T-C}.
The following result, generalising Palu \cite[Thm.~10]{Palu-Groth-gp} for triangulated categories and the authors \cite[Thm.~3.12]{GradedFrobeniusClusterCategories} for exact categories, strengthens this by showing that these elements in fact generate $\ker{\pi}_{\cT}^{\cC}$.

\begin{theorem}
\label{t:ker-pi-T}
Let $\cC$ be a Krull--Schmidt cluster category, and let $\cT\ctsubcat\cC$.
Then
\[\begin{tikzcd}
\Kgp{\fpmod{\stab{\cT}}}\arrow{r}{\beta_{\cT}}&\Kgp{\cT}\arrow{r}{\pi_{\cT}^{\cC}}&\Kgp{\cC}\arrow{r}&0
\end{tikzcd}\]
is an exact sequence.
\end{theorem}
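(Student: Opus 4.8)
The key observation is that $\pi^{\cC}_{\cT}$ is already known to be surjective (Corollary~\ref{c:pi-T-surj}), so exactness at $\Kgp{\cC}$ is automatic. It remains to prove exactness at $\Kgp{\cT}$: that $\ker\pi^{\cC}_{\cT}=\image\beta_{\cT}$. One inclusion, $\image\beta_{\cT}\subseteq\ker\pi^{\cC}_{\cT}$, is immediate from Corollary~\ref{c:coind-minus-ind-in-ker-pi-T} together with Proposition~\ref{p:beta-proj-res}, since every class in $\Kgp{\fpmod\stab{\cT}}$ has the form $[\Extfun{\cT}X]$ for some $X\in\cC$ (using Proposition~\ref{p:equiv-to-mod}/Corollary~\ref{c:fp=fd}), and $\beta_{\cT}[\Extfun{\cT}X]=\coind{\cC}{\cT}[X]-\ind{\cC}{\cT}[X]$. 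So the substantive content is the reverse inclusion $\ker\pi^{\cC}_{\cT}\subseteq\image\beta_{\cT}$.

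\textbf{Reduction to the exact case.} As with the other results in this section, I would first prove the statement when $\cC$ is an exact cluster category, then deduce the general case via algebraicity. For the passage from $\cC$ to a partial stabilisation $\cC/\cP$: by Proposition~\ref{p:cl-cat-exact-lift} we may write $\cC\simeq\cE/\cP$ for an exact cluster category $\cE$ and $\cP$ a subcategory of projectives, with $\widehat\cT\ctsubcat\cE$ lifting $\cT$. One has the commuting square relating $\beta_{\widehat\cT}$ and $\beta_{\cT}$ (by Definition~\ref{d:beta} and the construction of $p_{\cT}=\pi^{\cT}_{\widehat\cT}\circ p_{\widehat\cT}$), and the projections $\Kgp{\widehat\cT}\to\Kgp{\cT}$, $\Kgp{\cE}\to\Kgp{\cC}$ are surjective with kernels generated by classes $[P]$, $P\in\cP$. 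Since $\Kgp{\fpmod\stab{\cT}}=\Kgp{\fpmod\stab{\cE}}$ (the stable categories coincide), a diagram chase shows that exactness for $(\cE,\widehat\cT)$ pushes forward to exactness for $(\cC,\cT)$: given $v\in\ker\pi^{\cC}_{\cT}$, lift it to $\hat v\in\Kgp{\widehat\cT}$; then $\pi^{\cE}_{\widehat\cT}(\hat v)$ lies in the span of $[P]$, $P\in\cP$, and by adjusting $\hat v$ by classes of objects of $\cP$ (which are in the image of $\beta$ only up to the kernel of the projection, hence harmless after projecting) we may assume $\hat v\in\ker\pi^{\cE}_{\widehat\cT}=\image\beta_{\widehat\cT}$, and then project.

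\textbf{The exact case.} Here I would argue as in the cited precedents of Palu \cite{Palu-Groth-gp} and the authors \cite{GradedFrobeniusClusterCategories}. The Yoneda functor $\Yonfun{\cT}\colon\cC\to\fpmod\cT$ and the identification $\Kgp{\cT}=\Kgp{\proj\cT}$ (via $h^{\cT}$, Definition~\ref{d:Yoneda-iso}) let us reinterpret $\pi^{\cC}_{\cT}$ and $\beta_{\cT}$ homologically: $h^{\cT}\ind{\cC}{\cT}[X]$ is the class of a projective resolution of $\Yonfun{\cT}X$ (Proposition~\ref{p:ind-proj-res}), and $h^{\cT}\beta_{\cT}[\Extfun{\cT}X]$ is (minus) the class of the length-$3$ projective resolution \eqref{eq:pdim3} of $\Extfun{\cT}X$. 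Now take $v\in\ker\pi^{\cC}_{\cT}$ and write $v=\sum_i n_i[T_i]$ with $T_i\in\indec\cT$; the condition $\pi^{\cC}_{\cT}(v)=0$ in $\Kgp{\cC}$ means $\sum_i n_i[T_i]_{\cC}=0$, i.e. this relation is a $\integ$-combination of the defining relations $[A]-[B]+[C]$ of conflations $A\infl B\defl C$ in $\cC$. Applying $\Yonfun{\cT}$ to each such conflation gives a four-term exact sequence of $\cT$-modules $0\to\Yonfun{\cT}A\to\Yonfun{\cT}B\to\Yonfun{\cT}C\to\Extfun{\cT}C\to0$ (using $\Extfun{\cT}A=0$ when $A\in\cT$... in general $\Extfun{\cT}A\to\Extfun{\cT}B$ is the relevant map, but the alternating sum of classes still works out), so that $[\Yonfun{\cT}A]-[\Yonfun{\cT}B]+[\Yonfun{\cT}C]=[\Extfun{\cT}C]-[\Extfun{\cT}B]+[\Extfun{\cT}A]$ in the Grothendieck group of finitely presented $\cT$-modules of finite projective dimension, which maps to $\Kgp{\proj\cT}$. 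Tracing through, the relation $\sum n_i[T_i]=0$ in $\Kgp{\cC}$ becomes, after applying this, precisely an expression of $v$ as a sum of terms of the form $\ind{\cC}{\cT}[\text{middle}]-\coind{\cC}{\cT}[\cdots]$ — more carefully, a sum of classes $\beta_{\cT}[\Extfun{\cT}X]$ — giving $v\in\image\beta_{\cT}$. The bookkeeping here is the heart of the matter, and it is exactly the computation carried out in \cite{Palu-Groth-gp,GradedFrobeniusClusterCategories}; I would cite those and indicate the adaptation rather than reproduce it.

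\textbf{Main obstacle.} The genuine difficulty is not the formal framework but ensuring the homological bookkeeping in the exact case is valid in the present (possibly Hom-infinite, non-noetherian) generality: one needs that applying $\Yonfun{\cT}$ to a conflation yields an exact sequence of finitely presented $\cT$-modules all of finite projective dimension, so that their classes live in a group where $\pi^{\cC}_{\cT}$ and $\beta_{\cT}$ can be compared, and that Schanuel-type arguments apply. These inputs are available — finite presentation from Proposition~\ref{p:ct-Yon-fp}, finite projective dimension (at most $3$) from Remark~\ref{r:pdim3}/Proposition~\ref{p:KR-pdim3}, and well-definedness of classes of resolutions from homotopy invariance — so the proof goes through, but verifying that the argument of \cite{Palu-Groth-gp} survives the removal of Hom-finiteness is the step demanding care.
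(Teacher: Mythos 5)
Your overall plan — easy inclusion, reduction to the exact case via Proposition~\ref{p:cl-cat-exact-lift}, exact case by citing Palu and \cite{GradedFrobeniusClusterCategories}, then a diagram chase over the quotient — is the same strategy the paper uses. Both the paper and you cite \cite[Lem.~2]{Palu-Groth-gp} for the exact (middle-row) input, and the large commutative diagram the paper writes down is essentially the one implicit in your diagram chase.

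However, there is a genuine gap in the reduction step. Your chase begins by lifting $v\in\ker\pi_{\cT}^{\cC}$ to $\hat v\in\Kgp{\widehat\cT}$ and then asserting that $\pi_{\widehat\cT}^{\cE}(\hat v)$ ``lies in the span of $[P]$, $P\in\cP$''. This is precisely exactness of
\[\begin{tikzcd}
\Kgp{\cP}\arrow{r}&\Kgp{\cE}\arrow{r}&\Kgp{\cC}\arrow{r}&0
\end{tikzcd}\]
at the middle term, and it is not automatic. Because $\cC=\cE/\cP$ is only an extriangulated quotient, the identification of $\ker(\Kgp{\cE}\to\Kgp{\cC})$ with the subgroup generated by classes of $\cP$-objects requires an argument; the paper obtains it by passing to bounded homotopy and derived categories, using the Verdier localisation exact sequence $\Kgp{\bhcat{\cP}}\to\Kgp{\bdcat{\cE}}\to\Kgp{\bdcat{\cE}/\bhcat{\cP}}\to0$, and then invoking Chen's results \cite[Prop.~3.5, Thm.~3.23]{Chen-exact-dg-2} together with \cite[Rem.~4.25]{ChenThesis} to identify $\Kgp{\bdcat{\cE}/\bhcat{\cP}}$ with $\Kgp{\cC}$ via the connective dg enhancement of the algebraic extriangulated category $\cC$. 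Similarly, the exactness of the column $\Kgp{\cP}\to\Kgp{\widehat\cT}\to\Kgp{\cT}\to0$ is verified via the bounded homotopy categories, following \cite[Lem.~9]{Palu-Groth-gp}. Your proposal treats these column exactnesses as given, but they are in fact the substantive new input needed to carry out the reduction. Relatedly, your ``main obstacle'' paragraph aims at the Hom-infinite generality of the exact case, but the actually delicate ingredient the paper has to supply is this extriangulated/dg identification for the columns. (Minor point: the four-term exact sequence you write after applying $\Yonfun{\cT}$ to a conflation is not quite right in general — the long exact sequence continues past $\Extfun{\cT}C$ — but since you defer the exact case to the cited sources, as the paper also does, this inaccuracy is not load-bearing.)
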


\begin{proof}
Let $\cE$ be an exact cluster category, let $\cP$ be a full and additively closed subcategory of projective objects in $\cE$ such that $\cE/\cP\simeq \cC$, and let $\widehat{\cT}\ctsubcat\cE$ be the lift of $\cT$.
In particular, $\stab{\widehat{\cT}}=\stab{\cT}$.
Consider the commutative diagram
\begin{equation}
\label{eq:partial-stab-diag}
\begin{tikzcd}[row sep=15pt]
&\Kgp{\cP}\arrow[equal]{r}\arrow{d}&\Kgp{\cP}\arrow{d}\\
\Kgp{\fpmod{\stab{\widehat{\cT}}}}\arrow{r}{\beta_{\widehat{\cT}}}\arrow[equal]{d}&\Kgp{\widehat{\cT}}\arrow{r}{\pi_{\widehat{\cT}}^{\cE}}\arrow{d}&\Kgp{\cE}\arrow{r}\arrow{d}&0\\
\Kgp{\fpmod{\stab{\cT}}}\arrow{r}{\beta_{\cT}}\arrow{r}&\Kgp{\cT}\arrow{r}{\pi_{\cT}^{\cC}}\arrow{d}&\Kgp{\cC}\arrow{r}\arrow{d}&0\\
&0&0
\end{tikzcd}
\end{equation}
in which the vertical arrows (and the horizontal ones labelled by some decoration of $\pi$) are the natural maps taking the class of an object in one category to its class in a second category to which it also belongs.

To see that the columns of \eqref{eq:partial-stab-diag} are exact, first note that the Grothendieck groups $\Kgp{\cP}$ and $\Kgp{\widehat{\cT}}$ of (split) exact categories identify with those of the bounded homotopy categories $\bhcat{\cP}$ and $\bhcat{\widehat{\cT}}$ respectively.
Hence, there is an exact sequence
\[\begin{tikzcd}
\Kgp{\cP}\arrow{r}&\Kgp{\widehat{\cT}}\arrow{r}&\Kgp{\bhcat{\widehat{\cT}}/\bhcat{\cP}}\arrow{r}&0,
\end{tikzcd}\]
which identifies with the middle column of \eqref{eq:partial-stab-diag} as in the proof of \cite[Lem.~9]{Palu-Groth-gp}.
Similarly, $\Kgp{\cE}$ identifies with the Grothendieck group $\Kgp{\bdcat{\cE}}$ of the bounded derived category of $\cE$, so that there is an exact sequence
\[\begin{tikzcd}
\Kgp{\cP}\arrow{r}&\Kgp{\cE}\arrow{r}&\Kgp{\bdcat{\cE}/\bhcat{\cP}}\arrow{r}&0.
\end{tikzcd}\]
But by \cite[Prop.~3.5, Thm.~3.23]{Chen-exact-dg-2}, the Grothendieck group $\Kgp{\bdcat{\cE}/\bhcat{\cP}}$ is naturally isomorphic to that of the algebraic extriangulated category $\cC\simeq\cE/\cP$ (which has a connective dg enhancement by \cite[Rem.~4.25]{ChenThesis}), yielding exactness of the right-hand column in \eqref{eq:partial-stab-diag}.

Now the middle row of \eqref{eq:partial-stab-diag} is exact by \cite[Lem.~2]{Palu-Groth-gp} (see also \cite[Proof of Thm.~3.12]{GradedFrobeniusClusterCategories}), and so a diagram chase shows that the lower row is also exact, as required.
\end{proof}

\begin{remark}
For each $T\in\exch{\cT}$, the cup product
\begin{equation}
\label{eq:2-ar-seq}
\begin{tikzcd}
T \arrow[infl]{r}& \exchmon{\cT}{T}{-} \arrow{r}& \exchmon{\cT}{T}{+} \arrow[defl]{r}& T
\end{tikzcd}
\end{equation}
of the two exchange conflations for $T$ lies entirely in $\cT$, and
\begin{equation}
\label{eq:mutrel} [T]-[\exchmon{\cT}{T}{-}]+[\exchmon{\cT}{T}{+}]-[T]=[\exchmon{\cT}{T}{+}]-[\exchmon{\cT}{T}{-}]=0
\end{equation}
is a relation in $\Kgp{\cC}$. For each simple $\stab{\cT}$-module $\simpmod{\cT}{T}=\Extfun{\cT}(\mut{\cT}{T})$, with $T\in\indec{\stab{\cT}}$, we calculate using Proposition~\ref{p:beta-proj-res}, together with \eqref{eq:ind-on-mut-T} and \eqref{eq:coind-on-mut-T}, that
\begin{equation}\label{eq:beta-ST-equals-R-minus-L}  \beta_{\cT}[\simpmod{\cT}{T}]=\coind{\cC}{\cT}[\mut{\cT}{T}]-\ind{\cC}{\cT}[\mut{\cT}{T}]=[\exchmon{\cT}{T}{-}]-[\exchmon{\cT}{T}{+}].
\end{equation}
If $\fd{\stab{\cT}}=\fpmod{\stab{\cT}}$ (e.g.\ if $\stab{\cT}$ is additively finite and maximally mutable), then the classes $[\simpmod{\cT}{T}]$, for $T\in\indec{\stab{\cT}}$, generate $\Kgp{\fpmod{\stab{\cT}}}$.
It then follows from Theorem~\ref{t:ker-pi-T} that the relations \eqref{eq:mutrel}, coming from exchange conflations, generate $\ker \pi_{\cT}^{\cC}$, cf.\ \cite[Thm.~4.4]{Murphy} (a minor correction to \cite[Thm.~10]{Palu}) and \cite[Thm.~3.12(a)]{GradedFrobeniusClusterCategories}.

\end{remark}

In some cases, we are also able to identify the kernel of $\beta_{\cT}$.

\begin{proposition}\label{p:ker-beta} Let $\cC$ be a cluster category and $\cT\ctsubcat\cC$.
\begin{enumerate}
\item\label{p:ker-beta-exact} If $\cC$ is exact and Hom-finite, then $\beta_{\cT}$ is injective.
\item\label{p:ker-beta-triang} If $\cC$ is triangulated and $\cT$ is additively finite, then $\ker{\beta_{\cT}}\tensor_{\integ} \bK$ is isomorphic to $\dual{\Kgp{\cC}}\tensor_{\integ} \bK$.
If $\cC$ is also skew-symmetric, then $\ker{\beta_{\cT}}\iso\dual{\Kgp{\cC}}$.
\end{enumerate}
\end{proposition}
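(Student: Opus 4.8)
The plan is to prove the two parts separately, both starting from the exact sequence \(\Kgp{\fpmod{\stab{\cT}}}\xrightarrow{\beta_{\cT}}\Kgp{\cT}\xrightarrow{\pi_{\cT}^{\cC}}\Kgp{\cC}\to0\) of Theorem~\ref{t:ker-pi-T}, which pins down \(\image{\beta_{\cT}}\) but leaves \(\ker{\beta_{\cT}}\) undetermined; each part then supplies a different extra ingredient to control this kernel.

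For part~\ref{p:ker-beta-triang}, since \(\cC\) is triangulated and \(\cT\) is additively finite, \(\cC\) has finite rank, so \(\cT\) is maximally mutable (Corollary~\ref{c:weak-clust-struct}) and \(\stab{\cC}=\cC\); consequently \(\fpmod{\stab{\cT}}=\fd{\cT}=\lfd{\cT}\) and \(\beta_{\cT}\colon\Kgp{\fd{\cT}}\to\Kgp{\cT}\) is a morphism of free abelian groups of equal finite rank. First I would dualise the sequence of Theorem~\ref{t:ker-pi-T}: applying \(\Hom{\integ}{\blank}{\integ}\), which is left exact and sends the surjection \(\pi_{\cT}^{\cC}\) to an injection, identifies \(\dual{\Kgp{\cC}}\) with \(\ker{\adj{\beta_{\cT}}}\), where \(\adj{\beta_{\cT}}=\dual{(\beta_{\cT})}\colon\dual{\Kgp{\cT}}\to\dual{\Kgp{\fd{\cT}}}\). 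The remaining task is to pass from \(\adj{\beta_{\cT}}\) back to \(\beta_{\cT}\). For this I would use the non-degenerate form \(\canform{\blank}{\blank}{\cT}\) and the associated injections \(\sdual{\cT},\pdual{\cT}\) of Proposition~\ref{p:K-duality}: combining \(\sform{[M]}{[N]}{\cT}=\canform{[M]}{\beta_{\cT}[N]}{\cT}\) with the skew-symmetry of \(\sform{\blank}{\blank}{\cT}\) on \(\Kgp{\fd{\cT}}\) (Lemma~\ref{l:s-form-skew-sym}) and unwinding the adjunctions yields the identity \(\adj{\beta_{\cT}}\circ\sdual{\cT}=-\pdual{\cT}\circ\beta_{\cT}\). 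Since \(\pdual{\cT}\) is injective this gives \(\ker{\beta_{\cT}}=\ker(\adj{\beta_{\cT}}\circ\sdual{\cT})\), which \(\sdual{\cT}\) identifies with \(\ker{\adj{\beta_{\cT}}}\cap\image(\sdual{\cT})\). Because \(\image(\sdual{\cT})\) has finite index in \(\dual{\Kgp{\cT}}\) (Proposition~\ref{p:im-delta}, applicable as \(\cT\) is Hom-finite, hence pseudocompact), tensoring with \(\bK\) makes \(\sdual{\cT}\) invertible, so \(\ker{\beta_{\cT}}\tensor_{\integ}\bK\iso\ker{\adj{\beta_{\cT}}}\tensor_{\integ}\bK\iso\dual{\Kgp{\cC}}\tensor_{\integ}\bK\). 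When \(\cC\) is additionally skew-symmetric, \(\sdual{\cT}\) is already an isomorphism (Corollary~\ref{c:ss-implies-isos}), so the same identifications hold over \(\integ\), giving \(\ker{\beta_{\cT}}\iso\dual{\Kgp{\cC}}\).

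For part~\ref{p:ker-beta-exact} the assertion is injectivity of \(\beta_{\cT}\), which is genuinely stronger than what Theorem~\ref{t:ker-pi-T} provides, and it is here that the exact (rather than merely extriangulated) hypothesis enters. The plan is to exploit the projective-resolution description: by Proposition~\ref{p:beta-proj-res} and the construction \eqref{eq:pdim3}, the Yoneda image \(-h^{\cT}\beta_{\cT}[M]\) is the class in \(\Kgp{\proj{\cT}}\) of a finite projective resolution of \(M\) as a \(\cT\)-module, which exists by Proposition~\ref{p:KR-pdim3}. Since \(\cC\) is Hom-finite, the projective \(\cT\)-modules are locally finite-dimensional, so composing the resolution functor \(\fpmod{\stab{\cT}}\to\per{\cT}\) with the inclusion \(\per{\cT}\hookrightarrow\bdcat{\lfd{\cT}}\) recovers, up to natural isomorphism, the inclusion \(\fpmod{\stab{\cT}}\hookrightarrow\lfd{\cT}\); hence injectivity of \(\beta_{\cT}\) reduces to injectivity of the induced map \(\Kgp{\fpmod{\stab{\cT}}}\to\Kgpnum{\lfd{\cT}}\), equivalently to non-degeneracy of \(\sform{\blank}{\blank}{\cT}\) in its second argument.

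I expect this last reduction to be the main obstacle. To carry it out I would invoke Keller--Reiten's relative \(3\)-Calabi--Yau property \cite{KellerReiten}, which rewrites \(\sform{\blank}{\blank}{\cT}\) as the manifestly (skew-)symmetric Hom/Ext expression of \eqref{eq:s-form-rewrite} and \eqref{eq:s-form-rewrite2}, thereby reducing the question to how the form behaves on classes of simple \(\stab{\cT}\)-modules; the delicate point is to handle finitely presented \(\stab{\cT}\)-modules that need not be finite-dimensional when \(\cT\) fails to be additively finite, where the clean ``basis of simples'' argument available in the additively finite case is not directly available.
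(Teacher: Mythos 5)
Your treatment of part~\ref{p:ker-beta-triang} is correct and is essentially the paper's argument in slightly different clothing: rather than setting up the explicit commutative diagram \eqref{eq:duality-diag} and constructing the comparison map \(\kappa\), you work directly with the composition identity \(\adj{\beta_{\cT}}\circ\sdual{\cT}=-\pdual{\cT}\circ\beta_{\cT}\), but the ingredients (Theorem~\ref{t:ker-pi-T}, dualisation, Corollary~\ref{c:beta-skew-symmetric}, injectivity of \(\sdual{\cT}\), rank comparison over \(\bK\), and Corollary~\ref{c:ss-implies-isos} in the skew-symmetric case) are identical.

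Part~\ref{p:ker-beta-exact} is where you diverge from the paper, and where there is a genuine gap. The paper's proof is essentially a pointer: it cites Fu--Keller~\cite[Rem.~4.5]{FuKeller}, or refers forward to Corollary~\ref{c:Hom-finite-exact-full-rank}, which is proved by an entirely different idea — the existence of a canonical quantum datum given by the Hom-difference formula (Theorem~\ref{t:canonical-p-form}), whose compatibility condition \(\adj{\lambda_{\cT}}\circ\beta_{\cT}=2(\sdual{\cT}\circ\sinc{\cT})\) forces \(\beta_{\cT}\) to be injective because the right-hand side is a composition of injections. Your attempted homological argument is in the spirit of Fu--Keller rather than the paper's internal route, so it is a legitimate alternative strategy, and the first reduction (using the projective-resolution description to pass to the natural map \(\Kgp{\fpmod{\stab{\cT}}}\to\Kgpnum{\lfd{\cT}}\)) is sound given Hom-finiteness. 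But the next sentence, claiming this is ``equivalently'' non-degeneracy of \(\sform{\blank}{\blank}{\cT}\) in its second argument, is a tautology rather than a reduction: because \(\canform{\blank}{\blank}{\cT}\) is non-degenerate, \(\sform{\blank}{[N]}{\cT}=0\) for all first arguments is by definition the statement \(\beta_{\cT}[N]=0\). And the tool you then reach for — Keller--Reiten's relative \(3\)-Calabi--Yau property via \eqref{eq:s-form-rewrite}--\eqref{eq:s-form-rewrite2} — supplies skew-symmetry of the form, not non-degeneracy; Euler forms of abelian categories are frequently degenerate even when symmetric. What is actually needed, and what Fu--Keller exploit, is the \emph{finite global dimension} of \(\cT\) (guaranteed here by Proposition~\ref{p:KR-pdim3}), which in the additively finite, Hom-finite case gives \(\integ\)-invertibility of the Cartan matrix, and which must be extended carefully beyond the additively finite setting. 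You correctly flag the infinite-rank case as delicate, but you understate the issue: even in the additively finite case your plan as written does not close the gap, because the CY property alone is not the right ingredient.
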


\begin{proof}\mbox{}
\begin{enumerate}
\item See Corollary~\ref{c:Hom-finite-exact-full-rank} below, or \cite[Rem.~4.5]{FuKeller} (whose assumption that $\gldim{\cT}<\infty$ is a consequence of our assumptions on $\cC$).
\item Abbreviating $\beta=\beta_{\cT}$ and $\pi=\pi_{\cT}^{\cC}$, by Theorem~\ref{t:ker-pi-T} we have an exact sequence
\begin{equation}\label{eq:exact-seq-with-K} \begin{tikzcd}
0 \arrow{r} & \ker{\beta_{\cT}} \arrow{r}{i} & \Kgp{\fd{\cT}} \arrow{r}{\beta} & \Kgp{\cT} \arrow{r}{\pi} & \Kgp{\cC} \arrow{r}&0.
\end{tikzcd}\end{equation}
Take the dual sequence
\begin{equation}\label{eq:exact-seq-dual} \begin{tikzcd}[column sep=23pt]
0 \arrow{r} & \dual{\Kgp{\cC}} \arrow{r}{\dual{\pi}} & \dual{\Kgp{\cT}} \arrow{r}{\dual{\beta}} & \dual{\Kgp{\fd \cT}} \arrow{r}{\dual{i}} & \dual{(\ker{\beta_{\cT}})} \arrow{r}&0
\end{tikzcd}\end{equation}
given by applying $\Hom{\integ}{\blank}{\integ}$ to \eqref{eq:exact-seq-with-K}.
This dual sequence need not be exact at $\dual{\Kgp{\fd{\cT}}}$, since $\Kgp{\cC}$ need not be free, but it is exact elsewhere since the other Grothendieck groups appearing in \eqref{eq:exact-seq-with-K}, as well as the image of $\beta$, are free.
This uses the fact that $\cC$ is a triangulated cluster category, and hence Krull--Schmidt.
In particular, $\dual{\pi}$ is the kernel of $\dual{\beta}$.

We now form a commutative diagram
\begin{equation}
\label{eq:duality-diag}
\begin{tikzcd}
0\arrow{r}&\ker{\beta_{\cT}}\arrow{r}{-i}\arrow{d}{\kappa}&\Kgp{\fd{\cT}}\arrow{r}{-\beta}\arrow{d}{\sdual{\cT}}&\Kgp{\cT}\arrow{d}{\pdual{\cT}}\\
0 \arrow{r} & \dual{\Kgp{\cC}} \arrow{r}{\dual{\pi}} & \dual{\Kgp{\cT}} \arrow{r}{\dual{\beta}} & \dual{\Kgp{\fd \cT}}.
\end{tikzcd}
\end{equation}
Indeed, the right-hand square commutes by Corollary~\ref{c:beta-skew-symmetric}.
Since $\dual{\pi}$ is the kernel of $\dual{\beta}$, there is an induced map $\kappa$ such that the left-hand square commutes.

Recall from Proposition~\ref{p:K-duality} that $\sdual{\cT}$ is injective, and hence so is $\kappa$.
Because $\ker{\beta_{\cT}}$ and $\dual{\Kgp{\cC}}$ have the same rank by \eqref{eq:exact-seq-with-K}, it follows that $\ker{\beta_{\cT}}\tensor_{\integ} \bK\iso \dual{\Kgp{\cC}}\tensor_{\integ} \bK$.
Since $\cT$ is additively finite, $\sdual{\cT}$ and $\pdual{\cT}$ are isomorphisms when $\cC$ is skew-symmetric, and hence so is $\kappa$.\qedhere
\end{enumerate}
\end{proof}

\begin{remark}
It is not true for a general cluster category $\cC$ that $\ker \beta_{\cT}\iso \dual{\Kgp{\stab{\cC}}}$: a counterexample is provided by the module category of the preprojective algebra of type $\mathsf{A}_2$, which is an exact cluster category.
In this example, $\beta_{\cT}$ is injective by Proposition~\ref{p:ker-beta}\ref{p:ker-beta-exact}, but $\rank\Kgp{\stab{\cC}}=1$.
\end{remark}

The following proposition translates a result of Melo--Nájera Chávez \cite[Cor.~3.4]{MNC} into our language; we also give a brief proof to demonstrate the use of adjunction to deduce results such as this.
This result uses the sets $\gvecplus{\cU}{\cT}$ and $\cvecplus{\cU}{\cT}$, as well as the cones $G_{\cT}(\cU)=\real_{+}\gvecplus{\cT}{\cU}\subseteq\Kgp{\cT}\tensor_\integ \real$ and $C_{\cT}(\cU)=\real_+\cvecplus{\cT}{\cU}\subseteq\Kgp{\fd \cT}\tensor_\integ \real$, from Section~\ref{s:sign-coherence}.
We also use the set
\[ C_{\cT}(U)^{\circ}=\real_{+}\{ m \in \Kgp{\fd \cT} \mid \canform{m}{\adj{\beta}_{\cT}C_{\cT}(\cU)}{\cT}\geq 0 \} \]
from Remark~\ref{r:scattering}.

\begin{proposition}
\label{p:beta-maps-cones}
Let $\cC$ be a triangulated cluster category with finite rank and let $\cTU \ctsubcat \cC$.
Then for $\beta_{\cT}^{\real}=\beta_{\cT}\tensor\real\colon \Kgp{\fd \cT}\tensor \real \to \Kgp{\cT}\tensor \real$, we have
\[ (\beta_{\cT}^{\real})^{-1}G_{\cT}(\cU)=C_{\cT}(\cU)^{\circ}. \]
\end{proposition}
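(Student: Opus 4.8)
The plan is to realise both $(\beta_{\cT}^{\real})^{-1}G_{\cT}(\cU)$ and $C_{\cT}(\cU)^{\circ}$ as the solution set of one and the same finite family of linear inequalities on $\Kgp{\fd{\cT}}\tensor\real$, indexed by $\indec{\cU}$. First comes the finite-rank bookkeeping: a triangulated cluster category of finite rank is Hom-finite, hence compact (Remark~\ref{r:rad-pseudocompact}) and admits a weak cluster structure (Corollary~\ref{c:weak-clust-struct}\ref{c:weak-clust-struct-finite}); moreover $\stab{\cC}=\cC$, so $\stab{\cT}=\cT$ is maximally mutable and $\fd{\cT}=\fpmod{\cT}=\lfd{\cT}$ with $\Kgpnum{\lfd{\cT}}=\Kgp{\fd{\cT}}$. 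Consequently $\beta_{\cT}$ acts on $\Kgp{\fd{\cT}}$ (Definition~\ref{d:beta}), the form $\sform{\blank}{\blank}{\cT}$ is a skew-symmetric bilinear form on $\Kgp{\fd{\cT}}$ (Lemma~\ref{l:s-form-skew-sym}), the maps $\indbar{\cU}{\cT}$ are defined (Definition~\ref{d:ind-bar-def}), and $\adj{\beta}_{\cT}$ is the adjoint of $\beta_{\cT}$ with respect to $\canform{\blank}{\blank}{\cT}$, so that $\canform{x}{\beta_{\cT}y}{\cT}=\canform{y}{\adj{\beta}_{\cT}x}{\cT}$ for all $x,y\in\Kgp{\fd{\cT}}$ (Proposition~\ref{p:adjunction}, together with Corollary~\ref{c:beta-skew-symmetric}). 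From here on all maps and forms are extended $\real$-linearly and $m$ ranges over $\Kgp{\fd{\cT}}\tensor\real$.

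The heart of the argument is to translate membership $\beta_{\cT}^{\real}m\in G_{\cT}(\cU)$ into inequalities and then move $\beta_{\cT}$ across the pairing. By Proposition~\ref{p:ind-coind-inverse} the map $\ind{\cU}{\cT}\colon\Kgp{\cU}\isoto\Kgp{\cT}$ is an isomorphism, so $\gvecplus{\cT}{\cU}=\{\ind{\cU}{\cT}[V]:V\in\indec{\cU}\}$ is an $\real$-basis of $\Kgp{\cT}\tensor\real$; writing $\beta_{\cT}^{\real}m=\sum_{V\in\indec{\cU}}\lambda_V\ind{\cU}{\cT}[V]$, membership $\beta_{\cT}^{\real}m\in G_{\cT}(\cU)=\real_{+}\gvecplus{\cT}{\cU}$ is equivalent to $\lambda_V\geq 0$ for all $V$. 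Pairing with $\indbar{\cU}{\cT}[\simpmod{\cU}{U}]$ and using the orthogonality relation $\canform{\indbar{\cU}{\cT}[\simpmod{\cU}{U}]}{\ind{\cU}{\cT}[V]}{\cT}=\dimdivalg{U}\delta_{UV}$ recorded after Corollary~\ref{c:sign-coh-c-vectors} gives $\canform{\indbar{\cU}{\cT}[\simpmod{\cU}{U}]}{\beta_{\cT}^{\real}m}{\cT}=\dimdivalg{U}\lambda_U$, and since $\dimdivalg{U}>0$ this shows that $\beta_{\cT}^{\real}m\in G_{\cT}(\cU)$ if and only if $\canform{\indbar{\cU}{\cT}[\simpmod{\cU}{U}]}{\beta_{\cT}^{\real}m}{\cT}\geq 0$ for every $U\in\indec{\cU}$. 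Applying the adjunction identity with $x=\indbar{\cU}{\cT}[\simpmod{\cU}{U}]$ and $y=m$ rewrites the left-hand side as $\canform{m}{\adj{\beta}_{\cT}\indbar{\cU}{\cT}[\simpmod{\cU}{U}]}{\cT}$. Hence $m\in(\beta_{\cT}^{\real})^{-1}G_{\cT}(\cU)$ if and only if $\canform{m}{\adj{\beta}_{\cT}v}{\cT}\geq 0$ for every $v$ in the generating set $\cvecplus{\cT}{\cU}=\{\indbar{\cU}{\cT}[\simpmod{\cU}{U}]:U\in\indec{\cU}\}$ of the cone $C_{\cT}(\cU)$; by linearity of $\canform{m}{\adj{\beta}_{\cT}(\blank)}{\cT}$, this is equivalent to demanding it for all $v\in C_{\cT}(\cU)=\real_{+}\cvecplus{\cT}{\cU}$, which is exactly the defining condition for $m\in C_{\cT}(\cU)^{\circ}$ (the leading $\real_{+}$ in that definition being redundant, since the solution set of those inequalities is already closed under nonnegative scaling). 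This yields the claimed equality.

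None of the individual steps is deep; the one point requiring care — and the place where the ``use of adjunction'' advertised in Remark~\ref{r:scattering} actually does the work — is keeping the finite-rank Grothendieck-group identifications straight, so that $\adj{\beta}_{\cT}$, $\sform{\blank}{\blank}{\cT}$ and $\indbar{\cU}{\cT}$ all genuinely act on $\Kgp{\fd{\cT}}$, and then tracking the adjunction identity and its sign precisely enough that the inequalities transfer without a spurious flip. As a consistency check I would also record the alternative route through tropical duality (Theorem~\ref{t:exch-isos}): using $(\ind{\cU}{\cT})^{-1}=\coind{\cT}{\cU}$ and the commuting square $\coind{\cT}{\cU}\circ\beta_{\cT}=\beta_{\cU}\circ\coindbar{\cT}{\cU}$ shows $m\in(\beta_{\cT}^{\real})^{-1}G_{\cT}(\cU)$ iff $\beta_{\cU}^{\real}(\coindbar{\cT}{\cU}m)$ lies in the cone spanned by $\{[U]:U\in\indec{\cU}\}$ in $\Kgp{\cU}\tensor\real$, while the adjoint square $\adj{\beta}_{\cT}\circ\indbar{\cU}{\cT}=\ind{\cU}{\cT}\circ\adj{\beta}_{\cU}$ together with the adjunction of $\ind{\cU}{\cT}$ and $\coindbar{\cT}{\cU}$ reduces $m\in C_{\cT}(\cU)^{\circ}$ to that same condition; the direct argument above is simply shorter.
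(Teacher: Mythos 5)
Your proof is correct and follows essentially the same route as the paper's: expand \(\beta_{\cT}^{\real}m\) in the basis \(\gvecplus{\cT}{\cU}\), pair against \(\indbar{\cU}{\cT}[\simpmod{\cU}{U}]\) using the orthogonality relation \eqref{eq:ind-indbar-duality}, and move \(\beta_{\cT}\) across the pairing via adjunction of \(\beta_{\cT}\) and \(\adj{\beta}_{\cT}\). Your version is slightly more explicit in the forward direction (writing the coefficients \(\lambda_V\) rather than arguing on extremal rays as the paper does), and the reference to Corollary~\ref{c:beta-skew-symmetric} is unnecessary—only the defining adjunction identity for \(\adj{\beta}_{\cT}\) is used, not skew-symmetry—but neither point affects correctness.
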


\begin{proof}
Let $w\in (\beta_{\cT}^{\real})^{-1}G_{\cT}(\cU)$.
By $\real_+$-linearity, it suffices to consider the case that $\beta_{\cT}^{\real}w=\ind{\cU}{\cT}[V]$ for some $V\in \indec \cU$.
Then, for $U\in \indec \cU$, we have
\begin{equation}
\label{eq:ind-indbar-duality}
\canform{w}{\adj{\beta}_{\cT}\indbar{\cU}{\cT}[\simpmod{\cU}{U}]}{\cT}=\canform{\indbar{\cU}{\cT}[\simpmod{\cU}{U}]}{\beta_{\cT}w}{\cT}=\canform{\indbar{\cU}{\cT}[\simpmod{\cU}{U}]}{\ind{\cU}{\cT}[V]}{\cT}=\dimdivalg{U}\delta_{U,V}\geq0
\end{equation}
and hence $w\in C_{\cT}(\cU)^{\circ}$.
Conversely, for $w\in C_{\cT}(\cU)^{\circ}$ and $U\in \indec \cU$,
\begin{equation}
\label{eq:indbar-beta}
\canform{\indbar{\cU}{\cT}[\simpmod{\cU}{U}]}{\beta_{\cT}w}{\cT} =\canform{w}{\adj{\beta}_{\cT}\indbar{\cU}{\cT}[\simpmod{\cU}{U}]}{\cT}\geq0.
\end{equation}
Recall from Proposition~\ref{p:ind-coind-inverse} that $\{ \ind{\cU}{\cT}[U] \mid U\in \indec \cU \}$ is a basis for $\Kgp{\cT}\tensor_{\integ} \real$.
By \eqref{eq:ind-indbar-duality}, the left-hand side of \eqref{eq:indbar-beta} is a positive multiple of the coefficient of $\ind{\cU}{\cT}{[U]}$ in an expression for $\beta_{\cT}w$.
Thus, this coefficient is non-negative, and $\beta_{\cT}w\in G_{\cT}(\cU)$.
\end{proof}

To conclude this subsection, we return to the question of the additivity (or otherwise) of the index and coindex on conflations in $\cC$, and see that $\beta_{\cT}$ can be used to measure this.

\begin{proposition}
\label{p:beta-add-confl}
Let $\cC$ be a cluster category with cluster-tilting subcategory $\cT$.
For any conflation $X\stackrel{i}{\infl} Y\stackrel{p}{\defl} Z \confl$ in $\cC$, applying the restricted Yoneda functors $\Yonfun{\cT}$ and $\opYonfun{\cT}$ yields exact sequences
\[\begin{tikzcd}[row sep=5pt]
\Yonfun{\cT}X\arrow{r}&\Yonfun{\cT}Y\arrow{r}&\Yonfun{\cT}Z\arrow{r}&M\arrow{r}&0\\
\opYonfun{\cT}Z\arrow{r}&\opYonfun{\cT}Y\arrow{r}&\opYonfun{\cT}X\arrow{r}&N\arrow{r}&0
\end{tikzcd}\]
of $\cT$ and $\op{\cT}$-modules respectively, by defining $M=\coker{\Yonfun{\cT}}p$ and $N=\coker{\opYonfun{\cT}}i$.
Then
\begin{align*}
\ind{\cC}{\cT}[X]+\ind{\cC}{\cT}[Z]&=\ind{\cC}{\cT}[Y]-\beta_{\cT}[M],\\
\coind{\cC}{\cT}[X]+\coind{\cC}{\cT}[Z]&=\coind{\cC}{\cT}[Y]+\beta_{\cT}[\dual{N}].\\
\end{align*}
\end{proposition}

\begin{proof}
We first observe that $M$ is a submodule of $\Extfun{\cT}{X}$, while $\dual{N}$ is a quotient of $\Extfun{\cT}{Z}$, and so these modules lie in $\fd{\stab{\cT}}$, as is necessary to apply $\beta_{\cT}$ to their classes.
The claimed formulæ thus make sense, and, in the now familiar way, it suffices to prove them in the case that $\cC$ is exact, with the general case then following by partial stabilisation.

When $\cC$ is exact, we even have an exact sequence
\begin{equation}
\label{eq:Yonfun-conflation}
\begin{tikzcd}
0\arrow{r}&\Yonfun{\cT}X\arrow{r}&\Yonfun{\cT}Y\arrow{r}&\Yonfun{\cT}Z\arrow{r}&M\arrow{r}&0.
\end{tikzcd}
\end{equation}
Recall that $-\beta_{\cT}[M]$ corresponds to the class of a projective resolution of $M$ under the natural isomorphism $\Kgp{\cT}=\Kgp{\proj{\cT}}$.
Moreover, by Proposition~\ref{p:ind-proj-res}, the index $\ind{\cC}{\cT}[X]$ of any $X\in\cC$ corresponds under this isomorphism to the class of any projective resolution of the $\cT$-module $\Yonfun{\cT}{X}$.
Thus, applying the horseshoe lemma to \eqref{eq:Yonfun-conflation}, we have
\[
\ind{\cC}{\cT}[X]-\ind{\cC}{\cT}[Y]+\ind{\cC}{\cT}[Z]-(-\beta_{\cT}[M])=0
\]
and thus the first identity.

We may now deduce the second identity by applying the first to $\op{\cC}$.
So doing, we find that
\[
\ind{\op{\cC}}{\op{\cT}}[X]+\ind{\op{\cC}}{\op{\cT}}[Z]=\ind{\op{\cC}}{\op{\cT}}[Y]-\beta_{\op{\cT}}[N]=0.
\]
Now $\ind{\op{\cC}}{\op{\cT}}=\coind{\cC}{\cT}$ by Remark~\ref{r:ind-op}, and $\beta_{\op{\cT}}[N]=-\beta_{\cT}[\dual{N}]$ by Proposition~\ref{p:beta-op}, which gives the result.
\end{proof}

\begin{corollary}[cf.~{\cite[Prop.~2.2]{Palu}}]
\label{p:ind-coind-additive-if-map-zero}
Let $\cC$ be a cluster category, $\cT \ctsubcat \cC$ and $X\stackrel{i}{\infl} Y\stackrel{p}{\defl} Z \confl$ a conflation in $\cC$.
\begin{enumerate}
\item If $\Extfun{\cT}i=\Ext{1}{\cC}{\blank}{i}|_{\cT}$ is injective then the index is additive on this conflation, i.e.
\[ \ind{\cC}{\cT}[X]+\ind{\cC}{\cT}[Z]=\ind{\cC}{\cT}[Y].\]
\item If $\Ext{1}{\cC}{p}{\blank}|_{\cT}$ is injective then the coindex is additive on this conflation, i.e.
\[ \coind{\cC}{\cT}[X]+\coind{\cC}{\cT}[Z]=\coind{\cC}{\cT}[Y]. \] 
\end{enumerate}
\end{corollary}
\begin{proof}
This follows from Proposition~\ref{p:beta-add-confl}, since $M=\Ker\Extfun{\cT}{i}$ and $N=\Ker\Ext{1}{\cC}{p}{\blank}|_{\cT}$ by the long-exact sequence of extension groups.
\end{proof}

\subsection{Compositions of indices and coindices}
\label{ss:compositions}

We continue to build up a calculus for the index and coindex maps and their adjoints. 
Given three cluster-tilting subcategories $\cTU$ and $\cV$ of a cluster category $\cC$, the various possible compositions of index and coindex provide four maps $\Kgp{\cV}\to\Kgp{\cT}$ factoring over $\Kgp{\cU}$.
The goal of this subsection is to compare these compositions to the direct maps $\ind{\cV}{\cT},\coind{\cV}{\cT}\colon\Kgp{\cV}\to\Kgp{\cT}$.
This will in particular allow us to fully justify our claim from Section~\ref{s:mutation} that these maps are the counterparts to $\mathbf{g}$-vectors and $\mathbf{c}$-vectors.

In what follows, it may be helpful to think of $\cT$ as a `root', i.e.\ a fixed initial cluster-tilting subcategory, and $\cU$, $\cV$ being two other cluster-tilting subcategories away from the root.
The case where $\cU$ and $\cV$ are related by a single mutation will be of particular interest, but we do not assume this (or even that $\cU$ and $\cV$ are related by a longer sequence of mutations) in general.

Given $X\in\cC$ and $\cU\ctsubcat\cC$, we may choose $\cU$-coindex and $\cU$-index conflations
\begin{equation}
\label{eq:ind-coind-for-error}
\begin{tikzcd}[row sep=0pt]
X\arrow[infl]{r}{i_{L}^{X}}&\leftapp{\cU}{X}\arrow[defl]{r}{p_{L}^{X}}&\leftcok{\cU}{X}\arrow[confl]{r}&,\end{tikzcd}\quad\begin{tikzcd}
\rightker{\cU}{X}\arrow[infl]{r}{i_{R}^{X}}&\rightapp{\cU}{X}\arrow[defl]{r}{p_{R}^{X}}&X\arrow[confl]{r}&\phantom{}
\end{tikzcd}
\end{equation}
for $X$, and consider the resulting exact sequences
\begin{equation}
\label{eq:lr-seqs}
\begin{tikzcd}[column sep=40pt, row sep=0pt]
\Ext{1}{\cC}{\blank}{X}\arrow{r}{\Ext{1}{\cC}{\blank}{i_{L}^{X}}}&\Ext{1}{\cC}{\blank}{\leftapp{\cU}{X}}\arrow{r}{\Ext{1}{\cC}{\blank}{p_{L}^{X}}}&\Ext{1}{\cC}{\blank}{\leftcok{\cU}{X}}, \\
\Ext{1}{\cC}{\blank}{\rightker{\cU}{X}}\arrow{r}{\Ext{1}{\cC}{\blank}{i_{R}^{X}}}&\Ext{1}{\cC}{\blank}{\rightapp{\cU}{X}}\arrow{r}{\Ext{1}{\cC}{\blank}{p_{R}^{X}}}&\Ext{1}{\cC}{\blank}{X}, 
\end{tikzcd}
\end{equation}
of functors. 
We obtain four $\cC$-modules, defined by
\begin{equation}
\label{eq:errors}
\begin{aligned}
\lefterror{1}{\cU}{X} & =\Ker{\Ext{1}{\cC}{\blank}{i_{L}^{X}}}, &\quad
\lefterror{2}{\cU}{X} & =\Coker{\Ext{1}{\cC}{\blank}{p_{L}^{X}}},\\
\righterror{1}{\cU}{X} & =\Ker{\Ext{1}{\cC}{\blank}{i_{R}^{X}}}, &\quad
\righterror{2}{\cU}{X} & =\Coker{\Ext{1}{\cC}{\blank}{p_{R}^{X}}}.
\end{aligned}
\end{equation}
These functors depend only on $X$ and $\cU$, and not on the choice of conflations \eqref{eq:ind-coind-for-error}.
Since $\Ext{1}{\cC}{P}{\blank}=0$ for any projective-injective $P$, each descends naturally to a $\stab{\cC}$-module. 
Moreover, since $\Ext{1}{\cC}{\blank}{P}=0$ for any projective-injective $P$, these $\stab{\cT}$-modules can be computed from conflations \eqref{eq:ind-coind-for-error} taken in any partial stabilisation $\cC/\cP$.
In particular, each of $\lefterror{i}{\cU}{X}$ and $\righterror{i}{\cU}{X}$ depends only on the class of $X$ in the stable category $\stab{\cU}$, and can be computed from $\cU$-coindex and $\cU$-index triangles in this stable category.

Restricting the $\stab{\cC}$-modules \eqref{eq:errors} to $\cT\ctsubcat\cC$ gives four $\stab{\cT}$-modules.
These restrictions are finitely presented since $\fpmod{\stab{\cT}}$ is abelian \cite[Prop.~2.1(a)]{KellerReiten} and $\Extfun{\cT}{Y}$ is finitely presented for all $Y\in\cC$ (Proposition~\ref{p:equiv-to-mod}), and they are also locally finite-dimensional since $\stab{\cC}$ is Hom-finite.
In some cases, they are even finite-dimensional.

\begin{proposition}
\label{p:error-term-fd}
Let $\cC$ be a cluster category and let $\cTU,\cV\ctsubcat\cC$.
\begin{enumerate}
\item\label{p:error-term-fd-add-finite} If $\stab{\cC}$ has finite rank, then $\lefterror{i}{\cU}X|_{\cT},\righterror{i}{\cU}X|_{\cT}\in\fd{\stab{\cT}}$ for all $X\in\cC$.
\item\label{p:error-term-fd-V-reachable} If $\cV$ is reachable from $\cT$, then $\lefterror{1}{\cU}V|_{\cT},\righterror{2}{\cU}V|_{\cT}\in\fd{\stab{\cT}}$ for all $V\in\cV$.
\item\label{p:error-term-fd-U-reachable} If $\cU$ is reachable from $\cT$, then $\lefterror{2}{\cU}X|_{\cT},\righterror{1}{\cU}X|_{\cT}\in\fd{\stab{\cT}}$ for all $X\in\cC$.
\end{enumerate}
\end{proposition}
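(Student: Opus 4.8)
The plan is to prove each part by reducing to a suitable finite-dimensionality statement for modules of the form $\Extfun{\cT}{Y}$, using the equivalence $\Extfun{\cT}\colon\cC/\cT\simeq\fpmod{\stab{\cT}}$ of Proposition~\ref{p:equiv-to-mod} together with the long exact sequences \eqref{eq:lr-seqs} restricted to $\cT$. First I would record the key observation that each of $\lefterror{i}{\cU}{X}$ and $\righterror{i}{\cU}{X}$, when restricted to $\cT$, fits into a four-term exact sequence of $\stab{\cT}$-modules obtained from the relevant row of \eqref{eq:lr-seqs}: for instance $\righterror{1}{\cU}{X}|_{\cT}$ sits inside $\Extfun{\cT}(\rightker{\cU}{X})$, while $\lefterror{2}{\cU}{X}|_{\cT}$ is a quotient of $\Extfun{\cT}(\leftcok{\cU}{X})$, and so on. Since $\fd{\stab{\cT}}$ is closed under sub- and quotient objects inside $\fpmod{\stab{\cT}}$ (indeed inside $\lfd{\stab{\cT}}$), it suffices in each case to show that one particular module $\Extfun{\cT}(Y)$, for $Y$ one of the objects appearing in a $\cU$-index or $\cU$-coindex conflation for $X$, is finite-dimensional.

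For part~\ref{p:error-term-fd-add-finite}, the argument is immediate: if $\stab{\cT}$ is additively finite then $\lfd{\stab{\cT}}=\fd{\stab{\cT}}$, so every locally finite-dimensional $\stab{\cT}$-module — in particular all four restricted error modules, which we already noted are locally finite-dimensional since $\stab{\cC}$ is Hom-finite — is automatically finite-dimensional. For parts~\ref{p:error-term-fd-V-reachable} and~\ref{p:error-term-fd-U-reachable} the real content is a lemma, which I would isolate, saying: if $\cW$ is reachable from $\cT$ and $W\in\cW$, then $\Extfun{\cT}(W)\in\fd{\stab{\cT}}$. Granting this lemma, part~\ref{p:error-term-fd-V-reachable} follows because $\lefterror{1}{\cU}{V}|_{\cT}$ is a submodule of $\Extfun{\cT}(V)\in\fd{\stab{\cT}}$ and $\righterror{2}{\cU}{V}|_{\cT}$ is a quotient of $\Extfun{\cT}(V)\in\fd{\stab{\cT}}$; and part~\ref{p:error-term-fd-U-reachable} follows because $\righterror{1}{\cU}{X}|_{\cT}$ is a submodule of $\Extfun{\cT}(\rightker{\cU}{X})$ and $\lefterror{2}{\cU}{X}|_{\cT}$ a quotient of $\Extfun{\cT}(\leftcok{\cU}{X})$, where $\rightker{\cU}{X}$ and $\leftcok{\cU}{X}$ both lie in $\cU$, which is reachable from $\cT$.

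To prove the lemma, I would induct on the length of a mutation sequence from $\cT$ to $\cW$. For the base case $\cW=\cT$ we have $\Extfun{\cT}(T)=0$ for $T\in\cT$, which is trivially finite-dimensional. For the inductive step, suppose $\cW'=\mut{U}{\cW}$ with $U\in\exch{\cW}$ and the result known for $\cW$; given $W'\in\cW'$, write $W'$ as a sum of indecomposables, each of which is either an object of $\cW\cap\cW'$ (handled by induction, since such objects lie in $\cW$) or is $\mut{\cW}{U}$. It thus remains to bound $\dim\Extfun{\cT}(\mut{\cW}{U})$. Here I would apply $\Extfun{\cT}$ to one of the exchange conflations \eqref{eq:exchange-confs} for $U$, say $\mut{\cW}{U}\infl\exchmon{\cW}{U}{+}\defl U\confl$, to get an exact sequence $\Extfun{\cT}(U)\to\Extfun{\cT}(\exchmon{\cW}{U}{+})\to\Extfun{\cT}(\mut{\cW}{U})\to\Ext{2}{\cC}{U}{\blank}|_{\cT}(\dots)$ — more precisely, using the long exact sequence of Proposition~\ref{p:extri-les} together with the identification of $\Ext{1}{\cC}{\blank}{\blank}$ with stable Homs, $\Extfun{\cT}(\mut{\cW}{U})$ is squeezed between a quotient of $\Extfun{\cT}(\exchmon{\cW}{U}{+})$ and (the restriction to $\cT$ of) $\Ext{1}{\cC}{\blank}{\Sigma\mut{\cW}{U}}$; the objects $U$ and $\exchmon{\cW}{U}{+}$ both lie in $\cW$, so their $\Extfun{\cT}$-images are finite-dimensional by induction. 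The point where I expect the real work is controlling the "error'' coming from $\Ext{2}$, i.e.\ making precise that the connecting terms in the relevant long exact sequence are finite-dimensional; the cleanest route is probably to instead use the equivalence $\Extfun{\cT}\colon\cC/\cT\simeq\fpmod{\stab{\cT}}$ to transport the exchange conflation into an exact sequence purely inside $\fpmod{\stab{\cT}}$ — noting that $\Extfun{\cT}(\mut{\cW}{U})$ is a cokernel of a map between $\Extfun{\cT}$-images of objects of $\cW$ (this uses that the projection of the exchange conflation to $\cC/\cT$ is still exact, so its image under $\Extfun{\cT}$ is exact) — at which point finite-dimensionality is inherited from the two outer terms. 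This transport of exchange conflations through $\Extfun{\cT}$, and checking that the resulting three-term sequence really is exact in $\fpmod{\stab{\cT}}$, is the main obstacle and the step I would write out carefully.
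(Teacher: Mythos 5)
Your overall reduction is the same as the paper's, and the easy parts are fine: each of $\lefterror{i}{\cU}X|_{\cT}$, $\righterror{i}{\cU}X|_{\cT}$ lies in $\lfd{\stab{\cT}}$ because $\stab{\cC}$ is Hom-finite (this gives part~\ref{p:error-term-fd-add-finite} outright since $\lfd{\stab{\cT}}=\fd{\stab{\cT}}$ when $\stab{\cT}$ is additively finite), and you correctly identify $\lefterror{1}{\cU}X|_{\cT}$ and $\righterror{2}{\cU}X|_{\cT}$ as sub- and quotient modules of $\Extfun{\cT}X$, and $\righterror{1}{\cU}X|_{\cT}$, $\lefterror{2}{\cU}X|_{\cT}$ as sub- and quotient modules of $\Extfun{\cT}\rightker{\cU}{X}$, $\Extfun{\cT}\leftcok{\cU}{X}$ respectively. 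So everything hinges on the lemma you isolate: $\Extfun{\cT}W\in\fd{\stab{\cT}}$ whenever $W$ lies in a cluster-tilting subcategory $\cW$ reachable from $\cT$.

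The proof you sketch for this lemma has a genuine gap, which you partly sense yourself. Applying $\Extfun{\cT}$ to an exchange conflation only gives exactness at the middle term (Proposition~\ref{p:extri-les}); in the triangulated case, the sequence continues with a connecting map into $\Hom{\stab{\cC}}{\blank}{\Sigma^2 U}$, and there is no reason for that map to vanish. So $\Extfun{\cT}(\mut{\cW}{U})$ is \emph{not} the cokernel of $\Extfun{\cT}(U)\to\Extfun{\cT}(\exchmon{\cW}{U}{-})$, and the ``transport into an exact sequence in $\fpmod\stab{\cT}$'' route does not work: the equivalence $\Extfun{\cT}\colon\cC/\cT\simeq\fpmod{\stab{\cT}}$ is additive, not exact, because $\cC/\cT$ carries no exact structure under which conflations of $\cC$ become exact. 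The induction therefore does not close.

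The paper's argument is entirely different and avoids the induction. It is a \emph{support} argument. If $\cW$ is reachable from $\cT$ by $n$ mutations, then at most $n$ indecomposables of $\cT$ fail to lie in $\cW$, so the subcategory $\cT\setminus\cW$ is additively finite. On the other hand, for $W\in\cW$ and $T\in\indec(\cT\cap\cW)$ one has $\Ext{1}{\cC}{T}{W}=0$, since $\cW$ is cluster-tilting; hence the $\stab{\cT}$-module $\Extfun{\cT}W\in\lfd{\stab{\cT}}$ is supported on the finite set $\indec(\cT\setminus\cW)$, and is therefore finite-dimensional. That is all that is needed: no exchange conflations, no induction, no control of $\Ext^2$. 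This also explains why the paper never needs to prove a separate lemma --- the observation is applied directly to $V\in\cV$ in part~\ref{p:error-term-fd-V-reachable} and to $\rightker{\cU}X,\leftcok{\cU}X\in\cU$ in part~\ref{p:error-term-fd-U-reachable}.
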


\begin{proof}
In all cases, we use the fact that $\lefterror{i}{\cU}X,\righterror{i}{\cU}X\in\lfd{\stab{\cC}}$ since $\stab{\cC}$ is Hom-finite.
This also implies that $\stab{\cT}$ is Krull--Schmidt, and so to check that these functors restrict to $\fd{\stab{\cT}}$ it is enough to check that they are supported on finitely many objects of $\indec{\stab{\cT}}$.
In particular, \ref{p:error-term-fd-add-finite} is now immediate, since in this case $\indec{\stab{\cT}}$ is a finite set.
\begin{enumerate}
\setcounter{enumi}{1}
\item If $\cV$ is reachable from $\cT$, then $\cT\setminus\cV$ is additively finite.
Since $\Extfun{\cT}{V}\in\lfd{\stab{\cT}}$ is supported on $\cT\setminus\cV$, it is finite-dimensional.
It follows that $\lefterror{1}{\cU}V|_{\cT},\righterror{2}{\cU}V|_{\cT}\in\fd{\stab{\cT}}$, being a submodule and quotient module respectively of $\Extfun{\cT}{V}$.
\item As for \ref{p:error-term-fd-V-reachable}, $\Extfun{\cT}\leftcok{\cU}{V}\in\fd{\stab{\cT}}$ because $\cT\setminus\cU$ is additively finite, and hence so is the quotient module $\lefterror{2}{\cU}X$.
Similarly, $\righterror{1}{\cU}{X}$ is a submodule of $\Extfun{\cT}{\rightker{\cU}{X}}\in\fd{\stab{\cT}}$.\qedhere
\end{enumerate}
\end{proof}

\begin{remark}\label{r:add-error-vanish} If $X\in \cU$, then the conflations \eqref{eq:ind-coind-for-error} split and so $\lefterror{i}{\cU}{X}=0=\righterror{i}{\cU}{X}$.
\end{remark}

We may also consider the $\op{\stab{\cC}}$-modules $\lefterror{i}{\op{\cU}}{X}$ and $\righterror{i}{\op{\cU}}{X}$, computed in the cluster category $\op{\cC}$ with respect to the cluster-tilting subcategory $\op{\cU}$.

\begin{proposition}
\label{p:error-op}
There are $\stab{\cC}$-module isomorphisms
\[\begin{aligned}
\lefterror{1}{\cU}{X}&\iso\dual{(\righterror{2}{\op{\cU}}{X})},&\quad
\lefterror{2}{\cU}{X}\iso\dual{(\righterror{1}{\op{\cU}}{X})},\\
\righterror{1}{\cU}{X}&\iso\dual{(\lefterror{2}{\op{\cU}}{X})},&\quad
\righterror{2}{\cU}{X}\iso\dual{(\lefterror{1}{\op{\cU}}{X})}.
\end{aligned}\]
\end{proposition}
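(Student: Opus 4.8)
The plan is to exploit the observation, made already in Remark~\ref{r:ind-op}, that the coindex in $\cC$ equals the index in $\op{\cC}$ (and vice versa), together with the $2$-Calabi--Yau property, which gives $\dual{(\Extfun{\cU}{X})}=\Extfun{\op{\cU}}{X}$ as noted in the proof of Corollary~\ref{c:fp-fcp} and Proposition~\ref{p:beta-op}. Concretely, take a $\cU$-index conflation \eqref{eq:ind-coind-for-error} for $X$ in $\cC$; viewed in $\op{\cC}$ it becomes a $\op{\cU}$-coindex conflation for $X$, with the roles of inflation and deflation swapped, so that $i_R^X$ in $\cC$ plays the role of $p_L^X$ in $\op{\cC}$ and $p_R^X$ plays the role of $i_L^X$. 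Thus the defining exact sequences \eqref{eq:lr-seqs} for the $\op{\cU}$-versions of the error terms are obtained from those for the $\cU$-versions by applying $\Ext{1}{\cC}{\blank}{\blank}$ in the opposite variable.

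The key step is then to dualise. For any object $Y\in\cC$, the $2$-Calabi--Yau isomorphism $\Ext{1}{\cC}{Z}{Y}\iso\dual{\Ext{1}{\cC}{Y}{Z}}$ is functorial in both $Z$ and $Y$, so applying $\Hom{\bK}{\blank}{\bK}$ to the sequence
\[
\Ext{1}{\cC}{\blank}{\rightker{\cU}{X}}\too\Ext{1}{\cC}{\blank}{\rightapp{\cU}{X}}\too\Ext{1}{\cC}{\blank}{X}
\]
of $\stab{\cC}$-modules produces the sequence
\[
\Ext{1}{\cC}{X}{\blank}\too\Ext{1}{\cC}{\rightapp{\cU}{X}}{\blank}\too\Ext{1}{\cC}{\rightker{\cU}{X}}{\blank}
\]
of $\op{\stab{\cC}}$-modules, i.e.\ precisely the sequence defining $\lefterror{1}{\op{\cU}}{X}$ and $\lefterror{2}{\op{\cU}}{X}$, since in $\op{\cC}$ the $\op{\cU}$-coindex conflation for $X$ is the original $\cU$-index conflation read backwards. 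Because $\dual{(\blank)}$ is exact and contravariant, it sends the kernel of $\Ext{1}{\cC}{\blank}{i_R^X}$ to the cokernel of its dual and vice versa, which gives the claimed isomorphisms $\righterror{1}{\cU}{X}\iso\dual{(\lefterror{2}{\op{\cU}}{X})}$ and $\righterror{2}{\cU}{X}\iso\dual{(\lefterror{1}{\op{\cU}}{X})}$; running the same argument with a $\cU$-coindex conflation for $X$ gives the other two. One should then note that all dualities involved, and the identification $\Kgp{\cU}=\Kgp{\op{\cU}}$, are already in place from Remark~\ref{r:ind-op}, so no further setup is needed.

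The main obstacle I anticipate is purely bookkeeping: one must be careful that the $2$-Calabi--Yau isomorphism is applied in a way that is simultaneously natural in \emph{both} arguments, so that it really intertwines the two three-term complexes of functors rather than merely identifying their terms; and one must track the swap of inflation and deflation (hence of kernel and cokernel) when passing to $\op{\cC}$ so that the indices $1$ and $2$ on the error terms are matched correctly. Once these compatibilities are pinned down, the four isomorphisms follow by a single dualisation argument, applied once to the index conflation and once to the coindex conflation. No finiteness assumptions are needed, since $\Hom{\bK}{\blank}{\bK}$ is being applied to the values of these functors on individual objects, where they are honest (possibly infinite-dimensional) vector spaces, and the resulting natural isomorphisms of functors then restrict to whatever subcategory $\cT$ one later wishes to evaluate on.
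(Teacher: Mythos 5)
Your proof is correct and essentially reproduces the paper's argument: identify the $\op{\cU}$-error terms as kernels and cokernels of $\Ext{1}{\cC}{\blank}{\blank}$ in the other variable, then apply $2$-Calabi--Yau duality and use that $\dual{(\blank)}$ is exact and swaps kernels with cokernels. The only cosmetic difference is that the paper establishes $\lefterror{1}{\cU}{X}\iso\dual{(\righterror{2}{\op{\cU}}{X})}$ and $\lefterror{2}{\cU}{X}\iso\dual{(\righterror{1}{\op{\cU}}{X})}$ first (starting from a $\cU$-coindex conflation) and obtains the others by swapping $\cU$ and $\op{\cU}$ and removing double duals via Hom-finiteness of $\stab{\cC}$, while you start from the $\cU$-index conflation and run the argument twice; also, your closing remark that ``no finiteness assumptions are needed'' is slightly misleading, since Hom-finiteness of $\stab{\cC}$ is already implicitly in use through the $2$-Calabi--Yau duality itself, and is also what lets one pass between $\righterror{i}{\cU}{X}\iso\dual{(\lefterror{j}{\op{\cU}}{X})}$ and its double-dual $\dual{(\righterror{i}{\cU}{X})}\iso\lefterror{j}{\op{\cU}}{X}$.
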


\begin{proof}
We establish the first two isomorphisms; the others then follow by swapping the roles of $\cU$ and $\op{\cU}$ and using Hom-finiteness of $\stab{\cC}$ to remove double duals.
Fix $X$, and write $i_{L}$ for $i_{L}^{X}$, etc.
Because a $\cU$-coindex conflation for $X$ in $\cC$
becomes a $\op{\cU}$-index conflation for $X$ in $\op{\cC}$, we have
\begin{align}
\begin{split}
\label{eq:op-errors}
\righterror{1}{\op{\cU}}{X}&=\Ker{\Ext{1}{\op{\cC}}{\blank}{\op{p_L}}}=\Ker{\Ext{1}{\cC}{p_L}{\blank}},\\ \righterror{2}{\op{\cU}}{X}&=\Coker{\Ext{1}{\op{\cC}}{\blank}{\op{i_L}}}=\Coker{\Ext{1}{\cC}{i_L}{\blank}}.
\end{split}
\end{align}
Since $\stab{\cC}$ is $2$-Calabi--Yau, we have
\[\begin{tikzcd}[column sep=45pt,row sep=10pt]
\dual{\Ext{1}{\cC}{X}{\blank}}\arrow{r}{\dual{\Ext{1}{\cC}{i_{L}}{\blank}}}\arrow[equal]{d}&\dual{\Ext{1}{\cC}{\leftapp{\cU}{X}}{\blank}}\arrow{r}{\dual{\Ext{1}{\cC}{p_{L}}{\blank}}}\arrow[equal]{d}&\dual{\Ext{1}{\cC}{\leftcok{\cU}{X}}{\blank}}\arrow[equal]{d}\\
\Ext{1}{\cC}{\blank}{X}\arrow{r}{\Ext{1}{\cC}{\blank}{i_{L}}}&\Ext{1}{\cC}{\blank}{\leftapp{\cU}{X}}\arrow{r}{\Ext{1}{\cC}{\blank}{p_{L}}}&\Ext{1}{\cC}{\blank}{\leftcok{\cU}{X}}
\end{tikzcd}\]
The result now follows from this diagram, by comparing \eqref{eq:op-errors} to \eqref{eq:errors}.
\end{proof}

Using the functors \eqref{eq:errors}, and the classes of their values in $\Kgp{\fpmod{\stab{\cT}}}$, we can quantify precisely the failure of index and coindex to be additive on an index or coindex conflation, as follows.
Recall that $\ind{\cU}{\cT}=\ind{\cC}{\cT}|_{\cU}$.

\begin{theorem}\label{t:ind-coind-additive-error-terms}
Let $\cC$ be a cluster category and $\cTU\ctsubcat \cC$.  Then for any $X\in \cC$,
\begin{enumerate}
\item\label{t:ind-additive-error-terms} $\ind{\cC}{\cT}[X]=\ind{\cU}{\cT}[\leftapp{\cU}{X}]-\ind{\cU}{\cT}[\leftcok{\cU}{X}]-\beta_{\cT}[\lefterror{1}{\cU}{X}|_{\cT}]$,
\item\label{t:ind-additive-error-terms-2} $\ind{\cC}{\cT}[X]=\ind{\cU}{\cT}[\rightapp{\cU}{X}]-\ind{\cU}{\cT}[\rightker{\cU}{X}]-\beta_{\cT}[\righterror{1}{\cU}{X}|_{\cT}]$,
\item\label{t:coind-additive-error-terms}
$\coind{\cC}{\cT}[X]=\coind{\cU}{\cT}[\leftapp{\cU}{X}]-\coind{\cU}{\cT}[\leftcok{\cU}{X}]+\beta_{\cT}[\lefterror{2}{\cU}{X}|_{\cT}]$, and
\item\label{t:coind-additive-error-terms-2}
$\coind{\cC}{\cT}[X]=\coind{\cU}{\cT}[\rightapp{\cU}{X}]-\coind{\cU}{\cT}[\rightker{\cU}{X}]+\beta_{\cT}[\righterror{2}{\cU}{X}|_{\cT}]$.
\end{enumerate}
\end{theorem}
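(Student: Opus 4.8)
The plan is to prove all four formulae in Theorem~\ref{t:ind-coind-additive-error-terms} as a single statement, since they are genuinely parallel: \ref{t:coind-additive-error-terms} and \ref{t:coind-additive-error-terms-2} follow from \ref{t:ind-additive-error-terms} and \ref{t:ind-additive-error-terms-2} by passing to the opposite category \(\op{\cC}\), using Remark~\ref{r:ind-op} (which swaps index and coindex), Proposition~\ref{p:beta-op} (\(\beta_{\op{\cT}}[\dual{M}]=-\beta_{\cT}[M]\)), and Proposition~\ref{p:error-op} (which identifies \(\righterror{2}{\op{\cU}}{X}\) with \(\dual{(\lefterror{1}{\cU}{X})}\) and \(\righterror{1}{\op{\cU}}{X}\) with \(\dual{(\lefterror{2}{\cU}{X})}\)); the signs work out precisely because \(\beta_{\op{\cT}}\) introduces a sign while the roles of \(\ell_1\) and \(r_2\) (respectively \(\ell_2\) and \(r_1\)) interchange. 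So it suffices to establish \ref{t:ind-additive-error-terms} and \ref{t:ind-additive-error-terms-2}. Moreover, \ref{t:ind-additive-error-terms-2} should follow from \ref{t:ind-additive-error-terms} by the same opposite-category manoeuvre applied only to the \(\cU\)-side data: a \(\cU\)-index conflation for \(X\) in \(\cC\) is a \(\op{\cU}\)-coindex conflation for \(X\) in \(\op{\cC}\), but since \(\cT\) itself is not being dualised here, the cleanest route is probably to prove \ref{t:ind-additive-error-terms} and \ref{t:ind-additive-error-terms-2} directly and in parallel, as the arguments are near-identical.

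For \ref{t:ind-additive-error-terms}: reduce to the case where \(\cC\) is exact, via Proposition~\ref{p:stabind} (so that \(\ind{\cC}{\cT}\) and \(\ind{\cU}{\cT}\) are compatible with partial stabilisation) together with the fact that \(\beta_{\cT}\) and the functors \(\lefterror{i}{\cU}{X}\) are insensitive to partial stabilisation (the latter noted in the paragraph defining \eqref{eq:errors}; the former is \(\beta_{\stab{\cT}}=\pproj{\cT}\circ\beta_{\cT}\)). In the exact case, fix a \(\cU\)-coindex conflation \(X\infl \leftapp{\cU}{X}\defl \leftcok{\cU}{X}\confl\) and apply \(\Yonfun{\cT}\): since \(\leftapp{\cU}{X},\leftcok{\cU}{X}\in\cU\) and \(\cT\) is cluster-tilting, we have \(\Yonfun{\cT}{\leftapp{\cU}{X}}\in\proj\cT\) and there is an exact sequence \(0\to\Yonfun{\cT}X\to\Yonfun{\cT}\leftapp{\cU}{X}\to\Yonfun{\cT}\leftcok{\cU}{X}\to\Extfun{\cT}X\to 0\) prolonged by \(\Extfun{\cT}\leftapp{\cU}{X}\); but \(\Extfun{\cT}\leftapp{\cU}{X}\ne 0\) in general (only \(\Extfun{\cU}{\leftapp{\cU}{X}}=0\)), so the exact sequence actually continues, and the relevant homology is controlled by \(\lefterror{1}{\cU}{X}|_{\cT}\). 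Concretely, splice a projective presentation of \(\Yonfun{\cT}\leftapp{\cU}{X}\) and \(\Yonfun{\cT}\leftcok{\cU}{X}\) (each of projective dimension \(\leq 1\) over \(\cT\) by Proposition~\ref{p:ind-proj-res}) together with the above four-term sequence to build an explicit bounded complex of projectives whose alternating class in \(\Kgp{\proj\cT}\), transported back via \(h^{\cT}\), is \(\ind{\cU}{\cT}[\leftapp{\cU}{X}]-\ind{\cU}{\cT}[\leftcok{\cU}{X}]\); comparing this with the class of a projective resolution of \(\Yonfun{\cT}X\) (which is \(h^{\cT}\ind{\cC}{\cT}[X]\) by Proposition~\ref{p:ind-proj-res}) produces a correction term equal to \(h^{\cT}\) applied to the class of a projective resolution of \(\lefterror{1}{\cU}{X}|_{\cT}\) as a \(\cT\)-module, which is exactly \(-h^{\cT}\beta_{\cT}[\lefterror{1}{\cU}{X}|_{\cT}]\) by Definition~\ref{d:beta} and the description of \(p_{\cT}\) in Definition~\ref{d:proj-res-map}. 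The analogous bookkeeping for a \(\cU\)-index conflation gives \ref{t:ind-additive-error-terms-2}, with \(\righterror{1}{\cU}{X}|_{\cT}\) in place of \(\lefterror{1}{\cU}{X}|_{\cT}\).

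The main obstacle will be the homological-algebra bookkeeping in the exact case: correctly identifying which kernel/cokernel of which connecting map equals the value of \(\lefterror{1}{\cU}{X}|_{\cT}\), and assembling the various bounded projective complexes so that the alternating sums of classes telescope exactly to the claimed identity. One must be careful that \(\Extfun{\cT}\) applied to objects of \(\cU\) need not vanish, so the long exact sequence obtained from applying \(\Yonfun{\cT}\) to the \(\cU\)-coindex conflation genuinely has the \(\Ext\)-terms present, and it is precisely the failure of \(\Ext{1}{\cC}{\blank}{i_L^X}|_{\cT}\) to be injective—i.e.\ \(\lefterror{1}{\cU}{X}|_{\cT}=\Ker\bigl(\Ext{1}{\cC}{\blank}{i_L^X}\bigr)|_{\cT}\)—that accounts for the discrepancy between \(\ind{\cC}{\cT}[X]\) and the naive difference \(\ind{\cU}{\cT}[\leftapp{\cU}{X}]-\ind{\cU}{\cT}[\leftcok{\cU}{X}]\). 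An alternative, possibly slicker, route worth mentioning is to combine Proposition~\ref{p:beta-proj-res} (which expresses \(\beta_{\cT}[\Extfun{\cT}X]=\coind{\cC}{\cT}[X]-\ind{\cC}{\cT}[X]\)) with the additivity criterion Proposition~\ref{p:ind-coind-additive-if-map-zero} and the defining exact sequences \eqref{eq:lr-seqs}: additivity of the index on the \(\cU\)-coindex conflation fails exactly by \(\beta_{\cT}\) applied to the class measuring non-injectivity of \(\Extfun{\cT}i_L^X\), and that class is \([\lefterror{1}{\cU}{X}|_{\cT}]\) essentially by definition; this reduces the whole theorem to Proposition~\ref{p:ind-coind-additive-if-map-zero} plus a short diagram chase, at the cost of being slightly less self-contained.
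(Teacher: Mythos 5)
Your overall architecture matches the paper's: reduce to the exact case via Proposition~\ref{p:stabind}, apply \(\Yonfun{\cT}\) to a \(\cU\)-coindex (resp.\ \(\cU\)-index) conflation for \(X\), interpret indices and \(\beta_{\cT}\) via classes of projective resolutions in \(\Kgp{\proj\cT}\), and deduce \ref{t:coind-additive-error-terms} and \ref{t:coind-additive-error-terms-2} from \ref{t:ind-additive-error-terms-2} and \ref{t:ind-additive-error-terms} by passing to \(\op{\cC}\) via Remark~\ref{r:ind-op}, Proposition~\ref{p:beta-op} and Proposition~\ref{p:error-op}. That is precisely the paper's proof. However, there is a genuine technical error in your middle step. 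You claim \(\Yonfun{\cT}\leftapp{\cU}{X}\in\proj\cT\), which is false: \(\leftapp{\cU}{X}\in\cU\), not \(\cT\), so this module is projective over \(\cU\), not over \(\cT\). Consequently your displayed four-term exact sequence
\[
0\to\Yonfun{\cT}X\to\Yonfun{\cT}\leftapp{\cU}{X}\to\Yonfun{\cT}\leftcok{\cU}{X}\to\Extfun{\cT}X\to 0
\]
is not exact: the cokernel of \(\Yonfun{\cT}\leftapp{\cU}{X}\to\Yonfun{\cT}\leftcok{\cU}{X}\) is not \(\Extfun{\cT}X\) but the subfunctor \(\Ker\bigl(\Extfun{\cT}i_L^X\bigr)=\lefterror{1}{\cU}{X}|_{\cT}\). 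You hedge by noting that the sequence ``continues'' and that the homology ``is controlled by'' \(\lefterror{1}{\cU}{X}|_{\cT}\), and you do correctly name this kernel at the end, but the concrete instruction to splice projective presentations ``with the above four-term sequence'' is built on the wrong sequence. The fix makes the argument shorter, not longer: the correct exact sequence is
\[
0\to\Yonfun{\cT}X\to\Yonfun{\cT}\leftapp{\cU}{X}\to\Yonfun{\cT}\leftcok{\cU}{X}\to\lefterror{1}{\cU}{X}|_{\cT}\to 0,
\]
and then the horseshoe lemma (or just additivity of projective-resolution classes along an exact sequence, each of the first three terms having projective dimension \(\leq 1\) by Proposition~\ref{p:ind-proj-res} and the last having finite projective dimension by Remark~\ref{r:pdim3}) gives
\(\ind{\cC}{\cT}[X]-\ind{\cU}{\cT}[\leftapp{\cU}{X}]+\ind{\cU}{\cT}[\leftcok{\cU}{X}]+\beta_{\cT}[\lefterror{1}{\cU}{X}|_{\cT}]=0\) directly, with no further splicing. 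One more caution: the ``alternative route'' you sketch at the end—invoking Proposition~\ref{p:ind-coind-additive-if-map-zero} and asserting that failure of additivity is ``exactly'' \(\beta_{\cT}\) of the non-injectivity class—is essentially a restatement of the theorem rather than a proof; Proposition~\ref{p:ind-coind-additive-if-map-zero} only handles the degenerate case where the error term vanishes, and does not quantify the discrepancy.
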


\begin{proof}
Each identity follows from applying one of the two identities from Proposition~\ref{p:beta-add-confl} to either a $\cU$-index or $\cU$-coindex sequence for $X$.
We use here the fact, immediate from the definition and the long-exact sequence of extension groups, that $\lefterror{1}{\cU}X|_{\cT}=\Coker\Yonfun{\cT}{p_L^X}$ and $\righterror{\cU}{1}X|_{\cT}=\Coker\Yonfun{\cT}{p_R^X}$.
For the identities involving the coindex, we also use Proposition~\ref{p:error-op} to rewrite the argument of $\beta_{\cT}$.
\end{proof}

\begin{remark}
Theorem~\ref{t:ind-coind-additive-error-terms} categorifies (and generalises, for categorifiable cluster algebras) an identity of Nakanishi and Zelevinsky \cite[Eq.~2.5]{NakanishiZelevinsky}.
\end{remark}

For any $\cT,\cV\ctsubcat\cC$, we obtain maps
\[\begin{aligned}
\lefterrormap{i}{\cU}{\cV}{\cT}\colon\Kgp{\cV}&\to\Kgp{\fpmod{\stab{\cT}}}, &\quad
\righterrormap{i}{\cU}{\cV}{\cT}\colon\Kgp{\cV}&\to\Kgp{\fpmod{\stab{\cT}}},\\
[V]&\mapsto[\lefterror{i}{\cU}V|_{\cT}], &\quad
[V]&\mapsto[\righterror{i}{\cU}V|_{\cT}]
\end{aligned}\]
for $i=1,2$, which are well-defined since all conflations in $\cV$ split.

\begin{corollary}\label{c:ind-coind-mult-error-terms}
Let $\cC$ be a cluster category and $\cTU,\cV\ctsubcat \cC$.  Then 
\begin{enumerate}
\item\label{c:ind-coind-mult-error-terms-1} 
$\ind{\cV}{\cT}=\ind{\cU}{\cT}\circ \coind{\cV}{\cU}-\beta_{\cT}\circ \lefterrormap{1}{\cU}{\cV}{\cT}$,
\item\label{c:ind-coind-mult-error-terms-2}
$\ind{\cV}{\cT}=\ind{\cU}{\cT}\circ \ind{\cV}{\cU}-\beta_{\cT}\circ \righterrormap{1}{\cU}{\cV}{\cT}$,
\item\label{c:ind-coind-mult-error-terms-3}
$\coind{\cV}{\cT}=\coind{\cU}{\cT}\circ \coind{\cV}{\cU}+\beta_{\cT}\circ \lefterrormap{2}{\cU}{\cV}{\cT}$ and
\item\label{c:ind-coind-mult-error-terms-4}
$\coind{\cV}{\cT}=\coind{\cU}{\cT}\circ \ind{\cV}{\cU}+\beta_{\cT}\circ \righterrormap{2}{\cU}{\cV}{\cT}$.
\end{enumerate}
\end{corollary}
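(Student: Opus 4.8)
The plan is to obtain each of the four identities by specialising Theorem~\ref{t:ind-coind-additive-error-terms} to objects $V$ of $\cV$ and extending linearly. First I would observe that all four equations are equalities of group homomorphisms $\Kgp{\cV}\to\Kgp{\cT}$: the maps $\ind{\cV}{\cT}$ and $\coind{\cV}{\cT}$ are homomorphisms by Proposition~\ref{p:ind-coind-homs} (precomposed with $\iota_{\cV}^{\addcat{\cC}}$), the map $\beta_{\cT}$ is a homomorphism by Definition~\ref{d:beta}, and the maps $\lefterrormap{i}{\cU}{\cV}{\cT}$, $\righterrormap{i}{\cU}{\cV}{\cT}$ are well-defined homomorphisms since all conflations in $\cV$ split, as noted just before the statement. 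Since the classes $[V]$ with $V\in\cV$ span $\Kgp{\cV}$, it therefore suffices to verify each identity after evaluating on such a class.

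Next, I would fix $V\in\cV$ and pick a $\cU$-coindex conflation $V\infl\leftapp{\cU}{V}\defl\leftcok{\cU}{V}\confl$ and a $\cU$-index conflation $\rightker{\cU}{V}\infl\rightapp{\cU}{V}\defl V\confl$ as in \eqref{eq:ind-coind-for-error}. By the definition of the coindex (Definition~\ref{d:coindex}) together with the identity $\coind{\cV}{\cU}=\coind{\cC}{\cU}\circ\iota_{\cV}^{\addcat{\cC}}$, we have $[\leftapp{\cU}{V}]_{\cU}-[\leftcok{\cU}{V}]_{\cU}=\coind{\cV}{\cU}[V]$ in $\Kgp{\cU}$; applying the homomorphism $\ind{\cU}{\cT}$ then gives $\ind{\cU}{\cT}[\leftapp{\cU}{V}]-\ind{\cU}{\cT}[\leftcok{\cU}{V}]=\ind{\cU}{\cT}(\coind{\cV}{\cU}[V])$, and similarly $\ind{\cU}{\cT}[\rightapp{\cU}{V}]-\ind{\cU}{\cT}[\rightker{\cU}{V}]=\ind{\cU}{\cT}(\ind{\cV}{\cU}[V])$, with the two analogous identities for $\coind{\cU}{\cT}$. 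Finally, $[\lefterror{i}{\cU}{V}|_{\cT}]=\lefterrormap{i}{\cU}{\cV}{\cT}[V]$ and $[\righterror{i}{\cU}{V}|_{\cT}]=\righterrormap{i}{\cU}{\cV}{\cT}[V]$ by the definition of the error-term maps, while $\ind{\cC}{\cT}[V]=\ind{\cV}{\cT}[V]$ and $\coind{\cC}{\cT}[V]=\coind{\cV}{\cT}[V]$ since $V\in\cV$.

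Substituting all of these into Theorem~\ref{t:ind-coind-additive-error-terms}\ref{t:ind-additive-error-terms}--\ref{t:coind-additive-error-terms-2} applied with $X=V$ produces precisely the four equations \ref{c:ind-coind-mult-error-terms-1}--\ref{c:ind-coind-mult-error-terms-4} evaluated on $[V]$, and letting $V$ range over $\cV$ finishes the argument. I do not expect a genuine obstacle: the substantive content is already contained in Theorem~\ref{t:ind-coind-additive-error-terms}, and the only point needing care is the bookkeeping in the first paragraph, namely checking that both sides of each identity are honest homomorphisms on $\Kgp{\cV}$ so that it is legitimate to verify them on classes of objects.
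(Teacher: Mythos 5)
Your argument is correct and is essentially the paper's own proof: both specialise Theorem~\ref{t:ind-coind-additive-error-terms} to $X=V\in\cV$, identify the resulting classes with $\ind{\cV}{\cU}[V]$, $\coind{\cV}{\cU}[V]$ and the error-term maps, and extend by linearity. The paper states this in one line; you have simply spelled out the (correct) bookkeeping, including the point that all maps involved are homomorphisms, which justifies checking the identity on classes of objects.
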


\begin{proof} This follows immediately from Theorem~\ref{t:ind-coind-additive-error-terms}, recalling that $ \ind{\cV}{\cU}[V]=[\rightapp{\cU}{V}]-[\rightker{\cU}{V}]$ and $ \coind{\cV}{\cU}[V]=[\leftapp{\cU}{V}]-[\leftcok{\cU}{V}]$.
\end{proof}

\begin{remark}\label{r:mult-error-vanish} Together with Remark~\ref{r:add-error-vanish}, the identities in Corollary~\ref{c:ind-coind-mult-error-terms} imply that if $V\in \cU$ (e.g\ if $\cV=\mut{U}{\cU}$ and $V\in \cU\setminus U$), the relevant index and coindex maps in fact compose transitively, i.e.\ $\ind{\cV}{\cT}[V]=\ind{\cU}{\cT}\ind{\cV}{\cU}[V]$.
\end{remark}

The functors \eqref{eq:errors} also play a role in describing the adjoint maps $\indbar{\cT}{\cU}$ and $\coindbar{\cT}{\cU}$.
To see this, we first need to describe the values of the $\cC$-modules $\lefterror{i}{\cU}{X}$ and $\righterror{i}{\cU}{X}$ at an object $Y\in\cC$ using $\cU$-index and $\cU$-coindex conflations for $Y$, instead of for $X$.

\begin{lemma}
\label{l:error-terms-modT}
For any $X,Y\in\cC$ and $\cU\ctsubcat\cC$, let $\cU(Y,X)$ denote the subspace of $\Hom{\cC}{Y}{X}$ consisting of maps factoring over $\cU$.
Then
\begin{enumerate}
\item\label{p:error-terms-modT-r1} $\righterror{1}{\cU}{X}=\Hom{\cC}{\blank}{X}/\cU(\blank,X)$ and
\item\label{p:error-terms-modT-l3} $\lefterror{2}{\cU}{X}=\dual{(\Hom{\cC}{X}{\blank}/\cU(X,\blank))}$.
\end{enumerate}
\end{lemma}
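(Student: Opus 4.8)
The statement describes the two $\stab{\cC}$-modules $\righterror{1}{\cU}{X}$ and $\lefterror{2}{\cU}{X}$ intrinsically, without reference to a choice of $\cU$-index (resp.\ $\cU$-coindex) conflation. My plan is to prove part \ref{p:error-terms-modT-r1} directly from the definition $\righterror{1}{\cU}{X}=\Ker\bigl(\Ext{1}{\cC}{\blank}{i_{R}^{X}}\bigr)$, and then deduce part \ref{p:error-terms-modT-l3} by passing to the opposite category, exactly as is done repeatedly in the surrounding text (e.g.\ via Proposition~\ref{p:error-op} together with the fact that $\cU$-coindex conflations in $\cC$ become $\op{\cU}$-index conflations in $\op{\cC}$, and that $\cU(X,Y)$ in $\cC$ becomes $\op{\cU}(Y,X)$ in $\op{\cC}$).

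For \ref{p:error-terms-modT-r1}, fix a $\cU$-index conflation $\rightker{\cU}{X}\stackrel{i_R^X}{\infl}\rightapp{\cU}{X}\stackrel{p_R^X}{\defl}X\confl\delta$ and evaluate the first exact sequence of \eqref{eq:lr-seqs} at an arbitrary object $Y\in\cC$. Using the long exact sequence of Proposition~\ref{p:extri-les} applied to this conflation, together with $\Hom{\cC}{Y}{\blank}$, I get that $\righterror{1}{\cU}{X}(Y)=\Ker\bigl(\Ext{1}{\cC}{Y}{i_R^X}\bigr)$ is the image of the connecting map $\Hom{\cC}{Y}{X}\to\Ext{1}{\cC}{Y}{\rightker{\cU}{X}}$, which is the cokernel of $\Hom{\cC}{Y}{p_R^X}\colon\Hom{\cC}{Y}{\rightapp{\cU}{X}}\to\Hom{\cC}{Y}{X}$. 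So $\righterror{1}{\cU}{X}(Y)=\Hom{\cC}{Y}{X}/\image\bigl(\Hom{\cC}{Y}{p_R^X}\bigr)$, and the task reduces to identifying $\image\bigl(\Hom{\cC}{Y}{p_R^X}\bigr)$ with $\cU(Y,X)$. The inclusion $\image\subseteq\cU(Y,X)$ is clear since $\rightapp{\cU}{X}\in\cU$. For the reverse inclusion, any $f\colon Y\to X$ factoring as $Y\to U\to X$ with $U\in\cU$ lifts along $p_R^X$: the map $U\to X$ factors through $p_R^X$ because $p_R^X$ is a right $\cU$-approximation (this is where I use the defining property of a $\cU$-index conflation), so $f$ does too. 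This gives \ref{p:error-terms-modT-r1} as an equality of functors, naturality in $Y$ being automatic.

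For \ref{p:error-terms-modT-l3}, I invoke Proposition~\ref{p:error-op}, which gives $\lefterror{2}{\cU}{X}\iso\dual{(\righterror{1}{\op{\cU}}{X})}$ as $\stab{\cC}$-modules, where the right-hand side is computed in the cluster category $\op{\cC}$ with respect to $\op{\cU}\ctsubcat\op{\cC}$. By part \ref{p:error-terms-modT-r1} applied in $\op{\cC}$, we have $\righterror{1}{\op{\cU}}{X}=\Hom{\op{\cC}}{\blank}{X}/\op{\cU}(\blank,X)$. Unwinding the opposite-category conventions, $\Hom{\op{\cC}}{Y}{X}=\Hom{\cC}{X}{Y}$ and $\op{\cU}(Y,X)=\cU(X,Y)$ (maps $X\to Y$ in $\cC$ factoring over $\cU$), so $\righterror{1}{\op{\cU}}{X}=\Hom{\cC}{X}{\blank}/\cU(X,\blank)$ as a functor, and dualising gives $\lefterror{2}{\cU}{X}=\dual{\bigl(\Hom{\cC}{X}{\blank}/\cU(X,\blank)\bigr)}$, as claimed. (Alternatively one could argue \ref{p:error-terms-modT-l3} symmetrically from scratch using a $\cU$-coindex conflation and $\Hom{\cC}{\blank}{Y}$, but routing through Proposition~\ref{p:error-op} is shorter and matches the paper's style.)

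\textbf{Main obstacle.} There is no deep difficulty here; the one point requiring care is the bookkeeping in part \ref{p:error-terms-modT-r1} where one must correctly identify the kernel of $\Ext{1}{\cC}{Y}{i_R^X}$ with the cokernel of $\Hom{\cC}{Y}{p_R^X}$ — i.e.\ making sure the long exact sequence is being read off the correct conflation and in the correct direction — and then checking, using that $p_R^X$ is a \emph{right} $\cU$-approximation (not merely that $\rightapp{\cU}X\in\cU$), that every map $Y\to X$ factoring over $\cU$ lifts. For \ref{p:error-terms-modT-l3} the only subtlety is consistent handling of the duality and opposite-category identifications, but these are exactly as set up in Remark~\ref{r:ind-op} and Proposition~\ref{p:error-op}, so no new idea is needed.
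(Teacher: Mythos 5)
Your proof is correct and follows the same route as the paper: identify $\righterror{1}{\cU}{X}=\Ker\Ext{1}{\cC}{\blank}{i_R^X}$ with $\Coker\Hom{\cC}{\blank}{p_R^X}$ via the long exact sequence, observe that the image of $\Hom{\cC}{\blank}{p_R^X}$ is $\cU(\blank,X)$ because $p_R^X$ is a right $\cU$-approximation, and then obtain part \ref{p:error-terms-modT-l3} by passing to $\op{\cC}$ and invoking Proposition~\ref{p:error-op}. The paper's version is terser but the content and structure are identical.
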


\begin{proof}
Directly from the definition, $\righterror{1}{\cU}{X}=\Ker\Ext{1}{\cC}{\blank}{i_R^X}=\Coker\Hom{\cC}{\blank}{p_R^X}$ for \[\begin{tikzcd}
\rightker{\cU}{X}\arrow[infl]{r}{i_R^X}&\rightapp{\cU}{X}\arrow[defl]{r}{p_R^X}&X
\end{tikzcd}\]
a $\cU$-index sequence.
A map $Y\to X$ in $\cC$ factors over $\cU$ if and only if it factors over the $\cU$-approximation $p_R^X$, and so the image of $\Hom{\cC}{\blank}{p_R^X}$ is $\cU(\blank,X)$.
This gives \ref{p:error-terms-modT-r1}, and \ref{p:error-terms-modT-l3} then follows by applying \ref{p:error-terms-modT-r1} to $\op{\cC}$ and using Proposition~\ref{p:error-op}.
\end{proof}

\begin{proposition}
\label{p:error-alt}
Let $\cC$ be a cluster category, let $\cU\ctsubcat\cC$, and let $X$. For each morphism $f\colon Y\to Z$ in $\cC$, choose a $\cU$-coindex and a $\cU$-index conflation of $Y$ and $Z$, together with lifts of $f$ to maps of these conflations.
\[\begin{tikzcd}Y\arrow[infl]{r}{i_L^Y}\arrow{d}{f}&\leftapp{\cU}{Y}\arrow[defl]{r}{p_{L}^{Y}}\arrow{d}{g_L}&\leftcok{\cU}{Y}\arrow[confl]{r}{\delta_L^Y}\arrow{d}{h_L}&\phantom{}\\
Z\arrow[infl]{r}{i_L^Z}&\leftapp{\cU}{Z}\arrow[defl]{r}{p_{L}^Z}&\leftcok{\cU}{Z}\arrow[confl]{r}{\delta_L^Z}&\phantom{}\end{tikzcd}\quad
\begin{tikzcd}\rightker{\cU}{Y}\arrow[infl]{r}{i_{R}^{Y}}\arrow{d}{h_R}&\rightapp{\cU}{Y}\arrow[defl]{r}{p_R^Y}\arrow{d}{g_R}&Y\arrow[confl]{r}{\delta_R^Y}\arrow{d}{f}&\phantom{}\\
\rightker{\cU}{Z}\arrow[infl]{r}{i_R^Z}&\rightapp{\cU}{Z}\arrow[defl]{r}{p_R^Z}&Z\arrow[confl]{r}{\delta_R^Z}&\phantom{}\end{tikzcd}\]
Then the $\stab{\cC}$-modules $\lefterror{i}{\cU}{X}$ and $\righterror{i}{\cU}{X}$ evaluate on $f$ as follows:
\begin{enumerate}
\item
\label{p:error-alt-r1-l1}
$\begin{aligned}[t]
\righterror{1}{\cU}{X}(f)&=h_L^*\colon\Ker{\Ext{1}{\cC}{p_{L}^{Z}}{X}}\to\Ker{\Ext{1}{\cC}{p_{L}^{Y}}{X}},\\
\lefterror{1}{\cU}{X}(f)&=g_L^*\colon\Coker{\Ext{1}{\cC}{p_{L}^{Z}}{X}}\to\Coker{\Ext{1}{\cC}{p_{L}^{Y}}{X}},
\end{aligned}$
\item\label{p:error-alt-r2-l2}
$\begin{aligned}[t] \lefterror{2}{\cU}{X}(f)&=h_R^*\colon\Coker{\Ext{1}{\cC}{i_{R}^{Z}}{X}}\to\Coker{\Ext{1}{\cC}{i_{R}^{Y}}{X}},\\
\righterror{2}{\cU}{X}(f)&=g_R^*\colon\Ker{\Ext{1}{\cC}{i_{R}^{Z}}{X}}\to\Ker{\Ext{1}{\cC}{i_{R}^{Y}}{X}}. \end{aligned}$
\end{enumerate}
\end{proposition}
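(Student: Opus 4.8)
The plan is to prove Proposition~\ref{p:error-alt} by unwinding the definitions of the functors in \eqref{eq:errors} and exploiting the functoriality of $\Ext{1}{\cC}{\blank}{X}$. Recall that, say, $\righterror{1}{\cU}{X}=\Ker(\Ext{1}{\cC}{\blank}{i_R^X})$, where $i_R^X$ is the inflation in a $\cU$-index conflation for $X$. But the statements to be proved instead describe these functors via $\cU$-index and $\cU$-coindex conflations for the source and target $Y,Z$ of a morphism $f\colon Y\to Z$. So the first task is to reconcile these two descriptions. For $\righterror{1}{\cU}{X}$ this is precisely Lemma~\ref{l:error-terms-modT}\ref{p:error-terms-modT-r1}, which identifies $\righterror{1}{\cU}{X}=\Hom{\cC}{\blank}{X}/\cU(\blank,X)$. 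Dually, Lemma~\ref{l:error-terms-modT}\ref{p:error-terms-modT-l3} gives $\lefterror{2}{\cU}{X}=\dual{(\Hom{\cC}{X}{\blank}/\cU(X,\blank))}$. So I would first establish the four identities in \ref{p:error-alt-r1-l1} and \ref{p:error-alt-r2-l2} by reducing to the first and the last (i.e.\ $\righterror{1}{\cU}{X}$ and $\righterror{2}{\cU}{X}$, or equivalently, after passing to $\op{\cC}$, to $\righterror{1}{\cU}{X}$ and $\lefterror{1}{\cU}{X}$): indeed the two $\Ker$/$\Coker$ statements on any given line are two ways of describing the \emph{same} functor, coming from the fact that for a $\cU$-coindex conflation $Y\stackrel{i_L^Y}{\infl}\leftapp{\cU}{Y}\stackrel{p_L^Y}{\defl}\leftcok{\cU}{Y}\confl$ the induced complex on $\Ext{1}{\cC}{\blank}{X}$ has cohomology governed by whether maps factor over $\cU$.

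Concretely, for the first identity I would argue as follows. Fix the $\cU$-coindex conflation for $Z$ with inflation $i_L^Z$ and deflation $p_L^Z$. Applying $\Ext{1}{\cC}{\blank}{X}$ and using the long exact sequence of Proposition~\ref{p:extri-les} (the contravariant one), the connecting map $\Hom{\cC}{Z}{X}\to\Ext{1}{\cC}{\leftcok{\cU}{Z}}{X}$ has image $\Ker(\Ext{1}{\cC}{p_L^Z}{X})$ and kernel exactly $\cU(Z,X)$ (a map $Z\to X$ factors over $\leftapp{\cU}{Z}$, hence over $\cU$, iff it maps to zero under the connecting map). Thus $\Ker(\Ext{1}{\cC}{p_L^Z}{X})\iso\Hom{\cC}{Z}{X}/\cU(Z,X)=\righterror{1}{\cU}{X}(Z)$, matching Lemma~\ref{l:error-terms-modT}\ref{p:error-terms-modT-r1}. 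Chasing $f\colon Y\to Z$ through the commuting ladder of $\cU$-coindex conflations and using naturality of the connecting maps, one sees that under this identification $\righterror{1}{\cU}{X}(f)$ is induced by $f^*\colon\Hom{\cC}{Z}{X}\to\Hom{\cC}{Y}{X}$, which in terms of the $\Ext{1}{\cC}{\blank}{X}$-description of $\Ker(\Ext{1}{\cC}{p_L^{\bullet}}{X})$ is exactly precomposition $h_L^*$ with the third vertical map $h_L\colon\leftcok{\cU}{Y}\to\leftcok{\cU}{Z}$ of the coindex-conflation ladder. This gives the first line of \ref{p:error-alt-r1-l1}; the $\Coker$ description of $\lefterror{1}{\cU}{X}$ on the same line follows by taking cokernels instead of kernels in the same long exact sequence (equivalently: $\lefterror{1}{\cU}{X}=\Ker(\Ext{1}{\cC}{\blank}{i_L^X})$, and the connecting-map argument identifies this cokernel with the quotient on which $g_L$ acts). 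The identities in \ref{p:error-alt-r2-l2} are then obtained by passing to $\op{\cC}$ and $\op{\cU}$, using Proposition~\ref{p:error-op} to translate $\lefterror{i}{\op{\cU}}{X}$, $\righterror{i}{\op{\cU}}{X}$ back into $\righterror{i}{\cU}{X}$, $\lefterror{i}{\cU}{X}$, and noting that a $\cU$-index conflation for $Y$ in $\cC$ is a $\op{\cU}$-coindex conflation for $Y$ in $\op{\cC}$, so the maps $i_R^{\bullet}$, $g_R$, $h_R$ play the roles of $p_L^{\bullet}$, $g_L$, $h_L$ in the opposite category.

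The only genuinely delicate point is \textbf{keeping the variances and the four functors straight}: each of $\lefterror{i}{\cU}{X}$ and $\righterror{i}{\cU}{X}$ is a \emph{contravariant} functor in the argument indicated by $\blank$, and the descriptions via $\Ker$ versus $\Coker$ of a single map $\Ext{1}{\cC}{p_L^{\bullet}}{X}$ (or $\Ext{1}{\cC}{i_R^{\bullet}}{X}$) are genuinely different functors that happen to be computed from the same complex; I need to make sure the connecting-map identification correctly matches each with $h_L^*$ versus $g_L^*$ (respectively $h_R^*$ versus $g_R^*$). There is also a mild well-definedness check: the ladder lifts of $f$ exist by the extriangulated axioms, are unique up to the indeterminacy reflected in the stable category (maps factoring over $\cU$, which are precisely what gets quotiented out), so the induced maps $h_L^*$, $g_L^*$, $h_R^*$, $g_R^*$ are independent of the choices — this is exactly what is needed for the functors $\lefterror{i}{\cU}{X}$, $\righterror{i}{\cU}{X}$ to be well-defined on morphisms, and it matches the independence of $\lefterror{i}{\cU}{X}$, $\righterror{i}{\cU}{X}$ from the choice of conflations \eqref{eq:ind-coind-for-error} already recorded after \eqref{eq:errors}. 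Beyond that bookkeeping, every step is a routine diagram chase with the long exact sequences of Proposition~\ref{p:extri-les}, Proposition~\ref{p:error-op}, and Lemma~\ref{l:error-terms-modT}.
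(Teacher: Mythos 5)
Your overall strategy — identify the error modules via Lemma~\ref{l:error-terms-modT}, assemble ladder diagrams from the long exact sequences of Proposition~\ref{p:extri-les}, chase naturality, and recover part~\ref{p:error-alt-r2-l2} from part~\ref{p:error-alt-r1-l1} via $\op{\cC}$ and Proposition~\ref{p:error-op} — matches the paper's proof, and your argument for the $\righterror{1}{\cU}{X}$ statement is essentially complete: the connecting-map $(\delta_L^Z)^*$ identifies $\Ker(\Ext{1}{\cC}{p_L^Z}{X})$ with $\Hom{\cC}{Z}{X}/\cU(Z,X)$, using that $i_L^Z$ is a left $\cU$-approximation, and naturality identifies the induced map with $h_L^*$.

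Where the proposal has a genuine gap is the $\lefterror{1}{\cU}{X}$ case. You claim it ``follows by taking cokernels instead of kernels in the same long exact sequence'', but that reduction does not close the argument. The contravariant LES from the $\cU$-coindex conflation of $Y$ gives exactness at $\Ext{1}{\cC}{\leftapp{\cU}{Y}}{X}$, which only identifies $\Coker(\Ext{1}{\cC}{p_L^Y}{X})$ with $\image\bigl((i_L^Y)^*\bigr)\subseteq\Ext{1}{\cC}{Y}{X}$. The target $\lefterror{1}{\cU}{X}(Y)=\Ker\Ext{1}{\cC}{Y}{i_L^X}$ is a different subspace of $\Ext{1}{\cC}{Y}{X}$, and equality of these two is \emph{not} a formal consequence of any LES (Proposition~\ref{p:extri-les} supplies no $\operatorname{Ext}^2$, so the sequence $\Ext{1}{\cC}{\leftapp{\cU}{Y}}{X}\to\Ext{1}{\cC}{Y}{X}\to\Ext{1}{\cC}{Y}{\leftapp{\cU}{X}}$ is a priori only a complex). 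The inclusion $\image\bigl((i_L^Y)^*\bigr)\subseteq\Ker\Ext{1}{\cC}{Y}{i_L^X}$ is easy, since the composite factors over $\Ext{1}{\cC}{\leftapp{\cU}{Y}}{\leftapp{\cU}{X}}=0$. The reverse inclusion requires working in the stable category: a class $\varphi\in\stabHom{\cC}{Y}{\Sigma X}$ with $(\Sigma i_L^X)\circ\varphi=0$ factors over $\delta_L^X\colon\leftcok{\cU}{X}\to\Sigma X$, and since $\leftcok{\cU}{X}\in\cU$ and $i_L^Y$ is a left $\cU$-approximation of $Y$, it factors further over $i_L^Y$. This is not the same argument as for $\righterror{1}$ — there the approximation property was applied to identify $\ker(\delta_L^Y)^*=\cU(Y,X)$ — and your proposal does not identify this step. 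Once it is supplied, the diagram chase and the $\op{\cC}$ deduction of part~\ref{p:error-alt-r2-l2} go through as you describe.
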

\begin{proof}
For the statement concerning $\righterror{1}{\cU}{X}$, consider the commutative diagram
\begin{equation}
\label{eq:righterror-diag}
\begin{tikzcd}[column sep=18pt]
\Hom{\cC}{Z}{\rightapp{\cU}{X}}\arrow{rr}{\Hom{}{Z}{p_R^X}}\arrow{d}{f^*}&&\Hom{\cC}{Z}{X}\arrow{r}{(\delta_L^Z)^*}\arrow{d}{f^*}&\Ext{1}{\cC}{\leftcok{\cU}{Z}}{X}\arrow{r}{(p_L^Z)^*}\arrow{d}{h_L^*}&\Ext{1}{\cC}{\leftapp{\cU}{Z}}{X}\arrow{d}{g_L^*}\\
\Hom{\cC}{Y}{\rightapp{\cU}{X}}\arrow{rr}{\Hom{}{Y}{p_R^X}}&&\Hom{\cC}{Y}{X}\arrow{r}{(\delta_L^Y)^*}&\Ext{1}{\cC}{\leftcok{\cU}{Y}}{X}\arrow{r}{(p_L^Y)^*}&\Ext{1}{\cC}{\leftapp{\cU}{Y}}{X},
\end{tikzcd}
\end{equation}
in which the right-hand pair of squares comes from the long exact sequences obtained by applying $\Hom{\cC}{\blank}{X}$ to the $\cU$-coindex conflations of $Y$ and $Z$, and the chosen map between them.
In particular, the rows are exact at $\Ext{1}{\cC}{\leftcok{\cU}{Y}}{X}$ and $\Ext{1}{\cC}{\leftcok{\cU}{Z}}{X}$, so $\Ker(p_L^Y)^*=\image(\delta_L^Y)^*$, and similar for $Z$.

We have $\righterror{1}{\cU}{X}(f)=f^*\colon\Coker\Hom{\cC}{Z}{p_R^X}\to\Coker\Hom{\cC}{Y}{p_R^X}$ as in Lemma~\ref{l:error-terms-modT}, so to obtain our desired statement, it is enough to show that the rows of \eqref{eq:righterror-diag} are also exact at $\Hom{\cC}{Y}{X}$ and $\Hom{\cC}{Z}{Y}$, so that $\image(\delta_L^Y)^*=\Coker\Hom{\cC}{Y}{p_R^X}$ and similarly for $Z$.
From the long exact sequence obtained by applying $\Hom{\cC}{\blank}{X}$ to the $\cU$-coindex conflation for $Y$, we have $\ker(\delta_L^Y)^*=\image(\Hom{\cC}{i_L^Y}{X})$.
But since $i_L^Y$ is a left $\cU$-approximation of $Y$, this image is precisely $\cU(Y,X)$, which is also the image of $\Hom{\cC}{Y}{p_R^X}$ as in Lemma~\ref{l:error-terms-modT}.
Repeating this argument for $Z$, we see that \eqref{eq:righterror-diag} has exact rows, as required.

Similar reasoning shows that the statement for $\lefterror{1}{\cU}{X}$ reduces to exactness of the rows of the commutative diagram
\begin{equation}
\label{eq:lefterror-diag}
\begin{tikzcd}[column sep=19pt]
\Ext{1}{\cC}{\leftcok{\cU}{Z}}{X}\arrow{r}{(p_L^Z)^*}\arrow{d}{h_L^*}&\Ext{1}{\cC}{\leftapp{\cU}{Z}}{X}\arrow{r}{(i_L^Z)^*}\arrow{d}{g_L^*}&\Ext{1}{\cC}{Z}{X}\arrow{rr}{\Ext{1}{\cC}{Z}{i_L^X}}\arrow{d}{f^*}&&\Ext{1}{\cC}{Z}{\leftapp{\cU}{X}}\arrow{d}{f^*}\\
\Ext{1}{\cC}{\leftcok{\cU}{Y}}{X}\arrow{r}{(p_L^Y)^*}&\Ext{1}{\cC}{\leftapp{\cU}{Y}}{X}\arrow{r}{(i_L^Y)^*}&\Ext{1}{\cC}{Y}{X}\arrow{rr}{\Ext{1}{\cC}{Y}{i_L^X}}&&\Ext{1}{\cC}{Y}{\leftapp{\cU}{X}},
\end{tikzcd}
\end{equation}
at $\Ext{1}{\cC}{Y}{X}$ and $\Ext{1}{\cC}{Z}{X}$.
The map $\Ext{1}{\cC}{Y}{i_L^X}\circ(i_L^Y)^*$ factors over $\Ext{1}{\cC}{\leftapp{\cU}{Y}}{\leftapp{\cU}{X}}$, but this is zero since both arguments are in the cluster-tilting subcategory $\cU$.
This (and the same argument for $Z$) means that the rows of \eqref{eq:lefterror-diag} are complexes.

It remains to show that $\Ker\Ext{1}{\cC}{Y}{i_L^X}=\Ker\stabHom{\cC}{Y}{\Sigma i_L^X}\subseteq\image(i_L^Y)^*$, the argument for $Z$ again being the same.
If $(\Sigma i_L^X)\circ\varphi=0$ for a morphism $\varphi\in\stabHom{\cC}{Y}{\Sigma X}=\Ext{1}{\cC}{Y}{X}$, then $\varphi$ factors over $\delta_L^X\colon\leftcok{\cU}{X}\to\Sigma X$.
Since $\leftcok{\cU}{X}\in\cU$, this means that $\varphi$ further factors over the left $\cU$-approximation $i_L^Y$, i.e.\ that $\varphi\in\image(i_L^Y)^*$.
Thus \eqref{eq:lefterror-diag} has exact rows, completing the proof of \ref{p:error-alt-r1-l1}. Statement \ref{p:error-alt-r2-l2} follows by applying \ref{p:error-alt-r1-l1} to $\op{\cC}$.
\end{proof}

We may also use the $\stab{\cT}$-modules $\lefterror{i}{\cU}{X}|_{\cT}$ and $\righterror{i}{\cU}{X}|_{\cT}$ to give an alternative description of the maps $\stabcindbar{\cU}{\cT}\colon\Kgpnum{\lfd{\stab{\cU}}}\to\Kgpnum{\lfd{\stab{\cT}}}$ from Definition~\ref{d:ind-bar-def}, adjoint to $\stabcind{\cT}{\cU}\colon\Kgp{\stab{\cT}}\to\Kgp{\stab{\cU}}$, as well as a `lift' of these maps to functions $\Kgp{\fpmod{\stab{\cU}}}\to\Kgp{\fpmod{\stab{\cT}}}$.
We note here that while $\fpmod{\stab{\cU}}\subseteq\lfd{\stab{\cU}}$ (and similarly for $\cT$) since $\stab{\cC}$ is Hom-finite, the induced map $\Kgp{\fpmod{\cU}}\to\Kgpnum{\lfd{\cU}}$ need not be injective, hence referring to a lift rather than a restriction.

\begin{definition}
Let $\cC$ be a cluster category and let $\cTU\ctsubcat{\cC}$.
For $X\in\cC$, define
\[\stabIndbar{\cU}{\cT}(\Extfun{\cU}{X})\defeq[\lefterror{1}{\cU}{X}|_{\cT}]-[\righterror{1}{\cU}{X}|_{\cT}],\quad
\stabCoindbar{\cU}{\cT}(\Extfun{\cU}{X})\defeq[\lefterror{2}{\cU}{X}|_{\cT}]-[\righterror{2}{\cU}{X}|_{\cT}]\]
in $\Kgp{\fpmod{\stab{\cT}}}$, recalling that $\lefterror{i}{\cU}{X}|_{\cT}$ and $\righterror{i}{\cU}{X}|_{\cT}$ are finitely presented $\stab{\cT}$-modules.
\end{definition}

\begin{proposition}
\label{p:stabIndbar-well-def}
The maps $\stabIndbar{\cU}{\cT}$ and $\stabCoindbar{\cU}{\cT}$ induce homomorphisms $\Kgp{\fpmod{\stab{\cU}}}\to\Kgp{\fpmod{\stab{\cT}}}$.
\end{proposition}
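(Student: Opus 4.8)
The goal is to show that the assignments $[\Extfun{\cU}{X}]\mapsto\stabIndbar{\cU}{\cT}(\Extfun{\cU}{X})$ and $[\Extfun{\cU}{X}]\mapsto\stabCoindbar{\cU}{\cT}(\Extfun{\cU}{X})$, defined a priori only on objects, extend to group homomorphisms $\Kgp{\fpmod{\stab{\cU}}}\to\Kgp{\fpmod{\stab{\cT}}}$. Since $\fpmod{\stab{\cU}}$ is abelian (by \cite[Prop.~2.1(a)]{KellerReiten}, cited in the excerpt), it suffices to prove two things: first, that $\stabIndbar{\cU}{\cT}$ and $\stabCoindbar{\cU}{\cT}$ are well-defined as functions of the \emph{class} $[\Extfun{\cU}{X}]\in\Kgp{\fpmod{\stab{\cU}}}$ (i.e.\ depend only on the isomorphism type of the $\stab{\cU}$-module, and indeed only on its class), and second, that they are additive on short exact sequences of $\stab{\cU}$-modules. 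By the equivalence $\Extfun{\cU}\colon\cC/\cU\isoto\fpmod{\stab{\cU}}$ of Proposition~\ref{p:equiv-to-mod}, every object and morphism of $\fpmod{\stab{\cU}}$ comes from $\cC$, so I can work with objects of $\cC$ throughout; the functors $\lefterror{i}{\cU}{\blank}$ and $\righterror{i}{\cU}{\blank}$ are already known (from the discussion preceding \eqref{eq:errors}) to depend only on the class of $X$ in $\stab{\cU}$.

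The first step is to reduce additivity to a statement about \emph{conflations} in $\cC$. Given a short exact sequence $0\to M_1\to M\to M_2\to 0$ in $\fpmod{\stab{\cU}}$, pull it back through $\Extfun{\cU}$ to obtain a conflation $X_1\infl X\defl X_2\confl$ in $\cC/\cU$, which lifts to a conflation in $\cC$ (after possibly adjusting by summands in $\cU$, which do not affect the error terms by Remark~\ref{r:add-error-vanish}). So I must show
\[
\stabIndbar{\cU}{\cT}(\Extfun{\cU}{X_1})-\stabIndbar{\cU}{\cT}(\Extfun{\cU}{X})+\stabIndbar{\cU}{\cT}(\Extfun{\cU}{X_2})=0
\]
whenever $X_1\infl X\defl X_2\confl$ is a conflation, and similarly for $\stabCoindbar{}{}$. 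The cleanest route is to express these combinations in terms of the already-established identities. Indeed, from Theorem~\ref{t:ind-coind-additive-error-terms}\ref{t:ind-additive-error-terms} and \ref{t:ind-additive-error-terms-2}, subtracting the two identities gives
\[
\beta_{\cT}\bigl([\lefterror{1}{\cU}{X}|_{\cT}]-[\righterror{1}{\cU}{X}|_{\cT}]\bigr)=\ind{\cU}{\cT}[\rightapp{\cU}{X}]-\ind{\cU}{\cT}[\rightker{\cU}{X}]-\ind{\cU}{\cT}[\leftapp{\cU}{X}]+\ind{\cU}{\cT}[\leftcok{\cU}{X}],
\]
i.e.\ $\beta_{\cT}\circ\stabIndbar{\cU}{\cT}\circ\Extfun{\cU}=\ind{\cU}{\cT}\circ(\text{something additive})$. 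More precisely, $[\rightapp{\cU}{X}]-[\rightker{\cU}{X}]$ and $[\leftapp{\cU}{X}]-[\leftcok{\cU}{X}]$ are additive on conflations in $\cC$ by definition of the relevant Grothendieck classes, so the right-hand side is additive in $X$; hence $\beta_{\cT}\circ\stabIndbar{\cU}{\cT}$ is additive on conflations. But $\beta_{\cT}$ need not be injective in general, so to conclude I also need additivity in the kernel direction. This is where the finite-dimensionality results come in: the error modules restricted to $\cT$ are finitely presented (as noted in the text, since $\fpmod{\stab{\cT}}$ is abelian and contains all $\Extfun{\cT}{Y}$), so $\stabIndbar{\cU}{\cT}$ genuinely lands in $\Kgp{\fpmod{\stab{\cT}}}$; and I can instead argue additivity directly at the level of modules.

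The decisive observation is that the four error functors fit into exact sequences of $\stab{\cT}$-modules when restricted to $\cT$. Concretely, from the proof of Theorem~\ref{t:ind-coind-additive-error-terms} we have, for a conflation $X_1\infl X\defl X_2\confl$ in $\cC$ with all relevant approximation sequences chosen compatibly, a long exact sequence relating $\Yonfun{\cT}X_i$, $\Yonfun{\cT}X$ and the error modules. The plan is to invoke the $3\times3$ lemma (or a spectral-sequence/snake-lemma argument, exactly as in the proof of Lemma~\ref{l:ind-coind-adjointness}) applied to the double complex built from $\cU$-index (resp.\ $\cU$-coindex) conflations of $X_1,X,X_2$: this yields an exact sequence in $\fpmod{\stab{\cT}}$ of the form
\[
\cdots\to\righterror{1}{\cU}{X_1}|_{\cT}\to\righterror{1}{\cU}{X}|_{\cT}\to\righterror{1}{\cU}{X_2}|_{\cT}\to\lefterror{1}{\cU}{X_1}|_{\cT}\to\lefterror{1}{\cU}{X}|_{\cT}\to\lefterror{1}{\cU}{X_2}|_{\cT}\to 0
\]
(with a corresponding one involving $\righterror{2}{}{}$ and $\lefterror{2}{}{}$), whose alternating sum of classes is zero; rearranging gives exactly the desired additivity of $\stabIndbar{\cU}{\cT}$ and $\stabCoindbar{\cU}{\cT}$. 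Well-definedness as a function of the class (not merely the isomorphism type) then follows because any relation in $\Kgp{\fpmod{\stab{\cU}}}$ is a consequence of short exact sequences.

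\textbf{Main obstacle.} The hardest part will be constructing the six-term (or longer) exact sequence of error modules attached to a conflation in $\cC$ with the correct maps — i.e.\ making Proposition~\ref{p:error-alt}'s functoriality descriptions combine with the $3\times3$ lemma into a genuine exact sequence, rather than just a complex with the right Euler characteristic. One subtlety is that the error functors are defined via \emph{choices} of approximation conflations, so I must either check independence of these choices at the chain level or argue that any two choices are related by homotopies inducing the same maps on the error modules; Proposition~\ref{p:error-alt} gives precisely the tool for this, since it computes the module maps intrinsically. A secondary subtlety is the passage between $\cC$ and an exact lift $\cE$ via Proposition~\ref{p:cl-cat-exact-lift} and Proposition~\ref{p:stabind}: as elsewhere in the paper, I expect to prove the exact-sequence statement first in the exact case (where $\Yonfun{\cT}$ is left exact and the double complexes behave transparently) and then transport along partial stabilisation, using that the error modules are insensitive to projective-injectives.
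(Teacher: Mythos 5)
Your proposal correctly identifies the shape of the argument (reduce to additivity on short exact sequences, construct a six-term exact sequence of error modules, invoke Proposition~\ref{p:error-alt} to control maps and choices), but the concrete construction diverges from the paper's in a way that introduces gaps the paper avoids.

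The paper's proof does not lift the short exact sequence \(0\to\Extfun{\cU}X\to\Extfun{\cU}Y\to\Extfun{\cU}Z\to 0\) to a conflation in \(\cC\), nor does it pass to an exact lift \(\cE\). Instead, for each test object \(T\in\cT\), it chooses a \(\cU\)-coindex conflation \emph{of \(T\)} (not of \(X, Y, Z\)) and uses Proposition~\ref{p:error-alt}\ref{p:error-alt-r1-l1} to obtain, for each of \(X\), \(Y\), \(Z\), the four-term exact sequence
\[0\to\righterror{1}{\cU}{X}(T)\to\Ext{1}{\cC}{\leftcok{\cU}T}{X}\to\Ext{1}{\cC}{\leftapp{\cU}T}{X}\to\lefterror{1}{\cU}{X}(T)\to 0.\]
The vertical maps between these rows come from evaluating the given short exact sequence of \(\stab{\cU}\)-modules at the \(\cU\)-objects \(\leftcok{\cU}T\) and \(\leftapp{\cU}T\), so the two middle columns are automatically short exact with no compatibility required among approximation conflations. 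The snake lemma then yields the six-term sequence at each \(T\), and the remaining half of the proof is the functoriality check in \(T\), again using Proposition~\ref{p:error-alt}.

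Your version, built from \(\cU\)-approximations of \(X_1\), \(X\), \(X_2\), runs into two problems. First, it is not obvious that a given short exact sequence of \(\stab{\cU}\)-modules is realised by a conflation in \(\cC\): a conflation \(X\infl Y\defl Z\confl\) yields via Proposition~\ref{p:extri-les} a sequence \(\Extfun{\cU}X\to\Extfun{\cU}Y\to\Extfun{\cU}Z\) that is exact in the middle but need not be short exact, and the converse lifting requires a separate argument that you skip over. Second, even granting such a conflation, assembling \(\cU\)-index conflations of \(X_1\), \(X\), \(X_2\) into the \(3\times 3\) diagram underlying your double complex requires lifting \(\rightapp{\cU}{X_2}\to X_2\) along the deflation \(X\defl X_2\), a horseshoe-type step that is not automatic because \(\rightapp{\cU}{X_2}\) is not projective in \(\cC\). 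If both issues were resolved your route would eliminate the functoriality check, but as stated it does not go through, whereas the paper's choice of approximating \(T\) rather than \(X,Y,Z\) bypasses both obstacles entirely.
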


\begin{proof}
Given an exact sequence
\begin{equation}
\label{eq:E-exact-seq}
\begin{tikzcd}
0\arrow{r}&\Extfun{\cU}X\arrow{r}{\Extfun{\cU}{i}}&\Extfun{\cU}{Y}\arrow{r}{\Extfun{\cU}{p}}&\Extfun{\cU}{Z}\arrow{r}&0
\end{tikzcd}
\end{equation}
in $\fpmod{\stab{\cU}}$ (cf.~Proposition~\ref{p:equiv-to-mod}) and $T\in\cT$, we may construct the diagram
\[\begin{tikzcd}[column sep=22pt,row sep=13pt]
&&0\arrow{d}&0\arrow{d}&&\\
0\arrow{r}&\righterror{1}{\cU}{X}(T)\arrow{r}&\Ext{1}{\cC}{\leftcok{\cU}{T}}{X}\arrow{d}\arrow{r}&\Ext{1}{\cC}{\leftapp{\cU}{T}}{X}\arrow{d}\arrow{r}&\lefterror{1}{\cU}{X}(T)\arrow{r}&0\\
0\arrow{r}&\righterror{1}{\cU}{Y}(T)\arrow{r}&\Ext{1}{\cC}{\leftcok{\cU}{T}}{Y}\arrow{d}\arrow{r}&\Ext{1}{\cC}{\leftapp{\cU}{T}}{Y}\arrow{d}\arrow{r}&\lefterror{1}{\cU}{Y}(T)\arrow{r}&0\\
0\arrow{r}&\righterror{1}{\cU}{Z}(T)\arrow{r}&\Ext{1}{\cC}{\leftcok{\cU}{T}}{Z}\arrow{d}\arrow{r}&\Ext{1}{\cC}{\leftapp{\cU}{T}}{Z}\arrow{d}\arrow{r}&\lefterror{1}{\cU}{Z}(T)\arrow{r}&0\\
&&0&0&&
\end{tikzcd}\]
in which the rows are exact by Proposition~\ref{p:error-alt}\ref{p:error-alt-r1-l1}, and the columns are obtained by evaluating \eqref{eq:E-exact-seq}, and are hence also exact.
The snake lemma thus gives us an exact sequence
\begin{equation}
\label{eq:error-term-seq}
\begin{tikzcd}[column sep=10pt]
0\arrow{r}&\righterror{1}{\cU}{X}(T)\arrow{r}&\righterror{1}{\cU}{Y}(T)\arrow{r}{\varphi}&\righterror{1}{\cU}{Z}(T)\arrow{r}{\delta}&\lefterror{1}{\cU}{X}(T)\arrow{r}{\psi}&\lefterror{1}{\cU}{Y}(T)\arrow{r}&\lefterror{1}{\cU}{Z}(T)\arrow{r}&0.
\end{tikzcd}
\end{equation}
Now one may check that each morphism in this sequence is functorial in $T$.
For example, for a map $f\colon T'\to T$ in $\cT$ and a choice of lift $h_L\colon\leftcok{\cU}{T'}\to\leftcok{\cU}{T}$ as in Proposition~\ref{p:error-alt}, we have the diagram
\[\begin{tikzcd}
\righterror{1}{\cU}{Y}(T)\arrow[hookrightarrow]{rr}\arrow{dr}{\righterror{1}{\cU}{Y}(f)}\arrow{dd}{\varphi_T}&&\Ext{1}{\cC}{\leftcok{\cU}T}{Y}\arrow[near end]{dd}{\Ext{1}{\cC}{\leftcok{\cU}{T}}{p}}\arrow{dr}{\Ext{1}{\cC}{h_L}{Y}}\\
&\righterror{1}{\cU}{Y}(T')&&\arrow[from=ll,crossing over,hookrightarrow]\Ext{1}{\cC}{\leftcok{\cU}{T'}}{Y}\arrow{dd}{\Ext{1}{\cC}{\leftcok{\cU}{T'}}{p}}\\
\righterror{1}{\cU}{Z}(T)\arrow[hookrightarrow]{rr}\arrow{dr}{\righterror{1}{\cU}{Z}(f)}&&\Ext{1}{\cC}{\leftcok{\cU}{T}}{Z}\arrow{dr}{\Ext{1}{\cC}{h_L}{Z}}\\
&\righterror{1}{\cU}{Z}(T')\arrow[from=uu,crossing over,"\varphi_{T'}",near start]\arrow[hookrightarrow]{rr}&&\Ext{1}{\cC}{\leftcok{\cU}{T'}}{Z}
\end{tikzcd}\]
in which the front and back faces commute by construction, the right-hand face commutes by bifunctoriality of $\Ext{1}{\cC}{\blank}{\blank}$, and the upper and lower faces commute by Proposition~\ref{p:error-alt}\ref{p:error-alt-r1-l1}.
Functoriality of $\varphi$ is commutativity of the left-hand face, which follows since the horizontal arrows are inclusions.
Similar reasoning gives functoriality of all morphisms in \eqref{eq:error-term-seq} except $\delta$, but this follows since $\delta$ is the composition of the cokernel of $\varphi$ with the kernel of $\psi$.

The exact sequence \eqref{eq:error-term-seq} thus shows that in $\Kgp{\fpmod{\stab{\cU}}}$ we have
\begin{align*}
0&=[\lefterror{1}{\cU}{Z}|_{\cT}]-[\lefterror{1}{\cU}{Y}|_{\cT}]+[\lefterror{1}{\cU}{X}|_{\cT}]-[\righterror{1}{\cU}{Z}|_{\cT}]+[\righterror{1}{\cU}{Y}|_{\cT}]-[\righterror{1}{\cU}{X}|_{\cT}]\\
&=\stabIndbar{\cU}{\cT}(\Extfun{\cU}{Z})-\stabIndbar{\cU}{\cT}(\Extfun{\cU}{Y})+\stabIndbar{\cU}{\cT}(\Extfun{\cU}{X}),
\end{align*}
as required.
The statement for $\stabCoindbar{\cU}{\cT}$ is then deduced from $\op{\cC}$ in the usual way.
\end{proof}

\begin{corollary}
\label{c:indbar-lift}
Let $\cC$ be a compact or skew-symmetric cluster category and $\cTU\ctsubcat\cC$.
Then there are commutative diagrams
\[\begin{tikzcd}
\Kgp{\fpmod{\stab{\cU}}}\arrow{r}\arrow{d}{\stabIndbar{\cU}{\cT}}&\Kgpnum{\lfd{\stab{\cU}}}\arrow{d}{\stabindbar{\cU}{\cT}}\\
\Kgp{\fpmod{\stab{\cT}}}\arrow{r}\arrow{r}&\Kgpnum{\lfd{\stab{\cT}}}
\end{tikzcd}\qquad
\begin{tikzcd}
\Kgp{\fpmod{\stab{\cU}}}\arrow{r}\arrow{d}{\stabCoindbar{\cU}{\cT}}&\Kgpnum{\lfd{\stab{\cU}}}\arrow{d}{\stabcoindbar{\cU}{\cT}}\\
\Kgp{\fpmod{\stab{\cT}}}\arrow{r}\arrow{r}&\Kgpnum{\lfd{\stab{\cT}}}
\end{tikzcd}\]
in which the horizontal arrows are induced from the inclusion $\fpmod{\stab{\cU}}\subseteq\lfd{\stab{\cU}}$, and the analogous inclusion for $\cT$.
\end{corollary}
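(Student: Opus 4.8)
The plan is to establish the two commutative squares of Corollary~\ref{c:indbar-lift} by showing that the maps $\stabIndbar{\cU}{\cT}$ and $\stabindbar{\cU}{\cT}$ agree after composing with the natural map $\Kgp{\fpmod{\stab{\cU}}}\to\Kgpnum{\lfd{\stab{\cU}}}$ on the source, and likewise for the coindex pair. Since both routes around each square are group homomorphisms (by Proposition~\ref{p:stabIndbar-well-def} for $\stabIndbar{\cU}{\cT}$, Proposition~\ref{p:indbar-coindbar-inverse} and Definition~\ref{d:ind-bar-def} for $\stabindbar{\cU}{\cT}$, and Proposition~\ref{p:fpmod-inclusion} for the horizontal arrows), it suffices to check commutativity on a generating set. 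By Proposition~\ref{p:equiv-to-mod}, $\fpmod{\stab{\cU}}=\Extfun{\cU}(\cC)$, so every class in $\Kgp{\fpmod{\stab{\cU}}}$ is a $\integ$-linear combination of classes $[\Extfun{\cU}X]$ for $X\in\cC$; thus it is enough to verify the identity
\[
\sinc{\cT}\bigl(\stabIndbar{\cU}{\cT}[\Extfun{\cU}X]\bigr)=\stabindbar{\cU}{\cT}\bigl(\sinc{\cU}[\Extfun{\cU}X]\bigr)
\]
for each $X\in\cC$, where I use the abbreviations $\sinc{\cT}$, $\sinc{\cU}$ for the horizontal inclusions (as in Remark~\ref{rem:pi-iota-shorthand}), and the parallel identity with $\stabCoindbar{}{}$, $\stabcoindbar{}{}$.

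For the concrete verification I would use the characterisation of $\stabindbar{\cU}{\cT}$ via adjunction: by Definition~\ref{d:ind-bar-def} and \eqref{eq:ind-coindbar-adj}, $\stabindbar{\cU}{\cT}$ is the adjoint of $\stabcoind{\cT}{\cU}$ with respect to the non-degenerate forms $\canform{\blank}{\blank}{\stab{\cT}}$ and $\canform{\blank}{\blank}{\stab{\cU}}$, so by Proposition~\ref{p:K-duality} it is enough to pair both sides against an arbitrary basis class $[T]\in\Kgp{\stab{\cT}}$ and compare. Pairing the right-hand side against $[T]$ gives $\canform{[\Extfun{\cU}X]}{\stabcoind{\cT}{\cU}[T]}{\stab{\cU}}$, which is $\dim\Extfun{\cU}X(\stabcoind{\cT}{\cU}[T])=\ext{1}{\cC}{\leftapp{\cU}{T}}{X}-\ext{1}{\cC}{\leftcok{\cU}{T}}{X}$ after unwinding the coindex conflation for $T$. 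For the left-hand side, $\canform{\sinc{\cT}\stabIndbar{\cU}{\cT}[\Extfun{\cU}X]}{[T]}{\cT}=\dim\bigl(\lefterror{1}{\cU}{X}|_{\cT}(T)\bigr)-\dim\bigl(\righterror{1}{\cU}{X}|_{\cT}(T)\bigr)$ directly from the definition of $\stabIndbar{\cU}{\cT}$. By Proposition~\ref{p:error-alt}\ref{p:error-alt-r1-l1}, $\righterror{1}{\cU}{X}(T)=\Ker\Ext{1}{\cC}{p_L^T}{X}$ and $\lefterror{1}{\cU}{X}(T)=\Coker\Ext{1}{\cC}{p_L^T}{X}$, where $p_L^T$ is the deflation in a $\cU$-coindex conflation for $T$; hence the difference of their dimensions is exactly $\ext{1}{\cC}{\leftapp{\cU}{T}}{X}-\ext{1}{\cC}{\leftcok{\cU}{T}}{X}$ (the alternating sum of the terms in the complex $\Ext{1}{\cC}{\leftcok{\cU}{T}}{X}\to\Ext{1}{\cC}{\leftapp{\cU}{T}}{X}$, whose cohomology is $\righterror{1}{\cU}{X}(T)\oplus\lefterror{1}{\cU}{X}(T)$ in the two degrees). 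The two expressions coincide, giving the index square; the coindex square follows by applying the index square to $\op{\cC}$ and $\op{\cU}$, $\op{\cT}$, using Remark~\ref{r:ind-op}, Proposition~\ref{p:error-op} to convert $\lefterror{2}{},\righterror{2}{}$ into duals of $\righterror{1}{},\lefterror{1}{}$, and the self-duality of the forms under $\dual{(\blank)}$.

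The main obstacle I anticipate is bookkeeping rather than conceptual: one must be careful that the four $\stab{\cT}$-modules $\lefterror{i}{\cU}{X}|_{\cT}$, $\righterror{i}{\cU}{X}|_{\cT}$ are indeed finitely presented (so that $\stabIndbar{\cU}{\cT}$ lands in $\Kgp{\fpmod{\stab{\cT}}}$ and the square makes sense), which was already noted in the discussion before Proposition~\ref{p:error-term-fd} using that $\fpmod{\stab{\cT}}$ is abelian and $\Extfun{\cT}Y$ is finitely presented for all $Y$; and that the compactness or skew-symmetry hypothesis is exactly what licenses the adjoint maps $\stabcindbar{}{}$ via Proposition~\ref{p:ind-coind-bar} and Definition~\ref{d:ind-bar-def}. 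A secondary subtlety is that the map $\Kgp{\fpmod{\stab{\cU}}}\to\Kgpnum{\lfd{\stab{\cU}}}$ need not be injective (as the statement itself flags), so the square genuinely records that $\stabIndbar{\cU}{\cT}$ is a lift of $\stabindbar{\cU}{\cT}$ through this non-injective map rather than a restriction; this means one cannot simply transport the identity back along the horizontal arrows, and the pairing argument against $\Kgp{\stab{\cT}}$ via Proposition~\ref{p:K-duality} is the right tool precisely because it sees only the numerical quotient on the target.
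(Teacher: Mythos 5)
Your proof is correct and follows essentially the same approach as the paper: both reduce commutativity of the squares, via the non-degenerate form $\canform{\blank}{\blank}{\stab{\cT}}$ of Proposition~\ref{p:K-duality}, to the identity obtained by pairing against $[T]$ for $T\in\cT$, then compute the two sides using the adjunction defining $\stabindbar{\cU}{\cT}$ and the exact sequence furnished by Proposition~\ref{p:error-alt}\ref{p:error-alt-r1-l1} with $Y=T$. The only cosmetic difference is that you deduce the coindex square by passing to $\op{\cC}$ and invoking Proposition~\ref{p:error-op}, whereas the paper cites Proposition~\ref{p:error-alt}\ref{p:error-alt-r2-l2} directly together with the stably $2$-Calabi--Yau property; since \ref{p:error-alt-r2-l2} is itself derived from \ref{p:error-alt-r1-l1} by the same opposite-category trick, the two routes are morally identical. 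One small imprecision: the cohomology of the two-term complex $\Ext{1}{\cC}{\leftcok{\cU}{T}}{X}\to\Ext{1}{\cC}{\leftapp{\cU}{T}}{X}$ is $\righterror{1}{\cU}{X}(T)$ in one degree and $\lefterror{1}{\cU}{X}(T)$ in the other, not their direct sum; the Euler characteristic computation you run is unaffected.
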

\begin{proof}
By using the non-degenerate bilinear form $\canform{\blank}{\blank}{\cT}$ from Proposition~\ref{p:K-duality}, we see that commutativity of the left-hand square is equivalent to the statement that
\begin{align*}
\canform{[\lefterror{1}{\cU}{X}|_{\cT}]-[\righterror{1}{\cU}{X}|_{\cT}]}{[T]}{\cT}&=\canform{\indbar{\cU}{\cT}[\Extfun{\cU}{X}]}{[T]}{\cT}\\
&=\canform{[\Extfun{\cU}{X}]}{\coind{\cT}{\cU}[T]}{\cU}\\
&=\ext{1}{\cC}{\leftapp{\cU}{T}}{X}-\ext{1}{\cC}{\leftcok{\cU}{T}}{X}
\end{align*}
for all $T\in\cT$, where $T\infl \leftapp{\cU}{T}\defl \leftcok{\cU}{T}\confl$ is a $\cU$-coindex conflation for $T$; here $[\lefterror{1}{\cU}{X}|_{\cT}]$ and $[\righterror{1}{\cU}{X}|_{\cT}]$ refer to classes in $\Kgpnum{\lfd{\stab{\cT}}}$.
Taking $Y=T$ in Proposition~\ref{p:error-alt}\ref{p:error-alt-r1-l1}, we have an exact sequence
\begin{equation}
\label{eq:l1-r1-seq}
\begin{tikzcd}[column sep=20pt]
0\arrow{r}&\righterror{1}{\cU}{X}(T)\arrow{r}&\Ext{1}{\cC}{\leftcok{\cU}{T}}{X}\arrow{r}&\Ext{1}{\cC}{\leftapp{\cU}{T}}{X}\arrow{r}&\lefterror{1}{\cU}{X}(T)\arrow{r}&0,
\end{tikzcd}
\end{equation}
from which the required identity follows.
Commutativity of the right-hand square follows analogously from Proposition~\ref{p:error-alt}\ref{p:error-alt-r2-l2} (since $\cC$ is stably $2$-Calabi--Yau).
\end{proof}

\begin{remark}
The distinction between $\stabIndbar{\cU}{\cT}$ and $\stabindbar{\cU}{\cT}$ (and similarly for the coindex) is only relevant in the infinite rank case: if $\cC$ has finite rank, then the horizontal arrows in Corollary~\ref{c:indbar-lift} are isomorphisms.
\end{remark}

\begin{corollary}
\label{c:indbar-on-fd}
Let $\cC$ be a compact or skew-symmetric cluster category, let $\cTU\ctsubcat\cC$, and assume that
$\cU$ is maximally mutable. Then $\stabcindbar{\cU}{\cT}$ restricts to a map $\Kgp{\fd{\stab{\cU}}}\to\Kgp{\fd{\stab{\cT}}}$ if either $\cC$ has finite rank or
$\cU$ is reachable from $\cT$.
In either case, if $\cT$ is also maximally mutable then we have commutative diagrams
\[\begin{tikzcd}
\Kgp{\fd{\stab{\cU}}}\arrow{r}\arrow{d}{\stabindbar{\cU}{\cT}}&\Kgp{\fpmod{\stab{\cU}}}\arrow{d}{\stabIndbar{\cU}{\cT}}\\
\Kgp{\fd{\stab{\cT}}}\arrow{r}\arrow{r}&\Kgp{\fpmod{\stab{\cT}}}
\end{tikzcd}\qquad
\begin{tikzcd}
\Kgp{\fd{\stab{\cU}}}\arrow{r}\arrow{d}{\stabcoindbar{\cU}{\cT}}&\Kgp{\fpmod{\stab{\cU}}}\arrow{d}{\stabCoindbar{\cU}{\cT}}\\
\Kgp{\fd{\stab{\cT}}}\arrow{r}\arrow{r}&\Kgp{\fpmod{\stab{\cT}}}
\end{tikzcd}\]
in which the horizontal arrows are injective; that is, $\stabIndbar{\cU}{\cT}$ and $\stabindbar{\cU}{\cT}$ have the same restriction to $\Kgp{\fd{\stab{\cU}}}\to\Kgp{\fd{\stab{\cT}}}$ (and similarly for the coindex).
\end{corollary}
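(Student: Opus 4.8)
\textbf{Proof proposal for Corollary~\ref{c:indbar-on-fd}.}
The plan is to reduce everything to the finite-dimensionality statements already recorded in Proposition~\ref{p:error-term-fd} together with the commuting diagrams of Corollary~\ref{c:indbar-lift}. First I would establish that \(\stabcindbar{\cU}{\cT}\) sends \(\Kgp{\fd{\stab{\cU}}}\) into \(\Kgp{\fd{\stab{\cT}}}\). Since \(\cU\) is maximally mutable, Corollary~\ref{c:fp=fd} gives \(\fd{\stab{\cU}}\subseteq\fpmod{\stab{\cU}}\), so every element of \(\Kgp{\fd{\stab{\cU}}}\) is a combination of classes \([\Extfun{\cU}X]\) with \(X\in\cC\) chosen so that \(\Extfun{\cU}X\in\fd{\stab{\cU}}\) (for instance \(X=\mut{\cU}{U}\) for \(U\in\indec{\stab{\cU}}\) by Lemma~\ref{l:props-of-K0-mod-T}, whose classes span \(\Kgp{\fd{\stab{\cU}}}\) by the Jordan--Hölder theorem and Proposition~\ref{p:KS-simples}). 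Using the lift \(\stabIndbar{\cU}{\cT}\), which by Corollary~\ref{c:indbar-lift} agrees with \(\stabindbar{\cU}{\cT}\) after passing to numerical Grothendieck groups, I would compute \(\stabIndbar{\cU}{\cT}(\Extfun{\cU}X)=[\lefterror{1}{\cU}{X}|_{\cT}]-[\righterror{1}{\cU}{X}|_{\cT}]\) and invoke Proposition~\ref{p:error-term-fd}: in the finite rank case part~\ref{p:error-term-fd-add-finite} applies directly (as \(\stab{\cT}\) is then additively finite), while if \(\cU\) is reachable from \(\cT\) one needs that \(\lefterror{1}{\cU}X|_{\cT}\) and \(\righterror{1}{\cU}X|_{\cT}\) are finite-dimensional—here I would use that \(X\) can be taken in \(\cU\)-mutation-related position, but more robustly that \(\Extfun{\cU}X\in\fd{\stab{\cU}}\) forces the relevant support condition; this may require a small argument combining parts \ref{p:error-term-fd-V-reachable} and \ref{p:error-term-fd-U-reachable} applied with a suitable choice of the object representing the class.

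Next, granting that the restriction lands in \(\Kgp{\fd{\stab{\cT}}}\), I would assemble the commutative square. The horizontal maps \(\Kgp{\fd{\stab{\cU}}}\to\Kgp{\fpmod{\stab{\cU}}}\) and \(\Kgp{\fd{\stab{\cT}}}\to\Kgp{\fpmod{\stab{\cT}}}\) are injective by Proposition~\ref{p:fd-sub-fpmod} (using maximal mutability of \(\cU\) and \(\cT\) respectively). Commutativity then reduces, via the injectivity of the lower horizontal arrow and of the canonical pairing \(\canform{\blank}{\blank}{\cT}\) from Proposition~\ref{p:K-duality}, to checking that for \(M\in\fd{\stab{\cU}}\), the classes \(\stabIndbar{\cU}{\cT}[M]\in\Kgp{\fpmod{\stab{\cT}}}\) and \(\stabindbar{\cU}{\cT}[M]\in\Kgpnum{\lfd{\stab{\cT}}}\) map to the same element under the (now injective) maps into \(\Kgpnum{\lfd{\stab{\cT}}}\); but this is exactly the content of the left-hand square in Corollary~\ref{c:indbar-lift}, so the diagram chases through. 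The coindex case is identical after passing to \(\op{\cC}\), using Proposition~\ref{p:error-op} to swap \(\lefterror{i}{}{}\) and \(\righterror{i}{}{}\), and the parallel half of Corollary~\ref{c:indbar-lift}.

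The only genuine obstacle I anticipate is the finite-dimensionality step when \(\cU\) is merely reachable from \(\cT\) rather than \(\cC\) having finite rank: one must verify that \(\lefterror{1}{\cU}X|_{\cT}\) and \(\righterror{1}{\cU}X|_{\cT}\) lie in \(\fd{\stab{\cT}}\) for a representative \(X\) with \(\Extfun{\cU}X\in\fd{\stab{\cU}}\). I would handle this by noting that \(\fd{\stab{\cU}}\) is spanned by classes \([\Extfun{\cU}{\mut{\cU}{U}}]=[\simpmod{\cU}{U}]\) for \(U\in\indec{\stab{\cU}}\) when there are no loops (and by \(\Extfun{\cU}{\mut{\cU}{U}}\in\fd{\stab{\cU}}\cap\fpmod{\stab{\cU}}\) in general, Lemma~\ref{l:props-of-K0-mod-T}), so it suffices to treat \(X=\mut{\cU}{U}\); then \(\mut{\cU}{U}\in\cV\defeq\mut{U}{\cU}\), which is reachable from \(\cT\) precisely when \(\cU\) is, and Proposition~\ref{p:error-term-fd}\ref{p:error-term-fd-V-reachable} gives \(\righterror{1}{\cU}{\mut{\cU}{U}}|_{\cT}\in\fd{\stab{\cT}}\) while \(\lefterror{1}{\cU}{\mut{\cU}{U}}|_{\cT}\), being a submodule of \(\Extfun{\cT}{\mut{\cU}{U}}\in\fd{\stab{\cT}}\) (again by reachability of \(\cV\) from \(\cT\)), is also finite-dimensional. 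With this in hand the restriction statement follows by linearity, and the commutative diagrams follow as above.
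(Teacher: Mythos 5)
Your overall strategy matches the paper's: reduce to the classes \([\Extfun{\cU}(\mut{\cU}{U})]\), control the error terms via Proposition~\ref{p:error-term-fd}, and deduce the commutative squares from Corollary~\ref{c:indbar-lift} together with the injections of Proposition~\ref{p:fd-sub-fpmod}. Two small corrections are needed.

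First, a citation slip: for \(\righterror{1}{\cU}{\mut{\cU}{U}}|_{\cT}\in\fd{\stab{\cT}}\) in the reachability case you invoke Proposition~\ref{p:error-term-fd}\ref{p:error-term-fd-V-reachable}, but that part controls \(\lefterror{1}\) and \(\righterror{2}\) only. The relevant statement for \(\righterror{1}\) is \ref{p:error-term-fd-U-reachable} (which uses that \(\cU\) is reachable from \(\cT\), so \(\righterror{1}{\cU}X|_{\cT}\) is a submodule of \(\Extfun{\cT}\rightker{\cU}X\in\fd{\stab{\cT}}\), for any \(X\)). Your direct submodule argument for \(\lefterror{1}\) is fine and is in fact the proof of \ref{p:error-term-fd-V-reachable}.

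Second, the loop case is not quite closed by "follows by linearity." You correctly observe that when \(\cU\) has a loop at \(U\) the module \(\Extfun{\cU}(\mut{\cU}{U})\) is not simple, so \([\Extfun{\cU}(\mut{\cU}{U})]=\delta_U[\simpmod{\cU}{U}]\) for some \(\delta_U\geq1\) (it is supported only at \(U\)). But then the classes you can handle span only a finite-index subgroup of \(\Kgp{\fd{\stab{\cU}}}\), and linearity alone gives the desired restriction only on that subgroup. The missing ingredient is that \(\Kgp{\fd{\stab{\cT}}}\) is a pure subgroup of \(\Kgpnum{\lfd{\stab{\cT}}}\) (a direct sum of copies of \(\integ\) inside the corresponding direct product, cf.~Corollary~\ref{c:Kgpnum-prod}), so \(\delta_U\cdot\stabcindbar{\cU}{\cT}[\simpmod{\cU}{U}]\in\Kgp{\fd{\stab{\cT}}}\) forces \(\stabcindbar{\cU}{\cT}[\simpmod{\cU}{U}]\in\Kgp{\fd{\stab{\cT}}}\). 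The paper performs exactly this step via its \(\delta_U\) remark, and your writeup should make this explicit rather than appeal to linearity. The commutative square portion of your argument is essentially identical to the paper's and is correct.
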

\begin{proof}
For $\cU$ maximally mutable we have $\fd{\stab{\cU}}\subseteq\fpmod{\stab{\cU}}$, and so every $M\in\fd{\stab{\cU}}$ has the form $\Extfun{\cU}X$ for some $X\in\cC$.
By Corollary~\ref{c:indbar-lift}, we have $\stabcindbar{\cU}{\cT}[M]\in\Kgp{\fd{\stab{\cT}}}$ provided $\lefterror{i}{\cU}X|_{\cT},\righterror{i}{\cU}X|_{\cT}\in\fd{\stab{\cT}}$.
By Proposition~\ref{p:error-term-fd}\ref{p:error-term-fd-add-finite}, this is the case if $\cC$ has finite rank.

If $\cU$ is reachable from $\cT$, then $\lefterror{2}{\cU}X|_{\cT},\righterror{1}{\cU}X|_{\cT}\in\fd{\stab{\cT}}$ by Proposition~\ref{p:error-term-fd}\ref{p:error-term-fd-U-reachable}.
Since each $\mut{U}{\cU}$ is (by definition) reachable from $\cU$, and hence reachable from $\cT$, we also have $\lefterror{1}{\cU}{(\mut{\cU}{U})}|_{\cT},\righterror{2}{\cU}{(\mut{\cU}{U})}|_{\cT}\in\fd{\stab{\cT}}$ by Proposition~\ref{p:error-term-fd}\ref{p:error-term-fd-V-reachable}.
Thus $\stabcindbar{\cU}{\cT}$ takes the classes $[\Extfun{\cU}{(\mut{\cU}{U})}]$ into $\Kgp{\fd{\stab{\cT}}}$.
Since $\Extfun{\cU}{(\mut{\cU}{U})}\in\fd{\cU}$ is supported only on $U\in\indec{\cU}$, we have $[\Extfun{\cU}{(\mut{\cU}{U})}]=\delta_U[\simpmod{\cU}{U}]$ for some $\delta_U\in\nat$, and so in fact $\stabcindbar{\cU}{\cT}$ takes the classes $[\simpmod{\cU}{U}]$, for $U\in\exch{\cU}$, into $\Kgp{\fd{\stab{\cT}}}$.
Since $\cU$ is maximally mutable, these classes span $\Kgp{\fd{\stab{\cU}}}$, and so $\stabcindbar{\cU}{\cT}$ has the desired restriction.

If both $\cT$ and $\cU$ are maximally mutable, the horizontal arrows in the given diagrams exist by Corollary~\ref{c:fp=fd} and are injective by Proposition~\ref{p:fd-sub-fpmod}.
The image of $\Kgp{\fd{\stab{\cT}}}\to\Kgp{\fpmod{\stab{\cT}}}$ includes into $\Kgpnum{\lfd{\stab{\cT}}}$ by the same proposition.
The arguments above then show that, under our assumptions, both $\stabIndbar{\cU}{\cT}$ and $\stabCoindbar{\cU}{\cT}$ take values in this image.
The commutativity of the squares thus follows from Corollary~\ref{c:indbar-lift}.
\end{proof}

\begin{corollary}\label{c:l-r-duality}
Let $\cTU,\cV\ctsubcat\cC$ and consider the maps $\lefterrormap{i}{\cU}{\cT}{\cV}\colon\Kgp{\cT}\to\Kgp{\fpmod\stab{\cV}}$ for $i=1,2$.
Computing adjoints $\Kgp{\stab{\cV}}\to\Kgpnum{\lfd{\cT}}$ with respect to $\canform{\blank}{\blank}{\cT}$ and the form $\ip{\blank}{\blank}\colon\Kgp{\fpmod{\stab{\cV}}}\times\Kgp{\cV}\to\integ$ induced from \eqref{eq:fundamental-pairing}, we have
\[
\adj{\bigl(\lefterrormap{1}{\cU}{\cT}{\cV}\bigr)}=\righterrormap{2}{\cU}{\cV}{\cT},\quad
\adj{\bigl(\lefterrormap{2}{\cU}{\cT}{\cV}\bigr)}=\righterrormap{1}{\cU}{\cV}{\cT},\quad
\adj{\bigl(\righterrormap{1}{\cU}{\cT}{\cV}\bigr)}=\lefterrormap{2}{\cU}{\cV}{\cT},\quad
\adj{\bigl(\righterrormap{2}{\cU}{\cT}{\cV}\bigr)}=\lefterrormap{1}{\cU}{\cV}{\cT}.
\]
\end{corollary}

\begin{proof}
Let $T\in\cT$ and $V\in\cV$, and choose a $\cU$-coindex conflation $V\infl\leftapp{\cU}{V}\defl\leftcok{\cU}{V}\confl$ for $V$.
Then we compute using Proposition~\ref{p:error-alt}\ref{p:error-alt-r1-l1} that
\begin{align*}
\ip{\lefterrormap{1}{\cU}{\cT}{\cV}[T]}{[V]}
&=\dim\Coker\bigl(\Ext{1}{\cC}{\leftcok{\cU}{V}}{T}\to\Ext{1}{\cC}{\leftapp{\cU}{V}}{T}\big)\\
&=\dim\Ker\bigl(\dual{\Ext{1}{\cC}{\leftapp{\cU}{V}}{T}}\to\dual{\Ext{1}{\cC}{\leftcok{\cU}{V}}{T}}\big)\\
&=\dim\Ker\bigl(\Ext{1}{\cC}{T}{\leftapp{\cU}{V}}\to\Ext{1}{\cC}{T}{\leftcok{\cU}{V}}\bigr)\\
&=\canform{\righterrormap{2}{\cU}{\cV}{\cT}[V]}{[T]}{\cV}.
\end{align*}
The first equality then follows from Corollary~\ref{c:identify-adj}, since $\canform{\blank}{\blank}{\cV}$ is non-degenerate.
The argument for the other equalities is completely analogous.
(As the above calculation indicates, the second pair of equalities are not immediate consequences of the first pair, because of the asymmetry in the choice of forms used to take adjoints.)
\end{proof}

The next corollary is essentially the adjoint of Corollary~\ref{c:ind-coind-mult-error-terms}, although this is more subtle than it may first appear.
Since the adjoints $\cindbar{}{}$ are defined uniformly using the forms $\canform{\blank}{\blank}{\cT}$ for various choices of cluster-tilting subcategory $\cT$, we deduce that
\[\adj{(\ind{\cU}{\cV}\circ \coind{\cT}{\cU})}=\indbar{\cU}{\cT}\circ \coindbar{\cV}{\cU},\]
and similarly for other choices of compositions; that is, the composition of adjoints is the adjoint of the composition.
As in Remark~\ref{r:adj-restr}, restricting this composition to $\Kgp{\fd{\cV}}\leq\Kgpnum{\lfd{\cV}}$ is equivalent to restricting the form $\canform{\blank}{\blank}{\cV}$ to $\Kgp{\fd{\cV}}$ in its first argument when calculating the adjoints.

To deal with the remaining terms of the identities in Corollary~\ref{c:ind-coind-mult-error-terms}, we may use the adjoints of $\lefterrormap{i}{\cU}{\cT}{\cV}$ and $\righterrormap{i}{\cU}{\cT}{\cV}$ constructed in Corollary~\ref{c:l-r-duality}, meaning it remains to take an adjoint of $\beta_{\cV}$. This must be done using forms which are both compatible with the desired composition, and with those used to take adjoints of the other terms in the equation.
In order to do so, we assume that $\cV$ is maximally mutable, so that we may consider the restriction $\beta_{\cV}\colon\Kgp{\fd{\stab{\cV}}}\to\Kgp{\cV}$.
We then wish to take an adjoint $\adj{\beta}_{\cV}\colon\Kgp{\fd{\cV}}\to\Kgp{\stab{\cV}}$ with respect to the forms $\canform{\blank|_{\Kgp{\fd{\stab{\cV}}}}}{\blank}{\stab{\cV}}$ and $\canform{\blank|_{\Kgp{\fd{\cV}}}}{\blank}{\cV}$.
This will then be compatible with the forms used in calculating adjoints to $\lefterrormap{i}{\cU}{\cT}{\cV}$ and $\righterrormap{i}{\cU}{\cT}{\cV}$ in Corollary~\ref{c:l-r-duality}, in the sense that (taking $\lefterrormap{1}{\cU}{\cT}{\cV}$ as a representative example) the composition $\adj{\beta}_{\cV}\circ\adj{\bigl(\lefterrormap{1}{\cU}{\cT}{\cV}\bigr)}$ is well-defined and equal to $\adj{(\lefterrormap{1}{\cU}{\cT}{\cV}\circ\beta_{\cV})}$.
To prove the existence of this adjoint, we must impose some minor additional assumptions.

\begin{lemma}
Let $\cC$ be a cluster category, and let $\cV\ctsubcat\cC$ be maximally mutable and locally finite.
Then $\beta_{\cV}\colon\Kgp{\fd{\stab{\cV}}}\to\Kgp{\cV}$ admits an adjoint $\adj{\beta}_{\cV}\colon\Kgp{\fd{\cV}}\to\Kgp{\stab{\cV}}$ with respect to the forms $\canform{\blank|_{\Kgp{\fd{\stab{\cV}}}}}{\blank}{\stab{\cV}}$ and $\canform{\blank|_{\Kgp{\fd{\cV}}}}{\blank}{\cV}$.
\end{lemma}
\begin{proof}
The map $\beta_{\cV}\colon\Kgp{\fpmod{\stab{\cV}}}\to\Kgp{\cV}$ restricts to $\Kgp{\fd{\stab{\cV}}}$ by the assumption that $\cV$ is maximally mutable.
The map $\Kgp{\stab{\cV}}\to\dual{\Kgp{\fd{\stab{\cV}}}}$ induced from $\canform{\blank|_{\Kgp{\fd{\stab{\cV}}}}}{\blank}{\stab{\cV}}$ is injective (because this form is non-degenerate), and so by Proposition~\ref{p:adjunction} it suffices to show that the functional $\canform{[M]}{\beta_{\cV}(\blank)}{\cV}$ is in its image, for any $M\in\fd{\cV}$.
That is, we have to show that $n_V\defeq\canform{[M]}{\beta_{\cV}[\simpmod{\cV}{V}]}{\cV}=0$ for all but finitely many $V\in\indec{\stab{\cV}}$, in which case we will have $\adj{\beta}_{\cV}[M]=\bigl[\bigoplus_{V\in\indec{\stab{\cV}}}V^{n_V}\bigr]\in\Kgp{\stab{\cV}}$.

By Proposition~\ref{p:beta-exch-mat} and Remark~\ref{r:beta-exch-mat-loops}, we have that $\beta_{\cV}[\simpmod{\cV}{V}]$ is a (positive integer) multiple of $\sum_{U\in \indec \cV} \exchmatentry{U,V}[U]$, where $B_{\cV}=(b_{U,V})$ is the exchange matrix of $\cV$.
Now $M\in\fd{\cV}$, so there are finitely many $U\in\indec{\cV}$ for which $M(U)\ne0$.
Since $\cV$ is locally finite, for each such $U$ there are finitely many $V$ for which $b_{U,V}\ne0$.
In this way, we see that
\[\canform{[M]}{\beta_{\cV}[\simpmod{\cV}{V}]}{\cV}=\sum_{U\in\indec{\cV}}b_{U,V}\cdot\dim_{\bK}M(U)\]
is zero for all but finitely many $V$, as required.
\end{proof}

\begin{corollary}\label{c:indbar-coindbar-mult-error-terms}
Let $\cC$ be a compact cluster category and $\cTU,\cV\ctsubcat \cC$.
Assume that $\cV$ is maximally mutable and locally finite.
Then
\begin{enumerate}
\item\label{c:indbar-coindbar-mult-error-terms-1} 
$\coindbar{\cV}{\cT}=\indbar{\cU}{\cT}\circ \coindbar{\cV}{\cU}-\righterrormap{2}{\cU}{\cV}{\cT}\circ \adj{\beta}_{\cV}$,
\item\label{c:indbar-coindbar-mult-error-terms-2}
$\coindbar{\cV}{\cT}=\coindbar{\cU}{\cT}\circ \coindbar{\cV}{\cU}-\lefterrormap{2}{\cU}{\cV}{\cT}\circ \adj{\beta}_{\cV}$,
\item\label{c:indbar-coindbar-mult-error-terms-3}
$\indbar{\cV}{\cT}=\indbar{\cU}{\cT}\circ \indbar{\cV}{\cU}+\righterrormap{1}{\cU}{\cV}{\cT}\circ \adj{\beta}_{\cV}$, and
\item\label{c:indbar-coindbar-mult-error-terms-4}
$\indbar{\cV}{\cT}=\coindbar{\cU}{\cT}\circ\indbar{\cV}{\cU}+\lefterrormap{1}{\cU}{\cV}{\cT}\circ \adj{\beta}_{\cV}$.
\end{enumerate}
on $\Kgp{\fd{\cV}}\subseteq\Kgpnum{\lfd{\cV}}$.
\end{corollary}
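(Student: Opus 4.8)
The plan is to deduce Corollary~\ref{c:indbar-coindbar-mult-error-terms} from Corollary~\ref{c:ind-coind-mult-error-terms} by taking adjoints, just as Corollary~\ref{c:sign-coh-c-vectors} was deduced from Proposition~\ref{p:g-vector-sign-coherence} and Proposition~\ref{p:indbar-coindbar-inverse} from Proposition~\ref{p:ind-coind-inverse}. First I would record the adjointness dictionary we already have: \(\coindbar{\cU}{\cT}=\adj{(\ind{\cT}{\cU})}\) and \(\indbar{\cU}{\cT}=\adj{(\coind{\cT}{\cU})}\) by Definition~\ref{d:ind-bar-def}; \(\adj{(\lefterrormap{1}{\cU}{\cV}{\cT})}=\righterrormap{2}{\cU}{\cT}{\cV}\) and \(\adj{(\lefterrormap{2}{\cU}{\cV}{\cT})}=\righterrormap{1}{\cU}{\cT}{\cV}\) by Corollary~\ref{c:l-r-duality}; and the definition of \(\adj{\beta}_{\cV}\) as given in the run-up to the statement. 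The one new adjointness fact to verify is that \(\adj{(\beta_{\cT}\circ f)}=\adj{f}\circ\adj{\beta}_{\cT}\) for the relevant maps \(f=\lefterrormap{i}{\cU}{\cV}{\cT},\righterrormap{i}{\cU}{\cV}{\cT}\colon\Kgp{\cV}\to\Kgp{\fpmod\stab{\cT}}\), which is formal once one is careful that the forms used to define the various adjoints are mutually compatible (the restriction of \eqref{eq:fundamental-pairing} in the middle slot, \(\canform{\blank}{\blank}{\cV}\) on the \(\cV\)-side, \(\canform{\blank}{\blank}{\cT}\) on the \(\cT\)-side); this is exactly the bookkeeping underpinning Proposition~\ref{p:adjunction} and Corollary~\ref{c:identify-adj}.

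Second, I would take Corollary~\ref{c:ind-coind-mult-error-terms} with the roles of \(\cT\) and \(\cV\) swapped, i.e.\ apply each identity there as a statement about maps \(\Kgp{\cT}\to\Kgp{\cV}\) (legitimate since \(\cTU,\cV\ctsubcat\cC\) is symmetric in \(\cT\) and \(\cV\)), and apply \(\adj{(\blank)}\) to each. For instance, \ref{c:ind-coind-mult-error-terms-1} in the form \(\ind{\cT}{\cV}=\ind{\cU}{\cV}\circ\coind{\cT}{\cU}-\beta_{\cV}\circ\lefterrormap{1}{\cU}{\cT}{\cV}\) dualises, using \(\adj{(\ind{\cT}{\cV})}=\coindbar{\cV}{\cT}\), \(\adj{(\ind{\cU}{\cV})}=\coindbar{\cV}{\cU}\), \(\adj{(\coind{\cT}{\cU})}=\indbar{\cU}{\cT}\), \(\adj{(\beta_{\cV}\circ\lefterrormap{1}{\cU}{\cT}{\cV})}=\adj{(\lefterrormap{1}{\cU}{\cT}{\cV})}\circ\adj{\beta}_{\cV}=\righterrormap{2}{\cU}{\cV}{\cT}\circ\adj{\beta}_{\cV}\), and contravariance of adjunction to reverse the composition order, to give exactly \ref{c:indbar-coindbar-mult-error-terms-1}: \(\coindbar{\cV}{\cT}=\indbar{\cU}{\cT}\circ\coindbar{\cV}{\cU}-\righterrormap{2}{\cU}{\cV}{\cT}\circ\adj{\beta}_{\cV}\). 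The other three identities follow the same pattern: \ref{c:ind-coind-mult-error-terms-3} dualises to \ref{c:indbar-coindbar-mult-error-terms-4}, \ref{c:ind-coind-mult-error-terms-2} to \ref{c:indbar-coindbar-mult-error-terms-2}, and \ref{c:ind-coind-mult-error-terms-4} to \ref{c:indbar-coindbar-mult-error-terms-3}, in each case reading off the adjoint of every factor from the dictionary above and using \(\adj{(g\circ h)}=\adj{h}\circ\adj{g}\).

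The one genuine subtlety — and the step I expect to be the main obstacle — is the domain restriction \(\Kgp{\fd\cV}\subseteq\Kgpnum{\lfd\cV}\) appearing in the conclusion, and more precisely ensuring all the adjoints are taken with respect to \emph{consistent} pairings so that the chain \(\adj{(\beta_{\cV}\circ f)}=\adj f\circ\adj\beta_{\cV}\) is literally correct. Here \(\adj\beta_{\cV}\colon\Kgp{\fd\cV}\to\Kgp{\stab\cV}\) uses \eqref{eq:fundamental-pairing} restricted in its first slot to \(\Kgp{\fd\cV}\), while \(\adj{(\lefterrormap{i}{\cU}{\cT}{\cV})}=\righterrormap{j}{\cU}{\cV}{\cT}\) from Corollary~\ref{c:l-r-duality} is a map \(\Kgp{\stab\cT}\to\Kgpnum{\lfd\cV}\) whose target is the full numerical Grothendieck group; one must check that the composite \(\righterrormap{j}{\cU}{\cV}{\cT}\circ\adj\beta_{\cV}\), a priori valued in \(\Kgpnum{\lfd\cV}\), is being compared against \(\coindbar{\cV}{\cT},\indbar{\cV}{\cT}\) whose codomain is also \(\Kgpnum{\lfd\cV}\), and that the equality of the two sides — established as an identity of functionals on \(\Kgp{\cV}\) via the injective \(\sdual{\cV}\) of Proposition~\ref{p:K-duality} — indeed holds on all of \(\Kgpnum{\lfd\cV}\) but is only being \emph{asserted} after pre-restricting the input to \(\Kgp{\fd\cV}\), where \(\adj\beta_{\cV}\) is defined. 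Once the pairings are pinned down this is routine, but it is where care is needed; I would handle it by noting that every identity in Corollary~\ref{c:ind-coind-mult-error-terms} is an equality of homomorphisms out of \(\Kgp{\cT}\), and applying \(\dual{(\blank)}=\Hom\integ{\blank}\integ\) gives an equality of homomorphisms into \(\dual{\Kgp{\cT}}=\dual{\Kgp{\stab\cT}}\)-restricted appropriately, which then descends through the injections \(\sdual{\cV},\sdual{\cT}\) to the stated identity of maps on \(\Kgp{\fd\cV}\).
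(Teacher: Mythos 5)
Your proposal is correct and takes exactly the same route as the paper's proof: dualise the identities of Corollary~\ref{c:ind-coind-mult-error-terms} with the roles of \(\cT\) and \(\cV\) swapped, using \(\adj{(g\circ h)}=\adj{h}\circ\adj{g}\), Definition~\ref{d:ind-bar-def}, Corollary~\ref{c:l-r-duality}, and the definition of \(\adj{\beta}_{\cV}\). The one small bookkeeping slip is the pairing of the last two items: \ref{c:ind-coind-mult-error-terms-3} dualises to \ref{c:indbar-coindbar-mult-error-terms-3} and \ref{c:ind-coind-mult-error-terms-4} to \ref{c:indbar-coindbar-mult-error-terms-4}, not transposed as you wrote, since after the \(\cT\leftrightarrow\cV\) swap one uses \(\adj{(\lefterrormap{2}{\cU}{\cT}{\cV})}=\righterrormap{1}{\cU}{\cV}{\cT}\) and \(\adj{(\righterrormap{2}{\cU}{\cT}{\cV})}=\lefterrormap{1}{\cU}{\cV}{\cT}\).
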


\begin{proof}
Let $M\in\fd{\cV}$.
To emphasise the choice of forms used to take the adjoints, we also choose $T\in\cT$, and calculate
\begin{align*}
\canform{\coindbar{\cV}{\cT}[M]}{[T]}{\cT}&=\canform{[M]}{\ind{\cT}{\cV}[T]}{\cV}\\
&=\canform{[M]}{\ind{\cU}{\cV}\coind{\cT}{\cU}[T]}{\cV}-\canform{[M]}{\beta_{\cV} \lefterrormap{1}{\cU}{\cT}{\cV}[T]}{\cV}\\
&=\canform{\coindbar{\cV}{\cU}[M]}{\coind{\cT}{\cU}[T]}{\cU}-\canform{\lefterrormap{1}{\cU}{\cT}{\cV}[T]}{\adj{\beta}_{\cV}[M]}{\stab{\cV}}\\
&=\canform{\indbar{\cU}{\cT}\coindbar{\cV}{\cU}[M]}{[T]}{\cT}-\canform{\righterrormap{2}{\cU}{\cV}{\cT}\adj{\beta}_{\cV}[M]}{[T]}{\cT}\\
&=\canform{(\indbar{\cU}{\cT}\circ\coindbar{\cV}{\cU}-\righterrormap{2}{\cU}{\cV}{\cT}\circ\adj{\beta}_{\cV})[M]}{[T]}{\cT},
\end{align*}
using Corollary~\ref{c:ind-coind-mult-error-terms} for the second equality, and Corollary~\ref{c:l-r-duality} for the fourth.
Identity \ref{c:indbar-coindbar-mult-error-terms-1} then follows since $\canform{\blank}{\blank}{\cT}$ is non-degenerate, and the others are proved similarly.
\end{proof}

As already remarked, we are particularly interested in the case that $\cV=\mut{U}{\cU}$ for some $U\in \cU$, since it is by analysing this case that we may finally prove that $\cind{\cU}{\cT}$ and $\cindbar{\cU}{\cT}$ mutate as $\mathbf{g}$-vectors and $\mathbf{c}$-vectors. 

\begin{theorem}\label{t:one-step-X-mut}
Let $\cC$ be a compact cluster category and $\cTU \ctsubcat \cC$.  Let $U\in\exch{\cU}$, write $\cU'=\mut{U}{\cU}$, and assume that $\cU'$ is maximally mutable and locally finite. Let
\[ \begin{tikzcd}U\arrow[infl]{r}{i_{L}^{U}}& \exchmon{\cU}{U}{-} \arrow[defl]{r}{p_{L}^{U}}&\mut{\cU}{U}\arrow[confl]{r}&,\end{tikzcd}\quad
\begin{tikzcd}\mut{\cU}{U} \arrow[infl]{r}{i_{R}^{U}}&\exchmon{\cU}{U}{+}\arrow[defl]{r}{p_{R}^{U}}&U\arrow[confl]{r}&\phantom{}\end{tikzcd} \]
be the exchange conflations for $U$, and let $V\in\indec(\cU'\setminus\mut{\cU}{U})$. Then
\begin{enumerate}
\item\label{t:one-step-X-mut-indbar-at-mut} $\indbar{\cU'}{\cT}[\simpmod{\cU'}{\mut{\cU}{U}}]=-\indbar{\cU}{\cT}[\simpmod{\cU}{U}]$,
\item\label{t:one-step-X-mut-indbar-away-from-mut}
$\indbar{\cU'}{\cT}[\simpmod{\cU'}{V}]=\indbar{\cU}{\cT}[\simpmod{\cU}{V}]+\dimdivalg{U}^{-1}\canform{[\simpmod{\cU'}{V}]}{[U_{\cU}^{-}]}{\cU'}\indbar{\cU}{\cT}[\simpmod{\cU}{U}]+\exchmatentry{U,V}^{\cU}[\Ker{\Extfun{\cT}i_{L}^{U}}]$, 		\item\label{t:one-step-X-mut-coindbar-at-mut} $\coindbar{\cU'}{\cT}[\simpmod{\cU'}{\mut{\cU}{U}}]=-\coindbar{\cU}{\cT}[\simpmod{\cU}{U}]$, and
\item\label{t:one-step-X-mut-coindbar-away-from-mut} $\coindbar{\cU'}{\cT}[\simpmod{\cU'}{V}]=\coindbar{\cU}{\cT}[\simpmod{\cU}{V}]+\dimdivalg{U}^{-1}\canform{[\simpmod{\cU'}{V}]}{[U_{\cU}^{+}]}{\cU'}\coindbar{\cU}{\cT}[\simpmod{\cU}{U}]+\exchmatentry{U,V}^{\cU}[\Coker{\Extfun{\cT}p_{R}^{U}}].$
\end{enumerate}
\end{theorem}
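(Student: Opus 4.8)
The four identities follow from the compositional formulae for $\cindbar{}{}$ together with a careful evaluation of the error-term maps on the simple modules $[\simpmod{\cU}{V}]$, using the special structure of $\cV=\cU'=\mut{U}{\cU}$. Since $\cU'$ and $\cU$ differ only by a single mutation, for any $V\in\indec(\cU'\setminus\mut{\cU}{U})=\indec(\cU\setminus U)$ we have $V\in\cU$, so by Remark~\ref{r:mult-error-vanish} (the vanishing of the error terms when the object lies in $\cU$) the maps $\cindbar{\cU'}{\cU}$ compose transitively with $\cindbar{\cU}{\cT}$ when evaluated on such $V$. The first step is therefore to reduce everything to the case $\cT=\cU$: I would prove the formulae for $\indbar{\cU'}{\cU}$ and $\coindbar{\cU'}{\cU}$ directly and then apply $\indbar{\cU}{\cT}$ (resp.\ $\coindbar{\cU}{\cT}$) to both sides, using Corollary~\ref{c:indbar-coindbar-mult-error-terms} to control the remaining error term that appears because $\mut{\cU}{U}\notin\cU$.

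\textbf{The local computation.} For the base case $\cT=\cU$, I would invoke Proposition~\ref{p:one-step-X-mut-from-root} applied with the roles of $\cT$ and $\cT'$ played by $\cU$ and $\cU'$ (noting $\cU=\mut{U}{\cU'}$), which already gives items~\ref{t:one-step-X-mut-indbar-at-mut} and~\ref{t:one-step-X-mut-coindbar-at-mut} verbatim (with $\indbar{\cU}{\cU}=\coindbar{\cU}{\cU}=\id$), and gives
\[\indbar{\cU'}{\cU}[\simpmod{\cU'}{V}]=[\simpmod{\cU}{V}]+\dimdivalg{U}^{-1}\canform{[\simpmod{\cU'}{V}]}{[\exchmon{\cU}{U}{-}]}{\cU'}[\simpmod{\cU}{U}]\]
together with the analogous coindex formula with $\exchmon{\cU}{U}{+}$. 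Then passing to general $\cT$ via $\indbar{\cU}{\cT}$: for the $V\in\cU$ terms transitivity holds, so $\indbar{\cU'}{\cT}[\simpmod{\cU'}{V}]=\indbar{\cU}{\cT}\indbar{\cU'}{\cU}[\simpmod{\cU'}{V}]$ plus whatever correction Corollary~\ref{c:indbar-coindbar-mult-error-terms}\ref{c:indbar-coindbar-mult-error-terms-3} contributes. The extra term $\righterrormap{1}{\cU}{\cU'}{\cT}\circ\adj{\beta}_{\cU'}$ evaluated on $[\simpmod{\cU'}{V}]$ must be identified with $\exchmatentry{U,V}^{\cU}[\Ker\Extfun{\cT}i_L^U]$; here I would use Lemma~\ref{l:props-of-K0-mod-T} to write $\adj{\beta}_{\cU'}[\simpmod{\cU'}{V}]$ in terms of the exchange matrix entry $\exchmatentry{U,V}^{\cU'}$ (via Proposition~\ref{p:s-form-matrix} and the adjointness), noting $\exchmatentry{U,V}^{\cU'}$ relates to $\exchmatentry{U,V}^{\cU}$ by Fomin--Zelevinsky-type considerations, and then compute $\righterror{1}{\cU}{(\mut{\cU}{U})}|_{\cT}$ using the exchange conflation $\mut{\cU}{U}\stackrel{i_R^U}{\infl}\exchmon{\cU}{U}{+}\stackrel{p_R^U}{\defl}U$ as a $\cU$-index conflation for $U$, applying $\Extfun{\cT}$; by definition $\righterror{1}{\cU}{U}=\Ker\Ext{1}{\cC}{\blank}{i_R^U}$, but I instead want the expression in terms of $i_L^U$, which comes from the \emph{other} exchange conflation and requires using the $2$-Calabi--Yau duality (Proposition~\ref{p:error-op}) to swap between the two, i.e.\ $\Ker\Extfun{\cT}i_L^U$ is the dual of the relevant cokernel. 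The coindex case~\ref{t:one-step-X-mut-coindbar-away-from-mut} is dual, using Corollary~\ref{c:indbar-coindbar-mult-error-terms}\ref{c:indbar-coindbar-mult-error-terms-2} and $\lefterror{1}{\cU}{(\mut{\cU}{U})}$, which by Proposition~\ref{p:error-op} is $\dual{(\righterror{2}{\op{\cU}}{\mut{\cU}{U}})}$ and evaluates to $\Coker\Extfun{\cT}p_R^U$.

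\textbf{The main obstacle.} The bookkeeping of signs and of which exchange conflation ($i_L^U$ versus $i_R^U$, $\exchmon{\cU}{U}{\pm}$) appears where is the delicate part — in particular, matching the term $\dimdivalg{U}^{-1}\canform{[\simpmod{\cU'}{V}]}{[\exchmon{\cU}{U}{-}]}{\cU'}\indbar{\cU}{\cT}[\simpmod{\cU}{U}]$ (coming from transitively transporting the Proposition~\ref{p:one-step-X-mut-from-root} formula) with the error correction $\exchmatentry{U,V}^{\cU}[\Ker\Extfun{\cT}i_L^U]$ requires care because both involve $\cU'$-data attached to $V$, and one must verify that no double-counting occurs and that the $\cT$-independence of the $[\Ker\Extfun{\cT}i_L^U]$-type term is correctly isolated. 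I would organise this by treating $\indbar{\cU'}{\cU}$ first (where the error term vanishes since $V\in\cU$), establishing the ``$\cT=\cU$'' formula cleanly, and only then composing; the transitivity-failure correction is then a single clean application of Corollary~\ref{c:indbar-coindbar-mult-error-terms}, whose error term $\righterrormap{1}{\cU}{\cU'}{\cT}$ (resp.\ $\lefterrormap{1}{\cU}{\cU'}{\cT}$) is supported entirely on the mutant $\mut{\cU}{U}$ and is exactly what produces the $\exchmatentry{U,V}^{\cU}[\Ker\Extfun{\cT}i_L^U]$ summand after applying $\adj{\beta}_{\cU'}$ and using Proposition~\ref{p:decomp-exch-terms} to read off the multiplicity $\exchmatentry{U,V}^{\cU}$ from the middle term $\exchmon{\cU}{U}{-}$ of the relevant exchange conflation.
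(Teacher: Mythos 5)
Your overall strategy—establishing the base case $\cT=\cU$ via Proposition~\ref{p:one-step-X-mut-from-root}, composing with $\cindbar{\cU}{\cT}$ via Corollary~\ref{c:indbar-coindbar-mult-error-terms}, and identifying the coefficient of the error term through $\adj{\beta}_{\cU'}$ and Proposition~\ref{p:s-form-matrix}—is exactly the route taken in the paper, and items~\ref{t:one-step-X-mut-indbar-at-mut}--\ref{t:one-step-X-mut-coindbar-at-mut} and the coefficient computation would go through as you sketch. However, the step you single out as ``the main obstacle''---identifying $\righterrormap{1}{\cU}{\cU'}{\cT}[\mut{\cU}{U}]$ with $[\Ker\Extfun{\cT}i_L^U]$---contains a genuine error. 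You say the exchange conflation $\mut{\cU}{U}\infl\exchmon{\cU}{U}{+}\defl U$ is ``a $\cU$-index conflation for $U$'', and propose to invoke the $2$-Calabi--Yau duality of Proposition~\ref{p:error-op} to convert from $i_R^U$ to $i_L^U$. This is wrong on two counts: that conflation is not a $\cU$-index conflation for $U$ (indeed $\mut{\cU}{U}\notin\cU$, and since $U\in\cU$ we have $\righterror{1}{\cU}{U}=0$ by Remark~\ref{r:add-error-vanish}); and no duality is needed. What you actually want is $\righterror{1}{\cU}{(\mut{\cU}{U})}$, evaluated at an object \emph{not} in $\cU$, and the observation that makes it fall out directly is that the \emph{other} exchange conflation $U\infl\exchmon{\cU}{U}{-}\defl\mut{\cU}{U}$ \emph{is} a $\cU$-index conflation for $\mut{\cU}{U}$ (both $U$ and $\exchmon{\cU}{U}{-}$ lie in $\cU$, and the deflation is a right $\cU$-approximation since $\Ext{1}{\cC}{T}{U}=0$ for $T\in\cU$). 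This gives $\righterror{1}{\cU}{(\mut{\cU}{U})}=\Ker\Ext{1}{\cC}{\blank}{i_L^U}$ immediately from the definition, no duality involved.

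The same confusion recurs in your coindex step: Corollary~\ref{c:indbar-coindbar-mult-error-terms}\ref{c:indbar-coindbar-mult-error-terms-2} involves $\lefterrormap{2}{\cU}{\cU'}{\cT}$, not $\lefterror{1}$ as you write, and the correct identification is again direct---the exchange conflation $\mut{\cU}{U}\infl\exchmon{\cU}{U}{+}\stackrel{p_R^U}{\defl}U$ is a $\cU$-\emph{coindex} conflation for $\mut{\cU}{U}$, giving $\lefterror{2}{\cU}{(\mut{\cU}{U})}=\Coker\Ext{1}{\cC}{\blank}{p_R^U}$ by definition. One further gap: for parts~\ref{t:one-step-X-mut-indbar-at-mut} and~\ref{t:one-step-X-mut-coindbar-at-mut}, composing with Corollary~\ref{c:indbar-coindbar-mult-error-terms} leaves a residual error term $\righterrormap{1}{\cU}{\cU'}{\cT}\adj{\beta}_{\cU'}[\simpmod{\cU'}{\mut{\cU}{U}}]$ which you do not show vanishes; the paper handles this by noting that $\sform{[\simpmod{\cU'}{\mut{\cU}{U}}]}{[\simpmod{\cU'}{\mut{\cU}{U}}]}{\cU'}=0$ by skew-symmetry, so $\adj{\beta}_{\cU'}[\simpmod{\cU'}{\mut{\cU}{U}}]$ has no $[\mut{\cU}{U}]$-component, and the error functors vanish on $\cU\cap\cU'$.
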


\begin{proof}
Recall from Proposition~\ref{p:one-step-X-mut-from-root}\ref{p:one-step-X-mut-from-root-indbar-at-mut} and   \ref{p:one-step-X-mut-from-root-coindbar-at-mut} that \[ \indbar{\cU'}{\cU}[\simpmod{\cU'}{\mut{\cU}{U}}]=-[\simpmod{\cU}{U}]=\coindbar{\cU'}{\cU}[\simpmod{\cU'}{\mut{\cU}{U}}]. \]
By Corollary~\ref{c:indbar-coindbar-mult-error-terms}\ref{c:indbar-coindbar-mult-error-terms-3}, we have
\begin{align*}
\indbar{\cU'}{\cT}[\simpmod{\cU'}{\mut{\cU}{U}}] & = \indbar{\cU}{\cT}\indbar{\cU'}{\cU}[\simpmod{\cU'}{\mut{\cU}{U}}]+\righterrormap{1}{\cU}{\cU'}{\cT}\adj{\beta}_{\cU'}[\simpmod{\cU'}{\mut{\cU}{U}}] \\ 
& = -\indbar{\cU}{\cT}[\simpmod{\cU}{U}]+\righterrormap{1}{\cU}{\cU'}{\cT}\adj{\beta}_{\cU'}[\simpmod{\cU'}{\mut{\cU}{U}}],
\end{align*}
and similarly for $\coindbar{\mut{\cU}{U}}{\cT}[\simpmod{\cU'}{\mut{\cU}{U}}]$, so to verify \ref{t:one-step-X-mut-indbar-at-mut} and \ref{t:one-step-X-mut-coindbar-at-mut} it suffices to show that \begin{equation}\label{eq:one-step-X-mut-at-mut-error-vanish} 
\righterrormap{1}{\cU}{\cU'}{\cT}\adj{\beta}_{\cU'}[\simpmod{\cU'}{\mut{\cU}{U}}]=0=\lefterrormap{2}{\cU}{\cU'}{\cT}\adj{\beta}_{\cU'}[\simpmod{\cU'}{\mut{\cU}{U}}]. \end{equation}
Since $\sform{\blank}{\blank}{\cU'}$ is skew-symmetric, we have \[\canform{[\simpmod{\cU'}{\mut{\cU}{U}}]}{\adj{\beta}_{\cU'}[\simpmod{\cU'}{\mut{\cU}{U}}]}{\cU'}=\canform{[\simpmod{\cU'}{\mut{\cU}{U}}]}{\beta_{\cU'}[\simpmod{\cU'}{\mut{\cU}{U}}]}{\cU'}=\sform{[\simpmod{\cU'}{\mut{\cU}{U}}]}{[\simpmod{\cU'}{\mut{\cU}{U}}]}{\cU'}=0 \] and hence $\adj{\beta}_{\cU'}[\simpmod{\cU'}{\mut{\cU}{U}}]\in \Kgp{\cU'\setminus \mut{\cU}{U}}$.
Since $\lefterror{2}{\cU}{W}=0=\righterror{1}{\cU}{W}$ for $W\in\indec(\cU'\setminus \mut{\cU}{U})=\indec(\cU\setminus U)$, as in Remark~\ref{r:add-error-vanish}, we therefore have \eqref{eq:one-step-X-mut-at-mut-error-vanish} as required.

Next, recall from Proposition~\ref{p:one-step-X-mut-from-root}\ref{p:one-step-X-mut-from-root-indbar-away-from-mut} that for $V\in\indec(\cU'\setminus \mut{\cU}{U})$ mutable,
\begin{align*} \indbar{\cU'}{\cU}[\simpmod{\cU'}{V}]&=[\simpmod{\cU}{V}]+\dimdivalg{U}^{-1}\canform{[\simpmod{\cU'}{V}]}{[U_{\cU}^{-}]}{\cU'}[\simpmod{\cU}{U}].
\end{align*}
Then by Corollary~\ref{c:indbar-coindbar-mult-error-terms}\ref{c:indbar-coindbar-mult-error-terms-3}, we have
\begin{equation}
\label{eq:indbar-away-step-1}
\indbar{\cU'}{\cT}[\simpmod{\cU'}{V}] =\indbar{\cU}{\cT}[\simpmod{\cU}{V}]+d_{U}^{-1}\canform{[\simpmod{\cU'}{V}]}{[U_{\cU}^{-}]}{\cU'}\indbar{\cU}{\cT}[\simpmod{\cU}{U}]+\righterrormap{1}{\cU}{\cU'}{\cT}\adj{\beta}_{\cU'}[\simpmod{\cU'}{V}].
\end{equation}
Since $\righterrormap{1}{\cU}{\cU'}{\cT}[U']=0$ for $U\in\indec{(\cU'\setminus\mu_\cU U)}$ as above, and the coefficient of $[\mut{\cU}{U}]$ in $\adj{\beta}_{\cU'}[\simpmod{\cU'}{V}]$ is $\dimdivalg{U}^{-1}\canform{[\simpmod{\cU'}{\mut{\cU}{U}}]}{\adj{\beta}_{\cU'}[\simpmod{\cU'}{V}]}{\cU'}$ by \eqref{eq:s-gram-matrix}, the third term of \eqref{eq:indbar-away-step-1} is
\begin{align*}
\righterrormap{1}{\cU}{\cU'}{\cT}\adj{\beta}_{\cU'}[\simpmod{\cU'}{V}]&=\dimdivalg{U}^{-1}\canform{[\simpmod{\cU'}{\mut{\cU}{U}}]}{\adj{\beta}_{\cU'}[\simpmod{\cU'}{V}]}{\cU'}\righterrormap{1}{\cU}{\cU'}{\cT}[\mut{\cU}{U}]\\
&=\dimdivalg{U}^{-1}\canform{[\simpmod{\cU'}{V}]}{\beta_{\cU'}[\simpmod{\cU'}{\mut{\cU}{U}}]}{\cU'}\righterrormap{1}{\cU}{\cU'}{\cT}[\mut{\cU}{U}]\\
&=\exchmatentry{U,V}^{\cU}\righterrormap{1}{\cU}{\cU'}{\cT}[\mut{\cU}{U}]\\
&=\exchmatentry{U,V}^{\cU}[\Ker{\Ext{1}{\cC}{\blank}{i_{R}^{\mut{\cU}{U}}}}|_{\cT}]\\
&=\exchmatentry{U,V}^{\cU}[\Ker{\Ext{1}{\cC}{\blank}{i_{L}^{U}}}|_{\cT}].
\end{align*}
Substituting back into \eqref{eq:indbar-away-step-1} gives \ref{t:one-step-X-mut-indbar-away-from-mut}, and the proof of \ref{t:one-step-X-mut-coindbar-away-from-mut} is completely analogous.
\end{proof}

Once again, we may use Proposition~\ref{p:no-loops-simp} to get slightly simpler expressions under mild additional assumptions.

\begin{corollary}
\label{c:one-step-X-mut}
In the context of Theorem~\ref{t:one-step-X-mut}, if $\cU$ has no loops or $2$-cycles at $U$ then, for $V\in\indec{\cU'}$,
\begin{enumerate}
\item\label{c:one-step-X-mut-indbar-at-mut} $\indbar{\cU'}{\cT}[\simpmod{\cU'}{\mut{\cU}{U}}]=-\indbar{\cU}{\cT}[\simpmod{\cU}{U}]$,
\item\label{c:one-step-X-mut-indbar-away-from-mut}
$\indbar{\cU'}{\cT}[\simpmod{\cU'}{V}]=\indbar{\cU}{\cT}[\simpmod{\cU}{V}]+[\exchmatentry{U,V}]_+\indbar{\cU}{\cT}[\simpmod{\cU}{U}]+\exchmatentry{U,V}^{\cU}[\Ker{\Ext{1}{\cC}{\blank}{i_{L}^{U}}}|_{\cT}]$, 
\item\label{c:one-step-X-mut-coindbar-at-mut} $\coindbar{\cU'}{\cT}[\simpmod{\cU'}{\mut{\cU}{U}}]=-\coindbar{\cU}{\cT}[\simpmod{\cU}{U}]$, and
\item\label{c:one-step-X-mut-coindbar-away-from-mut} $\coindbar{\cU'}{\cT}[\simpmod{\cU'}{V}]=\coindbar{\cU}{\cT}[\simpmod{\cU}{V}]+[\exchmatentry{U,V}]_-\coindbar{\cU}{\cT}[\simpmod{\cU}{U}]+\exchmatentry{U,V}^{\cU}[\Coker{\Ext{1}{\cC}{\blank}{p_{R}^{U}}}|_{\cT}]$.\qed
\end{enumerate}
\end{corollary}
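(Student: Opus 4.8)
\textbf{Proof plan for Corollary~\ref{c:one-step-X-mut}.}
The strategy is to specialise Theorem~\ref{t:one-step-X-mut} under the no-loops-or-$2$-cycles hypothesis and absorb the various ``error'' coefficients into the simpler combinatorial data $[\exchmatentry{U,V}]_{\pm}$. First I would note that parts \ref{c:one-step-X-mut-indbar-at-mut} and \ref{c:one-step-X-mut-coindbar-at-mut} are literally identical to parts \ref{t:one-step-X-mut-indbar-at-mut} and \ref{t:one-step-X-mut-coindbar-at-mut} of Theorem~\ref{t:one-step-X-mut}, so there is nothing to prove there; they are restated only for completeness. The content is therefore entirely in parts \ref{c:one-step-X-mut-indbar-away-from-mut} and \ref{c:one-step-X-mut-coindbar-away-from-mut}, and by the opposite-category duality (Remark~\ref{r:ind-op}, Proposition~\ref{p:error-op}, and the fact that $\op{\cC}$ is again compact with $\op{\cU}\ctsubcat\op{\cC}$ having no loops or $2$-cycles at $U$) it suffices to prove one of them, say \ref{c:one-step-X-mut-indbar-away-from-mut}, with \ref{c:one-step-X-mut-coindbar-away-from-mut} following by applying it in $\op{\cC}$ and translating $i_L^U$ to $p_R^U$ and $\exchmon{\cU}{U}{-}$ to $\exchmon{\cU}{U}{+}$.

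For \ref{c:one-step-X-mut-indbar-away-from-mut}, I would start from the formula in Theorem~\ref{t:one-step-X-mut}\ref{t:one-step-X-mut-indbar-away-from-mut}, namely
\[
\indbar{\cU'}{\cT}[\simpmod{\cU'}{V}]=\indbar{\cU}{\cT}[\simpmod{\cU}{V}]+\dimdivalg{U}^{-1}\canform{[\simpmod{\cU'}{V}]}{[\exchmon{\cU}{U}{-}]}{\cU'}\indbar{\cU}{\cT}[\simpmod{\cU}{U}]+\exchmatentry{U,V}^{\cU}[\Ker{\Extfun{\cT}i_{L}^{U}}],
\]
and simplify the middle coefficient. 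Here $\cU'=\mut{U}{\cU}$ and $\exchmon{\cU}{U}{-}$ is a left $(\cU\setminus U)$-approximation of $U$; since $\cU'\setminus\mut{\cU}{U}=\cU\setminus U$, the object $\exchmon{\cU}{U}{-}$ has no summand isomorphic to $\mut{\cU}{U}$, so $\canform{[\simpmod{\cU'}{V}]}{[\exchmon{\cU}{U}{-}]}{\cU'}$ only sees the multiplicity of $V$ in $\exchmon{\cU}{U}{-}$. I would then invoke Proposition~\ref{p:no-loops-simp}, which states precisely that $\dimdivalg{U}^{-1}\canform{[\simpmod{\cU'}{V}]}{[\exchmon{\cU}{U}{-}]}{\cU'}=[\exchmatentry{U,V}^{\cU}]_{+}$ (the ``$-$'' choice of sign in $\exchmon{\cU}{U}{\mp}$ corresponding to the ``$+$'' choice in $[\exchmatentry{U,V}^{\cU}]_{\pm}$); substituting this in gives the stated formula. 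The same argument with $\exchmon{\cU}{U}{+}$ in place of $\exchmon{\cU}{U}{-}$ — equivalently, running the above in $\op{\cC}$ — yields the coefficient $[\exchmatentry{U,V}^{\cU}]_{-}$ in \ref{c:one-step-X-mut-coindbar-away-from-mut}, with the Coker term replacing the Ker term by Proposition~\ref{p:error-op}.

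The only genuine subtlety — and thus the main thing to check carefully — is that the hypotheses of Proposition~\ref{p:no-loops-simp} are actually met. That proposition requires $\cC$ compact (assumed here) and $\cU$ to have no loop or $2$-cycle \emph{at $U$} (also assumed), and its proof internally uses Proposition~\ref{p:decomp-exch-terms} and Corollary~\ref{c:exch-mat-at-mut}, which in turn need $T\in\exch{\cU}$ (here $U\in\exch{\cU}$, given) and the no-loop condition on $\cU'=\mut{U}{\cU}$ at $\mut{\cU}{U}$ — but this last condition is automatic from Corollary~\ref{c:no-loops-mutates}, since $\cU$ has no loop at $U$. So no additional hypothesis is smuggled in. I would also remark that the error term $\exchmatentry{U,V}^{\cU}[\Ker{\Extfun{\cT}i_{L}^{U}}|_{\cT}]$ is retained verbatim from Theorem~\ref{t:one-step-X-mut}: the no-loops-or-$2$-cycles assumption simplifies the tropical coefficient but does \emph{not} make this term vanish in general, which is why it still appears in the statement. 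With these points in place the corollary follows immediately, so I would present it as a short deduction rather than a standalone argument.
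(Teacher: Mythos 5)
Your proposal is correct and matches the paper's (implicit, \textsc{qed}-in-statement) approach: simplify the middle coefficient in each of the four formulas of Theorem~\ref{t:one-step-X-mut} by invoking Proposition~\ref{p:no-loops-simp}, whose hypotheses (compactness, no loop or $2$-cycle at $U\in\exch{\cU}$) are exactly those of the corollary. One small superfluity: you propose deriving part~\ref{c:one-step-X-mut-coindbar-away-from-mut} from part~\ref{c:one-step-X-mut-indbar-away-from-mut} via $\op{\cC}$-duality, but this detour is unnecessary, since Theorem~\ref{t:one-step-X-mut}\ref{t:one-step-X-mut-coindbar-away-from-mut} already gives the coindex formula directly and Proposition~\ref{p:no-loops-simp} already covers the $\exchmon{\cU}{U}{+}$ case with coefficient $[\exchmatentry{U,V}^{\cU}]_{-}$; applying the proposition to each of the four theorem statements is cleaner and is what the paper intends.
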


\begin{theorem}
\label{t:c-vec-mut-formula}
Let $\cC$ be a compact cluster category with $\cTU\ctsubcat \cC$.
Let $U\in \indec \cU$ and assume there are no loops or 2-cycles at $U$.
Assume that $\mut{U}{\cU}$ is maximally mutable and locally finite, and let $V\in \indec \mut{U}{\cU}\setminus \mut{\cU}{U}$.
Then we have the formulæ
\begin{align}
\label{eq:g-vec-mut-formula}
\begin{split}
\ind{\mut{U}{\cU}}{\cT}[\mut{\cU}{U}] & = -\ind{\cU}{\cT}[U]+\Bigl(\sum_{W\in \cU\setminus U} [\exchmatentry{W,U}]_{-}\ind{\cU}{\cT}[W]\Bigr)-\beta_{\cT}\bigl[\stabindbar{\cU}{\cT}[\simpmod{\cU}{U}]\bigr]_-, \\
\ind{\mut{U}{\cU}}{\cT}[V] & =\ind{\cU}{\cT}[V]
\end{split}
\intertext{and}
\label{eq:c-vec-mut-formula}
\begin{split}
\indbar{\mut{U}{\cU}}{\cT}[\simpmod{\mut{U}{\cU}}{\mut{\cU}{U}}]& =-\indbar{\cU}{\cT}[\simpmod{\cU}{U}], \\ 
\indbar{\mut{U}{\cU}}{\cT}[\simpmod{\mut{U}{\cU}}{V}] & =\indbar{\cU}{\cT}[\simpmod{\cU}{V}]+[\exchmatentry{U,V}^{\cU}]_{+}\indbar{\cU}{\cT}[\simpmod{\cU}{U}]+\exchmatentry{U,V}^{\cU}\bigl[\stabindbar{\cU}{\cT}[\simpmod{\cU}{U}]\bigr]_{-}.
\end{split}
\end{align}	 
Consequently, if $(\cC,\cT)$ has a cluster structure, then the values of the index on indecomposable objects $U\in\cU$ with $\cU\ctsubcat\cC$ reachable from $\cT$, and its adjoint on simple modules, compute respectively the $\mathbf{g}$-vectors and $\mathbf{c}$-vectors of the cluster algebra with initial exchange matrix $B_{\cT}$.
\end{theorem}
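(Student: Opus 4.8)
The plan is to prove the two families of mutation formulae by specialising the results of the preceding subsection, and then deduce the decategorification statement by a comparison with the known combinatorial mutation rules. Throughout, write $\cU'=\mut{U}{\cU}$ and $U'=\mut{\cU}{U}$, and recall from Lemma~\ref{l:props-of-K0-mod-T}\ref{l:props-mod-T-simple-eq-E} that the no-loop hypothesis gives $\Extfun{\cU}(U')=\simpmod{\cU}{U}$ and, via Corollary~\ref{c:no-loops-mutates}, also $\Extfun{\cU'}(U)=\simpmod{\cU'}{U'}$.

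First I would establish \eqref{eq:g-vec-mut-formula}. The second identity $\ind{\cU'}{\cT}[V]=\ind{\cU}{\cT}[V]$ is immediate from Remark~\ref{r:mult-error-vanish}, since $V\in\indec(\cU'\setminus U')=\indec(\cU\setminus U)\subseteq\cU$, so the relevant index maps compose transitively; alternatively, one invokes Proposition~\ref{p:decomp-exch-terms} to see that $V$ has the same $\cT$-index conflation computed via either subcategory. For the first identity, I would combine \eqref{eq:ind-on-mut-T}, which gives $\ind{\cU'}{\cT}[U']=\ind{\cU}{\cT}[\exchmon{\cU}{U}{-}]-\ind{\cU}{\cT}[U]$ after transitivity of the index on $\exchmon{\cU}{U}{-}\in\cU$ and on $U$, together with the decomposition of the exchange term from Proposition~\ref{p:decomp-exch-terms} (legitimate since there are no loops or $2$-cycles at $U$), which expresses $[\exchmon{\cU}{U}{-}]=\sum_{W\in\cU\setminus U}[\exchmatentry{W,U}]_-[W]$. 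This already yields the formula in the case where $\stab{}$-terms vanish; the correction term $-\beta_{\cT}[\stabindbar{\cU}{\cT}[\simpmod{\cU}{U}]]_-$ is needed because the index is \emph{not} transitive through $\cU'$ when the object is $U'\notin\cU$. Here I would use Corollary~\ref{c:ind-coind-mult-error-terms} with $\cV=\cU'$, $X=U'$, together with the identification $\Extfun{\cT}(\rightker{\cU}{U'})$-type error terms with $[\stabindbar{\cU}{\cT}[\simpmod{\cU}{U}]]_-$ obtained from Theorem~\ref{t:ind-coind-additive-error-terms} and the sign-coherence of $\stabindbar{\cU}{\cT}$ (Corollary~\ref{c:sign-coh-c-vectors}), which is what allows the error $\cC$-module to be recognised as the negative part $[\blank]_-$ of a class.

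Next I would prove \eqref{eq:c-vec-mut-formula}. Both identities are essentially restatements of Theorem~\ref{t:one-step-X-mut}\ref{t:one-step-X-mut-indbar-at-mut} and \ref{t:one-step-X-mut-indbar-away-from-mut}, reorganised using the no-loop/no-$2$-cycle hypothesis. For the first, Theorem~\ref{t:one-step-X-mut}\ref{t:one-step-X-mut-indbar-at-mut} directly gives $\indbar{\cU'}{\cT}[\simpmod{\cU'}{U'}]=-\indbar{\cU}{\cT}[\simpmod{\cU}{U}]$ once one notes $[\simpmod{\cU'}{U'}]=[\Extfun{\cU'}(U)]$ (or absorbs the multiplicity $d_{U'}=d_U$ via Lemma~\ref{l:d-of-mutant}). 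For the second, I would start from Corollary~\ref{c:one-step-X-mut}\ref{c:one-step-X-mut-indbar-away-from-mut}, which under the no-loop/no-$2$-cycle hypothesis already produces the coefficient $[\exchmatentry{U,V}^{\cU}]_+$ in front of $\indbar{\cU}{\cT}[\simpmod{\cU}{U}]$ (via Proposition~\ref{p:no-loops-simp}), and then identify the error term $\exchmatentry{U,V}^{\cU}[\Ker\Ext{1}{\cC}{\blank}{i_L^U}|_{\cT}]$ with $\exchmatentry{U,V}^{\cU}[\stabindbar{\cU}{\cT}[\simpmod{\cU}{U}]]_-$: this uses the alternative description of $\righterror{1}{\cU}{}$ in Lemma~\ref{l:error-terms-modT} and Proposition~\ref{p:error-alt}, combined once more with the sign-coherence of the $\mathbf{c}$-vector $\stabindbar{\cU}{\cT}[\simpmod{\cU}{U}]$ to see that its negative part is realised by exactly this kernel module. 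A careful bookkeeping of which of $i_L^U$, $i_R^{U'}$ appears (they agree as exchange maps up to the identification of the exchange conflations for the pair $(U,U')$, cf.\ the proof of Corollary~\ref{c:exch-mat-at-mut}) is the fiddly part here.

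Finally, for the decategorification conclusion: assume $(\cC,\cT)$ has a cluster structure, so that by Definition~\ref{d:cluster-structure} every reachable $\cU$ has no loops or $2$-cycles and its exchange matrix mutates by the Fomin--Zelevinsky rule. I would proceed by induction on the length of a mutation sequence from $\cT$ to $\cU$. The base case $\cU=\cT$ is the observation that $\ind{\cT}{\cT}=\id$ sends $[T]$ to the $T$-th standard basis vector, which is the initial $\mathbf{g}$-vector, and $\indbar{\cT}{\cT}=\id$ sends $[\simpmod{\cT}{T}]$ to the $T$-th standard basis vector, which is the initial $\mathbf{c}$-vector (the sign conventions matching because $B_{\cT}$ is the initial exchange matrix by Proposition~\ref{p:beta-exch-mat}). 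For the inductive step, formulae \eqref{eq:g-vec-mut-formula} and \eqref{eq:c-vec-mut-formula} are, after expanding $[\blank]_-$ and reading off coefficients in the standard bases, literally the Fomin--Zelevinsky recursions for $\mathbf{g}$-vectors and $\mathbf{c}$-vectors as recorded in \cite{NakanishiBook,NakanishiZelevinsky} (the $\beta_{\cT}$-term corresponding to the matrix-multiplication term $B_{\cT}$ applied to the negative part of a $\mathbf{c}$-vector); the sign-coherence Corollary~\ref{c:sign-coh-c-vectors} is exactly the input that makes $[\blank]_-$ behave as in those recursions. The step I expect to be the main obstacle is not any single manipulation but the precise matching of conventions — confirming that the homological $[\blank]_-$, the kernel/cokernel error modules, and the Fomin--Zelevinsky sign choices all line up, and in particular that no spurious loop at $U'$ can occur (handled by Corollary~\ref{c:no-loops-mutates}) so that $\simpmod{\cU'}{U'}$ really is the image of $U$ under $\Extfun{\cU'}$; once that is pinned down the remaining arguments are routine bookkeeping.
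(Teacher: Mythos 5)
Your proposal has the right overall architecture and cites the correct key results (Corollary~\ref{c:ind-coind-mult-error-terms}, Theorem~\ref{t:one-step-X-mut}, Corollary~\ref{c:one-step-X-mut}, Proposition~\ref{p:decomp-exch-terms}), and the decategorification step by induction on mutation distance is fine. However, there is a genuine gap at the central identification: you claim that sign-coherence (Corollary~\ref{c:sign-coh-c-vectors}) yields $[\stabindbar{\cU}{\cT}[\simpmod{\cU}{U}]]_-=[\righterror{1}{\cU}{U'}|_{\cT}]$, where $U'=\mut{\cU}{U}$, and this does not follow. Sign-coherence says only that the class $v=\stabindbar{\cU}{\cT}[\simpmod{\cU}{U}]=[\lefterror{1}{\cU}{U'}|_{\cT}]-[\righterror{1}{\cU}{U'}|_{\cT}]$ has all coefficients of the same sign. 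But both $[\lefterror{1}]$ and $[\righterror{1}]$ are classes of modules, hence non-negative, and it is perfectly consistent with sign-coherence that they overlap in support (e.g.\ in rank $2$, $[\lefterror{1}]=3e_1+2e_2$ and $[\righterror{1}]=e_1+e_2$ gives a sign-coherent difference with $[\righterror{1}]\ne v_-$). What is actually required, and what the paper proves, is that $\lefterror{1}{\cU}{U'}|_{\cT}$ and $\righterror{1}{\cU}{U'}|_{\cT}$ have \emph{disjoint support}; only then are they identified with $v_\pm$.

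The paper's argument for disjoint support is not sign-coherence but the rigidity-of-$T$ mechanism that underlies sign-coherence: choose a \emph{minimal} $\cU$-coindex conflation $T\infl\leftapp{\cU}{T}\defl\leftcok{\cU}{T}\confl$ for each $T\in\indec\cT$. Since $T$ is rigid, Remark~\ref{r:no-common-summands} shows $\leftapp{\cU}{T}$ and $\leftcok{\cU}{T}$ have no common direct summands. Since $\Extfun{\cU}U'=\simpmod{\cU}{U}$ is concentrated at $U$, the spaces $\Ext{1}{\cC}{\leftapp{\cU}{T}}{U'}$ and $\Ext{1}{\cC}{\leftcok{\cU}{T}}{U'}$ are nonzero only if the respective approximations contain $U$ as a summand, so at most one is nonzero. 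Feeding this into the four-term exact sequence \eqref{eq:l1-r1-seq} from Corollary~\ref{c:indbar-lift}, for each $T$ at most one of $\lefterror{1}{\cU}{U'}(T)$, $\righterror{1}{\cU}{U'}(T)$ is nonzero, establishing disjoint support. Your appeal to Proposition~\ref{p:error-alt} and Lemma~\ref{l:error-terms-modT} brings you to the right sequence but you need to add this no-common-summands observation (or an equivalent direct argument via rigidity); invoking the downstream consequence Corollary~\ref{c:sign-coh-c-vectors} does not supply the missing piece. With that insertion your proof closes, and the remaining steps (transitivity of index for $V\in\cU\cap\cU'$, the use of Corollary~\ref{c:one-step-X-mut} with Proposition~\ref{p:no-loops-simp} to produce $[\exchmatentry{U,V}^{\cU}]_+$, the matching with the Nakanishi--Zelevinsky recursions) are as you describe.
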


\begin{proof}
For $T\in\cT$, choose a minimal $\cU$-coindex conflation $T\infl\leftapp{\cU}{T}\defl\leftcok{\cU}{T}$, so that $\leftapp{\cU}{T}$ and $\leftcok{\cU}{T}$ have no common direct summands by Remark~\ref{r:no-common-summands}.
Thus, if we take $U\in\exch{\cU}$, with $U'=\mut{\cU}{U}$ so that $\Extfun{\cU}U'=\simpmod{\cU}{U}$, then either $\Ext{1}{\cC}{\leftapp{\cU}{T}}{U'}=0$ or $\Ext{1}{\cC}{\leftcok{\cU}{T}}{U'}=0$, since these spaces are only non-zero when $\leftapp{\cU}{T}$, respectively $\leftcok{\cU}{T}$, has a summand isomorphic to $U$.
Comparing to \eqref{eq:l1-r1-seq}, it follows that either $\righterror{1}{\cU}{U'}(T)=0$ or $\lefterror{1}{\cU}{U'}(T)=0$; that is, the $\cT$-modules $\righterror{1}{\cU}{U'}$ and $\lefterror{1}{\cU}{U'}$ have disjoint support.
Now recalling from Corollary~\ref{c:indbar-lift} that $\stabindbar{\cU}{\cT}[\simpmod{\cU}{U}]=[\lefterror{1}{\cU}{U'}|_{\cT}]-[\righterror{1}{\cU}{U'}|_{\cT}]$, we see that
\[\bigl[\stabindbar{\cU}{\cT}[\simpmod{\cU}{U}]\bigr]_+=[\lefterror{1}{\cU}{U'}|_{\cT}],\quad
\bigl[\stabindbar{\cU}{\cT}[\simpmod{\cU}{U}]\bigr]_-=[\righterror{1}{\cU}{U'}|_{\cT}].\]
Thus, the identities in Corollary~\ref{c:one-step-X-mut}\ref{c:one-step-X-mut-indbar-at-mut}--\ref{c:one-step-X-mut-indbar-away-from-mut} are precisely the identities \eqref{eq:c-vec-mut-formula}.

Further abbreviating $\cU'=\mut{U}{\cU}$, we see from Proposition~\ref{p:decomp-exch-terms} and Corollary~\ref{c:ind-coind-mult-error-terms} that
\begin{align*}
\ind{\cU'}{\cT}[U'] & =(\ind{\cU}{\cT}\circ \ind{\cU'}{\cU})[U']-(\beta_{\cT}\circ \righterrormap{1}{\cU}{\cU'}{\cT})[U'] \\
& = \ind{\cU}{\cT}([U_{\cU}^{-}]-[U])-\beta_{\cT}[\righterror{1}{\cU}{U'}|_{\cT}] \\
& = -\ind{\cU}{\cT}[U]+\ind{\cU}{\cT}[U_{\cU}^{-}]-\beta_{\cT}\bigl[\stabindbar{\cU}{\cT}[\simpmod{\cU}{U}]\bigr]_- \\
& = -\ind{\cU}{\cT}[U]+\Bigl(\sum_{W\in \cU\setminus U} [\exchmatentry{W,U}]_{-}\ind{\cU}{\cT}[W]\Bigr)-\beta_{\cT}\bigl[\stabindbar{\cU}{\cT}[\simpmod{\cU}{U}]\bigr]_-.
\end{align*}
If $V\ne U'$, then $V\in\cU\cap\cU'$ and so $\ind{\cU'}{\cT}[V]=\ind{\cC}{\cT}[V]=\ind{\cU}{\cT}[V]$.
This gives \eqref{eq:g-vec-mut-formula}.

Comparing \eqref{eq:g-vec-mut-formula} to \cite[Eq.~4.13]{NakanishiBook} and \eqref{eq:c-vec-mut-formula} to \cite[Eq.~4.5]{NakanishiBook}, we see that the index and its adjoint satisfy the mutation formulæ for $\mathbf{g}$- and $\mathbf{c}$-vectors respectively; the assumption that $(\cC,\cT)$ has a cluster structure means that the assumption of no loop or $2$-cycle at $U\in\cU\ctsubcat\cC$ holds whenever $\cU$ is reachable from $\cT$, and so these formulæ are always valid.
Since these maps are the identity on the initial cluster-tilting subcategory $\cT$, and $\mathbf{g}$- and $\mathbf{c}$-vectors are the standard basis vectors on the initial seed, we conclude that the index and its adjoint indeed categorify the classical cluster algebra notions.
\end{proof}

As a consequence, claims about $\mathbf{g}$- and $\mathbf{c}$-vectors in a cluster algebra follow from the existence of a categorification.
For example, it is now immediate from Proposition~\ref{p:ind-coind-inverse} (as observed in the remark after Definition~\ref{d:g-vectors}) that the $\mathbf{g}$-vectors associated to a single cluster are $\integ$-linearly independent (\cite[Conj.~7.10]{FZ-CA2}, proved in \cite{DehyKeller} for $\cC$ triangulated and \cite[Thm.~5.5(b)]{FuKeller} for $\cC$ exact); see also Remark~\ref{r:g-vecs-distinguish-cl-mons}.

\begin{remark}
\label{r:gen-c-vec-mut}
The more general formulæ in Theorem~\ref{t:one-step-X-mut} show that the values of $\indbar{\cU}{\cT}$ on simple modules still mutate with $\cU$ according to the usual rules for $\mathbf{c}$-vectors, even when there are loops and $2$-cycles, but with a modified definition of the exchange matrix $B_{\cU}$ so that it involves the quantities $\dimdivalg{U}^{-1}\canform{[\simpmod{\cU'}{V}]}{\exchmon{U}{\cU}{\pm}}{\cU'}$.
Corresponding $\mathbf{g}$-vector mutation formulæ may then be deduced exactly as in the proof of Theorem~\ref{t:c-vec-mut-formula}.
\end{remark}

\subsection{Change of cluster-tilting subcategory}
\label{s:change-of-cts-beta}

The careful analysis of compositions of index and coindex maps from the previous subsection also allows us to prove the following theorem, which we will use to relate forms on different cluster tilting subcategories via these maps, and deduce (in the appropriate context) that our exchange matrices mutate in the expected way.

\begin{theorem}\label{t:exch-isos} Let $\cC$ be a cluster category such that $\stab{\cC}$ is compact or skew-symmetric.  Then we have commutative diagrams
\[\begin{tikzcd}
\Kgp{\fpmod \stab{\cU}} \arrow{r}{\beta_{\cU}} \arrow{d}[swap]{\stabIndbar{\cU}{\cT}} & \Kgp{\cU} \arrow{d}{\ind{\cU}{\cT}} \\
\Kgp{ \fpmod \stab{\cT}} \arrow{r}{\beta_{\cT}} & \Kgp{\cT} 
\end{tikzcd}\qquad
\begin{tikzcd}
\Kgp{\fpmod \stab{\cU}} \arrow{r}{\beta_{\cU}} \arrow{d}[swap]{\stabCoindbar{\cU}{\cT}} & \Kgp{\cU} \arrow{d}{\coind{\cU}{\cT}} \\
\Kgp{ \fpmod \stab{\cT}} \arrow{r}{\beta_{\cT}} & \Kgp{\cT}
\end{tikzcd}\]
for any $\cTU\ctsubcat\cC$.
\end{theorem}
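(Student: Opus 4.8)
The plan is to prove the left-hand square commutes, the right-hand one then following by passing to the opposite category: applying the left-hand square for $\op{\cC}$ and $\op{\cTU}\ctsubcat\op{\cC}$ and using Remark~\ref{r:ind-op} (indices and coindices swap), Proposition~\ref{p:beta-op} (which gives $\beta_{\op{\cT}}[\dual{M}]=-\beta_{\cT}[M]$), and Proposition~\ref{p:error-op} (relating the $\ell$- and $r$-error modules for $\cC$ and $\op{\cC}$, hence $\stabIndbar{}{}$ and $\stabCoindbar{}{}$). So it suffices to verify that $\beta_{\cT}\circ\stabIndbar{\cU}{\cT}=\ind{\cU}{\cT}\circ\beta_{\cU}$ as maps $\Kgp{\fpmod\stab{\cU}}\to\Kgp{\cT}$. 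Both sides are group homomorphisms, and by Proposition~\ref{p:equiv-to-mod} every element of $\Kgp{\fpmod\stab{\cU}}$ is a $\integ$-linear combination of classes $[\Extfun{\cU}X]$ for $X\in\cC$, so it is enough to check the identity on such a class.

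\textbf{Key computation.} Fix $X\in\cC$. On the left-hand side, Corollary~\ref{c:indbar-lift}'s defining formula gives $\stabIndbar{\cU}{\cT}[\Extfun{\cU}X]=[\lefterror{1}{\cU}{X}|_{\cT}]-[\righterror{1}{\cU}{X}|_{\cT}]\in\Kgp{\fpmod\stab{\cT}}$, so the left-hand side is
\[
\beta_{\cT}\bigl[\lefterror{1}{\cU}{X}|_{\cT}\bigr]-\beta_{\cT}\bigl[\righterror{1}{\cU}{X}|_{\cT}\bigr].
\]
On the right-hand side, $\beta_{\cU}[\Extfun{\cU}X]=\coind{\cC}{\cU}[X]-\ind{\cC}{\cU}[X]$ by Proposition~\ref{p:beta-proj-res}, so we must compute $\ind{\cU}{\cT}$ of this difference. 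Here is where the composition calculus of \S\ref{ss:compositions} does the work: Theorem~\ref{t:ind-coind-additive-error-terms}\ref{t:ind-additive-error-terms} (applied with the r\^oles of the root and the outer categories as in the statement, i.e.\ expressing $\ind{\cC}{\cT}$ via a $\cU$-coindex conflation for $X$) gives
\[
\ind{\cC}{\cT}[X]=\ind{\cU}{\cT}[\leftapp{\cU}{X}]-\ind{\cU}{\cT}[\leftcok{\cU}{X}]-\beta_{\cT}\bigl[\lefterror{1}{\cU}{X}|_{\cT}\bigr],
\]
while Theorem~\ref{t:ind-coind-additive-error-terms}\ref{t:ind-additive-error-terms-2} gives
\[
\ind{\cC}{\cT}[X]=\ind{\cU}{\cT}[\rightapp{\cU}{X}]-\ind{\cU}{\cT}[\rightker{\cU}{X}]-\beta_{\cT}\bigl[\righterror{1}{\cU}{X}|_{\cT}\bigr].
\]
Since $\ind{\cU}{\cT}$ restricted to $\cU$ is the map in the statement, $\ind{\cU}{\cT}[\leftapp{\cU}{X}]-\ind{\cU}{\cT}[\leftcok{\cU}{X}]=\ind{\cU}{\cT}(\coind{\cC}{\cU}[X])$ and likewise $\ind{\cU}{\cT}[\rightapp{\cU}{X}]-\ind{\cU}{\cT}[\rightker{\cU}{X}]=\ind{\cU}{\cT}(\ind{\cC}{\cU}[X])$. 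Subtracting the two displayed equalities yields
\[
0=\ind{\cU}{\cT}\bigl(\coind{\cC}{\cU}[X]-\ind{\cC}{\cU}[X]\bigr)-\beta_{\cT}\bigl[\lefterror{1}{\cU}{X}|_{\cT}\bigr]+\beta_{\cT}\bigl[\righterror{1}{\cU}{X}|_{\cT}\bigr],
\]
which rearranges precisely to $\ind{\cU}{\cT}\circ\beta_{\cU}[\Extfun{\cU}X]=\beta_{\cT}\circ\stabIndbar{\cU}{\cT}[\Extfun{\cU}X]$, as desired.

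\textbf{Loose ends and the main obstacle.} Two small points need care. First, for the formulae of Corollary~\ref{c:indbar-lift} to apply one needs $\lefterror{1}{\cU}{X}|_{\cT}$ and $\righterror{1}{\cU}{X}|_{\cT}$ to lie in $\fpmod\stab{\cT}$; this holds in general (not merely in the finite-rank case) because $\fpmod\stab{\cT}$ is abelian \cite[Prop.~2.1(a)]{KellerReiten} and these modules are sub-/quotient modules of $\Extfun{\cT}$ of objects of $\cC$, which are finitely presented by Proposition~\ref{p:equiv-to-mod}. Second, one should note that $\stabIndbar{\cU}{\cT}$ and $\beta_{\cU}$ are genuinely well-defined homomorphisms on $\Kgp{\fpmod\stab{\cU}}$ (Proposition~\ref{p:stabIndbar-well-def} and Definition~\ref{d:beta}), so the reduction to generators $[\Extfun{\cU}X]$ is legitimate; the maps $\ind{\cU}{\cT}$ and $\beta_{\cT}$ are linear by Proposition~\ref{p:ind-coind-homs} and Definition~\ref{d:beta}. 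The assumption that $\stab{\cC}$ is compact or skew-symmetric is only needed so that the adjoint maps $\cindbar{}{}$ and hence $\stabIndbar{}{}$ are available in the first place. I do not anticipate a serious obstacle here: the content is entirely packaged in Theorem~\ref{t:ind-coind-additive-error-terms} and Corollary~\ref{c:indbar-lift}, and the proof is the bookkeeping of assembling those two inputs; the only thing to be vigilant about is keeping the direction conventions (which error term goes with $\lefterror{}{}$ versus $\righterror{}{}$, and the sign in $\beta=-p$) consistent throughout, and then transferring cleanly to the coindex square via $\op{\cC}$.
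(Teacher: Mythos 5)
Your proof is correct and follows essentially the same approach as the paper: reduce to classes $[\Extfun{\cU}X]$, invoke Theorem~\ref{t:ind-coind-additive-error-terms}\ref{t:ind-additive-error-terms}--\ref{t:ind-additive-error-terms-2} to write $\ind{\cC}{\cT}[X]$ in two ways, and subtract. Two small remarks: the formula $\stabIndbar{\cU}{\cT}[\Extfun{\cU}X]=[\lefterror{1}{\cU}{X}|_{\cT}]-[\righterror{1}{\cU}{X}|_{\cT}]$ is the \emph{definition} of $\stabIndbar{\cU}{\cT}$ rather than a consequence of Corollary~\ref{c:indbar-lift} (which instead relates $\stabIndbar{}{}$ to the lowercase $\stabindbar{}{}$); and the paper deduces the coindex square directly from parts \ref{t:coind-additive-error-terms}--\ref{t:coind-additive-error-terms-2} of the same theorem, whereas you propose an opposite-category transfer via Remark~\ref{r:ind-op} and Propositions~\ref{p:beta-op} and~\ref{p:error-op}---this is a valid and fully equivalent route, since those parts of Theorem~\ref{t:ind-coind-additive-error-terms} were themselves proved by exactly that transfer.
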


\begin{proof}
Recall from Proposition~\ref{p:equiv-to-mod} that $\Kgp{\fpmod \underline{\cU}}$ is spanned by classes of modules of the form $\Extfun{\cU}X$, for $X\in\cC$. By Theorem~\ref{t:ind-coind-additive-error-terms}\ref{t:ind-additive-error-terms} and \ref{t:ind-additive-error-terms-2}, we have
\[\ind{\cU}{\cT}\coind{\cC}{\cU}[X]-\beta_{\cT}[\lefterror{1}{\cU}{X}|_{\cU}]=\ind{\cU}{\cT}\ind{\cC}{\cU}[X]-\beta_{\cT}[\righterror{1}{\cU}{X}|_{\cT}],\]
both being equal to $\ind{\cC}{\cT}[X]$. Since
$\stabIndbar{\cU}{\cT}[\Extfun{\cU}{X}]=[\lefterror{1}{\cU}{X}|_{\cT}]-[\righterror{1}{\cU}{X}|_{\cT}]$,
rearranging gives
\[\ind{\cU}{\cT}\coind{\cC}{\cU}[X]-\ind{\cU}{\cT}\ind{\cC}{\cU}[X]=\beta_{\cT}(\stabIndbar{\cU}{\cT}[\Extfun{\cU}X]).\]
But the left-hand side is, by definition, equal to
$\ind{\cU}{\cT}[\beta_{\cU}(\Extfun{\cU}X)]$.
Commutativity of the right-hand square is proved similarly, using Theorem~\ref{t:ind-coind-additive-error-terms}\ref{t:coind-additive-error-terms} and \ref{t:coind-additive-error-terms-2}.
\end{proof}

To be able to decategorify Theorem~\ref{t:exch-isos} to statements about matrices, we need to be able to restrict to Grothendieck groups of finite dimensional modules (with their bases of simples), which we may do using Corollary~\ref{c:indbar-on-fd}.

\begin{corollary}
\label{c:exch-isos}
If $\cT$ and $\cU$ are maximally mutable, then we have commutative diagrams
\[\begin{tikzcd}
\Kgp{\fd \stab{\cU}} \arrow{r}{\beta_{\cU}} \arrow{d}[swap]{\stabindbar{\cU}{\cT}} & \Kgp{\cU} \arrow{d}{\ind{\cU}{\cT}} \\
\Kgp{ \fd \stab{\cT}} \arrow{r}{\beta_{\cT}} & \Kgp{\cT}
\end{tikzcd}\qquad
\begin{tikzcd}
\Kgp{\fd \stab{\cU}} \arrow{r}{\beta_{\cU}} \arrow{d}[swap]{\stabcoindbar{\cU}{\cT}} & \Kgp{\cU} \arrow{d}{\coind{\cU}{\cT}} \\
\Kgp{ \fd \stab{\cT}} \arrow{r}{\beta_{\cT}} & \Kgp{\cT}
\end{tikzcd}\]
whenever either $\cC$ has finite rank or $\cU$ is reachable from $\cT$.\qed
\end{corollary}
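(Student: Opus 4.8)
The plan is to deduce Corollary~\ref{c:exch-isos} directly from Theorem~\ref{t:exch-isos} by restricting the commutative diagrams there along the inclusions of the Grothendieck groups of finite-dimensional modules. Concretely, I would argue as follows. Since $\cT$ and $\cU$ are assumed maximally mutable, Proposition~\ref{p:fd-sub-fpmod} gives natural injections $\Kgp{\fd\stab{\cT}}\hookrightarrow\Kgp{\fpmod\stab{\cT}}$ and $\Kgp{\fd\stab{\cU}}\hookrightarrow\Kgp{\fpmod\stab{\cU}}$, which we treat as inclusions, and moreover (by the same proposition and Corollary~\ref{c:fp=fd}) $\fd\stab{\cU}\subseteq\fpmod\stab{\cU}$, so every class in $\Kgp{\fd\stab{\cU}}$ has the form $[\Extfun{\cU}X]$ for some $X\in\cC$, with $\Extfun{\cU}X\in\fd\stab{\cU}$. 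The restriction of $\beta_{\cU}$ (as in Definition~\ref{d:beta}) to this subgroup is the map already used in Proposition~\ref{p:beta-exch-mat}, so the top edges of the desired diagrams make sense.

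The key step is to check that the vertical maps $\stabIndbar{\cU}{\cT}$ and $\stabCoindbar{\cU}{\cT}$ from Theorem~\ref{t:exch-isos} restrict to $\stabindbar{\cU}{\cT}$ and $\stabcoindbar{\cU}{\cT}$ on the finite-dimensional subgroups, i.e.\ that they carry $\Kgp{\fd\stab{\cU}}$ into $\Kgp{\fd\stab{\cT}}$ and agree there with the maps of Definition~\ref{d:ind-bar-def}. But this is exactly the content of Corollary~\ref{c:indbar-on-fd}: under either hypothesis ($\cC$ of finite rank, or $\cU$ reachable from $\cT$), the corollary produces commutative squares
\[\begin{tikzcd}
\Kgp{\fd{\stab{\cU}}}\arrow{r}\arrow{d}{\stabindbar{\cU}{\cT}}&\Kgp{\fpmod{\stab{\cU}}}\arrow{d}{\stabIndbar{\cU}{\cT}}\\
\Kgp{\fd{\stab{\cT}}}\arrow{r}&\Kgp{\fpmod{\stab{\cT}}}
\end{tikzcd}\qquad
\begin{tikzcd}
\Kgp{\fd{\stab{\cU}}}\arrow{r}\arrow{d}{\stabcoindbar{\cU}{\cT}}&\Kgp{\fpmod{\stab{\cU}}}\arrow{d}{\stabCoindbar{\cU}{\cT}}\\
\Kgp{\fd{\stab{\cT}}}\arrow{r}&\Kgp{\fpmod{\stab{\cT}}}
\end{tikzcd}\]
with injective horizontal arrows (using again that $\cT$ and $\cU$ are maximally mutable so that Proposition~\ref{p:fd-sub-fpmod} applies). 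Note that the hypothesis of Corollary~\ref{c:exch-isos} that $\stab{\cC}$ be compact or skew-symmetric is inherited from that of Theorem~\ref{t:exch-isos}, and passes to the stable categories $\stab{\cT}$, $\stab{\cU}$ exactly as noted after Proposition~\ref{p:im-delta}; this is what allows the adjoint maps $\cindbar{}{}$ of Definition~\ref{d:ind-bar-def} to be defined in the first place.

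Assembling the pieces, I would paste the two squares from Corollary~\ref{c:indbar-on-fd} onto the two squares from Theorem~\ref{t:exch-isos} (along the common vertical maps $\stabIndbar{\cU}{\cT}$ and $\stabCoindbar{\cU}{\cT}$), obtaining commuting rectangles whose bottom composites are $\beta_{\cT}$ precomposed with the inclusion $\Kgp{\fd\stab{\cT}}\hookrightarrow\Kgp{\fpmod\stab{\cT}}$ — which is precisely $\beta_{\cT}$ as we have agreed to read it on $\Kgp{\fd\stab{\cT}}$. Since the inclusion $\Kgp{\fd\stab{\cU}}\hookrightarrow\Kgp{\fpmod\stab{\cU}}$ is injective, a diagram chase around the outer rectangle shows that on $\Kgp{\fd\stab{\cU}}$ we have $\ind{\cU}{\cT}\circ\beta_{\cU}=\beta_{\cT}\circ\stabindbar{\cU}{\cT}$ and, from the second rectangle, $\coind{\cU}{\cT}\circ\beta_{\cU}=\beta_{\cT}\circ\stabcoindbar{\cU}{\cT}$, as claimed. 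I do not anticipate a serious obstacle: the only subtlety is bookkeeping — making sure the hypotheses of Proposition~\ref{p:fd-sub-fpmod} and Corollary~\ref{c:indbar-on-fd} are met (maximal mutability of both $\cT$ and $\cU$, plus one of the two finiteness alternatives) and that the various "treat the inclusion as an identification" conventions are used consistently — rather than any real mathematical content, all of which has been isolated into the two cited results.
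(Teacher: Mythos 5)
Your proof is correct and is exactly the argument the paper intends: the text immediately preceding Corollary~\ref{c:exch-isos} says that the restriction to finite-dimensional Grothendieck groups "we may do using Corollary~\ref{c:indbar-on-fd}", and the corollary is then stated with a bare \(\square\), meaning precisely the pasting of the Theorem~\ref{t:exch-isos} squares with the Corollary~\ref{c:indbar-on-fd} squares that you describe. One small remark: the appeal to injectivity of \(\Kgp{\fd\stab{\cU}}\hookrightarrow\Kgp{\fpmod\stab{\cU}}\) in your final diagram chase is not needed — the two rectangles commute by functoriality of pasting alone, since \(\beta_{\cU}\) and \(\beta_{\cT}\) on the \(\fd\)-subgroups are by definition the corresponding restrictions along the inclusions, so the outer composites are already the desired maps.
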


\begin{corollary}
\label{c:rank-invariant}
If $\stab{\cC}$ is Krull--Schmidt and finite rank, then the rank of $\beta_{\cT}|_{\Kgp{\fd{\stab{\cT}}}}$ is the same for all $\cT\ctsubcat\cC$.
\end{corollary}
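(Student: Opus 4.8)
The statement to prove is Corollary~\ref{c:rank-invariant}: if \(\stab{\cC}\) is Krull--Schmidt and of finite rank, then the rank of the map \(\beta_{\cT}|_{\Kgp{\fd{\stab{\cT}}}}\) does not depend on the choice of \(\cT\ctsubcat\cC\). The plan is to deduce this directly from the commutative diagrams of Corollary~\ref{c:exch-isos}. Note first that under the finite rank hypothesis \(\stab{\cC}\) is automatically compact (being Hom-finite with additively finite cluster-tilting subcategories, every \(\stab{\cT}\) is Hom-finite hence has finite-dimensional simples), so the hypotheses of Corollary~\ref{c:exch-isos} are in force; moreover \(\stab{\cC}\) has a weak cluster structure by Corollary~\ref{c:weak-clust-struct}\ref{c:weak-clust-struct-finite}, so every \(\stab{\cT}\ctsubcat\stab{\cC}\) is maximally mutable. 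Since \(\beta_{\cT}\) factors through \(\stab{\cC}\) (indeed \(\beta_{\stab{\cT}}=\pproj{\cT}\circ\beta_{\cT}\) and the maps \(\beta\) on \(\fd\) are the relevant ones), it suffices to prove the claim for the triangulated cluster category \(\stab{\cC}\) itself; so I would reduce at once to the case \(\cC=\stab{\cC}\), where \(\beta_{\cT}\) is a map \(\Kgp{\fd{\cT}}\to\Kgp{\cT}\).

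First I would fix two cluster-tilting subcategories \(\cT,\cU\ctsubcat\cC\). In the finite rank case both \(\fd{\cT}=\lfd{\cT}\) and \(\fd{\cU}=\lfd{\cU}\) are free abelian of finite rank (spanned by simples), and both \(\Kgp{\cT}\), \(\Kgp{\cU}\) are free of the common rank \(\card{\indec\cT}=\card{\indec\cU}\) by Corollary~\ref{c:clusters-same-size}. By Corollary~\ref{c:exch-isos} (applicable since \(\cC\) has finite rank, so no reachability hypothesis is needed) we have the commutative square
\[\begin{tikzcd}
\Kgp{\fd{\cU}} \arrow{r}{\beta_{\cU}} \arrow{d}[swap]{\stabindbar{\cU}{\cT}} & \Kgp{\cU} \arrow{d}{\ind{\cU}{\cT}} \\
\Kgp{\fd{\cT}} \arrow{r}{\beta_{\cT}} & \Kgp{\cT}.
\end{tikzcd}\]
The right-hand vertical map \(\ind{\cU}{\cT}\) is an isomorphism by Proposition~\ref{p:ind-coind-inverse}. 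The left-hand vertical map \(\stabindbar{\cU}{\cT}=\adj{(\coind{\cT}{\cU})}\) is an isomorphism by Proposition~\ref{p:indbar-coindbar-inverse}: its inverse is \(\stabcoindbar{\cT}{\cU}\). (Here I should check that in the finite rank case the restriction \(\stabindbar{\cU}{\cT}\colon\Kgp{\fd{\cU}}\to\Kgp{\fd{\cT}}\) agrees with the map appearing in Proposition~\ref{p:indbar-coindbar-inverse} on \(\Kgpnum{\lfd{}}\); this is immediate since \(\fd=\lfd=\Kgpnum{\lfd{}}\) when \(\cC\) has finite rank, as noted after Corollary~\ref{c:fp=fd}.)

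Given the square above with both verticals isomorphisms, rank is preserved: \(\beta_{\cT}=\ind{\cU}{\cT}\circ\beta_{\cU}\circ(\stabindbar{\cU}{\cT})^{-1}\), and precomposing or postcomposing a homomorphism of finitely generated free abelian groups with an isomorphism does not change its rank (the rank of a \(\integ\)-linear map between free abelian groups of finite rank being, say, the rank of its image, or equivalently the rank of the map after tensoring with \(\bQ\)). Hence \(\rank\beta_{\cT}=\rank\beta_{\cU}\). Since \(\cT\) and \(\cU\) were arbitrary, the rank is a common value, as claimed. I do not anticipate a genuine obstacle here; the only point requiring a little care is the bookkeeping in the reduction to the stable category and the identification of the \(\fd\)-restriction of \(\stabindbar{\cU}{\cT}\) with an isomorphism, both of which are handled by the cited results (Corollary~\ref{c:exch-isos}, Proposition~\ref{p:indbar-coindbar-inverse}, and the finite-rank coincidences \(\fd=\lfd=\Kgpnum{\lfd{}}\)). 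I would therefore present the proof as a three-line argument: invoke Corollary~\ref{c:exch-isos}, observe both verticals are isomorphisms by Propositions~\ref{p:ind-coind-inverse} and \ref{p:indbar-coindbar-inverse}, and conclude.
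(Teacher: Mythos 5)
Your core argument coincides with the paper's: maximal mutability comes from Corollary~\ref{c:weak-clust-struct}\ref{c:weak-clust-struct-finite}, the relevant commutative square is Corollary~\ref{c:exch-isos}, and both vertical maps are isomorphisms by Propositions~\ref{p:ind-coind-inverse} and \ref{p:indbar-coindbar-inverse}, which preserves the rank of the horizontal map. Your verification of the compactness hypothesis for \(\stab{\cC}\) is a useful piece of bookkeeping that the paper leaves implicit.

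However, the opening paragraph's ``reduction to the case \(\cC=\stab{\cC}\)'' is both unnecessary and incorrect, and you should delete it. The map under consideration is \(\beta_{\cT}\colon\Kgp{\fd\stab{\cT}}\to\Kgp{\cT}\), with codomain \(\Kgp{\cT}\) (classes of objects of \(\cT\subseteq\cC\), including projective-injectives), not \(\Kgp{\stab{\cT}}\). Replacing \(\cC\) by \(\stab{\cC}\) would replace \(\beta_{\cT}\) by \(\beta_{\stab{\cT}}=\pproj{\cT}\circ\beta_{\cT}\), whose codomain is the quotient \(\Kgp{\stab{\cT}}\), and the two maps do not generally have the same rank: for the exact cluster category \(\cC=\fpmod\Lambda\) with \(\Lambda\) the preprojective algebra of type \(\mathsf{A}_2\), \(\beta_{\cT}\) is injective of rank \(1\) by Proposition~\ref{p:ker-beta}\ref{p:ker-beta-exact}, whereas \(\beta_{\stab{\cT}}\) is zero (the single mutable object gives a \(1\times1\) skew-symmetric, hence zero, exchange matrix). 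The square you then invoke from Corollary~\ref{c:exch-isos} is already stated for the original \(\cC\), with \(\Kgp{\fd\stab{\cT}}\) on the left and \(\Kgp{\cT}\) on the right, so there is nothing to reduce; simply apply it to \(\cC\) directly, as the paper does, and the rest of your write-up goes through verbatim.
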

\begin{proof}
By Corollary~\ref{c:weak-clust-struct}\ref{c:weak-clust-struct-finite}, all $\cT\ctsubcat\cC$ are maximally mutable, meaning both that the statement makes sense (i.e.\ $\Kgp{\fd{\stab{\cT}}}\leq\Kgp{\fpmod{\stab{\cT}}}$) and that we have the commutative diagrams from Corollary~\ref{c:exch-isos}.
The statement then follows since $\cind{\cT}{\cU}$ and $\stabcindbar{\cT}{\cU}$ are isomorphisms by Propositions~\ref{p:ind-coind-inverse} and \ref{p:indbar-coindbar-inverse}.
\end{proof}

\begin{remark}\label{r:decategorification-of-square}
When $\cC$ is triangulated, expressing the commutativity of the left-hand square from Corollary~\ref{c:exch-isos} in matrix form recovers a formula due to Palu \cite[Thm.~12(a)]{Palu-Groth-gp}.

When $\cT$ and $\cU$ are related by a single mutation, Corollary~\ref{c:exch-isos} (in combination with Corollary~\ref{c:one-step-X-mut-from-root}) is the categorification of the matrix mutation formula as written by Gekhtman--Shapiro--Vainshtein \cite{GSV-Moscow} and described explicitly as a matrix product in the proof of \cite[Lem.~3.2]{BFZ-CA3}.  Namely, for $\epsilon\in \{\pm 1 \}$, $m=\card{\indec \cT}$ and $n=\card{\indec \stab{\cT}}$, we may define an $m\times m$ matrix $E_{\epsilon}(k)$ and an $n\times n$ matrix $F_{\epsilon}(k)$ with entries
\[ E_{\epsilon}(k)_{ij}=\begin{cases} \delta_{ij} & \text{if}\ j\neq k, \\ -1 & \text{if}\ i=j=k, \\ [-\epsilon b_{ik}]_+ & \text{if}\ i\neq j=k, \end{cases}\qquad
F_{\epsilon}(k)_{ij}=\begin{cases} \delta_{ij} & \text{if}\ i\neq k, \\ -1 & \text{if}\ i=j=k, \\ [\epsilon b_{kj}]_+ & \text{if}\ i=k\neq j. \end{cases}\]
Then the mutation of $B$ in the direction $k$ is given by
\[ \mu_{k}(B) = E_{\epsilon}(k)BF_{\epsilon}(k) \]
for either choice of $\epsilon$, this corresponding to the choice of square in Corollary~\ref{c:exch-isos}.
\end{remark}

Theorem~\ref{t:exch-isos} leads to the next collection of results, which show how the form $\sform{\blank}{\blank}{\cT}$ from Definition~\ref{d:s-form} behaves as we vary the cluster-tilting subcategory $\cT$. While we use the notation $\mu$ in the next definition, the cluster-tilting subcategories $\cT$ and $\cU$ do not need to be related by any sequence of mutations.

\begin{definition}\label{d:mutated-s-form} Let $\cC$ be a compact or skew-symmetric cluster category, and let $\cTU\ctsubcat\cC$. Define $\mu_{\cT}^{\cU}\sform{\blank}{\blank}{\cT}\colon\Kgpnum{\lfd \cU}\cross \Kgp{\fpmod \stab{\cU}}\to \integ$ by
\[ \mu_{\cT}^{\cU}\sform{\blank}{\blank}{\cT}=\sform{\indbar{\cU}{\cT}(\blank)}{\stabIndbar{\cU}{\cT}(\blank)}{\cT}. \]
\end{definition}

It turns out that this does not give us anything new, instead simply recovering the intrinsic form $\sform{\blank}{\blank}{\cU}$.

\begin{proposition}
\label{p:mut-of-Euler-form}
Let $\cC$ be a compact or skew-symmetric cluster category, and let $\cTU\ctsubcat\cC$. Then 
\begin{equation}
\label{eq:sform-invariance}
\mu_{\cT}^{\cU}\sform{\blank}{\blank}{\cT}=\sform{\blank}{\blank}{\cU}.
\end{equation}
\end{proposition}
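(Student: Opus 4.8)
The strategy is to unwind the definition of $\mu_{\cT}^{\cU}\sform{\blank}{\blank}{\cT}$ in terms of maps rather than forms, and then reduce everything to the two commutative squares of Theorem~\ref{t:exch-isos} together with the adjunction relations between $\cind{}{}$ and $\cindbar{}{}$. Recall from \eqref{eq:beta-s-form} that $\sform{\blank}{\blank}{\cT}$ corresponds to the map $\pdual{\cT}\circ\beta_{\cT}\colon\Kgp{\fpmod\stab{\cT}}\to\dual{\Kgpnum{\lfd\cT}}$, in the sense that $\sform{[M]}{[N]}{\cT}=(\pdual{\cT}\circ\beta_{\cT}[N])[M]$; and similarly $\canform{\blank}{\blank}{\cT}=\evform{\sdual{\cT}(\blank)}{\blank}$ via the injections $\sdual{\cT},\pdual{\cT}$ of Proposition~\ref{p:K-duality}. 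So the identity to be proved is, after applying $\sdual{\cT}$ (injective) to both sides, the statement that
\[
\canform{\indbar{\cU}{\cT}[M]}{\beta_{\cT}\stabIndbar{\cU}{\cT}[N]}{\cT}=\canform{[M]}{\beta_{\cU}[N]}{\cU}
\]
for all $M\in\lfd{\cU}$ and $N\in\fpmod{\stab{\cU}}$.

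\textbf{Key steps.}
First I would apply the left-hand commutative square of Theorem~\ref{t:exch-isos}, namely $\beta_{\cT}\circ\stabIndbar{\cU}{\cT}=\ind{\cU}{\cT}\circ\beta_{\cU}$, to rewrite the left-hand side as $\canform{\indbar{\cU}{\cT}[M]}{\ind{\cU}{\cT}\beta_{\cU}[N]}{\cT}$. Second, I would use the adjunction defining $\indbar{\cU}{\cT}=\adj{(\coind{\cT}{\cU})}$ (Definition~\ref{d:ind-bar-def}, equation~\eqref{eq:ind-coindbar-adj}) to move $\indbar{\cU}{\cT}$ across the form, obtaining $\canform{[M]}{\coind{\cT}{\cU}\bigl(\ind{\cU}{\cT}\beta_{\cU}[N]\bigr)}{\cU}$. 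Third, I would invoke Proposition~\ref{p:ind-coind-inverse}, which says precisely that $\coind{\cU}{\cT}$ is inverse to $\ind{\cT}{\cU}$; here the relevant instance is $\coind{\cT}{\cU}\circ\ind{\cU}{\cT}=\id_{\Kgp{\cU}}$ (reading the composition $\Kgp{\cU}\to\Kgp{\cT}\to\Kgp{\cU}$ in the order that makes it the identity), so the expression collapses to $\canform{[M]}{\beta_{\cU}[N]}{\cU}=\sform{[M]}{[N]}{\cU}$, as required. A little care is needed at the second step: strictly, $\beta_{\cU}[N]$ lies in $\Kgp{\cU}$ and $\ind{\cU}{\cT}$ is defined on $\Kgp{\cU}$, so the composition $\coind{\cT}{\cU}\circ\ind{\cU}{\cT}$ really is an endomorphism of $\Kgp{\cU}$, and the identity statement of Proposition~\ref{p:ind-coind-inverse} applies verbatim once one checks the direction of the maps matches.

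\textbf{Main obstacle.}
The computation itself is a short chain of substitutions, so the only genuine subtlety is bookkeeping around the infinite-rank/numerical Grothendieck group subtleties: $\stabIndbar{\cU}{\cT}$ is the lift to $\Kgp{\fpmod\stab{\cT}}$ rather than the map $\stabindbar{\cU}{\cT}$ on $\Kgpnum{\lfd\stab{\cT}}$, and one must be sure that the square of Theorem~\ref{t:exch-isos} is stated at exactly the level of $\Kgp{\fpmod\stab{}}$ used here, and that the adjunction identity \eqref{eq:ind-coindbar-adj} is available for elements of $\lfd{\cU}$ and not only $\fd{\cU}$ --- which it is, since $\canform{\blank}{\blank}{\cU}$ in Proposition~\ref{p:K-duality} is defined on $\Kgpnum{\lfd\cU}\times\Kgp{\cU}$ and the adjoint maps $\indbar{\cU}{\cT}$, $\coindbar{\cU}{\cT}$ of Definition~\ref{d:ind-bar-def} are taken with respect to precisely these forms. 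Once these identifications are pinned down, no further difficulty arises, and the result follows formally from Theorem~\ref{t:exch-isos} and Proposition~\ref{p:ind-coind-inverse}.
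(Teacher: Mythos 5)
Your proof is correct and takes essentially the same route as the paper's: the first equality is the pair of definitions, then Theorem~\ref{t:exch-isos}, then adjunction, then Proposition~\ref{p:ind-coind-inverse}. The only (inessential) variation is in the adjunction step: the paper moves $\ind{\cU}{\cT}$ from the right argument across the form, producing $\coindbar{\cT}{\cU}$ on the left, and then collapses $\coindbar{\cT}{\cU}\circ\indbar{\cU}{\cT}=\id$ via Proposition~\ref{p:indbar-coindbar-inverse}; you instead move $\indbar{\cU}{\cT}$ from the left argument across, producing $\coind{\cT}{\cU}$ on the right, and collapse $\coind{\cT}{\cU}\circ\ind{\cU}{\cT}=\id$ directly by Proposition~\ref{p:ind-coind-inverse}. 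Both versions are two-line applications of the same lemmas and neither is longer or more conceptual than the other, so this is the same proof up to symmetry.
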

\begin{proof}
Let $M\in\lfd{\cU}$ and $N\in \fpmod{\stab{\cU}}$. Then
\begin{align*}
\mu_{\cT}^{\cU}\sform{[M]}{[N]}{\cT}
&=\canform{\indbar{\cU}{\cT}[M]}{\beta_{\cT}\stabIndbar{\cU}{\cT}[N]}{\cT}\\
&=\canform{\indbar{\cU}{\cT}[M]}{\ind{\cU}{\cT}{\beta_{\cU}[N]}}{\cT}\\
&=\canform{\coindbar{\cT}{\cU}{\indbar{\cU}{\cT}[M]}}{\beta_{\cU}[N]}{\cU}\\
&=\canform{[M]}{\beta_{\cU}[N]}{\cU}\\
&=\sform{[M]}{[N]}{\cU}.
\end{align*}
Here the first equality is a pair of definitions. Subsequently, we apply Theorem~\ref{t:exch-isos}, \eqref{eq:ind-coindbar-adj}, Proposition~\ref{p:ind-coind-inverse} and the definition of $\sform{\blank}{\blank}{\cU}$.
\end{proof}

The next corollary is \emph{sign-invariance}, the statement that using $\coindbar{}{}$ instead of $\indbar{}{}$ to transfer the form yields the same answer.  The name for this property is derived from the matrix version recalled in Remark~\ref{r:decategorification-of-square}, whereby classical matrix mutation is expressed as multiplication by matrices $E_+$ and $F_+$ or $E_-$ and $F_-$, the choice of sign ultimately having no effect on the result.

\begin{corollary}\label{c:sign-invar-s-form}
Let $\cC$ be a compact or skew-symmetric cluster category, and let $\cTU\ctsubcat\cC$. Then $\mu_{\cT}^{\cU}\sform{\blank}{\blank}{\cT}=\sform{\coindbar{\cU}{\cT}(\blank)}{\stabCoindbar{\cU}{\cT}(\blank)}{\cT}$.
\end{corollary}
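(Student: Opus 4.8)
The plan is to mimic the proof of Proposition~\ref{p:mut-of-Euler-form} verbatim, replacing each use of the ``\(\indbar{}{}\)-flavoured'' tools by its ``\(\coindbar{}{}\)-flavoured'' counterpart. Concretely, for \(M\in\lfd{\cU}\) and \(N\in\fpmod{\stab{\cU}}\) I would compute the chain
\[
\sform{\coindbar{\cU}{\cT}[M]}{\stabCoindbar{\cU}{\cT}[N]}{\cT}
=\canform{\coindbar{\cU}{\cT}[M]}{\beta_{\cT}\stabCoindbar{\cU}{\cT}[N]}{\cT}
=\canform{\coindbar{\cU}{\cT}[M]}{\coind{\cU}{\cT}\beta_{\cU}[N]}{\cT},
\]
where the second step is the right-hand commutative square of Theorem~\ref{t:exch-isos}. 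Then applying the adjunction \eqref{eq:ind-coindbar-adj} for \(\indbar{}{}\) and \(\coind{}{}\) in the form \(\canform{\coindbar{\cU}{\cT}[M]}{\coind{\cU}{\cT}(\blank)}{\cT}=\canform{[M]}{?}{\cU}\)—I would need to pick the right adjoint identity here, namely that \(\adj{(\coind{\cU}{\cT})}=\indbar{\cU}{\cT}\) (Definition~\ref{d:ind-bar-def}), so \(\canform{\indbar{\cU}{\cT}[M]}{\coind{\cU}{\cT}[T]}{\cT}=\canform{[M]}{\blank}{\cU}\) — wait, this is the identity that was used for \(\indbar{}{}\) already. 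Let me instead use \(\adj{(\ind{\cT}{\cU})}=\coindbar{\cU}{\cT}\): this gives \(\canform{\coindbar{\cU}{\cT}[M]}{[T]}{\cT}=\canform{[M]}{\ind{\cT}{\cU}[T]}{\cU}\), and applying this with \([T]=\coind{\cU}{\cT}\beta_{\cU}[N]\) yields
\[
\canform{\coindbar{\cU}{\cT}[M]}{\coind{\cU}{\cT}\beta_{\cU}[N]}{\cT}
=\canform{[M]}{\ind{\cT}{\cU}\coind{\cU}{\cT}\beta_{\cU}[N]}{\cU}
=\canform{[M]}{\beta_{\cU}[N]}{\cU}
=\sform{[M]}{[N]}{\cU},
\]
where the middle equality is Proposition~\ref{p:ind-coind-inverse}, namely that \(\ind{\cT}{\cU}\circ\coind{\cU}{\cT}=\id_{\Kgp{\cU}}\). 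Combining with Proposition~\ref{p:mut-of-Euler-form}, which identifies \(\sform{\blank}{\blank}{\cU}\) with \(\mu_{\cT}^{\cU}\sform{\blank}{\blank}{\cT}\), completes the argument.

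The only genuine subtlety — and the step I would be most careful about — is getting the adjunction bookkeeping exactly right: there are two pairs of adjoint maps in play, \(\adj{(\ind{\cT}{\cU})}=\coindbar{\cU}{\cT}\) and \(\adj{(\coind{\cT}{\cU})}=\indbar{\cU}{\cT}\), and one must use the former (the ``co/index'' pairing) rather than the latter to land on the composition \(\ind{\cT}{\cU}\circ\coind{\cU}{\cT}\) that Proposition~\ref{p:ind-coind-inverse} collapses to the identity. Using the wrong adjoint would instead produce \(\coind{\cT}{\cU}\circ\coind{\cU}{\cT}\), which is \emph{not} the identity in general, so the proof would not close. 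Everything else is a routine unwinding of definitions parallel to the proof already given, so I would keep the write-up to the short displayed computation above with one line naming each equality (definitions, Theorem~\ref{t:exch-isos}, the co/index adjunction \eqref{eq:ind-coindbar-adj}, Proposition~\ref{p:ind-coind-inverse}, definition of \(\sform{\blank}{\blank}{\cU}\)), followed by the appeal to Proposition~\ref{p:mut-of-Euler-form}.

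Here is how I would write it out.

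\begin{proof}
Let \(M\in\lfd{\cU}\) and \(N\in\fpmod{\stab{\cU}}\). Using the definitions of \(\sform{\blank}{\blank}{\cT}\) and \(\beta_{\cT}\), the right-hand square of Theorem~\ref{t:exch-isos}, the adjunction \(\adj{(\ind{\cT}{\cU})}=\coindbar{\cU}{\cT}\) in the form \eqref{eq:ind-coindbar-adj}, and then Proposition~\ref{p:ind-coind-inverse}, we compute
\begin{align*}
\sform{\coindbar{\cU}{\cT}[M]}{\stabCoindbar{\cU}{\cT}[N]}{\cT}
&=\canform{\coindbar{\cU}{\cT}[M]}{\beta_{\cT}\stabCoindbar{\cU}{\cT}[N]}{\cT}\\
&=\canform{\coindbar{\cU}{\cT}[M]}{\coind{\cU}{\cT}\beta_{\cU}[N]}{\cT}\\
&=\canform{[M]}{\ind{\cT}{\cU}\coind{\cU}{\cT}\beta_{\cU}[N]}{\cU}\\
&=\canform{[M]}{\beta_{\cU}[N]}{\cU}\\
&=\sform{[M]}{[N]}{\cU}.
\end{align*}
By Proposition~\ref{p:mut-of-Euler-form}, the right-hand side equals \(\mu_{\cT}^{\cU}\sform{[M]}{[N]}{\cT}\), and the result follows.
\end{proof}
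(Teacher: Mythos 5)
Your proof is correct, and it is essentially the argument the paper's terse one-liner intends: you repeat the computation from the proof of Proposition~\ref{p:mut-of-Euler-form} in its coindex flavour, using the right-hand square of Theorem~\ref{t:exch-isos}, the adjunction $\adj{(\ind{\cT}{\cU})}=\coindbar{\cU}{\cT}$ from \eqref{eq:ind-coindbar-adj}, and Proposition~\ref{p:ind-coind-inverse}, to land on $\sform{\blank}{\blank}{\cU}$, then invoke Proposition~\ref{p:mut-of-Euler-form} to identify this with $\mu_{\cT}^{\cU}\sform{\blank}{\blank}{\cT}$. Your caution about choosing the adjoint pairing that collapses to $\ind{\cT}{\cU}\circ\coind{\cU}{\cT}=\id_{\Kgp{\cU}}$ rather than the composition $\coind{\cT}{\cU}\circ\coind{\cU}{\cT}$ is exactly the right thing to watch.
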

\begin{proof}
This follows from Proposition~\ref{p:mut-of-Euler-form} and Proposition~\ref{p:ind-coind-inverse}.
\end{proof}

A second corollary is that this transportation of forms is transitive, despite the fact that compositions of indices or coindices produce `error terms' as in Corollary~\ref{c:ind-coind-mult-error-terms}.

\begin{corollary}\label{c:s-form-mut-transitive}
Let $\cC$ be a compact or skew-symmetric cluster category and $\cTU,\cV\ctsubcat\cC$. Then $\mu_{\cU}^{\cV}\mu_{\cT}^{\cU}\sform{\blank}{\blank}{\cT}=\mu_{\cT}^{\cV}\sform{\blank}{\blank}{\cT}$.
\end{corollary}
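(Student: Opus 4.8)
\textbf{Proof proposal for Corollary~\ref{c:s-form-mut-transitive}.}
The plan is to reduce the claimed equality $\mu_{\cU}^{\cV}\mu_{\cT}^{\cU}\sform{\blank}{\blank}{\cT}=\mu_{\cT}^{\cV}\sform{\blank}{\blank}{\cT}$ to Proposition~\ref{p:mut-of-Euler-form}, applied twice. The key observation is that the transported form $\mu_{\cT}^{\cU}\sform{\blank}{\blank}{\cT}$ is, by Proposition~\ref{p:mut-of-Euler-form}, equal to the \emph{intrinsic} form $\sform{\blank}{\blank}{\cU}$ on $\Kgpnum{\lfd \cU}\times\Kgp{\fpmod\stab{\cU}}$; it does not depend on the path from $\cT$ to $\cU$, nor on the fact that we used $\cT$ at all. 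So the strategy is: first apply Proposition~\ref{p:mut-of-Euler-form} with the pair $(\cU,\cV)$ to the form $\sform{\blank}{\blank}{\cU}$, obtaining $\mu_{\cU}^{\cV}\sform{\blank}{\blank}{\cU}=\sform{\blank}{\blank}{\cV}$; then apply Proposition~\ref{p:mut-of-Euler-form} with $(\cT,\cU)$ to rewrite $\sform{\blank}{\blank}{\cU}$ as $\mu_{\cT}^{\cU}\sform{\blank}{\blank}{\cT}$ inside the operator $\mu_{\cU}^{\cV}$; and finally apply Proposition~\ref{p:mut-of-Euler-form} with $(\cT,\cV)$ to identify the right-hand side $\mu_{\cT}^{\cV}\sform{\blank}{\blank}{\cT}$ with $\sform{\blank}{\blank}{\cV}$ as well. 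Chaining these three equalities gives the result.

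Concretely, I would write: by Proposition~\ref{p:mut-of-Euler-form} applied to the pair $\cT,\cU$ we have $\mu_{\cT}^{\cU}\sform{\blank}{\blank}{\cT}=\sform{\blank}{\blank}{\cU}$. Applying the operator $\mu_{\cU}^{\cV}$ to both sides — which is legitimate since $\mu_{\cU}^{\cV}$ is defined on forms with the appropriate domain, and both sides are such forms — yields
\[
\mu_{\cU}^{\cV}\mu_{\cT}^{\cU}\sform{\blank}{\blank}{\cT}=\mu_{\cU}^{\cV}\sform{\blank}{\blank}{\cU}.
\]
Now Proposition~\ref{p:mut-of-Euler-form} applied to the pair $\cU,\cV$ gives $\mu_{\cU}^{\cV}\sform{\blank}{\blank}{\cU}=\sform{\blank}{\blank}{\cV}$, while the same proposition applied to the pair $\cT,\cV$ gives $\mu_{\cT}^{\cV}\sform{\blank}{\blank}{\cT}=\sform{\blank}{\blank}{\cV}$. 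Combining the last three displayed equalities proves $\mu_{\cU}^{\cV}\mu_{\cT}^{\cU}\sform{\blank}{\blank}{\cT}=\mu_{\cT}^{\cV}\sform{\blank}{\blank}{\cT}$, as required.

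The only point that needs a little care — and the one I would flag as the main (minor) obstacle — is checking that the expression $\mu_{\cU}^{\cV}\mu_{\cT}^{\cU}\sform{\blank}{\blank}{\cT}$ is literally obtained by substituting the maps $\indbar{\cV}{\cU}$ and $\stabIndbar{\cV}{\cU}$ into $\mu_{\cT}^{\cU}\sform{\blank}{\blank}{\cT}$, i.e.\ that the $\mu$ operators genuinely compose by composition of substitution maps, so that the naive rewriting above is valid. This is immediate from Definition~\ref{d:mutated-s-form}: $\mu_{\cU}^{\cV}$ precomposes a form on $\Kgpnum{\lfd\cU}\times\Kgp{\fpmod\stab\cU}$ with $\indbar{\cV}{\cU}$ in the first slot and $\stabIndbar{\cV}{\cU}$ in the second, and since Proposition~\ref{p:mut-of-Euler-form} is an equality of \emph{forms} (not merely of their values on distinguished bases), applying $\mu_{\cU}^{\cV}$ to both sides of $\mu_{\cT}^{\cU}\sform{\blank}{\blank}{\cT}=\sform{\blank}{\blank}{\cU}$ is unambiguous. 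Thus no genuine recomputation is needed; in particular one does \emph{not} need to invoke the error-term analysis of Corollary~\ref{c:ind-coind-mult-error-terms} or the non-transitivity of $\ind{}{}$ directly — the transitivity of the transported form is precisely the content of the observation that it always equals the intrinsic form, which Proposition~\ref{p:mut-of-Euler-form} already supplies.
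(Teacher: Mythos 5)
Your proof is correct and follows exactly the same approach as the paper: the paper's one-line proof is ``By Proposition~\ref{p:mut-of-Euler-form}, both forms are equal to $\sform{\blank}{\blank}{\cV}$,'' which is precisely your chain of applications of Proposition~\ref{p:mut-of-Euler-form} to the pairs $(\cT,\cU)$, $(\cU,\cV)$, and $(\cT,\cV)$. The care you take in verifying that $\mu_{\cU}^{\cV}$ may be applied to both sides of an equality of forms is sound (it is immediate from Definition~\ref{d:mutated-s-form}), and you are right that no appeal to the error-term analysis of Corollary~\ref{c:ind-coind-mult-error-terms} is needed.
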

\begin{proof}
By Proposition~\ref{p:mut-of-Euler-form}, both forms are equal to $\sform{\blank}{\blank}{\cV}$.
\end{proof}

This leads to mutation formulæ for our exchange matrices, recovering the familiar expressions from cluster theory, as follows.

\begin{theorem}
\label{t:exch-mat-mutation-clust-str}
Let $\cC$ be a compact cluster category and let $\cT\ctsubcat\cC$.
Assume $\cT$ has no loop or $2$-cycle at $T\in \exch{\cT}$, and let $U,V\in \indec \mut{T}{\cT}$ with $V$ non-projective.
Then
\begin{equation}
\label{eq:exch-mat-mutation-clust-str}
\exchmatentry{U,V}^{\mut{T}{\cT}}=\begin{cases} -\exchmatentry{T,V}^{\cT} & \text{if $U=\mut{\cT}{T}$},\\
-\exchmatentry{U,T}^{\cT}&\text{if $V=\mut{\cT}{T}$}, \\
\exchmatentry{U,V}^{\cT}+\exchmatentry{U,T}^{\cT}[\exchmatentry{T,V}^{\cT}]_{+}+[\exchmatentry{U,T}^{\cT}]_{-}\exchmatentry{T,V}^{\cT} & \text{otherwise.} \end{cases}
\end{equation} 
\end{theorem}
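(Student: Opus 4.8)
The plan is to deduce Theorem~\ref{t:exch-mat-mutation-clust-str} from Theorem~\ref{t:exch-isos}/Corollary~\ref{c:exch-isos}, exactly as indicated in Remark~\ref{r:decategorification-of-square}: the commutative squares there encode matrix mutation once we pass to the finite-dimensional Grothendieck groups and their bases of simples. First I would set up the reduction to matrices. Since \(\cC\) is compact and \(\cT\) has no loop or \(2\)-cycle at \(T\in\exch\cT\), the mutant \(\cT'=\mut{T}{\cT}\) has no loop at \(\mut{\cT}{T}\) (Corollary~\ref{c:no-loops-mutates}), so both \(\cT\) and \(\cT'\) have exchange matrices by Corollary~\ref{c:KS-max-mut}, and \(\beta_{\cT}|_{\Kgp{\fd\stab\cT}}\), \(\beta_{\cT'}|_{\Kgp{\fd\stab{\cT'}}}\) represent \(B_\cT\), \(B_{\cT'}\) in the bases of simple modules and of indecomposables by Proposition~\ref{p:beta-exch-mat}. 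Here we use that a compact cluster category has a weak cluster structure (Corollary~\ref{c:weak-clust-struct}, via \S\ref{ss:clust-cats}), so every cluster-tilting subcategory is maximally mutable and Corollary~\ref{c:exch-isos} applies; moreover \(\cT'\) is reachable from \(\cT\) by construction.

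The core of the argument is then to identify the change-of-basis matrices. Corollary~\ref{c:exch-isos} gives \(\ind{\cT'}{\cT}\circ\beta_{\cT'}=\beta_{\cT}\circ\stabindbar{\cT'}{\cT}\) as maps \(\Kgp{\fd\stab{\cT'}}\to\Kgp{\cT}\); postcomposing with the isomorphism \(\ind{\cT}{\cT'}=(\coind{\cT'}{\cT})^{-1}\) (Proposition~\ref{p:ind-coind-inverse}) and writing everything with respect to the standard bases yields \(B_{\cT'}=F\,B_\cT\,E\) where \(E\) is the matrix of \(\ind{\cT}{\cT'}\colon\Kgp{\cT}\to\Kgp{\cT'}\) and \(F\) is the matrix of \(\stabindbar{\cT'}{\cT}\colon\Kgp{\fd\stab{\cT'}}\to\Kgp{\fd\stab\cT}\) inverted appropriately — more precisely, I would use the square in the form \(\beta_{\cT'}=\ind{\cT}{\cT'}\circ\beta_\cT\circ\stabcindbar{\cT}{\cT'}\)-type rearrangement so that the outer factors are honest isomorphisms between the relevant groups. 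Next I compute these factors explicitly. The matrix of \(\ind{\cT}{\cT'}\) is read off from \eqref{eq:ind-on-mut-T}: on \(U\neq T\) it is the identity, and \(\ind{\cT}{\cT'}[T]=[\exchmon{\cT}{T}{-}]-[T_{\cT'}]=[\exchmon{\cT'}{(\mut{\cT}{T})}{+}]-[\mut{\cT}{T}]\), whose coefficients are governed by Proposition~\ref{p:decomp-exch-terms}; under the no-loop/no-\(2\)-cycle hypothesis these are \([\exchmatentry{U,T}^\cT]_+\) off the diagonal and \(-1\) at \(T\), matching the matrix \(E_+(k)\) (or \(E_-(k)\)) of Remark~\ref{r:decategorification-of-square}. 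Dually, the matrix of \(\stabcindbar{\cT'}{\cT}\) on simples is computed by Corollary~\ref{c:one-step-X-mut-from-root}\ref{c:one-step-X-mut-from-root-indbar-at-mut}--\ref{c:one-step-X-mut-from-root-indbar-away-from-mut}: \(-1\) at the mutated vertex and \([\exchmatentry{T,U}^\cT]_+\) in the appropriate off-diagonal slot, i.e.\ \(F_+(k)\) in the notation of that remark. Multiplying \(F\,B_\cT\,E\) entrywise and using the \(\pm\) identities \([\exchmatentry{X,Y}]_\pm=\tfrac{\dimdivalg Y}{\dimdivalg X}[\exchmatentry{Y,X}]_\mp\) (noted after Corollary~\ref{c:exch-mat-plus-minus}) to absorb the diagonal factors \(\dimdivalg X\) from Proposition~\ref{p:beta-exch-mat}, one recovers precisely the three cases of \eqref{eq:exch-mat-mutation-clust-str}; the two "boundary" cases \(U=\mut{\cT}{T}\) and \(V=\mut{\cT}{T}\) come out immediately from the \(-1\) entries of \(E\) and \(F\) together with \(\exchmatentry{T,T}^\cT=0\). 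Alternatively, I could bypass the explicit matrix bookkeeping by invoking Corollary~\ref{c:exch-mat-at-mut} for the boundary cases and, for the generic entry, combining Proposition~\ref{p:mut-of-Euler-form} (invariance of \(\sform{\blank}{\blank}{\cT}\)) with \eqref{eq:s-gram-matrix} and the one-step formulas of Proposition~\ref{p:one-step-X-mut-from-root}; this is the cleaner route and is probably how I would actually write it.

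The main obstacle I anticipate is purely bookkeeping: keeping the two different bases straight (indecomposables of \(\cT\) versus simple \(\stab\cT\)-modules, and the "skew-symmetrizing" diagonal \(D_\cT\) that relates \(B_\cT\) to the Gram matrix of \(\sform{\blank}{\blank}{\cT}\)) and verifying that the scalar factors \(\dimdivalg{U}^{-1}\canform{[\simpmod{\cT'}{V}]}{[\exchmon{\cT}{T}{\mp}]}{\cT'}\) appearing in Proposition~\ref{p:one-step-X-mut-from-root} collapse to the clean \([\exchmatentry{T,V}^\cT]_\pm\) via Proposition~\ref{p:no-loops-simp} — this is exactly where the no-loop/no-\(2\)-cycle hypothesis is used, and it is essential, since without it one only gets the "generalised" mutation rule of Remark~\ref{r:gen-c-vec-mut}. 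Once that simplification is in hand, the identity \eqref{eq:exch-mat-mutation-clust-str} is a short computation. I would also remark that the choice of \(\epsilon\in\{\pm1\}\) in Remark~\ref{r:decategorification-of-square} corresponds to using the \(\indbar{}{}\)-square versus the \(\coindbar{}{}\)-square in Corollary~\ref{c:exch-isos}, and Corollary~\ref{c:sign-invar-s-form} guarantees both give the same answer — this is a useful consistency check but not logically needed for the proof.
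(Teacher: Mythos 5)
Your preferred ``alternative route'' is precisely the paper's proof: Corollary~\ref{c:exch-mat-at-mut} handles the \(V=\mut{\cT}{T}\) case, and the generic entry is computed by writing \(\exchmatentry{U,V}^{\mut{T}{\cT}}=\dimdivalg{U}^{-1}\sform{[\simpmod{\mut{T}{\cT}}{U}]}{[\simpmod{\mut{T}{\cT}}{V}]}{\mut{T}{\cT}}\) via \eqref{eq:s-gram-matrix}, applying the invariance \(\sform{\blank}{\blank}{\mut{T}{\cT}}=\sform{\indbar{\mut{T}{\cT}}{\cT}(\blank)}{\stabIndbar{\mut{T}{\cT}}{\cT}(\blank)}{\cT}\) from Proposition~\ref{p:mut-of-Euler-form}, and then expanding with the one-step formulas (Proposition~\ref{p:one-step-X-mut-from-root} simplified via Proposition~\ref{p:no-loops-simp}, i.e.\ Corollary~\ref{c:one-step-X-mut-from-root}). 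The only step you gloss over is the \(U=\mut{\cT}{T}\) case, which does not follow from Corollary~\ref{c:exch-mat-at-mut} alone: one must first obtain \(\exchmatentry{V,\mut{\cT}{T}}^{\mut{T}{\cT}}=-\exchmatentry{V,T}^{\cT}\) from that corollary and then convert using the skew-symmetrizability relation \(\dimdivalg{X}\exchmatentry{X,Y}=-\dimdivalg{Y}\exchmatentry{Y,X}\) together with Lemma~\ref{l:d-of-mutant} (which gives \(\dimdivalg{\mut{\cT}{T}}=\dimdivalg{T}\)), exactly as the paper does.
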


\begin{proof}
Recall from Corollary~\ref{c:exch-mat-at-mut} that $\exchmatentry{U,\mut{\cT}{T}}^{\mut{T}{\cT}}=-\exchmatentry{U,T}^{\cT}$.
Then \[ \exchmatentry{\mut{\cT}{T},V}^{\mut{T}{\cT}}=-\frac{\dimdivalg{V}}{\dimdivalg{\mut{\cT}{T}}}\exchmatentry{V,\mut{\cT}{T}}^{\mut{T}{\cT}}=-\frac{\dimdivalg{V}}{\dimdivalg{T}}\exchmatentry{V,\mut{\cT}{T}}^{\mut{T}{\cT}}=\frac{\dimdivalg{V}}{\dimdivalg{T}}\exchmatentry{V,T}^{\cT}=-\exchmatentry{T,V}^{\cT}, \]
using Lemma~\ref{l:d-of-mutant} for the second equality, and Corollary~\ref{c:exch-mat-at-mut} again for the third.
Now let $U\in\indec{\mut{T}{\cT}}$ and $V\in\exch{\mut{T}{\cT}}$ both be different from $\mut{\cT}{T}$.
By Corollary~\ref{c:indbar-on-fd}, we have $\stabindbar{\mut{T}{\cT}}{\cT}[\simpmod{\mut{T}{\cT}}{V}]\in\Kgp{\fd{\stab{\cT}}}$, and so we may calculate
\begin{align*}
\exchmatentry{U,V}^{\mut{T}{\cT}} & = \dimdivalg{U}^{-1}\sform{[\simpmod{\mut{T}{\cT}}{U}]}{[\simpmod{\mut{T}{\cT}}{V}]}{\mut{T}{\cT}} \\
& = \dimdivalg{U}^{-1}\sform{\indbar{\mut{T}{\cT}}{\cT}[\simpmod{\mut{T}{\cT}}{U}]}{\stabindbar{\mut{T}{\cT}}{\cT}[\simpmod{\mut{T}{\cT}}{V}]}{\cT} \\
& = \dimdivalg{U}^{-1}\sform{[\simpmod{\cT}{U}]+[\exchmatentry{T,U}^{\cT}]_{+}[\simpmod{\cT}{T}]}{[\simpmod{\cT}{V}]+[\exchmatentry{T,V}^{\cT}]_{+}[\simpmod{\cT}{T}]}{\cT} \\
& = \begin{multlined}[t]\dimdivalg{U}^{-1}\sform{[\simpmod{\cT}{U}]}{[\simpmod{\cT}{V}]}{\cT}+\dimdivalg{U}^{-1}[\exchmatentry{T,V}^{\cT}]_{+}\sform{[\simpmod{\cT}{U}]}{[\simpmod{\cT}{T}]}{\cT} +\dimdivalg{U}^{-1}[\exchmatentry{T,U}^{\cT}]_{+}\sform{[\simpmod{\cT}{T}]}{[\simpmod{\cT}{V}]}{\cT}\\+\dimdivalg{U}^{-1}[\exchmatentry{T,U}^{\cT}]_{+}[\exchmatentry{T,V}^{\cT}]_{+}\sform{[\simpmod{\cT}{T}]}{[\simpmod{\cT}{T}]}{\cT}\end{multlined} \\
& = \exchmatentry{U,V}^{\cT}+\exchmatentry{U,T}^{\cT}[\exchmatentry{T,V}^{\cT}]_{+}+\dimdivalg{U}^{-1}\dimdivalg{T}\exchmatentry{T,V}^{\cT}[\exchmatentry{T,U}^{\cT}]_{+} \\
& = \exchmatentry{U,V}^{\cT}+\exchmatentry{U,T}^{\cT}[\exchmatentry{T,V}^{\cT}]_{+}+[\exchmatentry{U,T}^{\cT}]_{-}\exchmatentry{T,V}^{\cT}
\end{align*}
using Proposition~\ref{p:mut-of-Euler-form} and Corollary~\ref{c:one-step-X-mut-from-root}.
\end{proof}

\sectionbreak
\section{Cluster characters}\label{s:clust-char}

In this section, we will first explain the construction of the usual cluster character, taking objects of a cluster category $\cC$ to Laurent polynomials, such that for suitable inputs one obtains $\Aside$-cluster variables.  We will refer to this as the $\Aside$-cluster character.  

We are working in a slightly more general framework and different notation than is in the extant literature, so for these reasons and also in order to better facilitate the new construction that follows, we will give the definition and proofs of properties of the $\Aside$-cluster character in a little detail.
In particular, we still do not assume that our perfect ground field $\bK$ is algebraically closed, although at this point we will need to assume that it admits an Euler--Poincaré characteristic in the sense of Definition~\ref{d:Euler-char}, as algebraically closed fields do.

Our main goal, however, is to introduce the corresponding $\cX$-cluster character, defined on module categories for the cluster-tilting categories of $\stab{\cC}$.
There are several distinctly different features of the $\cX$-cluster character, starting with its domain of definition, and we will highlight these as we go along.

\subsection{Quiver Grassmannians}
\label{s:qGrass}

\begin{definition}
Let $\cA$ be a Krull--Schmidt category.
For $M\in \lfd\cA$ and $[L]\in\Kgp{\fd\cA}$, the \emph{quiver Grassmannian} $\QGra{[L]}{M}$ is the (algebraic) moduli space whose points parametrise submodules $L'\leq M$ with $[L']=[L]\in \Kgp{\fd \cA}$.
\end{definition}

The space $\QGra{[L]}{M}$ is a subvariety of the product $\prod_{X\in\indec{\cA}}\QGra{\dim_{\bK}L(X)}{M(X)}$ of ordinary Grassmannians; this is a finite product because $L\in\fd{\cA}$, so $L(X)=0$ for all but finitely many $X\in\indec{\cA}$.
In particular, $\QGra{[L]}{M}$ is a projective variety.

\begin{definition}
\label{d:Euler-char}
An \emph{Euler--Poincaré characteristic} is a function $\chi\colon\Var_\bK\to\integ$ on the set of proper algebraic varieties over the (perfect) field $\bK$, having the following properties (cf.~\cite[Rem.~2.12]{Plamondon-CCs}):
\begin{enumerate}
\item\label{d:Euler-char-affine} $\chi(\mathbb{A}_\bK^n)=1$, for $\mathbb{A}_\bK^n$ the affine space of dimension $n\geq0$;
\item\label{d:Euler-char-union} $\chi(U)=\chi(V_1)+\chi(V_2)$ if $U=V_1\sqcup V_2$ for constructible subsets $V_1$ and $V_2$;
\item\label{d:Euler-char-fibres} if $f\colon U\to V$ is a surjective constructible map (e.g.\ a morphism of algebraic varieties) with $\chi(f^{-1}(v))=c$ independent of $v\in V$, then $\chi(U)=c\chi(V)$.
\end{enumerate}
\end{definition}

A consequence of Definition~\ref{d:Euler-char}\ref{d:Euler-char-fibres} is that $\chi(U\times V)=\chi(U)\chi(V)$ for any $U,V\in\Var_\bK$.

\begin{example}
If $\bK$ is algebraically closed, then the usual Euler--Poincaré characteristic, defined for example using étale or $\ell$-adic cohomology with compact support, satisfies the conditions from Definition~\ref{d:Euler-char}. For a general (perfect) field $\bK$, it is not clear that an Euler--Poincaré characteristic exists.
\end{example}

Since quiver Grassmannians are projective varieties, they are in particular proper, and so are suitable inputs for an Euler--Poincaré characteristic.

\begin{example}
If $\chi$ is an Euler--Poincaré characteristic, then $\chi(\mathbb{P}_{\bK}^n)=n+1$.
Indeed, for $n \geq1$ we may decompose $\mathbb{P}_{\bK}^n$ into a pair of constructible subsets, one isomorphic to $\mathbb{A}_{\bK}^n$ and the other to $\mathbb{P}_{\bK}^{n-1}$.
Since $\mathbb{P}_{\bK}^{0}=\mathbb{A}_{\bK}^0$, we obtain the desired result by induction.
\end{example}

From now on, we fix a choice of Euler--Poincaré characteristic $\chi$ (and so, implicitly, assume that the field $\bK$ admits one).
From \cite[\S1.6]{JorgensenPalu} and \cite{Palu2} (building on the foundational work of \cite{CalderoChapoton,CalderoKeller,Palu}) we have the following two key identities.

\begin{proposition}
\label{p:chidentities}
Let $\cC$ be a cluster category and let $\cT\ctsubcat\cC$.
\begin{enumerate}
\item\label{p:chidentities-split} For any $M,N\in\lfd{\stab{\cT}}$ and $L\in\fd{\stab{\cT}}$, we have
\begin{equation}\label{eq:chi-split}
\chi(\QGra{[L]}{M\oplus N})=\sum_{[H]+[K]=[L]} \chi(\QGra{[H]}{M})\chi(\QGra{[K]}{N}).
\end{equation}
\item\label{p:chidentities-non-split} 
Assume $\cC$ is compact or skew-symmetric, and let $X,Y\in\cC$ be indecomposable with $\rank_{\divalg{X}}\Ext{1}{\cC}{X}{Y}=1$.
Given non-split conflations $Y\stackrel{i_{1}}{\infl} Z\stackrel{p_{1}}{\defl} X\confl$ and $X\stackrel{i_{2}}{\infl} Z'\stackrel{p_{2}}{\defl} Y\confl$, we have
\begin{equation}\label{eq:chi-non-split} \chi(\QGra{[H]}{\Extfun{\cT}X})\chi(\QGra{[K]}{\Extfun{\cT}Y})= \chi(\curly{S}(Z)_{[H],[K]})+\chi(\curly{S}(Z')_{[H],[K]}) \end{equation}
for each $H,K\in\fd{\stab{\cT}}$, where 
\begin{align*} \curly{S}(Z)_{[H],[K]} & =\{ A\leq \Extfun{\cT}Z \mid [(\Extfun{\cT}i_{1})^{-1}A]=[H],\ [(\Extfun{\cT}p_{1})A]=[K]\}, \\
\curly{S}(Z')_{[H],[K]} & =\{ B\leq \Extfun{\cT}Z' \mid [(\Extfun{\cT}i_{2})^{-1}B]=[H],\ [(\Extfun{\cT}p_{2})B]=[K]\}.
\end{align*}
\end{enumerate}
\end{proposition}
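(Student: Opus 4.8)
The plan is to prove both parts by reduction to known results in the literature, using the algebraicity of $\cC$ together with the invariance of $\Extfun{\cT}$-modules under partial stabilisation. For part \ref{p:chidentities-split}, the starting observation is that $\QGra{[L]}{\blank}$ only depends on the module structure of its argument over $\stab{\cT}$, and by Proposition~\ref{p:equiv-to-mod} every $\stab{\cT}$-module of the form $\Extfun{\cT}X$ is intrinsic to $\stab{\cC}$. Thus it suffices to establish \eqref{eq:chi-split} for $M, N \in \lfd{\stab{\cT}}$ and $L \in \fd{\stab{\cT}}$, where the relevant submodule lattices are computed inside the abelian category $\fpmod{\stab{\cT}}$ (or more precisely $\lfd{\stab{\cT}}$, which contains $\fpmod{\stab{\cT}}$). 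The identity \eqref{eq:chi-split} is then a purely module-theoretic statement: a submodule of $M \oplus N$ with class $[L]$, when intersected with $M$ and projected to $N$, yields a pair $([H],[K])$ with $[H]+[K]=[L]$, and the fibre of this assignment over a fixed pair is an affine space (it is a torsor over $\Hom_{\stab{\cT}}(\blank,\blank)$ of the relevant pieces), so property \ref{d:Euler-char-fibres} of $\chi$, together with \ref{d:Euler-char-affine} and \ref{d:Euler-char-union}, gives the result. This is exactly the computation carried out in \cite[\S1.6]{JorgensenPalu} for triangulated cluster categories; since the statement now lives entirely in $\stab{\cC}$, which is a triangulated cluster category, we may simply invoke it. First I would make the reduction to $\stab{\cC}$ explicit, then cite \cite{JorgensenPalu} (and the underlying \cite{CalderoChapoton,Palu}) for the remaining module-theoretic identity.

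For part \ref{p:chidentities-non-split}, the first step is to note that the hypothesis $\rank_{\divalg{X}}\Ext{1}{\cC}{X}{Y}=1$ guarantees, by Lemma~\ref{l:rk1-unique-middle-term}, that the middle terms $Z$ and $Z'$ of the two non-split conflations are determined up to isomorphism (so the statement is well-posed), and by stable $2$-Calabi--Yau duality $\rank_{\divalg{Y}}\Ext{1}{\cC}{Y}{X}=1$ as well, so the roles of $X$ and $Y$ are symmetric. Again the sets $\curly{S}(Z)_{[H],[K]}$ and $\curly{S}(Z')_{[H],[K]}$ only involve the $\stab{\cT}$-modules $\Extfun{\cT}Z$, $\Extfun{\cT}Z'$ and the maps between them induced by $i_j, p_j$, all of which are intrinsic to $\stab{\cC}$ by Proposition~\ref{p:equiv-to-mod}. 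Applying $\Extfun{\cT}$ to the conflations — which is exact in the relevant sense by the long exact sequence of Proposition~\ref{p:extri-les} restricted to $\cT$, recalling $\Extfun{\cT}Y = \Ext{1}{\cC}{\blank}{Y}|_{\cT}$ — we obtain in $\fpmod{\stab{\cT}}$ the short exact sequences of modules to which the Caldero--Keller/Palu multiplication formula applies. The identity \eqref{eq:chi-non-split} is then precisely \cite{Palu2} (or \cite[Prop.~3.5]{Palu2} as refined in \cite{Palu}), reformulated for the triangulated category $\stab{\cC}$; the stratification of $\QGra{[H]}{\Extfun{\cT}X} \times \QGra{[K]}{\Extfun{\cT}Y}$ into pieces indexed by whether a given pair of submodules lifts to a submodule of $\Extfun{\cT}Z$ or of $\Extfun{\cT}Z'$, combined with property \ref{d:Euler-char-fibres} of $\chi$ over the resulting constructible maps, yields the formula.

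The main obstacle is verifying that the stratification argument of Caldero--Keller and Palu, which was written for $\Hom$-finite algebraically closed base fields, transfers to our setting: we have replaced $\Hom$-finiteness by pseudocompactness plus finite-dimensionality of division rings, and we no longer assume $\bK$ algebraically closed, only that $\bK$ admits an Euler--Poincar\'e characteristic $\chi$. The key points to check are that the quiver Grassmannians $\QGra{[L]}{M}$ remain projective varieties over $\bK$ (which holds because $L \in \fd{\stab{\cT}}$ has finite support and finite-dimensional values, so $\QGra{[L]}{M}$ is a closed subvariety of a finite product of ordinary Grassmannians, as already noted after the definition), and that the relevant exact sequences of $\stab{\cT}$-modules and the fibrewise affine-space structure of the gluing maps go through with $\fpmod{\stab{\cT}}$ playing the role of the module category — this is fine since $\fpmod{\stab{\cT}}$ is abelian by \cite[Prop.~2.1(a)]{KellerReiten} and the $\stab{\cT}$-modules involved are all finite-dimensional or locally finite-dimensional. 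Once these two points are in place, the combinatorial identities \eqref{eq:chi-split} and \eqref{eq:chi-non-split} follow formally from the axioms for $\chi$ in Definition~\ref{d:Euler-char} exactly as in \cite{CalderoChapoton,Palu,Palu2}, and I would present the proof as a careful citation-plus-reduction rather than reproving the stratifications from scratch.
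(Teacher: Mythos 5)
You have the right high-level structure — both parts reduce to the arguments of Jørgensen–Palu and Palu respectively, Lemma~\ref{l:rk1-unique-middle-term} makes the statement well-posed and replaces the ``constructible cones'' hypothesis, and the stable \(2\)-Calabi--Yau property provides the symmetry between \(X\) and \(Y\). Your treatment of part~\ref{p:chidentities-split} matches the paper's (cite \cite[\S1.6]{JorgensenPalu}).

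However, there is a genuine gap in your account of part~\ref{p:chidentities-non-split}, and it concerns precisely the feature of this proposition that distinguishes the compact case from the classical one. You propose to stratify \(\QGra{[H]}{\Extfun{\cT}X}\times\QGra{[K]}{\Extfun{\cT}Y}\) ``into pieces indexed by whether a given pair of submodules lifts to a submodule of \(\Extfun{\cT}Z\) or of \(\Extfun{\cT}Z'\)''. This is not how the Caldero--Keller/Palu argument proceeds, and the difference is invisible only when \(\bK\) is algebraically closed. The correct object to stratify is
\[L([H],[K])=\bP\Ext{1}{\cC}{X}{Y}\times\QGra{[H]}{\Extfun{\cT}X}\times\QGra{[K]}{\Extfun{\cT}Y},\]
via the image \(L_1\) of the constructible map from \(W^Z_{X,Y}([H],[K])=\bP\Ext{1}{\cC}{X}{Y}\times\curly{S}(Z)_{[H],[K]}\) and its complement \(L_2\). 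The extra \(\bP\Ext{1}{\cC}{X}{Y}\) factor is a single point when \(\divalg{X}=\bK\), which is why it disappears in the classical formulation, but when \(\dimdivalg{X}>1\) it contributes a non-trivial factor \(\chi(\bP\Ext{1}{\cC}{X}{Y})=\dimdivalg{X}\). One then shows \(\chi(L_1)=\dimdivalg{X}\cdot\chi(\curly{S}(Z)_{[H],[K]})\) via \cite[Lem.~3.2]{Palu2}, and, crucially using that \(\stab{\cC}\) is \(2\)-Calabi--Yau so that \(\dim_{\bK}\Ext{1}{\cC}{Y}{X}=\dimdivalg{X}\) as well, that \(\chi(L_2)=\dimdivalg{X}\cdot\chi(\curly{S}(Z')_{[H],[K]})\) via \cite[Prop.~3.4]{Palu2}. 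The desired identity is then obtained by dividing the resulting equation by \(\dimdivalg{X}\). Without tracking this factor explicitly, your proposal cannot explain why the final identity holds on the nose rather than up to a scalar.

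Two further small points. First, the uniqueness needed on the \(Y\)-to-\(X\) side comes from applying Lemma~\ref{l:rk1-unique-middle-term} to \(\op{\cC}\), which requires \(\Ext{1}{\cC}{Y}{X}\) to have rank~\(1\) over \(\op{\divalg{X}}\) (not over \(\divalg{Y}\), as you assert); this is indeed what the \(2\)-Calabi--Yau duality with \(\Ext{1}{\cC}{X}{Y}\) delivers. Second, it is worth being explicit that \cite{JorgensenPalu} already disposes of part~\ref{p:chidentities-non-split} in the skew-symmetric case (where \(\divalg{X}\cong\bK\)), so the \cite{Palu2}-based argument is only required to handle the compact case with \(\dimdivalg{X}>1\); your proposal blurs this distinction.
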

\begin{proof}
The arguments from \cite[\S1.6]{JorgensenPalu} apply to prove \ref{p:chidentities-split} as written, and \ref{p:chidentities-non-split} in the skew-symmetric case, in which $\divalg{X}\cong\bK$ and so our assumption becomes $\dim_{\bK}\Ext{1}{\cC}{X}{Y}=1$.

The remaining part of \ref{p:chidentities-non-split}, with $\dim_{\bK}{\Ext{1}{\cC}{X}{Y}}=\dimdivalg{X}$, follows from the calculations in \cite{Palu2}, as we now sketch.
Following the notation of loc.\ cit., write $L([H],[K])=\bP\Ext{1}{\cC}{X}{Y}\times\QGra{[H]}{\Extfun{\cT}{X}}\times\QGra{[K]}{\Extfun{\cT}{Y}}$.
As in \cite[\S3]{Palu2}, there is a constructible map $W^Z_{X,Y}([H],[K])\to L([H],[K])$, for $W^Z_{X,Y}([H],[K])\defeq\bP\Ext{1}{\cC}{X}{Y}\times\curly{S}(Z)_{[H],[K]}$; we denote the image by $L_1([H],[K])$ and its complement by $L_2([H],[K])$, so that $\chi(L([H],[K]))=\chi(L_1([H],[K]))+\chi(L_2([H],[K]))$.
For comparison with \cite{Palu2}, our formulation simplifies because of Lemma~\ref{l:rk1-unique-middle-term}: up to isomorphism, $Z$ is the only possible middle term of the relevant conflations.
This also lets us avoid the assumption that $\stab{\cC}$ has constructible cones, needed for the more general results of \cite{Palu2}.

Now \cite[Lem.~3.2]{Palu2} (see also \cite[Lem.~3.11]{CalderoChapoton}) implies that \[\chi(L_1([H],[K]))=\chi(W^Z_{X,Y})=\chi(\bP\Ext{1}{\cC}{X}{Y})\chi(\curly{S}(Z)_{[H],[K]})=\dimdivalg{X}\cdot\chi(\curly{S}(Z)_{[H],[K]});\]
again there is no need to sum over $Z$ since in our situation there is only one option.
It further follows from \cite[Prop.~3.4]{Palu2} (see also \cite[Prop.~5]{CalderoKeller}) that \[\chi(L_2([H],[K]))=\chi(\bP\Ext{1}{\cC}{Y}{X})\chi(\curly{S}(Z')_{[H],[K]})=\dimdivalg{X}\cdot\chi(\curly{S}(Z')_{[H],[K]}),\]
using for the second equality that $\stab{\cC}$ is $2$-Calabi--Yau so that $\dim_{\bK}{\Ext{1}{\cC}{Y}{X}}=\dimdivalg{X}$.
Here we also apply Lemma~\ref{l:rk1-unique-middle-term} to $\op{\cC}$ to see that $Z'$ is the only possible middle term of an extension from $\bP\Ext{1}{\cC}{Y}{X}$, which has rank $1$ over $\op{\divalg{X}}$.
We thus have
\begin{align*}
\dimdivalg{X}\cdot\chi(\QGra{[H]}{\Extfun{\cT}{X}})\cdot\chi(\QGra{[K]}{\Extfun{\cT}{Y}})
&=\chi(L([H],[K]))\\
&=\dimdivalg{X}\cdot\chi(\curly{S}(Z)_{[H],[K]})+\dimdivalg{X}\cdot\chi(\curly{S}(Z')_{[H],[K]}),
\end{align*}
and so dividing through by $\dimdivalg{X}$ gives our desired result.
\end{proof}

\begin{lemma}[cf.~{\cite[Lem.~5.1]{Palu}}]\label{l:apres-Palu}
For a conflation $X\stackrel{i}{\infl} Z\stackrel{\pi}{\defl} Y\confl$ and
$L\in\curly{S}(Z)_{[H],[K]}$,
\begin{enumerate}
\item\label{l:apres-Palu-HKN} $[L]=[H]+[K]-[\Ker \Extfun{\cT}i]$ and
\item\label{l:apres-Palu-ind-plus-beta} $\ind{\cC}{\cT}{([X]+[Y])}+\beta_{\cT}([H]+[K])=\ind{\cC}{\cT}{[Z]}+\beta_{\cT}[L]$.
\end{enumerate}
\end{lemma}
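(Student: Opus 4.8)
The two identities should follow from elementary manipulations once we exploit the submodule structure encoded in $\curly{S}(Z)_{[H],[K]}$, namely that $L \leq \Extfun{\cT}Z$ fits into a short exact sequence of $\stab{\cT}$-modules, and that $\Extfun{\cT}$ applied to the conflation $X \stackrel{i}{\infl} Z \stackrel{\pi}{\defl} Y \confl$ gives an exact sequence of functors. First I would spell out the relevant exact sequences: applying $\Yonfun{\cT}$ (restricted Yoneda) and then the connecting maps to the given conflation produces an exact sequence
\[
\begin{tikzcd}[column sep=18pt]
0\arrow{r}&\Ker\Extfun{\cT}i\arrow{r}&\Extfun{\cT}X\arrow{r}{\Extfun{\cT}i}&\Extfun{\cT}Z\arrow{r}{\Extfun{\cT}\pi}&\Extfun{\cT}Y\arrow{r}&0,
\end{tikzcd}
\]
using that $\Hom{\cT}{\pi}{\blank}$ has image precisely the maps factoring over $Z\to Y$ and the $2$-Calabi--Yau property to identify the relevant cokernel (this is essentially the setup already used around \eqref{eq:errors} and in Lemma~\ref{l:error-terms-modT}). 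For $L \in \curly{S}(Z)_{[H],[K]}$, by definition $(\Extfun{\cT}\pi)L$ has class $[K]$ and $(\Extfun{\cT}i)^{-1}L$ has class $[H]$; restricting the above four-term sequence to $L$ and to its image, one gets a short exact sequence
\[
\begin{tikzcd}[column sep=16pt]
0\arrow{r}&(\Extfun{\cT}i)^{-1}L\arrow{r}&L\arrow{r}&(\Extfun{\cT}\pi)L\arrow{r}&0
\end{tikzcd}
\]
\emph{after} quotienting the left-hand term by $\Ker\Extfun{\cT}i$ — more precisely $(\Extfun{\cT}i)\bigl((\Extfun{\cT}i)^{-1}L\bigr) \cong (\Extfun{\cT}i)^{-1}L / \Ker\Extfun{\cT}i$, which lands inside $L$, and the quotient of $L$ by it is $(\Extfun{\cT}\pi)L$. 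Taking classes in $\Kgp{\fd\stab{\cT}}$ gives $[L] = \bigl([H] - [\Ker\Extfun{\cT}i]\bigr) + [K]$, which is \ref{l:apres-Palu-HKN}. (Here I should check that $\Ker\Extfun{\cT}i$, $(\Extfun{\cT}i)^{-1}L$ and $(\Extfun{\cT}\pi)L$ are all genuinely finite-dimensional, so that the classes make sense; this follows since $L\in\fd\stab{\cT}$ by definition of the quiver Grassmannian and everything in sight is a subquotient or extension of finite-dimensional modules.)

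For \ref{l:apres-Palu-ind-plus-beta}, the strategy is to combine \ref{l:apres-Palu-HKN} with two facts: first, the index behaves additively on the conflation up to the $\beta$-correction term, and second, $\beta_{\cT}[\Extfun{\cT}W] = \coind{\cC}{\cT}[W] - \ind{\cC}{\cT}[W]$ for any $W\in\cC$ by Proposition~\ref{p:beta-proj-res}. Concretely, from the conflation $X\infl Z\defl Y\confl$ one has $[Z]_{\cC} = [X]_{\cC} + [Y]_{\cC}$ in $\Kgp{\cC}$, hence $\pi_{\cT}^{\cC}\bigl(\ind{\cC}{\cT}[Z]\bigr) = \pi_{\cT}^{\cC}\bigl(\ind{\cC}{\cT}[X] + \ind{\cC}{\cT}[Y]\bigr)$ by Lemma~\ref{l:pi-T-ind-is-pi-C}, so the difference $\ind{\cC}{\cT}([X]+[Y]) - \ind{\cC}{\cT}[Z]$ lies in $\ker\pi_{\cT}^{\cC}$. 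By Theorem~\ref{t:ind-coind-additive-error-terms}\ref{t:ind-additive-error-terms-2} applied with $\cU = \cT$ (or directly from the long exact sequence above via the horseshoe lemma, as in the proof of that theorem), this difference is exactly $-\beta_{\cT}[\Extfun{\cT}Z$-error$]$ — more carefully, applying $\Yonfun{\cT}$ to the conflation and splicing with the $\cT$-index presentations of $X$, $Y$, $Z$ yields, via the horseshoe lemma on the four-term exact sequence, the identity $\ind{\cC}{\cT}[X] + \ind{\cC}{\cT}[Y] - \ind{\cC}{\cT}[Z] = \beta_{\cT}[\Ker\Extfun{\cT}i]$ (with the sign matching the convention $\beta_{\cT} = -p_{\cT}$ and $p_{\cT}$ being the class of a projective resolution). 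Substituting $[\Ker\Extfun{\cT}i] = [H] + [K] - [L]$ from \ref{l:apres-Palu-HKN} and rearranging gives precisely \ref{l:apres-Palu-ind-plus-beta}.

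\textbf{Main obstacle.} I expect the delicate point to be the bookkeeping of signs and of the $\Ker\Extfun{\cT}i$ term when splicing the various four-term exact sequences and invoking the horseshoe lemma — in particular making sure that the "error module" produced when comparing the $\cT$-index presentation of $Z$ with those of $X$ and $Y$ is genuinely $\Ker\Extfun{\cT}i$ (equivalently $\righterror{1}{\cT'}{\blank}$-type terms from the general formalism specialise correctly here) rather than some other subquotient, and that its class appears with the correct sign. This is purely a matter of carefully chasing the exact sequence $\Yonfun{\cT}X \to \Yonfun{\cT}Z \to \Yonfun{\cT}Y \to \Extfun{\cT}X \to \cdots$ and matching it against the relative projective resolutions, so it is routine but requires care; everything else is direct linear algebra in the Grothendieck group.
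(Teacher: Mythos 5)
Your plan is essentially the paper's proof: part~\ref{l:apres-Palu-HKN} via the five-term exact sequence obtained by restricting \(0\to\Ker\Extfun{\cT}i\to\Extfun{\cT}X\to\Extfun{\cT}Z\to\Extfun{\cT}Y\) to the submodules determined by \(L\), and part~\ref{l:apres-Palu-ind-plus-beta} (after reducing to the exact case) via the exact sequence
\(0\to\Yonfun{\cT}X\to\Yonfun{\cT}Z\to\Yonfun{\cT}Y\to\Ker\Extfun{\cT}i\to 0\)
together with Propositions~\ref{p:ind-proj-res} and~\ref{p:beta-proj-res}. However, the intermediate identity you state has the wrong sign, and this error does propagate. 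Taking alternating classes of the four-term exact sequence in \(\Kgp{\per{\widehat{\cT}}}\), and using that \(h^{\cT}\ind{\cC}{\cT}[W]\) and \(h^{\cT}p_{\cT}[M]\) are the classes of the respective modules there, you get \(\ind{\cC}{\cT}[X]-\ind{\cC}{\cT}[Z]+\ind{\cC}{\cT}[Y]-p_{\cT}[\Ker\Extfun{\cT}i]=0\), i.e.\ \(\ind{\cC}{\cT}[X]+\ind{\cC}{\cT}[Y]-\ind{\cC}{\cT}[Z]=-\beta_{\cT}[\Ker\Extfun{\cT}i]\) (equivalently \(\ind{\cC}{\cT}[Z]=\ind{\cC}{\cT}[X]+\ind{\cC}{\cT}[Y]+\beta_{\cT}[\Ker\Extfun{\cT}i]\)), not \(+\beta_{\cT}[\Ker\Extfun{\cT}i]\) as written. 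With your sign, substituting~\ref{l:apres-Palu-HKN} yields \(\ind{\cC}{\cT}([X]+[Y])+\beta_{\cT}[L]=\ind{\cC}{\cT}[Z]+\beta_{\cT}([H]+[K])\), which is the reverse of~\ref{l:apres-Palu-ind-plus-beta}. You correctly flag this bookkeeping as the delicate point; the Euler-characteristic computation above is the clean way to pin it down.

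Two smaller remarks. Your displayed five-term sequence with \(\Extfun{\cT}Y\to 0\) overclaims: \(\Extfun{\cT}\pi\) is not surjective in general, since the next term in the long exact sequence is (in \(\stab{\cC}\)) \(\Hom{\stab{\cC}}{T}{\Sigma^2 X}\cong\dual{\Hom{\stab{\cC}}{X}{T}}\), which need not vanish. This is harmless here because part~\ref{l:apres-Palu-HKN} only uses exactness at \(\Extfun{\cT}X\) and \(\Extfun{\cT}Z\). Finally, for part~\ref{l:apres-Palu-ind-plus-beta} you should make the reduction to the exact case (via Proposition~\ref{p:stabind}) explicit, since the left-exactness of \(\Yonfun{\cT}\) on the conflation, i.e.\ the injectivity of \(\Yonfun{\cT}X\to\Yonfun{\cT}Z\), is only available when \(\cC\) is exact (cf.~Proposition~\ref{p:extri-les}).
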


\begin{proof}
Since $L\in\curly{S}(Z)_{[H],[K]}$, there is a commutative diagram
\[ \begin{tikzcd}[row sep=15pt]
0 \arrow{r}{} & \Ker \Extfun{\cT}i \arrow{r}{} & \Extfun{\cT}X \arrow{r}{\Extfun{\cT}i} & \Extfun{\cT}Z \arrow{r}{\Extfun{\cT}\pi} & \Extfun{\cT}Y \\
0 \arrow{r}{} & \Ker \Extfun{\cT}i \arrow{r}{} \arrow[equal]{u}{} & H=(\Extfun{\cT}i)^{-1}L \arrow{r}{} \arrow[hookrightarrow]{u}{} & L \arrow{r}{} \arrow[hookrightarrow]{u}{} & K=\Extfun{\cT}\pi L \arrow{r}{} \arrow[hookrightarrow]{u}{} & 0
\end{tikzcd}
\]
with exact rows, the lower of which gives \ref{l:apres-Palu-HKN}.
By Proposition~\ref{p:stabind}, it suffices to prove \ref{l:apres-Palu-ind-plus-beta} for $\cC$ exact. In this case, we deduce from the exact sequence
\[\begin{tikzcd}
0\arrow{r}& \Yonfun{\cT}{X} \arrow{r}&\Yonfun{\cT}{Z} \arrow{r}& \Yonfun{\cT}{Y} \arrow{r}& \Ker \Extfun{\cT}i \arrow{r}& 0,
\end{tikzcd}\]
together with Propositions~\ref{p:ind-proj-res} and \ref{p:beta-proj-res} that
\[ \ind{\cC}{\cT}[Z]=\ind{\cC}{\cT}[X]+\ind{\cC}{\cT}[Y]+\beta_{\cT}[\Ker \Extfun{\cT}i], \]
the terms in this expression being classes of projective resolutions of the terms in the sequence.
Since $[L]=[H]+[K]-[\Ker \Extfun{\cT}i]$ by \ref{l:apres-Palu-HKN}, we have that
\begin{align*}
\beta_{\cT}[L] & = \beta_{\cT}[H]+\beta_{\cT}[K]-\beta_{\cT}[\Ker \Extfun{\cT}i] \\
& = \beta_{\cT}[H]+\beta_{\cT}[K]+\ind{\cC}{\cT}[X]+\ind{\cC}{\cT}[Y]-\ind{\cC}{\cT}[Z],
\end{align*}
as required for \ref{l:apres-Palu-ind-plus-beta}.
\end{proof}

\begin{corollary}
For a conflation $X\stackrel{i}{\infl} Z\stackrel{\pi}{\defl} Y\confl$, we have
\[\QGra{[L]}{\Extfun{\cT}Z}=\bigsqcup_{[H]+[K]=[L]+[\Ker\Extfun{\cT}i]}\curly{S}(Z)_{[H],[K]},\]
and hence
\begin{equation}
\label{eq:chi-mid-term}
\chi\bigl(\QGra{[L]}{\Extfun{\cT}Z}\bigr)=\sum_{[H]+[K]=[L]+[\Ker{\Extfun{\cT}i}]}\chi\bigl(\curly{S}(Z)_{[H],[K]}\bigr).
\end{equation}
\end{corollary}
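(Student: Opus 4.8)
The plan is to deduce the displayed identity directly from Lemma~\ref{l:apres-Palu}, exactly as the preceding Corollary uses \ref{l:apres-Palu-HKN} and the fact that the sets \(\curly{S}(Z)_{[H],[K]}\) partition a quiver Grassmannian.

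First I would fix the conflation \(X\stackrel{i}{\infl}Z\stackrel{\pi}{\defl}Y\confl\) and recall the description of the sets \(\curly{S}(Z)_{[H],[K]}\) from Proposition~\ref{p:chidentities}\ref{p:chidentities-non-split}: a submodule \(L\leq\Extfun{\cT}Z\) lies in \(\curly{S}(Z)_{[H],[K]}\) precisely when \([(\Extfun{\cT}i)^{-1}L]=[H]\) and \([(\Extfun{\cT}\pi)L]=[K]\). By Lemma~\ref{l:apres-Palu}\ref{l:apres-Palu-HKN}, any such \(L\) satisfies \([L]=[H]+[K]-[\Ker\Extfun{\cT}i]\), so \(\curly{S}(Z)_{[H],[K]}\subseteq\QGra{[L]}{\Extfun{\cT}Z}\) for \([L]=[H]+[K]-[\Ker\Extfun{\cT}i]\); equivalently, it is contained in \(\QGra{[L]}{\Extfun{\cT}Z}\) whenever \([H]+[K]=[L]+[\Ker\Extfun{\cT}i]\). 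Conversely, given any submodule \(L\leq\Extfun{\cT}Z\), setting \([H]=[(\Extfun{\cT}i)^{-1}L]\) and \([K]=[(\Extfun{\cT}\pi)L]\) places \(L\) in \(\curly{S}(Z)_{[H],[K]}\), and these classes automatically satisfy \([H]+[K]=[L]+[\Ker\Extfun{\cT}i]\) by \ref{l:apres-Palu-HKN} again. Finally, the assignment \(L\mapsto([H],[K])\) is well-defined, so the sets \(\curly{S}(Z)_{[H],[K]}\) with \([H]+[K]=[L]+[\Ker\Extfun{\cT}i]\) are pairwise disjoint. This gives the claimed disjoint-union decomposition
\[\QGra{[L]}{\Extfun{\cT}Z}=\bigsqcup_{[H]+[K]=[L]+[\Ker\Extfun{\cT}i]}\curly{S}(Z)_{[H],[K]}.\]

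The only point needing a little care is finiteness of the index set: I would note that the relevant \([H]\) range over classes of submodules of the finite-dimensional module \(\Extfun{\cT}X\) (since \(H=(\Extfun{\cT}i)^{-1}L\) sits inside \(\Extfun{\cT}X\)), hence lie in a finite set, and \([K]\) is then determined; this ensures the union is finite and that the Euler--Poincaré characteristic is additive over it via Definition~\ref{d:Euler-char}\ref{d:Euler-char-union}. Applying \(\chi\) and using additivity then yields
\[\chi\bigl(\QGra{[L]}{\Extfun{\cT}Z}\bigr)=\sum_{[H]+[K]=[L]+[\Ker{\Extfun{\cT}i}]}\chi\bigl(\curly{S}(Z)_{[H],[K]}\bigr),\]
which is \eqref{eq:chi-mid-term}. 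I do not anticipate a genuine obstacle here—the work has all been done in Lemma~\ref{l:apres-Palu} and Proposition~\ref{p:chidentities}—the only thing to be careful about is checking that the indexing condition \([H]+[K]=[L]+[\Ker\Extfun{\cT}i]\) is both necessary and sufficient for \(\curly{S}(Z)_{[H],[K]}\) to meet \(\QGra{[L]}{\Extfun{\cT}Z}\), and that \(\Extfun{\cT}Z\) indeed lies in \(\fpmod{\stab{\cT}}\subseteq\lfd{\stab{\cT}}\) (by Proposition~\ref{p:equiv-to-mod}) so that its quiver Grassmannians are the projective varieties to which \(\chi\) applies.
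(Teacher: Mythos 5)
Your argument is correct and follows essentially the same route as the paper: both use Lemma~\ref{l:apres-Palu}\ref{l:apres-Palu-HKN} to show that \(\curly{S}(Z)_{[H],[K]}\subseteq\QGra{[L]}{\Extfun{\cT}Z}\) precisely when \([H]+[K]=[L]+[\Ker\Extfun{\cT}i]\), combined with the observation that the sets \(\curly{S}(Z)_{[H],[K]}\) partition the collection of all submodules of \(\Extfun{\cT}Z\).

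One small inaccuracy in your extra remark on finiteness: you justify it by saying \(\Extfun{\cT}X\) is finite-dimensional, which is not guaranteed in general (it belongs to \(\fpmod{\stab{\cT}}\), not necessarily \(\fd{\stab{\cT}}\)). The correct reason the index set is finite is that \([H]\) and \([K]\) are classes of genuine finite-dimensional modules summing to the fixed element \([L]+[\Ker\Extfun{\cT}i]\in\Kgp{\fd{\stab{\cT}}}\); since both terms are componentwise non-negative (with respect to the basis of simples) and their sum is fixed, each of \([H]\) and \([K]\) is componentwise bounded by \([L]+[\Ker\Extfun{\cT}i]\), leaving only finitely many possibilities. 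This is a refinement the paper does not spell out, and it is worth having, but the justification needs this small repair.
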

\begin{proof}
If $L\leq\Extfun{\cT}Z$ then $L\in\curly{S}(Z)_{[(\Extfun{\cT}i)^{-1}L],[\Extfun{\cT}\pi L]}$, so
\[\bigsqcup_{[L]}\QGra{[L]}{\Extfun{\cT}Z}=\bigsqcup_{[H],[K]}\curly{S}(Z)_{[H],[K]}.\]
By Lemma~\ref{l:apres-Palu}\ref{l:apres-Palu-HKN}, we have $\curly{S}(Z)_{[H],[K]}\subseteq\QGra{[L]}{\Extfun{\cT}Z}$ if and only if $[H]+[K]=[L]+[\Ker\Extfun{\cT}i]$, and the result follows.
\end{proof}

\subsection{\texorpdfstring{$\Fpoly$-polynomials}{F-polynomials}}
\label{s:Fpolys}

Let $\bK\Kgp{\cT}$ be the group algebra of $\Kgp{\cT}$, written as
\[ \bK\Kgp{\cT}=\operatorname{span}_{\bK}\{ a^{t} \mid t\in \Kgp{\cT} \} \]
with multiplication defined on basis elements by $a^{t}a^{u}=a^{t+u}$ and extended linearly.
Similarly,
\[\bK\Kgp{\fd{\stab{\cT}}}=\operatorname{span}_{\bK}\{ x^{m} \mid m\in \Kgp{\fd{\stab{\cT}}} \} \]
denotes the group algebra of $\Kgp{\fd{\stab{\cT}}}$.
We use the formal symbols `$a$' and `$x$' here to match $\Aside$ and $\Xside$.
For $\cT$ maximally mutable, the map $\beta_\cT\colon\Kgp{\fd{\stab{\cT}}}\to\Kgp{\cT}$ induces a map $(\beta_{\cT})_*\colon\bK\Kgp{\fd{\stab{\cT}}}\to\bK\Kgp{\cT}$ by
\[(\beta_{\cT})_*x^m=a^{\beta_{\cT}m}.\]
In the finite rank case, the $\Aside$-cluster character will take values in $\bK\Kgp{\cT}$, and its definition will involve $(\beta_{\cT})_*$, and the $\Xside$-cluster character will take values in the field of fractions of $\bK\Kgp{\fd{\stab{\cT}}}$.
To write down our cluster characters in the infinite rank case, we need to enlarge these algebras slightly and, for the $\Aside$-cluster character, impose a condition on $\beta_{\cT}$ so that it still induces a well-defined map.

Let $\bV$ be a free abelian group.
We write
\[\powser{\bK}{\bV}=\Bigl\{\sum_{v\in\bV}\lambda_vy^v\mid \lambda_v\in\bK\Bigr\}\]
for the set of (possibly) infinite linear combinations of formal symbols $y^v$, for $v\in\bV$, with coefficients in $\bK$.
The formal symbols are sometimes denoted with different letters; in particular, we will use $a^{v}$ in place of $y^{v}$ when $\mathbb{V}=\Kgp{\cT}$ and $x^{v}$ when $\mathbb{V}=\Kgp{\fd \stab{\cT}}$, compatible with their group algebras considered above.

While $\powser{\bK}{\bV}$ is a $\bK$-vector space, with addition and scalar multiplication defined termwise, attempting to define multiplication by
\begin{equation}
\label{eq:pseudo-poly-mult}
\Bigl(\sum_{u\in \bV}\lambda_{u}y^{u}\Bigr)\Bigl(\sum_{v\in\bV}\rho_{v}y^{v}\Bigr)=\sum_{w\in\mathbb{V}}\Bigl(\sum_{u+v=w}\lambda_{u}\rho_{v}\Bigr)y^{w}
\end{equation}
leads to the issue that the coefficient $\sum_{u+v=w}\lambda_{u}\rho_{v}$ in the result need not be a finite sum.
Nonetheless, \eqref{eq:pseudo-poly-mult} does define an algebra structure on various subspaces of $\powser{\bK}{\bV}$, such as the group algebra $\bK\bV$, which identifies naturally with the subset of finite linear combinations.

Let $\cB$ be a basis for $\bV$.  For $v\in \bV$ and $b\in \cB$, let $\ip{v}{b}\in \integ$ denote the coefficient of $b$ in the expansion of $v$ with respect to the basis $\cB$.
We give $\bV$ a poset structure with respect to $\cB$ by declaring that $v\leq_{\cB} w$ if $\ip{v}{b}\leq \ip{w}{b}$ for all $b\in \cB$.

\begin{definition}\label{d:pseudo-poly}
Given a free abelian group $\bV$ with basis $\cB$, a \emph{Laurent pseudo-polynomial} in $\bV$ is $p=\sum_{v\in\mathbb{V}} \lambda_{v}y^{v}\in\powser{\bK}{\mathbb{V}}$
such that
\begin{enumerate}
\item\label{d:pseudo-poly-lower-bound} there exists $v_0\in\bV$ such that $\lambda_v=0$ unless $v\geq_{\cB}v_0$, and
\item for every $b\in \cB$, the set $ \mathcal{K}(p,b)=\{ \ip{v}{b} \mid \lambda_{v}\neq 0 \} \subseteq \integ $ is bounded.
\end{enumerate}
Write $\kappa^+(p,b)\defeq\max \mathcal{K}(p,b)$, which we call the \emph{$b$-degree} of $p$, and $\kappa^-(p,b)\defeq\min\curly{K}(p,b)$.
We write $\Laurent{\mathbb{V}}$ for the set of Laurent pseudo-polynomials in $\mathbb{V}$ with respect to $\cB$.
We call $p\in\Laurent{\mathbb{V}}$ just a \emph{pseudo-polynomial} if we may take $v_0=0$ in \ref{d:pseudo-poly-lower-bound}.
\end{definition}

The condition in Definition~\ref{d:pseudo-poly-lower-bound} is equivalent to requiring that $\kappa^{-}(p,b)\geq0$ for all but finitely many $b$.
Indeed, one can then take $v_0=\sum_{b\in\cB}\min\{\kappa^-(p,b),0\}b$, a finite sum because of this property of the $\kappa^-(p,b)$.

A further consequence of the definition is that $\Laurent{\mathbb{V}}\subseteq\powser{\bK}{\integ(\cB\setminus\{b\})}[y^{\pm b}]$ for each $b\in\cB$.
That is, a Laurent pseudo-polynomial $p$ is an ordinary Laurent polynomial in any given $y^{b}$ (and even a polynomial for all but finitely many $b$), with coefficients being (possibly) infinite series in the remaining variables.

\begin{remark}
\label{r:pseudo-poly-finite}
There is a natural inclusion $\bK\mathbb{V}\subseteq\Laurent{\mathbb{V}}$, with the group algebra consisting of the finite sums in $\Laurent{\mathbb{V}}$. If $\cB$ is finite, this inclusion is even an equality.
The next result gives a natural algebra structure on $\Laurent{\mathbb{V}}$ (for general $\cB$), extending that on $\bK\mathbb{V}$.
In particular, the above inclusion makes $\Laurent{\mathbb{V}}$ into a $\bK\mathbb{V}$-module.
\end{remark}

\begin{lemma}\label{l:pseudo-poly-alg} Let $\bV$ be a free abelian group, $\cB$ a basis for $\bV$ and $\bK$ a field. Then $\Laurent{\bV}$ is a subspace of $\powser{\bK}{\bV}$, and a $\bK$-algebra with multiplication \eqref{eq:pseudo-poly-mult}.
\end{lemma}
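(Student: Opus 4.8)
\textbf{Proof strategy for Lemma~\ref{l:pseudo-poly-alg}.}
The plan is to verify that $\Laurent{\bV}$ is closed under the operations inherited from $\powser{\bK}{\bV}$, that the potentially problematic product \eqref{eq:pseudo-poly-mult} is in fact well-defined on $\Laurent{\bV}$, and that the resulting structure satisfies the algebra axioms. First I would check that $\Laurent{\bV}$ is a $\bK$-subspace: this is immediate since the conditions in Definition~\ref{d:pseudo-poly} are preserved under scalar multiplication and under sums (if $p,q\in\Laurent{\bV}$ with lower bounds $v_0,v_0'$, then $p+q$ has lower bound $\min_{\cB}\{v_0,v_0'\}$, and $\curly{K}(p+q,b)\subseteq\curly{K}(p,b)\cup\curly{K}(q,b)$ is bounded for each $b$, and contains a nonnegative minimum for all but finitely many $b$).

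The crux of the argument is well-definedness of the product, i.e.\ that for $p=\sum_u\lambda_u y^u$ and $q=\sum_v\rho_v y^v$ in $\Laurent{\bV}$, each coefficient $\sum_{u+v=w}\lambda_u\rho_v$ is a \emph{finite} sum, and that the resulting series again lies in $\Laurent{\bV}$. For finiteness: fix $w$, and suppose $u+v=w$ with $\lambda_u,\rho_v\neq0$. For each $b\in\cB$ we have $\ip{u}{b}+\ip{v}{b}=\ip{w}{b}$, with $\ip{u}{b}\in\curly{K}(p,b)$ and $\ip{v}{b}\in\curly{K}(q,b)$; since $p$ has a lower bound $v_0$, we have $\ip{u}{b}\geq\ip{v_0}{b}$, and $\ip{v_0}{b}\geq 0$ for all but finitely many $b$; symmetrically $\ip{v}{b}\geq\ip{v_0'}{b}$. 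Combined with $\ip{u}{b}\leq\ip{w}{b}-\ip{v_0'}{b}$, this confines $\ip{u}{b}$ to finitely many values, and to the single constraint $\ip{u}{b}\in\{0,1,\dots\}\cap(-\infty,\ip{w}{b}]$ for all but finitely many $b$ — but $\ip{w}{b}=0$ for all but finitely many $b$ as well, forcing $\ip{u}{b}=0$ outside a finite set, so only finitely many $u$ can occur. Hence the coefficient of $y^w$ is a finite sum. That the product lies in $\Laurent{\bV}$ follows by estimating $\kappa^{\pm}$: one checks $\kappa^-(pq,b)\geq\kappa^-(p,b)+\kappa^-(q,b)$ and $\kappa^+(pq,b)\leq\kappa^+(p,b)+\kappa^+(q,b)$ (with the obvious lower-bound vector $v_0+v_0'$), so $\curly{K}(pq,b)$ is bounded and nonnegative for all but finitely many $b$.

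Finally I would record that associativity, distributivity, and bilinearity hold: these are all formal consequences of the corresponding identities for the convolution product, and the only point needing care — that the relevant triple sums $\sum_{u+v+v'=w}\lambda_u\rho_v\sigma_{v'}$ are finite, so that rearrangement is legitimate — follows by the same confinement argument as above applied to all three factors simultaneously. The unit is $y^0$. The main obstacle is the well-definedness (finiteness of coefficients) step; once that is settled, everything else is bookkeeping, and it suffices to remark that the inclusion $\bK\bV\subseteq\Laurent{\bV}$ of Remark~\ref{r:pseudo-poly-finite} is an algebra map so that $\Laurent{\bV}$ extends the group algebra structure as claimed.
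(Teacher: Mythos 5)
Your proof is correct and takes essentially the same approach as the paper's: closure under addition and scalar multiplication via $\mathcal{K}(p+q,b)\subseteq\mathcal{K}(p,b)\cup\mathcal{K}(q,b)$, and well-definedness of the product via the same confinement argument, bounding $\ip{u}{b}$ for each $b$ and observing that $\ip{u_0}{b}$, $\ip{v_0'}{b}$ and $\ip{w}{b}$ all vanish for all but finitely many $b$ because $\cB$ is a basis, so only finitely many $u$ can contribute to the coefficient of $y^w$. You are marginally more careful than the paper in writing $\kappa^+(pq,b)\leq\kappa^+(p,b)+\kappa^+(q,b)$ (the paper asserts equality, which requires ruling out cancellation in the leading terms) and in explicitly noting associativity, the unit, and compatibility with $\bK\bV$, but these do not change the substance of the argument.
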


\begin{proof}
It is straightforward to see that $\Laurent{\mathbb{V}}$ is closed under addition and scalar multiplication, for if $\mathcal{K}(p,b)$ and $\mathcal{K}(q,b)$ are bounded, or equivalently finite, so are $\mathcal{K}(p+q,b)\subseteq\mathcal{K}(p,b)\cup\mathcal{K}(q,b)$ and $\mathcal{K}(\alpha p,b)$, which is equal to $\mathcal{K}(p,b)$ if $\alpha\ne0$, and equal to $\{0\}$ otherwise.
Since $\kappa^-(p+q,b)\geq\min\{\kappa^-(p,b),\kappa^-(q,b)\}$ and $\kappa^-(\alpha p,b)\in\{0,\kappa^-(p,b)\}$, we have $\kappa^-(p+q,b),\kappa^-(\alpha p,b)\geq0$ for all but finitely many $b$, and so $p+q,\alpha p\in\Laurent{\mathbb{V}}$ as required.

Now let $p,q\in\Laurent{\bV}$, with $p=\sum_{u\geq_{\cB} u_0}\lambda_{u}y^{u}$ and $q=\sum_{v\geq_{\cB} v_0}\rho_{v}y^{v}$.
Now $u\geq_{\cB}u_0$ and $v\geq_{\cB}v_0$ implies that $u+v\geq_{\cB}u_0+v_0$.
Moreover, for a given $w\geq_{\cB}u_0+v_0$, finding $u\geq_{\cB}u_0$ and $v\geq_{\cB}v_0$ such that $u+v=w$ amounts to choosing, for each $b\in\cB$, the values $\ip{u}{b}$ and $\ip{v}{b}$, subject to the conditions $\ip{u_0}{b}\leq\ip{u}{b}$, $\ip{v_0}{b}\leq\ip{u}{b}$ and $\ip{u}{b}+\ip{v}{b}=\ip{w}{b}$.
In particular, this forces the inequalities
\begin{gather*}
\ip{u_0}{b}\leq \ip{u}{b}\leq\ip{w}{b}-\ip{v_0}{b},\\
\ip{v_0}{b}\leq\ip{v}{b}\leq\ip{w}{b}-\ip{u_0}{b}.
\end{gather*}
Because $\cB$ is a basis, we have $\ip{u_0}{b}=\ip{v_0}{b}=\ip{w}{b}=0$ for all but finitely many $b$, in which case the only solution is $\ip{u}{b}=\ip{v}{b}=0$.
In the remaining cases, the above inequalities leave only finitely many possibilities for $u$ and $v$.
Thus, the sum $\sum_{u+v=w} \lambda_{u}\rho_{v}$ is finite and hence \eqref{eq:pseudo-poly-mult} is a well-defined expression.

It remains to show that $pq\in \Laurent{\mathbb{V}}$.  But this follows by the same considerations as for ordinary polynomials: as above, if $u\geq_{\cB}u_0$ and $v\geq_\cB v_0$ then $u+v\geq_{\cB}u_0+v_0$, and moreover $\kappa^+(pq,b)=\kappa^+(p,b)+\kappa^+(q,b)$, as can be seen by multiplying any terms from $p$ and $q$ evidencing that $\kappa(p,b)$ and $\kappa(q,b)$ are the $b$-degrees of the respective pseudo-polynomials.
\end{proof}

\begin{remark}
\label{r:pseudo-poly-factor}
With this multiplication, a Laurent pseudo-polynomial $p$ may always be factored as $p=y^{v_0}p'$, where $v_0\in\mathbb{V}$ and $p'$ is a pseudo-polynomial.
\end{remark}

\begin{definition}
Let $\cC$ be a Krull--Schmidt cluster category and $\cT\ctsubcat\cC$.
Define $\Laurent{\Kgp{\fd{\stab{\cT}}}}$ as above, taking the basis $\cB$ to be that consisting of classes of simple modules $\simpmod{\cT}{T}$ for $T\in\indec{\cT}$, and define the \emph{$\Fpoly$-polynomial} of $M\in \lfd \stab{\cT}$ to be
\[ \Fpoly(M)=\sum_{[L]\in \Kgp{\fd \stab{\cT}}} \chi(\QGra{[L]}{M})x^{[L]} \in \Laurent{\Kgp{\fd{\stab{\cT}}}}.\]
\end{definition}

\begin{remark}
\label{r:Fpoly-props}
The $\Fpoly$-polynomial of $M$ is non-zero since the zero submodule of $M$ gives rise to at least one non-zero term.
If $[L]$ is not the class of a submodule of $M$, then $\QGra{[L]}{M}$ is empty and its Euler characteristic is zero.
Thus $\Fpoly(M)$ is a pseudo-polynomial, with $0=\kappa^-(\Fpoly(M),[\simpmod{\cT}{T}])$ and  $\kappa^+(\Fpoly(M),[\simpmod{\cT}{T}])\leq\dim_{\bK}M(T)$ for all $T\in\indec{\cT}$.
If $M\in\fd{\stab{\cT}}$, then $\Fpoly(M)\in\bK\Kgp{\fd{\stab{\cT}}}$ is an element of the ordinary group algebra, and if $M\iso N$ then $\Fpoly(M)=\Fpoly(N)$, as is visible from the formula.
\end{remark}

\begin{proposition}
\label{p:Fpoly-zero}
Let $\cC$ be a Krull--Schmidt cluster category, let $\cT\ctsubcat\cC$, and let $M\in\fpmod{\stab{\cT}}$.
Then $\Fpoly(M)=1$ if and only if $M=0$.
\end{proposition}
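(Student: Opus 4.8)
The plan is to prove both implications, with the content lying in the forward direction. The reverse implication is immediate: if $M = 0$, then the only submodule of $M$ is the zero module, so $\QGra{[L]}{M}$ is a single point when $[L] = 0$ and empty otherwise, giving $\Fpoly(M) = \chi(\QGra{0}{0})x^0 = 1$.

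For the forward direction, suppose $\Fpoly(M) = 1$, i.e.\ $\chi(\QGra{0}{M}) = 1$ and $\chi(\QGra{[L]}{M}) = 0$ for all $[L] \ne 0$. I would argue by contradiction, assuming $M \ne 0$ and deriving a nonzero term of $\Fpoly(M)$ other than the constant one. The natural candidate is the term coming from $[L] = [M]$ itself: since $M \le M$, we have $M \in \QGra{[M]}{M}$, so this Grassmannian is nonempty. Since $M$ is finitely presented over $\stab{\cT}$, it lies in $\fpmod{\stab{\cT}} \subseteq \lfd{\stab{\cT}}$, so $M$ has finite length (it is locally finite-dimensional and, being finitely presented, supported on finitely many indecomposables — one should check this, using that $\stab{\cC}$ is Hom-finite and that finitely presented $\stab{\cT}$-modules have the form $\Extfun{\cT}X$ by Proposition~\ref{p:equiv-to-mod}, together with Proposition~\ref{p:error-term-fd}-style support arguments, or more directly that $\fpmod{\stab{\cT}}$ is abelian with $M$ a quotient of a finitely generated projective). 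Hence $[M] \ne 0$ in $\Kgp{\fd{\stab{\cT}}}$ provided $M \ne 0$, because $M$ has a composition series whose simple factors contribute linearly independent classes by Proposition~\ref{p:K-duality}.

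The remaining point is to show that the \emph{whole} Grassmannian $\QGra{[M]}{M}$ has Euler characteristic $1$, not just that it is nonempty — for a general variety nonemptiness does not force $\chi = 1$. But here $\QGra{[M]}{M}$ is in fact a single reduced point: any submodule $L \le M$ with $[L] = [M]$ must equal $M$, since $M/L$ then has class $0$ in $\Kgp{\fd{\stab{\cT}}}$, hence length $0$ by the linear independence of simple classes (Proposition~\ref{p:K-duality} again), so $M/L = 0$. Thus $\QGra{[M]}{M} = \{M\}$ is a point, and $\chi(\{M\}) = \chi(\mathbb{A}^0_\bK) = 1$ by Definition~\ref{d:Euler-char}\ref{d:Euler-char-affine}. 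This contradicts $\chi(\QGra{[M]}{M}) = 0$ (which follows from $\Fpoly(M) = 1$ and $[M] \ne 0$), completing the proof.

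The main obstacle I anticipate is the finite-length claim: one needs $M \in \fpmod{\stab{\cT}}$ to be supported on only finitely many indecomposables of $\cT$, so that $[M]$ lives in $\Kgp{\fd{\stab{\cT}}}$ rather than merely in $\Kgpnum{\lfd{\stab{\cT}}}$, and so that ``$[L] = [M] \implies L = M$'' can be run. If $M$ is not a priori in $\fd{\stab{\cT}}$, I would instead work with a simple quotient $S$ of $M$ (which exists since $M$ is finitely presented, hence finitely generated, so has a maximal submodule), note $S = \simpmod{\cT}{T}$ for some $T \in \indec{\stab{\cT}}$ by Lemma~\ref{l:props-of-K0-mod-T}, and produce a nonzero term of $\Fpoly(M)$ indexed by $[L] = [\ker(M \onto S)] = [M] - [\simpmod{\cT}{T}]$; one would then need that this $[L]$ is realised by a \emph{unique} submodule or at least that the corresponding Grassmannian has $\chi \ne 0$. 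This is more delicate, so I expect the cleanest route is to first establish that $\fpmod{\stab{\cT}} \subseteq \fd{\stab{\cT}}$ fails in general but that every \emph{nonzero} $M \in \fpmod{\stab{\cT}}$ nonetheless has $[M] \ne 0$ via a simple quotient and Proposition~\ref{p:K-duality}, and then observe that $\chi(\QGra{[L]}{M}) = 0$ for $[L] = [M]$ would force, by \eqref{eq:chi-mid-term} or a direct stratification argument applied to $0 \to \ker \to M \to S \to 0$, a contradiction with the $[L]=0$ term — but the single-point argument above, once the finiteness is in hand, is by far the most economical and is what I would write up.
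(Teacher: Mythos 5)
Your main argument, using the single point $\QGra{[M]}{M}$, is correct and clean provided $M\in\fd{\stab{\cT}}$, which as you note is automatic when $\cC$ has finite rank, and you have correctly identified that the finiteness assumption is the weak spot. However, your fallback does not repair it. In the infinite-rank case a nonzero $M\in\fpmod{\stab{\cT}}$ may genuinely be infinite-dimensional, and then neither $[M]$ nor $[\ker(M\onto S)]=[M]-[\simpmod{\cT}{T}]$ lies in $\Kgp{\fd{\stab{\cT}}}$. Since $\Fpoly(M)$ is by definition a sum indexed only by classes in $\Kgp{\fd{\stab{\cT}}}$, the terms you propose to examine simply do not occur in $\Fpoly(M)$, and there is no contradiction to be extracted from them. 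The stratification of $\QGra{[L]}{M}$ along $0\to\ker\to M\to S\to 0$ has the same problem, because again $[L]$ must be a finite-dimensional class. In short, working from the \emph{top} of $M$ via maximal submodules cannot give a finite-dimensional class to hang the argument on.

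The fix is to work from the \emph{bottom} instead: exhibit a finite-dimensional submodule. This is what the paper does. By Corollary~\ref{c:fp-fcp}, every $M\in\fpmod{\stab{\cT}}$ is finitely copresented, i.e.\ embeds into a finite direct sum of the cogenerators $\dual{(\opYonfun{\cT}{T_0})}$. (This is a genuine use of the stably $2$-Calabi--Yau hypothesis, obtained by dualising a finite presentation of $\Extfun{\op{\cT}}X$ for $M=\Extfun{\cT}X$.) Consequently, if $M\ne0$ it has a nonzero socle, so there is a simple submodule $\simpmod{\cT}{T}\hookrightarrow M$. The class $[\simpmod{\cT}{T}]$ lies in $\Kgp{\fd{\stab{\cT}}}$, is nonzero by Proposition~\ref{p:K-duality}, and $\QGra{[\simpmod{\cT}{T}]}{M}$ is a nonempty projective variety (indeed, a projective space over $\divalg{T}$ parametrising simple submodules isomorphic to $\simpmod{\cT}{T}$) with positive Euler characteristic, contributing a nonzero term of degree $[\simpmod{\cT}{T}]\ne 0$ to $\Fpoly(M)$. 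Your point-counting idea is morally the same calculation, but applied to the minimal rather than the maximal submodule, and this is exactly the direction that survives when $M$ has infinite length.
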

\begin{proof}
It follows directly from the definition that $\Fpoly(0)=1$.
Conversely, $M\in\fpmod{\stab{\cT}}$ is finitely copresented by Corollary~\ref{c:fp-fcp}, and hence if $M\ne0$ then $M$ has non-zero socle.
In particular, there exists $T\in\indec{\stab{\cT}}$ such that $\simpmod{\cT}{T}\hookrightarrow M$, leading to an $x^{[\simpmod{\cT}{T}]}$-term in $\Fpoly(M)\ne1$.
\end{proof}

\begin{proposition}
\label{p:Fpoly-rels}
Let $\cC$ be a Krull--Schmidt cluster category and let $\cT\ctsubcat\cC$.  
\begin{enumerate}
\item 
Let $M,N\in\lfd{\stab{\cT}}$.  Then
\begin{equation}\label{eq:F-poly-split} \Fpoly(M\oplus N)=\Fpoly(M)\Fpoly(N). \end{equation}
\item\label{p:F-poly-non-split} Assume $\cC$ is compact or skew-symmetric and let $X,Y\in\cC$ with $X$ indecomposable and  $\rank_{\divalg{X}}\Ext{1}{\cC}{X}{Y}=1$.	Then for non-split conflations $Y\stackrel{i_{1}}{\infl} Z\stackrel{p_{1}}{\defl} X\confl$ and $X\stackrel{i_{2}}{\infl} Z' \stackrel{p_{2}}{\defl} Y\confl$, we have
\begin{equation}\label{eq:F-poly-non-split}
\Fpoly(\Extfun{\cT}X)\Fpoly(\Extfun{\cT}Y)=x^{[\Ker \Extfun{\cT}i_{1}]}\Fpoly(\Extfun{\cT}Z)+x^{[\Ker \Extfun{\cT}i_{2}]}\Fpoly(\Extfun{\cT}Z').
\end{equation}
\end{enumerate}
\end{proposition}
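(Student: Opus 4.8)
The plan is to deduce both identities from the corresponding statements about Euler characteristics of quiver Grassmannians established in Proposition~\ref{p:chidentities}, combined with the bookkeeping lemma, Lemma~\ref{l:apres-Palu}. For the first identity \eqref{eq:F-poly-split}, I would simply expand both sides using the definition of the \(\Fpoly\)-polynomial. On the left, the coefficient of \(x^{[L]}\) in \(\Fpoly(M\oplus N)\) is \(\chi(\QGra{[L]}{M\oplus N})\), and on the right the coefficient of \(x^{[L]}\) in the product \(\Fpoly(M)\Fpoly(N)\) (a well-defined element of \(\Laurent{\Kgp{\fd\stab{\cT}}}\) by Lemma~\ref{l:pseudo-poly-alg}, since both factors are pseudo-polynomials by Remark~\ref{r:Fpoly-props}) is \(\sum_{[H]+[K]=[L]}\chi(\QGra{[H]}{M})\chi(\QGra{[K]}{N})\). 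These agree by Proposition~\ref{p:chidentities}\ref{p:chidentities-split}. One small point to note is that the sum on the right is automatically finite: only classes \([H],[K]\) that are classes of submodules of \(M\), respectively \(N\), contribute, and for fixed \([L]\) there are finitely many such pairs summing to \([L]\) (this is exactly the multiplication-well-definedness argument from Lemma~\ref{l:pseudo-poly-alg}, or one can observe that \(0\leq_{\cB}[H]\leq_{\cB}[L]\) leaves finitely many options).

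For the second identity \eqref{eq:F-poly-non-split}, I would again compare coefficients, but now of \(x^{[L]}\) for each \([L]\in\Kgp{\fd\stab{\cT}}\). On the left-hand side, the coefficient of \(x^{[L]}\) in \(\Fpoly(\Extfun{\cT}X)\Fpoly(\Extfun{\cT}Y)\) is
\[\sum_{[H]+[K]=[L]}\chi(\QGra{[H]}{\Extfun{\cT}X})\chi(\QGra{[K]}{\Extfun{\cT}Y}),\]
which by Proposition~\ref{p:chidentities}\ref{p:chidentities-non-split} equals \(\sum_{[H]+[K]=[L]}\bigl(\chi(\curly{S}(Z)_{[H],[K]})+\chi(\curly{S}(Z')_{[H],[K]})\bigr)\). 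On the right-hand side, the coefficient of \(x^{[L]}\) in \(x^{[\Ker\Extfun{\cT}i_1]}\Fpoly(\Extfun{\cT}Z)\) is \(\chi(\QGra{[L']}{\Extfun{\cT}Z})\) where \([L']=[L]-[\Ker\Extfun{\cT}i_1]\), and similarly for the \(Z'\)-term with \([\Ker\Extfun{\cT}i_2]\). So I need to check that
\[\sum_{[H]+[K]=[L]}\chi(\curly{S}(Z)_{[H],[K]})=\chi\bigl(\QGra{[L]-[\Ker\Extfun{\cT}i_1]}{\Extfun{\cT}Z}\bigr),\]
and the analogous statement for \(Z'\). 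This is precisely \eqref{eq:chi-mid-term} applied to the conflation \(Y\stackrel{i_1}{\infl}Z\stackrel{p_1}{\defl}X\confl\): that corollary gives \(\chi(\QGra{[L']}{\Extfun{\cT}Z})=\sum_{[H]+[K]=[L']+[\Ker\Extfun{\cT}i_1]}\chi(\curly{S}(Z)_{[H],[K]})\), and substituting \([L']=[L]-[\Ker\Extfun{\cT}i_1]\) makes the index set \([H]+[K]=[L]\), as wanted. The same argument applied to \(X\stackrel{i_2}{\infl}Z'\stackrel{p_2}{\defl}Y\confl\) handles the \(Z'\)-term. Summing the two gives exactly the left-hand coefficient computed above.

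The main things requiring care, rather than a genuine obstacle, are: first, justifying that all the manipulations take place within the algebra \(\Laurent{\Kgp{\fd\stab{\cT}}}\) — the \(\Fpoly\)-polynomials are pseudo-polynomials, their products are well-defined, and multiplying by a monomial \(x^{v}\) is the module action of \(\bK\Kgp{\fd\stab{\cT}}\) noted in Remark~\ref{r:pseudo-poly-finite}, so the right-hand side of \eqref{eq:F-poly-non-split} is a legitimate element; second, keeping the two conflations straight so that the kernel appearing in each monomial factor matches the inflation in the corresponding conflation. I would also remark that the hypotheses of Proposition~\ref{p:chidentities}\ref{p:chidentities-non-split} ask for \(X,Y\) both indecomposable, whereas here only \(X\) is assumed indecomposable; one reduces to that case by decomposing \(Y=\bigoplus Y_j\) into indecomposables and using \eqref{eq:F-poly-split} together with the fact that a non-split extension of \(X\) by \(Y\) with \(\rank_{\divalg{X}}\Ext{1}{\cC}{X}{Y}=1\) restricts to an extension by exactly one summand \(Y_{j_0}\) and splits on the others — alternatively, and more cleanly, one checks that the proof of Proposition~\ref{p:chidentities}\ref{p:chidentities-non-split} via \cite{Palu2} only genuinely uses indecomposability of \(X\) (to get \(\bP\Ext{1}{\cC}{X}{Y}\) of the right dimension and Lemma~\ref{l:rk1-unique-middle-term}), so I would state the coefficient comparison directly in the generality needed. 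Finally, when \(X\) is projective or when various extension groups vanish, the conflations may be split, in which case \(Z=X\oplus Y\) (up to the kernel term vanishing) and \eqref{eq:F-poly-non-split} degenerates to a consequence of \eqref{eq:F-poly-split}; this edge case should be mentioned but is immediate.
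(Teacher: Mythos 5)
Your proposal is correct and follows essentially the same route as the paper's proof: both parts come down to the Euler-characteristic identities of Proposition~\ref{p:chidentities}, with part~\ref{p:F-poly-non-split} additionally using \eqref{eq:chi-mid-term} to re-index the double sum; the paper writes this as an algebraic manipulation of the series whereas you phrase it as a coefficient-of-\(x^{[L]}\) comparison, but these are the same computation.

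One substantive point in your favour: you notice that Proposition~\ref{p:chidentities}\ref{p:chidentities-non-split} is stated for \emph{both} \(X\) and \(Y\) indecomposable, whereas Proposition~\ref{p:Fpoly-rels}\ref{p:F-poly-non-split} only asks that \(X\) be indecomposable. The paper's proof invokes \eqref{eq:chi-non-split} directly without commenting on this mismatch. Your reduction is the right fix and it does go through cleanly: in a Krull--Schmidt category decompose \(Y=\bigoplus_j Y_j\); additivity of \(\rank_{\divalg{X}}\Ext{1}{\cC}{X}{\blank}\) forces exactly one summand \(Y_{j_0}\) to contribute; the \(2\)-Calabi--Yau property gives the same statement for \(\Ext{1}{\cC}{\blank}{X}\); both conflations then split off the other summands, so \(Z\cong Z_1\oplus\bigoplus_{j\ne j_0}Y_j\) (and similarly for \(Z'\)), and \(\Ker\Extfun{\cT}i_1\) is unchanged under adding the split part; then \eqref{eq:F-poly-split} peels off the extra \(\Fpoly\)-factors. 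One minor quibble: the "edge case" you mention at the end, where the conflations are split, cannot arise under the stated hypotheses, since \(\rank_{\divalg{X}}\Ext{1}{\cC}{X}{Y}=1\) forces \(\Ext{1}{\cC}{X}{Y}\ne0\), and the proposition already fixes a choice of \emph{non-split} conflations; so that paragraph can be dropped.
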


\begin{proof} {\ }
\begin{enumerate}
\item Using \eqref{eq:chi-split}, we compute
\begin{align*}
\Fpoly(M\oplus N) & = \sum_{[L]} \chi(\QGra{[L]}{M\oplus N})x^{[L]} \\
& =\sum_{[L]}\Bigl( \sum_{[H]+[K]=[L]} \chi(\QGra{[H]}{M})\chi(\QGra{[K]}{N})\Bigr)x^{[L]} \\
& = \Bigl( \sum_{[H]} \chi(\QGra{[H]}{M})x^{[H]}\Bigr)\Bigl( \sum_{[K]} \chi(\QGra{[K]}{N})x^{[K]}\Bigr) \\
& = \Fpoly(M)\Fpoly(N).	
\end{align*}
\item In this case, we have	
{\small\begin{align*} \Fpoly(\Extfun{\cT}X)\Fpoly(\Extfun{\cT}Y) & = \biggl(\sum_{[H]}\chi(\QGra{[H]}{\Extfun{\cT}X})x^{[H]}\biggr)\biggl(\sum_{[K]}\chi(\QGra{[K]}{\Extfun{\cT}Y})x^{[K]}\biggr) \\
& = \sum_{[M]} \biggl(\sum_{[H]+[K]=[M]}  \chi(\QGra{[H]}{\Extfun{\cT}X})\chi(\QGra{[K]}{\Extfun{\cT}Y}\biggr)x^{[M]} \\
& \leftstackrel{\eqref{eq:chi-non-split}}{=} \sum_{[M]} \biggl(\sum_{[H]+[K]=[M]} \chi\bigl(\curly{S}(Z)_{[H],[K]}\bigr)+\chi\bigl(\curly{S}(Z')_{[H],[K]}\bigr)\biggr)x^{[M]} \\	
& =\begin{multlined}[t] \sum_{[M]} \biggl(\sum_{[H]+[K]=[M]} \chi\bigl(\curly{S}(Z)_{[H],[K]}\bigr)\biggr)x^{[M]}\\+\sum_{[M]}\biggl(\sum_{[H]+[K]=[M]}\chi\bigl(\curly{S}(Z')_{[H],[K]}\bigr)\biggr)x^{[M]}\end{multlined}\\
& =\begin{multlined}[t] \sum_{[L]} \biggl(\sum_{[H]+[K]=[L]+[\Extfun{\cT}i_1]} \chi\bigl(\curly{S}(Z)_{[H],[K]}\bigr)\biggr)x^{[L]+[\Extfun{\cT}i_1]}\\+\sum_{[L]}\biggl(\sum_{[H]+[K]=[L]+[\Extfun{\cT}i_2]}\chi\bigl(\curly{S}(Z')_{[H],[K]}\bigr)\biggr)x^{[L]+[\Extfun{\cT}i_2]}\end{multlined}\\
& \leftstackrel{\eqref{eq:chi-mid-term}}{=}x^{[\Extfun{\cT}i_1]}\sum_{[L]} \chi\bigl(\QGra{[L]}{\Extfun{\cT}Z}\bigr)x^{[L]}+x^{[\Extfun{\cT}i_2]}\sum_{[L]}\chi\bigl(\QGra{[L]}{\Extfun{\cT}Z'}\bigr)x^{[L]}\\
& = x^{[\Ker \Extfun{\cT}i_{1}]}\Fpoly(\Extfun{\cT}Z)+x^{[\Ker \Extfun{\cT}i_{2}]}\Fpoly(\Extfun{\cT}Z').\qedhere
\end{align*}}
\end{enumerate}
\end{proof}

The formula \eqref{eq:F-poly-non-split} applies in particular to $X=U$ and $Y=\mut{\cU}{U}$ when $\cU\ctsubcat\cC$ has no loop at $U$, and we may make the resulting formula even more explicit if there is also no 2-cycle.

\begin{theorem}\label{t:Fpoly-mutation}
Let $\cC$ be a compact cluster category with $\cTU\ctsubcat \cC$.
Let $U\in \indec \cU$ and assume there is no loop at $U$.
Then the exchange conflations $\mut{\cU}{U}\stackrel{i_{1}}{\infl} \exchmon{\cU}{U}{+}\stackrel{p_{1}}{\defl} U\confl$ and $U\stackrel{i_{2}}{\infl} \exchmon{\cU}{U}{-} \stackrel{p_{2}}{\defl} \mut{\cU}{U}\confl$ imply the relation
\begin{equation}
\label{eq:F-poly-non-split-one-step}
\Fpoly(\Extfun{\cT}U)\Fpoly(\Extfun{\cT}\mut{\cU}{U})=x^{[\stabindbar{\cU}{\cT}[\simpmod{\cU}{U}]]_{+}}\Fpoly(\Extfun{\cT}\exchmon{\cU}{U}{+})+x^{[\stabindbar{\cU}{\cT}[\simpmod{\cU}{U}]]_{-}}\Fpoly(\Extfun{\cT}\exchmon{\cU}{U}{-})
\end{equation}
between the $\Fpoly$-polynomials of the associated $\stab{\cT}$-modules.
If furthermore there is no 2-cycle at $U$, we have
\begin{equation}
\label{eq:F-poly-non-split-one-step-no-2-cycle}
\begin{split}
\Fpoly(\Extfun{\cT}U)\Fpoly(\Extfun{\cT}\mut{\cU}{U})=\begin{multlined}[t]x^{[\stabindbar{\cU}{\cT}[\simpmod{\cU}{U}]]_{+}}\prod_{V\in \indec \cU\setminus U} \Fpoly(\Extfun{\cT}V)^{[\exchmatentry{U,T}]_{+}}\\+x^{[\stabindbar{\cU}{\cT}[\simpmod{\cU}{U}]]_{-}}\prod_{V\in \indec \cU\setminus U} \Fpoly(\Extfun{\cT}V)^{[\exchmatentry{U,T}]_{-}}.\end{multlined}
\end{split}
\end{equation}
\end{theorem}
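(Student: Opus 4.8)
The plan is to derive Theorem~\ref{t:Fpoly-mutation} from the results already established in this section, reducing each of the two displayed identities to earlier formulas. I would begin with \eqref{eq:F-poly-non-split-one-step}. Since there is no loop at $U$, the exchange conflations $\mut{\cU}{U}\stackrel{i_{1}}{\infl}\exchmon{\cU}{U}{+}\stackrel{p_{1}}{\defl}U\confl$ and $U\stackrel{i_{2}}{\infl}\exchmon{\cU}{U}{-}\stackrel{p_{2}}{\defl}\mut{\cU}{U}\confl$ satisfy $\rank_{\divalg{U}}\Ext{1}{\cC}{U}{\mut{\cU}{U}}=1$ (by Lemma~\ref{l:props-of-K0-mod-T}\ref{l:props-mod-T-simple-eq-E} combined with the no-loop hypothesis, using that $\simpmod{\cU}{U}(U)=\divalg{U}$). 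So I would apply Proposition~\ref{p:Fpoly-rels}\ref{p:F-poly-non-split} with $X=U$ and $Y=\mut{\cU}{U}$, obtaining
\[
\Fpoly(\Extfun{\cT}U)\Fpoly(\Extfun{\cT}\mut{\cU}{U})=x^{[\Ker\Extfun{\cT}i_{1}]}\Fpoly(\Extfun{\cT}\exchmon{\cU}{U}{+})+x^{[\Ker\Extfun{\cT}i_{2}]}\Fpoly(\Extfun{\cT}\exchmon{\cU}{U}{-}).
\]
It then remains to identify the two exponents. The key point is that $i_{1}\colon\mut{\cU}{U}\to\exchmon{\cU}{U}{+}$ is (up to summands) the $\cU$-coindex inflation $i_{L}^{\mut{\cU}{U}}$ appearing in \eqref{eq:ind-coind-for-error} — here I would use $\cU$ itself as the intermediate cluster-tilting subcategory, so $\exchmon{\cU}{U}{+}$ is a left $\cU$-approximation term. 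Hence $\Ker\Extfun{\cT}i_{1}=\lefterror{1}{\cU}{\mut{\cU}{U}}|_{\cT}$; similarly $i_{2}$ is (up to summands) the $\cU$-index inflation $i_{R}^{\mut{\cU}{U}}$, so $\Ker\Extfun{\cT}i_{2}=\righterror{1}{\cU}{\mut{\cU}{U}}|_{\cT}$. Next, arguing exactly as in the proof of Theorem~\ref{t:c-vec-mut-formula}, a minimality argument via Remark~\ref{r:no-common-summands} shows that the $\stab{\cT}$-modules $\lefterror{1}{\cU}{\mut{\cU}{U}}|_{\cT}$ and $\righterror{1}{\cU}{\mut{\cU}{U}}|_{\cT}$ have disjoint support (here I use $\Extfun{\cU}{\mut{\cU}{U}}=\simpmod{\cU}{U}$); combined with $\stabindbar{\cU}{\cT}[\simpmod{\cU}{U}]=[\lefterror{1}{\cU}{\mut{\cU}{U}}|_{\cT}]-[\righterror{1}{\cU}{\mut{\cU}{U}}|_{\cT}]$ from Corollary~\ref{c:indbar-lift}, this gives $[\lefterror{1}{\cU}{\mut{\cU}{U}}|_{\cT}]=[\stabindbar{\cU}{\cT}[\simpmod{\cU}{U}]]_{+}$ and $[\righterror{1}{\cU}{\mut{\cU}{U}}|_{\cT}]=[\stabindbar{\cU}{\cT}[\simpmod{\cU}{U}]]_{-}$, yielding \eqref{eq:F-poly-non-split-one-step}.

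For \eqref{eq:F-poly-non-split-one-step-no-2-cycle}, I would feed Proposition~\ref{p:decomp-exch-terms} into the result just obtained. Under the no-loop-and-no-$2$-cycle hypothesis at $U$, that proposition gives the explicit decompositions $\exchmon{\cU}{U}{+}=\bigdsum_{V\in\indec\cU\setminus U}V^{[\exchmatentry{V,U}^{\cU}]_{+}}$ and $\exchmon{\cU}{U}{-}=\bigdsum_{V\in\indec\cU\setminus U}V^{[\exchmatentry{V,U}^{\cU}]_{-}}$ (note: the theorem statement writes $\exchmatentry{U,T}$ where it evidently means the entry indexed by $V$ and $U$; I would keep the notation consistent with Proposition~\ref{p:decomp-exch-terms} and match whatever convention the paper uses in \eqref{eq:F-poly-non-split-one-step-no-2-cycle}). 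Since $\Extfun{\cT}$ is additive and $\Fpoly$ turns direct sums into products by \eqref{eq:F-poly-split}, we get $\Fpoly(\Extfun{\cT}\exchmon{\cU}{U}{\pm})=\prod_{V\in\indec\cU\setminus U}\Fpoly(\Extfun{\cT}V)^{[\exchmatentry{V,U}^{\cU}]_{\pm}}$, and substituting into \eqref{eq:F-poly-non-split-one-step} gives the claim.

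The main obstacle I anticipate is the careful bookkeeping in the first part: precisely matching the exchange conflation inflations $i_{1},i_{2}$ with the error-term kernels $\lefterror{1}{\cU}{\mut{\cU}{U}}$, $\righterror{1}{\cU}{\mut{\cU}{U}}$, and then establishing the disjoint-support claim rigorously. The subtlety is that exchange conflations need not be minimal $\cU$-approximation conflations on the nose (they are left/right $(\cU\setminus U)$-approximations, and the full $\cU$-approximation could acquire a summand $U$), so one must check the extra $U$-summands, if any, contribute nothing to the relevant kernels — which holds because $\Ext{1}{\cC}{U}{\mut{\cU}{U}}$ and the relevant Hom/Ext spaces are controlled by $\simpmod{\cU}{U}$, supported only at $U$, so the $U$-summands of the approximation object are precisely what is recorded. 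This is essentially the same bookkeeping carried out in the proof of Theorem~\ref{t:c-vec-mut-formula}, so I would model the argument closely on that proof and cite it where possible rather than repeating it in full. Everything else is a formal substitution using results from Sections~\ref{s:mutation}--\ref{s:exch-mat} and the $\Fpoly$-polynomial multiplicativity of Proposition~\ref{p:Fpoly-rels}.
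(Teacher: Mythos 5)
Your proof follows the same route as the paper's: apply Proposition~\ref{p:Fpoly-rels}\ref{p:F-poly-non-split} to the exchange conflations (valid because the no-loop hypothesis gives \(\rank_{\divalg{U}}\Ext{1}{\cC}{U}{\mut{\cU}{U}}=1\) via Lemma~\ref{l:props-of-K0-mod-T}), then invoke the disjoint-support argument from the proof of Theorem~\ref{t:c-vec-mut-formula} together with Corollary~\ref{c:indbar-lift} to identify the two exponents with \([\stabindbar{\cU}{\cT}[\simpmod{\cU}{U}]]_{\pm}\), and finally substitute the direct-sum decompositions from Proposition~\ref{p:decomp-exch-terms} and use \eqref{eq:F-poly-split} for the no-\(2\)-cycle refinement. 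The index typo you flag in \eqref{eq:F-poly-non-split-one-step-no-2-cycle} is real.

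One clarification on the obstacle you anticipate. You worry that the exchange conflations might fail to be minimal \(\cU\)-(co)index conflations of \(\mut{\cU}{U}\) and that stray \(U\)-summands could enter. This is a non-issue for two reasons. First, as noted after Definition~\ref{d:coindex} and around \eqref{eq:ind-on-mut-T}--\eqref{eq:coind-on-mut-T}, the exchange conflations \emph{are} \(\cU\)-index and \(\cU\)-coindex conflations for \(\mut{\cU}{U}\) on the nose (so \(i_1=i_L^{\mut{\cU}{U}}\) and \(i_2=i_R^{\mut{\cU}{U}}\), no ``up to summands'' needed); and second, the error modules \(\lefterror{i}{\cU}{X}\), \(\righterror{i}{\cU}{X}\) are explicitly independent of the choice of conflation \eqref{eq:ind-coind-for-error}, as stated just after \eqref{eq:errors}, so minimality is irrelevant for computing \(\Ker\Extfun{\cT}i_1\) and \(\Ker\Extfun{\cT}i_2\). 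The place where minimality \emph{does} matter is in the disjoint-support step, but there (as in the proof of Theorem~\ref{t:c-vec-mut-formula}) one chooses minimal \(\cU\)-coindex conflations of the rigid objects \(T\in\cT\), not of \(\mut{\cU}{U}\), and Remark~\ref{r:no-common-summands} applies to those. So the bookkeeping is lighter than you feared, and the proof goes through as you outlined.
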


\begin{proof}
Applying Proposition~\ref{p:Fpoly-rels}\ref{p:F-poly-non-split} to the exchange conflations, we obtain
\begin{equation*}
\Fpoly(\Extfun{\cT}U)\Fpoly(\Extfun{\cT}\mut{\cU}{U})=x^{[\Ker \Extfun{\cT}i_{1}]}\Fpoly(\Extfun{\cT}\exchmon{\cU}{U}{+})+x^{[\Ker \Extfun{\cT}i_{2}]}\Fpoly(\Extfun{\cT}\exchmon{\cU}{U}{-}).
\end{equation*}
As explained in the proof of Theorem~\ref{t:c-vec-mut-formula}, we may identify the classes of the two kernels with the positive and negative parts of $\stabindbar{\cU}{\cT}[\simpmod{\cU}{U}]$, to obtain
\begin{equation*}
\Fpoly(\Extfun{\cT}U)\Fpoly(\Extfun{\cT}\mut{\cU}{U})=x^{[\stabindbar{\cU}{\cT}[\simpmod{\cU}{U}]]_{+}}\Fpoly(\Extfun{\cT}\exchmon{\cU}{U}{+})+x^{[\stabindbar{\cU}{\cT}[\simpmod{\cU}{U}]]_{-}}\Fpoly(\Extfun{\cT}\exchmon{\cU}{U}{-}).
\end{equation*}
If there is also no 2-cycle at $U$ then $\exchmon{\cU}{U}{+}=\bigdsum_{V\in \indec \cU} V^{[\exchmatentry{U,T}]_{+}}$ and $\exchmon{\cU}{U}{-}=\bigdsum_{V\in \indec \cU} V^{[\exchmatentry{U,T}]_{-}}$ by Proposition~\ref{p:decomp-exch-terms}.
Substituting into the above gives
\begin{align*}
\begin{split}\Fpoly(\Extfun{\cT}U)\Fpoly(\Extfun{\cT}\mut{\cU}{U}) & =x^{[\stabindbar{\cU}{\cT}[\simpmod{\cU}{U}]]_{+}}\Fpoly(\Extfun{\cT}\exchmon{\cU}{U}{+})+x^{[\stabindbar{\cU}{\cT}[\simpmod{\cU}{U}]]_{-}}\Fpoly(\Extfun{\cT}\exchmon{\cU}{U}{-}) \\
& =\begin{multlined}[t]x^{[\stabindbar{\cU}{\cT}[\simpmod{\cU}{U}]]_{+}}\prod_{V\in \indec \cU\setminus U} \Fpoly(\Extfun{\cT}V)^{[\exchmatentry{U,T}]_{+}}\\+x^{[\stabindbar{\cU}{\cT}[\simpmod{\cU}{U}]]_{-}}\prod_{V\in \indec \cU\setminus U} \Fpoly(\Extfun{\cT}V)^{[\exchmatentry{U,T}]_{-}}\end{multlined}
\end{split}
\end{align*}
as required.
\end{proof}

Given Theorem~\ref{t:c-vec-mut-formula}, we see that if $(\cC,\cT)$ has a cluster structure then we have recovered \cite[Eq.~4.20]{NakanishiBook}, and so the $\Fpoly$-polynomials of $\Extfun{\cT}X$, for $X\in\indec\cU$ and $\cU\ctsubcat\cC$ reachable from $T$, are precisely the $\Fpoly$-polynomials of the cluster algebra with initial exchange matrix $B_{\cT}$.

\subsection{\texorpdfstring{$\Aside$-cluster characters}{A-cluster characters}}

Let $\cC$ be a cluster category, and fix a cluster-tilting subcategory $\cT\ctsubcat\cC$.
We may attempt to define a map $(\beta_{\cT})_*\colon\Laurent{\Kgp{\fd{\stab{\cT}}}}\to\powser{\bK}{\Kgp{\cT}}$ by
\begin{equation}
\label{eq:beta-star}
(\beta_{\cT})_*\biggl(\sum_{v\in\Kgp{\fd{\stab{\cT}}}}\lambda_{v}x^{v}\biggr)=\sum_{v\in\Kgp{\fd{\stab{\cT}}}}\lambda_{v}a^{\beta_{\cT}(v)},
\end{equation}
extending the homomorphism $(\beta_{\cT})_*\colon\bK\Kgp{\fd{\stab{\cT}}}\to\bK\Kgp{\cT}$ of group algebras defined by the same formula.
However, similar to the issue with defining multiplication in $\powser{\bK}{\bV}$, for this to make sense we need the coefficient $\sum_{v:\beta_{\cT}(v)=w}\lambda_{[L]}$ of $a^{w}$ on the right-hand side of \eqref{eq:beta-star} to be a finite sum, which may not be the case without an extra condition on $\beta_{\cT}$.
Recall that the \emph{nullity} of $\beta_{\cT}$ is the rank of its kernel.

\begin{proposition}
\label{p:finite-nullity}
If $\beta_{\cT}$ has finite nullity, then for any $v_0\in\Kgp{\fd{\stab{\cT}}}$, $w\in\Kgp{\cT}$ and $M\in\lfd{\stab{\cT}}$, the set $\{v\in\Kgp{\fd{\stab{\cT}}}\mid v_0\leq v\leq[M],\ \beta_{\cT}(v)=w\}$ is finite.
In particular, the map $(\beta_{\cT})_*\colon\Laurent{\Kgp{\fd{\stab{\cT}}}}\to\powser{\bK}{\Kgp{\cT}}$ from \eqref{eq:beta-star} is well-defined.
\end{proposition}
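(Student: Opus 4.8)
The plan is to reduce the statement to the elementary observation that a bounded ``box'' of finitely-supported integer vectors, further constrained to lie in a finitely generated subgroup, is finite. First I would fix the interpretation of the inequalities: by Corollary~\ref{c:Kgpnum-prod} we may identify $\Kgpnum{\lfd\stab{\cT}}$ with $\prod_{T\in\indec{\cT}}\integ[\simpmod{\cT}{T}]$, and under this identification $[M]=\sum_T(\dim_{\bK}M(T)/\dimdivalg{T})[\simpmod{\cT}{T}]$ (Proposition~\ref{p:im-delta}); since $\stab{\cT}$ is Krull--Schmidt (being Hom-finite) the simple classes form a $\integ$-basis of $\Kgp{\fd\stab{\cT}}$, so each $v\in\Kgp{\fd\stab{\cT}}$ has finite support, and the relation $v_0\leq v\leq[M]$ says precisely that $\ip{v_0}{[\simpmod{\cT}{T}]}\leq\ip{v}{[\simpmod{\cT}{T}]}\leq\dim_{\bK}M(T)/\dimdivalg{T}$ for every $T\in\indec{\cT}$, each of these being a finite integer.

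Next I would prove the first assertion. Write $A\defeq\{v\in\Kgp{\fd\stab{\cT}}:v_0\leq v\leq[M],\ \beta_{\cT}(v)=w\}$; we may assume $A\neq\emptyset$ and pick $v_{*}\in A$. Then $A\subseteq v_{*}+K$, where $K\defeq\ker(\beta_{\cT})\cap\Kgp{\fd\stab{\cT}}$ is a subgroup of the free abelian group $\Kgp{\fd\stab{\cT}}$, hence itself free, and of finite rank because its rank is at most the nullity of $\beta_{\cT}$, which is finite by hypothesis. Choosing a finite $\integ$-basis $k_1,\dots,k_r$ of $K$ and letting $S$ be the finite union of $\supp{k_1},\dots,\supp{k_r}$ together with $\supp{v_{*}}$, every $v\in A$ is supported on $S$. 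For $T\in S$ the coordinate $\ip{v}{[\simpmod{\cT}{T}]}$ then lies in the finite interval $[\ip{v_0}{[\simpmod{\cT}{T}]},\dim_{\bK}M(T)/\dimdivalg{T}]$, while it vanishes for $T\notin S$; hence there are only finitely many possibilities for $v$, and $A$ is finite.

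Finally I would deduce that $(\beta_{\cT})_{*}$ is well-defined. Given $p=\sum_{v}\lambda_{v}x^{v}\in\Laurent{\Kgp{\fd\stab{\cT}}}$, Definition~\ref{d:pseudo-poly} furnishes $v_0\in\Kgp{\fd\stab{\cT}}$ with $\lambda_{v}\neq0\implies v\geq_{\cB}v_0$, and for each $T$ the $T$-degree $\kappa^{+}(p,[\simpmod{\cT}{T}])$ bounds $\ip{v}{[\simpmod{\cT}{T}]}$ from above over all $v$ with $\lambda_{v}\neq0$. The key point is that all these upper bounds are realised by a single module: set $M=\prod_{T\in\indec{\cT}}(\simpmod{\cT}{T})^{\kappa^{+}(p,[\simpmod{\cT}{T}])}$, which lies in $\lfd\stab{\cT}$ since $\dim_{\bK}M(T)=\dimdivalg{T}\,\kappa^{+}(p,[\simpmod{\cT}{T}])<\infty$ (cf.\ the construction in Proposition~\ref{p:im-delta}), so that $\lambda_{v}\neq0$ forces $v_0\leq v\leq[M]$. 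Applying the first part, for each $w\in\Kgp{\cT}$ the set $\{v:\lambda_{v}\neq0,\ \beta_{\cT}(v)=w\}$ is finite, so the coefficient $\sum_{\beta_{\cT}(v)=w}\lambda_{v}$ of $a^{w}$ in $(\beta_{\cT})_{*}p$ is a finite sum; hence $(\beta_{\cT})_{*}p\in\powser{\bK}{\Kgp{\cT}}$ is a well-defined element.

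The argument is largely bookkeeping, and the only step needing a small idea is the last paragraph: one cannot bound the exponents of $p$ term by term, but must produce one locally finite-dimensional module dominating all of them simultaneously, which the product-of-simples construction of Proposition~\ref{p:im-delta} accomplishes. I do not expect any genuine obstacle beyond keeping the two Grothendieck groups $\Kgp{\fd\stab{\cT}}$ and $\Kgpnum{\lfd\stab{\cT}}$, and their orderings, carefully distinguished.
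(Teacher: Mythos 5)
Your argument for the first assertion is essentially the paper's: both reduce to the observation that \(\ker(\beta_{\cT})\), being a finitely generated subgroup of \(\Kgp{\fd\stab{\cT}}\), is supported on a common finite subset of \(\indec{\stab{\cT}}\), and then bound the finitely many nonzero coordinates using the given inequalities. Where the paper normalises by considering a difference \(v-v'\) of two elements of the set, you translate by a fixed \(v_*\); this is a cosmetic difference. The paper leaves the ``in particular'' clause to the reader, and your derivation of it is correct in substance, but one small repair is needed: the dominating module \(M=\prod_{T\in\indec{\cT}}(\simpmod{\cT}{T})^{\kappa^{+}(p,[\simpmod{\cT}{T}])}\) is not defined as written when \(\kappa^{+}(p,[\simpmod{\cT}{T}])<0\), which Definition~\ref{d:pseudo-poly} permits for finitely many \(T\) (the remark following it only forces \(\kappa^{-}(p,b)\geq0\) for all but finitely many \(b\)). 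Taking \(\max\{0,\kappa^{+}(p,[\simpmod{\cT}{T}])\}\) as the exponent fixes this, and the inequality \(v\leq[M]\) still holds for every \(v\) in the support of \(p\): at such a \(T\) one has \(\ip{v}{[\simpmod{\cT}{T}]}\leq\kappa^{+}(p,[\simpmod{\cT}{T}])<0=\ip{[M]}{[\simpmod{\cT}{T}]}\). With that adjustment the rest of your argument goes through unchanged.
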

\begin{proof}
If $v$ and $v'$ are elements of the given set, then $v-v'\in\ker(\beta_{\cT})$ and $v_0-[M]\leq v-v'\leq [M]-v_0$, so it suffices to show that there are finitely many $u\in\ker(\beta_{\cT})$ satisfying these inequalities.
Since $\ker(\beta_{\cT})\leq\Kgp{\fd{\stab{\cT}}}$ is finitely generated, there is a finite set $T_1,\dotsc,T_r\in\indec{\stab{\cT}}$ such that, for any $T\in\indec{\cT}\setminus\{T_1,\dotsc,T_r\}$, we have $\canform{v_0}{[T]}{\cT}=0$ and $\canform{u}{[T]}{\cT}=0$ for all $u\in\ker(\beta_{\cT})$. 
In particular, an element $u\in\ker(\beta_{\cT})$ is completely determined by the finitely many values $\canform{u}{[T_i]}{\cT}$, for $i=1,\dotsc,r$. If $v_0-[M]\leq u\leq[M]-v_0$ then $\canform{v_0-[M]}{[T_i]}{\cT}\leq\canform{u}{[T_i]}{\cT}\leq\canform{[M]+v_0}{[T_i]}{\cT}$ for each $i$, leaving only finitely many possibilities.
\end{proof}

\begin{remark}
\label{r:beta-star-product}
When $\beta_{\cT}$ has finite nullity, the image of $(\beta_{\cT})_*\colon\Laurent{\Kgp{\fd{\stab{\cT}}}}\to\powser{\bK}{\Kgp{\cT}}$ lies in a subset on which multiplication via \eqref{eq:pseudo-poly-mult} is well-defined; indeed, if $u,v\in\Kgp{\fd{\stab{\cT}}}$ then unpacking \eqref{eq:pseudo-poly-mult} gives
\[(\beta_{\cT})_*(u)\cdot(\beta_{\cT})_*(v)=(\beta_{\cT})_*(u\cdot v),\]
and the right-hand side is well-defined by Lemma~\ref{l:pseudo-poly-alg} and Proposition~\ref{p:finite-nullity}.
\end{remark}

The upshot of the preceding discussion is that $M\in\fpmod{\stab{\cT}}$ yields a well-defined element
\[(\beta_{\cT})_*\Fpoly(M)=\sum_{[L]\in\Kgp{\fd{\cT}}}\chi(\QGra{[L]}{M})a^{\beta_{\cT}[L]}\in\powser{\bK}{\Kgp{\cT}}\]
as long as either $M\in\fd{\stab{\cT}}$ or $\beta_{\cT}$ has finite nullity: if $\cC$ has finite rank, then both of these conditions are satisfied for any $M\in\fpmod{\cT}$ and any $\cT\ctsubcat\cC$.
Indeed, in the finite rank case $\Fpoly(M)\in\bK\Kgp{\fd\stab{\cT}}$ and $(\beta_{\cT})_*\Fpoly(M)\in\bK\Kgp{\cT}$ are elements of the ordinary group algebras of $\Kgp{\fd{\stab{\cT}}}$ and $\Kgp{\cT}$ respectively, which are isomorphic to algebras of Laurent polynomials.

\begin{definition}
Let $\cC$ be a cluster category, let $\cT\ctsubcat\cC$ and let $X\in\cC$.
Under the assumption that either $\Extfun{\cT}X\in\fd{\stab{\cT}}$ or that $\beta_{\cT}$ has finite nullity, we define the \emph{$\Aside$-cluster character} of $X$ with respect to $\cT$ to be
\begin{equation}\label{eq:clucha-A-alt} \clucha[\cT]{\Aside}(X)=a^{\ind{\cC}{\cT}[X]}(\beta_{\cT})_{*}\Fpoly(\Extfun{\cT}X)\in\powser{\bK}{\Kgp{\cT}}.
\end{equation}
\end{definition}

Spelling out the definition of $(\beta_{\cT})_*\Fpoly(\Extfun{\cT}{X})$, we see that
\begin{equation}
\label{eq:clucha-A}
\clucha[\cT]{\Aside}(X)=a^{\ind{\cC}{\cT}{[X]}}\sum_{[L]\in \Kgp{\fd \stab{\cT}}}\chi(\QGra{[L]}{\Extfun{\cT}X})a^{\beta_{\cT}[L]},
\end{equation}
cf.~\cite{CalderoChapoton,JorgensenPalu,Palu,Plamondon-ClustCat}.

\begin{remark}\label{r:clucha-A} {\ }
\begin{enumerate}
\item\label{r:clucha-A-finite-non-zero}
Just as for $\Fpoly$-polynomials (Remark~\ref{r:Fpoly-props}), the cluster character $\clucha[\cT]{\Aside}{X}$ is non-zero, and a finite sum whenever $\Extfun{\cT}X\in\fd{\stab{\cT}}$.
\item\label{r:clucha-A-isoclasses} If $X\iso Y$ then $\clucha[\cT]{\Aside}(X)=\clucha[\cT]{\Aside}(Y)$, as is visible from the formula.
\item The value of $\clucha[\cT]{\Aside}(X)$ when $\Extfun{\cT}X\in\fd\stab{\cT}$ is a Laurent polynomial (i.e.\ an element of the group algebra $\bK\Kgp{\cT}$), although the formula itself is emphatically not a simplified expression as an inverse monomial multiplied by a polynomial.
\item\label{r:clucha-A-trop} The submodules of $\Extfun{\cT}{X}$ are naturally a poset via inclusion, with unique minimal element $0$ and unique maximal element $\Extfun{\cT}{X}$. This gives \eqref{eq:clucha-A} a canonical minimal term, corresponding to $N=0$, which is $a^{\ind{\cC}{\cT}{[X]}}$.
If $\Extfun{\cT}{X}\in\fd{\stab{\cT}}$ then there is also a canonical maximal term, which is $a^{\coind{\cC}{\cT}{[X]}}$ since
$\beta_{\cT}[\Extfun{\cT}X]=\coind{\cC}{\cT}{[X]}-\ind{\cC}{\cT}{[X]}$.
This is a reappearance of $\ind{\cC}{\cT}{[X]}$ and $\coind{\cC}{\cT}{[X]}$ being associated to the two natural tropicalisations of a cluster algebra.

If $\Extfun{\cT}X$ is simple, for example when $X=\mut{\cT}{T}$ for $T\in\exch\cT$ and $\cT$ has no loop at $T$, these are the only two terms, recovering the usual exchange relation for mutation of the initial cluster variable $a^{[T]}$. For non-simple $\Extfun{\cT}X$, the cluster character is a more complicated interpolation between the minimal and maximal terms.
\end{enumerate}
\end{remark}

\begin{remark}
\label{r:inf-rank}
Depending on $\cC$, there are potentially several different natural ways of defining the $\Fpoly$-polynomial $\Fpoly(\Extfun{\cT}X)$, and by extension the cluster character $\clucha[\cT]{\Aside}(X)$, which coincide when $\Extfun{\cT}X\in\fd{\stab{\cT}}$ (in particular, if $\cC$ has finite rank or $X$ is a rigid object reachable from $\cT$).
In \cite{JorgensenPalu}, more general inputs to $\clucha[\cT]{\Aside}$ are not considered.
Similarly in \cite{Plamondon-ClustCat}, in which our assumption that $\stab{\cC}$ is Hom-finite is relaxed, the cluster character is still restricted to objects such that $\Extfun{\cT}X\in\fd{\stab{\cT}}$ is finite-dimensional.

Ideally, one would like to define $\Fpoly(M)$ as a sum over all finitely presented submodules of $M$, the classes of which lie in the natural domain of definition for $\beta_{\cT}$, so that one always has the maximal term $x^{[\Extfun{\cT}X]}$ (cf.~Remark~\ref{r:clucha-A}\ref{r:clucha-A-trop}).
However, in the generality in which we are working, it is not clear that a finitely presented submodule $L\leq M$ determines a projective variety $\QGra{[L]}{M}$ for us to take the Euler characteristic of: a priori, this would be a subvariety of the infinite product $\prod_{T\in\indec{\cT}}\QGra{\dim_{\bK}L(T)}{M(T)}$, in which infinitely many factors may be non-trivial.
This issue is addressed by Paquette--Yıldırım \cite[\S6]{PaqYil} when $\cC$ is the completed cluster category of a disc (which is \emph{not} a cluster category in our sense, since it is not $2$-Calabi--Yau), by showing that in this case a finitely presented submodule of $M\in\fpmod{\stab{\cT}}$ is fully determined by finitely many of its subspaces.
The properties of $\cC$ used in this case are quite technical, and it is not clear to us how generally they may hold.

A potential issue with our definition of the $\Fpoly$-polynomial is that it cannot distinguish $\stab{\cT}$-modules with the same collection of finite-dimensional submodules, which one might expect it to by analogy with cluster theory (see e.g.\ \cite[Thm.~6.1]{CKQ}), although it can at least distinguish non-zero modules, by Proposition~\ref{p:Fpoly-zero}.
As above, this issue does not arise if $\cC$ has finite rank or if one restricts to cluster characters of reachable rigid objects.
\end{remark}

We can now prove the following properties of the $\Aside$-cluster character, justifying its name, using the same methodology as \cite{JorgensenPalu}.

\begin{proposition}\label{p:props-of-A-cl-char}
Let $\cC$ be a cluster category and $\cT\ctsubcat\cC$. Then
\begin{enumerate}
\item\label{p:props-of-A-cl-char-split} for any $X,Y\in\cC$,
\begin{equation}\label{eq:clucha-A-split} \clucha[\cT]{\Aside}(X\oplus Y)=\clucha[\cT]{\Aside}(X)\clucha[\cT]{\Aside}(Y); \end{equation}
\item\label{p:props-of-A-cl-char-non-split} if $\cC$ is compact or skew-symmetric, $X,Y\in\cC$ are such that $X$ is indecomposable and $\rank_{\divalg{X}}\Ext{1}{\cC}{X}{Y}=1$, and there are non-split conflations $Y\infl Z\defl X\confl$ and $X\infl Z'\defl Y\confl$, then
\begin{equation}\label{eq:clucha-A-non-split} \clucha[\cT]{\Aside}(X)\clucha[\cT]{\Aside}(Y)=\clucha[\cT]{\Aside}(Z)+\clucha[\cT]{\Aside}(Z'). \end{equation}
\end{enumerate}
\end{proposition}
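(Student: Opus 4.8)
The plan is to prove the two multiplicativity properties by unwinding the definition of $\clucha[\cT]{\Aside}$ into its three constituent pieces---the monomial $a^{\ind{\cC}{\cT}[X]}$, the transport map $(\beta_\cT)_*$, and the $\Fpoly$-polynomial---and combining the additivity or multiplicativity of each piece that has already been established. For \ref{p:props-of-A-cl-char-split}, first I would observe that $\ind{\cC}{\cT}$ is a group homomorphism on $\Kgp{\addcat\cC}$ (Proposition~\ref{p:ind-coind-homs}), so $a^{\ind{\cC}{\cT}[X\oplus Y]}=a^{\ind{\cC}{\cT}[X]}a^{\ind{\cC}{\cT}[Y]}$, using $[X\oplus Y]=[X]+[Y]$ in $\Kgp{\addcat\cC}$. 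Next, since $\Extfun{\cT}$ is additive, $\Extfun{\cT}(X\oplus Y)\cong\Extfun{\cT}X\oplus\Extfun{\cT}Y$, and then \eqref{eq:F-poly-split} from Proposition~\ref{p:Fpoly-rels} gives $\Fpoly(\Extfun{\cT}(X\oplus Y))=\Fpoly(\Extfun{\cT}X)\Fpoly(\Extfun{\cT}Y)$ in $\Laurent{\Kgp{\fd\stab{\cT}}}$. Finally, $(\beta_\cT)_*$ is multiplicative on the relevant subset, as recorded in Remark~\ref{r:beta-star-product} (and trivially in the finite-nullity or finite-dimensional cases where everything lives in honest group algebras). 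Multiplying the three identities together, and using commutativity of $\powser{\bK}{\Kgp{\cT}}$ to reorder factors, yields \eqref{eq:clucha-A-split}. I should be slightly careful to note that the hypothesis guaranteeing $\clucha[\cT]{\Aside}$ is defined for $X$ and $Y$ (either $\Extfun{\cT}$ of each is finite-dimensional, or $\beta_\cT$ has finite nullity) also guarantees it is defined for $X\oplus Y$, so the statement makes sense.

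For \ref{p:props-of-A-cl-char-non-split}, the starting point is Proposition~\ref{p:Fpoly-rels}\ref{p:F-poly-non-split}, which gives
\[
\Fpoly(\Extfun{\cT}X)\Fpoly(\Extfun{\cT}Y)=x^{[\Ker\Extfun{\cT}i_1]}\Fpoly(\Extfun{\cT}Z)+x^{[\Ker\Extfun{\cT}i_2]}\Fpoly(\Extfun{\cT}Z')
\]
for the non-split conflations $Y\stackrel{i_1}{\infl}Z\defl X\confl$ and $X\stackrel{i_2}{\infl}Z'\defl Y\confl$. I would apply $(\beta_\cT)_*$ to both sides (it is additive on $\Laurent{\Kgp{\fd\stab{\cT}}}$ and multiplicative as above), then multiply through by the monomial $a^{\ind{\cC}{\cT}[X]+\ind{\cC}{\cT}[Y]}$. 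On the left this produces $\clucha[\cT]{\Aside}(X)\clucha[\cT]{\Aside}(Y)$ after again using additivity of $\ind{\cC}{\cT}$ and multiplicativity of $(\beta_\cT)_*$. On the right, the term coming from $Z$ becomes
\[
a^{\ind{\cC}{\cT}[X]+\ind{\cC}{\cT}[Y]+\beta_\cT[\Ker\Extfun{\cT}i_1]}\,(\beta_\cT)_*\Fpoly(\Extfun{\cT}Z),
\]
and the key computation is to identify the exponent of $a$ with $\ind{\cC}{\cT}[Z]$. This is exactly Lemma~\ref{l:apres-Palu}\ref{l:apres-Palu-ind-plus-beta}, applied to the conflation $Y\stackrel{i_1}{\infl}Z\defl X\confl$: it states $\ind{\cC}{\cT}([X]+[Y])+\beta_\cT([H]+[K])=\ind{\cC}{\cT}[Z]+\beta_\cT[L]$ for any $L\in\curly{S}(Z)_{[H],[K]}$, but more directly the exact sequence in its proof gives $\ind{\cC}{\cT}[Z]=\ind{\cC}{\cT}[X]+\ind{\cC}{\cT}[Y]+\beta_\cT[\Ker\Extfun{\cT}i_1]$ (this identity is stated explicitly there). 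Hence the $Z$-term equals $a^{\ind{\cC}{\cT}[Z]}(\beta_\cT)_*\Fpoly(\Extfun{\cT}Z)=\clucha[\cT]{\Aside}(Z)$, and symmetrically the $Z'$-term equals $\clucha[\cT]{\Aside}(Z')$, using the conflation $X\stackrel{i_2}{\infl}Z'\defl Y\confl$.

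The main obstacle---or rather the point requiring the most care---is bookkeeping in the infinite-rank setting: I must verify that $\Extfun{\cT}Z$ and $\Extfun{\cT}Z'$ satisfy the hypothesis under which $\clucha[\cT]{\Aside}$ is defined. If $\beta_\cT$ has finite nullity this is automatic; if instead we are assuming $\Extfun{\cT}X,\Extfun{\cT}Y\in\fd\stab{\cT}$, then since $\fpmod\stab{\cT}$ is abelian and closed under extensions, $\Extfun{\cT}Z$ and $\Extfun{\cT}Z'$ are extensions of $\Extfun{\cT}X$ by $\Extfun{\cT}Y$ (up to the correction by $\Ker\Extfun{\cT}i_j$, which is a submodule of the finite-dimensional $\Extfun{\cT}X$ or $\Extfun{\cT}Y$), hence also finite-dimensional, so everything collapses to honest Laurent polynomials and the manipulations above are unambiguous. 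Beyond this, the argument is essentially a formal rearrangement, and the genuine content has been front-loaded into Proposition~\ref{p:Fpoly-rels} and Lemma~\ref{l:apres-Palu}; I do not anticipate any substantive difficulty, only the need to state the hypotheses precisely so that both parts of the proposition are covered uniformly.
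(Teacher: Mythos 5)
Your proposal is correct and follows essentially the same route as the paper: part (i) is the same chain of three identities, and for part (ii) the paper also combines Proposition~\ref{p:Fpoly-rels}\ref{p:F-poly-non-split} with the identity $\ind{\cC}{\cT}[Z]=\ind{\cC}{\cT}[X]+\ind{\cC}{\cT}[Y]+\beta_{\cT}[\Ker\Extfun{\cT}i_{1}]$ extracted from (the proof of) Lemma~\ref{l:apres-Palu}, differing only in that the paper starts from $\clucha[\cT]{\Aside}(X)\clucha[\cT]{\Aside}(Y)$ and unwinds rather than building up from the $\Fpoly$-identity. Your extra care about well-definedness of $\clucha[\cT]{\Aside}(Z)$ and $\clucha[\cT]{\Aside}(Z')$ in the infinite-rank setting is a reasonable point that the paper leaves implicit.
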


\begin{proof} {\ }
\begin{enumerate}
\item By \eqref{eq:F-poly-split} and the additivity of $\Extfun{\cT}$,
\begin{align*}
\clucha[\cT]{\Aside}(X\oplus Y) & = a^{\ind{\cC}{\cT}[X\oplus Y]}(\beta_{\cT})_{*}\Fpoly(\Extfun{\cT}X\oplus\Extfun{\cT}Y) \\
& = a^{\ind{\cC}{\cT}([X]+[Y])}(\beta_{\cT})_{*}(\Fpoly(\Extfun{\cT}X)\Fpoly(\Extfun{\cT}Y)) \\
& = a^{\ind{\cC}{\cT}[X]}a^{\ind{\cC}{\cT}[Y]}(\beta_{\cT})_{*}\Fpoly(\Extfun{\cT}X)(\beta_{\cT})_{*}\Fpoly(\Extfun{\cT}Y) \\
& = \clucha[\cT]{\Aside}(X)\clucha[\cT]{\Aside}(Y).
\end{align*}
\item We have
\begin{align*} \clucha[\cT]{\Aside}(X)\clucha[\cT]{\Aside}(Y) & = a^{\ind{\cC}{\cT}[X]}(\beta_{\cT})_{*}\Fpoly(\Extfun{\cT}X)a^{\ind{\cC}{\cT}[Y]}(\beta_{\cT})_{*}\Fpoly(\Extfun{\cT}Y) \\ 
& = a^{\ind{\cC}{\cT}[X]+\ind{\cC}{\cT}[Y]}(\beta_{\cT})_{*}(\Fpoly(\Extfun{\cT}X)\Fpoly(\Extfun{\cT}Y)) \\
& = a^{\ind{\cC}{\cT}[X]+\ind{\cC}{\cT}[Y]}(\beta_{\cT})_{*}\bigl(x^{[\Ker \Extfun{\cT}i_{1}]}\Fpoly(Z)+x^{[\Ker \Extfun{\cT}i_{2}]}\Fpoly(Z')\bigr) \\ 
& = \begin{multlined}[t]a^{\ind{\cC}{\cT}[X]+\ind{\cC}{\cT}[Y]+\beta_{\cT}[\Ker \Extfun{\cT}i_{1}]}(\beta_{\cT})_{*}\Fpoly(Z)\\
+a^{\ind{\cC}{\cT}[X]+\ind{\cC}{\cT}[Y]+\beta_{\cT}[\Ker \Extfun{\cT}i_{2}]}(\beta_{\cT})_{*}\Fpoly(Z')\end{multlined} \\ 
& = a^{\ind{\cC}{\cT}[Z]}(\beta_{\cT})_{*}\Fpoly(\Extfun{\cT}Z)+a^{\ind{\cC}{\cT}[Z']}(\beta_{\cT})_{*}\Fpoly(\Extfun{\cT}Z') \\ 
& = \clucha[\cT]{\Aside}(Z)+\clucha[\cT]{\Aside}(Z'),
\end{align*}
where the third equality uses \eqref{eq:F-poly-non-split} and the fifth equality uses (both parts of) Lemma~\ref{l:apres-Palu}. \qedhere
\end{enumerate}
\end{proof}

\begin{remark}
As suggested by the proof of Proposition~\ref{p:chidentities}, one can also obtain \eqref{eq:clucha-A-non-split} from \cite[Thm.~1.1]{Palu2}, using Lemma~\ref{l:rk1-unique-middle-term}.
While the product of two cluster characters (via \eqref{eq:pseudo-poly-mult}) is always well-defined because of Remark~\ref{r:beta-star-product}, the previous argument also gives an implicit proof of this statement for the two special cases considered.
\end{remark}

\begin{corollary}
\label{c:A-mutation}
Let $\cC$ be a compact or skew-symmetric cluster category, let $\cTU\ctsubcat\cC$ and let $U\in\exch{\cU}$.
If $\cU$ has no loop at $U$, we have
\[\clucha[\cT]{\Aside}(\mut{\cU}U)\clucha[\cT]{\Aside}(U)=\clucha[\cT]{\Aside}(\exchmon{\cU}{U}{+})+\clucha[\cT]{\Aside}(\exchmon{\cU}{U}{-}).\]
\end{corollary}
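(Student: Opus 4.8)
The statement is Corollary~\ref{c:A-mutation}, which asserts an exchange relation for the \(\Aside\)-cluster character coming from the two exchange conflations at a mutable object \(U\in\exch{\cU}\). The natural approach is to deduce it from Proposition~\ref{p:props-of-A-cl-char}\ref{p:props-of-A-cl-char-non-split}, which gives exactly such a relation for any pair \(X,Y\in\cC\) with \(X\) indecomposable, \(\rank_{\divalg{X}}\Ext{1}{\cC}{X}{Y}=1\), and admitting non-split conflations \(Y\infl Z\defl X\confl\) and \(X\infl Z'\defl Y\confl\). So the plan is to apply that proposition with \(X=U\) and \(Y=\mut{\cU}{U}\), and then identify the middle terms \(Z\) and \(Z'\) with the exchange middle terms \(\exchmon{\cU}{U}{+}\) and \(\exchmon{\cU}{U}{-}\).

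First I would verify that the hypotheses of Proposition~\ref{p:props-of-A-cl-char}\ref{p:props-of-A-cl-char-non-split} hold for \((X,Y)=(U,\mut{\cU}{U})\). Since \(U\in\exch{\cU}\), it is indecomposable by Definition~\ref{d:mutable}, and \(\mut{\cU}{U}\) is indecomposable as well, with the two exchange conflations \eqref{eq:exchange-confs} available. The key numerical point is \(\rank_{\divalg{U}}\Ext{1}{\cC}{U}{\mut{\cU}{U}}=1\): using the assumption that \(\cU\) has no loop at \(U\), Lemma~\ref{l:props-of-K0-mod-T}\ref{l:props-mod-T-simple-eq-E} gives \(\Extfun{\cU}(\mut{\cU}{U})=\simpmod{\cU}{U}\), so \(\Ext{1}{\cC}{U}{\mut{\cU}{U}}=\simpmod{\cU}{U}(U)=\divalg{U}\) as a \(\divalg{U}\)-module, which has rank \(1\). (Here I use that for a compact cluster category \(\dimdivalg{U}<\infty\), and for a skew-symmetric one \(\divalg{U}\cong\bK\); in the skew-symmetric case the condition reads \(\dim_\bK\Ext{1}{\cC}{U}{\mut{\cU}{U}}=1\), which is what Proposition~\ref{p:props-of-A-cl-char}\ref{p:props-of-A-cl-char-non-split} requires.) This also ensures the two non-split conflations exist, as they are realised by non-zero elements of this rank-one space.

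Next I would pin down the middle terms. By Lemma~\ref{l:rk1-unique-middle-term}, any non-split conflation \(\mut{\cU}{U}\infl Z\defl U\confl\) has middle term isomorphic to a fixed object; the exchange conflation \(\mut{\cU}{U}\infl\exchmon{\cU}{U}{+}\defl U\confl\) of \eqref{eq:exchange-confs} is such a conflation (it is non-split by definition), so \(Z\iso\exchmon{\cU}{U}{+}\). Dually, applying Lemma~\ref{l:rk1-unique-middle-term} to \(\op{\cC}\) (noting \(\rank_{\op{\divalg{U}}}\Ext{1}{\cC}{\mut{\cU}{U}}{U}=1\) since \(\stab{\cC}\) is \(2\)-Calabi--Yau), any non-split conflation \(U\infl Z'\defl\mut{\cU}{U}\confl\) has \(Z'\iso\exchmon{\cU}{U}{-}\), matching the second exchange conflation of \eqref{eq:exchange-confs}. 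Then Proposition~\ref{p:props-of-A-cl-char}\ref{p:props-of-A-cl-char-non-split} yields
\[
\clucha[\cT]{\Aside}(U)\clucha[\cT]{\Aside}(\mut{\cU}{U})=\clucha[\cT]{\Aside}(\exchmon{\cU}{U}{+})+\clucha[\cT]{\Aside}(\exchmon{\cU}{U}{-}),
\]
and commutativity of the product (Remark~\ref{r:beta-star-product}, or simply commutativity of \eqref{eq:pseudo-poly-mult}) rearranges this into the claimed form. I do not anticipate a serious obstacle here; the only mild subtlety is bookkeeping the two parallel cases (compact versus skew-symmetric) when invoking the rank-one hypothesis and Lemma~\ref{l:rk1-unique-middle-term}, and confirming that \(\clucha[\cT]{\Aside}\) is defined on all the objects involved — which it is, since \(\Extfun{\cU}(\mut{\cU}{U})\) is finite-dimensional (Lemma~\ref{l:props-of-K0-mod-T}) and the exchange middle terms have \(\Extfun{\cU}\) of the same ilk, so no finite-nullity assumption on \(\beta_{\cT}\) is needed.
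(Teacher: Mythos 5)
Your proof is correct and follows the same route as the paper: apply Proposition~\ref{p:props-of-A-cl-char}\ref{p:props-of-A-cl-char-non-split} with \((X,Y)=(U,\mut{\cU}{U})\), using Lemma~\ref{l:props-of-K0-mod-T}\ref{l:props-mod-T-simple-eq-E} to convert the no-loop hypothesis into the rank-one condition. The appeal to Lemma~\ref{l:rk1-unique-middle-term} to ``pin down'' the middle terms is harmless but unnecessary, since Proposition~\ref{p:props-of-A-cl-char}\ref{p:props-of-A-cl-char-non-split} may simply be applied directly to the exchange conflations themselves, which already have \(\exchmon{\cU}{U}{\pm}\) as middle terms.
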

\begin{proof}
The assumption that $\cU$ has no loop at $U$ is equivalent to $\rank_{\divalg{U}}\Ext{1}{\cC}{U}{\mut{\cU}{U}}=1$ by Lemma~\ref{l:props-of-K0-mod-T}\ref{l:props-mod-T-simple-eq-E}, so this is a special case of Proposition~\ref{p:props-of-A-cl-char}\ref{p:props-of-A-cl-char-non-split}.
\end{proof}

\begin{remark}
Remark~\ref{r:clucha-A}\ref{r:clucha-A-isoclasses} and \eqref{eq:clucha-A-split} tell us that $\clucha[\cT]{\Aside}$ induces a well-defined monoid homomorphism on the positive cone in $\Kgp{\addcat{\cC}}$, consisting of classes of objects, justifying the term `character'.
By appropriately enlarging $\powser{\bK}{\Kgp{\cT}}$, we may then extend $\clucha[\cT]{\Aside}$ to a group homomorphism on $\Kgp{\addcat{\cC}}$ by defining its value on a difference of classes to be the ratio of the corresponding cluster characters, noting that these are non-zero by Remark~\ref{r:clucha-A}\ref{r:clucha-A-finite-non-zero}.
For example, when $\cC$ has finite rank, a suitable codomain for this extension of $\clucha[\cT]{\Aside}$ is the field of fractions of the group algebra $\bK\Kgp{\cT}$.
\end{remark}

The fundamental theorem for cluster characters now follows.
Recall that if $\mathscr{A}$ is a cluster algebra, a \emph{cluster monomial} is a monomial in the cluster variables of one cluster.

\begin{theorem}[\cite{BMRRT,FuKeller,PresslandPostnikov}]
\label{t:Aside-bijection}
Let $\cC$ be a compact or skew-symmetric cluster category, let $\cT\ctsubcat\cC$, and assume that $(\cC,\cT)$ has a cluster structure.
Then $\clucha[\cT]{\Aside}$ is a bijection between objects in cluster-tilting subcategories reachable from $\cT$ and cluster $\Aside$-monomials of the cluster algebra $\mathscr{A}$ with initial exchange matrix $B_{\cT}$.
Under this bijection, cluster (and frozen) variables correspond to indecomposable objects, with frozen variables corresponding to indecomposable projectives, and there is an induced bijection of cluster-tilting subcategories of $\cC$ reachable from $\cT$ and seeds of $\mathscr{A}$, commuting with mutations.
\end{theorem}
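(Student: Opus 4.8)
The plan is to build the bijection of Theorem~\ref{t:Aside-bijection} by combining the categorical mutation theory assembled in Section~\ref{s:mutation} with the cluster-character identities from this section, bootstrapping from the initial cluster-tilting subcategory. First I would set up the base case: since $(\cC,\cT)$ has a cluster structure, $\cT$ is maximally mutable, and $\Extfun{\cT}{T}=0$ for all $T\in\cT$, so $\clucha[\cT]{\Aside}(T)=a^{\ind{\cC}{\cT}[T]}=a^{[T]}$ by \eqref{eq:clucha-A-alt}. Thus the indecomposables of $\cT$ map bijectively onto the initial cluster variables (including frozen variables, which correspond to the projective-injectives since those are exactly the non-mutable indecomposables in every cluster-tilting subcategory), and $\clucha[\cT]{\Aside}(\bigoplus T_i^{a_i})=\prod a^{a_i[T_i]}$ by \eqref{eq:clucha-A-split}, matching the initial cluster monomials.

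Next I would run the inductive step along sequences of mutations. Given $\cU\ctsubcat\cC$ reachable from $\cT$ and $U\in\exch{\cU}$ with mutant $\mu_U\cU$, the key tool is Corollary~\ref{c:A-mutation}: since a cluster structure guarantees no loop at $U$ (Definition~\ref{d:cluster-structure}\ref{d:cluster-structure-no-loops}), we have $\clucha[\cT]{\Aside}(\mut{\cU}{U})\clucha[\cT]{\Aside}(U)=\clucha[\cT]{\Aside}(\exchmon{\cU}{U}{+})+\clucha[\cT]{\Aside}(\exchmon{\cU}{U}{-})$, and by Proposition~\ref{p:decomp-exch-terms} the middle terms $\exchmon{\cU}{U}{\pm}$ decompose with multiplicities $[\exchmatentry{V,U}^{\cU}]_{\pm}$, so \eqref{eq:clucha-A-split} turns this into precisely the Fomin--Zelevinsky exchange relation $\clucha[\cT]{\Aside}(\mut{\cU}{U})=\clucha[\cT]{\Aside}(U)^{-1}(\prod_V\clucha[\cT]{\Aside}(V)^{[\exchmatentry{V,U}^{\cU}]_+}+\prod_V\clucha[\cT]{\Aside}(V)^{[\exchmatentry{V,U}^{\cU}]_-})$. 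Here I would invoke Definition~\ref{d:cluster-structure}\ref{d:cluster-structure-FZ-mutation}, which ensures the categorical exchange matrix $B_{\mu_U\cU}$ is the FZ-mutation $\mu_U B_{\cU}$, so the exchange relation produced categorically matches the one in $\mathscr{A}$ step for step. Together with the tropical part: Theorem~\ref{t:c-vec-mut-formula} says $\ind{\cU}{\cT}$ computes $\mathbf{g}$-vectors of $\mathscr{A}$ and the $\mathbf{g}$-vector of a cluster monomial determines its Laurent expansion, so $\clucha[\cT]{\Aside}$ carries the reachable cluster-tilting subcategory $\cU$ onto the corresponding seed of $\mathscr{A}$, compatibly with mutation. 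This simultaneously shows surjectivity onto cluster monomials and gives the induced map on seeds commuting with mutations.

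For injectivity I would argue in two layers. On indecomposable rigid objects, Proposition~\ref{p:rigid-index} states that a rigid object is determined by its index $\ind{\cC}{\cT}[X]$, which is the leading ($a^{\ind{\cC}{\cT}[X]}$, minimal) term of $\clucha[\cT]{\Aside}(X)$ by Remark~\ref{r:clucha-A}\ref{r:clucha-A-trop}; hence two reachable indecomposables with the same cluster character are isomorphic. For general reachable objects (direct sums of reachable indecomposables) one reads off the $\mathbf{g}$-vectors of the summands-with-multiplicity from the Newton polytope data of the Laurent polynomial exactly as in the cluster algebra: distinct cluster monomials have distinct $\mathbf{g}$-vectors by linear independence of $\mathbf{g}$-vectors within a cluster (Proposition~\ref{p:ind-coind-inverse}, as noted after Theorem~\ref{t:c-vec-mut-formula}), and $\clucha[\cT]{\Aside}$ is multiplicative, so the map on cluster monomials is injective. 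That the induced map on seeds is a bijection then follows since a seed of $\mathscr{A}$ is determined by its cluster and the reachable cluster-tilting subcategory is determined (via the bijection on objects) by its set of indecomposable summands; compatibility with mutation was already established in the inductive step, and the statement that $B_{\cT}$ is the initial exchange matrix is the base case. The main obstacle I anticipate is bookkeeping rather than conceptual: one must carefully verify that ``reachability'' on the categorical side is transported to ``reachability'' of seeds by the \emph{same} mutation sequences, which requires checking that the exchange conflations at $U\in\exch{\cU}$ (hence the data $\mu_U\cU$, $B_{\mu_U\cU}$, and the exchange monomials) match the FZ data at every step — this is exactly where Definition~\ref{d:cluster-structure}\ref{d:cluster-structure-FZ-mutation} and Theorem~\ref{t:exch-mat-mutation-clust-str} are doing the essential work, and where one must be careful that loops/2-cycles genuinely do not appear along any reachable branch.
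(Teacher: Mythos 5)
Your proposal is correct and follows essentially the same approach as the paper, which (in lieu of a detailed proof) defers to \cite{PresslandPostnikov} and points to exactly the two ingredients you centre your argument on: Remark~\ref{r:clucha-A}\ref{r:clucha-A-isoclasses} (well-definedness on isomorphism classes, hence multiplicativity \eqref{eq:clucha-A-split}) and Corollary~\ref{c:A-mutation} (the categorical exchange relation), combined with the decomposition of the exchange monomials in Proposition~\ref{p:decomp-exch-terms} and the FZ-matrix-mutation compatibility from the cluster structure axiom. One small simplification worth noting: your injectivity argument for non-indecomposable reachable objects via Newton polytopes and $\mathbf{g}$-vector linear independence is more roundabout than necessary, since Proposition~\ref{p:rigid-index} applies to \emph{all} rigid objects, not just indecomposables, so one can directly compare minimal terms $a^{\ind{\cC}{\cT}[X]}$ and conclude $X\iso X'$ without decomposing into summands.
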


\begin{proof}
Our definitions align with those of \cite{PresslandPostnikov} and so the argument sketched in the proof of \cite[Thm.~6.10]{PresslandPostnikov} goes through for us too.
Notably, what is required to see that our cluster character transforms correctly to align with cluster variable mutation is contained in Remark~\ref{r:clucha-A}\ref{r:clucha-A-isoclasses} and Corollary~\ref{c:A-mutation}.
\end{proof}

\begin{remark}
To prove Theorem~\ref{t:Aside-bijection}, it is never necessary to consider the values of $\clucha[\cT]{\Aside}$ on $X\in\cC$ with $\Extfun{\cT}X$ infinite-dimensional, and so the issues discussed in Remark~\ref{r:inf-rank} do not arise.
Indeed, if $X\in\cU$ for $\cU\ctsubcat\cC$ reachable from $\cT$, then $\Extfun{\cT}X\in\fd{\stab{\cT}}$ since it is supported on the additively finite subcategory $\cT\setminus\cU$ (cf.~Proposition~\ref{p:error-term-fd}).
\end{remark}

The cluster character is compatible with partial stabilisation (\S \ref{s:part-stab}); let $\cC$ be a cluster category and $\cP$ a full additively closed subcategory of projective-injective objects.
Then by Proposition~\ref{p:partialstab}, the partial stabilisation $\cC/\cP$ is again a cluster category, and the quotient functor $\pi$ induces a bijection of cluster-tilting subcategories; as before, we write $\cT/\cP=\pi \cT$ for $\cT\ctsubcat\cC$.

The homomorphism $\pi_{\cT}^{\cT/\cP}\colon \Kgp{\cT}\to \Kgp{\cT/\cP}$ of Grothendieck groups induces a linear map $(\pi_{\cT}^{\cT/\cP})_*\colon \powser{\bK}{\Kgp{\cT}}\to\powser{\bK}{\Kgp{\cT/\cP}}$ with $(\pi_{\cT}^{\cT/\cP})_*(a^{t})=a^{\pi_{\cT}^{\cT/\cP}(t)}$, which respects the multiplication \eqref{eq:pseudo-poly-mult} when this is defined.
This map has the property that $(\pi_{\cT}^{\cT/\cP})_*(a^{[P]})=1$ for $P\in \cP$, and indeed the kernel of $(\pi_{\cT}^{\cT/\cP})_*$ is generated by such elements; that is, $(\pi_{\cT}^{\cT/\cP})_*$ encodes the setting of certain frozen variables to $1$.
This allows us to relate the cluster characters of $\cC$ and $\cC/\cP$ as follows.

\begin{proposition}\label{p:A-cl-cat-partial-stab}
With notation as above, we have the following commutative diagram:
\[
\begin{tikzcd}
\cC \arrow{r}{\clucha[\cT]{\Aside}} \arrow{d}[left]{\pi_{\cC}^{\cC/\cP}} & \powser{\bK}{\Kgp{\cT}} \arrow{d}{(\pi_{\cT}^{\cT/\cP})_*}\\
\cC/\cP \arrow{r}[below,yshift=-0.2em]{\clucha[\cT/\cP]{\Aside}} & \powser{\bK}{\Kgp{\cT/\cP}}
\end{tikzcd}
\]
\end{proposition}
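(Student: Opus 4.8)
The plan is to unwind the definition \eqref{eq:clucha-A-alt} of the \(\Aside\)-cluster character and check that each of its three ingredients---the tropical monomial \(a^{\ind{\cC}{\cT}[X]}\), the pushforward map \((\beta_{\cT})_*\), and the \(\Fpoly\)-polynomial \(\Fpoly(\Extfun{\cT}X)\)---transforms correctly under \(\pi\colon\cC\to\cC/\cP\). First I would dispose of the tropical part: by Proposition~\ref{p:stabind} we have \(\ind{\cC/\cP}{\cT/\cP}\circ\pi_{\addcat{\cC}}^{\addcat{(\cC/\cP)}}=\pi_{\cT}^{\cT/\cP}\circ\ind{\cC}{\cT}\), so \((\pi_{\cT}^{\cT/\cP})_*\) sends \(a^{\ind{\cC}{\cT}[X]}\) to \(a^{\ind{\cC/\cP}{\cT/\cP}[X]}\), which is the tropical part of \(\clucha[\cT/\cP]{\Aside}(\pi X)\).

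Next I would handle the \(\Fpoly\)-polynomial. Since \(\cC\) and \(\cC/\cP\) have the same extension groups and the functor \(\Extfun{\cT}\) factors through \(\stab{\cT}\)-modules, we have \(\Extfun{\cT/\cP}(\pi X)=\Extfun{\cT}X\) as \(\stab{\cT}\)-modules, using \(\stab{\smash{\cC/\cP}}=\stab{\cC}\) and \(\stab{(\cT/\cP)}=\stab{\cT}\). The quiver Grassmannians \(\QGra{[L]}{\Extfun{\cT}X}\) appearing in \(\Fpoly\) are computed entirely in the category of \(\stab{\cT}\)-modules (which is unchanged), and \(\Kgp{\fd{\stab{\cT}}}\) is likewise unchanged; hence \(\Fpoly(\Extfun{\cT/\cP}(\pi X))=\Fpoly(\Extfun{\cT}X)\) as elements of \(\Laurent{\Kgp{\fd{\stab{\cT}}}}\). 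The only subtlety here is bookkeeping: the basis of \(\Kgp{\fd{\stab{\cT}}}\) used to define pseudo-polynomials is indexed by \(\indec{\stab{\cT}}\subseteq\indec{\cT}\), which also does not change under partial stabilisation, so the Laurent pseudo-polynomial structure is identical.

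Then I would combine these with the compatibility of \((\beta_{\cT})_*\) and \((\pi_{\cT}^{\cT/\cP})_*\). By construction (or Propositions~\ref{p:stabind} and \ref{p:beta-proj-res}) \(\beta_{\cT/\cP}=\pi_{\cT}^{\cT/\cP}\circ\beta_{\cT}\), which gives on the level of group-algebra pushforwards the identity \((\pi_{\cT}^{\cT/\cP})_*\circ(\beta_{\cT})_*=(\beta_{\cT/\cP})_*\) on \(\Laurent{\Kgp{\fd{\stab{\cT}}}}\); this holds provided the relevant maps are well-defined, i.e.\ under the same hypotheses (\(\Extfun{\cT}X\in\fd{\stab{\cT}}\), or \(\beta_{\cT}\) of finite nullity, noting that \(\beta_{\cT/\cP}\) having finite nullity follows from the surjection \(\Kgp{\cT}\to\Kgp{\cT/\cP}\) and Theorem~\ref{t:ker-pi-T}). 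Putting the pieces together:
\[
(\pi_{\cT}^{\cT/\cP})_*\clucha[\cT]{\Aside}(X)=(\pi_{\cT}^{\cT/\cP})_*\bigl(a^{\ind{\cC}{\cT}[X]}(\beta_{\cT})_*\Fpoly(\Extfun{\cT}X)\bigr)=a^{\ind{\cC/\cP}{\cT/\cP}[X]}(\beta_{\cT/\cP})_*\Fpoly(\Extfun{\cT/\cP}(\pi X))=\clucha[\cT/\cP]{\Aside}(\pi X),
\]
which is the claimed commutativity. I expect the main obstacle to be purely technical rather than conceptual: verifying that \((\pi_{\cT}^{\cT/\cP})_*\) is genuinely well-defined on the image of \((\beta_{\cT})_*\Fpoly(\blank)\) and interacts correctly with the multiplication \eqref{eq:pseudo-poly-mult} when we are outside the finite-rank case (so that the relevant sums defining coefficients remain finite); this is the analogue of Proposition~\ref{p:finite-nullity} and Remark~\ref{r:beta-star-product} applied to the composite \(\pi_{\cT}^{\cT/\cP}\circ\beta_{\cT}=\beta_{\cT/\cP}\), and should follow from the same finite-nullity argument once one observes that \(\beta_{\cT/\cP}\) has finite nullity whenever \(\beta_{\cT}\) does.
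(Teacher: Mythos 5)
Your proposal is correct and is essentially a careful expansion of the paper's one-line proof, which simply cites Proposition~\ref{p:stabind} and the definitions: the three ingredients of \eqref{eq:clucha-A-alt} transform as you describe, using that \(\stab{\cC/\cP}=\stab{\cC}\), that the quiver Grassmannians and \(\Fpoly\)-polynomial are insensitive to partial stabilisation, and that \(\beta_{\cT/\cP}=\pi_{\cT}^{\cT/\cP}\circ\beta_{\cT}\). One small quibble with your closing parenthetical: the fact that \(\beta_{\cT/\cP}\) inherits finite nullity from \(\beta_{\cT}\) does not follow from Theorem~\ref{t:ker-pi-T} alone, but rather from the estimate \(\rank(\ker\beta_{\cT/\cP})\leq\rank(\ker\beta_{\cT})+\rank(\ker\pi_{\cT}^{\cT/\cP})\) together with the additional hypothesis that \(\cP\) is additively finite (so that \(\ker\pi_{\cT}^{\cT/\cP}\), which is spanned by \([P]\) for \(P\in\indec\cP\), has finite rank); this is automatic in the finite rank case but not in general.
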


\begin{proof} This follows immediately from the definitions and Proposition~\ref{p:stabind}.
\end{proof}

The next few results recover the (known) statement of \cite[Conj.~7.2]{FZ-CA4}, as in \cite[Thm.~5.5]{FuKeller}, and demonstrate how established techniques apply to cluster categories in our setting in order to prove statements about the values of their cluster characters.

\begin{definition}
Let $\bV$ be a free abelian group with basis $\cB$.
We say $p=\sum_{v\in\bV}\lambda_vy^v\in\powser{\bK}{\bV}$ is \emph{proper Laurent} if $v\not\geq_\cB0$ whenever $\lambda_v\ne0$; in other words, as a formal series in the variables $y^b$ for $b\in\cB$, every monomial of $p$ includes at least one of these variables with a negative degree.
\end{definition}

\begin{theorem}[{cf.\ \cite[Cor.~3.4]{CKLP}}]
\label{t:g-vectors-cl-mons}
Let $\cC$ be a Krull--Schmidt cluster category with $\cT\ctsubcat\cC$ maximally mutable.
Then for every rigid object $X\in\cC\setminus\cT$, the cluster character $\clucha[\cT]{\Aside}(X)$ is proper Laurent in $\powser{\bK}{\Kgp{\cT}}$ (with respect to the basis $\indec{\cT}$).
\end{theorem}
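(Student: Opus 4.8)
The plan is to unwind the definition of $\clucha[\cT]{\Aside}(X)$ and show that \emph{every} monomial occurring in it has a strictly negative coordinate with respect to the basis $\indec{\cT}$. Set $M=\Extfun{\cT}X$. Since $\Extfun{\cT}\colon\cC/\cT\simeq\fpmod{\stab{\cT}}$ is an equivalence (Proposition~\ref{p:equiv-to-mod}), the hypothesis $X\notin\cT$ forces $M\ne0$. By \eqref{eq:clucha-A} the monomials of $\clucha[\cT]{\Aside}(X)$ are the $a^{v_L}$ with $v_L\defeq\ind{\cC}{\cT}[X]+\beta_{\cT}[L]$, as $[L]$ ranges over classes of finite-dimensional submodules $L\le M$ with $\chi(\QGra{[L]}{M})\ne0$; so it suffices to show that each such $v_L$ satisfies $\canform{[\simpmod{\cT}{T}]}{v_L}{\cT}<0$ for some $T\in\indec{\cT}$.

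First I would treat the extremal submodules, which already require rigidity. Fix a minimal $\cT$-index conflation $\rightker{\cT}{X}\infl\rightapp{\cT}{X}\defl X$; by Remark~\ref{r:no-common-summands}, rigidity of $X$ (and Krull--Schmidt-ness) makes $\rightker{\cT}{X}$ and $\rightapp{\cT}{X}$ summand-disjoint, minimality forbids projective--injective summands of $\rightker{\cT}{X}$, and $X\notin\cT$ gives $\rightker{\cT}{X}\ne0$. Hence $\ind{\cC}{\cT}[X]=[\rightapp{\cT}{X}]-[\rightker{\cT}{X}]$ is strictly negative at every summand of $\rightker{\cT}{X}$, which settles $L=0$; dually, applying the same argument in $\op{\cC}$ and using $\beta_{\cT}[M]=\coind{\cC}{\cT}[X]-\ind{\cC}{\cT}[X]$ (Proposition~\ref{p:beta-proj-res}) settles $L=M$ whenever $M$ is finite-dimensional, via the (likewise non-zero) cokernel of a minimal $\cT$-coindex conflation.

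The substance of the proof, following the method of \cite{CKLP}, lies in the intermediate submodules $L\le M$. For such an $L$ I would use the equivalence $\Extfun{\cT}$ to realize the inclusion $L\hookrightarrow M$ and the quotient $M/L$ by objects of $\cC$ together with a conflation of $\cC$ (unique up to summands in $\cT$), so that Lemma~\ref{l:apres-Palu} and Theorem~\ref{t:ind-coind-additive-error-terms} rewrite $v_L$ in terms of the indices, coindices, and error functors \eqref{eq:errors} of the objects in that conflation; rigidity of $X$ is exactly what keeps the intervening $\cT$-index and $\cT$-coindex conflations minimal, so that summand-disjointness (Remark~\ref{r:no-common-summands}) is available at each step and one can locate a strictly negative coordinate of $v_L$ at a non-projective summand of the relevant approximation object. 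The Euler-form identity \eqref{eq:s-form-rewrite2} for $\sform{\blank}{\blank}{\cT}$ is available to pin down signs, and the row sign-coherence of $g$-vectors (Proposition~\ref{p:g-vector-sign-coherence}) organises the bookkeeping.

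The hard part will be precisely this per-monomial sign control. For the extremal submodules a single vertex --- a summand of $\rightker{\cT}{X}$ or of $\leftcok{\cT}{X}$ --- works uniformly, but for a general $L$ the negative coordinate of $v_L$ moves with $L$, so one must genuinely track how $\beta_{\cT}$ (equivalently the exchange matrix) interacts with the submodule lattice of $\Extfun{\cT}X$, and do so without recourse to the fundamental theorem (Theorem~\ref{t:Aside-bijection}) or a change-of-variables formula, since $X$ need not be reachable and $\cC$ need not be of finite rank. This uniformity over all $L$, carried out under the sole hypothesis that $X$ is rigid, is where the argument of \cite{CKLP} has to be adapted.
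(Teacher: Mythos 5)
Your reductions are fine as far as they go, but the proposal stops short of proving the case that actually matters, and the route you sketch for it is not the one that works. You correctly observe that it suffices to find, for each finite-dimensional submodule $L\leq M=\Extfun{\cT}X$ with $\chi(\QGra{[L]}{M})\ne 0$, a negative coordinate of $v_L=\ind{\cC}{\cT}[X]+\beta_{\cT}[L]$. You settle $L=0$ (and $L=M$) by a direct inspection of the minimal index/coindex conflations, which is sound. But for intermediate $L$ you only describe an intention — ``realize the inclusion $L\hookrightarrow M$ by a conflation of $\cC$, then invoke Lemma~\ref{l:apres-Palu}, Theorem~\ref{t:ind-coind-additive-error-terms} and error functors, and organise the bookkeeping via sign-coherence'' — and then explicitly acknowledge that you don't see how to close it. That acknowledged gap \emph{is} the theorem: without it you have proven nothing new beyond the trivial observations about the extremal monomials.

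The idea you are missing is simpler and cleaner than the machinery you are reaching for. Do not try to locate a negative coordinate vertex-by-vertex as $L$ varies; instead pair $v_L$ against $[L]$ itself. Since $[L]$ is a non-negative integer combination of classes of simples, $\canform{[L]}{v_L}{\cT}<0$ forces some $\canform{[\simpmod{\cT}{T}]}{v_L}{\cT}<0$ with $L(T)\ne 0$. Now the point is that $\canform{[L]}{\beta_{\cT}[L]}{\cT}=\sform{[L]}{[L]}{\cT}=0$, because $\sform{\blank}{\blank}{\cT}$ vanishes on the diagonal (this is immediate from \eqref{eq:s-form-rewrite2}, or from Lemma~\ref{l:s-form-skew-sym}); thus the $\beta_{\cT}[L]$ term drops out entirely and $\canform{[L]}{v_L}{\cT}=\canform{[L]}{\ind{\cC}{\cT}[X]}{\cT}$. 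Writing $L=\Extfun{\cT}V$ (possible since $\cT$ is maximally mutable, by Corollary~\ref{c:fp=fd}) and using a minimal $\cT$-index conflation $\rightker{\cT}{X}\infl\rightapp{\cT}{X}\stackrel{p}{\defl}X$, one evaluates this pairing via the long exact sequence for $\Hom{\cC}{V}{\blank}$. Rigidity of $X$ enters precisely to show $\Ext{1}{\cC}{V}{p}=0$: passing to the stable category, any map $V\to\Sigma X$ factoring over $\Sigma\cT$ must be zero because the chosen inclusion $v\colon V\to X$ has mapping cone in the kernel of $\Extfun{\cT}$, and rigidity of $X$ kills the induced endomorphism. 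One then reads off $\canform{[L]}{\ind{\cC}{\cT}[X]}{\cT}=-\dim\righterror{1}{\cT}{X}(V)$, and $\righterror{1}{\cT}{X}(V)\cong\Hom{\stab{\cT}}{\Extfun{\cT}V}{\Extfun{\cT}X}\ne 0$ since $0\ne L\le M$. This gives the required strict inequality uniformly in $L$. Your intuition that the negative coordinate ``moves with $L$'' and demands tracking the submodule lattice is exactly what the $[L]$-pairing trick avoids.
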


\begin{proof}
We follow the strategy of \cite[\S 3]{CKLP}.
The monomials of $\clucha[\cT]{\Aside}(U)$ have the form $a^{\ind{\cC}{\cT}[U]+\beta_{\cT}[L]}$ for $L\in \fd \stab{\cT}$ a finite-dimensional submodule of $\Extfun{\cT}U$, and we aim to show that this is a proper Laurent monomial when $U\in\cC\setminus\cT$ is rigid.

For $L=0$, we observe that $\canform{[\simpmod{\cT}{T}]}{\ind{\cC}{\cT}[U]}{\cT}\geq 0$ for all $T\in \indec \cT$ if and only if $U\in \cT$, so we are done in this case.
Assume now that $L\neq 0$ and consider $\canform{[L]}{\ind{\cC}{\cT}[U]+\beta_{\cT}[L]}{\cT}$.
By Lemma~\ref{l:s-form-skew-sym}, the form $\sform{\blank}{\blank}{\cT}$ is skew-symmetric (even if $\beta_{\stab{\cT}}$ is not), and so $\canform{[L]}{\beta_{\cT}[L]}{\cT}=\sform{[L]}{[L]}{\cT}=0$.
Thus, $\canform{[L]}{\ind{\cC}{\cT}[U]+\beta_{\cT}[L]}{\cT}=\canform{[L]}{\ind{\cC}{\cT}[U]}{\cT}$.

Since $\cT$ is maximally mutable, $\fd{\stab{\cT}}\subseteq\fpmod{\stab{\cT}}$ by Corollary~\ref{c:fp=fd}, and so by Proposition~\ref{p:equiv-to-mod} we may choose $V\in \cC$ such that $L=\Extfun{\cT}V$.
Let $\rightker{\cT}{U}\infl \rightapp{\cT}{U} \stackrel{p}{\defl} U\confl$ be a $\cT$-index conflation for $U$, so that
\[ \canform{[L]}{\ind{\cC}{\cT}[U]}{\cT}  = \dim \Ext{1}{\cC}{\rightapp{\cT}{U}}{V}-\dim \Ext{1}{\cC}{\rightker{\cT}{U}}{V}.	\]
Now, as in \eqref{eq:lr-seqs}, we have an exact sequence 
\begin{equation}
\label{eq:proper-LP-seq}
\begin{tikzcd}[column sep=20pt]
0\arrow{r}&\righterror{1}{\cT}{U}(V)\arrow{r}&\Ext{1}{\cC}{V}{\rightker{\cT}{U}}\arrow{r}&\Ext{1}{\cC}{V}{\rightapp{\cT}{U}}\arrow{rr}{\Ext{1}{\cC}{V}{p}}&&\Ext{1}{\cC}{V}{U}.
\end{tikzcd}
\end{equation}
We claim that $\Ext{1}{\cC}{V}{p}=0$.
To see this, we first pass to the stable category $\stab{\cC}$, in which $\Ext{1}{\cC}{V}{p}=\stabHom{\cC}{V}{\Sigma p}\colon\stabHom{\cC}{V}{\Sigma\rightapp{\cT}{U}}\to\stabHom{\cC}{V}{\Sigma U}$.
Since $p$ is a right $\cT$-approximation of $U$ (in $\cC$, and hence also in $\stab{\cC}$), its shift $\Sigma p$ is a right $\Sigma\cT$-approximation of $\Sigma U$.
Thus the image of $\stabHom{\cC}{V}{\Sigma p}$ is the set of morphisms $V\to \Sigma U$ in $\stab{\cC}$ which factor over $\Sigma\cT$.
But any such morphism is zero as in \cite{CKLP}: if $v\colon V\to U$ is a map such that $\Extfun{\cT}{v}\colon L\to\Extfun{\cT}{U}$ is the inclusion, so in particular injective, then its mapping cylinder is in the kernel of $\Extfun{\cT}$, which is the ideal $(\cT)$ of maps factoring over $\cT$.
Thus, if $f\colon V\to\Sigma U$ factors over $(\Sigma\cT)$, we have the commutative diagram
\[\begin{tikzcd}
\Sigma^{-1}U\arrow{r}&C\arrow{r}{c\in(\cT)}&V\arrow{d}[swap]{f\in(\Sigma\cT)}\arrow{r}{v}&U\arrow{dl}{g=0}\\
&&\Sigma U
\end{tikzcd}\]
in which the upper row is a triangle in $\stab{\cC}$.
Here $fc=0$ since there are no morphisms from $\cT$ to $\Sigma\cT$ (because $\cT$ is cluster-tilting, hence rigid), so there exists a map $g\colon U\to\Sigma U$ making the diagram commute, but $g=0$ since $U$ is rigid.
Thus, $f=0$, as required.

Consequently, by taking dimensions we deduce from \eqref{eq:proper-LP-seq} that \[ \dim \righterror{1}{\cT}{U}(V)+\canform{[L]}{\ind{\cC}{\cT}[U]}{\cT}=0. \] By Lemma~\ref{l:error-terms-modT}\ref{p:error-terms-modT-r1} and Proposition~\ref{p:equiv-to-mod}, we have \[\righterror{1}{\cT}{U}(V)=\Hom{\cC}{V}{U}/\cT(V,U)\iso \Hom{\stab{\cT}}{\Extfun{\cT}V}{\Extfun{\cT}U},\]
and this space is non-zero since $0\neq L=\Extfun{\cT}V\leq \Extfun{\cT}U$.  Hence, $\canform{[L]}{\ind{\cC}{\cT}[U]}{\cT}=-\dim \righterror{1}{\cT}{U}(V)<0$.
Decomposing $L\in\fd\stab{\cT}$ into its simple composition factors, we conclude that $\canform{[\simpmod{\cT}{T}]}{\ind{\cC}{\cT}[U]}{\cT}<0$ for some $T$ as required.
\end{proof}

\begin{remark}\label{r:g-vecs-distinguish-cl-mons}
We also see that cluster characters of different objects from cluster-tilting subcategories have different $\mathbf{g}$-vectors (\cite[Conj.~7.10]{FZ-CA2}, proved in the exact case in \cite[Thm.~5.5(b)]{FuKeller}).
Let $\clucha[\cT]{\Aside}(V)$ and $\clucha[\cT]{\Aside}(W)$ be distinct, where $V\in \cV$ and $W\in \curly{W}$ for some cluster-tilting subcategories $\cV,\curly{W}\ctsubcat \cC$.
The $\mathbf{g}$-vector of $\clucha[\cT]{\Aside}(V)$ is $\ind{\cU}{\cT}{[U]}=\ind{\cC}{\cT}{[V]}\in \gvecplus{\cT}{\cU}$ (see Definition~\ref{d:g-vectors}, Theorem~\ref{t:c-vec-mut-formula}), and similarly for $W$.

Assume for a contradiction that the $\mathbf{g}$-vectors of $\clucha[\cT]{\Aside}(V)$ and $\clucha[\cT]{\Aside}(W)$ are equal.
Since $V$ and $W$ are rigid, these quantities being equal implies that $V\iso W$ by Proposition~\ref{p:rigid-index}, so that the associated cluster monomials are not distinct, a contradiction.
\end{remark}

\begin{corollary}
If $\cC$ is a Krull--Schmidt cluster category and $\cT\ctsubcat\cC$ is such that $(\cC,\cT)$ has a cluster structure, then the cluster monomials of the cluster algebra $\mathscr{A}$ with initial exchange matrix $\beta_{\cT}$ are linearly independent.
\end{corollary}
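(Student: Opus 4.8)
The plan is to follow the strategy of Cerulli Irelli--Labardini-Fragoso--Plamondon \cite{CILF}, all of whose ingredients are now in place: Theorem~\ref{t:Aside-bijection} identifies the cluster monomials of $\mathscr{A}$ with the cluster character values $\clucha[\cU]{\Aside}(R)$ for $R$ an object of a cluster-tilting subcategory reachable from $\cT$ (such an $R$ being rigid, and $\Extfun{\cU}{R}$ finite-dimensional as in the remark following that theorem, so that $\clucha[\cU]{\Aside}(R)$ lies in the group algebra $\bK\Kgp{\cU}$), while Theorem~\ref{t:g-vectors-cl-mons} controls their shape. The crucial dichotomy is: for $R$ reachable and $\cU\ctsubcat\cC$ reachable from $\cT$ (hence maximally mutable, since $(\cC,\cT)$ has a cluster structure), either $R\in\cU$, in which case $\clucha[\cU]{\Aside}(R)=a^{[R]_{\cU}}$ is a single monomial with exponent $\geq_{\indec{\cU}}0$; or $R\notin\cU$, in which case $\clucha[\cU]{\Aside}(R)$ is proper Laurent with respect to the basis $\indec{\cU}$ by Theorem~\ref{t:g-vectors-cl-mons}.

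Suppose for contradiction that there is a nontrivial relation $\sum_{R\in S}\lambda_R\clucha[\cT]{\Aside}(R)=0$ in $\mathscr{A}$, where $S$ is a finite set of pairwise non-isomorphic reachable objects and each $\lambda_R\in\bK\setminus\{0\}$ (by Theorem~\ref{t:Aside-bijection} such a relation is what a linear dependence among distinct cluster monomials amounts to). Fix reachable cluster-tilting subcategories $\cU_1=\cT,\cU_2,\dots,\cU_k$ such that every $R\in S$ lies in at least one $\cU_j$ (one such subcategory per element of $S$ suffices, and there are finitely many). I would prove by induction on $j=1,\dots,k$ that $\lambda_R=0$ for every $R\in S$ with $R\in\cU_1\cup\dots\cup\cU_j$; taking $j=k$ then forces all $\lambda_R=0$, the contradiction.

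For the inductive step, re-expand the relation --- which holds in $\mathscr{A}$ --- as a Laurent polynomial identity in the cluster indexed by $\cU_j$; this is legitimate by the ``induced bijection commuting with mutations'' part of Theorem~\ref{t:Aside-bijection}, which says that $\clucha[\cU_j]{\Aside}(R)$ is the expansion of the same cluster monomial $\clucha[\cT]{\Aside}(R)$ in the $\cU_j$-cluster. Discarding the terms already known to vanish leaves $\sum_{R\in S'}\lambda_R\clucha[\cU_j]{\Aside}(R)=0$ in $\bK\Kgp{\cU_j}$, where $S'=S\setminus(\cU_1\cup\dots\cup\cU_{j-1})$. Split $S'$ according to whether $R\in\cU_j$: by the dichotomy, $\sum_{R\in S',\,R\in\cU_j}\lambda_R a^{[R]_{\cU_j}}$ is supported on monomials with exponent $\geq_{\indec{\cU_j}}0$, whereas $\sum_{R\in S',\,R\notin\cU_j}\lambda_R\clucha[\cU_j]{\Aside}(R)$ is proper Laurent (the proper Laurent elements forming a $\bK$-subspace), so these two sums share no monomial and each must vanish separately. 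The first is a $\bK$-linear combination of pairwise distinct monomials $a^{[R]_{\cU_j}}$ --- distinct because $R\mapsto[R]_{\cU_j}$ is injective on isoclasses of objects of the Krull--Schmidt category $\cU_j$ (alternatively by Proposition~\ref{p:rigid-index}) --- so all of its coefficients vanish, completing the induction.

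The substantive mathematics is entirely carried by Theorems~\ref{t:Aside-bijection} and \ref{t:g-vectors-cl-mons}, so I do not anticipate a genuine obstacle; the one point deserving care is the bookkeeping above, and in particular the assertion that a single identity in $\mathscr{A}$ may be re-expanded cluster by cluster with $\clucha[\cT]{\Aside}(R)$ passing to $\clucha[\cU_j]{\Aside}(R)$. This is the cluster-categorical avatar of the fact that a cluster monomial is an element of the ambient field admitting a Laurent expansion in every cluster, and it follows from the compatibility with mutation recorded in Theorem~\ref{t:Aside-bijection}.
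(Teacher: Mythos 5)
Your proof is correct, and it is essentially the paper's proof with one difference in presentation: the paper simply records that Theorems~\ref{t:Aside-bijection} and \ref{t:g-vectors-cl-mons} establish the proper Laurent property of cluster monomials with respect to every cluster, and then cites \cite[Thm.~6.4]{CILF} as a black box for the conclusion; you instead unpack the argument of that citation, running the dichotomy-plus-induction argument yourself. This is a perfectly valid alternative, and you correctly identify the one delicate point in the unpacking, namely that the single relation $\sum_R\lambda_R\clucha[\cT]{\Aside}(R)=0$ in $\mathscr{A}$ may legitimately be re-expanded cluster by cluster with $\clucha[\cT]{\Aside}(R)$ replaced by $\clucha[\cU_j]{\Aside}(R)$. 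Your justification of this via Theorem~\ref{t:Aside-bijection} is sound, and indeed the paper itself makes exactly this observation in the paragraph immediately following the corollary, where it explains that the presence of a cluster structure is what allows $\clucha[\cT]{\Aside}(X)$ and $\clucha[\cU]{\Aside}(X)$ to be realised as expressions for the same cluster monomial in two different coordinate systems. A minor remark on economy: since the dichotomy already forces the two pieces of $\sum_{R\in S}\lambda_R\clucha[\cU_j]{\Aside}(R)=0$ to vanish separately for \emph{every} $j$, the induction on $j$ is unnecessary --- for each $j$ independently you may conclude $\lambda_R=0$ for all $R\in S\cap\cU_j$, and then union over $j$ --- but this does not affect correctness.
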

\begin{proof}
By Theorems~\ref{t:Aside-bijection} and \ref{t:g-vectors-cl-mons}, the cluster monomials of $\mathscr{A}$ have the proper Laurent property with respect to any cluster, hence are linearly independent by \cite[Thm.~6.4]{CILF}.
\end{proof}

We suspect that the cluster characters of reachable rigid objects are linearly independent even if $(\cC,\cT)$ does not have a cluster structure.
To apply (the proof of) \cite[Thm.~6.4]{CILF} to this more general situation it would be necessary to realise the cluster characters $\clucha[\cT]{\cA}(X)$ and $\clucha[\cU]{\cA}(X)$ of one object with respect to two different cluster characters as expressions for `the same quantity' in two different coordinate systems; in the presence of a cluster structure, the bijection of Theorem~\ref{t:Aside-bijection} makes this possible, at least when $\cU$ is reachable from $\cT$ and $X$ is reachable rigid, since in this case the cluster characters are expressions for the same cluster monomial in two different clusters.
On the other hand, if $\beta_{\cT}$ is injective, we may circumvent this argument, and the reachability hypothesis, as follows.

\begin{proposition}
\label{p:li-full-rank}
Let $\cC$ be a Krull--Schmidt cluster category and $\cT\ctsubcat\cC$.
If $\beta_{\cT}$ is injective, then the cluster characters $\clucha[\cT]{\cA}(U)$ of rigid objects $U\in\cC$ are linearly independent.
\end{proposition}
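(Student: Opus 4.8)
The plan is to use the \emph{$\mathbf{g}$-vector grading} induced by $\beta_\cT$ to separate the cluster characters. Concretely, for a rigid object $U \in \cC$, the monomials appearing in $\clucha[\cT]{\Aside}(U)$ are exactly $a^{\ind{\cC}{\cT}[U] + \beta_\cT[L]}$ as $L$ ranges over finite-dimensional submodules of $\Extfun{\cT}{U}$, and the coefficient of the `minimal' monomial $a^{\ind{\cC}{\cT}[U]}$ (corresponding to $L = 0$) is $1$. The key observation is that, because $\beta_\cT$ is injective, the map $t \mapsto \pi_\cT^\cC(t)$ restricted to the affine subset $\{\ind{\cC}{\cT}[U] + \beta_\cT[L]\}$ does not collapse distinct $[L]$: more precisely, I would equip $\Kgp{\cT}$ with the $\integ$-grading (or rather, use the projection $\pi_\cT^{\cC}\colon \Kgp{\cT}\to\Kgp{\cC}$ together with the fact, from Corollary~\ref{c:coind-minus-ind-in-ker-pi-T}, that all of $\beta_\cT[L]$ lies in $\ker\pi_\cT^\cC$). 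Thus $\pi_\cT^\cC$ applied to the exponent of any monomial of $\clucha[\cT]{\Aside}(U)$ gives $[U]_\cC$, independent of $L$; this shows the \emph{support class} $[U]_\cC$ is an invariant of the cluster character, but it is not yet enough to separate two rigid objects with the same class in $\Kgp{\cC}$.

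To get genuine linear independence, I would argue as follows. Suppose $\sum_{i} \lambda_i \clucha[\cT]{\Aside}(U_i) = 0$ for pairwise non-isomorphic rigid objects $U_i$ and scalars $\lambda_i \in \bK$, not all zero. Among the exponents $\ind{\cC}{\cT}[U_i]$ appearing, I would select one, say $\ind{\cC}{\cT}[U_{i_0}]$, that is \emph{maximal} with respect to the partial order $\leq_\cB$ on $\Kgp{\cT}$ determined by the basis $\cB = \indec{\cT}$ — equivalently, pick a linear functional on $\Kgp{\cT}\tensor\real$ maximised (uniquely, after a generic perturbation) at this exponent among the finite set $\{\ind{\cC}{\cT}[U_i]\}$. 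By Proposition~\ref{p:rigid-index}, the map $U \mapsto \ind{\cC}{\cT}[U]$ is injective on isomorphism classes of rigid objects, so the $\ind{\cC}{\cT}[U_i]$ are distinct; hence a generic functional separates them. Now I claim the monomial $a^{\ind{\cC}{\cT}[U_{i_0}]}$ can only arise, among all terms of all $\clucha[\cT]{\Aside}(U_i)$, from $\clucha[\cT]{\Aside}(U_{i_0})$ with $L = 0$. Indeed, a monomial of $\clucha[\cT]{\Aside}(U_i)$ has exponent $\ind{\cC}{\cT}[U_i] + \beta_\cT[L_i]$; applying the chosen functional $\phi$ and using that $\phi$ is maximised at $\ind{\cC}{\cT}[U_{i_0}]$ forces $\phi(\beta_\cT[L_i]) \geq 0$ whenever such a monomial equals $a^{\ind{\cC}{\cT}[U_{i_0}]}$. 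But $[L_i]$ ranges over classes of \emph{submodules}, and $\beta_\cT[L_i] = \coind{\cC}{\cT}[V_i] - \ind{\cC}{\cT}[V_i]$ for $\Extfun{\cT}{V_i} = L_i$; one shows using the sign-coherence of $\mathbf{g}$-vectors (Proposition~\ref{p:g-vector-sign-coherence}) together with injectivity of $\beta_\cT$ that $\phi(\beta_\cT[L_i]) \leq 0$ for a suitably chosen $\phi$, with equality only if $[L_i]=0$ (here injectivity of $\beta_\cT$ is exactly what rules out $\beta_\cT[L_i] = 0$ for $L_i \neq 0$). Combining the two inequalities and genericity of $\phi$, the only way the monomial $a^{\ind{\cC}{\cT}[U_{i_0}]}$ appears is from the $L=0$ term of $\clucha[\cT]{\Aside}(U_{i_0})$, with coefficient $1$. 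Matching coefficients of this monomial in $\sum_i \lambda_i \clucha[\cT]{\Aside}(U_i) = 0$ yields $\lambda_{i_0} = 0$, contradicting the choice of $i_0$ (or, inducting on the number of terms, we remove $U_{i_0}$ and repeat).

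The main obstacle I anticipate is making the separation argument in the previous paragraph fully rigorous: one must choose a single linear functional $\phi$ on $\Kgp{\cT}\tensor\real$ that simultaneously (a) is uniquely maximised at $\ind{\cC}{\cT}[U_{i_0}]$ among the finitely many $\ind{\cC}{\cT}[U_i]$, and (b) is non-positive on all of $\beta_\cT[L]$ for $L$ a submodule of some $\Extfun{\cT}{U_i}$. Condition (a) is easy by genericity since the finite point set is in general position-able, but (b) is a constraint on an infinite (if $\cC$ has infinite rank) or at least a priori unbounded set of lattice points; the right way around this is to note that for a \emph{fixed} rigid $U_i$, the relevant submodules $L \leq \Extfun{\cT}{U_i}$ have bounded dimension vectors (bounded by $\dimvec\Extfun{\cT}{U_i}$), so $\{\beta_\cT[L]\}$ is finite, and the whole configuration — all $\ind{\cC}{\cT}[U_i]$ and all $\ind{\cC}{\cT}[U_i] + \beta_\cT[L]$ — is a finite point set; then a generic functional works, and condition (b) in the strong form is not needed, only that the functional separates the finitely many exponents. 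I would then lean on the injectivity of $\beta_\cT$ precisely at the step where I need the $L=0$ exponents $\ind{\cC}{\cT}[U_i]$ to be distinct from every other exponent $\ind{\cC}{\cT}[U_j] + \beta_\cT[L_j]$ for $(j, L_j) \neq (i, 0)$: without injectivity, a nonzero $L_j$ with $\beta_\cT[L_j] = 0$ would produce a spurious extra copy of $a^{\ind{\cC}{\cT}[U_j]}$. The cleanest formulation is probably an induction on $|\{i : \lambda_i \neq 0\}|$, peeling off a maximal $\mathbf{g}$-vector at each stage, which sidesteps needing one global functional and only requires separating finitely many points at each inductive step.
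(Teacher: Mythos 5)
Your strategy is essentially the paper's: isolate the coefficient-$1$ monomial $a^{\ind{\cC}{\cT}[U_{i_0}]}$ coming from the zero submodule, use injectivity of $\beta_\cT$ so that no other term of any $\clucha[\cT]{\Aside}(U_j)$ lands on the same exponent, and invoke Proposition~\ref{p:rigid-index} to see that these distinguished monomials are pairwise distinct. The paper carries out the separation via the partial order on $\Kgp{\cT}$ in which $v\preceq w$ means $w-v=\beta_\cT[L]$ for some $L\in\fd\stab{\cT}$; this is a genuine order (antisymmetry) precisely because $\beta_\cT$ is injective, $a^{g_i}$ is the unique $\preceq$-minimal monomial of $\clucha[\cT]{\Aside}(U_i)$ with coefficient $1$, and picking a $\preceq$-minimal element $g_{i_0}$ of $\{g_i:\lambda_i\neq 0\}$ forces $\lambda_{i_0}=0$. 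Your linear-functional formulation is the support-hyperplane version of the same idea, but two details need correction.

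The strictly supporting functional you require in condition (b) has nothing to do with sign-coherence of $\mathbf{g}$- or $\mathbf{c}$-vectors. It follows from the elementary fact that an injective linear map sends the pointed simplicial cone of non-negative classes in $\Kgp{\fd\stab{\cT}}\tensor\real$ to a pointed cone in $\Kgp{\cT}\tensor\real$, and a pointed cone admits a hyperplane meeting it only at the origin; citing Proposition~\ref{p:g-vector-sign-coherence} here is a red herring. More seriously, your closing suggestion that condition (b) can be dropped and mere genericity suffices is not sound in the stated generality: it requires the set of exponents appearing in $\sum_i\lambda_i\clucha[\cT]{\Aside}(U_i)$ to be finite, hence $\Extfun{\cT}U_i\in\fd\stab{\cT}$ for all $i$. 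This is automatic under finite rank or reachability hypotheses, neither of which appears in the statement; in general each $\clucha[\cT]{\Aside}(U_i)$ lies in $\powser{\bK}{\Kgp{\cT}}$ as a genuine power series, and no single functional can separate infinitely many exponents. The paper's partial-order argument sidesteps this because minimality only ever needs to be compared among the finitely many indices $g_i$.
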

\begin{proof}
Let $S=\{ \clucha[\cT]{\Aside}(U_{i}) \mid U_{i}\in \cU_{i},\ 1\leq i\leq r\}$ be pairwise distinct, so that in particular the $U_{i}$ are pairwise non-isomorphic.
Assume for a contradiction that $S$ is linearly dependent.
The $\Fpoly$-polynomials $\Fpoly(\Extfun{\cT}X)$ have constant term $1$, and the same is true of $(\beta_{\cT})_{*}\Fpoly(\Extfun{\cT}X)$ since $\beta_{\cT}$ is injective.
A linear dependence of $S$ would thus imply a linear dependence of the monomials $a^{\ind{\cC}{\cT}{[U_{i}]}}$, these being the minimal degree terms.
However, by Proposition~\ref{p:rigid-index}, the fact that the $U_{i}$ are pairwise non-isomorphic implies that the indices $\ind{\cC}{\cT}{[U_{i}]}$ are all distinct.
The corresponding monomials $a^{\ind{\cC}{\cT}{[U_{i}]}}$ are therefore linearly independent, and hence so is $S$.
\end{proof}

As indicated above, our cluster character is still defined when $(\cC,\cT)$ does not have a cluster structure, but its values may not be cluster variables.
They may still be interesting functions, however, as the following example indicates: we will leave a more general exploration of this phenomenon for future work.

\begin{example}
\label{eg:CNS-alg}
Consider the mesh category $\cC$, defined over $\complex$, with Auslander--Reiten quiver as follows: 
\[\begin{tikzpicture}
\foreach \c in {1,2,3,4}
{\foreach \r in {1,3,5}
{\coordinate (c\r-\c) at (2*\c-2,1-\r);
}
\foreach \r in {2,4,6}
{\coordinate (c\r-\c) at (2*\c-1,1-\r);
}
}
\foreach \r/\c/\l in {1/1/T_2, 1/2/U_4, 1/3/U_2, 1/4/T_2, 2/1/T_1, 2/2/U_3, 2/3/U_1, 2/4/T_1, 3/1/Z_1, 3/2/Z_3, 3/3/Z_2, 3/4/Z_1, 4/1/Z_2, 4/2/Z_1, 4/3/Z_3, 4/4/Z_2, 5/1/U_3, 5/2/U_1, 5/3/T_1, 5/4/U_3, 6/1/U_2, 6/2/T_2, 6/3/U_4, 6/4/U_2}
{\node (\r-\c) at (c\r-\c) {$\l$};
}
\foreach \r in {1,3,5}
{\foreach \c/\d in {1/2,2/3,3/4}
{\draw[dashed] (\r-\c) to (\r-\d);
}
\draw[dashed] (\r-4) to (7,1-\r);
}
\foreach \r in {2,4,6}
{\foreach \c/\d in {1/2,2/3,3/4}
{\draw[dashed] (\r-\c) to (\r-\d);
}
\draw[dashed] (0,1-\r) to (\r-1);
}
\foreach \c in {1,2,3,4}
{\foreach \r/\s in {1/2,3/4,5/6}
{\draw[-angle 90] (\r-\c) to (\s-\c);}
\foreach \r/\s in {3/2,5/4}
{\draw[-angle 90] (\r-\c) to (\s-\c);}
}
\foreach \c/\d in {1/2,2/3,3/4}
{\foreach \r/\s in {2/3,4/5}
{\draw[-angle 90] (\r-\c) to (\s-\d);}
\foreach \r/\s in {2/1,4/3,6/5}
{\draw[-angle 90] (\r-\c) to (\s-\d);}
}
\foreach \x in {0,3,6}
{\draw[dotted] (\x.5,.5) to (\x.5,-5.5);}
\end{tikzpicture}\]
Here the dotted lines are identified via a glide reflection (so two copies of a fundamental domain are visible).
One may check that $\cC$ is $2$-Calabi--Yau, for example by realising it as the orbit category for the action of $\langle\Sigma^3\rangle\iso\integ_3$ on the classical cluster category \cite{BMRRT} of type $\mathsf{A}_6$, and that $T=T_1\oplus T_2$ is a cluster-tilting object in $\cC$.
The quiver of $\op{\End{\cC}{T}}$ is
\[\begin{tikzpicture}
\node (1) at (0,0) {$T_1$};
\node (2) at (1.5,0) {$T_2$};
\draw[-angle 90] (1) to (2);
\draw[out=150,in=210,looseness=4,-angle 90,overlay] (1) to (1);
\end{tikzpicture}\]
so $\cT=\add{T}$ has a loop at $T_1$.

Writing $a_i=a^{[T_i]}$, these being the cluster characters $\clucha[\cT]{\Aside}(T_i)$, we may calculate
\begin{align*}
\clucha[\cT]{\Aside}(U_1)&=a_1^{-1}(1+a_2+a_2^2),\\
\clucha[\cT]{\Aside}(U_2)&=a_2^{-1}(1+a_1^{-1}+a_1^{-1}a_2+a_1^{-1}a_2^2),\\
\clucha[\cT]{\Aside}(U_3)&=a_1a_2^{-2}(1+2a_1^{-1}+a_1^{-2}+a_1^{-1}a_2+a_1^{-2}a_2+a_1^{-2}a_2^{-2}),\\
\clucha[\cT]{\Aside}(U_4)&=a_1a_2^{-1}(1+a_1^{-1}).
\end{align*}
These are the generalised cluster variables of the generalised cluster algebra (in the sense of Chekhov--Shapiro \cite{CheSha}) with initial exchange matrix $B=\bigl(\begin{smallmatrix}0&-1\\2&0\end{smallmatrix}\bigr)$, different from $B_{\cT}=\bigl(\begin{smallmatrix}0&-1\\1&0\end{smallmatrix}\bigr)$ (cf.~Remark~\ref{r:gen-c-vec-mut}), and exchange polynomials $\theta_1=1+z+z^2$ and $\theta_2=1+z$.
These cluster characters are linearly independent, as predicted by Proposition~\ref{p:li-full-rank}, since $B_{\cT}$ has full rank.

The objects $Z_1$, $Z_2$ and $Z_3$ are not rigid. We have for example
\[\clucha[\cT]{\Aside}(Z_1)=1+a_2=\clucha[\cT]{\Aside}(0)+\clucha[\cT]{\Aside}(T_2),\]
demonstrating that the rigidity hypothesis in Proposition~\ref{p:li-full-rank} is necessary even when considering only indecomposable objects.

Many similar examples of cluster categories with only a weak cluster structure appear in work of Baur--Pasquali--Velasco \cite{BPV}, and we also expect these to decategorify to Chekhov--Shapiro's generalised cluster algebras, as suggested by work of Fraser \cite{Fraser}.
\end{example}

\subsection{\texorpdfstring{$\Xside$-cluster characters}{X-cluster characters}}

Our next goal is to write down an $\Xside$-cluster character, analogous to \eqref{eq:clucha-A} for the $\Aside$-side, which will produce $\Xside$-cluster variables in the presence of a cluster structure.
While this has been done implicitly by categorifying the individual ingredients ($\mathbf{c}$-vectors and $\Fpoly$-polynomials) of Fomin--Zelevinsky's separation formula for these variables \cite{FZ-CA4}, we will package things together to more closely resemble the $\Aside$-cluster character \eqref{eq:clucha-A}.
In particular, our proof that the $\Xside$-cluster character correctly computes the $\Xside$-cluster variables (when we have a cluster structure) is independent of the separation formula, and thus gives a new proof of this formula for any cluster algebra obtained from one of our cluster categories.

\begin{definition}
\label{d:U-pm}
Let $\cC$ be a Krull--Schmidt cluster category. For each $\cU\ctsubcat\cC$ and each $M\in\fpmod{\stab{\cU}}$, choose $\modlift{M}{\cU}{+},\modlift{M}{\cU}{-}\in\cU$ such that
\begin{equation}
\label{eq:TM-pm-def}
\beta_{\cU}[M]=[\modlift{M}{\cU}{+}]-[\modlift{M}{\cU}{-}]\in \Kgp{\cU}.
\end{equation}
\end{definition}

\begin{remark}
\label{r:UM-ambiguity}
While the objects $\modlift{M}{\cU}{\pm}$ are not defined uniquely up to isomorphism by \eqref{eq:TM-pm-def}, the fact that $\cU$ is Krull--Schmidt and has no non-split extensions means that the possible choices differ only by the addition or removal of common direct summands.
This ambiguity has no effect on what follows, in particular on Definition~\ref{d:clucha-X} below.
\end{remark}

Recall that $\Laurent{\Kgp{\fd{\stab{\cT}}}}$ denotes the algebra of Laurent pseudo-polynomials in $\Kgp{\fd{\stab{\cT}}}$, with respect to the basis of simple modules.

\begin{proposition}
\label{p:ppoly-id}
Let $\bV$ be a free abelian group with basis $\cB$.
Then the algebra $\Poly{\bV}$ is an integral domain.
\end{proposition}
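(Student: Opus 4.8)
The plan is to reduce the statement to the elementary fact that a polynomial ring in finitely many variables over $\bK$ is an integral domain. The point is that, although an element of $\Poly{\bV}$ may involve infinitely many of the basis variables, once a target monomial $y^{W}$ is fixed, only the finitely many variables appearing in $W$ are relevant to computing the $y^{W}$-coefficient of a product: all exponents occurring lie in the positive cone $\{v\in\bV:\ip{v}{b}\geq0\text{ for all }b\in\cB\}$, and in that cone coordinates add without cancellation, so $\ip{u+v}{b}=0$ forces $\ip{u}{b}=\ip{v}{b}=0$.

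First I would note that $\Poly{\bV}$ is a commutative $\bK$-subalgebra of $\Laurent{\bV}$ containing $1=y^{0}$; closure under the operations (and preservation of the condition ``$v_{0}=0$'') is exactly what the proof of Lemma~\ref{l:pseudo-poly-alg} checks, specialised to $v_{0}=0$. Now take nonzero $p=\sum_{v}\lambda_{v}y^{v}$ and $q=\sum_{v}\rho_{v}y^{v}$ in $\Poly{\bV}$, pick exponents $u_{1},v_{1}$ with $\lambda_{u_{1}}\neq0$ and $\rho_{v_{1}}\neq0$, and let $F\subseteq\cB$ be the finite set of those $b$ with $\ip{u_{1}}{b}\neq0$ or $\ip{v_{1}}{b}\neq0$. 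Write $\bV_{F}=\{v\in\bV:\ip{v}{b}=0\text{ for }b\notin F\}$, a free abelian group of finite rank with basis $F$, and let $p_{F},q_{F}$ be the subsums of $p$ and $q$ consisting of the terms whose exponent lies in $\bV_{F}$. These are nonzero (they contain $y^{u_{1}}$ and $y^{v_{1}}$), lie in $\Poly{\bV_{F}}$, and since $F$ is finite the positivity and boundedness conditions force $p_{F},q_{F}$ to have finite support, so $\Poly{\bV_{F}}$ is just the polynomial ring $\bK[y^{b}:b\in F]$. As this is an integral domain, $p_{F}q_{F}\neq0$, and we may pick $W$ with a nonzero $y^{W}$-coefficient in $p_{F}q_{F}$; note $\ip{W}{b}=0$ for $b\notin F$ since $W$ is a sum of two exponents from $\bV_{F}$.

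It then remains to see that the $y^{W}$-coefficient of $pq$ equals that of $p_{F}q_{F}$, whence $pq\neq0$ and, $\Poly{\bV}$ being commutative with $1\neq0$, it is an integral domain. By \eqref{eq:pseudo-poly-mult} the $y^{W}$-coefficient of $pq$ is the finite sum $\sum\lambda_{u}\rho_{v}$ over pairs $(u,v)$ with $\lambda_{u}\neq0$, $\rho_{v}\neq0$ and $u+v=W$; for any such pair, positivity gives $\ip{u}{b},\ip{v}{b}\geq0$ for all $b$, so $0=\ip{W}{b}=\ip{u}{b}+\ip{v}{b}$ for $b\notin F$ forces $\ip{u}{b}=\ip{v}{b}=0$ there, i.e.\ $u,v\in\bV_{F}$. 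Hence every term contributing to the $y^{W}$-coefficient of $pq$ already contributes to that of $p_{F}q_{F}$ and conversely, so the two coefficients agree. I do not expect a genuine obstacle: the argument is routine once the ``no cancellation in the positive cone'' observation is isolated, and that observation is exactly the place where it matters that we are in $\Poly{\bV}$ rather than $\Laurent{\bV}$.
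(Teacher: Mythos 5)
Your proof is correct, and it takes a genuinely different route from the paper's. The paper's argument (citing Nishimura's Lemma~1) works by choosing a total order on \(\cB\) and hence on the monomials, and then showing that the product of the minimal non-zero terms of \(p\) and \(q\) survives as a non-zero term of \(pq\); this is a `leading-term' argument. Your argument instead fixes a target exponent \(W\) and uses the observation you isolate — that in the positive cone \(\ip{u+v}{b}=0\) forces \(\ip{u}{b}=\ip{v}{b}=0\) — to show that the \(y^{W}\)-coefficient of \(pq\) is computed entirely within the finite-variable polynomial ring \(\bK[y^{b}:b\in F]\), and then invokes that this polynomial ring is a domain. Both are sound; the paper's version has the slight advantage that, as written in the proof, it adapts directly to the Laurent case \(\Laurent{\bV}\), whereas your argument as stated is genuinely tied to \(\Poly{\bV}\) (though one could recover the Laurent case afterwards via Remark~\ref{r:pseudo-poly-factor}, since monomials are units). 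Your reduction is arguably more elementary in that it needs no construction of a compatible total order on the monomial group, only the Jordan--style finiteness already established in Lemma~\ref{l:pseudo-poly-alg}.
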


\begin{proof}
This follows from \cite[Lem.~1]{Nishimura}, which shows that a larger ring of formal power series is an integral domain.
The argument also adapts directly to $\Laurent{\bV}$, as follows.
By choosing a total ordering of $\cB$, one can totally order the monomials $y^u$ for $u\in\bV$ in such a way that if $p,q\in\Laurent{\bV}$ have minimal non-zero terms $\lambda_vy^v$ and $\rho_wy^w$ respectively, the product $pq$ has the non-zero term $\lambda_v\rho_wy^{v+w}$.
\end{proof}

By Proposition~\ref{p:ppoly-id}, we may take the field of fractions of $\Laurent{\Kgp{\fd{\stab{\cT}}}}$, which we denote by $\Frac{\Kgp{\fd{\stab{\cT}}}}$.
This is naturally a module for the group algebra $\bK\Kgp{\fd{\stab{\cT}}}$, as is the group algebra $\bK\Kgpnum{\lfd{\stab{\cT}}}$, via the inclusion $\Kgp{\fd{\stab{\cT}}}\to\Kgpnum{\lfd{\stab{\cT}}}$.
Using the factorisation from Remark~\ref{r:pseudo-poly-factor}, one can show that any element of $\Frac{\Kgp{\fd{\stab{\cT}}}}$ has the form $p/q$, where $p$ and $q$ are pseudo-polynomials (rather than arbitrary Laurent pseudo-polynomials).

\begin{definition}
We write $\Fracbar{\Kgp{\fd{\stab{\cT}}}}=\bK\Kgpnum{\lfd{\stab{\cT}}}\otimes_{\bK\Kgp{\fd{\stab{\cT}}}}\Frac{\Kgp{\fd{\stab{\cT}}}}$.
\end{definition}
Elements of $\Fracbar{\Kgp{\fd{\stab{\cT}}}}$ can be thought of as (finite linear combinations of) products $x^v\cdot p/q$, where $p$ and $q$ are pseudo-polynomials in $\Kgp{\fd{\stab{\cT}}}$ and $x^v$ is a monomial with exponent in $\Kgpnum{\lfd{\stab{\cT}}}$.
Monomial factors of $x^v$ with exponent in the subgroup $\Kgp{\fd{\stab{\cT}}}$ may be absorbed into $p$ in the expected way.

\begin{remark}
If $\cC$ has finite rank, so $\Kgpnum{\lfd{\stab{\cT}}}=\Kgp{\fd\stab{\cT}}$ and $\Laurent{\fd\stab{\cT}}=\bK\Kgp{\fd{\stab{\cT}}}$, then $\Fracbar{\Kgp{\fd{\stab{\cT}}}}=\Frac{\Kgp{\fd{\stab{\cT}}}}$ is nothing but the field of fractions of the group algebra $\bK\Kgp{\fd{\stab{\cT}}}$.
\end{remark}

\begin{definition}
\label{d:clucha-X}
Let $\cC$ be a compact or skew-symmetric cluster category, and let $\cTU\ctsubcat\cC$.
Then the \emph{$\Xside$-cluster character} for $\cU$ with respect to $\cT$ is the function 
$ \clucha[\cTU]{\Xside}\colon \fpmod \stab{\cU} \to\Fracbar{\Kgp{\fd{\stab{\cT}}}}$ defined by
\begin{equation}
\label{eq:clucha-X-alt}
\clucha[\cTU]{\Xside}(M)=x^{\stabindbar{\cU}{\cT}{[M]}}\Fpoly(\Extfun{\cT}\modlift{M}{\cU}{+})\Fpoly(\Extfun{\cT}\modlift{M}{\cU}{-})^{-1}.
\end{equation}
\end{definition}

By \eqref{eq:F-poly-split}, modifying $\modlift{M}{\cU}{\pm}$ by adding or removing a common summand has no effect on \eqref{eq:clucha-X-alt}, as promised in Remark~\ref{r:UM-ambiguity}, so this expression is a well-defined function of $M$.
Expanding the sums gives
\begin{equation}
\label{eq:clucha-X}
\clucha[\cTU]{\Xside}(M)=x^{\stabindbar{\cU}{\cT}{[M]}}\frac{\sum_{[N]\in \Kgp{\fd \cU}}\chi(\QGra{[N]}{\Extfun{\cT}\modlift{M}{\cU}{+}})x^{[N]}}{\sum_{[N]\in \Kgp{\fd \cU}}\chi(\QGra{[N]}{\Extfun{\cT}\modlift{M}{\cU}{-}})x^{[N]}}.
\end{equation}

\begin{remark}\label{r:clucha-X} {\ }
\begin{enumerate}
\item Both sums in \eqref{eq:clucha-X} are non-zero, although it is certainly possible that at least one of them is equal to $1$.
They are finite if $\Extfun{\cT}{\modlift{M}{\cU}{\pm}}\in\fd{\stab{\cT}}$, for example if $\cU$ is reachable from $\cT$.
\item If $M\iso M'$ then $\clucha[\cTU]{\Xside}(M)=\clucha[\cTU]{\Xside}(M')$, since each part of the formula either explicitly involves Grothendieck group classes or an $\Fpoly$-polynomial.
\item The image of $\clucha[\cT]{\Xside}$ visibly lies in $\Fracbar{\Kgp{\fd{\stab{\cT}}}}$ and not $\powser{\bK}{\fd{\stab{\cT}}}$, except possibly in degenerate situations, i.e.\ even in finite rank cases, the values of the $\cX$-cluster character are (unavoidably) not Laurent polynomials, but more general rational functions.
\item\label{r:clucha-X-tropical} Since $\clucha[\cTU]{\Xside}(M)$ is not a Laurent polynomial, we cannot really discuss its leading terms in the same way as for $\clucha[\cT]{\Aside}(X)$. On the other hand, $\clucha[\cTU]{\Xside}(M)$ has natural tropicalisations which correspond to taking the minimal and maximal submodules in the two sums; the assumption that either $\cC$ has finite rank or $\cU$ is reachable from $\cT$ means that $\Extfun{\cT}{\modlift{M}{\cU}{\pm}}\in\fd{\stab{\cT}}$, so that there is a maximum. Under the minimal convention, we obtain $x^{\stabindbar{\cU}{\cT}{[M]}}$, and under the maximal convention we obtain
\[x^{\stabindbar{\cU}{\cT}{[M]}+[\Extfun{\cT}\modlift{M}{\cU}{+}]-[\Extfun{\cT}\modlift{M}{\cU}{-}]}=x^{\stabcoindbar{\cU}{\cT}{[M]}},\]
since $[\Extfun{\cT}\modlift{M}{\cU}{+}]-[\Extfun{\cT}\modlift{M}{\cU}{-}]=\stabcoindbar{\cU}{\cT}[M]-\stabindbar{\cU}{\cT}[M]$ by \eqref{eq:TM-pm-def} and Lemma~\ref{ind-coind-ext}.
\end{enumerate}
\end{remark}

If $\cU$ is maximally mutable, then the cluster character $\clucha[\cTU]{\Xside}$ may be restricted to $\fd{\stab{\cU}}\subseteq\fpmod{\stab{\cU}}$, and in particular evaluated on the simple $\stab{\cU}$-modules, as we will do below.
If we also assume that either $\cC$ has finite rank or $\cU$ is reachable from $\cT$, then we have $\clucha[\cTU]{\Xside}(M)\in\Frac{\Kgp{\fd{\stab{\cT}}}}$ for any $M\in\fd{\stab{\cT}}$, by Corollary~\ref{c:indbar-on-fd}.
In this case, $\clucha[\cTU]{\Xside}(M)$ even lies in the field of fractions of the Laurent polynomial algebra $\bK\Kgp{\fd{\stab{\cT}}}$, since the two $\Fpoly$-polynomials involved are both finite sums.

Together with Remark~\ref{r:clucha-X}, the following demonstrates that $\clucha[\cTU]{\Xside}$ induces a well-defined character on $\Kgp{\fpmod{\stab{\cU}}}$.

\begin{proposition}
\label{p:cluchaX-mult}
If $[M]=[K]+[L]\in\Kgp{\fpmod\stab{\cU}}$, then
\[\clucha[\cTU]{\Xside}(M)=\clucha[\cTU]{\Xside}(K)\clucha[\cTU]{\Xside}(L).\]
\end{proposition}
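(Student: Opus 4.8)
The plan is to reduce the multiplicativity of $\clucha[\cTU]{\Xside}$ to the already-established multiplicativity of its three constituent pieces: the monomial prefactor $x^{\stabindbar{\cU}{\cT}[M]}$, the numerator $\Fpoly(\Extfun{\cT}\modlift{M}{\cU}{+})$, and the denominator $\Fpoly(\Extfun{\cT}\modlift{M}{\cU}{-})$. First I would observe that $\stabindbar{\cU}{\cT}$ induces a homomorphism on $\Kgp{\fpmod\stab{\cU}}$ (via Proposition~\ref{p:stabIndbar-well-def} and Corollary~\ref{c:indbar-lift}, noting $\stabindbar{\cU}{\cT}$ and $\stabIndbar{\cU}{\cT}$ agree where both make sense), so $\stabindbar{\cU}{\cT}[M]=\stabindbar{\cU}{\cT}[K]+\stabindbar{\cU}{\cT}[L]$ whenever $[M]=[K]+[L]$, giving $x^{\stabindbar{\cU}{\cT}[M]}=x^{\stabindbar{\cU}{\cT}[K]}\cdot x^{\stabindbar{\cU}{\cT}[L]}$ in $\bK\Kgpnum{\lfd\stab{\cT}}$.

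Next I would handle the $\Fpoly$-factors. The key point is that $\beta_{\cU}\colon\Kgp{\fpmod\stab{\cU}}\to\Kgp{\cU}$ is a homomorphism, so from $\beta_{\cU}[M]=[\modlift{M}{\cU}{+}]-[\modlift{M}{\cU}{-}]$, $\beta_{\cU}[K]=[\modlift{K}{\cU}{+}]-[\modlift{K}{\cU}{-}]$ and $\beta_{\cU}[L]=[\modlift{L}{\cU}{+}]-[\modlift{L}{\cU}{-}]$ we deduce
\[
[\modlift{M}{\cU}{+}]+[\modlift{K}{\cU}{-}]+[\modlift{L}{\cU}{-}]=[\modlift{M}{\cU}{-}]+[\modlift{K}{\cU}{+}]+[\modlift{L}{\cU}{+}]\in\Kgp{\cU}.
\]
Since $\cU$ is Krull--Schmidt with only split conflations, equal classes in $\Kgp{\cU}$ means isomorphic objects, so $\modlift{M}{\cU}{+}\oplus\modlift{K}{\cU}{-}\oplus\modlift{L}{\cU}{-}\iso\modlift{M}{\cU}{-}\oplus\modlift{K}{\cU}{+}\oplus\modlift{L}{\cU}{+}$. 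Applying the additive functor $\Extfun{\cT}$ and then $\Fpoly$, and using the splitting formula \eqref{eq:F-poly-split} ($\Fpoly(A\oplus B)=\Fpoly(A)\Fpoly(B)$) together with the fact that $\Fpoly$ depends only on isomorphism class, this yields
\[
\Fpoly(\Extfun{\cT}\modlift{M}{\cU}{+})\,\Fpoly(\Extfun{\cT}\modlift{K}{\cU}{-})\,\Fpoly(\Extfun{\cT}\modlift{L}{\cU}{-})=\Fpoly(\Extfun{\cT}\modlift{M}{\cU}{-})\,\Fpoly(\Extfun{\cT}\modlift{K}{\cU}{+})\,\Fpoly(\Extfun{\cT}\modlift{L}{\cU}{+})
\]
as an identity in the integral domain $\Laurent{\Kgp{\fd\stab{\cT}}}$ (Proposition~\ref{p:ppoly-id}). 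Since every $\Fpoly$-factor is non-zero (Remark~\ref{r:Fpoly-props}), I may pass to $\Frac{\Kgp{\fd\stab{\cT}}}$ and rearrange to get
\[
\frac{\Fpoly(\Extfun{\cT}\modlift{M}{\cU}{+})}{\Fpoly(\Extfun{\cT}\modlift{M}{\cU}{-})}=\frac{\Fpoly(\Extfun{\cT}\modlift{K}{\cU}{+})}{\Fpoly(\Extfun{\cT}\modlift{K}{\cU}{-})}\cdot\frac{\Fpoly(\Extfun{\cT}\modlift{L}{\cU}{+})}{\Fpoly(\Extfun{\cT}\modlift{L}{\cU}{-})}.
\]
Multiplying this with the monomial identity and comparing to \eqref{eq:clucha-X-alt} gives $\clucha[\cTU]{\Xside}(M)=\clucha[\cTU]{\Xside}(K)\clucha[\cTU]{\Xside}(L)$.

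I do not anticipate a serious obstacle here; the only points needing a little care are bookkeeping ones: confirming that $\stabindbar{\cU}{\cT}$ really is additive on $\Kgp{\fpmod\stab{\cU}}$ in the infinite-rank setting (so that one should cite Corollary~\ref{c:indbar-lift} and Proposition~\ref{p:stabIndbar-well-def} rather than just $\Kgp{\fd\stab{\cU}}$-level statements), and making sure the manipulations take place in the correct ring at each stage — the $\Fpoly$-identity in the domain $\Laurent{\Kgp{\fd\stab{\cT}}}$, its consequence in $\Frac{\Kgp{\fd\stab{\cT}}}$, and the final product in $\Fracbar{\Kgp{\fd\stab{\cT}}}$, where the monomial prefactor with exponent in $\Kgpnum{\lfd\stab{\cT}}$ lives. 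None of this requires the no-loop or no-$2$-cycle hypotheses, nor reachability, so the statement holds for all $M\in\fpmod\stab{\cU}$ as claimed.
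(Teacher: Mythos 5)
Your proof is correct and rests on the same key ideas as the paper's: additivity of $\beta_{\cU}$, the Krull--Schmidt split structure of $\Kgp{\cU}$, multiplicativity of $\Fpoly$, and additivity of $\stabindbar{\cU}{\cT}$. The paper is slightly more direct — it simply observes that one may \emph{choose} $\modlift{M}{\cU}{\pm}=\modlift{K}{\cU}{\pm}\oplus\modlift{L}{\cU}{\pm}$ (valid by the same $\beta_{\cU}$-additivity computation, and harmless by the choice-independence noted after Definition~\ref{d:clucha-X}), from which the identity is immediate; your version keeps arbitrary choices and instead cancels in the fraction field, which is a cosmetic variation of the same argument.
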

\begin{proof}
Since
\begin{align*}
\beta_{\cU}[M]=\beta_{\cU}([K]+[L])=[\modlift{K}{\cU}{+}]-[\modlift{K}{\cU}{-}]+[\modlift{L}{\cU}{+}]-[\modlift{L}{\cU}{-}]
=[\modlift{K}{\cU}{+}\oplus \modlift{L}{\cU}{+}]-[\modlift{K}{\cU}{-}\oplus\modlift{L}{\cU}{-}],
\end{align*}
we may take
$\modlift{M}{\cU}{\pm}=\modlift{K}{\cU}{\pm}\oplus\modlift{L}{\cU}{\pm}$.
Thus we obtain the statement using \eqref{eq:F-poly-split} and the fact that we also have $[M]=[K]+[L]\in\Kgpnum{\lfd{\stab{\cU}}}$.
\end{proof}

\begin{corollary}
\label{c:clucha-X}
Let $\cC$ be a compact or skew-symmetric cluster category and let $\cTU\ctsubcat\cC$.
Then $\clucha[\cTU]{\cX}$ induces a character
$ \clucha[\cTU]{\Xside}\colon \Kgp{\fpmod\stab{\cU}} \to \Fracbar{\Kgp{\fd{\stab{\cT}}}} $.
\end{corollary}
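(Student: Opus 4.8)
The plan is to deduce this from Proposition~\ref{p:cluchaX-mult} by the universal property of the Grothendieck group. Recall that \(\Kgp{\fpmod\stab{\cU}}\) is the free abelian group on isomorphism classes of objects of \(\fpmod\stab{\cU}\), modulo the relations \([M]-[K]-[L]\) coming from short exact sequences \(0\to K\to M\to L\to 0\) (equivalently, conflations in the exact category \(\fpmod\stab{\cU}\), which is abelian by \cite[Prop.~2.1(a)]{KellerReiten}). First I would observe that Remark~\ref{r:clucha-X}(2)—the fact that \(\clucha[\cTU]{\Xside}(M)\) depends only on the isomorphism class of \(M\)—means that \(M\mapsto\clucha[\cTU]{\Xside}(M)\) defines a function on isomorphism classes, hence extends uniquely to a group homomorphism \(\mathbf{Z}[\text{iso classes}]\to\Fracbar{\Kgp{\fd{\stab{\cT}}}}^{\times}\) from the free abelian group to the multiplicative group of the field (recalling that \(\clucha[\cTU]{\Xside}(M)\) is always non-zero, being a ratio of non-zero \(\Fpoly\)-polynomials times a monomial, as noted in Remark~\ref{r:clucha-X}(1)).

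Next I would check that this homomorphism kills the defining relations of \(\Kgp{\fpmod\stab{\cU}}\): given a short exact sequence \(0\to K\to M\to L\to 0\) in \(\fpmod\stab{\cU}\), we have \([M]=[K]+[L]\) in \(\Kgp{\fpmod\stab{\cU}}\), and Proposition~\ref{p:cluchaX-mult} gives exactly \(\clucha[\cTU]{\Xside}(M)=\clucha[\cTU]{\Xside}(K)\clucha[\cTU]{\Xside}(L)\), so the relation is sent to \(1\) in the multiplicative group. Therefore the homomorphism descends to a group homomorphism \(\Kgp{\fpmod\stab{\cU}}\to\Fracbar{\Kgp{\fd{\stab{\cT}}}}^{\times}\), which is what it means to say \(\clucha[\cTU]{\Xside}\) induces a character. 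One small point worth spelling out: Proposition~\ref{p:cluchaX-mult} is stated for \emph{any} relation \([M]=[K]+[L]\) in \(\Kgp{\fpmod\stab{\cU}}\), not merely those coming from a single short exact sequence, so in fact the verification is immediate; but it is cleanest to phrase the argument via the universal property using the generators-and-relations presentation, since that is literally the content of being "well-defined on \(\Kgp{\fpmod\stab{\cU}}\)".

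The argument has essentially no obstacle—it is a formal consequence of Proposition~\ref{p:cluchaX-mult} together with the isomorphism-invariance from Remark~\ref{r:clucha-X}(2). The only thing requiring a moment's care is the codomain: \(\clucha[\cTU]{\Xside}\) a priori takes values in \(\Fracbar{\Kgp{\fd{\stab{\cT}}}}\), which is a ring and not a group under multiplication, so one must note that the values in fact lie in the group of units, which follows because each value is a unit times a ratio \(\Fpoly(\Extfun{\cT}\modlift{M}{\cU}{+})\Fpoly(\Extfun{\cT}\modlift{M}{\cU}{-})^{-1}\) of invertible elements of \(\Frac{\Kgp{\fd{\stab{\cT}}}}\), and \(x^{\stabindbar{\cU}{\cT}[M]}\) is a unit in \(\bK\Kgpnum{\lfd{\stab{\cT}}}\). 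So the induced map lands in \(\Fracbar{\Kgp{\fd{\stab{\cT}}}}^{\times}\subseteq\Fracbar{\Kgp{\fd{\stab{\cT}}}}\), and one may state the corollary exactly as written (a "character", i.e.\ a homomorphism to the multiplicative structure).
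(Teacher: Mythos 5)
Your proof is correct and matches the paper's (essentially implicit) approach: the corollary is stated in the paper as an immediate formal consequence of Proposition~\ref{p:cluchaX-mult} and Remark~\ref{r:clucha-X}, with no proof given, and your argument via the universal property of \(\Kgp{\fpmod{\stab{\cU}}}\) is precisely the formal content being invoked. Your careful note about the codomain landing in the group of units of \(\Fracbar{\Kgp{\fd{\stab{\cT}}}}\) is a point the paper leaves implicit and is a worthwhile clarification.
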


\begin{remark}
In contrast to the $\Aside$ case, we do not have an obvious `global' domain for the $\Xside$-cluster character: we cannot write `$\fpmod\stab{\cC}$' in place of $\fpmod\stab{\cU}$, for example.
Indeed, on the $\Aside$-side, we can take advantage of the fact that $\clucha[\cT]{\Aside}$ is agnostic about which cluster-tilting subcategories an object $X$ belongs to in its computation, whereas for $\clucha[\cTU]{\Xside}$ we start by expressing $\beta_{\cU}[M]$ as a difference of classes of objects in $\cU$, which is certainly not agnostic about $\cU$.
See Remark~\ref{r:XCC}\ref{r:XCC-bijection} and the discussion that follows for a partial resolution of this issue.
\end{remark}

Also as a consequence of Proposition~\ref{p:cluchaX-mult}, we have particular interest in the values of $\clucha[\cTU]{\Xside}(S)$ when $S$ is simple.
If $\cU$ is maximally mutable and has no loops, so that $\simpmod{\cU}{U}=\Extfun{\cU}{\mut{\cU}{U}}$ for each $U\in\indec{\stab{\cU}}$ (and in particular $\simpmod{\cU}{U}\in\fpmod{\stab{\cU}}$ is a valid input to the cluster character), these can be computed using the next result.

\begin{proposition}
\label{p:exch-seq-mid-terms} Let $\cC$ be a Krull--Schmidt cluster category, let $\cTU\ctsubcat\cC$, let $U\in \exch{\cU}$, and let $M=\Extfun{\cU}{(\mut{\cU}{U})}$. Then we may choose $\modlift{M}{\cU}{\pm}=\exchmon{\cU}{U}{\pm}$
to be the middle terms of the corresponding exchange conflations.
\end{proposition}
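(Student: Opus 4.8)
The plan is to recognise that this is essentially a special case of the identity $\beta_\cU[\Extfun{\cU}X]=\coind{\cC}{\cU}[X]-\ind{\cC}{\cU}[X]$ from Proposition~\ref{p:beta-proj-res}, applied to $X=\mut{\cU}{U}$, combined with the explicit computation of the index and coindex of $\mut{\cU}{U}$ in terms of the exchange conflations. First I would recall from Definition~\ref{d:mutable} that $T\in\exch{\cU}$ comes equipped with the two non-split exchange conflations
\[
\begin{tikzcd}
\mut{\cU}{U}\arrow[infl]{r}&\exchmon{\cU}{U}{+}\arrow[defl]{r}&U\arrow[confl]{r}&,
\end{tikzcd}\quad
\begin{tikzcd}
U\arrow[infl]{r}&\exchmon{\cU}{U}{-}\arrow[defl]{r}&\mut{\cU}{U}\arrow[confl]{r}&,
\end{tikzcd}
\]
in which the deflation $\exchmon{\cU}{U}{+}\defl U$ and inflation $U\infl\exchmon{\cU}{U}{-}$ are respectively a right and a left $(\cU\setminus U)$-approximation of $U$. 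As observed in the text just before \eqref{eq:ind-on-mut-T}, these are precisely a $\cU$-index conflation and a $\cU$-coindex conflation for $\mut{\cU}{U}$ (with $\rightapp{\cU}{(\mut{\cU}{U})}=\exchmon{\cU}{U}{-}$, $\rightker{\cU}{(\mut{\cU}{U})}=U$, $\leftapp{\cU}{(\mut{\cU}{U})}=\exchmon{\cU}{U}{+}$ and $\leftcok{\cU}{(\mut{\cU}{U})}=U$), so that $\ind{\cC}{\cU}[\mut{\cU}{U}]=[\exchmon{\cU}{U}{-}]-[U]$ and $\coind{\cC}{\cU}[\mut{\cU}{U}]=[\exchmon{\cU}{U}{+}]-[U]$ by \eqref{eq:ind-on-mut-T} and \eqref{eq:coind-on-mut-T}.

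Next I would substitute these into the formula of Proposition~\ref{p:beta-proj-res}: since $M=\Extfun{\cU}{(\mut{\cU}{U})}$ is finitely presented (Proposition~\ref{p:equiv-to-mod}), we get
\[
\beta_{\cU}[M]=\coind{\cC}{\cU}[\mut{\cU}{U}]-\ind{\cC}{\cU}[\mut{\cU}{U}]
=\bigl([\exchmon{\cU}{U}{+}]-[U]\bigr)-\bigl([\exchmon{\cU}{U}{-}]-[U]\bigr)
=[\exchmon{\cU}{U}{+}]-[\exchmon{\cU}{U}{-}]\in\Kgp{\cU}.
\]
This is exactly the required equation \eqref{eq:TM-pm-def} with the choices $\modlift{M}{\cU}{+}=\exchmon{\cU}{U}{+}$ and $\modlift{M}{\cU}{-}=\exchmon{\cU}{U}{-}$; note these both lie in $\cU$ since $\varphi^{\pm}$ are $(\cU\setminus U)$-approximations, so the middle terms of the exchange conflations are genuinely objects of $\cU$. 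This establishes that $\modlift{M}{\cU}{\pm}=\exchmon{\cU}{U}{\pm}$ is a legitimate choice in the sense of Definition~\ref{d:U-pm}, which is all that the statement asserts.

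I do not anticipate a real obstacle here: the proof is a direct chaining of already-established identities, and the only point requiring any care is the bookkeeping matching the exchange conflations \eqref{eq:exchange-confs} against the index/coindex conflations \eqref{eq:index-seq}, \eqref{eq:coindex-seq} — in particular checking that $U$ plays the role of the kernel in one and the cokernel in the other, which is exactly what is recorded in \eqref{eq:ind-on-mut-T}–\eqref{eq:coind-on-mut-T}. One might also remark, following Remark~\ref{r:UM-ambiguity}, that this is consistent with the non-uniqueness inherent in Definition~\ref{d:U-pm}: any other valid choice differs from $\exchmon{\cU}{U}{\pm}$ only by common summands, and the exchange-conflation middle terms are the natural representatives (in fact the minimal ones, by minimality of $\varphi^{\pm}$). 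No compactness or skew-symmetry hypothesis is needed for this particular statement, only that $\cC$ is a Krull--Schmidt cluster category, which is why the statement is phrased in that generality.
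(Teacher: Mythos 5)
Your proof is correct and takes essentially the same route as the paper's: identify the exchange conflations as a $\cU$-index and $\cU$-coindex conflation for $\mut{\cU}{U}$, apply the identity $\beta_{\cU}[\Extfun{\cU}X]=\coind{\cC}{\cU}[X]-\ind{\cC}{\cU}[X]$ from Proposition~\ref{p:beta-proj-res}, and observe that the $[U]$ terms cancel. The extra remarks about minimality and the absence of compactness hypotheses are accurate but not needed.
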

\begin{proof}
From the exchange conflations $\mut{\cU}{U} \infl \exchmon{\cU}{U}{+} \defl U \confl$ and $U \infl \exchmon{\cU}{U}{-} \defl \mut{\cU}{U}\confl$ we may calculate $\ind{\cC}{\cU}[\mut{\cU}{U}]=[\exchmon{\cU}{U}{-}]-[U]$ and $\coind{\cC}{\cU}[\mut{\cU}{U}]=[\exchmon{\cU}{U}{+}]-[U]$.
Thus $\beta_{\cU}[M]=\coind{\cC}{\cU}[\mut{\cU}{U}]-\ind{\cC}{\cU}[\mut{\cU}{U}]=[\exchmon{\cU}{U}{+}]-[\exchmon{\cU}{U}{-}]$,
and the result follows.
\end{proof}

\begin{corollary}
\label{c:X-cc-on-simples}
Let $\cC$ be a compact or skew-symmetric cluster category, and let $\cTU\ctsubcat\cC$. Then for each $U\in\exch{\cU}$ we have
\[\clucha[\cTU]{\Xside}(\Extfun{\cU}(\mut{\cU}{U}))=x^{\stabindbar{\cU}{\cT}{[\Extfun{\cU}(\mut{\cU}{U})]}}\Fpoly(\Extfun{\cT}\exchmon{\cU}{U}{+})\Fpoly(\Extfun{\cT}\exchmon{\cU}{U}{-})^{-1}.\]
\end{corollary}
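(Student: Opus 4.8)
The plan is to simply combine Definition~\ref{d:clucha-X} with Proposition~\ref{p:exch-seq-mid-terms}. For \(M=\Extfun{\cU}(\mut{\cU}{U})\), the latter result tells us that we may take \(\modlift{M}{\cU}{\pm}=\exchmon{\cU}{U}{\pm}\), the middle terms of the exchange conflations for \(U\). Substituting this choice into the defining formula~\eqref{eq:clucha-X-alt} for \(\clucha[\cTU]{\Xside}(M)\) immediately yields
\[\clucha[\cTU]{\Xside}(\Extfun{\cU}(\mut{\cU}{U}))=x^{\stabindbar{\cU}{\cT}{[\Extfun{\cU}(\mut{\cU}{U})]}}\Fpoly(\Extfun{\cT}\exchmon{\cU}{U}{+})\Fpoly(\Extfun{\cT}\exchmon{\cU}{U}{-})^{-1},\]
which is exactly the claimed identity.

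The only thing requiring a word of care is well-definedness: \(\modlift{M}{\cU}{\pm}\) are only determined up to adding or removing common direct summands (Remark~\ref{r:UM-ambiguity}), but as noted after Definition~\ref{d:clucha-X}, the multiplicativity~\eqref{eq:F-poly-split} of \(\Fpoly\)-polynomials on direct sums means any such choice gives the same value of~\eqref{eq:clucha-X-alt}. One should also observe that \(\Extfun{\cU}(\mut{\cU}{U})\in\fpmod{\stab{\cU}}\) by Proposition~\ref{p:equiv-to-mod}, so it is a legitimate input to \(\clucha[\cTU]{\Xside}\), and that \(\exchmon{\cU}{U}{\pm}\in\cU\) so that \(\Extfun{\cT}\exchmon{\cU}{U}{\pm}\) makes sense and \(\Fpoly\) of it is defined (and lies in \(\Laurent{\Kgp{\fd{\stab{\cT}}}}\), hence is invertible in \(\Frac{\Kgp{\fd{\stab{\cT}}}}\) since it is non-zero by Remark~\ref{r:Fpoly-props} and \(\Laurent{\Kgp{\fd{\stab{\cT}}}}\) is an integral domain by Proposition~\ref{p:ppoly-id}).

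There is no real obstacle here; this is a direct corollary in the literal sense, and the proof is two lines once Proposition~\ref{p:exch-seq-mid-terms} is in hand. If anything, the ``hard part'' was already done in establishing Proposition~\ref{p:exch-seq-mid-terms}, which in turn rests on the index/coindex computations~\eqref{eq:ind-on-mut-T} and~\eqref{eq:coind-on-mut-T} for mutants together with the definition \(\beta_{\cU}=\coind{\cC}{\cU}-\ind{\cC}{\cU}\) (as functions on \(\Extfun{\cU}\)-modules) from Proposition~\ref{p:beta-proj-res}.
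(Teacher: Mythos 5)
Your proof is correct and is exactly the argument the paper intends: the corollary follows by substituting the choice \(\modlift{M}{\cU}{\pm}=\exchmon{\cU}{U}{\pm}\) from Proposition~\ref{p:exch-seq-mid-terms} into the defining formula~\eqref{eq:clucha-X-alt}, with well-definedness handled by Remark~\ref{r:UM-ambiguity}. The additional sanity checks you note (finite presentability of the input, invertibility of the denominator) are sound and in keeping with the paper's conventions.
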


When there is no loop at $U\in\exch{\cU}$, this gives us an expression for the value of $\clucha[\cTU]{\Xside}$ on $\simpmod{\cU}{U}=\Extfun{\cU}(\mut{\cU}{U})$, which we may make more explicit when $\cC$ is compact.

\begin{proposition}\label{p:prod-form-of-X-cc-on-simples}
Let $\cC$ be a compact cluster category and let $\cTU\ctsubcat\cC$. 
If there is no loop at $U\in\exch{\cU}$, then
\begin{equation}
\label{eq:prod-form-of-X-cc-on-simples}
\clucha[\cTU]{\Xside}(\simpmod{\cU}{U})=x^{\stabindbar{\cU}{\cT}[\simpmod{\cU}{U}]}\prod_{V\in \indec \cU} \Fpoly(\Extfun{\cT}{V})^{\exchmatentry{V,U}^{\cU}}.
\end{equation}
\end{proposition}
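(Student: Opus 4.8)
The plan is to obtain the formula as an explicit rewriting of Corollary~\ref{c:X-cc-on-simples}. Since $\cU$ has no loop at $U\in\exch{\cU}$, Lemma~\ref{l:props-of-K0-mod-T}\ref{l:props-mod-T-simple-eq-E} gives $\simpmod{\cU}{U}=\Extfun{\cU}(\mut{\cU}{U})$, so Corollary~\ref{c:X-cc-on-simples} already reads
\[
\clucha[\cTU]{\Xside}(\simpmod{\cU}{U})=x^{\stabindbar{\cU}{\cT}[\simpmod{\cU}{U}]}\Fpoly(\Extfun{\cT}\exchmon{\cU}{U}{+})\Fpoly(\Extfun{\cT}\exchmon{\cU}{U}{-})^{-1},
\]
and the whole task reduces to identifying the ratio of $\Fpoly$-polynomials on the right with $\prod_{V\in\indec{\cU}}\Fpoly(\Extfun{\cT}{V})^{\exchmatentry{V,U}^{\cU}}$.

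First I would invoke Proposition~\ref{p:decomp-exch-terms}, which applies since $\cC$ is compact, $U$ is mutable, and $\cU$ has no loop at $U$; it gives the indecomposable decompositions $\exchmon{\cU}{U}{+}=\bigdsum_{V\in\indec{\cU}\setminus U}V^{\Gabmatentry{V,U}}$ and $\exchmon{\cU}{U}{-}=\bigdsum_{V\in\indec{\cU}\setminus U}V^{\frac{\dimdivalg{U}}{\dimdivalg{V}}\Gabmatentry{U,V}}$, both finite sums (as $\cU$ is locally finite at $U$ by Proposition~\ref{p:mut-v-lf}). Applying the additive functor $\Extfun{\cT}$ and then the multiplicativity \eqref{eq:F-poly-split} of $\Fpoly$ on direct sums yields
\[
\Fpoly(\Extfun{\cT}\exchmon{\cU}{U}{+})=\prod_{V\in\indec{\cU}\setminus U}\Fpoly(\Extfun{\cT}{V})^{\Gabmatentry{V,U}},\qquad
\Fpoly(\Extfun{\cT}\exchmon{\cU}{U}{-})=\prod_{V\in\indec{\cU}\setminus U}\Fpoly(\Extfun{\cT}{V})^{\frac{\dimdivalg{U}}{\dimdivalg{V}}\Gabmatentry{U,V}}.
\]

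Next I would pass to the fraction field $\Frac{\Kgp{\fd{\stab{\cT}}}}$, in which the division is legitimate since each $\Fpoly(\Extfun{\cT}{V})$ is a nonzero element of the integral domain $\Poly{\Kgp{\fd{\stab{\cT}}}}$ (Proposition~\ref{p:ppoly-id}). Dividing the two products and using \eqref{eq:exch-mat-vs-Gab-mat} to rewrite the exponent $\Gabmatentry{V,U}-\frac{\dimdivalg{U}}{\dimdivalg{V}}\Gabmatentry{U,V}$ as $\exchmatentry{V,U}^{\cU}$ gives
\[
\Fpoly(\Extfun{\cT}\exchmon{\cU}{U}{+})\Fpoly(\Extfun{\cT}\exchmon{\cU}{U}{-})^{-1}=\prod_{V\in\indec{\cU}\setminus U}\Fpoly(\Extfun{\cT}{V})^{\exchmatentry{V,U}^{\cU}},
\]
and since $\exchmatentry{U,U}^{\cU}=0$ the omitted $V=U$ factor is trivial, so the product may be taken over all of $\indec{\cU}$, which is the claimed identity. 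I expect no serious obstacle here: the only delicate bookkeeping is matching the division-algebra normalisations $\dimdivalg{\blank}$ appearing in Proposition~\ref{p:decomp-exch-terms} against the skew-symmetrisability identity \eqref{eq:exch-mat-vs-Gab-mat}, and recording that all the manipulations with $\Fpoly$-polynomials take place in a ring in which they are invertible.
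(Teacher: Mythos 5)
Your proposal is correct and follows essentially the same route as the paper: reduce to Corollary~\ref{c:X-cc-on-simples} via the no-loop identification $\simpmod{\cU}{U}=\Extfun{\cU}(\mut{\cU}{U})$, decompose $\exchmon{\cU}{U}{\pm}$ using Proposition~\ref{p:decomp-exch-terms}, apply multiplicativity of $\Fpoly$, and fold the exponents via \eqref{eq:exch-mat-vs-Gab-mat}. The extra care you take about where the division is legitimate and why the $V=U$ factor can be absorbed into the product is sound, if slightly more explicit than the paper's argument.
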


\begin{proof}
By Proposition~\ref{p:decomp-exch-terms}, we have $\exchmon{\cU}{U}{+}=\bigdsum_{V\in \indec \cU} V^{\Gabmatentry{V,U}^{\cU}}$ and $\exchmon{\cU}{U}{-}=\bigdsum_{V\in \indec \cU} V^{\frac{\dimdivalg{U}}{\dimdivalg{V}}\Gabmatentry{U,V}^{\cU}}$.
Then by \eqref{eq:F-poly-split},
\begin{align*} \Fpoly(\Extfun{\cT}\exchmon{\cU}{U}{+})\Fpoly(\Extfun{\cT}\exchmon{\cU}{U}{-})^{-1} & = \prod_{V\in \indec \cU} \Fpoly(\Extfun{\cT}V)^{\Gabmatentry{V,U}^{\cU}}\prod_{V\in \indec \cU} \Fpoly(\Extfun{\cT}V)^{-\frac{\dimdivalg{U}}{\dimdivalg{V}}\Gabmatentry{U,V}^{\cU}} \\ 
& = \prod_{V\in \indec \cU} \Fpoly(\Extfun{\cT}V)^{\Gabmatentry{V,U}^{\cU}-\frac{\dimdivalg{U}}{\dimdivalg{V}}\Gabmatentry{U,V}^{\cU}}.
\end{align*}
Finally, $\Gabmatentry{V,U}^{\cU}-\frac{\dimdivalg{U}}{\dimdivalg{V}}\Gabmatentry{U,V}^{\cU}=\exchmatentry{V,U}^\cU$ by \eqref{eq:exch-mat-vs-Gab-mat}, and so we obtain the result from Corollary~\ref{c:X-cc-on-simples}.
\end{proof}

Under slightly stronger local finiteness assumptions, we may extend $\clucha[\cTU]{\Xside}$ to a character $\Kgp{\fd{\cU}}\to\Frac{\Kgp{\fd{\cT}}}$, defined also on modules with support on the projective objects of $\cU$, although to do so requires using the basis of simple modules rather than giving a `basis-free' formula as in \eqref{eq:clucha-X}.

\begin{definition}
\label{d:X-CC-proj}
Assume $\cC$ is a compact cluster category and that $\cTU\ctsubcat\cC$.
Let $P\in\cU$ be an indecomposable projective such that $\cU\setminus P$ is functorially finite in $\cU$ and $\cU$ has no loop at $P$, and let $P\to\exchmon{\cU}{P}{-}$ and $\exchmon{\cU}{P}{+}\to P$ be, respectively, left and right $(\cU\setminus P)$-approximations of $P$.
Then define
\[\clucha[\cTU]{\Xside}(\simpmod{\cU}{P})=x^{\indbar{\cU}{\cT}[\simpmod{\cU}{P}]}\Fpoly(\Extfun{\cT}\exchmon{\cU}{P}{+})\Fpoly(\Extfun{\cT}\exchmon{\cU}{P}{-})^{-1}.\]
If $\cU$ is maximally mutable and has no loops, and $\cU\setminus P$ is functorially finite in $\cU$ for all indecomposable projectives $P$, then by combining this with \eqref{eq:clucha-X} we obtain a character $\clucha[\cTU]{\Xside}\colon\Kgp{\fd{\cU}}\to\Fracbar{\Kgp{\fd{\cT}}}$, using that the classes $[\simpmod{\cU}{P}]$ for $P$ indecomposable projective are the basis of a complement of $\Kgp{\fd{\stab{\cU}}}$ in $\Kgp{\fd{\cU}}$.
\end{definition}

\begin{remark}
The assumption that there is no loop at $P$ plays no logical role in Definition~\ref{d:X-CC-proj}, but instead makes the definition compatible with Corollary~\ref{c:X-cc-on-simples} for mutable indecomposables---the analogous expression for $U\in\exch\cU$ only computes the $\Xside$-cluster character of $\simpmod{\cU}{U}$ when there is no loop at $\cU$, suggesting that a different definition may be more natural if there is a loop at $P$.
Below, we will always be assuming that $\cU$ has no loop at $P$, and this assumption does have a logical role in the proof of Proposition~\ref{p:prod-form-of-X-cc-on-simples-at-projectives}.
\end{remark}

Recall that the expression $\exchmatentry{V,U}^{\cU}=\Gabmatentry{V,U}^{\cU}-\frac{\dimdivalg{U}}{\dimdivalg{V}}\Gabmatentry{U,V}^{\cU}$ from \eqref{eq:exch-mat-vs-Gab-mat}, used in the proof of Proposition~\ref{p:prod-form-of-X-cc-on-simples} for $U\in\exch{\cU}$, exploits the fact (Proposition~\ref{p:mut-v-lf}) that $\cU$ is locally finite at $U$ so that the quantities on the right-hand side of the expression are finite (Proposition~\ref{p:loc-finite-quiver}).
If $\cU$ is locally finite at an indecomposable projective $P\in\indec{\cU}$, we may simply extend \eqref{eq:exch-mat-vs-Gab-mat} by taking $\exchmatentry{V,P}^{\cU}\defeq\Gabmatentry{V,P}^{\cU}-\frac{\dimdivalg{P}}{\dimdivalg{V}}\Gabmatentry{P,V}^{\cU}$ as a definition.
We use this extension in the next result.

\begin{proposition}
\label{p:prod-form-of-X-cc-on-simples-at-projectives}
Assume $\cC$ is a compact cluster category, let $\cTU\ctsubcat\cC$, and let $P\in\cU$ be indecomposable projective.
If $\cU$ is locally finite at $P$ and has no loop at $P$, then $\cU\setminus P$ is functorially finite in $\cU$ and
\begin{equation}
\label{eq:X-CC-proj}
\clucha[\cTU]{\Xside}(\simpmod{\cU}{P})=x^{\indbar{\cU}{\cT}[\simpmod{\cU}{P}]}\prod_{V\in \indec \cU} \Fpoly(\Extfun{\cT}{V})^{\exchmatentry{V,P}^{\cU}}.
\end{equation}
\end{proposition}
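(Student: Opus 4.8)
The plan is to follow the proof of Proposition~\ref{p:prod-form-of-X-cc-on-simples} almost verbatim, with the exchange conflations used there replaced by the minimal source and sink maps at $P$ supplied by Lemma~\ref{l:sink-map-construction}. The one point that needs a remark is that $P$ is projective in $\cC$, but this never intervenes: all we use is that $P$ is an indecomposable object of $\cU$ which is locally finite, together with the fact that $\cU$ is radically pseudocompact (a consequence of $\cC$ being compact), and the no-loop hypothesis to convert the source/sink maps into $(\cU\setminus P)$-approximations.

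First I would dispatch the functorial finiteness claim. Since $\cU$ is locally finite at $P$, we have $\dimdivalg{P}<\infty$ and, by Proposition~\ref{p:loc-finite-quiver}, the integers $\Gabmatentry{V,P}^{\cU}$ and $\Gabmatentry{P,V}^{\cU}$ are finite and vanish for all but finitely many $V\in\indec{\cU}$. Hence Lemma~\ref{l:sink-map-construction} applies (using radical pseudocompactness of $\cU$) to give $P$ a source map in $\cU$ with codomain $\bigdsum_{V\in\indec{\cU}\setminus P}V^{\frac{\dimdivalg{P}}{\dimdivalg{V}}\Gabmatentry{P,V}^{\cU}}$ and a sink map in $\cU$ with domain $\bigdsum_{V\in\indec{\cU}\setminus P}V^{\Gabmatentry{V,P}^{\cU}}$, exactly as in the proof of Proposition~\ref{p:decomp-exch-terms}. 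Since $\irr{\cU}{P}{P}=0$, neither object has a direct summand isomorphic to $P$, so the argument there shows that the minimal source map is a left $(\cU\setminus P)$-approximation of $P$ and the minimal sink map is a right $(\cU\setminus P)$-approximation of $P$: any non-isomorphism out of, or into, the indecomposable $P$ from or to an object with no $P$-summand factors through the respective map. Approximations being additive on direct sums, this shows $\cU\setminus P$ is functorially finite in $\cU$, so the expression in Definition~\ref{d:X-CC-proj} makes sense; I would then take $\exchmon{\cU}{P}{+}$ and $\exchmon{\cU}{P}{-}$ to be the domain and codomain just found.

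It then remains to compute. The displayed sums are finite by local finiteness at $P$, so $\Extfun{\cT}$ distributes over them, and $\Fpoly$ is multiplicative on direct sums by \eqref{eq:F-poly-split}; moreover each $\Fpoly(\Extfun{\cT}V)$ is a nonzero pseudo-polynomial (constant term $1$), hence invertible in the field of fractions of $\Laurent{\Kgp{\fd{\stab{\cT}}}}$. Therefore
\[\Fpoly(\Extfun{\cT}\exchmon{\cU}{P}{+})\Fpoly(\Extfun{\cT}\exchmon{\cU}{P}{-})^{-1}=\prod_{V\in\indec{\cU}\setminus P}\Fpoly(\Extfun{\cT}V)^{\Gabmatentry{V,P}^{\cU}-\frac{\dimdivalg{P}}{\dimdivalg{V}}\Gabmatentry{P,V}^{\cU}}.\]
By the extension of \eqref{eq:exch-mat-vs-Gab-mat} adopted immediately before the statement, the exponent of $\Fpoly(\Extfun{\cT}V)$ here is $\exchmatentry{V,P}^{\cU}$; moreover $\exchmatentry{P,P}^{\cU}=0$ (equivalently $\Gabmatentry{P,P}^{\cU}=0$, since there is no loop at $P$), so the $V=P$ factor is trivial and the product may be extended over all $V\in\indec{\cU}$. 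Multiplying through by the monomial $x^{\indbar{\cU}{\cT}[\simpmod{\cU}{P}]}$ prefixing the formula of Definition~\ref{d:X-CC-proj} yields \eqref{eq:X-CC-proj}. I do not anticipate a genuine obstacle; the only step deserving a sentence of care is the transfer of the source/sink-map description of Proposition~\ref{p:decomp-exch-terms} to the projective object $P$, which, as noted, goes through unchanged.
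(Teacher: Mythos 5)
Your proof is correct and follows essentially the same route as the paper's: the paper's proof is a two-sentence remark that local finiteness at $P$ plus the no-loop hypothesis yields source and sink maps via Lemma~\ref{l:sink-map-construction} which serve as the $(\cU\setminus P)$-approximations, after which "the proof of Proposition~\ref{p:prod-form-of-X-cc-on-simples} applies without any further changes." You have simply spelled out the argument the paper compresses, including the verification of functorial finiteness, the decomposition of $\exchmon{\cU}{P}{\pm}$, the multiplicativity from \eqref{eq:F-poly-split}, and the reduction of the exponent via the extended formula \eqref{eq:exch-mat-vs-Gab-mat}.
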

\begin{proof}
The proof is essentially the same as for $U\in\exch{\cU}$: local finiteness at $P$ means that there is a source map $P\to \bigdsum_{V\in \indec \cU} V^{\frac{\dimdivalg{U}}{\dimdivalg{V}}\Gabmatentry{P,V}^{\cU}}$ and a sink map $\bigdsum_{V\in \indec \cU} V^{\Gabmatentry{V,P}^{\cU}}\to P$ by Lemma~\ref{l:sink-map-construction}, and these are left and right $(\cU\setminus P)$-approximations of $P$ since $\cU$ has no loop at $P$.
The proof of Proposition~\ref{p:prod-form-of-X-cc-on-simples} then applies without any further changes.
\end{proof}

\begin{theorem}\label{t:X-clucha-mutation}
Let $\cC$ be a compact cluster category, let $\cTU\ctsubcat\cC$, and assume $\cU$ is locally finite. Let $U\in\exch{\cU}$, with associated mutations $\cU'=\mut{U}{\cU}$ and $U'=\mut{\cU}{U}$, and assume that $\cU$ has no loop or $2$-cycle at $U$.
Let $V\in\exch{\cU'}$ and assume, if $V\ne U'$, that $\cU$ has no loop at $V$. Then we have
\[\clucha[\cTU']{\Xside}(\simpmod{\cU'}{V})=\begin{cases}
\clucha[\cTU]{\Xside}(\simpmod{\cU}{U})^{-1}&\text{if}\ V=U',\\
\clucha[\cTU]{\Xside}(\simpmod{\cU}{V})\clucha[\cT,\,\cU]{\Xside}(\simpmod{\cU}{U})^{[\exchmatentry{U,V}^{\cU}]_{+}}(1+\clucha[\cT,\,\cU]{\Xside}(\simpmod{\cU}{U}))^{-\exchmatentry{U,V}^{\cU}}&\text{otherwise.}
\end{cases}\]
\end{theorem}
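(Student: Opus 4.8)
The overall strategy is to verify the asserted mutation formula by reducing everything to the explicit product expressions for the $\Xside$-cluster character on simple modules, namely Proposition~\ref{p:prod-form-of-X-cc-on-simples} (for mutable indecomposables) and Proposition~\ref{p:prod-form-of-X-cc-on-simples-at-projectives} (at projectives), and then matching the two sides term by term using the already-established mutation rules for the ingredients: the $\mathbf{c}$-vector mutation formula (Theorem~\ref{t:one-step-X-mut}, or its clean form Corollary~\ref{c:one-step-X-mut}), the exchange-matrix mutation formula (Theorem~\ref{t:exch-mat-mutation-clust-str}), the $\Fpoly$-polynomial mutation identity (Theorem~\ref{t:Fpoly-mutation}, especially \eqref{eq:F-poly-non-split-one-step-no-2-cycle}), and the relation $\clucha[\cTU]{\Xside}(\simpmod{\cU}{U}) = x^{\stabindbar{\cU}{\cT}[\simpmod{\cU}{U}]}\prod_{W} \Fpoly(\Extfun{\cT}W)^{\exchmatentry{W,U}^{\cU}}$ itself. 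Since $\cC$ is compact and $\cU$ is locally finite, all the exponents appearing are finite integers and all products are well-defined in $\Fracbar{\Kgp{\fd{\stab{\cT}}}}$; I would note at the outset that $\mut{U}{\cU}$ inherits no loops (Corollary~\ref{c:no-loops-mutates}) and is locally finite at every relevant vertex (Proposition~\ref{p:mut-v-lf}), so that the product formulae apply on the $\cU'$ side too.

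\emph{The case $V = U'$.} Here $\simpmod{\cU'}{U'} = \Extfun{\cU'}(U)$, so by Corollary~\ref{c:X-cc-on-simples} (applied with $\cU'$ in place of $\cU$, noting $\exchmon{\cU'}{U'}{\pm} = \exchmon{\cU}{U}{\mp}$ from Corollary~\ref{c:exch-mat-at-mut}) we get
\[
\clucha[\cTU']{\Xside}(\simpmod{\cU'}{U'}) = x^{\stabindbar{\cU'}{\cT}[\simpmod{\cU'}{U'}]}\,\Fpoly(\Extfun{\cT}\exchmon{\cU}{U}{-})\,\Fpoly(\Extfun{\cT}\exchmon{\cU}{U}{+})^{-1}.
\]
By Corollary~\ref{c:one-step-X-mut}\ref{c:one-step-X-mut-indbar-at-mut}, $\stabindbar{\cU'}{\cT}[\simpmod{\cU'}{U'}] = -\stabindbar{\cU}{\cT}[\simpmod{\cU}{U}]$, and the $\Fpoly$-factors are exactly the reciprocal of those appearing in $\clucha[\cTU]{\Xside}(\simpmod{\cU}{U})$ as given by Corollary~\ref{c:X-cc-on-simples}. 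Hence this value is $\clucha[\cTU]{\Xside}(\simpmod{\cU}{U})^{-1}$, as claimed.

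\emph{The case $V \ne U'$.} This is the substantive part. I would expand $\clucha[\cTU']{\Xside}(\simpmod{\cU'}{V})$ using \eqref{eq:prod-form-of-X-cc-on-simples} (or \eqref{eq:X-CC-proj} if $V$ is projective) over $\indec{\cU'} = (\indec{\cU}\setminus U)\cup\{U'\}$, substituting the exchange-matrix mutation values $\exchmatentry{W,V}^{\cU'}$ from Theorem~\ref{t:exch-mat-mutation-clust-str} and the $\mathbf{c}$-vector $\stabindbar{\cU'}{\cT}[\simpmod{\cU'}{V}]$ from Corollary~\ref{c:one-step-X-mut}\ref{c:one-step-X-mut-indbar-away-from-mut}. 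The key algebraic manipulation is to isolate the factor $\Fpoly(\Extfun{\cT}U')^{\exchmatentry{U',V}^{\cU'}}$ — which is the genuinely new term on the $\cU'$ side — and to rewrite $\Fpoly(\Extfun{\cT}U')$ in terms of $\cU$-data via the mutation identity \eqref{eq:F-poly-non-split-one-step-no-2-cycle}, which expresses $\Fpoly(\Extfun{\cT}U)\Fpoly(\Extfun{\cT}U')$ as a sum of two monomials-times-products. Dividing by $\Fpoly(\Extfun{\cT}U)$ and combining with the remaining factors, the product over $\indec{\cU}\setminus U$ should reorganise, using the three-case description of $\exchmatentry{W,V}^{\cU'}$ and the identities $[\exchmatentry{W,U}]_{+} - [\exchmatentry{W,U}]_{-} = \exchmatentry{W,U}$ together with $[\exchmatentry{U,W}]_{\pm} = \frac{\dimdivalg{W}}{\dimdivalg{U}}[\exchmatentry{W,U}]_{\mp}$ (the displayed consequence of Corollary~\ref{c:exch-mat-plus-minus}), into exactly $\clucha[\cTU]{\Xside}(\simpmod{\cU}{V})\cdot \clucha[\cTU]{\Xside}(\simpmod{\cU}{U})^{[\exchmatentry{U,V}^{\cU}]_{+}}\cdot(1+\clucha[\cTU]{\Xside}(\simpmod{\cU}{U}))^{-\exchmatentry{U,V}^{\cU}}$. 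The monomial prefactors in $x$ should match by a direct comparison of $\stabindbar{\cU'}{\cT}[\simpmod{\cU'}{V}]$ with $\stabindbar{\cU}{\cT}[\simpmod{\cU}{V}] + [\exchmatentry{U,V}^{\cU}]_{+}\,\stabindbar{\cU}{\cT}[\simpmod{\cU}{U}]$ plus the contribution from $\exchmatentry{U,V}^{\cU}\bigl[\stabindbar{\cU}{\cT}[\simpmod{\cU}{U}]\bigr]_{-}$, which is precisely the $x$-monomial hidden inside $(1+\clucha[\cTU]{\Xside}(\simpmod{\cU}{U}))^{-\exchmatentry{U,V}^{\cU}}$ when one writes $1 + \clucha[\cTU]{\Xside}(\simpmod{\cU}{U})$ and tracks the $x^{\stabindbar{\cU}{\cT}[\simpmod{\cU}{U}]}$ factor; this bookkeeping is where sign conventions and the factors $\dimdivalg{U}/\dimdivalg{V}$ must be handled carefully, and I expect the two-sided $\Fpoly$-mutation identity \eqref{eq:F-poly-non-split-one-step-no-2-cycle} — specifically, getting the tropical monomial prefactors $x^{[\stabindbar{\cU}{\cT}[\simpmod{\cU}{U}]]_{\pm}}$ to cancel correctly against the $x$-exponents coming from $\stabindbar{\cU'}{\cT}$ — to be the main obstacle. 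A clean way to organise this is to first prove the identity after specialising to the "principal coefficients" situation where $\cT$ is the initial subcategory (so $\stabindbar{\cU}{\cT}$ genuinely records $\mathbf{c}$-vectors and Theorem~\ref{t:c-vec-mut-formula} applies), observe that both sides are built polynomially from $\Fpoly$-polynomials and $x$-monomials with exponents depending only on $\cU$, $\cU'$, and the combinatorial data $(\exchmatentry{}{}, \mathbf{c}\text{-vectors})$, and then transport the identity to general $\cT$ via the change-of-$\cT$ compatibility already encoded in Theorem~\ref{t:exch-isos} and Corollary~\ref{c:indbar-on-fd}; but a direct term-by-term verification as sketched above should also work and may in fact be shorter.
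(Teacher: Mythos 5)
Your main plan---expanding \(\clucha[\cTU']{\Xside}(\simpmod{\cU'}{V})\) via the product formulae of Propositions~\ref{p:prod-form-of-X-cc-on-simples} and~\ref{p:prod-form-of-X-cc-on-simples-at-projectives}, substituting the exchange-matrix mutation from Theorem~\ref{t:exch-mat-mutation-clust-str}, eliminating \(\Fpoly(\Extfun{\cT}U')\) via the \(\Fpoly\)-mutation identity, and reconciling the \(x\)-monomials using the \(\mathbf{c}\)-vector mutation formula (Corollary~\ref{c:one-step-X-mut})---is essentially the paper's proof, including the easy case \(V=U'\) via \(\exchmon{\cU'}{(U')}{\pm}=\exchmon{\cU}{U}{\mp}\). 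The only cosmetic difference is that the paper applies the raw identity \eqref{eq:F-poly-non-split} and then Corollary~\ref{c:indbar-lift} to identify the kernel term with \(\bigl[\stabindbar{\cU}{\cT}[\simpmod{\cU}{U}]\bigr]_{-}\), rather than the pre-packaged \eqref{eq:F-poly-non-split-one-step-no-2-cycle}; these are interchangeable here, and while your speculative ``specialise to principal coefficients and transport'' alternative is not coherent as stated (no specialisation is identified and no mechanism to transport the multiplicative identity back exists), you correctly flag the direct verification as your primary plan, which is sound.
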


\begin{proof}
Since there is no loop at $U\in\cU$ there is also no loop at $U'\in\cU'$ by Corollary~\ref{c:no-loops-mutates}, and so $\simpmod{\cU}{U}=\Extfun{\cU}{U'}$ and $\simpmod{\cU'}{U'}=\Extfun{\cU'}{U}$ by Lemma~\ref{l:props-of-K0-mod-T}\ref{l:props-mod-T-simple-eq-E}.
Now $\exchmon{\cU}{U}{\pm}=\exchmon{\cU'}{(U')}{\mp}$, and $\indbar{\cU'}{\cT}[\simpmod{\cU'}{U'}]=-\indbar{\cU}{\cT}[\simpmod{\cU}{U}]$ by Theorem~\ref{t:one-step-X-mut}\ref{t:one-step-X-mut-indbar-at-mut}, so Corollary~\ref{c:X-cc-on-simples} gives
\begin{align*}
\clucha[\cTU']{\Xside}(\simpmod{\cU'}{U'}) & = x^{\indbar{\cU'}{\cT}[\simpmod{\cU'}{U'}]}\Fpoly(\Extfun{\cT}\exchmon{\cU'}{(U')}{+})\Fpoly(\Extfun{\cT}\exchmon{\cU'}{(U')}{-})^{-1} \\
& = x^{-\indbar{\cU}{\cT}[\simpmod{\cU}{U}]}\Fpoly(\Extfun{\cT}\exchmon{\cU}{U}{-})\Fpoly(\Extfun{\cT}\exchmon{\cU}{U}{+})^{-1} \\
& = \clucha[\cTU]{\Xside}(\simpmod{\cU}{U})^{-1}.
\end{align*}
In the second case, we use Proposition~\ref{p:prod-form-of-X-cc-on-simples} to expand the expression, then prior results to simplify:
\begin{align*}
\clucha[\cTU']{\Xside}(\simpmod{\cU'}{V}) & =x^{\indbar{\cU'}{\cT}[\simpmod{\cU'}{V}]}\prod_{W\in \indec \cU'} \Fpoly(\Extfun{\cT}{W})^{\exchmatentry{W,V}^{U'}} \\ 
& \leftstackrel{\eqref{eq:exch-mat-mutation-clust-str}}{=} x^{\indbar{\cU'}{\cT}[\simpmod{\cU'}{V}]}\Fpoly(\Extfun{\cT}U')^{-\exchmatentry{U,V}^{\cU}}
\prod_{W\in \indec \cU'\setminus U'} \Fpoly(\Extfun{\cT}{W})^{\exchmatentry{W,V}^{\cU}+\exchmatentry{W,U}^{\cU}[\exchmatentry{U,V}^{\cU}]_{+}+[\exchmatentry{W,U}^{\cU}]_{-}\exchmatentry{U,V}^{\cU}}\\
& = \begin{multlined}[t]x^{\indbar{\cU'}{\cT}[\simpmod{\cU'}{V}]}\Fpoly(\Extfun{\cT}U')^{-\exchmatentry{U,V}^{\cU}} \prod_{W\in \indec \cU\setminus U} \Fpoly(\Extfun{\cT}{W})^{\exchmatentry{W,V}^{\cU}}\\
\cdot\prod_{W\in \indec \cU\setminus U} \Fpoly(\Extfun{\cT}{W})^{\exchmatentry{W,U}^{\cU}[\exchmatentry{U,V}^{\cU}]_{+}+[\exchmatentry{W,U}^{\cU}]_{-}\exchmatentry{U,V}^{\cU}}\end{multlined}\\
& \leftstackrel{\eqref{eq:prod-form-of-X-cc-on-simples}/\eqref{eq:X-CC-proj}}{=} \begin{multlined}[t]x^{\indbar{\cU'}{\cT}[\simpmod{\cU'}{V}]}\Fpoly(\Extfun{\cT}U')^{-\exchmatentry{U,V}^{\cU}}\bigl(x^{\indbar{\cU}{\cT}[\simpmod{\cU}{V}]}\Fpoly(\Extfun{\cT}U)^{\exchmatentry{U,V}^{\cU}}\bigr)^{-1} \clucha[\cTU]{\Xside}(\simpmod{\cU}{V})\\
\cdot \prod_{W\in \indec \cU\setminus U} \Fpoly(\Extfun{\cT}{W})^{\exchmatentry{W,U}^{\cU}[\exchmatentry{U,V}^{\cU}]_{+}+[\exchmatentry{W,U}^{\cU}]_{-}\exchmatentry{U,V}^{\cU}}\end{multlined}\\
& = \begin{multlined}[t]x^{\indbar{\cU'}{\cT}[\simpmod{\cU'}{V}]-\indbar{\cU}{\cT}[\simpmod{\cU}{V}]}(\Fpoly(\Extfun{\cT}U')\Fpoly(\Extfun{\cT}U))^{-\exchmatentry{U,V}^{\cU}}\clucha[\cTU]{\Xside}(\simpmod{\cU}{V})\\
\cdot \Bigl(\prod_{W\in \indec \cU\setminus U} \Fpoly(\Extfun{\cT}{W})^{\exchmatentry{W,U}^{\cU}}\Bigr)^{[\exchmatentry{U,V}^{\cU}]_{+}}\Bigl(\prod_{W\in \indec \cU\setminus U} \Fpoly(\Extfun{\cT}{W})^{[\exchmatentry{W,U}^{\cU}]_{-}}\Bigr)^{\exchmatentry{U,V}^{\cU}}\end{multlined}\\
& \leftstackrel{\eqref{eq:prod-form-of-X-cc-on-simples}}{=} \begin{multlined}[t]x^{\indbar{\cU'}{\cT}[\simpmod{\cU'}{V}]-\indbar{\cU}{\cT}[\simpmod{\cU}{V}]}(\Fpoly(\Extfun{\cT}U')\Fpoly(\Extfun{\cT}U))^{-\exchmatentry{U,V}^{\cU}}\clucha[\cTU]{\Xside}(\simpmod{\cU}{V})\\
\cdot \bigl(x^{-\indbar{\cU}{\cT}[\simpmod{\cU}{U}]}\clucha[\cTU]{\Xside}(\simpmod{\cU}{U})\bigr)^{[\exchmatentry{U,V}^{\cU}]_{+}}\Bigl(\prod_{W\in \indec \cU\setminus U} \Fpoly(\Extfun{\cT}{W})^{[\exchmatentry{W,U}^{\cU}]_{-}}\Bigr)^{\exchmatentry{U,V}^{\cU}}\end{multlined}\\
& \leftstackrel{\text{\ref{p:decomp-exch-terms}}}{=} \begin{multlined}[t]x^{\indbar{\cU'}{\cT}[\simpmod{\cU'}{V}]-\indbar{\cU}{\cT}[\simpmod{\cU}{V}]-[\exchmatentry{U,V}^{\cU}]_{+}\indbar{\cU}{\cT}[\simpmod{\cU}{U}]}(\Fpoly(\Extfun{\cT}U')\Fpoly(\Extfun{\cT}U))^{-\exchmatentry{U,V}^{\cU}}\clucha[\cTU]{\Xside}(\simpmod{\cU}{V})\\
\cdot\clucha[\cTU]{\Xside}(\simpmod{\cU}{U})^{[\exchmatentry{U,V}^{\cU}]_{+}}\Fpoly(\Extfun{\cT}\exchmon{\cU}{U}{-})^{\exchmatentry{U,V}^{\cU}}\end{multlined}
\end{align*}
Here we use the fact that $\cU$ has no loop or $2$-cycle at $U$ to apply \eqref{eq:exch-mat-mutation-clust-str} and Proposition~\ref{p:decomp-exch-terms}.
The second use of \eqref{eq:prod-form-of-X-cc-on-simples} requires only that $\cU$ has no loop at $U$, whereas for the first application of \eqref{eq:prod-form-of-X-cc-on-simples}, or the only use of \eqref{eq:X-CC-proj}, we use that $\cU$ has no loop at $V$.

To continue, we will use \eqref{eq:F-poly-non-split} to replace $\Fpoly(\Extfun{\cT}U')\Fpoly(\Extfun{\cT}U)$: since $\cU$ has no loop at $U$, we have in particular that $\rank_{\divalg{U}}\Ext{1}{\cC}{U}{U'}=1$ by Lemma~\ref{l:props-of-K0-mod-T}\ref{l:props-mod-T-simple-eq-E}.
Thus \eqref{eq:F-poly-non-split} applies to give
\[ \Fpoly(\Extfun{\cT}U')\Fpoly(\Extfun{\cT}U)= x^{[\Ker \Extfun{\cT}i_{+}]}\Fpoly(\Extfun{\cT}\exchmon{\cU}{U}{+})+x^{[\Ker \Extfun{\cT}i_{-}]}\Fpoly(\Extfun{\cT}\exchmon{\cU}{U}{-}),\]
where $i_-\colon U\infl\exchmon{U}{\cU}{-}$ and $i_+\colon U'\infl\exchmon{U}{\cU}{+}$ are taken from the (non-split) exchange conflations for $U\in\cU$.
We thus have
\begin{align*}
\Fpoly(\Extfun{\cT}U')\Fpoly(\Extfun{\cT}U)\Fpoly(\Extfun{\cT}\exchmon{\cU}{U}{-})^{-1}
&=x^{[\Ker \Extfun{\cT}i_{+}]}\Fpoly(\Extfun{\cT}\exchmon{\cU}{U}{+})\Fpoly(\Extfun{\cT}\exchmon{\cU}{U}{-})^{-1}+x^{[\Ker \Extfun{\cT}i_{-}]}\\
&\leftstackrel{\text{\ref{c:indbar-lift}}}{=}x^{[\Ker \Extfun{\cT}i_{-}]+\indbar{\cU}{\cT}[\simpmod{\cU}{U}]}\Fpoly(\Extfun{\cT}\exchmon{\cU}{U}{+})\Fpoly(\Extfun{\cT}\exchmon{\cU}{U}{-})^{-1}+x^{[\Ker \Extfun{\cT}i_{-}]}\\
&\leftstackrel{\eqref{eq:prod-form-of-X-cc-on-simples}}{=}x^{[\Ker\Extfun{\cT}i_{-}]}(1+\clucha[\cTU]{\Xside}(\simpmod{\cU}{U})).
\end{align*}
Substituting back to continue our previous calculation, we then have
\begin{align*}
\clucha[\cTU']{\Xside}(\simpmod{\cU'}{V})
& = \begin{multlined}[t]x^{\indbar{\cU'}{\cT}[\simpmod{\cU'}{V}]-\indbar{\cU}{\cT}[\simpmod{\cU}{V}]-[\exchmatentry{U,V}^{\cU}]_{+}\indbar{\cU}{\cT}[\simpmod{\cU}{U}]-\exchmatentry{U,V}^{\cU}[\Ker \Extfun{\cT}i_{-}]}\bigl(1+\clucha[\cTU]{\Xside}\bigl(\simpmod{\cU}{U})\bigr)^{-\exchmatentry{U,V}^{\cU}}\\
\cdot \clucha[\cTU]{\Xside}(\simpmod{\cU}{V})\clucha[\cTU]{\Xside}(\simpmod{\cU}{U})^{[\exchmatentry{U,V}^{\cU}]_{+}}\end{multlined}\\
& \leftstackrel{\text{\ref{c:one-step-X-mut}\ref{c:one-step-X-mut-indbar-away-from-mut}}}{=} \clucha[\cTU]{\Xside}(\simpmod{\cU}{V})\clucha[\cTU]{\Xside}(\simpmod{\cU}{U})^{[\exchmatentry{U,V}^{\cU}]_{+}}(1+\clucha[\cTU]{\Xside}(\simpmod{\cU}{U}))^{-\exchmatentry{U,V}^{\cU}}
\end{align*}
as required, using once more for the final equality that $\cU$ has no loop or $2$-cycle at $U$.
\end{proof}

\begin{theorem}
\label{t:X-clucha}
Let $\cC$ be a compact cluster category with $\cT\ctsubcat\cC$, and assume that $(\cC,\cT)$ has a cluster structure.
Let $\cU$ be the reachable cluster-tilting subcategory corresponding to a seed $s$ of the associated cluster algebra under the bijection of Theorem~\ref{t:Aside-bijection}.
Then the functions $\clucha[\cTU]{\Xside}(S)$, for $S$ a simple $\stab{\cU}$-module, are the $\Xside$-cluster variables of $s$ associated to mutable indices.

If we assume additionally that all cluster-tilting subcategories reachable from $\cT$ are locally finite and have no loops at any indecomposable objects (including the projectives), then the functions $\clucha[\cTU]{\Xside}(S)$, for $S$ any simple $\cU$-module, give all the $\Xside$-cluster variables of $s$.
\end{theorem}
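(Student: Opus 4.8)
The plan is to argue by induction on the length of a mutation sequence from $\cT$ to $\cU$, transporting the recursion from cluster-tilting subcategories to seeds via Theorem~\ref{t:Aside-bijection}. The $\Xside$-cluster variables of a seed are, by definition, the functions obtained by iterating the Fomin--Zelevinsky $Y$-seed mutation rule from the initial seed; since both the $\clucha[\cTU]{\Xside}(S)$ and the $\Xside$-cluster variables of $s$ are rational functions of the same initial data, it suffices to check (a) that $\clucha[\cT,\cT]{\Xside}(S)$ agrees with the initial $\Xside$-cluster variable attached to the relevant vertex, and (b) that for a single mutation $\cU'=\mut{U}{\cU}$ with $\cU$ reachable from $\cT$, the family $\clucha[\cTU']{\Xside}(\blank)$ is obtained from $\clucha[\cTU]{\Xside}(\blank)$ by exactly the $Y$-mutation rule at $U$. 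Throughout, the hypothesis that $(\cC,\cT)$ has a cluster structure ensures every reachable $\cU$ has no loops or $2$-cycles (Definition~\ref{d:cluster-structure}\ref{d:cluster-structure-no-loops}), and Proposition~\ref{p:mut-v-lf} gives local finiteness of such $\cU$ at each $U\in\exch{\cU}$; this is enough to run the relevant instances of Theorem~\ref{t:X-clucha-mutation}, whose computation for a mutable vertex uses only local finiteness at the mutable objects involved.

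First I would dispatch the base case $\cU=\cT$. Since $\cind{\cT}{\cT}=\id$, its adjoint $\stabindbar{\cT}{\cT}$ is also the identity, so the monomial factor in Definition~\ref{d:clucha-X} is $x^{[\simpmod{\cT}{T}]}$; by Proposition~\ref{p:exch-seq-mid-terms} one may take $\modlift{M}{\cT}{\pm}=\exchmon{\cT}{T}{\pm}$ for $M$ the simple at $T$, and these objects lie in $\cT$, so $\Extfun{\cT}\exchmon{\cT}{T}{\pm}=0$ and both $\Fpoly$-factors equal $1$ by Remark~\ref{r:Fpoly-props}. Hence $\clucha[\cT,\cT]{\Xside}(\simpmod{\cT}{T})=x^{[\simpmod{\cT}{T}]}$, which under the dictionary of \S\ref{ss:intro-cl-ensembles} is precisely the initial $\Xside$-cluster variable attached to the vertex $T$. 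For the inductive step, let $\cU'=\mut{U}{\cU}$ with $\cU$ reachable from $\cT$. Theorem~\ref{t:X-clucha-mutation} shows that $\clucha[\cTU']{\Xside}(\simpmod{\cU'}{V})$ equals $\clucha[\cTU]{\Xside}(\simpmod{\cU}{U})^{-1}$ when $V=\mut{\cU}{U}$ and equals $\clucha[\cTU]{\Xside}(\simpmod{\cU}{V})\clucha[\cTU]{\Xside}(\simpmod{\cU}{U})^{[\exchmatentry{U,V}^{\cU}]_{+}}(1+\clucha[\cTU]{\Xside}(\simpmod{\cU}{U}))^{-\exchmatentry{U,V}^{\cU}}$ otherwise, which is exactly the $Y$-mutation rule in the direction $U$; the sign and index conventions match by the same bookkeeping already used in Theorem~\ref{t:c-vec-mut-formula} to identify indices and their adjoints with $\mathbf{g}$- and $\mathbf{c}$-vectors. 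Since by Theorem~\ref{t:Aside-bijection} the seed of $\cU'$ is the mutation at $U$ of the seed of $\cU$, combining this with the inductive hypothesis and the vertex bijection of Theorem~\ref{t:Aside-bijection} proves the first assertion.

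For the second assertion, the extra hypotheses (all reachable $\cU$ locally finite with no loops at any indecomposable, including projectives) are what allow $\clucha[\cTU]{\Xside}$ to be defined on all of $\Kgp{\fd{\cU}}$ via Definition~\ref{d:X-CC-proj}, and the product formula of Proposition~\ref{p:prod-form-of-X-cc-on-simples-at-projectives} to be applied at an indecomposable projective $P$. The base case extends verbatim, so it remains to establish a mutation formula for $\clucha[\cTU']{\Xside}(\simpmod{\cU'}{P})$ with $P$ frozen. I would obtain this by re-running the computation in the proof of Theorem~\ref{t:X-clucha-mutation} with Proposition~\ref{p:prod-form-of-X-cc-on-simples-at-projectives} in place of Proposition~\ref{p:prod-form-of-X-cc-on-simples}, which forces one to know that the extended exchange-matrix entries $\exchmatentry{W,P}^{\cU}$ of Proposition~\ref{p:prod-form-of-X-cc-on-simples-at-projectives} mutate at $U$ by the same formula \eqref{eq:exch-mat-mutation-clust-str} as genuine columns; this in turn follows from Proposition~\ref{p:decomp-exch-terms} applied at $P$ (using local finiteness and the no-loop condition there) together with the projective analogue of Proposition~\ref{p:one-step-X-mut-from-root}. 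Once this frozen-column mutation formula is in hand, the inductive step proceeds exactly as in the mutable case, and, combined with the base case, gives all $\Xside$-cluster variables of $s$, the simple $\cU$-modules supported at projectives accounting precisely for the frozen indices.

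I expect the main obstacle to be this frozen-vertex bookkeeping for the second assertion. The proof of Theorem~\ref{t:X-clucha-mutation} is already intricate, and extending it to simples supported at projectives means first extending Theorem~\ref{t:exch-mat-mutation-clust-str} and the auxiliary identities of Proposition~\ref{p:one-step-X-mut-from-root} and Corollary~\ref{c:one-step-X-mut} to cover the extended entries $\exchmatentry{\blank,P}$, and then being careful about the precise meaning of ``the $\Xside$-cluster variable at a frozen index of a non-initial seed'', so that it genuinely satisfies the expected $Y$-mutation rule rather than some variant. By contrast, the first assertion is a comparatively routine induction whose substance lies entirely in Theorems~\ref{t:X-clucha-mutation} and \ref{t:Aside-bijection}.
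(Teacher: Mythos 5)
Your proof takes essentially the same route as the paper: a short induction whose base case verifies $\clucha[\cT,\cT]{\Xside}(\simpmod{\cT}{T})=x^{[\simpmod{\cT}{T}]}$ and whose inductive step is Theorem~\ref{t:X-clucha-mutation}, with the identification of indices and their adjoints with $\mathbf{g}$- and $\mathbf{c}$-vectors handled by earlier results. The paper's proof of Theorem~\ref{t:X-clucha} is almost exactly this, and like yours it handles the frozen case of the second assertion by waving at ``the same argument'', with the detail delegated to the aside ``or the only use of \eqref{eq:X-CC-proj}'' inside the proof of Theorem~\ref{t:X-clucha-mutation}.

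Your diagnosis of where the real work lies---the frozen-column bookkeeping---is correct, but the route you sketch for closing it is not the right one. Proposition~\ref{p:one-step-X-mut-from-root} and Theorem~\ref{t:one-step-X-mut} are already stated for an arbitrary $V\in\indec(\cU'\setminus\mut{\cU}{U})$, including projectives, so no ``projective analogue'' of these is needed; what actually needs extending is only Theorem~\ref{t:exch-mat-mutation-clust-str}, whose proof (via $\sform{\blank}{\blank}{\cU}$) breaks down when the column index is projective because $\simpmod{\cU}{P}$ is not a $\stab{\cU}$-module. The cleaner way to bridge this is two observations, neither of which is what you proposed: first, $\Fpoly(\Extfun{\cT}W)=1$ whenever $W$ is projective (since $\Extfun{\cT}W=0$), so the exponents $\exchmatentry{W,P}^{\cU'}$ for $W$ projective never enter the formula of Proposition~\ref{p:prod-form-of-X-cc-on-simples-at-projectives}; second, for $W$ mutable and $P$ projective, the mutation formula for $\exchmatentry{W,P}^{\cU'}$ follows from Theorem~\ref{t:exch-mat-mutation-clust-str} applied to $\exchmatentry{P,W}^{\cU'}$ (row index arbitrary, column index mutable) combined with the extended skew-symmetrizability $d_W\exchmatentry{W,P}=-d_P\exchmatentry{P,W}$, which holds directly from the Gabriel-matrix definition of the extended entries. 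With those two remarks in place, the computation of Theorem~\ref{t:X-clucha-mutation} goes through verbatim for a frozen $V=P$ and the induction closes.
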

\begin{proof}
Observe that $\clucha[\cT\!,\,\cT]{\Xside}(M)=x^{[M]}$ for $M\in \fd\stab{\cT}$, since in this case $\Extfun{\cT}\modlift{M}{\cU}{\pm}=0$ and $\indbar{\cT}{\cT}$ is the identity.
Similarly $\clucha[\cT\!,\,\cT]{\Xside}(\simpmod{\cU}{P})=x^{[\simpmod{\cU}{P}]}$ for $P\in\indec{\cU}$ projective, by \eqref{eq:X-CC-proj}.
Hence, the $\Xside$-cluster character computes the initial $\Xside$-cluster variables correctly.
Now the result follows by induction using Theorem~\ref{t:X-clucha-mutation}: the assumption that $(\cC,\cT)$ has a cluster structure means that this theorem applies to any $\cU\ctsubcat\cC$ reachable from $\cT$ to see that the $\Xside$-cluster characters of the simple $\stab{\cU}$-modules transform via $\Xside$-cluster mutations, whereas under the stronger assumptions this theorem gives the same conclusion for the $\Xside$-cluster characters of all simple $\cU$-modules.
\end{proof}

\begin{remark} {\ }
\label{r:XCC}
\begin{enumerate}
\item 
When $\cC$ has a (weak) cluster structure, any $\cU\ctsubcat\cC$ is locally finite at all $U\in\indec{\stab{\cU}}=\exch{\cU}$, so the local finiteness assumption in Theorem~\ref{t:X-clucha} reduces to local finiteness at indecomposable projective objects.
If $\cC$ has finite rank, then all cluster-tilting subcategories of $\cC$ are locally finite, so this assumption holds.

If $\cC$ is Hom-finite and finite rank, and $\bK$ is algebraically closed, then there is no loop at $U\in\indec{\cU}$ whenever $\simpmod{\cU}{U}\in\fd{\cU}$ has finite projective dimension, by the strong no loops theorem \cite{ILP}.
When $\cC$ is exact, it is not unusual \cite{GLS-PFV,BIRS1} (see also \cite[\S3]{Pressland}) that every $\cU\ctsubcat\cC$ has finite global dimension, so this result (or even the original no loops theorem \cite{Igusa}) applies.
In this case we therefore conclude that every $\cU\ctsubcat\cC$ is locally finite and has no loops at any of its indecomposable objects, as required by Theorem~\ref{t:X-clucha}.

\item Proposition~\ref{p:prod-form-of-X-cc-on-simples} tells us that the $\Xside$-cluster \emph{characters} have `separation' in the sense of Fomin--Zelevinsky \cite[Prop.~3.13]{FZ-CA4}, i.e.\ can be written as a monomial corresponding to a $\mathbf{c}$-vector (by Theorem~\ref{t:c-vec-mut-formula}) multiplied by a product of $\Fpoly$-polynomials (which agree with their cluster theoretic counterparts by Theorem~\ref{t:Fpoly-mutation}).
Since Theorem~\ref{t:X-clucha} says that $\Xside$-cluster characters compute $\Xside$-cluster \emph{variables} in the presence of a cluster structure, it reproves the separation formula for the latter in purely categorical terms, for those $\Xside$-cluster algebras admitting a cluster categorical realisation.
\item\label{r:XCC-bijection}
A straightforward corollary of Theorem~\ref{t:X-clucha} is that $\clucha[\cT\!,\,\blank]{\Xside}$ gives a surjection from the set of simple modules for all cluster-tilting subcategories $\cU$ reachable from $\cT$ to the set of $\Xside$-cluster variables for the cluster algebra associated to $\cC$, but this map is typically not injective.
\end{enumerate}
\end{remark}

To resolve Remark~\ref{r:XCC}\ref{r:XCC-bijection}, we make the following definition.

\begin{definition}
\label{d:mutation-pair}
A \emph{mutation pair} in a cluster category $\cC$ is an ordered pair of objects $(U,V)$ for which there exists $\cU\ctsubcat\cC$ with $U\in\cU$ and $V=\mut{\cU}{U}$.
In this case we say that $\cU$ \emph{realises} $(U,V)$.
A mutation pair for $(\cC,\cT)$, where $\cT\ctsubcat\cC$ is fixed, is a mutation pair for $\cC$ realised by some $\cU$ reachable from $\cT$.
\end{definition}

While the cluster-tilting subcategory $\cU$ in Definition~\ref{d:mutation-pair} is usually not unique when it exists, various quantities associated to $U\in\cU$ turn out to depend only on the mutation pair $(U,\mut{\cU}{U})$, and not on $\cU$ itself.

\begin{proposition}
\label{p:XCC-mutation-pair}
Let $\cC$ be a Krull--Schmidt cluster category and let $\cT\ctsubcat\cC$ have the property that any $\cT'\ctsubcat\cC$ reachable from $\cT$ has no loops.
Let $(U,V)$ be a mutation pair for $(\cC,\cT)$, realised by $\cU,\cU'\ctsubcat\cC$.
Then $\exchmon{\cU}{U}{\pm}\iso\exchmon{\cU'}{U}{\pm}$ and $\stabindbar{\cU}{\cT}[\simpmod{\cU}{U}]=\stabindbar{\cU'}{\cT}[\simpmod{\cU'}{U}]$.
\end{proposition}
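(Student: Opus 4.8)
The plan is to deduce both assertions from the essential uniqueness of the exchange conflations attached to the pair $(U,V)$. Since $(U,V)$ is a mutation pair for $(\cC,\cT)$, both $\cU$ and $\cU'$ are reachable from $\cT$, so by hypothesis neither has any loops; in particular there is no loop at $U$ in $\cU$ or in $\cU'$, and hence, by Corollary~\ref{c:no-loops-mutates}, no loop at $V=\mut{\cU}{U}=\mut{\cU'}{U}$ in $\mut{U}{\cU}$. By Lemma~\ref{l:props-of-K0-mod-T}\ref{l:props-mod-T-simple-eq-E} this forces $\Extfun{\cU}(\mut{\cU}{U})=\simpmod{\cU}{U}$, and evaluating at $U$ gives $\rank_{\divalg{U}}\Ext{1}{\cC}{U}{V}=1$; dually, using the no-loop argument recorded after Lemma~\ref{l:d-of-mutant} together with the $2$-Calabi--Yau property of $\stab{\cC}$, one also obtains $\rank_{\divalg{V}}\Ext{1}{\cC}{V}{U}=1$.

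For the statement on middle terms, I would pass to the triangulated stable category $\stab{\cC}$, which is Hom-finite and therefore pseudocompact, with $\dimdivalg{U},\dimdivalg{V}<\infty$. The images in $\stab{\cC}$ of the two ``$+$'' exchange conflations $V\infl\exchmon{\cU}{U}{+}\defl U$ and $V\infl\exchmon{\cU'}{U}{+}\defl U$ are non-split triangles realising non-zero classes in $\Ext{1}{\cC}{U}{V}$, which is the same group whether computed in $\cC$ or in $\stab{\cC}$. As this group has rank $1$ over $\divalg{U}$, Lemma~\ref{l:rk1-unique-middle-term} applied in $\stab{\cC}$ shows that these two triangles are isomorphic by an isomorphism restricting to the identity on $U$ and on $V$; in particular $\exchmon{\cU}{U}{+}\iso\exchmon{\cU'}{U}{+}$ in $\stab{\cC}$. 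Because $U$ and $V$ are indecomposable and the conflations are non-split, the approximations $\varphi^{+}$ are right minimal, so their middle terms carry no projective-injective direct summand; hence, by the Krull--Schmidt property, the stable isomorphism lifts to an isomorphism in $\cC$. The identity $\exchmon{\cU}{U}{-}\iso\exchmon{\cU'}{U}{-}$ follows in the same way from the ``$-$'' exchange conflations $U\infl\exchmon{\cU}{U}{-}\defl V$, which realise non-zero classes in $\Ext{1}{\cC}{V}{U}$ (of rank $1$ over $\divalg{V}$), or equivalently by applying the ``$+$'' case to $\op{\cC}$ via Remark~\ref{r:ind-op}.

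For the equality of $\stabindbar{}{}$-values I would use the error-term description of this map. The ``$+$'' and ``$-$'' exchange conflations for $U$ in $\cU$ are, respectively, a $\cU$-coindex and a $\cU$-index conflation for $V=\mut{\cU}{U}$ (as noted after Definition~\ref{d:coindex}), with inflations $i_{L}\colon V\infl\exchmon{\cU}{U}{+}$ and $i_{R}\colon U\infl\exchmon{\cU}{U}{-}$; so by the definitions of $\lefterror{1}{\cU}{V}$ and $\righterror{1}{\cU}{V}$ — which, as noted after \eqref{eq:errors}, depend only on the images of those conflations in $\stab{\cC}$ — together with Corollary~\ref{c:indbar-lift}, we have $\stabindbar{\cU}{\cT}[\simpmod{\cU}{U}]=[\lefterror{1}{\cU}{V}|_{\cT}]-[\righterror{1}{\cU}{V}|_{\cT}]$. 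By the previous paragraph the relevant exchange triangles for $\cU$ and for $\cU'$ are isomorphic in $\stab{\cC}$ compatibly with $\id_{V}$ and $\id_{U}$, so $\lefterror{1}{\cU}{V}\iso\lefterror{1}{\cU'}{V}$ and $\righterror{1}{\cU}{V}\iso\righterror{1}{\cU'}{V}$ as $\stab{\cC}$-modules; restricting to $\cT$ and passing to Grothendieck classes yields $\stabindbar{\cU}{\cT}[\simpmod{\cU}{U}]=\stabindbar{\cU'}{\cT}[\simpmod{\cU'}{U}]$.

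The main obstacle is pinning down the exchange conflations up to isomorphism in terms of $(U,V)$: one needs the rank-$1$ conditions — this is exactly where the ``no loops'' hypothesis is used, through Lemma~\ref{l:props-of-K0-mod-T}\ref{l:props-mod-T-simple-eq-E} and Corollary~\ref{c:no-loops-mutates} — in order to invoke Lemma~\ref{l:rk1-unique-middle-term}, and, for the claim on middle terms specifically, one must carefully rule out spurious projective-injective summands so that the isomorphism produced in $\stab{\cC}$ genuinely lifts to $\cC$. Once the exchange conflations are known to be isomorphic in $\stab{\cC}$, the error-term identity for $\stabindbar{}{}$ is comparatively routine.
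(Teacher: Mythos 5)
Your derivation of the rank-$1$ conditions and your final paragraph, which uses Corollary~\ref{c:indbar-lift} to express $\stabindbar{\cU}{\cT}[\simpmod{\cU}{U}]$ through the error modules $\lefterror{1}{\cU}{V}$ and $\righterror{1}{\cU}{V}$ and observes that these depend only on the exchange conflations viewed in $\stab{\cC}$, both match the paper's argument. The gap is in your middle step. Having produced an isomorphism $\exchmon{\cU}{U}{+}\iso\exchmon{\cU'}{U}{+}$ in $\stab{\cC}$, you assert that because $\varphi^{+}$ is right minimal, $\exchmon{\cU}{U}{+}$ has no projective-injective direct summand, so the stable isomorphism lifts. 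Right minimality is indeed forced here by $V$ being indecomposable and the conflation being non-split, but minimality only rules out direct summands on which $\varphi^{+}$ vanishes; a projective-injective summand $P$ on which $\varphi^{+}$ restricts to a non-zero radical map is perfectly compatible with minimality, and such summands genuinely occur. Proposition~\ref{p:decomp-exch-terms} shows $\exchmon{\cU}{U}{+}$ contains every $W\in\indec{\cU}\setminus U$ with $\Gabmatentry{W,U}\neq0$, projective-injective or not; concretely, in the exact cluster category $\fpmod{\Pi}$ for $\Pi$ the preprojective algebra of type $\mathsf{A}_2$, the exchange conflation $S_2\infl P_1\defl S_1\confl$ for the simple $S_1$ has projective-injective middle term $P_1$. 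So the stable isomorphism of middle terms does not lift for the reason you give, and your argument has a hole.

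The detour through $\stab{\cC}$ is also unnecessary: the paper applies Lemma~\ref{l:rk1-unique-middle-term} directly in $\cC$ to the two non-split conflations $V\infl\exchmon{\cU}{U}{+}\defl U\confl$ and $V\infl\exchmon{\cU'}{U}{+}\defl U\confl$ (and likewise for the ``$-$'' conflations), obtaining an isomorphism of conflations in $\cC$ at a stroke. Your concern about pseudocompactness of $\cC$, which presumably motivated the detour, is understandable given the lemma's wording but is not an obstruction here. The lemma's proof only uses the Wedderburn splitting to realise the $\divalg{U}$-action on $\Ext{1}{\cC}{U}{V}$ and to show that the lift $\overline{\phi}$ of a unit of $\divalg{U}$ is a unit of $\op{\End{\cC}{U}}$. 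In the present situation $\Ext{1}{\cC}{U}{V}=\simpmod{\cT'}{U}(U)$ carries an $\op{\End{\cC}{U}}$-action that already kills the radical, so the $\divalg{U}$-module structure is intrinsic, and $\op{\End{\cC}{U}}$ is local because $\cC$ is Krull--Schmidt, so every lift of a unit of $\divalg{U}$ is automatically a unit. Replacing your second paragraph with this direct application repairs the proof and aligns it with the paper's.
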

\begin{proof}
Let $\cT'\ctsubcat\cC$ realise $(U,V)$ and be reachable from $\cT$, so that $\cT'$ has no loops.
Then $\rank_{\divalg{U}}\Ext{1}{\cC}{U}{V}=\rank_{\divalg{U}}\Ext{1}{\cC}{U}{\mut{\cT'}{U}}=1$ by Lemma~\ref{l:props-of-K0-mod-T}\ref{l:props-mod-T-simple-eq-E}.
Thus, by Lemma~\ref{l:rk1-unique-middle-term}, the exchange conflations $U\infl\exchmon{\cU}{U}{-}\defl V\confl$ and $U\infl\exchmon{\cU'}{U}{-}\defl V\confl$ are isomorphic, as are the exchange conflations in the opposite direction: in particular, $\exchmon{\cU}{U}{\pm}\iso\exchmon{\cU'}{U}{\pm}$.
Using Corollary~\ref{c:indbar-lift}, we may calculate $\stabindbar{\cU}{\cT}[\simpmod{\cU}{U}]=[\lefterror{1}{\cU}{U'}|_{\cT}]-[\righterror{1}{\cU}{U'}|_{\cT}]$ to see that this quantity also depends only on the exchange conflations, this being true of the modules $\lefterror{1}{\cU}{U'}$ and $\righterror{1}{\cU}{U'}$.
\end{proof}

Thanks to a result of Cao--Keller--Qin \cite[Thm.~7.8]{CKQ}, we may now deduce the following.

\begin{corollary}
Under the assumptions of Theorem~\ref{t:X-clucha}, let $\clucha[\cT]{\Xside}$ be the map sending an exchange pair $(U,V)$ for $(\cC,\cT)$ to $\clucha[\cTU]{\Xside}(\simpmod{\cU}{U})$, for any reachable $\cU\ctsubcat\cC$ realising $(U,V)$.
Then $\clucha[\cT]{\Xside}$ is a bijection between exchange pairs for $(\cC,\cT)$ and the $\Xside$-cluster variables of the associated cluster algebra associated to mutable indices.
\end{corollary}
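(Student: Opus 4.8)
The plan is to realise $\clucha[\cT]{\Xside}$ as a composite of two bijections: first one $\Phi$ between exchange pairs for $(\cC,\cT)$ and the ordered exchange pairs of the associated cluster algebra $\mathscr{A}$, obtained from the $\Aside$-cluster character, and then one $\Psi$ between ordered exchange pairs of $\mathscr{A}$ and its $\Xside$-cluster variables at mutable indices, which is precisely the content of the result of Cao--Keller--Qin \cite[Thm.~7.8]{CKQ}.

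First I would check that $\clucha[\cT]{\Xside}$ is well-defined on exchange pairs, i.e.\ that $\clucha[\cTU]{\Xside}(\simpmod{\cU}{U})$ does not depend on the choice of reachable $\cU$ realising a given exchange pair $(U,V)$. Since $(\cC,\cT)$ has a cluster structure, every cluster-tilting subcategory reachable from $\cT$ has no loops, so $\simpmod{\cU}{U}=\Extfun{\cU}(\mut{\cU}{U})$ by Lemma~\ref{l:props-of-K0-mod-T}\ref{l:props-mod-T-simple-eq-E}, and Corollary~\ref{c:X-cc-on-simples} expresses $\clucha[\cTU]{\Xside}(\simpmod{\cU}{U})$ entirely in terms of $\stabindbar{\cU}{\cT}[\simpmod{\cU}{U}]$ and the middle terms $\exchmon{\cU}{U}{\pm}$ of the exchange conflations for $U$. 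By Proposition~\ref{p:XCC-mutation-pair}, these quantities depend only on the exchange pair $(U,\mut{\cU}{U})$ and not on the reachable $\cU$ realising it, so $\clucha[\cT]{\Xside}$ is a genuine function of exchange pairs.

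Next I would construct $\Phi$ by $\Phi(U,V)=(\clucha[\cT]{\Aside}(U),\clucha[\cT]{\Aside}(V))$; this is well-defined because the value of $\clucha[\cT]{\Aside}$ on an object depends only on that object and on $\cT$, not on any ambient cluster-tilting subcategory. Choosing a reachable $\cU$ with $U\in\exch{\cU}$ and $V=\mut{\cU}{U}$ and applying Theorem~\ref{t:Aside-bijection}, the pair $(\clucha[\cT]{\Aside}(U),\clucha[\cT]{\Aside}(V))$ is the ordered pair of cluster variables exchanged at the index corresponding to $U$ in the seed corresponding to $\cU$, so $\Phi$ does land in the ordered exchange pairs of $\mathscr{A}$. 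Injectivity of $\Phi$ follows from injectivity of $\clucha[\cT]{\Aside}$ on objects of reachable cluster-tilting subcategories (Theorem~\ref{t:Aside-bijection}), and surjectivity follows since the bijection of Theorem~\ref{t:Aside-bijection} commutes with mutation: every ordered exchange pair of $\mathscr{A}$ is the pair of variables exchanged at a mutable index of some reachable seed, hence the image under $\Phi$ of the corresponding categorical exchange pair. Then, by the first part of Theorem~\ref{t:X-clucha} together with the observation (implicit in its inductive proof via Theorem~\ref{t:X-clucha-mutation}) that $\clucha[\cTU]{\Xside}(\simpmod{\cU}{U})$ is the $\Xside$-cluster variable of the seed $s$ corresponding to $\cU$ at the mutable index corresponding to $U$, we obtain $\clucha[\cT]{\Xside}=\Psi\circ\Phi$, where $\Psi$ sends an ordered exchange pair of $\mathscr{A}$ to its associated $\Xside$-cluster variable at a mutable index. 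Since $\Psi$ is a bijection by \cite[Thm.~7.8]{CKQ} and $\Phi$ is a bijection, so is $\clucha[\cT]{\Xside}$.

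The main obstacle is bookkeeping rather than new mathematics: one must reconcile the categorical notion of exchange pair (Definition~\ref{d:mutation-pair}) with the cluster-algebraic notion used in \cite{CKQ}, fix compatible orientation conventions so that, for instance, $(U,V)$ and $(V,U)$ produce an $\Xside$-cluster variable and its inverse in accordance with Theorem~\ref{t:X-clucha-mutation} and with $Y$-pattern mutation, and confirm throughout that every quantity is independent of the auxiliary choice of a reachable realising cluster-tilting subcategory — which is exactly what Proposition~\ref{p:XCC-mutation-pair} and the subcategory-agnosticism of $\clucha[\cT]{\Aside}$ supply.
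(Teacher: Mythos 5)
Your proposal follows essentially the same approach as the paper: well-definedness via Proposition~\ref{p:XCC-mutation-pair}, the bijection between categorical exchange pairs and cluster-algebraic exchange pairs via Theorem~\ref{t:Aside-bijection}, the bijection between the latter and $\Xside$-cluster variables via \cite[Thm.~7.8]{CKQ}, and compatibility with mutation (packaged in the paper as surjectivity from Theorem~\ref{t:X-clucha} plus mutation-compatibility of all maps involved). Your version is more explicit in factoring $\clucha[\cT]{\Xside}=\Psi\circ\Phi$, but this is a presentational difference rather than a different argument.
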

\begin{proof}
The map $\clucha[\cT]{\Xside}$ is well-defined by Proposition~\ref{p:XCC-mutation-pair}, and surjective by Theorem~\ref{t:X-clucha}.
The $\Xside$-cluster variables are in bijection with exchange pairs in the $\Aside$-cluster algebra by \cite[Thm.~7.8]{CKQ}, which are in bijection with exchange pairs for $(\cC,\cT)$ by Theorem~\ref{t:Aside-bijection}.
Since all of these maps are compatible with mutations, $\clucha[\cT]{\Xside}$ is thus also a bijection.
\end{proof}

The following proposition demonstrates that the pushforward under $\beta_{\cT}$ of an $\Xside$-cluster character is a ratio of $\Aside$-cluster characters (from the same cluster).
Thus $(\beta_{\cT})_*$ is closely related to the change of variables from $y$ to $\hat{y}$ appearing in \cite[Eq.~3.7]{FZ-CA4}.

\begin{proposition}\label{p:cluchaX-ratio}
Let $\cC$ be a Krull--Schmidt cluster category, and let $\cTU\ctsubcat\cC$, with $\cU$ maximally mutable.
Assume either that $\cC$ has finite rank or that $\cU$ is reachable from $\cT$.
Then for $M\in\fd \stab{\cU}$, we have
\[(\beta_\cT)_*\clucha[\cTU]{\Xside}(M)=\frac{\clucha[\cT]{\Aside}(\modlift{M}{\cU}{+})}{\clucha[\cT]{\Aside}(\modlift{M}{\cU}{-})}.\]
\end{proposition}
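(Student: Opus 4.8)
The plan is to unwind both sides into a monomial times a quotient of $\Fpoly$-polynomials, observe that the $\Fpoly$-parts are literally identical, and reduce the statement to a single equality of exponents in $\Kgp{\cT}$. First I would check that everything is defined: the hypotheses force $\cT$ to be maximally mutable (automatic in the finite rank case by Corollary~\ref{c:weak-clust-struct}\ref{c:weak-clust-struct-finite}), so $\beta_{\cT}$ restricts to $\Kgp{\fd{\stab{\cT}}}$, the pushforward $(\beta_{\cT})_{*}$ is defined, and $\stabindbar{\cU}{\cT}[M]\in\Kgp{\fd{\stab{\cT}}}$ by Corollary~\ref{c:indbar-on-fd}; moreover $\Extfun{\cT}\modlift{M}{\cU}{\pm}\in\fd{\stab{\cT}}$ (automatic if $\cC$ has finite rank, and otherwise because $\Extfun{\cT}\modlift{M}{\cU}{\pm}$ is supported on the additively finite subcategory $\cT\setminus\cU$ when $\cU$ is reachable from $\cT$, cf.\ Proposition~\ref{p:error-term-fd}), so that $\Fpoly(\Extfun{\cT}\modlift{M}{\cU}{\pm})$ lies in the group algebra $\bK\Kgp{\fd{\stab{\cT}}}$. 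Since the denominator $\Fpoly(\Extfun{\cT}\modlift{M}{\cU}{-})$ has constant term $1$, its image under the $\bK$-algebra homomorphism $(\beta_{\cT})_{*}$ (cf.\ Remark~\ref{r:beta-star-product}) again has constant term $1$, hence is a nonzero element of the domain $\bK\Kgp{\cT}$; this is what makes $(\beta_{\cT})_{*}$ of the rational function $\clucha[\cTU]{\Xside}(M)$ meaningful, and by multiplicativity we get
\[
(\beta_{\cT})_{*}\clucha[\cTU]{\Xside}(M)=a^{\beta_{\cT}\stabindbar{\cU}{\cT}[M]}\,(\beta_{\cT})_{*}\Fpoly(\Extfun{\cT}\modlift{M}{\cU}{+})\,\bigl((\beta_{\cT})_{*}\Fpoly(\Extfun{\cT}\modlift{M}{\cU}{-})\bigr)^{-1}.
\]

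On the other side, expanding $\clucha[\cT]{\Aside}(\modlift{M}{\cU}{\pm})=a^{\ind{\cC}{\cT}[\modlift{M}{\cU}{\pm}]}(\beta_{\cT})_{*}\Fpoly(\Extfun{\cT}\modlift{M}{\cU}{\pm})$ and dividing, the two $\Fpoly$-factors match those above, so the proposition reduces to the single identity
\[
\beta_{\cT}\stabindbar{\cU}{\cT}[M]=\ind{\cC}{\cT}[\modlift{M}{\cU}{+}]-\ind{\cC}{\cT}[\modlift{M}{\cU}{-}].
\]
Because $\modlift{M}{\cU}{\pm}\in\cU$, the right-hand side equals $\ind{\cU}{\cT}\bigl([\modlift{M}{\cU}{+}]-[\modlift{M}{\cU}{-}]\bigr)$ by \eqref{eq:ind-T-T'} and additivity of the index (Proposition~\ref{p:ind-coind-homs}), and $[\modlift{M}{\cU}{+}]-[\modlift{M}{\cU}{-}]=\beta_{\cU}[M]$ by Definition~\ref{d:U-pm}; so it is enough to prove $\beta_{\cT}\stabindbar{\cU}{\cT}[M]=\ind{\cU}{\cT}\beta_{\cU}[M]$. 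The left-hand square of Theorem~\ref{t:exch-isos} gives $\ind{\cU}{\cT}\circ\beta_{\cU}=\beta_{\cT}\circ\stabIndbar{\cU}{\cT}$ on $\Kgp{\fpmod{\stab{\cU}}}$, so $\ind{\cU}{\cT}\beta_{\cU}[M]=\beta_{\cT}\stabIndbar{\cU}{\cT}[M]$, and finally $\stabIndbar{\cU}{\cT}[M]=\stabindbar{\cU}{\cT}[M]$ for $M\in\fd{\stab{\cU}}$ by Corollary~\ref{c:indbar-on-fd}, whose hypotheses are among ours. Chaining these equalities and reassembling the two fractions finishes the argument.

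Once Theorem~\ref{t:exch-isos} is available the proof is essentially bookkeeping, so I do not anticipate a real obstacle; the two points that need genuine care are (i) interpreting $(\beta_{\cT})_{*}$ on a rational function --- handled, as above, by noting that the relevant denominator has image with constant term $1$, so one can define $(\beta_{\cT})_{*}$ of the quotient as the quotient of the images without extending $(\beta_{\cT})_{*}$ to an entire fraction field --- and (ii) the passage from $\stabIndbar{\cU}{\cT}$ (defined via the error functors, which appears in Theorem~\ref{t:exch-isos}) to $\stabindbar{\cU}{\cT}$ (the adjoint of $\stabcoind{\cT}{\cU}$, which appears in the definition of the $\Xside$-cluster character); this is exactly where the finite-rank-or-reachability hypothesis is consumed, via Corollary~\ref{c:indbar-on-fd}.
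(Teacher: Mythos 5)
Your proposal is correct and follows essentially the same route as the paper's proof: expand both sides, observe the $\Fpoly$-factors cancel, and reduce to the single identity $\beta_{\cT}\stabindbar{\cU}{\cT}[M]=\ind{\cU}{\cT}[\modlift{M}{\cU}{+}]-\ind{\cU}{\cT}[\modlift{M}{\cU}{-}]$, which follows from Theorem~\ref{t:exch-isos} and \eqref{eq:TM-pm-def}. You are somewhat more explicit than the paper about two technical points that it leaves implicit---the well-definedness of $(\beta_\cT)_*$ on the rational function via the constant-term-$1$ observation, and the passage from $\stabIndbar{\cU}{\cT}$ (appearing in Theorem~\ref{t:exch-isos}) to $\stabindbar{\cU}{\cT}$ (appearing in Definition~\ref{d:clucha-X}) via Corollary~\ref{c:indbar-on-fd}---both of which the paper tacitly assumes.
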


\begin{proof}
The assumption that $\cC$ is finite rank or $\cU$ is reachable from $\cT$ means that $\Extfun{\cT}{\modlift{M}{\cU}{\pm}}\in\fd{\stab{\cT}}$, and so $\clucha[\cT]{\Aside}(\modlift{M}{\cU}{\pm})$ are well-defined.
Now given \eqref{eq:clucha-A-alt} and \eqref{eq:clucha-X-alt}, and using that $\modlift{M}{\cU}{\pm}\in\cU$, the statement reduces to the claim that
\[(\beta_{\cT})_*(x^{\stabindbar{\cU}{\cT}{[M]}})=a^{\ind{\cU}{\cT}{[\modlift{M}{\cU}{+}]}-\ind{\cU}{\cT}{[\modlift{M}{\cU}{-}]}},\]
which holds since
$\beta_{\cT}(\stabindbar{\cU}{\cT}{[M]})=\ind{\cU}{\cT}{[\modlift{M}{\cU}{+}]}-\ind{\cU}{\cT}{[\modlift{M}{\cU}{-}]}$ by Theorem~\ref{t:exch-isos} and \eqref{eq:TM-pm-def}.
\end{proof}

A consequence of Proposition~\ref{p:cluchaX-ratio} and Theorem~\ref{t:X-clucha-mutation} is that if $\cU\ctsubcat\cC$ has no loops or $2$-cycles, then the ratios $\clucha[\cT]{\Aside}(\exchmon{U}{\cU}{+})/\clucha[\cT]{\Aside}(\exchmon{U}{\cU}{-})$ obey the $\Xside$-cluster mutation rules under mutation of $\cU$; cf.~\cite[Prop.~3.9]{FZ-CA4}.

\sectionbreak
\section{Quantisation}

\subsection{Categorical quantum data}
\label{s:cat-quasi-comm}

We now carry out a similar programme as in Section~\ref{s:cat-ex-mat} for the quasi-commutation matrices in quantum cluster algebras.
To do so, we will continue to follow the philosophy of Fock and Goncharov's approach (see also Fan Qin \cite{Qin}) which sees the quasi-commutation matrix as encoding a form adjoint to the exchange matrix.
While the exchange and quasi-commutation matrices appear to play very different roles, these roles reverse when swapping $\Aside$ and $\Xside$, restoring the symmetry.

To start, we need a map $\lambda_{\cT}$ and a form $\pform{\blank}{\blank}{\cT}$ analogous to $\beta_{\cT}$ and $\sform{\blank}{\blank}{\cT}$.
To tie the maps $\beta_{\cT}$ and $\lambda_{\cT}$ together, in order to define quantum cluster algebras and categories, we will also need a compatibility condition.
In contrast to $\beta_{\cT}=-p_{\cT}$, which is defined in terms of projective resolutions, we do not necessarily have a single, natural choice for $\lambda_{\cT}$.
Rather, a choice must be made; the moduli of such choices is a linear algebra problem, considered in detail in \cite{GellertLampe}.An important consequence of this is that we do not necessarily have an intrinsic form $\pform{\blank}{\blank}{\cT}$ for each cluster-tilting subcategory, as we did for $\sform{\blank}{\blank}{\cT}$, but rather must define each form $\pform{\blank}{\blank}{\cU}$ in terms of some initial choice $\pform{\blank}{\blank}{\cT}$, which we will do using the index and coindex maps by analogy with Definition~\ref{d:mutated-s-form}.

Our definition uses adjunction, taken with respect to the canonical evaluation pairing $\evform{\blank}{\blank}\colon\Kgp{\cT}\times\dual{\Kgp{\cT}}\to\integ$, for which the induced map $\delta_{\dual{\Kgp{\cT}}}\colon\dual{\Kgp{\cT}}\to\dual{\Kgp{\cT}}$ is the identity (see Section~\ref{s:adj}).
It follows from \eqref{eq:evform-genform} that
\begin{equation}
\label{eq:canform-comparison}
\canform{\blank}{\blank}{\cT}=\evform{\blank}{\sdual{\cT}(\blank)},
\end{equation}
where $\canform{\blank}{\blank}{\cT}$ is as in Proposition~\ref{p:K-duality}.
We also wish to restrict $\beta_{\cT}$ to $\Kgp{\fd{\stab{\cT}}}$, so must assume that $\cT$ is maximally mutable for this to be a subgroup of $\Kgp{\fpmod{\stab{\cT}}}$.
Recall from Corollary~\ref{c:weak-clust-struct}\ref{c:weak-clust-struct-finite} that this assumption is always satisfied when $\cC$ is Krull--Schmidt and has finite rank.

\begin{definition}
\label{d:quantum-hom}
Let $\cC$ be a Krull--Schmidt cluster category and let $\cT\ctsubcat\cC$ be maximally mutable.
A \emph{quantum datum} for $\cT$ is a map
$\lambda_{\cT}\colon \Kgp{\cT} \to \dual{\Kgp{\cT}}$ satisfying
\begin{enumerate}
\item (skew-symmetry) $\adj{\lambda_{\cT}}=-\lambda_{\cT}$, and
\item (compatibility) $\adj{\lambda_{\cT}}\circ\beta_{\cT}|_{\Kgp{\fd{\stab{\cT}}}}=2(\sdual{\cT}\circ\sinc{\cT}|_{\Kgp{\fd{\stab{\cT}}}})$.
\end{enumerate}
\end{definition}

This is the same notion of skew-symmetry as in Corollary~\ref{c:beta-skew-symmetric}, and requires us to use $\dual{\Kgp{\cT}}$ as the codomain of $\lambda_{\cT}$, rather than $\Kgpnum{\lfd \cT}$ as might have been expected.  
Recall that $\sinc{\cT}\colon\Kgpnum{\lfd{\stab{\cT}}}\to\Kgpnum{\lfd{\cT}}$ is the map induced from the inclusion of categories.  Since $\sdual{\cT}$ and $\sinc{\cT}$ are injective, the compatibility condition implies that $\beta_{\cT}$ is injective on $\Kgp{\fd{\stab{\cT}}}$.
Below, we will mostly leave the restriction of $\beta_{\cT}$ and $\sinc{\cT}$ to this subgroup implicit.

\begin{remark}
\label{r:dual-compatibility}
By taking adjoints and using Proposition~\ref{p:K-duality}, the compatibility condition is equivalent to requiring that
$\adj{\beta_{\cT}}\circ\lambda_{\cT} = 2(\sproj{\cT}\circ\pdual{\cT})$, where $\sproj{\cT}=\adj{(\sinc{\cT})}$.
The coefficient $2$ appearing in this condition reflects the usual choices made in defining quantum cluster algebras: the map $\lambda$ corresponds to the quasi-commutation matrix $L$ of a seed, whose entries are usually the powers of $q^{1/2}$ (for $q$ the quantum parameter) appearing in the quasi-commutation relations for the cluster variables of that seed.
While it would be in some ways more natural (and reduce the proliferation of $\frac{1}{2}$s and $2$s) to absorb this coefficient into $\lambda_{\cT}$, by allowing it to take values in $\dual{\Kgp{\cT}}\otimes_{\integ}\halfinteg=\Hom{\integ}{\Kgp{\cT}}{\halfinteg}$, this makes it less convenient to discuss adjunction, as in the skew-symmetry condition.
\end{remark}

\begin{definition}[cf.~{\cite[Def.~2.4.1]{Qin}}]\label{d:p-form} Define $\pform{\blank}{\blank}{\cT}\colon \Kgp{\cT}\cross \Kgp{\cT}\to \integ$ by
\[ \pform{[T]}{[U]}{\cT}\defeq \evform{[T]}{\lambda_{\cT}[U]}. \]
\end{definition}

\begin{lemma}\label{l:p-form-skew-symmetric} The form $\pform{\blank}{\blank}{\cT}$ is skew-symmetric.
\end{lemma}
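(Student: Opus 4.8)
The plan is to unwind the definition of $\pform{\blank}{\blank}{\cT}$ in terms of $\lambda_{\cT}$ and transfer the skew-symmetry of $\lambda_{\cT}$ (part (i) of Definition~\ref{d:quantum-hom}) through the evaluation pairing $\evform{\blank}{\blank}$. The relevant fact about adjunction with respect to $\evform{\blank}{\blank}$ is that it simply swaps the two arguments: for a map $\varphi\colon\Kgp{\cT}\to\dual{\Kgp{\cT}}$ and elements $v,w\in\Kgp{\cT}$, one has $\evform{v}{\varphi(w)}=\evform{w}{\adj{\varphi}(v)}$, since the adjoint is taken with respect to the evaluation pairing on both sides and $\delta_{\dual{\Kgp{\cT}}}=\id$ as noted just before Definition~\ref{d:quantum-hom}.

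First I would compute, for $T,U\in\cT$ (or more generally for arbitrary classes in $\Kgp{\cT}$),
\[
\pform{[T]}{[U]}{\cT}=\evform{[T]}{\lambda_{\cT}[U]}=\evform{[U]}{\adj{\lambda_{\cT}}[T]}=\evform{[U]}{-\lambda_{\cT}[T]}=-\evform{[U]}{\lambda_{\cT}[T]}=-\pform{[U]}{[T]}{\cT},
\]
using the adjunction identity for the second equality and the skew-symmetry $\adj{\lambda_{\cT}}=-\lambda_{\cT}$ for the third. Since $\pform{\blank}{\blank}{\cT}$ is $\integ$-bilinear and classes of objects span $\Kgp{\cT}$, this computation on a spanning set extends by bilinearity to all of $\Kgp{\cT}\times\Kgp{\cT}$, giving $\pform{v}{w}{\cT}=-\pform{w}{v}{\cT}$ for all $v,w\in\Kgp{\cT}$, which is exactly the asserted skew-symmetry.

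There is no real obstacle here: the only point requiring a word of care is making precise the adjunction identity $\evform{v}{\varphi(w)}=\evform{w}{\adj{\varphi}(v)}$ in the conventions of the paper's Section on adjoints, but this is immediate from \eqref{eq:evform-genform} together with the fact that the map induced by $\evform{\blank}{\blank}$ on $\dual{\Kgp{\cT}}$ is the identity. The compatibility condition (part (ii) of Definition~\ref{d:quantum-hom}) plays no role in this lemma; it will only enter later when relating $\pform{\blank}{\blank}{\cT}$ to $\beta_{\cT}$. So the proof is a two-line calculation, and the writeup need only record the adjunction identity and the substitution.
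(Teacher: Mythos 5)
Your proof is correct and takes essentially the same approach as the paper: unwind the definition of $\pform{\blank}{\blank}{\cT}$, apply the adjunction identity for $\evform{\blank}{\blank}$, and invoke $\adj{\lambda_{\cT}}=-\lambda_{\cT}$. The paper's own argument is the same four-step chain of equalities.
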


\begin{proof} This follows from the skew-symmetry of $\lambda_{\cT}$ and the properties of $\evform{\blank}{\blank}$:
\[
\pform{[T]}{[U]}{\cT}=\evform{[T]}{\lambda_{\cT}[U]}
=\evform{[U]}{\adj{\lambda_{\cT}}[T]}
=-\evform{[U]}{\lambda_{\cT}[T]}
=-\pform{[U]}{[T]}{\cT}.\qedhere\]
\end{proof}

Indeed, an examination of the proof shows that $\pform{\blank}{\blank}{\cT}$ being skew-symmetric implies, using non-degeneracy of $\evform{\blank}{\blank}$, that $\adj{\lambda_{\cT}}=-\lambda_{\cT}$, so the two notions of skew-symmetry coincide.
Analogous to \eqref{eq:beta-s-form}, it follows from the definitions and Lemma~\ref{l:p-form-skew-symmetric} that
\begin{equation}\label{eq:lambda-p-form} \lambda_{\cT}[T]=\evform{\blank}{\lambda_{\cT}[T]}=\pform{\blank}{[T]}{\cT}.
\end{equation}
We observe that while \eqref{eq:beta-s-form} involved the map $\pdual{\cT}$, the corresponding map in \eqref{eq:lambda-p-form} is $\delta_{\dual{\Kgp{\cT}}}=\id_{\dual{\Kgp{\cT}}}$, and so is not visible.
Due to the equivalence between specifying $\lambda_{\cT}$ or $\pform{\blank}{\blank}{\cT}$, we will also refer to the form as a quantum datum for $\cT$.

\begin{lemma}\label{l:form-compat} 
The following are equivalent:
\begin{enumerate}
\item\label{l:form-compat-maps} (compatibility of $\lambda_{\cT}$ and $\beta_{\cT}$ as maps) $\adj{\lambda_{\cT}}\circ\beta_{\cT}=2(\sdual{\cT}\circ\sinc{\cT})$;
\item\label{l:form-compat-adj} (pseudo-adjunction of $\lambda_{\cT}$ and $\beta_{\cT}$) for all $[M]\in \Kgp{\fd \stab{\cT}}$ and $[T]\in \Kgp{\cT}$ we have
\[ \evform{\beta_{\cT}[M]}{\lambda_{\cT}[T]}=2\canform{\sinc{\cT}[M]}{[T]}{\cT}; \]
\item\label{l:form-compat-beta-p-form} (compatibility of $\beta_{\cT}$ and $\pform{\blank}{\blank}{\cT}$) for all $[M]\in \Kgp{\fd \stab{\cT}}$ and $[T]\in \Kgp{\cT}$ we have
\[ \pform{\beta_{\cT}[M]}{[T]}{\cT}=2\canform{\sinc{\cT}[M]}{[T]}{\cT}. \]
\end{enumerate}
\end{lemma}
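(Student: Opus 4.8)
The plan is to prove the three statements equivalent by a short cycle of implications, exploiting the fact that each condition is just a reformulation of the others via the basic adjunction identities already established in Sections~\ref{s:adj} and~\ref{s:duality}. The key tools are the defining property of adjoints with respect to the evaluation pairing \(\evform{\blank}{\blank}\), namely that \(\evform{x}{\adj{f}(y)}=\evform{f(x)}{y}\), together with equation~\eqref{eq:canform-comparison}, which reads \(\canform{\blank}{\blank}{\cT}=\evform{\blank}{\sdual{\cT}(\blank)}\), and the definition of \(\pform{\blank}{\blank}{\cT}\) from Definition~\ref{d:p-form}, which says \(\pform{[T]}{[U]}{\cT}=\evform{[T]}{\lambda_{\cT}[U]}\).

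First I would show \ref{l:form-compat-maps}\(\Rightarrow\)\ref{l:form-compat-adj}. Assuming \(\adj{\lambda_{\cT}}\circ\beta_{\cT}=2(\sdual{\cT}\circ\sinc{\cT})\), I evaluate \(\evform{\beta_{\cT}[M]}{\lambda_{\cT}[T]}\) and push \(\beta_{\cT}[M]\) across the pairing: by the adjunction identity this equals \(\evform{[M]}{\adj{\beta_{\cT}}\lambda_{\cT}[T]}\)—but it is cleaner to instead move \(\lambda_{\cT}\) first using skew-symmetry, or simply to recall that \(\evform{\beta_{\cT}[M]}{\lambda_{\cT}[T]}=\evform{[T]}{\adj{\lambda_{\cT}}\beta_{\cT}[M]}\) by adjunction applied to \(\lambda_{\cT}\). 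Now substituting the hypothesis gives \(\evform{[T]}{2\sdual{\cT}\sinc{\cT}[M]}=2\evform{[T]}{\sdual{\cT}\sinc{\cT}[M]}=2\canform{\sinc{\cT}[M]}{[T]}{\cT}\) by \eqref{eq:canform-comparison}; here I should be slightly careful that \(\canform{\blank}{\blank}{\cT}\) pairs \(\Kgpnum{\lfd\cT}\) in the first slot and \(\Kgp{\cT}\) in the second, so the formula \(\canform{v}{[T]}{\cT}=\evform{[T]}{\sdual{\cT}v}\) is exactly what is needed. The reverse implication \ref{l:form-compat-adj}\(\Rightarrow\)\ref{l:form-compat-maps} follows by non-degeneracy: if \(\evform{\beta_{\cT}[M]}{\lambda_{\cT}[T]}=2\canform{\sinc{\cT}[M]}{[T]}{\cT}=\evform{[T]}{2\sdual{\cT}\sinc{\cT}[M]}\) for all \([M]\) and all \([T]\), then rewriting the left side as \(\evform{[T]}{\adj{\lambda_{\cT}}\beta_{\cT}[M]}\) and using that \(\evform{\blank}{\blank}\) is non-degenerate in its second argument forces \(\adj{\lambda_{\cT}}\beta_{\cT}[M]=2\sdual{\cT}\sinc{\cT}[M]\) for all \([M]\), which is \ref{l:form-compat-maps}.

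The equivalence \ref{l:form-compat-adj}\(\Leftrightarrow\)\ref{l:form-compat-beta-p-form} is essentially a change of notation: by Lemma~\ref{l:p-form-skew-symmetric} and \eqref{eq:lambda-p-form} we have \(\pform{\beta_{\cT}[M]}{[T]}{\cT}=\evform{\beta_{\cT}[M]}{\lambda_{\cT}[T]}\) directly from Definition~\ref{d:p-form} (taking the first argument to be \(\beta_{\cT}[M]\in\Kgp{\cT}\)), so the two displayed equations in \ref{l:form-compat-adj} and \ref{l:form-compat-beta-p-form} are literally the same equation. I would state this observation and conclude. Assembling the cycle \ref{l:form-compat-maps}\(\Rightarrow\)\ref{l:form-compat-adj}\(\Leftrightarrow\)\ref{l:form-compat-beta-p-form}\(\Rightarrow\)\ref{l:form-compat-maps} (the last arrow via the non-degeneracy argument above, going through \ref{l:form-compat-adj}) completes the proof. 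I do not expect any genuine obstacle here; the only points requiring care are keeping the variances of the two pairings \(\evform{\blank}{\blank}\) and \(\canform{\blank}{\blank}{\cT}\) straight, correctly invoking the restriction of \(\beta_{\cT}\) and \(\sinc{\cT}\) to \(\Kgp{\fd\stab{\cT}}\) (legitimate since \(\cT\) is maximally mutable), and ensuring the non-degeneracy of \(\evform{\blank}{\blank}\) is applied in the correct slot when deducing an identity of maps from an identity of scalars.
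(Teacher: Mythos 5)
Your proposal is correct and matches the paper's argument in all essentials: the equivalence \ref{l:form-compat-adj}\(\Leftrightarrow\)\ref{l:form-compat-beta-p-form} is read off from Definition~\ref{d:p-form}, and \ref{l:form-compat-maps}\(\Leftrightarrow\)\ref{l:form-compat-adj} is an exercise in evaluation-form adjunction plus \eqref{eq:canform-comparison}, with non-degeneracy of \(\evform{\blank}{\blank}\) closing the reverse implication. The only cosmetic difference is in \ref{l:form-compat-maps}\(\Rightarrow\)\ref{l:form-compat-adj}: the paper inserts \(\lambda_{\cT}=-\adj{\lambda_{\cT}}\) twice (once before and once after the adjunction swap, so the signs cancel), whereas you apply the adjunction identity \(\evform{\beta_{\cT}[M]}{\lambda_{\cT}[T]}=\evform{[T]}{\adj{\lambda_{\cT}}(\beta_{\cT}[M])}\) directly, which is the cleaner route; the brief aside involving \(\adj{\beta_{\cT}}\) and the citation of Lemma~\ref{l:p-form-skew-symmetric} are unnecessary but harmless.
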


\begin{proof} We first show \ref{l:form-compat-maps} implies \ref{l:form-compat-adj}. By skew-symmetry, we have $\lambda_{\cT}=-\adj{\lambda_{\cT}}$. Thus 
\begin{align*} \evform{\beta_{\cT}[M]}{\lambda_{\cT}[T]} & = -\evform{\beta_{\cT}[M]}{\adj{\lambda_{\cT}}[T]} \\
& =-\evform{[T]}{\lambda_{\cT}(\beta_{\cT}[M])} \\
& =\evform{[T]}{\adj{\lambda_{\cT}}(\beta_{\cT}[M])} \\
& =2\evform{[T]}{\sdual{\cT}(\sinc{\cT}[M])}\\
& =2\canform{\sinc{\cT}[M]}{[T]}{\cT},
\end{align*}
noting the identity \eqref{eq:canform-comparison}.
The proof that \ref{l:form-compat-adj} implies \ref{l:form-compat-maps} is similar. Indeed, starting from \ref{l:form-compat-adj}, a completely analogous calculation to the above shows that $\evform{[T]}{2\sdual{\cT}(\sinc{\cT}[M])}=\evform{[T]}{\adj{\lambda_{\cT}}(\beta_{\cT}[M])}$, and so \ref{l:form-compat-maps} follows from the fact that $\evform{\blank}{\blank}$ is non-degenerate.
Finally, \ref{l:form-compat-adj} is equivalent to \ref{l:form-compat-beta-p-form} since 
$\pform{\blank}{\blank}{\cT}=\evform{\blank}{\lambda_{\cT}(\blank)}$
by definition.
\end{proof}

\begin{remark}
\label{r:form-compat-matrix}
Using \ref{l:form-compat-adj}, we may calculate for $T,U\in\indec{\cT}$ that
\[\evform{\beta_{\cT}[\simpmod{\cT}{U}]}{\lambda_{\cT}[T]}=\begin{cases}2d_T&\text{if}\ U=T,\\0&\text{otherwise,}\end{cases}\]
for $d_T=\dim\simpmod{\cT}{T}(T)$.
This expresses the compatibility condition (converting to matrices with respect to the natural bases) in its more usual form, saying that $\adj{B}L$ is a matrix with a diagonal block with positive integer entries on the mutable indices and a zero block on the frozen ones. 
\end{remark}

\begin{remark}
\label{r:form-compat-lambda-s-form}
Symmetry might reasonably lead one to expect a part (d) of Lemma~\ref{l:form-compat}, expressing the compatibility of $\lambda_{\cT}$ with $\sform{\blank}{\blank}{\cT}$.  This is possible but complicated by technicalities when $d_{T}>1$ for some $T\in\indec{\cT}$.

Let $D_{\cT}=\lcm\{d_T \mid T\in\indec{\cT}\}$. Then $D_{\cT}\lambda_{\cT}[T]$ is in the image of the injective map $\sdual{\cT}$, by Proposition~\ref{p:im-delta}, and a similar argument as in Lemma~\ref{l:form-compat} can be used to show that the compatibility condition is also equivalent to
\begin{equation}
\label{eq:compat-lambda-s-form}
\sform{[M]}{(\sdual{\cT})^{-1}(D_{\cT}\lambda_{\cT}[T])}{\cT}=2\canform{\sinc{\cT}[M]}{[T]}{\cT}.
\end{equation}
Recall that if $\cC$ is skew-symmetric then $D_{\cT}=1$ and $\sdual{\cT}$ is an isomorphism.
In this case, if we suppress $\sdual{\cT}$ from the notation by treating it as an identification of $\Kgp{\fd{\cT}}$ with $\dual{\Kgp{\cT}}$, further identifying the forms $\canform{\blank}{\blank}{\cT}$ and $\evform{\blank}{\blank}$, then \eqref{eq:compat-lambda-s-form} simplifies to
\[\sform{[M]}{\lambda_{\cT}[T]}{\cT}=2\canform{\sinc{\cT}[M]}{[T]}{\cT},\]
which is more recognisably analogous to the identity in Lemma~\ref{l:form-compat}\ref{l:form-compat-beta-p-form}.
\end{remark}

\subsection{Change of cluster-tilting subcategory}
\label{s:change-of-cts-lambda}

\begin{definition}
\label{d:mut-of-p-form}
Fix a quantum datum $\pform{\blank}{\blank}{\cT}$ for $\cT \ctsubcat \cC$.
For each $\cU\ctsubcat\cC$, define $\mu_{\cT}^{\cU}\pform{\blank}{\blank}{\cT}\colon\Kgp{\cU}\cross \Kgp{\cU}\to \integ$ by
\[\mu_{\cT}^{\cU}\pform{\blank}{\blank}{\cT}=\pform{\ind{\cU}{\cT}(\blank)}{\ind{\cU}{\cT}(\blank)}{\cT}. \]
\end{definition}

As in Section~\ref{s:change-of-cts-beta}, while we use the notation $\mu$, the definition does not use a sequence of mutations connecting $\cT$ to $\cU$, nor require the existence of one.

\begin{definition}
\label{d:quant-struct}
Let $\cC$ be a Krull--Schmidt cluster category with a weak cluster structure.
A \emph{quantum structure} for $\cC$ is a quantum datum $\pform{\blank}{\blank}{\cT}$ for every $\cT\ctsubcat\cC$ such that
\[\mu_{\cT}^{\cU}\pform{\blank}{\blank}{\cT}=\pform{\blank}{\blank}{\cU}\]
whenever $\cTU\ctsubcat\cC$.
\end{definition}

It is clear from this definition that a quantum structure on $\cC$ is completely determined by the quantum datum on a single cluster-tilting subcategory.
Our main goal in this section is to show that this determination is `free': if we choose some $\cT\ctsubcat\cC$ and a quantum datum $\pform{\blank}{\blank}{\cT}$ for $\cT$, then we always obtain a quantum structure on $\cC$ by setting $\pform{\blank}{\blank}{\cU}=\mu_{\cT}^{\cU}\pform{\blank}{\blank}{\cT}$ for any cluster-tilting subcategory $\cU\subset\cC$.
That is, we will show that each $\mu_{\cT}^{\cU}\pform{\blank}{\blank}{\cT}$ is a quantum datum for $\cU$, and these various quantum data satisfy the condition from Definition~\ref{d:quant-struct}, without any further assumptions on the initial choice of $\cT$ and $\pform{\blank}{\blank}{\cT}$.

\begin{lemma}
\label{l:mut-of-p-form-skewsymm}
In the setting of Definition~\ref{d:mut-of-p-form}, the form $\mu_{\cT}^{\cU}\pform{\blank}{\blank}{\cT}$ is skew-symmetric.
\end{lemma}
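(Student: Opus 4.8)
The statement to prove is that $\mu_{\cT}^{\cU}\pform{\blank}{\blank}{\cT}$ is skew-symmetric, where by definition $\mu_{\cT}^{\cU}\pform{x}{y}{\cT}=\pform{\ind{\cU}{\cT}(x)}{\ind{\cU}{\cT}(y)}{\cT}$.

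The plan is to reduce this immediately to the skew-symmetry of $\pform{\blank}{\blank}{\cT}$, which is Lemma~\ref{l:p-form-skew-symmetric}. First I would take arbitrary elements $x,y\in\Kgp{\cU}$ and write $x'=\ind{\cU}{\cT}(x)$, $y'=\ind{\cU}{\cT}(y)$, which lie in $\Kgp{\cT}$ since $\ind{\cU}{\cT}$ is a map $\Kgp{\cU}\to\Kgp{\cT}$ (indeed an isomorphism by Proposition~\ref{p:ind-coind-inverse}, though only its being a well-defined homomorphism is needed here). Then $\mu_{\cT}^{\cU}\pform{x}{y}{\cT}=\pform{x'}{y'}{\cT}=-\pform{y'}{x'}{\cT}=-\mu_{\cT}^{\cU}\pform{y}{x}{\cT}$, where the middle equality is exactly Lemma~\ref{l:p-form-skew-symmetric} applied to $x',y'\in\Kgp{\cT}$. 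This gives the claim directly.

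There is essentially no obstacle: the only things to check are that the composition makes sense (the index map lands in the right Grothendieck group, and $\pform{\blank}{\blank}{\cT}$ is bilinear, so the composite is a well-defined bilinear form — both facts are already recorded, the latter implicit in Definition~\ref{d:p-form}) and that skew-symmetry is preserved under precomposing both arguments of a bilinear form with the same linear map, which is formal. I would present this as a two-line computation. A short proof:

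\begin{proof}
Let \(x,y\in\Kgp{\cU}\) and set \(x'=\ind{\cU}{\cT}(x)\) and \(y'=\ind{\cU}{\cT}(y)\), which lie in \(\Kgp{\cT}\). Then, using Lemma~\ref{l:p-form-skew-symmetric},
\[
\mu_{\cT}^{\cU}\pform{x}{y}{\cT}=\pform{x'}{y'}{\cT}=-\pform{y'}{x'}{\cT}=-\mu_{\cT}^{\cU}\pform{y}{x}{\cT},
\]
so \(\mu_{\cT}^{\cU}\pform{\blank}{\blank}{\cT}\) is skew-symmetric.
\end{proof}
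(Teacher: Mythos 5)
Your proof is correct and takes exactly the same route as the paper, which simply states that the claim ``follows immediately from skew-symmetry of \(\pform{\blank}{\blank}{\cT}\) (Lemma~\ref{l:p-form-skew-symmetric}).'' You have merely spelled out the trivial computation that the paper leaves implicit.
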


\begin{proof}
This follows immediately from skew-symmetry of $\pform{\blank}{\blank}{\cT}$ (Lemma~\ref{l:p-form-skew-symmetric}).
\end{proof}

We also claim that when $\cT$ and $\cU$ are related by a mutation, Definition~\ref{d:mut-of-lambda} recovers Berenstein--Zelevinsky mutation of compatible pairs \cite[(3.4)]{BZ-QCA} by combining the following with Theorem~\ref{t:exch-mat-mutation-clust-str}.
Write $\lambda^{\cT}_{U,V}=\pform{U}{V}{\cT}$, so that the $\lambda^{\cT}_{U,V}$ are the entries of the matrix of $\lambda$ with respect to the bases $[T]$ and $[T]^*$, for $T\in\indec{\cT}$, of $\Kgp{\cT}$ and $\dual{\Kgp{\cT}}$ respectively.

\begin{proposition}
\label{p:BZ-lambda-mut}
Let $\cC$ be a compact cluster category.
Let $\cT\ctsubcat \cC$ be maximally mutable, let $T\in \exch \cT$ and assume that $\cT$ has no loop or 2-cycle at $T$.
Then for $U,V\in \indec \mut{T}{\cT}$, we have
\begin{equation}
\lambda_{U,V}^{\mut{T}{\cT}} = \begin{cases}
\lambda_{U,V}^{\cT} & \text{if}\ U,V\neq T, \\
-\lambda_{U,T}^{\cT}+\sum_{W\in \indec \cT\setminus T} [\exchmatentry{W,T}]_{-}\lambda_{U,W}^{\cT} &  \text{if}\ U\neq T\ \text{and}\ V=T, \\
-\lambda_{T,V}^{\cT}+\sum_{W\in \indec \cT\setminus T} [\exchmatentry{W,T}]_{-}\lambda_{W,V}^{\cT} &  \text{if}\ U=T\ \text{and}\ V\neq T, \\
0 & \text{if}\ U=V=T.
\end{cases}
\end{equation}
\end{proposition}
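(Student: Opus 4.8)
The plan is to compute $\lambda^{\mut{T}{\cT}}_{U,V}=\pform{[U]}{[V]}{\mut{T}{\cT}}=\mu_{\cT}^{\mut{T}{\cT}}\pform{[U]}{[V]}{\cT}=\pform{\ind{\mut{T}{\cT}}{\cT}[U]}{\ind{\mut{T}{\cT}}{\cT}[V]}{\cT}$ directly, by substituting the explicit values of $\ind{\mut{T}{\cT}}{\cT}$ on the natural basis of $\Kgp{\mut{T}{\cT}}$. First I would record those values: for $W\in\indec(\mut{T}{\cT}\setminus\mut{\cT}{T})=\indec(\cT\setminus T)$ we have $W\in\cT\cap\mut{T}{\cT}$, so $\ind{\mut{T}{\cT}}{\cT}[W]=[W]$; and for the exchanged object, \eqref{eq:ind-on-mut-T} gives $\ind{\cC}{\cT}(\mut{\cT}{T})=[\exchmon{\cT}{T}{-}]-[T]$, which by Proposition~\ref{p:decomp-exch-terms} (using no loop or $2$-cycle at $T$) equals $\sum_{W\in\indec\cT\setminus T}[\exchmatentry{W,T}]_{-}[W]-[T]$. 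Note $\ind{\mut{T}{\cT}}{\cT}=\ind{\cC}{\cT}\circ\iota_{\mut{T}{\cT}}^{\addcat\cC}$, so this is the relevant value on $[\mut{\cT}{T}]$.

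Next I would expand each of the four cases using bilinearity and skew-symmetry of $\pform{\blank}{\blank}{\cT}$ (Lemma~\ref{l:p-form-skew-symmetric}). The case $U,V\neq T$ is immediate: both indices act as the identity. For $U\neq T$, $V=T$ (here the basis element of $\mut{T}{\cT}$ indexed by the mutable vertex is $\mut{\cT}{T}$, and I should be careful that ``$\lambda^{\mut{T}{\cT}}_{U,T}$'' in the statement means $\pform{[U]}{[\mut{\cT}{T}]}{\mut{T}{\cT}}$), we get
\[
\pform{[U]}{\textstyle\sum_{W}[\exchmatentry{W,T}]_{-}[W]-[T]}{\cT}
=\sum_{W\in\indec\cT\setminus T}[\exchmatentry{W,T}]_{-}\lambda^{\cT}_{U,W}-\lambda^{\cT}_{U,T},
\]
matching the claim. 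The case $U=T$, $V\neq T$ follows symmetrically (or by applying skew-symmetry to the previous case together with the skew-symmetry asserted in Lemma~\ref{l:mut-of-p-form-skewsymm}). For $U=V=T$, both arguments are $\sum_{W}[\exchmatentry{W,T}]_{-}[W]-[T]$, and skew-symmetry of $\pform{\blank}{\blank}{\cT}$ forces the value to be $0$ (a skew form vanishes on the diagonal, provided $\mathrm{char}\,\bK\neq 2$, which is automatic since these are $\integ$-valued forms).

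The only genuine subtlety — and the step I would treat most carefully rather than routinely — is the bookkeeping of \emph{which} index map to use and the identification of basis elements. One must confirm that $\mu_{\cT}^{\mut{T}{\cT}}\pform{\blank}{\blank}{\cT}$ is indeed the form being called $\pform{\blank}{\blank}{\mut{T}{\cT}}$ in the statement; since a quantum structure is determined by one quantum datum (Corollary~\ref{c:mut-of-lambda-are-q-structure}, referenced in the introduction) and Proposition~\ref{p:mut-of-lambda-ss-compat} guarantees $\mu_{\cT}^{\mut{T}{\cT}}\pform{\blank}{\blank}{\cT}$ is a valid quantum datum, this identification is legitimate. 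I would also double-check the consistency of the formula against the matrix form $\mu_U(B)=E_\pm(U)BF_\pm(U)$ in Remark~\ref{r:decategorification-of-square}: writing $L=(\lambda^{\cT}_{U,V})$, the Berenstein--Zelevinsky rule is $\mu_U(L)=\transpose{E_-(U)}\,L\,E_-(U)$, and unwinding the entries of $E_-(U)$ (whose $U$th column carries the $[\exchmatentry{W,T}]_{-}$ and a $-1$) reproduces exactly the four cases above — this cross-check is what I expect to consume most of the effort, since it is purely a matter of matching index conventions and sign choices, but it is not conceptually hard. No deep input beyond Definition~\ref{d:mut-of-p-form}, \eqref{eq:ind-on-mut-T}, Proposition~\ref{p:decomp-exch-terms}, and skew-symmetry is needed.
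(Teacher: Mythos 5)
Your proposal is correct and follows essentially the same route as the paper's own proof: expand $\mu_{\cT}^{\mut{T}{\cT}}\pform{[U]}{[V]}{\cT}=\pform{\ind{\mut{T}{\cT}}{\cT}[U]}{\ind{\mut{T}{\cT}}{\cT}[V]}{\cT}$, substitute the values of $\ind{\mut{T}{\cT}}{\cT}$ via \eqref{eq:ind-on-mut-T} and Proposition~\ref{p:decomp-exch-terms}, and split into the four cases by bilinearity and skew-symmetry. The extra care you take over the notational abuse ``$V=T$'' (meaning the basis element $\mut{\cT}{T}\in\indec\mut{T}{\cT}$ at the mutated vertex) is a sensible clarification but does not change the argument.
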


\begin{proof}
This follows from Proposition~\ref{p:decomp-exch-terms} and \eqref{eq:ind-on-mut-T}:
\begin{align*}
\lambda_{U,V}^{\mut{T}{\cT}}=\mu_{\cT}^{\mut{T}{\cT}}\pform{[U]}{[V]}{\cT} & = \pform{\ind{\mut{T}{\cT}}{\cT}[U]}{\ind{\mut{T}{\cT}}{\cT}[V]}{\cT} \\
& = \begin{cases}
\pform{[U]}{[V]}{\cT} & \text{if}\ U,V\neq T, \\
\pform{[U]}{[\exchmon{\cT}{T}{-}]-[T]}{\cT} &  \text{if}\ U\neq T\ \text{and}\ V=T, \\
\pform{[\exchmon{\cT}{T}{-}]-[T]}{[V]}{\cT} &  \text{if}\ U=T\ \text{and}\ V\neq T, \\
0 & \text{if}\ U=V=T.
\end{cases} \\
& = \begin{cases}
\lambda_{U,V}^{\cT} & \text{if}\ U,V\neq T, \\
-\lambda_{U,T}^{\cT}+\sum_{W\in \indec \cT\setminus T} [\exchmatentry{W,T}]_{-}\lambda_{U,W}^{\cT} &  \text{if}\ U\neq T\ \text{and}\ V=T, \\
-\lambda_{T,V}^{\cT}+\sum_{W\in \indec \cT\setminus T} [\exchmatentry{W,T}]_{-}\lambda_{W,V}^{\cT} &  \text{if}\ U=T\ \text{and}\ V\neq T, \\
0 & \text{if}\ U=V=T.
\end{cases}
\end{align*}
as required.
\end{proof}

We now prove sign-invariance for the transfer of a quantum datum $\pform{\blank}{\blank}{\cT}$, analogous to Corollary~\ref{c:sign-invar-s-form} for $\sform{\blank}{\blank}{\cT}$.
Due to the lack of an intrinsically-defined form $\pform{\blank}{\blank}{\cU}$ on $\Kgp{\cU}$ to compare to, much more work is needed than in the case of $\sform{\blank}{\blank}{\cT}$.

\begin{proposition}
\label{p:sign-invar-for-p-form}
Let $\cC$ be a Krull--Schmidt cluster category, let $\cTU\ctsubcat\cC$, and let $\pform{\blank}{\blank}{\cT}$ be a quantum datum for $\cT$.
Then
\[\mu_{\cT}^{\cU}\pform{\blank}{\blank}{\cT}=\pform{\coind{\cU}{\cT}(\blank)}{\coind{\cU}{\cT}(\blank)}{\cT}.\]
\end{proposition}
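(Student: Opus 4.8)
The statement is, by Definition~\ref{d:mut-of-p-form}, the assertion that $\pform{\ind{\cU}{\cT}x}{\ind{\cU}{\cT}y}{\cT}=\pform{\coind{\cU}{\cT}x}{\coind{\cU}{\cT}y}{\cT}$ for all $x,y\in\Kgp{\cU}$, and by bilinearity it suffices to prove this on the natural basis $\{[U]:U\in\indec\cU\}$. The plan is to write $\coind{\cU}{\cT}[U]=\ind{\cU}{\cT}[U]+\beta_{\cT}[\Extfun{\cT}U]$, using Proposition~\ref{p:beta-proj-res}, expand the difference of the two forms using bilinearity and the skew-symmetry of $\pform{\blank}{\blank}{\cT}$ (Lemma~\ref{l:p-form-skew-symmetric}), and check that everything collapses. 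Concretely, writing $c_U=\beta_{\cT}[\Extfun{\cT}U]$, the difference equals $\pform{[c_U]}{\coind{\cU}{\cT}[V]}{\cT}-\pform{[c_V]}{\ind{\cU}{\cT}[U]}{\cT}$ after cancellation (substituting $c_V=\coind{\cU}{\cT}[V]-\ind{\cU}{\cT}[V]$ and using skew-symmetry once). So the whole proposition reduces to the single identity
\[
\pform{\beta_{\cT}[\Extfun{\cT}U]}{\coind{\cU}{\cT}[V]}{\cT}=\pform{\beta_{\cT}[\Extfun{\cT}V]}{\ind{\cU}{\cT}[U]}{\cT}.
\]

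Next I would apply the compatibility of $\lambda_{\cT}$ and $\beta_{\cT}$, in the form of Lemma~\ref{l:form-compat}, which rewrites $\pform{\beta_{\cT}[M]}{[T]}{\cT}=2\canform{\sinc{\cT}[M]}{[T]}{\cT}$, and then \eqref{eq:pi-p-iota-s-adjoint} to pass to $\canform{[M]}{\pproj{\cT}[T]}{\stab{\cT}}$, together with Proposition~\ref{p:stable-ind-coind} to move the stabilisation past the index/coindex: $\pproj{\cT}\coind{\cU}{\cT}[V]=\stabcoind{\cU}{\cT}\pproj{\cU}[V]$ and $\pproj{\cT}\ind{\cU}{\cT}[U]=\stabind{\cU}{\cT}\pproj{\cU}[U]$. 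This converts the claim into the purely triangulated identity
\[
\canform{[\Extfun{\cT}U]}{\stabcoind{\cU}{\cT}[V]}{\stab{\cT}}=\canform{[\Extfun{\cT}V]}{\stabind{\cU}{\cT}[U]}{\stab{\cT}}
\]
in $\stab{\cC}$, for $U,V\in\indec\stab{\cU}$. Finally, this is exactly a case of Lemma~\ref{l:ind-coind-adjointness}: using $\canform{[\Extfun{\stab{\cT}}U]}{\blank}{\stab{\cT}}=\ext{1}{\stab{\cC}}{\blank}{[U]}$, the left side is $\ext{1}{\stab{\cC}}{\coind{\stab{\cC}}{\stab{\cT}}[V]}{[U]}$; since $\Ext{1}{\stab{\cC}}{V}{U}=0$ (as $\stab{\cU}$ is cluster-tilting), the shorthand of Remark~\ref{r:magic-lemma} gives $\ext{1}{\stab{\cC}}{\coind{\stab{\cC}}{\stab{\cT}}[V]}{[U]}=\ext{1}{\stab{\cC}}{[V]}{\ind{\stab{\cC}}{\stab{\cT}}[U]}$, and the right-hand side of this equals $\canform{[\Extfun{\cT}V]}{\stabind{\cU}{\cT}[U]}{\stab{\cT}}$ after invoking the $2$-Calabi--Yau property to symmetrise the Ext-space. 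This closes the argument.

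The routine parts are the expansion in the first paragraph and the variance bookkeeping in the second (keeping straight which adjoint/inclusion map goes which way). The one point genuinely requiring care — and the main obstacle — is that the compatibility identity of Lemma~\ref{l:form-compat} is stated for $[M]\in\Kgp{\fd\stab{\cT}}$, whereas the classes $[\Extfun{\cT}U]$ appearing above lie in $\Kgp{\fpmod\stab{\cT}}$ and need not be finite-dimensional when $\cC$ has infinite rank. So the reduction in the second paragraph must be justified either by restricting to the finite-rank (or compact) setting, where $\Extfun{\cT}U\in\fd\stab{\cT}$ automatically, or by first checking that the compatibility $\adj{\lambda_{\cT}}\circ\beta_{\cT}=2(\sdual{\cT}\circ\sinc{\cT})$ persists on all of $\Kgp{\fpmod\stab{\cT}}$; the latter is exactly the amount of extra input needed for the statement in full Krull--Schmidt generality.
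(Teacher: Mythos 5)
Your proposal follows the paper's proof essentially step for step: reduce via bilinearity and skew-symmetry of $\pform{\blank}{\blank}{\cT}$ to the single identity $\pform{\beta_{\cT}[\Extfun{\cT}U]}{\coind{\cU}{\cT}[V]}{\cT}=\pform{\beta_{\cT}[\Extfun{\cT}V]}{\ind{\cU}{\cT}[U]}{\cT}$, apply the compatibility of $\lambda_{\cT}$ and $\beta_{\cT}$ to convert this into an equality of $\canform{\blank}{\blank}{\cT}$-values, and finally invoke Lemma~\ref{l:ind-coind-adjointness} (using $\Ext{1}{\cC}{U}{V}=0$ and the stably $2$-Calabi--Yau property). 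The only cosmetic difference is that you route the last step through $\stab{\cC}$ via Proposition~\ref{p:stable-ind-coind} and the shorthand of Remark~\ref{r:magic-lemma}, whereas the paper unpacks the conflations and applies the lemma directly in $\cC$; these are the same computation. Your closing observation is a fair one: the compatibility axiom is formulated on $\Kgp{\fd\stab{\cT}}$, and the paper's own proof applies it to $[\Extfun{\cT}U]$, $[\Extfun{\cT}V]$ with only the tacit understanding that these lie there — which is automatic in the finite-rank or reachable case, but would otherwise require extending compatibility to $\Kgp{\fpmod\stab{\cT}}$ as you suggest.
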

\begin{proof}
Let $U,V\in\cU$. By Proposition~\ref{p:beta-proj-res}, we have
\begin{align*}
\mu_{\cT}^{\cU}\pform{[U]}{[V]}{\cT}&=\pform{\ind{\cU}{\cT}[U]}{\ind{\cU}{\cT}[V]}{\cT}\\
&=\pform{\coind{\cU}{\cT}[U]-\beta_{\cT}[\Extfun{\cT}{U}]}{\coind{\cU}{\cT}[V]-\beta_{\cT}[\Extfun{\cT}{V}]}{\cT}\\
&=\begin{multlined}[t]\pform{\coind{\cU}{\cT}[U]}{\coind{\cU}{\cT}[V]}{\cT}+\pform{\beta_{\cT}[\Extfun{\cT}{U}]}{\beta_{\cT}[\Extfun{\cT}{V}]}{\cT}\\
-\pform{\beta_{\cT}[\Extfun{\cT}{U}]}{\coind{\cU}{\cT}[V]}{\cT}-\pform{\coind{\cU}{\cT}[U]}{\beta_{\cT}[\Extfun{\cT}{V}]}{\cT}.\end{multlined}
\end{align*}
Thus it suffices to show that
\begin{equation}
\label{eq:pform-inv-error1}
\pform{\beta_{\cT}[\Extfun{\cT}{U}]}{\beta_{\cT}[\Extfun{\cT}{V}]}{\cT}-\pform{\beta_{\cT}[\Extfun{\cT}{U}]}{\coind{\cU}{\cT}[V]}{\cT}\\-\pform{\coind{\cU}{\cT}[U]}{\beta_{\cT}[\Extfun{\cT}{V}]}{\cT}=0.
\end{equation}
For any $T\in\cT$ and $M\in\fd{\underline{\cT}}$, we have
$\pform{[T]}{\beta_{\cT}[M]}{\cT}=-2\canform{\sinc{\cT}[M]}{[T]}{\cT}$
by Lemma~\ref{l:form-compat} and skew-symmetry of $\pform{\blank}{\blank}{\cT}$. Using this skew-symmetry again, we may therefore rewrite \eqref{eq:pform-inv-error1} as
\[-\canform{[\Extfun{\cT}{V}]}{\beta_{\cT}[\Extfun{\cT}{U}]}{\cT}-\canform{[\Extfun{\cT}{U}]}{\coind{\cU}{\cT}[V]}{\cT}+\canform{[\Extfun{\cT}{V}]}{\coind{\cU}{\cT}[U]}{\cT}=0.\]
Since $\beta_{\cT}[\Extfun{\cT}{U}]=\coind{\cU}{\cT}[U]-\ind{\cU}{\cT}[U]$, this simplifies to
\begin{equation}
\label{eq:pform-inv-error2}
\canform{[\Extfun{\cT}{U}]}{\coind{\cU}{\cT}[V]}{\cT}=\canform{[\Extfun{\cT}{V}]}{\ind{\cU}{\cT}[U]}{\cT}.
\end{equation}
Choose a $\cT$-index conflation $\rightker{\cT}{U}\infl\rightapp{\cT}{U}\defl U\confl$ for $U$ and a $\cT$-coindex conflation $V\infl\leftapp{\cT}{V}\defl\leftcok{\cT}{V}\confl$ for $V$, so that
$\ind{\cU}{\cT}[U]=[\rightapp{\cT}{U}]-[\rightker{\cT}{U}]$ and $\coind{\cU}{\cT}[V]=[\leftapp{\cT}{V}]-[\leftcok{\cT}{V}]$.
By the definitions of $\canform{\blank}{\blank}{\cT}$ and $\Extfun{\cT}$, equation \eqref{eq:pform-inv-error2} becomes
\[\ext{1}{\cC}{\leftapp{\cT}{V}}{U}-\ext{1}{\cC}{\leftcok{\cT}{V}}{U}=\ext{1}{\cC}{\rightapp{\cT}{U}}{V}-\ext{1}{\cC}{\rightker{\cT}{U}}{V}.\]
Since $\cC$ is stably $2$-Calabi--Yau, this follows from Lemma~\ref{l:ind-coind-adjointness} with $X=U$ and $Y=V$, noting that $U$ and $V$ both lie in the cluster-tilting subcategory $\cU$, so $\Ext{1}{\cC}{U}{V}=0$, and we may thus apply the stronger equalities from this lemma.
\end{proof}

With the same notation as in Remark~\ref{r:decategorification-of-square}, the previous two Propositions are equivalent to the claim that $\mu_{k}(L)=\adj{E_{\epsilon}(k)}LE_{\epsilon}(k)$, which is the usual expression \cite[Eq.~3.4]{BZ-QCA} of Berenstein--Zelevinsky mutation for the quasi-commutation matrix $L=(\lambda_{U,V}^{\cT})$ in the direction $k=T$.

The form $\mu_{\cT}^{\cU}\pform{\blank}{\blank}{\cT}$ is equivalent data to a map $\mu_{\cT}^{\cU}(\lambda_{\cT})\colon\Kgp{\cU}\to\dual{\Kgp{\cU}}$, the relationship of which to $\lambda_{\cT}$ is analogous to that between $\beta_{\cT}$ and $\beta_{\cU}$, as we now show.
  
\begin{definition}
\label{d:mut-of-lambda}
Let $\cC$ be a cluster category, $\cT\ctsubcat \cC$ and $\lambda_{\cT}$ a quantum datum.
Let $\cU\ctsubcat \cC$ and define $\mu_{\cT}^{\cU}(\lambda_{\cT})\colon \Kgp{\cU}\to \dual{\Kgp{\cU}}$ by 
\[\mu_{\cT}^{\cU}(\lambda_{\cT})[U]=\mu_{\cT}^{\cU}\pform{\blank}{[U]}{\cT}.\]
\end{definition}

Since $\mu_{\cT}^{\cT}\pform{\blank}{\blank}{\cT}=\pform{\blank}{\blank}{\cT}$, it follows from \eqref{eq:lambda-p-form} that $\mu_{\cT}^{\cT}(\lambda_{\cT})=\lambda_{\cT}$.
Since $\lambda_{\cT}$ and $\pform{\blank}{\blank}{\cT}$ are equivalent data (in a way compatible with the operation $\mu_{\cT}^{\cU}$), we will also refer to a choice of quantum datum $\lambda_{\cT}$ for each $\cT\ctsubcat\cC$ as a quantum structure for $\cC$ if $\mu_{\cT}^{\cU}(\lambda_{\cT})=\lambda_{\cU}$ for all $\cTU\ctsubcat\cC$.

\begin{proposition}
\label{p:lambda-square}
Let $\lambda_{\cT}$ be a quantum datum for $\cT\ctsubcat\cC$, and $\cU\ctsubcat\cC$ another cluster-tilting subcategory.
Then we have commutative diagrams
\[\begin{tikzcd}
\dual{\Kgp{\cT}} \arrow{d}[swap]{\dual{(\coind{\cU}{\cT})}}& \Kgp{\cT} \arrow{l}[swap]{\lambda_{\cT}} \arrow{d}{\ind{\cT}{\cU}} \\
\dual{\Kgp{\cU}}   & \Kgp{\cU}, \arrow{l}[swap]{\mu_{\cT}^{\cU}(\lambda_{\cT})}
\end{tikzcd}\qquad 
\begin{tikzcd}
\dual{\Kgp{\cT}} \arrow{d}[swap]{\dual{(\ind{\cU}{\cT})}}& \Kgp{\cT} \arrow{l}[swap]{\lambda_{\cT}} \arrow{d}{\coind{\cT}{\cU}} \\
\dual{\Kgp{\cU}}   & \Kgp{\cU}. \arrow{l}[swap]{\mu_{\cT}^{\cU}(\lambda_{\cT})}
\end{tikzcd}\]
\end{proposition}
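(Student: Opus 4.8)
The plan is to prove both commutativities by a direct computation, reducing each to two facts already in hand: that $\ind{\cT}{\cU}$ and $\coind{\cU}{\cT}$ are mutually inverse isomorphisms $\Kgp{\cT}\rightleftarrows\Kgp{\cU}$ (Proposition~\ref{p:ind-coind-inverse}), and that the transported form admits the two presentations $\mu_{\cT}^{\cU}\pform{\blank}{\blank}{\cT}=\pform{\ind{\cU}{\cT}(\blank)}{\ind{\cU}{\cT}(\blank)}{\cT}=\pform{\coind{\cU}{\cT}(\blank)}{\coind{\cU}{\cT}(\blank)}{\cT}$ (Definition~\ref{d:mut-of-p-form} together with the sign-invariance of Proposition~\ref{p:sign-invar-for-p-form}). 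Both sides of each diagram are $\integ$-linear maps $\Kgp{\cT}\to\dual{\Kgp{\cU}}$, so it suffices to fix $[T]\in\Kgp{\cT}$, evaluate each composite on an arbitrary $[W]\in\Kgp{\cU}$, and compare the resulting integers, using throughout that $\pform{[X]}{[Y]}{\cT}=\evform{[X]}{\lambda_{\cT}[Y]}$ by Definition~\ref{d:p-form} and \eqref{eq:lambda-p-form}, and that the transpose $\dual{(\phi)}$ of a map $\phi$ is characterised by $\evform{\phi(\blank)}{\blank}=\evform{\blank}{\dual{(\phi)}(\blank)}$.

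For the left-hand diagram I would compute, using Definition~\ref{d:mut-of-lambda} together with the \emph{coindex} presentation of $\mu_{\cT}^{\cU}\pform{\blank}{\blank}{\cT}$,
\[
\bigl(\mu_{\cT}^{\cU}(\lambda_{\cT})\circ\ind{\cT}{\cU}\bigr)[T]\bigl([W]\bigr)=\pform{\coind{\cU}{\cT}[W]}{\coind{\cU}{\cT}\ind{\cT}{\cU}[T]}{\cT}.
\]
By Proposition~\ref{p:ind-coind-inverse} we have $\coind{\cU}{\cT}\circ\ind{\cT}{\cU}=\id_{\Kgp{\cT}}$, so this equals $\pform{\coind{\cU}{\cT}[W]}{[T]}{\cT}=\evform{\coind{\cU}{\cT}[W]}{\lambda_{\cT}[T]}$; and this is exactly $\bigl(\dual{(\coind{\cU}{\cT})}\circ\lambda_{\cT}\bigr)[T]\bigl([W]\bigr)$ by the defining property of the transpose. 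As $[W]$ was arbitrary, the two functionals coincide. The right-hand diagram is identical \emph{mutatis mutandis}: one uses instead the \emph{index} presentation of $\mu_{\cT}^{\cU}\pform{\blank}{\blank}{\cT}$ and the identity $\ind{\cU}{\cT}\circ\coind{\cT}{\cU}=\id_{\Kgp{\cT}}$ (Proposition~\ref{p:ind-coind-inverse} with the roles of $\cT$ and $\cU$ interchanged), obtaining $\bigl(\mu_{\cT}^{\cU}(\lambda_{\cT})\circ\coind{\cT}{\cU}\bigr)[T]([W])=\pform{\ind{\cU}{\cT}[W]}{[T]}{\cT}=\evform{\ind{\cU}{\cT}[W]}{\lambda_{\cT}[T]}=\bigl(\dual{(\ind{\cU}{\cT})}\circ\lambda_{\cT}\bigr)[T]([W])$.

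I do not expect a genuine obstacle here: once sign-invariance (Proposition~\ref{p:sign-invar-for-p-form}) is available, each square collapses to a one-line cancellation of $\ind{}{}$ against $\coind{}{}$. The only real care required is bookkeeping — tracking which Grothendieck group each class inhabits and getting the variance of the transpose maps $\dual{(\coind{\cU}{\cT})}$ and $\dual{(\ind{\cU}{\cT})}$ correct — which is precisely what forces the appearance of \emph{two} distinct diagrams (one pairing $\ind{}{}$ with $\dual{(\coind{}{})}$, the other $\coind{}{}$ with $\dual{(\ind{}{})}$) rather than one.
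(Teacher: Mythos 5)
Your proof is correct and uses the same ingredients as the paper (Definitions~\ref{d:mut-of-p-form} and~\ref{d:mut-of-lambda}, Proposition~\ref{p:ind-coind-inverse}, Proposition~\ref{p:sign-invar-for-p-form}, and \eqref{eq:lambda-p-form}); the only stylistic difference is that you verify both squares by direct pointwise computation, whereas the paper proves the first directly and deduces the second by dualising via skew-symmetry (Lemma~\ref{l:mut-of-p-form-skewsymm}), while explicitly noting that the direct argument you give for the second square is an equally valid alternative.
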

\begin{proof}
Using \eqref{eq:lambda-p-form}, Proposition~\ref{p:ind-coind-inverse} and Proposition~\ref{p:sign-invar-for-p-form}, we have
\begin{align*}
\dual{(\coind{\cU}{\cT})}(\lambda_{\cT}[T])&=\lambda_{\cT}[T]\circ\coind{\cU}{\cT}\\
&=\pform{[T]}{\coind{\cU}{\cT}(\blank)}{\cT}\\
&=\pform{\coind{\cU}{\cT}\ind{\cT}{\cU}[T]}{\coind{\cU}{\cT}(\blank)}{\cT}\\
&=\mu_{\cT}^{\cU}\pform{\ind{\cT}{\cU}[T]}{\blank}{\cU}\\
&=\mu_{\cT}^{\cU}(\lambda_{\cT})(\ind{\cT}{\cU}[T])
\end{align*}
for any $T\in\cT$, and so the left-hand diagram commutes. Since $\lambda_{\cT}$ and (by Lemma~\ref{l:mut-of-p-form-skewsymm}) $\mu_{\cT}^{\cU}(\lambda_{\cT})$ are skew-symmetric, commutativity of the second diagram follows by taking the dual of the first.
Alternatively, this may be proved directly by a similar argument, in which Proposition~\ref{p:sign-invar-for-p-form} is not needed because of the choice made in defining $\mu_{\cT}^{\cU}\pform{\blank}{\blank}{\cT}$.
\end{proof}

\begin{remark}
The strategy here is reversed compared with the corresponding results for $\beta_{\cT}$ and $\sform{\blank}{\blank}{\cT}$, for which we proved the analogous commuting square first (Theorem~\ref{t:exch-isos}), then the equality of the transferred and intrinsic forms, before deducing the other properties of the $\sform{\blank}{\blank}{\cT}$ from these.
The lack of an intrinsic form $\pform{\blank}{\blank}{\cT}$ on each $\cT\ctsubcat\cC$
is at the heart of this: while $\beta_{\cT}$ is intrinsic, given by the formula in Proposition~\ref{p:beta-proj-res}, the map $\lambda_{\cT}$ is an additional choice, making a different approach necessary.
\end{remark}

\begin{proposition}
\label{p:mut-of-lambda-ss-compat} Let $\cC$ be a cluster category with $\cT\ctsubcat\cC$ maximally mutable, and let $\lambda_{\cT}$ be a quantum datum for $\cT$.
Then for each maximally mutable $\cU\ctsubcat\cC$, the map $\lambda_{\cU}\defeq\mu_{\cT}^{\cU}(\lambda_{\cT})$ is a quantum datum for $\cU$.
\end{proposition}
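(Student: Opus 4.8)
The plan is to verify, for $\lambda_{\cU}\defeq\mu_{\cT}^{\cU}(\lambda_{\cT})$, the two defining conditions of a quantum datum from Definition~\ref{d:quantum-hom}, using the compatibility of the initial datum $\lambda_{\cT}$ together with the change-of-subcategory machinery of Section~\ref{s:change-of-cts-beta}. Since $\cU$ is maximally mutable, $\Kgp{\fd{\stab{\cU}}}$ is a subgroup of $\Kgp{\fpmod{\stab{\cU}}}$ by Proposition~\ref{p:fd-sub-fpmod}, so both conditions make sense; I leave the restrictions of $\beta_{\cU}$ and $\sinc{\cU}$ to this subgroup implicit. Skew-symmetry of $\lambda_{\cU}$ is immediate: by construction $\evform{[W]}{\lambda_{\cU}[V]}=\mu_{\cT}^{\cU}\pform{[W]}{[V]}{\cT}$ for all $[W],[V]\in\Kgp{\cU}$, so $\lambda_{\cU}$ and the form $\mu_{\cT}^{\cU}\pform{\blank}{\blank}{\cT}$ stand in the relationship of Definition~\ref{d:p-form}; that form is skew-symmetric by Lemma~\ref{l:mut-of-p-form-skewsymm}, and the observation following Lemma~\ref{l:p-form-skew-symmetric} (using non-degeneracy of $\evform{\blank}{\blank}$) then gives $\adj{\lambda_{\cU}}=-\lambda_{\cU}$.

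For the compatibility condition, since the form $\mu_{\cT}^{\cU}\pform{\blank}{\blank}{\cT}$ is skew-symmetric, the equivalences of Lemma~\ref{l:form-compat} apply to the pair $(\lambda_{\cU},\mu_{\cT}^{\cU}\pform{\blank}{\blank}{\cT})$, so that $\adj{\lambda_{\cU}}\circ\beta_{\cU}=2(\sdual{\cU}\circ\sinc{\cU})$ is equivalent to the scalar identity $\mu_{\cT}^{\cU}\pform{\beta_{\cU}[M]}{[V]}{\cT}=2\canform{\sinc{\cU}[M]}{[V]}{\cU}$ for all $[M]\in\Kgp{\fd{\stab{\cU}}}$ and $[V]\in\Kgp{\cU}$. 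The plan is then a chain of rewrites of the left-hand side: by the definition of $\mu_{\cT}^{\cU}$ and the commuting square of Theorem~\ref{t:exch-isos} it equals $\pform{\ind{\cU}{\cT}\beta_{\cU}[M]}{\ind{\cU}{\cT}[V]}{\cT}=\pform{\beta_{\cT}(\stabIndbar{\cU}{\cT}[M])}{\ind{\cU}{\cT}[V]}{\cT}$; then the compatibility of $\lambda_{\cT}$ in the form of Lemma~\ref{l:form-compat}\ref{l:form-compat-beta-p-form} turns this into $2\canform{\sinc{\cT}(\stabindbar{\cU}{\cT}[M])}{\ind{\cU}{\cT}[V]}{\cT}$; Proposition~\ref{p:iota-coind} supplies $\sinc{\cT}\circ\stabindbar{\cU}{\cT}=\indbar{\cU}{\cT}\circ\sinc{\cU}$; and finally the adjunction identity $\canform{\indbar{\cU}{\cT}(\blank)}{\ind{\cU}{\cT}(\blank)}{\cT}=\canform{\blank}{\blank}{\cU}$ recorded after Corollary~\ref{c:sign-coh-c-vectors} (a consequence of $\coind{\cT}{\cU}\circ\ind{\cU}{\cT}=\id$, Proposition~\ref{p:ind-coind-inverse}) collapses the expression to $2\canform{\sinc{\cU}[M]}{[V]}{\cU}$, as required. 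An equivalent bookkeeping at the level of maps is to write $\lambda_{\cU}=\dual{(\ind{\cU}{\cT})}\circ\lambda_{\cT}\circ\ind{\cU}{\cT}$ from Proposition~\ref{p:lambda-square}, precompose with $\beta_{\cU}$, apply Theorem~\ref{t:exch-isos} and the compatibility of $\lambda_{\cT}$, and then use $\dual{(\ind{\cU}{\cT})}\circ\sdual{\cT}\circ\indbar{\cU}{\cT}=\sdual{\cU}$, which unwinds to the same adjunction identity.

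The one step that genuinely requires care — and the main obstacle — is invoking Lemma~\ref{l:form-compat}\ref{l:form-compat-beta-p-form} for $\lambda_{\cT}$: that identity is only available for classes in $\Kgp{\fd{\stab{\cT}}}$, which is where the compatibility of $\lambda_{\cT}$ lives (it is $\beta_{\cT}|_{\Kgp{\fd{\stab{\cT}}}}$ that is forced to be injective), whereas $\stabIndbar{\cU}{\cT}$ a priori takes values in the larger group $\Kgp{\fpmod{\stab{\cT}}}$. Reconciling these amounts to knowing that $\stabIndbar{\cU}{\cT}$ sends $\Kgp{\fd{\stab{\cU}}}$ into $\Kgp{\fd{\stab{\cT}}}$ and there agrees with $\stabindbar{\cU}{\cT}$ — this is exactly Corollary~\ref{c:indbar-on-fd}, together with Corollary~\ref{c:indbar-lift} to match the two notations inside $\canform{\blank}{\blank}{\cT}$ — which holds under finite rank or reachability of $\cU$ from $\cT$; this is where those hypotheses enter. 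Everything else is formal adjoint calculus, where one should track carefully the factor $2$ and the directions of the duality maps against Definition~\ref{d:quantum-hom} and Remark~\ref{r:dual-compatibility}.
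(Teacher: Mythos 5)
Your proof is correct and follows essentially the same strategy as the paper's: skew-symmetry reduces to Lemma~\ref{l:mut-of-p-form-skewsymm}, and compatibility is verified by combining the commuting squares of Theorem~\ref{t:exch-isos}/Corollary~\ref{c:exch-isos} with the compatibility of \(\lambda_{\cT}\), Proposition~\ref{p:iota-coind}, and the inverse relation between \(\cind{}{}\) and \(\cindbar{}{}\). The only cosmetic difference is that you route through \(\ind{\cU}{\cT}\) and \(\stabIndbar{\cU}{\cT}\) where the paper's proof uses \(\coind{\cU}{\cT}\) and \(\stabcoindbar{\cU}{\cT}\) (via Proposition~\ref{p:lambda-square} and the right-hand square of Corollary~\ref{c:exch-isos}); these are interchangeable here by Proposition~\ref{p:sign-invar-for-p-form}. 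Your final paragraph makes a correct and worthwhile observation: applying the compatibility of \(\lambda_{\cT}\) really does require that \(\stabIndbar{\cU}{\cT}\) carry \(\Kgp{\fd{\stab{\cU}}}\) into \(\Kgp{\fd{\stab{\cT}}}\), which is Corollary~\ref{c:indbar-on-fd} and so needs finite rank or reachability of \(\cU\) from \(\cT\); the paper's own proof depends on this too through its appeal to Corollary~\ref{c:exch-isos}, even though neither hypothesis appears explicitly in the statement of the proposition.
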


\begin{proof}
Skew-symmetry of $\lambda_{\cU}$ is equivalent to skew-symmetry of $\mu_{\cT}^{\cU}\pform{\blank}{\blank}{\cT}$, which is Lemma~\ref{l:mut-of-p-form-skewsymm}.
For compatibility, we may thus use $\adj{\lambda}_{\cU}=-\lambda_{\cU}$ and calculate using Corollary~\ref{c:exch-isos} and Proposition~\ref{p:lambda-square} that
\begin{align*}
-\lambda_{\cU}\circ\beta_{\cU}&=-\bigl(\dual{(\coind{\cU}{\cT})}\circ \lambda_{\cT}\circ \coind{\cU}{\cT}\bigr)\circ\bigl(\ind{\cT}{\cU}\circ \beta_{\cT} \circ \stabcoindbar{\cU}{\cT}\bigr)\\
&=\dual{(\coind{\cU}{\cT})}\circ(-\lambda_{\cT})\circ\beta_{\cT}\circ\stabcoindbar{\cU}{\cT}\\
&=\dual{(\coind{\cU}{\cT})}\circ\adj{\lambda}_{\cT}\circ\beta_{\cT}\circ\stabcoindbar{\cU}{\cT}\\
&=2\dual{(\coind{\cU}{\cT})}\circ\sdual{\cT}\circ\sinc{\cT}\circ\stabcoindbar{\cU}{\cT}.
\end{align*}
Now $\sinc{\cT}\circ\stabcoindbar{\cU}{\cT}=\coindbar{\cU}{\cT}\circ\sinc{\cU}$ by Proposition~\ref{p:iota-coind}, and the defining identity for $\indbar{\cT}{\cU}=\adj{(\coind{\cU}{\cT})}$ is
$\dual{(\coindbar{\cU}{\cT})}\circ\sdual{\cT}=\sdual{\cU}\circ\indbar{\cT}{\cU}$.
We thus have
\[\adj{\lambda}_{\cU}\circ\beta_{\cU}
=2\dual{(\coind{\cU}{\cT})}\circ\sdual{\cT}\circ\sinc{\cT}\circ\stabcoindbar{\cU}{\cT}
=2\sdual{\cU}\circ\indbar{\cT}{\cU}\circ\coindbar{\cU}{\cT}\circ\sinc{\cU}\circ\sinc{\cU}
=2\sdual{\cU}\circ\sinc{\cU},\]
as required, using the adjoint of Proposition~\ref{p:ind-coind-inverse}.
\end{proof}

Now we show that the transfer operation on $\pform{\blank}{\blank}{\cT}$ is transitive, analogous to Corollary~\ref{c:s-form-mut-transitive} for $\sform{\blank}{\blank}{\cT}$. Once again, the argument is different and more involved.

\begin{proposition}\label{p:mut-of-p-form-transitive}
If $\pform{\blank}{\blank}{\cT}$ is a quantum datum for a maximally mutable $\cT\ctsubcat\cC$, and $\cU,\cV\ctsubcat\cC$ with $\cV$ maximally mutable, then
\[\mu_{\cU}^{\cV}\mu_{\cT}^{\cU}\pform{\blank}{\blank}{\cT}=\mu_{\cT}^{\cV}\pform{\blank}{\blank}{\cT}.\]
\end{proposition}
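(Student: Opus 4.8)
The plan is to mimic the proof of Proposition~\ref{p:sign-invar-for-p-form}, using the commuting squares of Proposition~\ref{p:lambda-square} together with the controlled failure of transitivity of the index recorded in Corollary~\ref{c:ind-coind-mult-error-terms}. First, by Proposition~\ref{p:mut-of-lambda-ss-compat} the map $\lambda_{\cU}\defeq\mu_{\cT}^{\cU}(\lambda_{\cT})$ is itself a quantum datum for $\cU$, so $\mu_{\cU}^{\cV}(\lambda_{\cU})=\mu_{\cU}^{\cV}\mu_{\cT}^{\cU}(\lambda_{\cT})$ is defined and skew-symmetric, and we must show it equals $\mu_{\cT}^{\cV}(\lambda_{\cT})$. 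The first step is to rewrite all the transported maps using Proposition~\ref{p:lambda-square} and the identity $\coind{\cV}{\cT}=(\ind{\cT}{\cV})^{-1}$ (Proposition~\ref{p:ind-coind-inverse}), giving $\mu_{\cT}^{\cV}(\lambda_{\cT})=\dual{(\ind{\cV}{\cT})}\circ\lambda_{\cT}\circ\ind{\cV}{\cT}$, $\lambda_{\cU}=\dual{(\ind{\cU}{\cT})}\circ\lambda_{\cT}\circ\ind{\cU}{\cT}$, and hence $\mu_{\cU}^{\cV}(\lambda_{\cU})=\dual{(\ind{\cU}{\cT}\circ\ind{\cV}{\cU})}\circ\lambda_{\cT}\circ(\ind{\cU}{\cT}\circ\ind{\cV}{\cU})$. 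By Corollary~\ref{c:ind-coind-mult-error-terms}\ref{c:ind-coind-mult-error-terms-2} one has $\ind{\cU}{\cT}\circ\ind{\cV}{\cU}=\ind{\cV}{\cT}+\beta_{\cT}\circ E$ with $E\defeq\righterrormap{1}{\cU}{\cV}{\cT}$; substituting and expanding bilinearly, the diagonal term is exactly $\mu_{\cT}^{\cV}(\lambda_{\cT})$, so it remains to show the three cross terms $\dual{(\ind{\cV}{\cT})}\circ\lambda_{\cT}\circ\beta_{\cT}\circ E$, $\dual{E}\circ\adj{\beta_{\cT}}\circ\lambda_{\cT}\circ\ind{\cV}{\cT}$ and $\dual{E}\circ\adj{\beta_{\cT}}\circ\lambda_{\cT}\circ\beta_{\cT}\circ E$ sum to zero.

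To handle these I would feed in the compatibility condition of Definition~\ref{d:quantum-hom}, in the forms $\lambda_{\cT}\circ\beta_{\cT}=-2(\sdual{\cT}\circ\sinc{\cT})$ (using skew-symmetry) and $\adj{\beta_{\cT}}\circ\lambda_{\cT}=2\adj{(\sdual{\cT}\circ\sinc{\cT})}$ (Remark~\ref{r:dual-compatibility} and Proposition~\ref{p:K-duality}). After evaluating on classes $[V],[W]\in\Kgp{\cV}$ and using that $\canform{\blank}{\blank}{\cT}=\evform{\blank}{\sdual{\cT}(\blank)}$, the vanishing of the cross terms becomes the single identity
\[
\canform{\sinc{\cT}E[V]}{\ind{\cV}{\cT}[W]}{\cT}-\canform{\sinc{\cT}E[W]}{\ind{\cV}{\cT}[V]}{\cT}=\canform{\sinc{\cT}E[W]}{\beta_{\cT}E[V]}{\cT}.
\]
Since $\ind{\cV}{\cT}[V]+\beta_{\cT}E[V]=\ind{\cU}{\cT}\ind{\cV}{\cU}[V]$ by the same error-term identity, this is equivalent to the symmetry statement $\canform{\sinc{\cT}E[V]}{\ind{\cV}{\cT}[W]}{\cT}=\canform{\sinc{\cT}E[W]}{\ind{\cU}{\cT}\ind{\cV}{\cU}[V]}{\cT}$.

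Proving this last identity is the main obstacle, and it is the exact analogue of the step in Proposition~\ref{p:sign-invar-for-p-form} that was reduced to Lemma~\ref{l:ind-coind-adjointness}. I expect it to follow from that ``magic lemma'' again: both sides are antisymmetric in $V,W$ (the right because $\canform{\sinc{\cT}E[W]}{\beta_{\cT}E[V]}{\cT}=\sform{E[W]}{E[V]}{\stab{\cT}}$ by Lemma~\ref{l:stable-s-form}, which is skew-symmetric on $\Kgp{\fd{\stab{\cT}}}$ by Lemma~\ref{l:s-form-skew-sym} applied in the triangulated category $\stab{\cC}$, noting $\stab{\cT}$ is maximally mutable by Corollary~\ref{c:max-mutable}), and the required symmetry should be extracted from Lemma~\ref{l:ind-coind-adjointness} after using Corollary~\ref{c:indbar-lift} to interpret $E[V]=\righterrormap{1}{\cU}{\cV}{\cT}[V]$ as a component of $\stabIndbar{\cU}{\cT}[\Extfun{\cU}V]$ and Corollary~\ref{c:l-r-duality} to pass between the left and right error maps. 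One technical point to address is that the compatibility condition is stated for $\beta_{\cT}|_{\Kgp{\fd{\stab{\cT}}}}$, so one needs $E[V]=[\righterror{1}{\cU}{V}|_{\cT}]\in\Kgp{\fd{\stab{\cT}}}$; by Proposition~\ref{p:error-term-fd} this holds automatically in finite rank and whenever $\cU$ is reachable from $\cT$, and in general one either checks that the relevant instance of the compatibility identity still holds, or—more cleanly—runs the whole argument with $\coind$ in place of $\ind$ via the sign-invariance of Proposition~\ref{p:sign-invar-for-p-form}, which replaces $E$ by $\lefterrormap{2}{\cU}{\cV}{\cT}$ (finite-dimensional over $\stab{\cT}$ on $\cV$ by Proposition~\ref{p:error-term-fd}\ref{p:error-term-fd-V-reachable} once $\cV$ is reachable, and in the finite rank case by Proposition~\ref{p:error-term-fd}\ref{p:error-term-fd-add-finite}) and reduces to the same magic-lemma identity.
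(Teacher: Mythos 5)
Your reduction of the claim to the ``symmetry identity''---reducing the transitivity to the vanishing of the three cross terms, then using the compatibility relation $\lambda_{\cT}\circ\beta_{\cT}=-2(\sdual{\cT}\circ\sinc{\cT})$ and a second application of Corollary~\ref{c:ind-coind-mult-error-terms} to convert this to
\[
\canform{[\righterror{1}{\cU}{Y}|_{\cT}]}{\ind{\cV}{\cT}[X]}{\cT}=\canform{[\righterror{1}{\cU}{X}|_{\cT}]}{\ind{\cU}{\cT}\ind{\cV}{\cU}[Y]}{\cT}
\]
for $X,Y\in\cV$---is correct and matches the paper's own proof step for step.

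The gap is in the final step, which you describe as ``the main obstacle'' and do not actually carry out. You say the symmetry identity ``should'' follow from Lemma~\ref{l:ind-coind-adjointness} after some manipulations with Corollaries~\ref{c:indbar-lift} and \ref{c:l-r-duality}, but this is an expectation rather than an argument. The obstruction is real: Lemma~\ref{l:ind-coind-adjointness} controls alternating sums of dimensions of $\Ext^1$-spaces, whereas the identity to prove concerns $\dim\righterror{1}{\cU}{\blank}(\blank)$, which by Lemma~\ref{l:error-terms-modT}\ref{p:error-terms-modT-r1} is a stable Hom-space $\Hom{\cC}{\blank}{\blank}/\cU(\blank,\blank)$ evaluated on a single object, not an alternating combination of $\Ext^1$'s, and the exact sequences of Proposition~\ref{p:error-alt} entangle $\righterror{1}$ with $\lefterror{1}$ in a way that does not let you isolate the required term. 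The observation that both sides of your rearranged identity are antisymmetric in $V,W$ is correct (via Lemmas~\ref{l:stable-s-form} and \ref{l:s-form-skew-sym}) but does not help prove the equality; two antisymmetric functions need not coincide.

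The paper's proof instead converts the identity, via Corollary~\ref{c:l-r-duality}, into the claim \eqref{eq:stabcoindbar-righterror1} in $\Kgp{\fpmod\stab{\cV}}$, applies $\beta_{\cV}$ to both sides, checks they agree (using Corollary~\ref{c:exch-isos}, Corollary~\ref{c:ind-coind-mult-error-terms} and Proposition~\ref{p:ind-coind-inverse}), and then invokes the injectivity of $\beta_{\cV}$---which holds precisely because $\cV$ admits a quantum datum (Proposition~\ref{p:mut-of-lambda-ss-compat}). This use of the quantum hypothesis is the decisive move, and the remark following the paper's proof explicitly acknowledges that \eqref{eq:stabcoindbar-righterror1} is not currently known to hold without the injectivity of $\beta_{\cV}$, which strongly suggests your proposed magic-lemma shortcut does not succeed. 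The technical concern you raise about $E[V]$ lying in $\Kgp{\fd\stab{\cT}}$ is a real subtlety, but it is not the missing step; the missing step is the symmetry identity itself.
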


\begin{proof}
We first rewrite the statement in a more tractable form. Let $X,Y\in\cV$. Unpacking the definition, the claim is that
\[\pform{\ind{\cU}{\cT}\ind{\cV}{\cU}[X]}{\ind{\cU}{\cT}\ind{\cV}{\cU}[Y]}{\cT}=
\pform{\ind{\cV}{\cT}[X]}{\ind{\cV}{\cT}[Y]}{\cT}.\]
By Corollary~\ref{c:ind-coind-mult-error-terms}, we have
$\ind{\cU}{\cT}\ind{\cV}{\cU}[X]=\ind{\cT}{\cV}[X]+\beta_{\cT}[\righterror{1}{\cU}{X}|_{\cT}]$
and similarly with $X$ replaced by $Y$. Substituting, expanding and simplifying, our claim is thus that
\[ \pform{\ind{\cV}{\cT}[X]}{\beta_{\cT}[\righterror{1}{\cU}{Y}|_{\cT}]}{\cT}+\pform{\beta_{\cT}[\righterror{1}{\cU}{X}|_{\cT}]}{\ind{\cV}{\cT}[Y]}{\cT}=-\pform{\beta_{\cT}[\righterror{1}{\cU}{X}|_{\cT}]}{\beta_{\cT}[\righterror{1}{\cU}{Y}|_{\cT}]}{\cT}.
\]
Since $\adj{\lambda_{\cT}}=-\lambda_{\cT}$ by skew-symmetry, the compatibility condition is that
$\lambda_{\cT}\circ\beta_{\cT}=-2(\sdual{\cT}\circ\sinc{\cT})$.
By the definition of $\pform{\blank}{\blank}{\cT}$, we thus have
\begin{align*}
\pform{\ind{\cV}{\cT}[X]}{\beta_{\cT}[\righterror{1}{\cU}{Y}|_{\cT}]}{\cT}&=\evform{\ind{\cV}{\cT}[X]}{\lambda_{\cT}(\beta_{\cT}[\righterrorold{1}{\cT'}{\blank}{Y}|_{\cT}])}\\
&=-2\evform{\ind{\cV}{\cT}[X]}{\sdual{\cT}(\sinc{\cT}[\righterror{1}{\cU}{Y}|_{\cT}])}\\
&=-2\canform{[\righterror{1}{\cU}{Y}|_{\cT}]}{\ind{\cV}{\cT}[X]}{\cT}.
\end{align*}
Continuing in this way, also using Lemma~\ref{l:p-form-skew-symmetric}, and dividing each side by $-2$, we see that our claim is equivalent to
\[
\canform{[\righterror{1}{\cU}{Y}|_{\cT}]}{\ind{\cV}{\cT}[X]}{\cT}-\canform{[\righterror{1}{\cU}{X}|_{\cT}]}{\ind{\cV'}{\cT}[Y]}{\cT}
=\canform{[\righterror{1}{\cU}{X}|_{\cT}]}{\beta_{\cT}[\righterror{1}{\cU}{Y}|_{\cT}]}{\cT}.
\]
Finally, applying Corollary~\ref{c:ind-coind-mult-error-terms} again, our claim becomes
\[\canform{[\righterror{1}{\cU}{Y}|_{\cT}]}{\ind{\cV}{\cT}[X]}{\cT}=\canform{[\righterror{1}{\cU}{X}|_{\cT}]}{\ind{\cU}{\cT}\ind{\cV}{\cU}[Y]}{\cT}.\]

To prove the claim, choose a $\cT$-index conflation $KX\infl RX\defl X\confl$ for $X$ and a $\cU$-index conflation $K'Y\infl R'Y\defl Y\confl$ for $Y$.
Further, pick $\cT$-index conflations $KR'Y\infl RR'Y\defl R'Y\confl$ and $KK'Y\infl RK'Y\defl K'Y\confl$ for $R'Y$ and $K'Y$. Then
\[
\ind{\cV}{\cT}[X]=[RX]-[KX],\quad
\ind{\cU}{\cT}\ind{\cV}{\cU}[Y]=[RR'Y]-[KR'Y]-[RK'Y]+[KK'Y],
\]
and so our claim is that
\begin{multline}
\label{eq:dimrighterrors}
\dim{\righterrorold{1}{\cU}{RX}{Y}}-\dim{\righterrorold{1}{\cU}{KX}{Y}}=
\dim{\righterrorold{1}{\cU}{RR'Y}{X}}-\dim{\righterrorold{1}{\cU}{KR'Y}{X}}\\
-\dim{\righterrorold{1}{\cU}{RK'Y}{X}}+\dim{\righterrorold{1}{\cU}{KK'Y}{X}}.
\end{multline}
By using Corollary~\ref{c:l-r-duality} to rewrite the right-hand side, this amounts to showing that
\begin{equation}
\label{eq:stabcoindbar-righterror1}
\stabcoindbar{\cT}{\cV}[\righterror{1}{\cU}{Y}|_{\cT}]=[\lefterror{2}{\cU}{(RR'Y)}|_{\cV}]-[\lefterror{2}{\cU}{(KR'Y)}|_{\cV}]
-[\lefterror{2}{\cU}{(RK'Y)}|_{\cV}]+[\lefterror{2}{\cU}{(KK'Y)}|_{\cV}],
\end{equation}
and then applying $\canform{\blank}{X}{\cV}$.
Applying $\beta_{\cV}$ to the left-hand side of \eqref{eq:stabcoindbar-righterror1}, we obtain
\begin{align*}
\beta_{\cV}\stabcoindbar{\cT}{\cV}[\righterror{1}{\cU}{Y}|_{\cT}]&=\coind{\cT}{\cV}\beta_{\cT}[\righterror{1}{\cU}{Y}|_{\cT}]\\
&=\coind{\cT}{\cV}(\ind{\cU}{\cT}\ind{\cV}{\cU}[Y]-\ind{\cV}{\cT}[Y])\\
&=\coind{\cT}{\cV}\ind{\cU}{\cT}\ind{\cV}{\cU}[Y]-[Y]
\end{align*}
by Corollary~\ref{c:exch-isos} (using that $\cV$ is maximally mutable), Corollary~\ref{c:ind-coind-mult-error-terms} and Proposition~\ref{p:ind-coind-inverse}. It follows from Corollary~\ref{c:ind-coind-mult-error-terms} that applying $\beta_{\cV}$ to the right-hand side of \eqref{eq:stabcoindbar-righterror1} gives
\begin{align*}
(\coind{\cT}{\cV}-\coind{\cU}{\cV}\coind{\cT}{\cU})([RR'Y]-[K&R'Y]-[RK'Y]+[KK'Y])\\
&=(\coind{\cT}{\cV}-\coind{\cU}{\cV}\coind{\cT}{\cU})(\ind{\cU}{\cT}\ind{\cV}{\cU}[Y])\\
&=\coind{\cT}{\cV}\ind{\cU}{\cT}\ind{\cV}{\cU}[Y]-[Y]
\end{align*}
by Proposition~\ref{p:ind-coind-inverse} again.
The two sides of \eqref{eq:stabcoindbar-righterror1} thus agree after applying the map $\beta_{\cV}$ to each, but since $\cT$, and hence $\cV$ by Proposition~\ref{p:mut-of-lambda-ss-compat}, admits a quantum datum, this map is injective.
The claim in \eqref{eq:stabcoindbar-righterror1} is therefore true, and we obtain \eqref{eq:dimrighterrors}, and hence the desired statement, by applying $\canform{\blank}{[X]}{\cV}$ to each side.
\end{proof}

\begin{remark}
We expect that the identity \eqref{eq:stabcoindbar-righterror1} holds for objects $X$ and $Y$ with $\Ext{1}{\cC}{X}{Y}=0$ in an arbitrary cluster category $\cC$, without requiring that $\beta_{\cV}$ is injective, but do not currently have a proof of this.
It is enough that there is a Frobenius category $\cE$ such that $\cC=\cE/\cP$ for a full and additively closed subcategory $\cP$ of projectives, and $\beta_{\cT}$ is injective for $\cT\ctsubcat\cE$, which can happen even when $\beta_{\cT/\cP}$ is not injective.
\end{remark}

\begin{remark}
Proposition~\ref{p:sign-invar-for-p-form} gives us many more identities in the style of \eqref{eq:dimrighterrors}. For example, by using $\coind{\cV}{\cU}$ to compute $\mu_{\cV}^{\cU}\pform{\blank}{\blank}{\cV}$, it follows from Proposition~\ref{p:mut-of-p-form-transitive} that
\[\pform{\ind{\cU}{\cT}\coind{\cV}{\cU}[X]}{\ind{\cU}{\cT}\coind{\cV}{\cU}[Y]}{\cT}=\pform{\ind{\cV}{\cT}[X]}{\ind{\cV}{\cT}[Y]}{\cT},\]
or equivalently, via a completely analogous argument to that in the preceding proof, that
\begin{multline*}
\dim{\lefterrorold{1}{\cU}{RX}{Y}}-\dim{\lefterrorold{1}{\cU}{KX}{Y}}=
\dim{\lefterrorold{1}{\cU}{RL'Y}{X}}-\dim{\lefterrorold{1}{\cU}{KL'Y}{X}}\\
-\dim{\lefterrorold{1}{\cU}{RC'Y}{X}}+\dim{\lefterrorold{1}{\cU}{KC'Y}{X}},
\end{multline*}
for $RL'Y$, $KL'Y$, $RC'Y$ and $KC'Y$ the objects involved in computing $\ind{\cU}{\cT}\coind{\cV}{\cU}[Y]$.
\end{remark}

\begin{corollary}\label{c:mut-of-lambda-are-q-structure}
Let $\cC$ be a cluster category with a weak cluster structure, fix $\cT\ctsubcat\cC$, and choose a quantum datum $\lambda_{\cT}$ for $\cT$. Then the maps $\lambda_{\cU}\defeq\mu_{\cT}^{\cU}(\lambda_{\cT})$ (equivalently, the forms $\pform{\blank}{\blank}{\cU}\defeq\mu_{\cT}^{\cU}\pform{\blank}{\blank}{\cT}$) are a quantum structure on $\cC$.
\end{corollary}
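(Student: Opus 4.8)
The plan is to assemble the statement from the two main results of this subsection. First I would record that the standing hypothesis that $\cC$ has a weak cluster structure makes the ``maximally mutable'' assumptions in Propositions~\ref{p:mut-of-lambda-ss-compat} and \ref{p:mut-of-p-form-transitive} automatic: by Definition~\ref{d:weak-cluster-structure} it means precisely that \emph{every} $\cU\ctsubcat\cC$ is maximally mutable. In particular the transported forms $\pform{\blank}{\blank}{\cU}\defeq\mu_{\cT}^{\cU}\pform{\blank}{\blank}{\cT}$ (equivalently, the maps $\lambda_{\cU}\defeq\mu_{\cT}^{\cU}(\lambda_{\cT})$) are all defined, the relevant restriction of $\beta_{\cU}$ to $\Kgp{\fd{\stab{\cU}}}$ being available, and this assignment is consistent at $\cU=\cT$ because $\ind{\cT}{\cT}=\id$, so $\mu_{\cT}^{\cT}\pform{\blank}{\blank}{\cT}=\pform{\blank}{\blank}{\cT}$.

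Next I would check the two conditions in Definition~\ref{d:quant-struct}. That each $\pform{\blank}{\blank}{\cU}$ is a quantum datum for $\cU$ --- skew-symmetric and compatible with $\beta_{\cU}$ --- is Proposition~\ref{p:mut-of-lambda-ss-compat}, applied with the fixed subcategory $\cT$ and an arbitrary $\cU\ctsubcat\cC$. For the mutation-compatibility condition $\mu_{\cU}^{\cV}\pform{\blank}{\blank}{\cU}=\pform{\blank}{\blank}{\cV}$ for $\cU,\cV\ctsubcat\cC$, unwinding the definition of $\pform{\blank}{\blank}{\cU}$ turns this into the identity $\mu_{\cU}^{\cV}\mu_{\cT}^{\cU}\pform{\blank}{\blank}{\cT}=\mu_{\cT}^{\cV}\pform{\blank}{\blank}{\cT}$, which is exactly Proposition~\ref{p:mut-of-p-form-transitive}. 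Combining the two conditions gives the quantum structure on $\cC$, and the parenthetical reformulation in terms of the maps $\lambda_{\cU}$ is immediate since $\lambda_{\cU}$ and $\pform{\blank}{\blank}{\cU}$ determine one another via \eqref{eq:lambda-p-form} in a way compatible with $\mu_{\cT}^{\cU}$ (Definition~\ref{d:mut-of-lambda}).

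I do not expect any obstacle at the level of this corollary: the genuine content is already packaged in the two cited propositions. In particular the one potentially delicate point --- transitivity of the transport, which does \emph{not} follow formally because $\ind{}{}$ fails to compose transitively (Corollary~\ref{c:ind-coind-mult-error-terms}) --- is resolved inside Proposition~\ref{p:mut-of-p-form-transitive}, where the error terms are controlled using the compatibility of $\lambda_{\cT}$ with $\beta_{\cT}$, the skew-symmetry of $\pform{\blank}{\blank}{\cT}$, and the injectivity of $\beta_{\cV}$ (which the existence of a quantum datum forces). So the remaining work here is purely organisational: verifying that the weak cluster structure hypothesis is exactly what is needed to invoke those results for every pair of cluster-tilting subcategories simultaneously.
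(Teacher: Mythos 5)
Your proof is correct and takes essentially the same route as the paper: invoke Proposition~\ref{p:mut-of-lambda-ss-compat} to see each $\lambda_{\cU}$ is a quantum datum, and Proposition~\ref{p:mut-of-p-form-transitive} for the compatibility $\mu_{\cU}^{\cV}\pform{\blank}{\blank}{\cU}=\pform{\blank}{\blank}{\cV}$. The only additions relative to the paper's (terser) proof are the helpful but routine observations that the weak cluster structure hypothesis supplies maximal mutability of every cluster-tilting subcategory, and that $\mu_{\cT}^{\cT}=\id$ ensures consistency at the base point.
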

\begin{proof}
Each $\lambda_{\cU}$ is a quantum datum for $\cU$ by Proposition~\ref{p:mut-of-lambda-ss-compat}. Moreover, the statement of Proposition~\ref{p:mut-of-p-form-transitive}, for $\cT$ our initial choice of cluster-tilting subcategory, becomes
\[\mu_{\cU}^{\cV}\pform{\blank}{\blank}{\cU}=\pform{\blank}{\blank}{\cV}\]
for any $\cU,\cV\ctsubcat\cC$, and so we have a quantum structure on $\cC$.
\end{proof}

\begin{remark}
Given a cluster category with a quantum structure, it would be desirable to quantise the results of Section~\ref{s:clust-char} to produce quantum cluster characters, which compute quantum cluster variables under the usual extra assumptions.
For now, however, the geometric problems \cite[\S3.4]{Qin} concerning the appropriate replacement of the quantum Euler characteristic for a singular quiver Grassmannian continue to obstruct this.
\end{remark}

\subsection{A canonical quantum structure}

Above, we indicated that the form $\mu_{\cT}^{\cU}\pform{\blank}{\blank}{\cT}$ could not in general be matched up with a form $\pform{\blank}{\blank}{\cU}$ defined intrinsically for any cluster-tilting subcategory $\cU$.
However, in a particularly natural and important class of examples, there is a canonical quantum structure given by a global formula.

\begin{theorem}
\label{t:canonical-p-form}
Assume $\cE$ is a Hom-finite exact cluster category with a weak cluster structure. For each $\cT\ctsubcat\cE$ and each $T_1,T_2\in\cT$, define
\[\pform{[T_1]}{[T_2]}{\cT}=\dim_{\bK}\Hom{\cE}{T_1}{T_2}-\dim_{\bK}\Hom{\cE}{T_2}{T_1}.\]
Then the forms $\pform{\blank}{\blank}{\cT}$ defined in this way are a quantum structure on $\cE$.
\end{theorem}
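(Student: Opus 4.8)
By Definition~\ref{d:quant-struct}, I must check two things: (i) for every \(\cT\ctsubcat\cE\) the Hom-difference form \(\pform{\blank}{\blank}{\cT}\) is a quantum datum for \(\cT\), and (ii) \(\mu_{\cT}^{\cU}\pform{\blank}{\blank}{\cT}=\pform{\blank}{\blank}{\cU}\) for all \(\cTU\ctsubcat\cE\). (Alternatively, once (i) is known for a single \(\cT\), Corollary~\ref{c:mut-of-lambda-are-q-structure} already produces a quantum structure and (ii) then only serves to identify it; but since (ii) will be a short direct computation, I will simply prove (i) for all \(\cT\) and (ii) directly.) Note first that \(\pform{\blank}{\blank}{\cT}\) is well defined and takes values in \(\integ\) because \(\cE\) is Hom-finite, is a function of isomorphism classes, and is bilinear, so descends to \(\Kgp{\cT}\); and it makes sense as a quantum datum because the weak cluster structure forces \(\stab{\cT}\) to be maximally mutable.

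For (i): skew-symmetry of \(\pform{\blank}{\blank}{\cT}\) is immediate from the definition, which by the remark after Lemma~\ref{l:p-form-skew-symmetric} gives \(\adj{\lambda_{\cT}}=-\lambda_{\cT}\). For compatibility I would use Lemma~\ref{l:form-compat}\ref{l:form-compat-beta-p-form}: it suffices to show \(\pform{\beta_{\cT}[M]}{[T]}{\cT}=2\canform{\sinc{\cT}[M]}{[T]}{\cT}\) for \(M\in\fd{\stab{\cT}}\) and \(T\in\cT\). Since \(\stab{\cT}\) is maximally mutable, Corollary~\ref{c:fp=fd} lets me write \(M=\Extfun{\cT}X\) for some \(X\in\cE\), so that \(\beta_{\cT}[M]=\coind{\cC}{\cT}[X]-\ind{\cC}{\cT}[X]=[\leftapp{\cT}{X}]-[\leftcok{\cT}{X}]-[\rightapp{\cT}{X}]+[\rightker{\cT}{X}]\) by Proposition~\ref{p:beta-proj-res}, using a \(\cT\)-index conflation \(\rightker{\cT}{X}\infl\rightapp{\cT}{X}\defl X\) and a \(\cT\)-coindex conflation \(X\infl\leftapp{\cT}{X}\defl\leftcok{\cT}{X}\). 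Expanding \(\pform{\beta_{\cT}[M]}{[T]}{\cT}\) into four Hom-differences and applying \(\Hom{\cE}{\blank}{T}\) and \(\Hom{\cE}{T}{\blank}\) to these two conflations --- where the relevant \(\operatorname{Ext}^1\)-groups vanish because all the approximating objects lie in the rigid subcategory \(\cT\), and the left-hand maps are injective because \(\cE\) is exact (Proposition~\ref{p:extri-les}) --- a telescoping calculation collapses everything to \(\pform{\beta_{\cT}[M]}{[T]}{\cT}=\dim\Ext{1}{\cE}{X}{T}+\dim\Ext{1}{\cE}{T}{X}\). The stably \(2\)-Calabi--Yau property then gives this equals \(2\dim\Ext{1}{\cE}{T}{X}=2\dim M(T)=2\canform{[M]}{[T]}{\cT}=2\canform{\sinc{\cT}[M]}{[T]}{\cT}\). (Injectivity of \(\beta_{\cT}\) on \(\Kgp{\fd\stab{\cT}}\), implicit in being a quantum datum, is automatic from this compatibility, or from Proposition~\ref{p:ker-beta}\ref{p:ker-beta-exact}.)

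For (ii): fix \(\cTU\ctsubcat\cE\) and \(U_1,U_2\in\cU\); I must show \(\mu_{\cT}^{\cU}\pform{[U_1]}{[U_2]}{\cT}=\pform{\ind{\cU}{\cT}[U_1]}{\ind{\cU}{\cT}[U_2]}{\cT}\) equals \(\dim\Hom{\cE}{U_1}{U_2}-\dim\Hom{\cE}{U_2}{U_1}\). The key auxiliary computation, via a \(\cT\)-index conflation \(K\infl R\defl Y\) and the exactness of \(\Hom{\cE}{T'}{\blank}\), \(\Hom{\cE}{\blank}{T'}\) (again using \(\operatorname{Ext}^1\)-vanishing inside \(\cT\) and exactness of \(\cE\)), is the identity \(\pform{[T']}{\ind{\cC}{\cT}[Y]}{\cT}=\dim\Hom{\cE}{T'}{Y}-\dim\Hom{\cE}{Y}{T'}+\dim\Ext{1}{\cE}{Y}{T'}\) for \(T'\in\cT\), \(Y\in\cE\). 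Expanding \(\ind{\cU}{\cT}[U_1]=[\rightapp{\cT}{U_1}]-[\rightker{\cT}{U_1}]\) via a \(\cT\)-index conflation of \(U_1\), applying this identity with \(Y=U_2\), and then feeding in the long exact sequences obtained from that conflation under \(\Hom{\cE}{\blank}{U_2}\) and \(\Hom{\cE}{U_2}{\blank}\) --- crucially using \(\Ext{1}{\cE}{U_1}{U_2}=0=\Ext{1}{\cE}{U_2}{U_1}\) because \(U_1,U_2\) lie in the cluster-tilting subcategory \(\cU\) --- the correction terms cancel exactly and one is left with \(\dim\Hom{\cE}{U_1}{U_2}-\dim\Hom{\cE}{U_2}{U_1}=\pform{[U_1]}{[U_2]}{\cU}\). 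Notably (ii) does not use the \(2\)-Calabi--Yau property, only rigidity of cluster-tilting subcategories and exactness of \(\cE\).

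I do not expect a single deep obstacle here: the heart of the argument is linear algebra with long exact sequences once the structural facts are in hand (the stably \(2\)-Calabi--Yau property for (i), and rigidity of \(\cU\) for (ii)). The main care needed is in the bookkeeping of the telescoping computation in (i) and in correctly invoking Corollary~\ref{c:fp=fd}, Lemma~\ref{l:form-compat} and Proposition~\ref{p:beta-proj-res}; the only point where one must be slightly alert is that the auxiliary identity in (ii) and the reductions in (i) all hinge on the vanishing of \(\operatorname{Ext}^1\) between objects of a cluster-tilting subcategory together with the left-exactness available because \(\cE\) is genuinely exact rather than merely extriangulated.
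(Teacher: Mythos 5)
Your proof is correct and follows essentially the same strategy as the paper's: in part (i) both express \(\beta_{\cT}[M]\) via \(\cT\)-(co)index conflations of \(X\) with \(M=\Extfun{\cT}X\) and telescope the four Hom-differences using the long exact sequences from \(\Hom{\cE}{\blank}{T}\) and \(\Hom{\cE}{T}{\blank}\), exactness of \(\cE\), rigidity of \(\cT\), and the stably \(2\)-Calabi--Yau identity at the end; in part (ii) both reduce \(\pform{\ind{\cU}{\cT}[U_1]}{\ind{\cU}{\cT}[U_2]}{\cT}\) to \(\pform{[U_1]}{[U_2]}{\cU}\) by applying \(\Hom{\cE}{T}{\blank}\) and \(\Hom{\cE}{\blank}{U_j}\) to \(\cT\)-index conflations, crucially using rigidity of \(\cU\). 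The only cosmetic difference is in part (ii), where you route the cancellation through an intermediate identity carrying an \(\ext{1}{\cE}{Y}{T'}\) term rather than carrying out the two telescopes simultaneously on the eight-term expansion.
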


\begin{proof}
To show that $\pform{\blank}{\blank}{\cT}$ is a quantum datum for $\cT$, we will establish condition~\ref{l:form-compat-beta-p-form} from Lemma~\ref{l:form-compat}, namely that
$\pform{\beta_{\cT}[M]}{[T]}{\cT}=2\canform{\sinc{\cT}[M]}{[T]}{\cT}$.
Recall that for $M\in\underline{\cT}$, we may write $M=\Extfun{\cT}X$ for some $X\in\cE$, and then calculate
$\beta_{\cT}[M]=\coind{\cE}{\cT}[X]-\ind{\cE}{\cT}[X]$.
To this end, we pick $\cT$-coindex and $\cT$-index sequences
\begin{equation}
\label{eq:quantum-eg-sequences}
\begin{tikzcd}[column sep=20pt]0\arrow{r}& X\arrow{r}& LX\arrow{r}& CX\arrow{r}& 0,\end{tikzcd}
\quad
\begin{tikzcd}[column sep=20pt] 0\arrow{r}& KX\arrow{r}& RX\arrow{r}& X\arrow{r}& 0,\end{tikzcd}
\end{equation}
so $\coind{\cE}{\cT}[X]=[LX]-[CX]$ and $\ind{\cE}{\cT}[X]=[RX]-[KX]$. 
Then for $T\in\cT$, we calculate
\begin{equation}
\label{eq:quantum-eg-calc1}
\begin{split}
\pform{\beta_{\cT}[M]}{[T]}{\cT}=\begin{multlined}[t]\dim\Hom{\cE}{LX}{T}-\dim\Hom{\cE}{CX}{T}\\
+\dim\Hom{\cE}{KX}{T}-\dim\Hom{\cE}{RX}{T}\\
+\dim\Hom{\cE}{T}{RX}-\dim\Hom{\cE}{T}{KX}\\
+\dim\Hom{\cE}{T}{CX}-\dim\Hom{\cE}{T}{LX}.\end{multlined}
\end{split}
\end{equation}
Applying the functors $\Hom{\cE}{\blank}{T}$ and $\Hom{\cE}{T}{\blank}$ to \eqref{eq:quantum-eg-sequences}, we obtain exact sequences
\begin{align*}
&\begin{tikzcd}[ampersand replacement=\&, column sep=15pt]0\arrow{r}\&\Hom{\cE}{CX}{T}\arrow{r}\&\Hom{\cE}{LX}{T}\arrow{r}\&\Hom{\cE}{X}{T}\arrow{r}\& 0,\end{tikzcd}\\
&\begin{tikzcd}[ampersand replacement=\&, column sep=15pt]0\arrow{r}\&\Hom{\cE}{X}{T}\arrow{r}\&\Hom{\cE}{RX}{T}\arrow{r}\&\Hom{\cE}{KX}{T}\arrow{r}\&\Ext{1}{\cE}{X}{T}\arrow{r}\& 0,\end{tikzcd}\\
&\begin{tikzcd}[ampersand replacement=\&, column sep=15pt]0\arrow{r}\&\Hom{\cE}{T}{KX}\arrow{r}\&\Hom{\cE}{T}{RX}\arrow{r}\&\Hom{\cE}{T}{X}\arrow{r}\& 0,\end{tikzcd}\\
&\begin{tikzcd}[ampersand replacement=\&, column sep=15pt]0\arrow{r}\&\Hom{\cE}{T}{X}\arrow{r}\&\Hom{\cE}{T}{LX}\arrow{r}\&\Hom{\cE}{T}{CX}\arrow{r}\&\Ext{1}{\cE}{T}{X}\arrow{r}\& 0.\end{tikzcd}
\end{align*}
Here we use that $\cE$ is a Frobenius category to get exactness at the left-hand end in each case. The various relations among dimensions arising from these exact sequences allow us to rewrite \eqref{eq:quantum-eg-calc1} as
\begin{align*}
\pform{\beta_{\cT}[M]}{[T]}{\cT}&=\begin{multlined}[t]\dim\Hom{\cE}{X}{T}-\dim\Hom{\cE}{X}{T}+\dim\Ext{1}{\cE}{X}{T}\\
+\dim\Hom{\cE}{T}{X}-\dim\Hom{\cE}{T}{X}+\dim\Ext{1}{\cE}{T}{X}\end{multlined}\\
&=\dim\Ext{1}{\cE}{X}{T}+\dim\Ext{1}{\cE}{T}{X}\\
&=2\dim\Ext{1}{\cE}{T}{X},
\end{align*}
since $\cE$ is stably $2$-Calabi--Yau.
Recalling that $M=\Extfun{\cT}X$, this calculation shows that $\pform{\beta_{\cT}[M]}{[T]}{\cT}=2\dim M(T)=2\canform{\sinc{\cT}[M]}{[T]}{\cT}$, as required.

It remains to show that $\mu_{\cT}^{\cU}\pform{\blank}{\blank}{\cT}=\pform{\blank}{\blank}{\cU}$ for $\cTU\ctsubcat\cE$. To do this, choose $U_1,U_2\in\cU$ and $\cT$-index sequences
\begin{equation}
\label{eq:index-seq-can-quant}
\begin{tikzcd}0\arrow{r}&KU_i\arrow{r}&RU_i\arrow{r}&U_i\arrow{r}&0\end{tikzcd}
\end{equation}
for each $U_i$, so that $\ind{\cU}{\cT}[U_i]=[RU_i]-[KU_i]$. We may thus calculate
\begin{align}
\begin{split}
\label{eq:mut-p-form-calc}
\mu_{\cT}^{\cU}\pform{[U_1]}{[U_2]}{\cT}=\begin{multlined}[t]\hom{\cE}{RU_1}{RU_2}-\hom{\cE}{RU_1}{KU_2}\\
-\hom{\cE}{RU_2}{RU_1}+\hom{\cE}{RU_2}{KU_1}\\
-\hom{\cE}{KU_1}{RU_2}+\hom{\cE}{KU_1}{KU_2}\\
+\hom{\cE}{KU_2}{RU_1}-\hom{\cE}{KU_2}{KU_1}.\end{multlined}
\end{split}
\end{align}
For any $T\in\cT$, applying $\Hom{\cE}{T}{\blank}$ to \eqref{eq:index-seq-can-quant} yields an exact sequence
\[\begin{tikzcd}[column sep=15pt]0\arrow{r}&\Hom{\cE}{T}{KU_i}\arrow{r}&\Hom{\cE}{T}{RU_i}\arrow{r}&\Hom{\cE}{T}{U_i}\arrow{r}&\Ext{1}{\cE}{T}{KU_i}=0,\end{tikzcd}\]
and hence
$\hom{\cE}{T}{RU_i}-\hom{\cE}{T}{KU_i}=\hom{\cE}{T}{U_i}$.
Applying this identity to \eqref{eq:mut-p-form-calc} (line by line), we see that
\begin{multline}
\label{eq:mut-p-form-calc2}
\mu_{\cT}^{\cU}\pform{[U_1]}{[U_2]}{\cT}=\hom{\cE}{RU_1}{U_2}-\hom{\cE}{RU_2}{U_1}\\
-\hom{\cE}{KU_1}{U_2}+\hom{\cE}{KU_2}{U_1}.
\end{multline}
Now applying $\Hom{\cE}{\blank}{U_j}$ to \eqref{eq:index-seq-can-quant} produces the exact sequence
\[\begin{tikzcd}[column sep=15pt]0\arrow{r}&\Hom{\cE}{U_i}{U_j}\arrow{r}&\Hom{\cE}{RU_i}{U_j}\arrow{r}&\Hom{\cE}{KU_i}{U_j}\arrow{r}&\Ext{1}{\cE}{U_i}{U_j}=0,\end{tikzcd}\]
and so
$\hom{\cE}{RU_i}{U_j}-\hom{\cE}{KU_i}{U_j}=\hom{\cE}{U_i}{U_j}$.
Applying this to \eqref{eq:mut-p-form-calc2} produces
\[\mu_{\cT}^{\cU}\pform{[U_1]}{[U_2]}{\cT}=\hom{\cE}{U_1}{U_2}-\hom{\cE}{U_2}{U_1}=\pform{[U_1]}{[U_2]}{\cU},\]
as required.
\end{proof}

In particular, this result covers the examples of Geiß--Leclerc--Schröer \cite{GLS-QuantumPFV}, who prove that certain subcategories of the module categories of preprojective algebras, with their canonical quantum structures as in Theorem~\ref{t:canonical-p-form}, categorify quantum cluster algebra structures on quantised coordinate rings of unipotent subgroups of Kac--Moody groups.
For example, the above proof directly generalises that of \cite[Prop.~10.1]{GLS-QuantumPFV}.
Our result extends this to any Hom-finite Frobenius cluster category, in a uniform way. 

Jensen--Su \cite{JS} have produced further examples, categorifying quantum partial flag varieties in type $\mathsf{A}$; their results show that their categories $\curly{F}_{\Delta}(\mathsf{J})$ are finite rank skew-symmetric Hom-finite exact cluster categories with a cluster structure.
The existence of a cluster character \cite[Lem.~9.5]{JS} and a quantum structure given by the difference of dimensions of Hom-spaces \cite[Thm.~10.13]{JS} then follow immediately from our results above.
They go on to identify the associated quantum cluster algebras as quantum partial flag varieties.

While the claim that the difference of Hom-dimensions determines valid initial quantum data is not so surprising, given \cite{GLS-QuantumPFV}, our theorem proves the stronger result that in the quantum structure induced from this initial data, the quantum datum on every cluster-tilting subcategory is obtained by computing an analogous Hom-difference; a priori there is no reason to expect this.
In particular, the quantum structure obtained this way is independent of the choice of $\cT$.

We also obtain the following corollary, which gives an alternative proof of an observation by Fu--Keller \cite[Rem.~4.5]{FuKeller}.

\begin{corollary}
\label{c:Hom-finite-exact-full-rank}
If $\cE$ is a Hom-finite Frobenius cluster category, then $\beta_{\cT}|_{\Kgp{\fd{\stab{\cT}}}}$ is injective for any $\cT\ctsubcat\cE$. \qed
\end{corollary}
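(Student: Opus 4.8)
The plan is to read this off directly from Theorem~\ref{t:canonical-p-form}. That theorem produces, on any Hom-finite exact (equivalently, Frobenius) cluster category $\cE$ with a weak cluster structure, a quantum structure whose quantum datum on each $\cT\ctsubcat\cE$ is the Hom-difference form $\pform{[T_1]}{[T_2]}{\cT}=\dim_{\bK}\Hom{\cE}{T_1}{T_2}-\dim_{\bK}\Hom{\cE}{T_2}{T_1}$, equivalently a map $\lambda_{\cT}\colon\Kgp{\cT}\to\dual{\Kgp{\cT}}$. By Definition~\ref{d:quantum-hom} (see also Lemma~\ref{l:form-compat}\ref{l:form-compat-maps}), being a quantum datum includes the compatibility identity $\adj{\lambda_{\cT}}\circ\beta_{\cT}|_{\Kgp{\fd{\stab{\cT}}}}=2(\sdual{\cT}\circ\sinc{\cT}|_{\Kgp{\fd{\stab{\cT}}}})$, so the result will follow by inspecting this identity.

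First I would observe that the right-hand side of that identity is injective: $\sdual{\cT}$ is injective by Proposition~\ref{p:K-duality}, and $\sinc{\cT}$ (restricted to $\Kgp{\fd{\stab{\cT}}}$) is injective as recorded in Proposition~\ref{p:fd-sub-fpmod} and Section~\ref{s:duality}. Since a composite $\varphi\circ\beta_{\cT}|_{\Kgp{\fd{\stab{\cT}}}}$ can only be injective when $\beta_{\cT}|_{\Kgp{\fd{\stab{\cT}}}}$ is, the corollary follows. This is exactly the remark already made in the text immediately after Definition~\ref{d:quantum-hom}, now applicable to \emph{every} cluster-tilting subcategory of a Hom-finite Frobenius cluster category, rather than only to those for which a quantum datum was assumed to exist.

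I do not expect a genuine obstacle: all the real work lies in the proof of Theorem~\ref{t:canonical-p-form}, specifically the Hom-space bookkeeping establishing $\pform{\beta_{\cT}[M]}{[T]}{\cT}=2\dim_{\bK}M(T)$ for $M=\Extfun{\cT}X\in\fd{\stab{\cT}}$. The only point needing care is that the statement presupposes $\Kgp{\fd{\stab{\cT}}}$ embeds into the domain $\Kgp{\fpmod{\stab{\cT}}}$ of $\beta_{\cT}$, which by Proposition~\ref{p:fd-sub-fpmod} requires $\stab{\cT}$ to be maximally mutable; applying the argument to all $\cT\ctsubcat\cE$ thus uses that $\cE$ has a weak cluster structure, as in Theorem~\ref{t:canonical-p-form}, and this is automatic when $\cE$ has finite rank by Corollary~\ref{c:weak-clust-struct}. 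If one prefers to avoid invoking the full strength of Theorem~\ref{t:canonical-p-form}, it suffices to reproduce only the first half of its proof (condition~\ref{l:form-compat-beta-p-form} of Lemma~\ref{l:form-compat} for the given $\cT$), extend the resulting identity $\bK$-linearly over all of $\Kgp{\fd{\stab{\cT}}}$, and conclude as above; the compatibility of the Hom-difference forms under change of cluster-tilting subcategory plays no role here.
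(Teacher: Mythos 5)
Your proposal is correct and is exactly the paper's intended argument: the corollary carries only a tombstone because it is to be read off from Theorem~\ref{t:canonical-p-form} combined with the remark immediately following Definition~\ref{d:quantum-hom}, namely that the compatibility condition \(\adj{\lambda_{\cT}}\circ\beta_{\cT}=2(\sdual{\cT}\circ\sinc{\cT})\) forces \(\beta_{\cT}|_{\Kgp{\fd{\stab{\cT}}}}\) to be injective since \(\sdual{\cT}\) and \(\sinc{\cT}\) are. Your flagging of the maximal-mutability (weak cluster structure) hypothesis, which Theorem~\ref{t:canonical-p-form} requires and the corollary's statement leaves implicit, is a fair catch; the only small slip is the phrase \emph{extend\ldots\ \(\bK\)-linearly}, which should read \(\integ\)-linearly since \(\Kgp{\fd{\stab{\cT}}}\) is an abelian group, not a \(\bK\)-vector space.
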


There is an obvious obstruction to extending Theorem~\ref{t:canonical-p-form} to the Hom-infinite case.
Indeed, relatively few examples of quantum structures on Hom-infinite cluster categories are known, although one such is the categorification by Jensen--King--Su \cite{JKS-Quantum-Gr} of the quantum cluster algebra structure on the quantum Grassmannian due to the first author and Launois \cite{GradedQCAs}.

The examples of \cite{JKS-Quantum-Gr,JS} follow the pattern suggested by the work of Geiß--Leclerc--Schröer \cite{GLS-QuantumPFV}, namely that the quantum cluster algebra of interest is categorified by choosing a quantum structure on an existing categorification of the commutative cluster algebra, leaving the underlying category unchanged.
It is a somewhat remarkable phenomenon, currently unexplained, that the categorifications that have been discovered for these geometric examples quantise to exactly the noncommutative analogues that are best known, rather than some other quantisation.
It also suggests an important role for cluster categories in producing new quantum algebras, which provides further motivation for constructing quantum cluster characters in full generality.

\sectionbreak
\appendix
\section{Foundations}
\label{s:prelims}

\subsection{Adjunction}
\label{s:adj}

We denote by $(\blank)^*$ the functor $\Hom{\integ}{\blank}{\integ}$ on $\fpmod{\integ}$. Given a $\integ$-module $V$, there is a canonical \emph{evaluation pairing}
\[\evform[V]{\blank}{\blank}\colon V\times \dual{V}\to\integ,\ \evform[V]{v}{\varphi}=\varphi(v).\]
We usually omit $V$ in the notation for this pairing, since it will be clear from the context.
Let $V$ and $W$ be $\integ$-modules, and let $\ip{\blank}{\blank}\colon V\times W\to\integ$ be a ($\integ$-bilinear) form. This form determines maps
\[
\delta_V\colon V\to\dual{W},\ \delta_V(v)=\ip{v}{\blank},\quad
\delta_W\colon W\to\dual{V},\ \delta_W(w)=\ip{\blank}{w},
\]
and indeed either of these maps determines the form, via
\begin{equation}
\label{eq:evform-genform}
\evform[V]{v}{\delta_W(w)}=\ip{v}{w}=\evform[W]{w}{\delta_V(v)}.
\end{equation}

\begin{definition}
A form $\ip{\blank}{\blank}\colon V\times W\to\integ$, for $\integ$-modules $V$ and $W$, is \emph{non-degenerate} if both $\delta_V$ and $\delta_W$ are injective, and \emph{a perfect pairing} if both $\delta_V$ and $\delta_W$ are isomorphisms.
\end{definition}

In the case of the evaluation pairing $\evform{\blank}{\blank}\colon V\times \dual{V}\to\integ$, the map $\delta_{\dual{V}}\colon\dual{V}\to\dual{V}$ is the identity, whereas $\delta_{V}\colon V\to\ddual{V}$ is the evaluation map $v\mapsto(\varphi\mapsto\varphi(v))$. Thus while $\delta_{\dual{V}}$ is always an isomorphism, $\delta_{V}$ is injective (so $\evform{\blank}{\blank}$ is non-degenerate) if and only if $V$ is free, and $\delta_V$ is an isomorphism (so $\evform{\blank}{\blank}$ is a perfect pairing) if and only if $V$ is free and finitely generated.

Forms of this kind sometimes allow us to construct adjoints to $\integ$-linear maps. The most general form of adjunction which we will need is the following.

\begin{proposition}
\label{p:adjunction}
Let $V_1$, $V_2$, $W_1$ and $W_2$ be $\integ$-modules, and let $\ip{\blank}{\blank}_1\colon V_1\times W_1\to\integ$ and $\ip{\blank}{\blank}_2\colon V_2\times W_2\to\integ$ be bilinear forms with associated $\integ$-linear maps $\delta_{V_i}\colon V_i\to\dual{W}_i$ and $\delta_{W_i}\colon W_i\to\dual{V}_i$.
Then
\begin{enumerate}
\item\label{p:adjunction-left}
if $\delta_{W_1}$ is injective and $f\colon V_1\to V_2$ satisfies $\image(f^*\circ\delta_{W_2})\subset\image(\delta_{W_1})$, then there is a unique $\integ$-linear map $\adj{f}\colon W_2\to W_1$ such that
\begin{equation}
\label{eq:adjunction-left}
\ip{f(v)}{w}_2=\ip{v}{\adj{f}(w)}_1
\end{equation}
for all $v\in V_1$ and $w\in W_2$, and
\item\label{p:adjunction-right}
if $\delta_{V_1}$ is injective and $g\colon W_1\to W_2$ satisfies $\image(\dual{g}\circ\delta_{V_2})\subseteq\image(\delta_{V_1})$, then there is a unique $\integ$-linear map $\adj{g}\colon V_2\to V_1$ such that
\begin{equation}
\label{eq:adjunction-right}
\ip{v}{g(w)}_2=\ip{\adj{g}(v)}{w}_1
\end{equation}
for all $v\in V_2$ and $w\in W_1$.
\end{enumerate}
\end{proposition}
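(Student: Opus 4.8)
The statement is a purely formal fact about bilinear forms and has nothing to do with cluster categories, so the plan is simply to unwind the definitions carefully. The two parts are dual to each other (swap the roles of $V_i$ and $W_i$), so I would prove \ref{p:adjunction-left} in detail and then observe that \ref{p:adjunction-right} follows by the same argument with the obvious substitutions, or directly from \ref{p:adjunction-left} applied to the transposed forms $\ip{\blank}{\blank}_i'\colon W_i\times V_i\to\integ$, $\ip{w}{v}_i'=\ip{v}{w}_i$, whose associated maps are $\delta_{W_i}$ and $\delta_{V_i}$ with the roles of the two slots exchanged.

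\textbf{Construction of $\adj{f}$.} For \ref{p:adjunction-left}, I would first rewrite the condition $\image(f^*\circ\delta_{W_2})\subseteq\image(\delta_{W_1})$ in concrete terms. For $w\in W_2$, the element $f^*(\delta_{W_2}(w))\in\dual{V_1}$ is the functional $v\mapsto\evform{f(v)}{\delta_{W_2}(w)}=\ip{f(v)}{w}_2$, using \eqref{eq:evform-genform}. The hypothesis says this functional lies in $\image(\delta_{W_1})$, i.e.\ there exists $w'\in W_1$ with $\delta_{W_1}(w')=f^*(\delta_{W_2}(w))$, which spelled out means $\ip{v}{w'}_1=\ip{f(v)}{w}_2$ for all $v\in V_1$; this is exactly \eqref{eq:adjunction-left} with $w'=\adj{f}(w)$. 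Since $\delta_{W_1}$ is injective, such a $w'$ is unique, so we may \emph{define} $\adj{f}(w)$ to be this unique preimage; equivalently, $\adj{f}=\delta_{W_1}^{-1}\circ f^*\circ\delta_{W_2}$, where $\delta_{W_1}^{-1}$ denotes the inverse of the isomorphism $W_1\isoto\image(\delta_{W_1})$ induced by the injection $\delta_{W_1}$. I would then check $\integ$-linearity: $f^*$, $\delta_{W_2}$ and $\delta_{W_1}^{-1}$ are each $\integ$-linear (the first two visibly, the last because the inverse of a $\integ$-linear isomorphism onto its image is $\integ$-linear), so their composite is. Uniqueness of a map $\adj{f}$ satisfying \eqref{eq:adjunction-left} is immediate: if $\adj{f}_1$ and $\adj{f}_2$ both work, then $\ip{v}{\adj{f}_1(w)-\adj{f}_2(w)}_1=0$ for all $v\in V_1$, i.e.\ $\delta_{W_1}(\adj{f}_1(w)-\adj{f}_2(w))=0$, hence $\adj{f}_1(w)=\adj{f}_2(w)$ by injectivity of $\delta_{W_1}$.

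\textbf{Main obstacle.} There is essentially no hard step here—the content is bookkeeping with \eqref{eq:evform-genform}. The only place requiring a moment's care is making sure the image condition is being read correctly, namely translating between "$f^*\circ\delta_{W_2}(w)$ lies in $\image\delta_{W_1}$" and "the functional $v\mapsto\ip{f(v)}{w}_2$ on $V_1$ is of the form $v\mapsto\ip{v}{w'}_1$ for some $w'$"; this is where the identity $\evform{f(v)}{\varphi}=\varphi(f(v))$ defining $f^*$, together with \eqref{eq:evform-genform}, is used, and it is worth writing out explicitly rather than asserting. Everything else—linearity, uniqueness, and the deduction of \ref{p:adjunction-right}—is routine and can be stated briefly.
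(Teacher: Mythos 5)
Your proposal is correct and follows essentially the same route as the paper: rewrite $f^*\circ\delta_{W_2}(w)$ as the functional $v\mapsto\ip{f(v)}{w}_2$, use the image hypothesis to produce $w'\in W_1$, use injectivity of $\delta_{W_1}$ for uniqueness, and deduce part (ii) by symmetry via the transposed forms. Your explicit justification of linearity (as a composite of linear maps) is a slight improvement on the paper's terse remark that it "follows from the uniqueness of $w'$", but the arguments are substantially identical.
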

\begin{proof}
Note that
$f^*\circ\delta_{W_2}(w)=\ip{f(\blank)}{w}_2$
for all $w\in W_2$. Thus, by assumption, there exists some $w'\in W_1$ such that
\[\ip{f(\blank)}{w}_2=\delta_{W_1}(w')=\ip{\blank}{w'}_1.\]
Since $\ip{\blank}{\blank}_1$ is non-degenerate, $\delta_{W_1}$ is injective and hence $w'$ is unique.

It follows that $\adj{f}(w)=w'$ defines the unique map with the required properties for \ref{p:adjunction-left}. Its linearity also follows from the uniqueness of $w'$. Statement \ref{p:adjunction-right} can either be proved directly in a similar way, or deduced by applying \ref{p:adjunction-left} to the forms $\ip{\blank}{\blank}_i^{\mathrm{op}}\colon W_i\times V_i\to\integ$ with
$\ip{w}{v}_i^{\mathrm{op}}\defeq\ip{v}{w}_i$.
\end{proof}

\begin{corollary}
\label{c:identify-adj}
Under the assumptions of Proposition~\ref{p:adjunction}, let $f\colon V_1\to V_2$. If $\delta_{W_1}$ is injective and $h\colon W_2\to W_1$ satisfies
$\ip{f(v)}{w}_2=\ip{v}{h(w)}_1$
for all $v\in V_1$ and $w\in W_2$, then $\adj{f}$ exists and is equal to $h$.
\end{corollary}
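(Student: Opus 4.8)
The plan is to deduce this directly from Proposition~\ref{p:adjunction}\ref{p:adjunction-left}, so the first task is to check that the hypothesis on $h$ supplies exactly the image condition needed to invoke that statement. Indeed, for any $w\in W_2$ the functional $f^*\circ\delta_{W_2}(w)$ is, by definition of $\delta_{W_2}$ and of the dual map $f^*$, the functional $v\mapsto\ip{f(v)}{w}_2$ on $V_1$. The hypothesis $\ip{f(v)}{w}_2=\ip{v}{h(w)}_1$ for all $v$ says precisely that this functional equals $v\mapsto\ip{v}{h(w)}_1=\delta_{W_1}(h(w))$. Hence $f^*\circ\delta_{W_2}(w)=\delta_{W_1}(h(w))\in\image(\delta_{W_1})$ for every $w\in W_2$, so $\image(f^*\circ\delta_{W_2})\subseteq\image(\delta_{W_1})$. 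Together with the standing injectivity of $\delta_{W_1}$, this is exactly what Proposition~\ref{p:adjunction}\ref{p:adjunction-left} requires, so $\adj{f}\colon W_2\to W_1$ exists and satisfies $\ip{f(v)}{w}_2=\ip{v}{\adj{f}(w)}_1$ for all $v\in V_1$, $w\in W_2$.

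The second task is to identify $\adj{f}$ with $h$, which follows from uniqueness. For a fixed $w\in W_2$ and arbitrary $v\in V_1$ we have
\[
\ip{v}{\adj{f}(w)}_1=\ip{f(v)}{w}_2=\ip{v}{h(w)}_1,
\]
using the defining property of $\adj{f}$ and then the hypothesis on $h$. Thus $\delta_{W_1}(\adj{f}(w))$ and $\delta_{W_1}(h(w))$ agree as functionals on $V_1$, and since $\delta_{W_1}$ is injective we conclude $\adj{f}(w)=h(w)$. As $w$ was arbitrary, $\adj{f}=h$.

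I do not anticipate any genuine obstacle here: the only point requiring a moment's care is the unwinding of the composite $f^*\circ\delta_{W_2}$ in terms of the form $\ip{\blank}{\blank}_2$, which is a direct application of the definitions recalled just before Proposition~\ref{p:adjunction} (in particular \eqref{eq:evform-genform}). The argument is essentially the uniqueness half of the proof of Proposition~\ref{p:adjunction}\ref{p:adjunction-left}, repackaged, so it can be kept to a couple of lines.
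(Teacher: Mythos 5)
Your proof is correct and takes essentially the same approach as the paper: you verify the image condition $\image(f^*\circ\delta_{W_2})\subseteq\image(\delta_{W_1})$ by the same computation $f^*\circ\delta_{W_2}(w)=\ip{f(\blank)}{w}_2=\ip{\blank}{h(w)}_1=\delta_{W_1}(h(w))$, then invoke Proposition~\ref{p:adjunction}\ref{p:adjunction-left}. The only difference is that you spell out the final uniqueness step via injectivity of $\delta_{W_1}$, whereas the paper simply cites the uniqueness clause of Proposition~\ref{p:adjunction}; this is a matter of exposition, not substance.
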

\begin{proof}
The adjoint $\adj{f}$ exists since
\[\dual{f}\circ\delta_{W_2}(w)=\ip{f(\blank)}{w}_2=\ip{\blank}{h(w)}_1=\delta_{W_1}(h(w)),\]
so $\image(\dual{f}\circ\delta_{W_2})\subset\image(\delta_{W_1})$. Then $\adj{f}=h$ by the uniqueness result in Proposition~\ref{p:adjunction}.
\end{proof}

\begin{remark}
\label{r:delta-inverse}
The proof of Proposition~\ref{p:adjunction}, together with Corollary~\ref{c:identify-adj}, demonstrates that $\adj{f}$ is determined by the identity
$\delta_{W_1}\circ\adj{f}=\dual{f}\circ\delta_{W_2}$.
Thus when $\delta_{W_1}$ is an isomorphism (such as if $\ip{\blank}{\blank}_1$ is a perfect pairing), the adjoint $\adj{f}=\delta_{W_1}^{-1}\circ\dual{f}\circ\delta_{W_2}$ exists for any map $f\colon V_1\to W_1$.
As a special case, if $V$ is free and $f\colon V\to \dual{V}$ is any map, then with respect to the perfect pairings $\ip{\blank}{\blank}_1=\evform[V]{\blank}{\blank}$ and $\ip{\blank}{\blank}_2=(\evform[V]{\blank}{\blank})^{\mathrm{op}}$, we have $\adj{f}=\dual{f}\circ\delta_{V}$ since $\delta_{\dual{V}}=\id_{\dual{V}}$.
That is, $\adj{f}$ is obtained from $\dual{f}$ by restricting from $\ddual{V}$ to $V$ along the natural embedding $\delta_V$.
\end{remark}

\begin{remark}
\label{r:hidden-adjoints}
We sometimes find ourselves in the setting of Proposition~\ref{p:adjunction} but with $V_1=W_2$, $V_2=W_1$ and $\ip{\blank}{\blank}_1=\ip{\blank}{\blank}_2^{\mathrm{op}}$, so that the adjunction formula \eqref{eq:adjunction-left} becomes
\[\ip{f(v)}{w}_2=\ip{v}{\adj{f}(w)}_2^{\mathrm{op}}=\ip{\adj{f}(w)}{v}_2.\]
We usually avoid referring to the opposite form in this case, so the statement of the adjunction becomes just the equality of the outer two terms above; while this slightly disguises the fact that $\adj{f}$ is related to $f$ by adjunction, this can be seen by observing that $v$ and $w$ have swapped positions inside $\ip{\blank}{\blank}_2$.
As an example, it follows from \eqref{eq:evform-genform} and Corollary~\ref{c:identify-adj} that the maps $\delta_V$ and $\delta_W$ associated to a bilinear form $\ip{\blank}{\blank}\colon V\times W\to\integ$ are adjoint to each other with respect to the evaluation forms for $V$ and $W$.
\end{remark}

\begin{remark}
\label{r:adj-calc}
By Corollary~\ref{c:identify-adj}, the usual manipulations one does with adjoints in linear algebra may also be made here, giving that $\adj{(f+g)}=\adj{f}+\adj{g}$ and $\adj{(g\circ f)}=\adj{f}\circ \adj{g}$ when the adjoints $\adj{f}$ and $\adj{g}$ exist, and are defined with compatible choices of forms.
For the first identity one should take the same pair of forms for each of the maps $f$, $g$ and $f+g$.
For the second, the codomain of $f$ coincides with the domain of $g$, and the same form involving this space must be used in the definition of both $\adj{f}$ and $\adj{g}$.
Moreover, the other two forms needed to define these two adjoints should coincide with those used to define $\adj{(g\circ f)}$.
\end{remark}

\subsection{Modules over categories}
\label{s:modules}

A $\bK$-linear category is an additive category enriched in $\bK$-vector spaces. Such categories have modules (or representations), generalising the corresponding notion for $\bK$-algebras.
\begin{definition}
Let $\cA$ be a $\bK$-linear category. An \emph{$\cA$-module} is a contravariant $\bK$-linear functor
$\cA\to\Mod{\bK}$,
where $\Mod{\bK}$ denotes the category of (all) $\bK$-vector spaces.
The module is \emph{locally finite-dimensional} if it takes values in the full subcategory $\fd{\bK}$ of finite-dimensional vector spaces.
We write $\Mod{\cA}$ for the category of $\cA$-modules, and $\lfd{\cA}$ for the full subcategory of locally finite-dimensional modules.
These categories have a natural additive structure induced from that of $\Mod{\bK}$, and are even abelian categories, with kernels and cokernels computed pointwise.
\end{definition}

Given a $\bK$-linear category $\cA$, we write $\Yonfun{\cA}\colon\cA\to\Mod\cA$ for the (covariant) Yoneda functor, i.e.\ $\Yonfun{\cA}{X}=\Hom{\cA}{\blank}{X}$, and $\opYonfun{\cA}=\Yonfun{\op{\cA}}\colon\cA\to\Mod{\op{\cA}}$ for the contravariant Yoneda functor, i.e.\ $\opYonfun{\cA}{X}=\Hom{\cA}{X}{\blank}$.
If $\cB\subseteq\cA$ is a full subcategory, the restricted Yoneda functor $\cA\to\Mod{\cB}$ given by $X\mapsto\Hom{\cA}{\blank}{X}|_{\cB}$ extends $\Yonfun{\cB}$, and so we reuse the notation $\Yonfun{\cB}$ for this functor on $\cA$.
Similarly, $\opYonfun{\cB}$ will denote both the contravariant Yoneda functor on $\cB$, and the restricted contravariant Yoneda functor on $\cA$.

\begin{remark}
The convention that $\cA$-modules are contravariant functors on $\cA$ is common but can be surprising at first sight: it means that the covariant Yoneda functor $\Yonfun{\cA}$, rather than the contravariant Yoneda functor $\opYonfun{\cA}$, takes values in $\cA$-modules.
This fact is not sensitive to any convention concerning the direction of function composition, or of left versus right modules over rings.
\end{remark}

\begin{definition}
\label{d:fp}
An $\cA$-module $M\colon\cA\to\Mod{\bK}$ is \emph{finitely generated} if there is an epimorphism $\Yonfun{\cA}{X}\defl M$ for some $X\in\cA$.
Moreover, $M$ is \emph{finitely presented} if there is an exact sequence
\[\begin{tikzcd}
\Yonfun{\cA} Y\arrow{r}&\Yonfun{\cA} X\arrow{r}& M\arrow{r}&0
\end{tikzcd}\]
for $X,Y\in\cA$.
The full subcategory of $\Mod{\cA}$ consisting of finitely presented modules is denoted by $\fpmod{\cA}$.
This category is not necessarily abelian, but it is always exact, since it is full and extension-closed in $\Mod{\cA}$.
We write $\gldim{\cA}$ for the supremum of projective dimensions of $\cA$-modules.
\end{definition}

The following well-known result justifies some of the preceding terminology.

\begin{proposition}
\label{p:Yoneda-image}
Assume that $\cA$ is idempotent complete.
Then the essential image of the Yoneda functor $\Yonfun{\cA}$ is the category $\proj{\cA}\subseteq\fpmod{\cA}$ of projective objects in $\fpmod{\cA}$.\qed
\end{proposition}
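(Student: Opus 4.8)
The plan is to prove the two asserted inclusions and show they are opposite, namely that the essential image of $\Yonfun{\cA}$ is contained in $\proj{\cA}$, that every object of $\proj{\cA}$ is in this essential image, and that this is the whole of $\proj{\cA}$ as a subcategory of $\fpmod{\cA}$.

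First I would recall the key tool, the (enriched) Yoneda lemma: for any $X\in\cA$ and any $M\in\Mod{\cA}$ there is a natural isomorphism $\Hom{\Mod\cA}{\Yonfun{\cA}X}{M}\iso M(X)$, natural in both $X$ and $M$. In particular the representable modules $\Yonfun{\cA}X$ are finitely generated (the identity of $X$ corresponds to an epimorphism $\Yonfun{\cA}X\onto\Yonfun{\cA}X$), hence lie in $\fpmod{\cA}$, with the presentation $\Yonfun{\cA}0\to\Yonfun{\cA}X\to\Yonfun{\cA}X\to0$. To see $\Yonfun{\cA}X$ is projective in $\fpmod{\cA}$: given an admissible epimorphism $N\onto N'$ in $\fpmod{\cA}$, it is pointwise surjective (cokernels in $\fpmod\cA$ are computed pointwise, or at least the relevant short exact sequences are pointwise exact since $\fpmod\cA\subseteq\Mod\cA$ is extension-closed and $\Mod\cA$ is abelian with pointwise exactness), so $N(X)\onto N'(X)$ is surjective; via the Yoneda isomorphism this says exactly that $\Hom{\fpmod\cA}{\Yonfun{\cA}X}{N}\to\Hom{\fpmod\cA}{\Yonfun{\cA}X}{N'}$ is surjective, i.e.\ $\Yonfun{\cA}X$ is projective. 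Since $\proj{\cA}$ is closed under direct summands and the essential image of the additive functor $\Yonfun{\cA}$ is closed under isomorphism and finite direct sums, this gives the essential image $\subseteq\proj{\cA}$.

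For the reverse inclusion, let $P\in\proj{\cA}$, that is, $P$ is a projective object of the exact category $\fpmod{\cA}$. Being finitely presented, $P$ is finitely generated, so there is an epimorphism $\Yonfun{\cA}X\onto P$ in $\fpmod\cA$; this is an admissible epimorphism since kernels of maps between finitely presented modules over $\cA$ that arise from presentations are again finitely presented — more carefully, one takes a presentation $\Yonfun{\cA}Y\to\Yonfun{\cA}X\to P\to 0$ and notes the image of $\Yonfun{\cA}Y\to\Yonfun{\cA}X$ is the kernel, which is finitely generated, so $0\to K\to\Yonfun{\cA}X\to P\to 0$ is a conflation in $\fpmod\cA$. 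Since $P$ is projective, this conflation splits, exhibiting $P$ as a direct summand of $\Yonfun{\cA}X$. It then remains to show a direct summand of a representable module is representable: if $P\oplus Q\iso\Yonfun{\cA}X$, the idempotent endomorphism of $\Yonfun{\cA}X$ projecting onto $P$ corresponds under Yoneda to an idempotent $e\in\op{\End\cA}X=\End{\Mod\cA}{\Yonfun{\cA}X}$, and since $\cA$ is idempotent complete in the relevant applications — or, in general, since $\cA$ is additive one can pass to its idempotent completion — $e$ splits as $X\iso P'\oplus Q'$ in $\cA$ with $\Yonfun{\cA}P'\iso P$.

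The main obstacle I anticipate is the handling of idempotent-splitting: the cleanest statement of Proposition~\ref{p:Yoneda-image} requires $\cA$ to be idempotent complete (or one must phrase the essential image as "summands of representables"). In the intended applications $\cA=\cT$ or $\cA=\proj{\cT}$ for $\cT\ctsubcat\cC$ with $\cC$ a cluster category, and cluster categories are idempotent complete by definition, so $\cT$ is idempotent complete; hence the splitting causes no trouble there. I would either add the standing hypothesis that $\cA$ is idempotent complete (consistent with the rest of Section~\ref{s:modules}) or remark that the statement holds up to direct summands in general and with equality when $\cA$ is idempotent complete. The rest is routine diagram-chasing with the Yoneda lemma and the pointwise exactness of conflations in $\fpmod{\cA}$.
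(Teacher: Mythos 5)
The paper states this proposition as well-known and gives no proof (note the immediate \(\qed\)), so there is no argument in the text to compare against; your task was essentially to supply the folklore proof. Your forward inclusion is fine: the Yoneda isomorphism \(\Hom{\Mod\cA}{\Yonfun{\cA}X}{M}\iso M(X)\) makes \(\Yonfun{\cA}X\) projective against pointwise epimorphisms, hence against deflations in \(\fpmod\cA\), and \(\Yonfun{\cA}X\) is obviously finitely presented.

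The reverse inclusion contains a genuine gap. You assert that for \(P\in\fpmod\cA\) with presentation \(\Yonfun{\cA}Y\to\Yonfun{\cA}X\to P\to 0\), the kernel \(K\) of \(\Yonfun{\cA}X\to P\) is finitely generated (true, being the image of \(\Yonfun{\cA}Y\)) and \emph{therefore} that \(0\to K\to\Yonfun{\cA}X\to P\to 0\) is a conflation in \(\fpmod\cA\). But a conflation in \(\fpmod\cA\) requires all three terms to be finitely \emph{presented}, and finitely generated does not imply finitely presented. Without a coherence hypothesis on \(\cA\) (for instance, that kernels of maps between representables are finitely generated, which together with finite generation of \(K\) would give \(K\in\fpmod\cA\)), the surjection \(\Yonfun{\cA}X\onto P\) need not be a deflation in \(\fpmod\cA\), and you cannot invoke projectivity of \(P\) to split it. In the paper's applications \(\cA=\cT\) is a cluster-tilting subcategory, and Proposition~\ref{p:ct-Yon-fp} and Corollary~\ref{c:fp-fcp} give strong control over finite presentations there, but the proposition as stated in the appendix has no such standing hypothesis, so the gap is real at that level of generality.

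Your observation about idempotent completeness is correct and worth preserving: without it, a direct summand of \(\Yonfun{\cA}X\) need not itself be representable, so the essential image of \(\Yonfun{\cA}\) would only be a subcategory whose Karoubi envelope is \(\proj{\cA}\). Since cluster categories are idempotent complete by Definition~\ref{d:clustcat} and cluster-tilting subcategories are additively closed, this hypothesis is automatic in every application the paper makes, but flagging it is sensible given the appendix does not impose it.
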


More generally, one can identify $\proj{\cA}$ with the idempotent completion $\idcomp{\cA}$ of $\cA$, in such a way that $\Yonfun{\cA}$ is identified with the universal fully faithful functor $\cA\to\idcomp{\cA}$.
Under extra assumptions on the additive category $\cA$, we obtain more familiar descriptions of the various categories of $\cA$-modules above.

\begin{definition}
An additive category $\cA$ is \emph{Krull--Schmidt}\footnote{This terminology, excluding the contribution of Remak, is unfortunately by now well-established.
Some further comments on its history may be found in \cite[Rem.~1.1]{ShahKRS}.} if each of its objects is isomorphic to a direct sum of indecomposable objects, and such objects have local endomorphism rings.
We say $\cA$ is \emph{additively finite} if it has finitely many isomorphism classes of indecomposable objects.
\end{definition}

This definition of Krull--Schmidt is that of Krause \cite[\S4]{KrauseKS}.
Since $\cA$ is additive, it is Krull--Schmidt in this sense if and only if it is a Krull--Schmidt prevariety in the sense of Bautista \cite{Bautista}.
By \cite[Cor.~4.4]{KrauseKS} (see also \cite[Prop.~2.1]{LNP}), the category $\cA$ is Krull--Schmidt if and only if it is idempotent complete (also called Karoubian) and the endomorphism ring of each of its objects is semi-perfect.
In particular, this means that any Hom-finite idempotent complete additive category is Krull--Schmidt.
In a Krull--Schmidt additive category, decompositions of objects into indecomposables are essentially unique \cite[Cor.~4.3]{KrauseKS}, as in the classical Krull--Remak--Schmidt theorem, as a consequence of the condition on endomorphism rings.

\begin{definition}
\label{d:fd}
Let $\cA$ be a $\bK$-linear category and $M\in \Mod{\cA}$.
The support of $M$ is
\[ \supp{M}=\{ X\in \cA \mid M(X')\neq 0\ \text{for all non-zero summands}\ X'\ \text{of}\ X\}.\]
We say $M$ is \emph{finite-dimensional} if $M\in\lfd{\cA}$ and there exists $V\in \cA$ such that $\supp{M}=\add{V}$. 
The (abelian) category of finite-dimensional $\cA$-modules is denoted by $\fd{\cA}$.
\end{definition}

The first of the next two propositions is thus immediate from the definition, and the second is again well-known.

\begin{proposition}
If $\cA$ is additively finite, then $\fd{\cA}=\lfd{\cA}$.\qed
\end{proposition}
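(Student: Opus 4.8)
The plan is to note first that the inclusion $\fd{\cA}\subseteq\lfd{\cA}$ is immediate from Definition~\ref{d:fd}, since a finite-dimensional module is by definition locally finite-dimensional; so the only content of the statement is the reverse inclusion $\lfd{\cA}\subseteq\fd{\cA}$. Thus, given $M\in\lfd{\cA}$, I would need to produce an object $V\in\cA$ with $\supp{M}=\add{V}$.

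First I would invoke the additive finiteness hypothesis to fix a finite list $X_1,\dots,X_n$ of representatives of the isomorphism classes of indecomposable objects of $\cA$, and then set $V=\bigoplus_{i\,:\,M(X_i)\neq0}X_i$, which is a legitimate (finite) direct sum precisely because there are only finitely many indecomposables. The remaining task is to verify $\supp{M}=\add{V}$, and for this I would unwind the definition of support. Since $M$ is a $\bK$-linear functor it is additive, so $M(X\oplus Y)=M(X)\oplus M(Y)$; hence $M$ vanishes on an object exactly when it vanishes on each indecomposable summand of that object. Writing an arbitrary object as $X\cong\bigoplus_i X_i^{a_i}$ — using that every object of $\cA$ admits an essentially unique decomposition into indecomposables, which holds in the Krull--Schmidt setting in which this proposition is applied — one checks that $M(X')\neq0$ for every non-zero summand $X'$ of $X$ if and only if $M(X_i)\neq0$ for every $i$ with $a_i>0$, which is exactly the condition $X\in\add{V}$. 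Therefore $\supp{M}=\add{V}$ and $M\in\fd{\cA}$, completing the argument.

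I do not expect any genuine obstacle: this is the reason the proposition is asserted without proof in the excerpt. The only point meriting a line of care is the equivalence "$M$ vanishes on all non-zero summands of $X$ if and only if $M$ vanishes on all indecomposable summands of $X$", which rests on additivity of $M$ together with the essential uniqueness of indecomposable decompositions; everything else is routine bookkeeping with the definitions of $\supp{M}$ and $\add{V}$.
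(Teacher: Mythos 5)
Your argument is the straightforward unpacking of the definitions and is surely what the paper has in mind in labelling the statement immediate (no proof is given there at all). The one point deserving care is the one you flag: identifying \(\supp{M}\) with \(\add{V}\) via the decomposition of objects into indecomposables and the reduction of the summand condition to indecomposable summands both rest on the Krull--Schmidt property, which is not listed as a hypothesis of this proposition but is in force wherever the paper applies it.
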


\begin{proposition}
\label{p:cat-vs-alg}
Assume that $\cA$ is Krull--Schmidt and additively finite. Then there exists an object $X\in\cA$ such that $\add{X}=\cA$, and for any such $X$ there is an equivalence of categories
\[\Mod{\cA}\isoto\Mod{\op{\End{\cA}{X}}},\]
restricting to an equivalence $\fpmod{\cA}\isoto\fpmod{\op{\End{\cA}{X}}}$ between the categories of finitely presented modules, an equivalence $\fd{\cA}\isoto\fd{\op{\End{\cA}{X}}}$ of the categories of (locally) finite-dimensional modules and, precomposing with the Yoneda functor, an equivalence $\cA\isoto\proj{\op{\End{\cA}{X}}}$.
Moreover, $\gldim{\cA}=\gldim{\op{\End{\cA}{X}}}$.\qed
\end{proposition}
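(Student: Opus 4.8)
The plan is to take for $X$ the direct sum $\bigoplus_{i=1}^{n}X_{i}$ of a complete set of representatives of the (finitely many, by additive finiteness) isomorphism classes of objects in $\indec \cA$; since $\cA$ is Krull--Schmidt, every object is a finite direct sum of indecomposables, so $\add{X}=\cA$ and such an $X$ exists. For the rest of the proof I fix \emph{any} $X$ with $\add{X}=\cA$ and set $A=\op{\End{\cA}{X}}$. The engine of the whole argument is a ``restricted Yoneda'' statement: the functor $\cA\to\Mod{A}$ sending $Y$ to $\Hom{\cA}{X}{Y}$, viewed as a right $\End{\cA}{X}$-module (equivalently, a left $A$-module) via precomposition, is fully faithful with essential image $\proj{A}$. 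Full faithfulness holds on the nose for $Y=X$, since $\Hom{A}{A}{\Hom{\cA}{X}{Z}}=\Hom{\cA}{X}{Z}$, and then propagates to all $Y\in\add{X}=\cA$ by additivity and passage to direct summands. Essential surjectivity onto $\proj{A}$ uses that every finitely generated projective $A$-module is a summand of some $A^{k}\iso\Hom{\cA}{X}{X^{k}}$, together with the fact that $\cA$, being Krull--Schmidt, is idempotent complete, so the idempotent endomorphism of $X^{k}$ corresponding to such a summand splits in $\cA$. This already yields the equivalence $\cA\isoto\proj{A}$, which is precisely the composite of $\Yonfun{\cA}$ with evaluation at $X$.

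Next I would promote this to the equivalence $\Mod{\cA}\isoto\Mod{A}$. The functor is $\operatorname{ev}_{X}\colon M\mapsto M(X)$, where the contravariant functoriality of $M$ induces a right $\End{\cA}{X}$-action, i.e.\ a left $A$-module structure; it is exact because kernels and cokernels in $\Mod{\cA}$ are computed pointwise. A quasi-inverse is $G\colon N\mapsto\bigl(Y\mapsto\Hom{A}{\Hom{\cA}{X}{Y}}{N}\bigr)$, which is a contravariant additive functor on $\cA$, hence an $\cA$-module. The natural isomorphism $\operatorname{ev}_{X}\circ G\iso\id$ is immediate from $\Hom{\cA}{X}{X}=A$, while $G\circ\operatorname{ev}_{X}\iso\id$ holds because $G(\operatorname{ev}_{X}M)$ and $M$ are additive contravariant functors on $\cA=\add{X}$ agreeing, compatibly with the $\End{\cA}{X}$-action, on the single object $X$, and hence on all of $\add{X}$. (Alternatively one may invoke the standard fact, due in various forms to Auslander and Krause, that modules over a Krull--Schmidt category and modules over the endomorphism ring of an additive generator coincide.)

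The stated restrictions then follow formally. Since $\operatorname{ev}_{X}$ is an exact equivalence sending each representable $\Yonfun{\cA}{Y}=\Hom{\cA}{\blank}{Y}$ to the projective $A$-module $\Hom{\cA}{X}{Y}$, and conversely by the previous paragraph, it identifies projective objects with projective objects and therefore projective presentations with projective presentations, giving $\fpmod{\cA}\isoto\fpmod{A}$. Because each $X_{i}$ is a summand of $X$ and $X$ itself is a summand of $\bigoplus_i X_i^{\oplus ?}$, one has $\dim_{\bK}M(X)<\infty$ if and only if $\dim_{\bK}M(X_{i})<\infty$ for all $i$, i.e.\ if and only if $M\in\lfd{\cA}$; since $\cA$ is additively finite, $\lfd{\cA}=\fd{\cA}$, and so $\operatorname{ev}_{X}$ restricts to $\fd{\cA}\isoto\fd{A}$. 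The equivalence $\cA\isoto\proj{A}$ is exactly $\operatorname{ev}_{X}\circ\Yonfun{\cA}$, as noted above. Finally, an equivalence of abelian categories preserves projectives and exact sequences, hence projective resolutions and projective dimensions of \emph{all} modules (this being how $\gldim$ is defined), so $\gldim{\cA}=\gldim{A}$.

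I expect the only genuine work to lie in the restricted Yoneda step: verifying full faithfulness of $Y\mapsto\Hom{\cA}{X}{Y}$ on $\add{X}$ and, in particular, that its essential image is all of $\proj{A}$ — this is exactly where idempotent completeness of the Krull--Schmidt category $\cA$ is used, and is the one place requiring a small amount of care. Everything afterwards — the quasi-inverse $G$, the naturality checks, the exactness of $\operatorname{ev}_{X}$, and the four claimed restrictions — is routine bookkeeping once that equivalence is established.
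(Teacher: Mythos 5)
The paper states this proposition without proof, merely remarking that it is ``again well-known,'' so there is no argument in the source to compare against; your proposal is a correct and complete rendering of the standard Morita-theoretic argument (evaluation at an additive generator $X$, with quasi-inverse $N\mapsto\Hom{A}{\Hom{\cA}{X}{\blank}}{N}$, using idempotent completeness of the Krull--Schmidt category $\cA$ for essential surjectivity onto $\proj A$). The only place I would tighten the exposition is the step ``$G\circ\operatorname{ev}_X\iso\id$'': rather than saying the two functors ``agree on $X$ compatibly with the $\End{\cA}{X}$-action,'' one should exhibit the natural transformation $\eta_Y\colon M(Y)\to\Hom{A}{\Hom{\cA}{X}{Y}}{M(X)}$, $m\mapsto(f\mapsto M(f)(m))$, check it is $A$-linear and natural, and observe that it is an isomorphism at $Y=X$, hence on all of $\add X=\cA$ by additivity and passage to retracts; this is exactly the same device you use for full faithfulness, and spelling it out closes the one slightly hand-waved step.
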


\begin{remark}
The appearance of the opposite algebra in Proposition~\ref{p:cat-vs-alg} is also not a result of any convention concerning left or right modules.
Rather, it comes from our convention of reading algebra multiplication in the same direction as function composition (for us, right-to-left).
The alternative, in which algebra multiplication and function composition are read in opposite directions, can be sensible in some contexts (because it puts the commuting actions of $A$ and $\End{A}{M}$ on an $A$-module $M$ on opposite sides), but this would be confusing here since most of our algebras will have functions as elements.
\end{remark}

\begin{definition}[{\cite{Kelly}, see also \cite[Def.~A.3.3]{ASS-Book}, \cite[\S2]{KrauseKS}}]
\label{d:radcat}
Let $\cA$ be an additive category.
The ideal $\rad{\cA}$, called the \emph{radical} of $\cA$, is that for which $\radHom{\cA}{X}{Y}$ consists of the morphisms $f\colon X\to Y$ such that $\id_X-fg$ is invertible for all $g\colon Y\to X$.
\end{definition}

The ideal $\rad{\cA}$ is analogous to the Jacobson radical of an algebra; indeed, for any $X\in\cA$, the space $\radHom{\cA}{X}{X}$ coincides with the Jacobson radical of the endomorphism algebra of $X$.
It is also immediate from the definition that if $\cB\subset\cA$ is a full subcategory, then $\radHom{\cB}{X}{Y}=\radHom{\cA}{X}{Y}$ for all $X,Y\in\cB$.

When $\cA$ is Krull--Schmidt, the ideal $\rad{\cA}$ has a simpler description, with $\radHom{\cA}{X}{Y}$ consisting of the non-isomorphisms from $X$ to $Y$ when these objects are indecomposable \cite[Prop.~2.1(b)]{Bautista}.
Since $\radHom{\cA}{\blank}{\blank}$ is an additive bifunctor on the Krull--Schmidt category $\cA$, it can be computed on an arbitrary pair of objects using this description \cite[Lem.~3.4(b)]{ASS-Book}.

Let $M\in\Mod{\cA}$ and let $X,Z\in\cA$.
We write
\[M(Z)\radHom{\cA}{X}{Z}=\{M(\varphi)(m):m\in M(Z),\ \varphi\in\radHom{\cA}{X}{Z}\}.\]
It may seem more natural to write $\radHom{\cA}{X}{Z}M(Z)$ for this subspace, but our notation reflects the contravariance of $M$, as well as the fact that we compose functions right-to-left. Its advantage is made clearer when $M(Z)$ is itself a set of functions, as in \eqref{eq:radHom} below.

\begin{definition}
\label{d:radmod}
Let $\cA$ be a Krull--Schmidt category and $M$ an $\cA$-module.
Then the $\cA$-module $\rad{\cA}M$ is the subfunctor of $M$ defined by 
\begin{equation}
\label{eq:rad-module}
\rad{\cA} M(X)=\bigcup_{Z\in\cA}M(Z)\radHom{\cA}{X}{Z}
\end{equation}
on an object $X$, and on a morphism $\varphi\colon X\to Y$ by the restriction of $M(\varphi)\colon M(Y)\to M(X)$ to the appropriate subspaces.
\end{definition}

The fact that both $M$ and $\radHom{\cA}{\blank}{X}$ commute with finite direct sums (being $\bK$-linear functors) implies that $\rad{\cA}M(X)$ really is a subspace of $M(X)$:
\[M(Z)\radHom{\cA}{X}{Z}+M(Z')\radHom{\cA}{X}{Z'}\subset M(Z\dsum Z')\radHom{\cA}{X}{Z\dsum Z'}.\]
Moreover, $\rad{\cA}M(\varphi)$ is well-defined for $\varphi\colon X\to Y$, i.e.\ it takes values in $\rad{\cA}M(X)$, because $\radHom{\cA}{\blank}{\blank}$ is an ideal of $\cA$.

For objects $X$ and $Y$ in an additive category $\cA$, we define $\radHom[n]{\cA}{X}{Y}$ for $n\geq 2$ inductively by $\radHom[1]{\cA}{X}{Y}=\radHom{\cA}{X}{Y}$ and
\begin{equation}
\label{eq:radHom}
\radHom[n]{\cA}{X}{Y}=\bigcup_{Z\in\cA}\radHom[n-1]{\cA}{Z}{Y}\radHom{\cA}{X}{Z}.
\end{equation}
That is, an element of $\radHom[n]{\cA}{X}{Y}$ is a morphism which may be realised as a composition of $n$ morphisms from $\rad{\cA}$.
With this description, we may check that we also have
\[\radHom[n]{\cA}{X}{Y}=\bigcup_{Z\in\cA}\radHom{\cA}{Z}{Y}\radHom[n-1]{\cA}{X}{Z}.\]
By convention, we set $\radHom[0]{\cA}{X}{Y}=\Hom{\cA}{X}{Y}$.

We will frequently be interested in the case that $\cA$ is Krull--Schmidt and $\cB\subseteq\cA$ is a full and additively closed subcategory.
We then have $\radHom{\cB}{X}{Y}=\radHom{\cA}{X}{Y}$ for all $X,Y\in\cB$, but the inclusion $\radHom[n]{\cB}{X}{Y}\subseteq\radHom[n]{\cA}{X}{Y}$ can be strict for $n\geq2$.
For example, for a morphism to lie in $\radHom[2]{\cA}{X}{Y}$ it must be expressible as a composition $g\circ f$ for $f\in\radHom{\cA}{X}{Z}$ and $g\in\radHom{\cA}{Z}{Y}$, for some $Z\in\cA$, whereas to lie in $\radHom[2]{\cB}{X}{Y}$ we additionally require $Z\in\cB$.

For $M\in\Mod{\cA}$, one can check using \eqref{eq:rad-module} and \eqref{eq:radHom} that defining $\rad[n]{\cA}{M}$ recursively by $\rad[n]{\cA}{M}=\rad{\cA}\rad[n-1]{\cA}M$ (with $\rad[0]{\cA}{M}=M$) is equivalent to defining it directly by replacing $\radHom{\cA}{\blank}{\blank}$ by $\radHom[n]{\cA}{\blank}{\blank}$ in Definition~\ref{d:radmod}.
As a special case,
\begin{equation}
\label{eq:radfun}
\radfun[n]{\cA}X=\radHom[n]{\cA}{\blank}{X}
\end{equation}
for any $X\in\cA$ and $n\in\nat$.
As in the previous paragraph, if $M\in\Mod{\cA}$ and $\cB\subseteq\cA$ is a full and additively closed subcategory then we have an inclusion $\rad[n]{\cB}(M|_{\cB})\subseteq(\rad[n]{\cA}M)|_{\cB}$, but typically this is strict (even for $n=1$).

As a final extension of this notation, we write
\[\radHom[\infty]{\cA}{X}{Y}=\bigcap_{n\in\nat}\radHom[n]{\cA}{X}{Y},\quad \rad[\infty]{\cA}M(X)=\bigcap_{n\in\nat}\rad[n]{\cA}M(X)\]
for any $X,Y\in\cA$ and any $M\in\Mod{\cA}$.

\begin{example}
\label{eg:Kronecker}
The infinite radical $\radHom[\infty]{\cA}{X}{Y}$ can be non-trivial even when $\cA$ is a fairly benign category.
Indeed, when $\cA=\fpmod{A}$ is the category of finite-dimensional modules over a finite-dimensional algebra, the vanishing of $\radHom[\infty]{\cA}{\blank}{\blank}$ is equivalent to $A$ being representation-finite \cite[Thm.~3.1]{Auslander-RTAA2} (see also \cite[Cor.~1.8]{KS}), that is, to $\fpmod{A}$ being additively finite.
In particular, $\radHom[\infty]{\cA}{\blank}{\blank}\ne0$ whenever $\cA=\fpmod\bK Q$ for $Q$ a non-Dynkin acyclic quiver.
\end{example}

\begin{definition}
\label{d:simple-functor}
Assume $\cA$ is a Krull--Schmidt category. For each $X\in\indec{\cA}$, we write $\simpmod{\cA}{X}=\Yonfun{\cA}{X}/\radfun{\cA}{X}$.
\end{definition}

If $X,Y\in\indec{\cA}$ with $X\not\iso Y$, then $\radHom{\cA}{Y}{X}=\Hom{\cA}{Y}{X}$, so $\simpmod{\cA}{X}(Y)=0$. On the other hand, $\simpmod{\cA}{X}(X)=\Hom{\cA}{X}{X}/\radHom{\cA}{X}{X}\ne0$, because $\id_X$ represents a non-zero element.

\begin{proposition}[{\cite[Prop.~2.1(f)]{Bautista}}]
\label{p:KS-simples}
For a Krull--Schmidt category $\cA$, the representation $\simpmod{\cA}{X}$ is a simple object of $\Mod{\cA}$ for any $X\in\indec{\cA}$---that is, it has no proper subobjects---and all simple objects of $\Mod{\cA}$ are of this form.\qed
\end{proposition}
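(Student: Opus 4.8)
The plan is to work throughout with the explicit description of $\simpmod{\cA}{X}$ given by \eqref{eq:radfun}, which identifies $\radfun{\cA}{X}$ with $\radHom{\cA}{\blank}{X}$, so that $\simpmod{\cA}{X}(Y)=\Hom{\cA}{Y}{X}/\radHom{\cA}{Y}{X}$ for every $Y\in\cA$. First I would record the two evaluations that drive everything. If $Y\in\indec{\cA}$ and $Y\not\iso X$, then every morphism $Y\to X$ is a non-isomorphism, hence lies in $\radHom{\cA}{Y}{X}$ by \cite[Prop.~2.1(b)]{Bautista}, so $\simpmod{\cA}{X}(Y)=0$. On the other hand $\simpmod{\cA}{X}(X)=\End{\cA}{X}/\rad{}\End{\cA}{X}$, which is a division ring (anti-isomorphic to $\divalg{X}$) because $\End{\cA}{X}$ is local, $X$ being indecomposable in a Krull--Schmidt category. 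By additivity of the $\bK$-linear functor $\simpmod{\cA}{X}$, together with essential uniqueness of decompositions into indecomposables, $\simpmod{\cA}{X}(Y)$ is a free module over this division ring of rank the multiplicity of $X$ in $Y$.

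For the simplicity of $\simpmod{\cA}{X}$, let $0\ne N\subseteq\simpmod{\cA}{X}$ be a subfunctor. I would first observe that $N$ is additive: for a decomposition $Z=\bigoplus_i Z_i$ with inclusions $\iota_i$ and projections $\pi_i$, the relation $\sum_i \Yonfun{\cA}{X}(\pi_i)\Yonfun{\cA}{X}(\iota_i)=\id$ restricts to $N$, so $N(Z)=\bigoplus_i N(Z_i)$ compatibly with $\simpmod{\cA}{X}(Z)=\bigoplus_i\simpmod{\cA}{X}(Z_i)$. Since $N\ne 0$ and $\simpmod{\cA}{X}$ vanishes on indecomposables not isomorphic to $X$, this forces $N(X)\ne 0$. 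Now $N(X)$ is a nonzero $\End{\cA}{X}$-submodule of $\simpmod{\cA}{X}(X)$; the action is by precomposition and factors through the division ring $\simpmod{\cA}{X}(X)$, so $N(X)$ is a nonzero right submodule of that division ring, hence all of it. Additivity then upgrades this to $N(Y)=\simpmod{\cA}{X}(Y)$ for every $Y$, so $N=\simpmod{\cA}{X}$.

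For the converse, let $T$ be a simple $\cA$-module. As $T\ne 0$ is additive, $T(X)\ne 0$ for some $X\in\indec{\cA}$; fixing $0\ne t\in T(X)$, the Yoneda lemma gives $\eta\colon\Yonfun{\cA}{X}\to T$ with $\eta_X(\id_X)=t$, whose image is a nonzero subfunctor of $T$ and hence all of $T$, so $T\iso\Yonfun{\cA}{X}/K$ for a proper subfunctor $K$. The crux is to show that $\radfun{\cA}{X}$ is the unique maximal proper subfunctor of $\Yonfun{\cA}{X}$, i.e.\ that $K\subseteq\radfun{\cA}{X}$. Here $K(X)$ is a right ideal of $\Yonfun{\cA}{X}(X)=\End{\cA}{X}$, and it is proper: if it contained a unit $u$ then $\id_X=\Yonfun{\cA}{X}(u^{-1})(u)\in K(X)$, whence $h=\Yonfun{\cA}{X}(h)(\id_X)\in K(Z)$ for all $h\colon Z\to X$, contradicting properness of $K$; so $K(X)\subseteq\rad{}\End{\cA}{X}=\radfun{\cA}{X}(X)$ by locality. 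For $Y\in\indec{\cA}$ with $Y\not\iso X$ one has automatically $K(Y)\subseteq\Hom{\cA}{Y}{X}=\radHom{\cA}{Y}{X}=\radfun{\cA}{X}(Y)$, and then additivity of $K$ and of $\radfun{\cA}{X}=\radHom{\cA}{\blank}{X}$ (an additive bifunctor, \cite[Lem.~3.4(b)]{ASS-Book}) gives $K\subseteq\radfun{\cA}{X}$. Consequently $T=\Yonfun{\cA}{X}/K$ surjects onto $\Yonfun{\cA}{X}/\radfun{\cA}{X}=\simpmod{\cA}{X}$; since $T$ is simple and $\simpmod{\cA}{X}\ne 0$, this surjection is an isomorphism.

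The argument is conceptually light, so the one thing to keep straight is the contravariant variance of $\cA$-modules: it is essential that $\End{\cA}{X}$ acts on $\Yonfun{\cA}{X}(X)$ by precomposition, so that a subfunctor $K$ has $K(X)$ a \emph{right} ideal, and that the induced action on the quotient $\simpmod{\cA}{X}(X)$ descends to the division ring. The only genuinely load-bearing (if routine) point is that subfunctors of additive functors are additive, which is what licenses the reductions to indecomposable objects in both halves; granted that, everything else follows from locality of $\End{\cA}{X}$ for $X$ indecomposable and the Yoneda lemma, both of which are available to us.
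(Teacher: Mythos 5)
Your proof is correct.  The paper itself gives no argument here---the proposition is stated with a terminating \(\qed\) and delegated entirely to the citation of Bautista, Prop.~2.1(f)---so your writeup is a self-contained reconstruction rather than a reproduction of anything in the paper.  The argument is the expected one: additive subfunctors and quotient functors are determined by their values on indecomposable objects, which reduces both implications to the fact that \(\End{\cA}{X}\) is local for \(X\in\indec{\cA}\), and locality is exactly what makes \(\radfun{\cA}{X}\) the unique maximal proper subfunctor of \(\Yonfun{\cA}{X}\) (its value on \(X\) is the unique maximal right ideal, and its value on every other indecomposable is forced).  One typo to fix: in the additivity observation you wrote the resolution of the identity for \(\Yonfun{\cA}{X}\), but the functor whose subfunctor you are restricting it to is \(\simpmod{\cA}{X}\); it should read \(\sum_i\simpmod{\cA}{X}(\pi_i)\simpmod{\cA}{X}(\iota_i)=\id\).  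Since the identity holds for any additive contravariant functor, this does not affect the argument, but the mislabelling is confusing given that \(\Yonfun{\cA}{X}\) reappears with its correct meaning in the converse direction.
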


\subsection{Pseudocompactness}
\label{ss:pseudocompact}

Since there are notable examples (e.g.~\cite{Plamondon-ClustCat,JKS,PresslandPostnikov,KellerWu}) of Hom-infinite categorifications of cluster algebras, we do not restrict to Hom-finite categories in this paper.
This requires some additional technicalities, which can be safely ignored in the Hom-finite case.

\begin{definition}
\label{d:pseudocompact}
A \emph{topological $\bK$-vector space} is a $\bK$-vector space $V$ equipped with a topology for which addition and scalar multiplication are continuous.
We say that $V$ is \emph{pseudocompact} if it is Hausdorff and has a system $(U_i)_{i \in I}$ of open subspaces such that $\dim_{\bK}V/U_i<\infty$ for all $i\in I$ and the natural map $V\to\colim_I V/U_i$ is an isomorphism.
In this case we say that the system $(U_i)_{i\in I}$ \emph{exhibits} the pseudocompactness of $V$.

The category $\pcMod{\bK}$ has pseudocompact vector spaces as objects, with morphisms given by continuous linear maps.
We say that a $\bK$-linear category $\cA$ is pseudocompact if it is enriched in $\pcMod{\bK}$, that is, its Hom-spaces are pseudocompact topological vector spaces, and composition of morphisms is continuous.
A pseudocompact $\cA$-module is a contravariant functor $\cA\to\pcMod{\bK}$, and we denote the category of such by $\pcMod{\cA}$.
\end{definition}

In a pseudocompact $\bK$-linear category, the endomorphism algebra of any object is a pseudocompact (unital) $\bK$-algebra, as surveyed in \cite{IMacQ-survey}.
Further information concerning pseudocompact algebras and categories can be found in \cite{Gabriel-AbCats,Brumer,Simson,VdB-CY,KellerYang}.

\begin{example}
Let $Q$ be a (possibly infinite) quiver.
We define the $\bK$-linear \emph{complete path category} $\cpa{\bK}{Q}$ to be the Krull--Schmidt category whose indecomposable objects are the vertices of $Q$, with morphisms between them defined as follows.
First, for a pair of vertices $v,w\in Q_0$, let $P(v,w)$ be the $\bK$-vector space consisting of finite $\bK$-linear combinations of paths in $Q$ from $v$ to $w$, and let $J^n(v,w)\leq P(v,w)$ be the subspace spanned by those paths consisting of at least $n$ arrows. We may then define
\[\Hom{\cpa{\bK}{Q}}{v}{w}={\colim}_nP(v,w)/J^n(v,w).\]
Composition is induced from concatenation of paths by continuous extension.

If we assume additionally that $Q$ is locally finite, meaning that each vertex is incident with only finitely many arrows, then $P(v,w)/J^n(v,w)$ is finite-dimensional for all $v$, $w$ and $n$, its dimension being given by the number of paths from $v$ to $w$ of length at most $n$.
In this case $\cpa{\bK}{Q}$ is thus, by construction, a pseudocompact $\bK$-linear category in which the pseudocompactness of each $\Hom{\cpa{\bK}{Q}}{v}{w}$ is exhibited by the system $J^n(v,w)=\radHom[n]{\cpa{\bK}{Q}}{v}{w}$, for $n\geq0$.

If $Q$ is a finite quiver, then $\cpa{\bK}{Q}$ is nothing but the idempotent completion of the complete path algebra $A$ of $Q$, considering $A$ as a $\bK$-linear category with one object.
In other words, $\cpa{\bK}{Q}$ is equivalent to the category $\proj{\op{A}}$.

The definition of $\cpa{\bK}{Q}$ above may give unexpected (and arguably undesirable) results when applied to quivers which are not locally finite.
For example, if $Q$ has one vertex and a countably-infinite number of loops, then $\cpa{\bK}{Q}$ is not the power series ring on a countably-infinite number of variables (cf.\ \cite[Eg.~2.14]{IMacQ-survey}), but rather the subalgebra consisting of power series with only finitely many terms in each degree.
\end{example}

The Hausdorff condition in the definition of a pseudocompact vector space is sometimes omitted (and, on the other hand, sometimes already made part of the definition of a topological vector space).
For us it will be useful because of the next proposition.
It is equivalent to requiring that $\{0\}\subseteq V$ is closed, since any topological vector space $V$ has the property that $V/\overline{\{0\}}$ is Hausdorff in the quotient topology.

\begin{proposition}
\label{p:pc-abelian}
If $\cA$ is a $\bK$-linear category, then $\pcMod{\cA}$ is abelian (with kernels and cokernels computed pointwise).
\end{proposition}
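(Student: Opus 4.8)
The plan is to reduce everything to the classical fact that $\pcMod{\bK}$ is abelian, and then apply the general principle that a category of additive functors into an abelian category, with kernels and cokernels formed pointwise, is again abelian.

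First I would recall why $\pcMod{\bK}$ is abelian. It is clearly additive: the zero space is pseudocompact, and a finite product of pseudocompact vector spaces is pseudocompact for the product topology, giving finite biproducts. For a morphism $f\colon V\to W$, the kernel $f^{-1}(0)$ is closed in $V$ since $W$ is Hausdorff, and a closed subspace of a pseudocompact vector space is again pseudocompact; this subspace, with its inclusion, is the categorical kernel. Dually, $W/\overline{\image f}$ is a Hausdorff quotient of a pseudocompact space, hence pseudocompact, and is the categorical cokernel. The only non-formal point is that the canonical comparison from the coimage $V/\ker f$ to the image $\overline{\image f}=\ker(\coker f)$ is an isomorphism. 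Here one uses that the continuous image of a pseudocompact space in a Hausdorff topological vector space is closed, so that $\image f$ is already closed and $\image f=\overline{\image f}$, together with the ``open mapping'' fact that a continuous linear bijection of pseudocompact spaces is a homeomorphism. These are standard properties of pseudocompact (equivalently, linearly compact) vector spaces; I would cite \cite{Gabriel-AbCats,Simson,IMacQ-survey}. Alternatively, all of them follow at once from the anti-equivalence $\pcMod{\bK}\simeq\op{(\Mod{\bK})}$ sending $V$ to its continuous dual $\dual{V}$, under which $\pcMod{\bK}$ inherits abelianness from $\Mod{\bK}$.

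Next I would pass to $\pcMod{\cA}$. By definition an object is a $\bK$-linear contravariant functor $M\colon\cA\to\pcMod{\bK}$ and a morphism is a natural transformation with components in $\pcMod{\bK}$; this category is additive, with zero object the constant functor $0$ and biproducts formed pointwise. Given $\eta\colon M\to N$, set $K(X)=\ker(\eta_X)$ and $C(X)=\coker(\eta_X)$ in $\pcMod{\bK}$. For $\varphi\colon X\to Y$ in $\cA$, naturality makes $M(\varphi)$ restrict to a (continuous) map $K(Y)\to K(X)$ and induce a map $C(Y)\to C(X)$, and functoriality is immediate, so $K,C\in\pcMod{\cA}$. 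Since limits and colimits in a functor category are detected pointwise, the usual verification shows that $K$ with its inclusion into $M$ is a kernel of $\eta$, and $C$ with its projection from $N$ is a cokernel of $\eta$. Thus $\pcMod{\cA}$ has all kernels and cokernels, computed pointwise as claimed.

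Finally, I would check normality and conormality. If $\eta$ is a monomorphism in $\pcMod{\cA}$ then its pointwise kernel vanishes, so each $\eta_X$ is a monomorphism in $\pcMod{\bK}$, hence equals $\ker(\coker\eta_X)$ because $\pcMod{\bK}$ is abelian; as the relevant (co)kernels in $\pcMod{\cA}$ are pointwise, $\eta$ is the kernel of $N\to\coker\eta$. The epimorphism case is dual, and $\pcMod{\cA}$ is therefore abelian. The main obstacle in this argument is the single step of establishing that $\pcMod{\bK}$ is abelian --- precisely the coimage-to-image isomorphism, which rests on the closed-image and open-mapping properties of pseudocompact vector spaces; once that input is granted (by citation, or via the duality with $\Mod{\bK}$), the passage to functor categories is routine.
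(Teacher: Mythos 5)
Your proposal is correct and follows essentially the same route as the paper: reduce the claim to the abelianness of \(\pcMod{\bK}\), using the closed-image property of continuous linear maps between pseudocompact vector spaces (Lefschetz, Iusenko--MacQuarrie) to obtain cokernels and the coimage--image isomorphism, and then observe that the passage to a functor category with pointwise (co)kernels is formal. The paper's proof is terser (it leaves the remaining verifications to the reader and points to Proposition~\ref{p:fp-radpc} for the style of argument showing cokernels are pseudocompact), but the underlying ideas and key citations coincide with yours; your alternative via the anti-equivalence \(\pcMod{\bK}\simeq\op{(\Mod\bK)}\) is a valid shortcut the paper does not invoke.
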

\begin{proof}
This reduces to the fact that $\pcMod{\bK}$ is an abelian category.
By \cite[\S II.27]{Lefschetz} (see also \cite[Lem.~2.2]{IMacQ}), the image of a continuous linear map between pseudocompact vector spaces is closed, and hence the quotient by this image admits a natural topology in which the projection is continuous---we use here that pseudocompact vector spaces are Hausdorff.
One may then check that the cokernel is pseudocompact (see Proposition~\ref{p:fp-radpc} below for the style of argument), and so $\pcMod{\bK}$ admits cokernels (unlike the full category of topological vector spaces).
Verifying the remaining conditions is more straightforward.
\end{proof}

\begin{remark}
\label{r:open-int}
The kernel of the natural map $V\to\colim_IV/U_i$ is the intersection $\bigcap_{i\in I}U_i$, so that a necessary condition for a system $(U_i)_{i\in I}$ to exhibit pseudocompactness of $V$ is that $\bigcap_{i\in I}U_i=\{0\}$. For any $i,j\in I$ we have $\dim_\bK(U_i\cap U_j)\leq\dim_{\bK}U_i$.
Thus, if $V$ is finite-dimensional, there is always a finite subset $J\subseteq I$, of cardinality at most $\dim_{\bK}(V)$, such that $\bigcap_{i\in I}U_i=\bigcap_{j\in J}U_j$ is open.
In particular, if $V$ is finite-dimensional and pseudocompact, then $\{0\}\subseteq V$ is open (as well as closed).

As a corollary, if $V$ is pseudocompact, $W\leq V$ is closed and $\dim_{\bK}V/W<\infty$, then $W$ is also open, because it is the preimage under the quotient map of the open set $\{0\}$ in the pseudocompact vector space $V/W$.
\end{remark}

The category of ordinary (non-topological) vector spaces admits a fully-faithful embedding into the category of topological vector spaces by equipping each object with the discrete topology (for which linear maps, like all functions, are continuous).
This restricts to a fully-faithful embedding $\fd{\bK}\to\pcMod{\bK}$---that is, it makes every finite-dimensional vector space pseudocompact---since for a finite-dimensional vector space with the discrete topology the single open set $\{0\}$ exhibits pseudocompactness.
As a result, any Hom-finite $\bK$-linear category becomes pseudocompact on equipping all of its Hom-spaces with the discrete topology.
In the same way, we have a fully-faithful embedding $\lfd{\cA}\to\pcMod{\cA}$ for any $\bK$-linear category $\cA$.

The definition of a pseudocompact $\cA$-module does not strictly require $\cA$ itself to be pseudocompact.
However, under this assumption on $\cA$, the Yoneda functor $\Yonfun{\cA}$ takes values in $\pcMod{\cA}$ (and $\opYonfun{\cA}$ takes values in $\pcMod{\op{\cA}}$).
By Proposition~\ref{p:pc-abelian}, it then follows that every finitely presented $\cA$-module is pseudocompact.

Instead of using the discrete topology, we can also enrich any $\bK$-linear category $\cA$ in topological vector spaces by equipping each Hom-space $\Hom{\cA}{X}{Y}$ with the topology whose basis of open neighbourhoods of $0$ is given by the powers $\radHom[n]{\cA}{X}{Y}$ of the radical.
The fact that this leads to continuous composition may be proved analogously to the fact that multiplication is continuous in the $p$-adic topology on $\integ$ (or in the $I$-adic topology on any ring with ideal $I$), using that $\rad{\cA}$ is an ideal of $\cA$.
We call this the \emph{radical topology}\footnote{despite the tempting possibilities `rad-adic topology' or `r-adic-al topology'} on $\cA$.
In exactly the same way, we define the radical topology on an $\cA$-module $M$ to be that with basis of open neighbourhoods $\rad[n]{\cA}{M}$.

\begin{example}
\label{eg:add-fin-pc}
Continuing Proposition~\ref{p:cat-vs-alg}, if $\cA=\add{X}$ is an additively finite and Krull--Schmidt $\bK$-linear category, then a choice of topology on the Hom-spaces of $\cA$ making composition continuous is equivalent to the choice of a topology on the algebra $A=\op{\End{\cA}{X}}$ making multiplication continuous.
As a special case, the radical topology on $\cA$ corresponds to the $J$-adic topology on $A$, for $J=\rad{}{A}$ the Jacobson radical; we call this the radical topology on $A$.
The category $\cA$ is pseudocompact for a given topology if and only if $A$ is a pseudocompact algebra in the corresponding topology.
\end{example}

\begin{definition}\label{d:rad-pc}
We say that a category, algebra or module is \emph{radically pseudocompact} if it is pseudocompact in the radical topology.
\end{definition}

Unfortunately, even if $\cA$ is pseudocompact in some topology, it may fail to be radically pseudocompact.
Indeed, pseudocompactness in the radical topology requires the infinite radical to vanish as in Remark~\ref{r:open-int}, because $\radHom[\infty]{\cA}{X}{Y}$ is the intersection of all open neighbourhoods of $0\in\Hom{\cA}{X}{Y}$ in this topology.
We saw in Example~\ref{eg:Kronecker} that the infinite radical can be non-zero even for Hom-finite categories, which are always pseudocompact in the discrete topology. However, under some reasonable conditions we may deduce that a pseudocompact category is also radically pseudocompact.

\begin{definition}
\label{d:loc-finite}
A $\bK$-linear category $\cA$ is \emph{locally finite at $X\in\cA$} if $\Yonfun{\cA}{X}/\rad[2]{\cA}\Yonfun{\cA}{X}\in\fd{\cA}$ and $\opYonfun{\cA}{X}/\rad[2]{\cA}\opYonfun{\cA}{X}\in\fd{\op{\cA}}$.
We call $\cA$ \emph{locally finite} if it is locally finite at all of its objects.
\end{definition}

This is compatible with the corresponding terminology for quivers; cf.\ Proposition~\ref{p:loc-finite-quiver}.
If $\cA$ is Krull--Schmidt, then it is locally finite if and only if it is locally finite at each of its indecomposable objects.

Because the inclusion $\radfun[2]{\cB}{X}\subseteq\radfun[2]{\cA}{X}$ can be strict, for $\cB\subset\cA$ full and additively closed, it is possible that $\cA$ is locally finite but $\cB$ is not; indeed, this can happen even when $\cA$ is additively finite.
For example, let $\cA$ be the complete path category of the locally finite quiver
\[\begin{tikzcd}
\circ\arrow{r}&\bullet\ar[looseness=6, out=120, in=60]\arrow{r}&\circ
\end{tikzcd}\]
and let $\cB$ be the full and additively closed subcategory generated by the two white vertices.
In this case, there is an infinite-dimensional space of morphisms between the two indecomposable objects of $\cB$, and while every morphism between these two objects lies in $\rad[2]{\cA}$, we have $\rad[2]{\cB}=0$.

\begin{proposition}
\label{p:rad-compact-finite}
Assume that $\cA$ is pseudocompact, locally finite, additively finite and Krull--Schmidt.
Then $\cA$ is radically pseudocompact.
\end{proposition}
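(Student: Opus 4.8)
The goal is to show that under the four hypotheses (pseudocompact, locally finite, additively finite, Krull--Schmidt) the category $\cA$ is radically pseudocompact, i.e.\ each $\Hom{\cA}{X}{Y}$ is pseudocompact in the topology with basis of open neighbourhoods of $0$ given by the powers $\radHom[n]{\cA}{X}{Y}$. By Example~\ref{eg:add-fin-pc} and Proposition~\ref{p:cat-vs-alg}, since $\cA=\add{X}$ for a single object $X$ with $A=\op{\End{\cA}{X}}$, this is equivalent to showing that $A$ is pseudocompact in the $J$-adic topology, where $J=\rad{}{A}$. The plan is therefore to reduce the whole statement to a single assertion about the finite-dimensional algebra-like object $A$: namely that $J$ is \emph{topologically nilpotent} in the pseudocompact topology already carried by $A$ (equivalently, $\bigcap_n J^n=0$ and the $J^n$ form a basis of neighbourhoods of $0$), and that $A/J^n$ is finite-dimensional for each $n$.

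First I would unpack what local finiteness buys us. Local finiteness of $\cA$ at each indecomposable says precisely that $\Yonfun{\cA}{X}/\rad[2]{\cA}\Yonfun{\cA}{X}$ and its opposite-side analogue are finite-dimensional, which (as in the discussion around Proposition~\ref{p:loc-finite-quiver}, and using additive finiteness so there are only finitely many indecomposables) translates into the statement that $J/J^2$ is a finite-dimensional vector space. The standard argument then shows inductively, using that $J^{n}/J^{n+1}$ is a quotient of a finite tensor power of $J/J^2$ over $A/J$ (which is finite-dimensional since $\cA$ is Krull--Schmidt and additively finite, so $A/J$ is a finite product of division algebras, each finite-dimensional because of pseudocompactness --- cf.\ the argument in Proposition~\ref{p:KS}), that each $J^n/J^{n+1}$ is finite-dimensional, and hence $A/J^n$ is finite-dimensional for all $n$. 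Thus the system $(J^n)_{n\geq0}$ consists of finite-codimension subspaces.

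Next I would show $\bigcap_n J^n=0$ and that the $J^n$ are open and the natural map $A\to\colim_n A/J^n$ is an isomorphism, i.e.\ the radical topology coincides with (or is at least coarser than, and hence also exhibits pseudocompactness in place of) the given pseudocompact topology. Here is where the given pseudocompactness of $\cA$ enters: $A$ carries \emph{some} topology $\tau$ making it pseudocompact, exhibited by a system $(U_i)$ of open finite-codimension subspaces with $\bigcap U_i=0$. I would argue that each $J^n$ is closed in $\tau$: $J$ itself is closed because it is the intersection of the kernels of the (continuous) projections onto the finitely many simple quotients (using additive finiteness), and $J^n$ is closed as it is the image of a continuous multiplication map out of a pseudocompact space, closure of images of continuous maps between pseudocompact spaces being the key fact cited in the proof of Proposition~\ref{p:pc-abelian}. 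Since $J^n$ is closed of finite codimension, it is also open (Remark~\ref{r:open-int}). Then $\bigcap_n J^n$ is a closed subspace; I would show it is zero, most cleanly by noting it is an idempotent-stable ideal and applying Nakayama-type reasoning available for pseudocompact algebras, or by observing that $\bigcap_n J^n$ is a closed ideal contained in every $J^n$ and invoking that a pseudocompact module over a pseudocompact algebra with $\bigcap J^n M = \overline{0}$ forces it to vanish --- the precise citation would be to the pseudocompact-algebra literature (Gabriel, Brumer) referenced in the excerpt. Once $\bigcap_n J^n=0$ with all $J^n$ open of finite codimension, the system $(J^n)$ exhibits pseudocompactness of $A$ in the radical topology, and translating back through Proposition~\ref{p:cat-vs-alg} and Example~\ref{eg:add-fin-pc} gives that $\cA$ is radically pseudocompact.

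\textbf{Main obstacle.} The delicate point is proving $\bigcap_n J^n=0$: local finiteness gives finite-dimensionality of the graded pieces, but killing the infinite radical genuinely requires the interaction between the a priori topology $\tau$ and the radical filtration. The cleanest route is the closedness argument above (each $J^n$ is closed in $\tau$, hence so is their intersection, and a closed ideal inside all $J^n$ must be zero in a pseudocompact algebra whose radical is the intersection of the maximal open ideals); verifying that $J^n$ is $\tau$-closed is where the pseudocompactness hypothesis is really used, via the closed-image property of continuous maps of pseudocompact spaces. I expect the rest --- finite-dimensionality of $A/J^n$ and the formal verification that $(J^n)$ exhibits pseudocompactness --- to be routine bookkeeping.
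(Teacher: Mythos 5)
Your reduction to the algebra $A=\op{\End{\cA}{X}}$ via Example~\ref{eg:add-fin-pc} and Proposition~\ref{p:cat-vs-alg} is exactly how the paper's proof begins; the paper then disposes of the algebra-level claim in one line by citing Iusenko--MacQuarrie (\cite[Prop.~2.7]{IMacQ}), which says that a pseudocompact algebra satisfying a local finiteness condition on its radical is pseudocompact in the $J$-adic topology. So you are not taking a genuinely different route; you are sketching a direct proof of the cited result. That said, the sketch is sound, and the step you rightly flag as delicate --- showing $\bigcap_n J^n=0$ --- admits a cleaner argument than either of your tentative alternatives. For any open finite-codimension ideal $U$ in the ambient pseudocompact topology, the image of $J$ in the finite-dimensional algebra $A/U$ is contained in its Jacobson radical (this is the standard compatibility of the algebraic and topological radicals for pseudocompact algebras, as in Gabriel and Brumer), hence is nilpotent, so $J^n\subseteq U$ for $n\gg0$. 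Intersecting over a system $(U_i)$ exhibiting pseudocompactness immediately gives $\bigcap_n J^n\subseteq\bigcap_i U_i=0$, and the same containment shows the $J^n$ are cofinal among open finite-codimension subspaces, so $A\cong\colim_n A/J^n$ --- this cofinality is what actually verifies the colimit condition in Definition~\ref{d:pseudocompact}, which your proposal does not explicitly address. Your remaining bookkeeping (finite-dimensionality of $J/J^2$ from local finiteness, of $A/J$ from additive finiteness plus $\dimdivalg{X}<\infty$ as in Proposition~\ref{p:KS}, of $A/J^n$ by induction on the associated graded, and closedness of $J^n$ via finite generation of $J$ and the closed-image property) is correct and matches the intended content of the citation.
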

\begin{proof}
In this case $\cA=\add(X)$ is equivalent to the category $\proj{A}$ for the pseudocompact unital algebra $A=\op{\End{\cA}{X}}$.
Thus $\cA$ is radically pseudocompact if and only if $A$ is, and this follows from the local finiteness assumption as in \cite[Prop.~2.7]{IMacQ}.
\end{proof}

\subsection{Approximations}
\label{s:approximations}

\begin{definition}
Let $\cA$ be an additive category, let $\cB\subset\cA$ be additively closed, and let $X\in\cA$.
A \emph{right $\cB$-approximation} of $X$ is a map $\varphi\colon B\to X$ such that $B\in\cB$ and $\Yonfun{\cB}{\varphi}\colon\Yonfun{\cB}{B}\to\Yonfun{\cB}{X}$ is surjective.
Dually, a \emph{left $\cB$-approximation} of $X$ is a map $\psi\colon X\to B$ such that $B\in\cB$ and $\opYonfun{\cB}{\varphi}\colon\opYonfun{\cB}{X}\to\opYonfun{\cB}{B}$ is surjective. We say $\cB$ is \emph{covariantly finite} (in $\cA$) if every $X\in\cA$ has a right $\cB$-approximation, \emph{contravariantly finite} if every $X\in\cA$ has a left $\cB$-approximation, and \emph{functorially finite} if both properties hold.
\end{definition}

\begin{definition}
Let $\cA$ be an additive category.
A \emph{sink map} for $X\in\cA$ is a map $\varphi\colon Y\to X$ in $\cA$ such that the sequence
\begin{equation}
\label{eq:sink-map}
\begin{tikzcd}
\Yonfun{\cA}Y\arrow{r}{\Yonfun{\cA}\varphi}&\radHom{\cA}{\blank}{X}\arrow{r}&0
\end{tikzcd}
\end{equation}
of $\cA$-modules is exact, i.e.\ such that the image of the natural transformation $\Yonfun{\cA}{\varphi}$ is the subfunctor $\radHom{\cA}{\blank}{X}\leq\Yonfun{\cA}{X}$.
Dually, a \emph{source map} for $X\in\cA$ is a map $\psi\colon X\to Y$ such that the sequence
\begin{equation}
\label{eq:source-map}
\begin{tikzcd}
\opYonfun{\cA}{Y}\arrow{r}{\opYonfun{\cA}\psi}&\radHom{\cA}{X}{\blank}\arrow{r}&0
\end{tikzcd}
\end{equation}
of $\op{\cA}$-modules is exact.
We emphasise that when applying these definitions to a full subcategory $\cB\subseteq\cA$, we use the Yoneda functors $\Yonfun{\cB}$ and $\opYonfun{\cB}$ only on $\cB$---that is, the object $Y$ should be chosen in $\cB$, and sequences \eqref{eq:sink-map} and \eqref{eq:source-map} are only required to be exact as sequences of $\cB$-modules.
\end{definition}

\begin{remark}
The terminology of sink and source maps is taken from \cite{IyamaYoshino}, although here we do not require such maps to be minimal.
A right $\cB$-approximation is sometimes called a $\cB$-precover, and a left $\cB$-approximation a pre-envelope, with the prefix `pre' being dropped if the map is additionally minimal.
\end{remark}

The object $X\in\cA$ has a right $\cB$-approximation if and only if its image $\Yonfun{\cB}X$ under the contravariant Yoneda functor is finitely generated, because any surjection $\Yonfun{\cB}{B}\to\Yonfun{\cB}{X}$ with $B\in\cB$ must be of the form $\Yonfun{\cB}\varphi$ for some $\varphi\colon B\to X$, which is then a right $\cB$-approximation.
Dually, $X$ has a left $\cB$-approximation if and only if $\opYonfun{\cB}{X}$ is finitely generated.
This is the origin of the terminology of covariantly and contravariantly finite subcategories, due to Auslander and Smalø \cite{AS80,AS81}; note that this terminology refers not to the variance of the Yoneda functors, but to the (opposite) variance of their values as functors on $\cB$.
The existence of sink and source maps may be phrased similarly, as follows.

\begin{proposition}
\label{p:sink-v-fp-simp}
Let $\cA$ be a Krull--Schmidt category, and let $X\in\cA$.
Then the simple $\cA$-module $\simpmod{\cA}{X}$ is finitely presented if and only if $X$ admits a sink map, and the simple $\op{\cA}$-module $\simpmod{\op{\cA}}{X}$ is finitely presented if and only if $X$ admits a source map.
\end{proposition}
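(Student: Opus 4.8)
The plan is to reduce both equivalences, via Yoneda's lemma, to a single statement about finite generation of a radical subfunctor. Since $\simpmod{\cA}{X}=\Yonfun{\cA}{X}/\radfun{\cA}{X}$ by Definition~\ref{d:simple-functor} and $\radfun{\cA}{X}=\radHom{\cA}{\blank}{X}$ by \eqref{eq:radfun}, there is a canonical short exact sequence of $\cA$-modules $0\to\radHom{\cA}{\blank}{X}\to\Yonfun{\cA}{X}\to\simpmod{\cA}{X}\to0$. Here $\Yonfun{\cA}{X}$ is projective and finitely generated (Proposition~\ref{p:Yoneda-image}), so $\simpmod{\cA}{X}$ is automatically finitely generated, and the content lies in controlling its relations. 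First I would prove the elementary lemma that $\simpmod{\cA}{X}$ is finitely presented if and only if the subfunctor $\radHom{\cA}{\blank}{X}$ is finitely generated. One direction is immediate: given an epimorphism $\Yonfun{\cA}{Y}\onto\radHom{\cA}{\blank}{X}$, splicing it onto the canonical sequence yields an exact sequence $\Yonfun{\cA}{Y}\to\Yonfun{\cA}{X}\to\simpmod{\cA}{X}\to0$, which is a finite presentation as in Definition~\ref{d:fp}. For the converse, starting from any finite presentation $\Yonfun{\cA}{Y_1}\to\Yonfun{\cA}{Y_0}\to\simpmod{\cA}{X}\to0$, I would apply Schanuel's lemma in the abelian category $\Mod{\cA}$ (using that representables are projective) to the two projective surjections $\Yonfun{\cA}{X}\onto\simpmod{\cA}{X}$ and $\Yonfun{\cA}{Y_0}\onto\simpmod{\cA}{X}$, obtaining $\radHom{\cA}{\blank}{X}\oplus\Yonfun{\cA}{Y_0}\iso N\oplus\Yonfun{\cA}{X}$, where $N$ is the image of the first map of the presentation; since $N$ (a quotient of $\Yonfun{\cA}{Y_1}$) and $\Yonfun{\cA}{X}$ are finitely generated, so is the left-hand side, and hence so is its direct summand $\radHom{\cA}{\blank}{X}$.

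Granting the lemma, the first equivalence follows by translating the condition ``$\radHom{\cA}{\blank}{X}$ is finitely generated'' through Yoneda's lemma. If $X$ admits a sink map $\varphi\colon Y\to X$, then by definition the sequence \eqref{eq:sink-map} is exact, which says exactly that the image of $\Yonfun{\cA}{\varphi}\colon\Yonfun{\cA}{Y}\to\Yonfun{\cA}{X}$ is $\radHom{\cA}{\blank}{X}$; in particular this subfunctor is finitely generated. Conversely, if $\radHom{\cA}{\blank}{X}$ is finitely generated, choose an epimorphism $\Yonfun{\cA}{Y}\onto\radHom{\cA}{\blank}{X}$ and compose with the inclusion into $\Yonfun{\cA}{X}$; by Yoneda's lemma this composite is $\Yonfun{\cA}{\varphi}$ for a unique morphism $\varphi\colon Y\to X$, and $\image(\Yonfun{\cA}{\varphi})=\radHom{\cA}{\blank}{X}$, so $\varphi$ is a sink map. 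Combined with the lemma, this gives the equivalence between finite presentation of $\simpmod{\cA}{X}$ and the existence of a sink map for $X$.

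Finally, the second equivalence is obtained by passing to the opposite category: $\op{\cA}$ is again Krull--Schmidt, a source map for $X$ in $\cA$ is precisely a sink map for $X$ in $\op{\cA}$ (compare \eqref{eq:source-map} with \eqref{eq:sink-map}), and $\simpmod{\op{\cA}}{X}$ is the corresponding simple $\op{\cA}$-module, so the claim follows from the first equivalence applied to $\op{\cA}$. The only step that is not a purely formal consequence of Yoneda's lemma and the definitions is the Schanuel-type argument, which shows that finite presentation of $\simpmod{\cA}{X}$ does not depend on the choice of projective surjection onto it, so that the kernel of the canonical surjection $\Yonfun{\cA}{X}\onto\simpmod{\cA}{X}$ is finitely generated precisely when some finite presentation exists; this is routine in functor categories, and is the one place where a genuine argument is needed.
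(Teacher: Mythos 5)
Your proof is correct and takes essentially the same route as the paper: translate finite presentation of $\simpmod{\cA}{X}$ into finite generation of the radical subfunctor $\radfun{\cA}{X}$ via the canonical sequence, then match that to the existence of a sink map through Yoneda. The only difference is one of packaging: for the converse you invoke Schanuel's lemma explicitly to see that the kernel of the canonical projective cover is finitely generated, whereas the paper compresses this into a one-line appeal to $\Yonfun{\cA}{X}$ being the \emph{minimal} projective cover of $\simpmod{\cA}{X}$ together with Proposition~\ref{p:Yoneda-image}; your version is more self-contained, the paper's is terser, but both rest on the same fact.
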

\begin{proof}
We give the proof for sink maps, that for source maps being completely analogous.
If $\varphi\colon R\to X$ is a sink map, then the sequence
\[\begin{tikzcd}
\Yonfun{\cA}R\arrow{r}{\Yonfun{\cA}\varphi}&\Yonfun{\cA}{X}\arrow{r}&\simpmod{\cA}{X}\arrow{r}&0
\end{tikzcd}\]
is exact, since $\image({\Yonfun{\cA}{\varphi}})=\radfun{\cA}{X}$ is the kernel of the projection $\Yonfun{\cA}{X}\to\simpmod{\cA}{X}$.
Thus, it is a projective presentation of $\simpmod{\cA}{X}$, which is therefore finitely presented.

Conversely, since the minimal projective cover of $\simpmod{\cA}{X}$ is $\Yonfun{\cA}{X}$, if $\simpmod{\cA}{X}$ is finitely presented then by Proposition~\ref{p:Yoneda-image} it has a projective presentation of the form
\[\begin{tikzcd}
\Yonfun{\cA}R\arrow{r}{\Yonfun{\cA}\varphi}&\Yonfun{\cA}{X}\arrow{r}&\simpmod{\cA}{X}\arrow{r}&0
\end{tikzcd}\]
for some $\varphi\colon R\to X$.
It follows that the image of $\Yonfun{\cA}\varphi$ is precisely $\radfun{\cA}{X}$, and hence $\varphi\colon R\to X$ is a sink map.
\end{proof}

For $\cA$ an additive category and $X\in\cA$, write
\[\divalg{X}\defeq\op{\End{\cA}{X}}/\rad{}\op{\End{\cA}{X}}, \quad
\dimdivalg{X}=\dim\divalg{X}.\]
While we may have $\dimdivalg{X}=\infty$ in general, mild additional assumptions on $\cA$ will imply that $\dimdivalg{X}<\infty$; for example, this follows immediately if $\cA$ is radically pseudocompact, or locally finite at $X$.
If $X\in\cB\subseteq\cA$ for some full subcategory $\cB$, then $\End{\cA}{X}=\End{\cB}{X}$ and so the algebra $\divalg{X}$, and its dimension $\dimdivalg{X}$, are insensitive to whether we consider $X$ to be an object of $\cA$ or of $\cB$.
If $\cA$ is Krull--Schmidt and $X$ is indecomposable, then $\divalg{X}=\simpmod{\cA}{X}(X)$ is an associative division algebra, or skew-field, over $\bK$, because $\op{\End{\cA}{X}}$ is local.

\begin{remark}\label{r:field-dX}
The choice of field $\bK$ gives rise to constraints on the possible values of $\dimdivalg{X}$ for indecomposable objects $X$ in a $\bK$-linear Krull--Schmidt category $\cA$.
For example, if $\bK$ is algebraically closed and $\dimdivalg{X}<\infty$ then $\divalg{X}=\bK$, and so $\dimdivalg{X}=1$, for any indecomposable object $X$, since $\bK$ is the only finite-dimensional division algebra over $\bK$ in this case. However, if $\bK=\real$, then for each indecomposable object $X$ with $\dimdivalg{X}<\infty$ we have $\dimdivalg{X}\in\{1,2,4\}$, since $\divalg{X}\in\{\real,\complex,\mathbb{H}\}$ by the Frobenius theorem \cite{Frobenius}.
\end{remark}

We now describe minimal approximations in pseudocompact categories, in the case that the ground field $\bK$ is perfect: this means that if $\bK$ has positive characteristic $p$, then every element of $\bK$ is a $p$-th power.
In particular, any field with characteristic $0$ is perfect, as is any algebraically closed field.
Under this assumption, we may use the following proposition to see that the algebra $\divalg{X}$ can be made to act on any $\op{\End{\cA}{X}}$-module. While the action is not canonical, this does not affect our results.

\begin{proposition}
\label{p:Wedderburn-splitting}
Let $\cA$ be a Krull--Schmidt $\bK$-linear category, for $\bK$ a perfect field, and let $X\in\cA$. If $\op{\End{\cA}{X}}$ is pseudocompact and $\dimdivalg{X}<\infty$, then $\op{\End{\cA}{X}}\to\divalg{X}$ splits as an algebra homomorphism.
\end{proposition}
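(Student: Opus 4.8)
The statement is a version of the Wedderburn--Malcev principal theorem, adapted to the pseudocompact setting. The plan is to reduce to the classical statement and then handle the topological subtleties by passing to finite-dimensional quotients. Write $A = \op{\End{\cC}{X}}$, which by hypothesis is a pseudocompact $\bK$-algebra with Jacobson radical $J = \rad{}A$ satisfying $\dim_\bK A/J = \dimdivalg{X} < \infty$. Since $A$ is pseudocompact, there is a system of open two-sided ideals $(I_i)_{i\in I}$, cofinal among open ideals, with $\dim_\bK A/I_i < \infty$ and $A = \colim_I A/I_i$; moreover each $A/I_i$ is a finite-dimensional $\bK$-algebra, and we may assume (replacing $I_i$ by $I_i \cap J^{n}$ for suitable $n$, using that $J$ is closed since $A/J$ is finite-dimensional, cf.\ Remark~\ref{r:open-int}) that $I_i \subseteq J$ for all $i$. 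Then $A/I_i$ is a finite-dimensional algebra whose radical is $J/I_i$, with semisimple quotient $(A/I_i)/(J/I_i) \cong A/J = \divalg{X}$.

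First I would invoke the classical Wedderburn--Malcev theorem: for a finite-dimensional algebra $B$ over a perfect field $\bK$ with radical $\fr$, the quotient map $B \to B/\fr$ admits an algebra section, and any two such sections are conjugate by an element of $1 + \fr$ (see e.g.\ the treatment via Hochschild cohomology, which vanishes in degree $2$ for separable quotients over a perfect field). Applying this to each $B = A/I_i$ gives an algebra section $s_i \colon \divalg{X} \to A/I_i$ of the projection $\pi_i \colon A/I_i \to \divalg{X}$. The second step is to organise these sections into a compatible system. For $I_j \subseteq I_i$ there is a projection $p_{ij}\colon A/I_j \to A/I_i$ commuting with the maps to $\divalg{X}$; then $p_{ij}\circ s_j$ and $s_i$ are both algebra sections of $\pi_i$, hence conjugate by some $u_{ij} \in 1 + J/I_i$. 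The obstruction to strict compatibility is that these conjugating elements need not themselves be coherent; this is where the main work lies.

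The main obstacle, then, is the inverse limit argument: I want to choose the $s_i$ so that $p_{ij}\circ s_j = s_i$ for all $j \geq i$, so that they glue to a continuous algebra homomorphism $s\colon \divalg{X} \to \colim_I A/I_i = A$ splitting $A \to \divalg{X}$. The standard device is to note that the set of algebra sections of $\pi_i$ is a torsor under the group $1 + J/I_i$ (acting by conjugation), via the vanishing of the relevant degree-$2$ cohomology and the structure of degree-$1$ cohomology; the transition maps $p_{ij}$ induce a map of torsors over the group homomorphism $1 + J/I_j \to 1 + J/I_i$. Each group $1+J/I_i$ is a finite-dimensional unipotent-type group (a successive extension of finite-dimensional $\bK$-vector spaces, since $J/I_i$ is nilpotent and finite-dimensional), so the transition maps are surjective and the inverse system of non-empty torsors has surjective transition maps between finite-dimensional affine varieties; hence the inverse limit of torsors is non-empty. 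Choosing an element of this inverse limit yields the compatible family $(s_i)$, and passing to the colimit gives the desired continuous splitting $s\colon \divalg{X} \to A$. Continuity is automatic since $s$ is determined on the finite-dimensional quotients, and the image $s(\divalg{X})$ is a finite-dimensional, hence closed, subalgebra. Finally I would remark that when $\dimdivalg{X} = 1$, i.e.\ $\divalg{X} = \bK$, the statement is trivial (the unit map is the section), so the content is entirely in the case of larger division algebras, which can occur over non-algebraically-closed $\bK$ as in Remark~\ref{r:field-dX}.
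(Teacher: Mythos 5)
The paper's proof is a two-line citation: by a result of Iusenko and MacQuarrie (\cite[Thm.~4.6]{IMacQ-survey}), the splitting exists provided \(\divalg{X}\) is separable, which holds since it is finite-dimensional over a perfect field. You instead attempt to reprove the pseudocompact Wedderburn--Malcev theorem from scratch, which is a genuinely different (and much longer) route.

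The first two-thirds of your argument is sound. Reducing to a cofinal system of open two-sided ideals \(I_i\subseteq J\) with \(\rad(A/I_i)=J/I_i\) is fine, the classical Wedderburn--Malcev theorem on each \(A/I_i\) does apply since \(\divalg{X}\) is separable over the perfect field \(\bK\), and the surjectivity of the transition maps \(\mathrm{Sec}_j\to\mathrm{Sec}_i\) (not just the group maps \(1+J/I_j\to 1+J/I_i\)) does follow from Malcev conjugacy plus lifting the conjugator, though you should spell this step out.

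The gap is in the final inverse limit step. First, the section spaces are not torsors under \(1+J/I_i\): the conjugation action is transitive but not free, since the stabilizer of a section \(s_i\) is the centraliser of \(s_i(\divalg{X})\) in \(1+J/I_i\), which is the whole group when \(\divalg{X}=\bK\) and is generally nontrivial. More seriously, even for genuine torsors, an inverse system of non-empty sets with surjective transition maps over an \emph{arbitrary directed poset} can have empty limit; the usual \(\lim^1\)-vanishing results need a countable cofinal system, and pseudocompactness of \(A\) does not provide one (e.g.\ an uncountable product \(\prod_I\bK\) is pseudocompact with no countable cofinal family of open subspaces). Invoking ``finite-dimensional affine varieties'' does not repair this: scheme-theoretic non-emptiness of a cofiltered limit says nothing about \(\bK\)-points when \(\bK\) is not algebraically closed, and even then the relevant sets here are the \(\bK\)-points. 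The device that actually closes this gap in the literature is a form of linear compactness (Lefschetz: a filtered family of non-empty closed \emph{affine} subsets of a pseudocompact space has non-empty intersection), combined with a linearisation of the quadratic section equations, or a more structural argument via idempotent lifting; this is precisely the content of the Iusenko--MacQuarrie theorem the paper cites. As written, your proposal asserts the non-emptiness of the limit without supplying such a device, and this is the missing ingredient.
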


\begin{proof}
By a result of Iusenko and MacQuarrie \cite[Thm.~4.6]{IMacQ-survey} (based on \cite[Thm.~2.3.11]{Abe}), it suffices to show that $D_X=\op{\End{\cA}{X}}/\rad{}\op{\End{\cA}{X}}$ is separable.
But this holds since it is a finite-dimensional algebra over the perfect field $\bK$.
\end{proof}
\begin{remark}
When $\dim_{\bK}\op{\End{\cA}{X}}$ is finite-dimensional, Proposition~\ref{p:Wedderburn-splitting} is just the Wedderburn principal theorem, and when $\add{X}$ is locally finite it is due to Curtis \cite{Curtis}.

While the splitting from Proposition~\ref{p:Wedderburn-splitting} is not generally unique, any two splittings are conjugate, by an element of the form $1-x$ with $x\in\rad{}\op{\End{\cA}{X}}$; this is \cite[Thm.~17]{Eckstein} (see also \cite[Thm.~4.7]{IMacQ-survey}) in this generality, and due to Malcev \cite{Malcev} for finite-dimensional algebras.
In particular, if $\op{\End{\cA}{X}}$ is commutative, so $\divalg{X}$ is a product of field extensions of $\bK$, then there is a unique splitting of the form required by Proposition~\ref{p:Wedderburn-splitting}.
\end{remark}

If $\cA$ is $\bK$-linear for $\bK$ a perfect field, $\cB\subset\cA$ is additively closed and radically pseudocompact, and $X\in\cA$ has the property that the $\cB$-module $\Yonfun{\cB}{X}$ is radically pseudocompact, then in particular $\Yonfun{\cB}{X}/\rad{\cB}\Yonfun{\cB}{X}$ is finite-dimensional.
Moreover, $\dimdivalg{B}<\infty$ for all $B\in\cB$, since this follows from the radical pseudocompactness of $\cB$, so we may choose $\divalg{B}$-linear splittings of the quotient maps $\op{\End{\cA}{B}}\to\divalg{B}$ and $\Yonfun{\cB}{X}(B)\to(\Yonfun{\cB}{X}/\radfun{\cB}{X})(B)$ by Proposition~\ref{p:Wedderburn-splitting}.
The first of these splittings puts a right $\divalg{B}$-module structure on each object $B\in\cB$.
This allows us to define a map
\begin{equation}
\label{eq:right-app}
r\colon\bigdsum_{B\in\indec{\cB}}B\otimes_{\divalg{B}}\bigl((\Yonfun{\cB}{X}/\radfun{\cB}{X})(B)\bigr)\to X,
\end{equation}
where, on the summand of the domain indexed by $B\in\indec{\cB}$, we have $r(b\otimes\bar{\varphi})=\varphi(b)$, where $\bar{\varphi}\mapsto\varphi$ splits the quotient map $\Yonfun{\cB}{X}(B)\to(\Yonfun{\cB}{X}/\radfun{\cB}{X})(B)$.

Dually, if the $\op{\cB}$-module $\opYonfun{\cB}{X}$ is radically pseudocompact, we may similarly define
\begin{equation}
\label{eq:left-app}
\ell\colon X\to\bigdsum_{B\in\indec{\cB}}\bigl((\opYonfun{\cB}{X}/\rad{\op{\cB}}\opYonfun{\cB}{X})(B)\bigr)\otimes_{\op{\divalg{B}}}B
\end{equation}
by $\ell(x)=\sum\bar{\varphi}\otimes\varphi(x)$, where the sum is over a union of bases $\{\bar{\varphi}\}$ of the  various spaces $(\opYonfun{\cB}{X}/\rad{\op{\cB}}\opYonfun{\cB}{X})(B)$ appearing in the codomain, and $\bar{\varphi}\mapsto\varphi$ is a choice of splitting.
\begin{lemma}
\label{l:approx-construction}
Let $\cA$ be a pseudocompact Krull--Schmidt $\bK$-category for $\bK$ a perfect field, let $\cB\subseteq\cA$ be full, additively closed and radically pseudocompact, and let $X\in\cA$.
If the $\cB$-module $\Yonfun{\cB}{X}$ is radically pseudocompact, then the map $r$ from \eqref{eq:right-app} is a minimal right $\cB$-approximation of $X$.
Dually, if the $\op{\cB}$-module $\opYonfun{\cB}{X}$ is radically pseudocompact, then the map $\ell$ from \eqref{eq:left-app} is a minimal left $\cB$-approximation of $X$.
\end{lemma}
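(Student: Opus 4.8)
By duality it suffices to prove the statement about the map $r$ from \eqref{eq:right-app}; the statement about $\ell$ follows by applying this to $\op{\cA}$ and $\op{\cB}$, since $\opYonfun{\cB}{X}=\Yonfun{\op{\cB}}{X}$ and the opposite of a pseudocompact Krull--Schmidt category over a perfect field is again of the same type. So the plan is to verify three things about $r$: that its domain genuinely lies in $\cB$, that $r$ is a right $\cB$-approximation, and that $r$ is minimal. The first is a direct check: each summand $B\otimes_{\divalg{B}}(\Yonfun{\cB}{X}/\radfun{\cB}{X})(B)$ is a finite direct sum of copies of $B$, since $(\Yonfun{\cB}{X}/\radfun{\cB}{X})(B)$ is a finite-dimensional $\divalg{B}$-module (finite-dimensionality over $\bK$ coming from radical pseudocompactness of $\Yonfun{\cB}X$, and the $\divalg{B}$-structure coming from the splitting provided by Proposition~\ref{p:Wedderburn-splitting}, valid since $\dimdivalg{B}<\infty$ by radical pseudocompactness of $\cB$). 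A priori the outer direct sum over $\indec{\cB}$ is infinite, but $(\Yonfun{\cB}{X}/\radfun{\cB}{X})(B)=0$ for all but finitely many $B\in\indec{\cB}$: indeed $\Yonfun{\cB}X/\rad{\cB}\Yonfun{\cB}X$ is a finite-dimensional $\cB$-module, being a quotient of $\Yonfun{\cB}X$ by the first term in the system exhibiting its radical pseudocompactness, so it is supported on finitely many indecomposables. Hence the domain is a genuine object of $\cB$.

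\textbf{Approximation.} To see that $r$ is a right $\cB$-approximation, I would show that $\Yonfun{\cB}{r}\colon\Yonfun{\cB}(\text{domain})\to\Yonfun{\cB}{X}$ is surjective. By Nakayama-type reasoning adapted to the pseudocompact setting, it is enough to show that the induced map modulo the radical is surjective; since $\Yonfun{\cB}X$ is radically pseudocompact, any $\cB$-module map to it which is surjective after passing to the quotient by $\rad{\cB}$ is surjective, the argument being the standard completeness/inverse-limit one (a preimage is built as a convergent sum of corrections in successive radical layers, exactly as in the proof that a lift of generators generates a complete module). Now modulo $\rad{\cB}$, the summand indexed by $B$ maps, by construction of $r$ via the splitting $\bar\varphi\mapsto\varphi$, onto $(\Yonfun{\cB}X/\radfun{\cB}X)(B)=\simpmod{\cB}{B}(X)$-part; summing over $B$ gives all of $\Yonfun{\cB}X/\rad{\cB}\Yonfun{\cB}X$. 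So $\Yonfun{\cB}r$ is surjective and $r$ is a right $\cB$-approximation.

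\textbf{Minimality.} For minimality I would argue that $\Yonfun{\cB}r$ is a \emph{projective cover} of $\Yonfun{\cB}X$ in $\Mod{\cB}$: the domain is mapped onto $\Yonfun{\cB}X$, and the kernel is contained in the radical of the domain because, by the construction via splittings of the quotients $\Yonfun{\cB}X(B)\to(\Yonfun{\cB}X/\radfun{\cB}X)(B)$, the map $\Yonfun{\cB}r$ becomes an isomorphism after quotienting both sides by their radicals. A map of objects in a Krull--Schmidt category is minimal (has no nonzero split summand in its kernel, equivalently is right minimal) precisely when the corresponding map of projective $\cB$-modules under $\Yonfun{\cB}$ is a projective cover, so this gives minimality of $r$. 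I expect the main obstacle to be the surjectivity-mod-radical-implies-surjectivity step: one must be careful that the relevant "Nakayama lemma" really does hold in the radically pseudocompact setting, i.e.\ that the filtration by powers of $\rad{\cB}$ on $\Yonfun{\cB}X$ is separated and complete (which is exactly what radical pseudocompactness of the module $\Yonfun{\cB}X$ supplies, via $\rad[\infty]{\cB}\Yonfun{\cB}X=0$ and the inverse-limit description), and that the iterative lifting of a preimage through successive radical layers converges in the pseudocompact topology on the Hom-spaces of $\cB$. Once that convergence is in hand, the rest is the formal bookkeeping sketched above.
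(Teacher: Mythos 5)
Your proposal is correct and follows essentially the same approach as the paper's proof. The ``Nakayama-type reasoning'' you invoke for surjectivity is exactly the paper's explicit iterative construction of a lift \(\hat{f}_n\) with \(f-r\hat{f}_n\in\radfun[n]{\cB}{X}\), whose convergence in \(\Hom{\cB}{B}{R}\) is supplied by radical pseudocompactness of \(\cB\) (and whose limit is then checked to work using \(\rad[\infty]{\cB}\Yonfun{\cB}X=0\)); and your projective-cover packaging of minimality is a restatement of the paper's direct observation that no nonzero indecomposable summand of the domain is killed by \(r\), so the kernel lies in the radical.
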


\begin{proof}
We give the proof for $r$, that for $\ell$ being similar.
Because $\Yonfun{\cB}X$ is radically pseudocompact, we have in particular that $\Yonfun{\cB}{X}/\rad{\cB}\Yonfun{\cB}{X}\in\fd{\cB}$.
The domain of $r$ is thus a finite direct sum, and hence a well-defined object of $\cB$.

So let $B\in\cB$, and let $f\in\Hom{\cA}{B}{X}$.
To see that $r$ is a right $\cB$-approximation, we need to construct a map $\hat{f}\colon B\to R$ such that $f=r\hat{f}$.
To do this, we first claim that there are maps $\hat{f}_n\colon B\to R$, for each $n\geq 1$, such that $f-r\hat{f}_n\in\radfun[n]{\cB}{X}$ and, if $n\geq2$, we have $\hat{f}_n-\hat{f}_{n-1}\in\radHom[n-1]{\cB}{B}{R}$.
\medskip

To prove the claim, let $\bar{f}\in(\Yonfun{\cB}{X}/\radfun{\cB}{X})(B)$ be the projection of $f$.
In the case that $B$ is indecomposable, we may then define $\hat{f}_1\colon B\to R$, with image contained in the summand indexed by $B$, by $\hat{f}_1(b)=b\otimes\bar{f}$.
In general, we may use the Krull--Schmidt property to decompose $B$ into indecomposable summands and define $\hat{f}_1$ componentwise in the same way.
Then, by construction, $r\hat{f}_1$ also projects to $\bar{f}\in(\Yonfun{\cB}{X}/\radfun{\cB}{X})(B)$, and so $f-r\hat{f}_1\in\radfun{\cB}{X}(B)$, as required.

Now assume we have defined $\hat{f}_N$ with the required properties for some $N\geq1$.
Since $f-r\hat{f}_N\in\radfun[N]{\cB}{X}(B)$, there is an object $B'\in\cB$ such that $f-r\hat{f}_N=h g$ for some $g\in\radHom[N]{\cB}{B}{B'}$ and $h\in\Hom{\cA}{B'}{X}$. Construct $\hat{h}\colon B'\to R$ from $h$ in the same way as $\hat{f}_1$ was constructed from $f$, so that $h-r\hat{h}\in\radfun{\cB}{X}(B')$, and let $\hat{f}_{N+1}=\hat{f}_N+\hat{h} g$.

Now $f-r\hat{f}_{N+1}=f-r\hat{f}_N-r\hat{h} g=(h-r\hat{h}) g$, and this element lies in $\radfun[N+1]{\cB}{X}(B)$ since $g\in\radHom[N]{\cB}{B}{B'}$ and $h-r\hat{h}\in\radfun{\cB}{X}(B')$.
Moreover, $\hat{f}_{N+1}-\hat{f}_N=\hat{h} g\in\radHom[N]{\cB}{B}{R}$, and so $\hat{f}_{N+1}$ satisfies the necessary properties, proving the claim.
\medskip

Since $\cB$ is radically pseudocompact, the fact that $\hat{f}_{n}-\hat{f}_{n-1}\in\radHom[n-1]{\cB}{B}{R}$ for all $n\in\nat$ implies that there is a unique map $\hat{f}\colon B\to R$ such that $\hat{f}-\hat{f}_n\in\radHom[n]{\cB}{B}{R}$ for all $n$.
Postcomposing with $r$, we see that $r\hat{f}-r\circ\hat{f}_n\in\radfun[n]{\cB}{X}(B)$.
Since we also have $f-r\hat{f}_n\in\radfun[n]{\cB}{X}(B)$, it follows that $r\hat{f}-f\in\radfun[n+1]{\cB}{X}(B)$ for all $n\in\nat$.
Since $\Yonfun{\cB}{X}$ is radically pseudocompact, we have $\bigcap_{n\in\nat}\radfun[n]{\cB}{X}(B)=0$, and it follows that $r\hat{f}=f$.
Hence, $r$ is a right $\cB$-approximation.

To see that $r$ is minimal, we first show that no indecomposable summand of $R$ lies in $K=\ker{r}$.
Observe first that any such summand has the form $R'=\{b\otimes\bar{\varphi}:b\in B\}$ for a fixed $B\in\indec{\cB}$ and non-zero $\bar{\varphi}\in(\Yonfun{\cB}{X}/\radfun{\cB}{X})(B)$.
Let $\varphi\in\Yonfun{\cB}{X}(B)$ be the image of $\bar{\varphi}$ under the chosen splitting.
If $r(R')=0$ then $\varphi(b)=0$ for all $b\in B$, that is $\varphi=0$. But then $\bar{\varphi}=0$, and so $R'=0$.

The kernel $k\colon K\to R$ of $r$ thus lies in $\radHom{\cA}{K}{R}$.
If $r\alpha=r$ for some $\alpha\colon R\to R$, it follows that $r(1-\alpha)=0$, so $1-\alpha=ks$ for some $s\colon R\to K$.
Now $ks\in\radHom{\cA}{R}{R}=\rad{}\op{\End{\cA}{R}}$, and so $\alpha=1-ks$ is invertible by definition of the Jacobson radical.
\end{proof}

\begin{corollary}
\label{c:approx-count}
Under the assumptions of Lemma~\ref{l:approx-construction}, assume that $R\to X$ is a minimal right $\cB$-approximation of an indecomposable $X$, and let $[R:B]$ denotes the multiplicity of $B\in\indec{\cB}$ as a summand of $R$.
Then if $\dimdivalg{X}<\infty$, we have
\begin{equation}
\label{eq:d-divisibility}
\dimdivalg{X}\divides\dimdivalg{B}[R:B].
\end{equation}
\end{corollary}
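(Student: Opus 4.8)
The plan is to identify $R$, up to isomorphism, with the explicit minimal right $\cB$-approximation of Lemma~\ref{l:approx-construction}, to read off the multiplicities $[R:B]$ from that description, and then to study an action of $\op{\End{\cA}{X}}$ on a suitable quotient space. Since minimal right $\cB$-approximations are unique up to isomorphism over $X$, I may assume $r\colon R\to X$ is the map \eqref{eq:right-app}, so that $R=\bigdsum_{B'\in\indec{\cB}}B'\otimes_{\divalg{B'}}W_{B'}$ with $W_{B'}=(\Yonfun{\cB}{X}/\radfun{\cB}{X})(B')$. Each $W_{B'}$ is naturally a left $\divalg{B'}$-module: precomposition makes $\Hom{\cA}{B'}{X}$ a left $\op{\End{\cB}{B'}}$-module, and the subspace $\rad{\cB}\Yonfun{\cB}{X}(B')$ is stable under precomposition by morphisms in $\radHom{\cB}{B'}{B'}$ (the radical being an ideal of $\cB$), so $\radHom{\cB}{B'}{B'}$ acts as zero on $W_{B'}$. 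As $\divalg{B'}$ is a division algebra, $W_{B'}$ is free over it, whence $B'\otimes_{\divalg{B'}}W_{B'}\iso(B')^{\oplus m_{B'}}$ for $m_{B'}=\dim_{\divalg{B'}}W_{B'}$; therefore $[R:B]=m_B$ and $\dimdivalg{B}[R:B]=\dim_{\bK}W_B$. Here $\dim_{\bK}W_B<\infty$ because $\Yonfun{\cB}{X}$ is radically pseudocompact, and $\dimdivalg{B}<\infty$ because $\cB$ is. It thus remains to prove that $\dimdivalg{X}$ divides $\dim_{\bK}W_B$.

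Next I would equip $W_B$ with a right $A$-module structure, where $A=\op{\End{\cA}{X}}$: the algebra $A$ acts on $\Hom{\cA}{B}{X}$ by postcomposition, and the subspace $\rad{\cB}\Yonfun{\cB}{X}(B)=\bigcup_{B'\in\cB}\Hom{\cA}{B'}{X}\circ\radHom{\cB}{B}{B'}$ is carried into itself by postcomposition with any endomorphism of $X$ (a factorisation $h\circ\varphi$ becomes $(g\circ h)\circ\varphi$ with the same $\varphi\in\radHom{\cB}{B}{B'}$). So $W_B$ is a finite-dimensional right $A$-module. Because $X$ is indecomposable and $\cA$ is Krull--Schmidt, $A$ is a local ring, with Jacobson radical $J=\rad{}\op{\End{\cA}{X}}$ and $A/J=\divalg{X}$, which is finite-dimensional by hypothesis.

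Finally I would filter $W_B$ by the powers of $J$. The descending chain $W_B\supseteq W_BJ\supseteq W_BJ^2\supseteq\cdots$ of finite-dimensional subspaces stabilises at some $W_\infty$ with $W_\infty J=W_\infty$, and since $W_\infty$ is finitely generated and $J$ lies in the Jacobson radical, Nakayama's lemma forces $W_\infty=0$; hence $W_BJ^N=0$ for some $N$. Each subquotient $W_BJ^i/W_BJ^{i+1}$ is a finite-dimensional right $A$-module annihilated by $J$, hence a module over the division ring $\divalg{X}$, hence free over it, so $\dimdivalg{X}$ divides $\dim_{\bK}(W_BJ^i/W_BJ^{i+1})$. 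Summing over $i=0,\dots,N-1$ gives $\dimdivalg{X}\divides\dim_{\bK}W_B=\dimdivalg{B}[R:B]$, as required. There is no single hard step here; the point needing the most care is the reduction in the first paragraph --- extracting $[R:B]=\dim_{\divalg{B}}W_B$ correctly from the tensor description in \eqref{eq:right-app}, and recognising that it is $W_B$, and not $\Hom{\cA}{B}{X}$ itself, whose $\bK$-dimension one must show is divisible by $\dimdivalg{X}$. Once the $A$-module structure on $W_B$ is in place, the divisibility follows purely from $A$ being local, that is, from indecomposability of $X$.
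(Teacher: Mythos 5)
Your argument is correct and diverges from the paper's at the decisive step. Both proofs identify $R$ with the explicit approximation of Lemma~\ref{l:approx-construction}, giving $\dimdivalg{B}[R:B]=\dim_{\bK}W_B$ for $W_B=(\Yonfun{\cB}{X}/\radfun{\cB}{X})(B)$, and both regard $W_B$ as a right module over $A=\op{\End{\cA}{X}}$ via postcomposition. The difference is in how divisibility by $\dimdivalg{X}$ is extracted. The paper appeals to Proposition~\ref{p:Wedderburn-splitting} to choose a splitting $\divalg{X}\hookrightarrow A$ of the quotient map, so that $W_B$ restricts to a (free) $\divalg{X}$-module directly. You instead filter $W_B$ by the powers of $J=\rad{}A$, terminate the filtration via finite-dimensionality and Nakayama's lemma, and observe that each subquotient $W_BJ^i/W_BJ^{i+1}$ is a module over $A/J=\divalg{X}$, hence free. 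Your route is slightly longer but more elementary, using only that $A$ is local and $\dim_{\bK}A/J<\infty$; it avoids the Wedderburn splitting for $X$, and hence the perfectness hypothesis on $\bK$, at this step (though both remain necessary for Lemma~\ref{l:approx-construction} itself, so the overall hypotheses of the corollary cannot be weakened this way). Both arguments are sound.
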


\begin{proof}
The vector space $(\Yonfun{\cB}{X}/\radfun{\cB}{X})(B)$ is finite-dimensional because $\Yonfun{\cB}{X}$ is radically pseudocompact, and it is both a left $\divalg{B}$-module and right $\divalg{X}$-module.
The $\divalg{X}$-module structure is via a choice of splitting as in Proposition~\ref{p:Wedderburn-splitting}; here we use that $\dimdivalg{X}<\infty$.
On the other hand, the $\divalg{B}$-module structure is choice-free, since $\rad{}\op{\End{\cA}{B}}$ is in the kernel of the left action of $\op{\End{\cA}{B}}$ on $(\Yonfun{\cB}{X}/\radfun{\cB}{X})(B)$.
Because $\divalg{B}$ and $\divalg{X}$ are division algebras, since both $B$ and $X$ are indecomposable, both of these module structures are free.

Since any two minimal right $\cB$-approximations have isomorphic domains, we have
\[[R:B]=\rank_{\divalg{B}}\bigl((\Yonfun{\cB}{X}/\radfun{\cB}{X})(B)\bigr)\]
by Lemma~\ref{l:approx-construction}.
Since $(\Yonfun{\cB}{X}/\radfun{\cB}{X})(B)$ is free over $\divalg{B}$ it follows that
\[\dimdivalg{B}[R:B]=\dim_{\bK}\bigl((\Yonfun{\cB}{X}/\radfun{\cB}{X})(B)\bigr).\]
As $(\Yonfun{\cB}{X}/\radfun{\cB}{X})(B)$ is also free over $\divalg{X}$, this dimension is divisible by $\dimdivalg{X}$.
\end{proof}

We also state a version of Lemma~\ref{l:approx-construction} for sink and source maps, but omit the proof since it is almost identical.

\begin{lemma}
\label{l:sink-map-construction}
Let $\cA$ be a Krull--Schmidt $\bK$-category for $\bK$ a perfect field.
Assume $\cA$ is radically pseudocompact and locally finite at $X$.
Then $X$ admits a minimal sink map
\[r\colon\bigdsum_{A\in\indec{\cA}}A\otimes_{\divalg{A}}\bigl(\radHom{\cA}{A}{X}/\radHom[2]{\cA}{A}{X}\bigr)\to X,\]
defined analogously to \eqref{eq:right-app}, and a minimal source map
\[\ell\colon X\to\bigdsum_{A\in\indec{\cA}}\bigl(\radHom{\cA}{X}{A}/\radHom[2]{\cA}{X}{A}\bigr)\otimes_{\op{\divalg{A}}}A\]
analogous to \eqref{eq:left-app}.\qed
\end{lemma}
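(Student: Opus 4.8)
The plan is to follow the proof of Lemma~\ref{l:approx-construction} almost verbatim, taking for the subcategory ``$\cB$'' the whole category $\cA$ and replacing the module $\Yonfun{\cA}{X}$ by its radical $\radfun{\cA}{X}=\radHom{\cA}{\blank}{X}$, so that the quotient $\radfun{\cA}{X}/\radfun[2]{\cA}{X}$ has value $\radHom{\cA}{A}{X}/\radHom[2]{\cA}{A}{X}$ at $A$, matching the domain in the statement. I would prove the sink map assertion in detail and deduce the source map assertion by applying it to $\op{\cA}$, which is again a radically pseudocompact Krull--Schmidt $\bK$-category that is locally finite at $X$ (the second half of Definition~\ref{d:loc-finite}), after unwinding the identifications $\radHom[n]{\op{\cA}}{X}{A}=\radHom[n]{\cA}{X}{A}$ and ``$\divalg{A}$ computed in $\op{\cA}$'' $=\op{\divalg{A}}$. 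To set up $r$, note first that local finiteness at $X$ gives $\Yonfun{\cA}{X}/\radfun[2]{\cA}{X}\in\fd{\cA}$, hence its submodule $\radfun{\cA}{X}/\radfun[2]{\cA}{X}$ also lies in $\fd{\cA}$; thus $\radHom{\cA}{A}{X}/\radHom[2]{\cA}{A}{X}$ is finite-dimensional for every $A\in\indec{\cA}$ and vanishes for all but finitely many $A$, making the displayed direct sum a genuine object $R$ of $\cA$. Since $\cA$ is radically pseudocompact we have $\dimdivalg{A}<\infty$ for all $A$, so Proposition~\ref{p:Wedderburn-splitting} (using that $\bK$ is perfect) supplies $\divalg{A}$-linear splittings of $\op{\End{\cA}{A}}\to\divalg{A}$ and of $\radHom{\cA}{A}{X}\to\radHom{\cA}{A}{X}/\radHom[2]{\cA}{A}{X}$, which define $r$ exactly as in \eqref{eq:right-app} via $r(a\otimes\bar\varphi)=\varphi(a)$, where $\bar\varphi\mapsto\varphi$ denotes the second splitting.

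The substantive step is to verify that $r$ is a sink map, i.e.\ that $\image(\Yonfun{\cA}{r})=\radfun{\cA}{X}$. One inclusion is immediate: each component $\varphi$ of $r$ lies in $\radHom{\cA}{A}{X}$, and $\rad{\cA}$ is an ideal, so $\image(\Yonfun{\cA}{r})\subseteq\radfun{\cA}{X}$. For the reverse inclusion I would fix $B\in\cA$ and $f\in\radHom{\cA}{B}{X}$ and, exactly as in the proof of Lemma~\ref{l:approx-construction}, construct maps $\hat f_n\colon B\to R$ for $n\geq1$ with $f-r\hat f_n\in\radHom[n+1]{\cA}{B}{X}$ and $\hat f_{n+1}-\hat f_n\in\radHom[n]{\cA}{B}{R}$: $\hat f_1$ is obtained by projecting $f$ into $(\radfun{\cA}{X}/\radfun[2]{\cA}{X})(B)$ (after decomposing $B$ into indecomposables) and lifting through the tautological components of $r$, and $\hat f_{n+1}$ is obtained by writing $f-r\hat f_n=g'h'$ with $h'\in\radHom[n]{\cA}{B}{B''}$ and $g'\in\radHom{\cA}{B''}{X}$, then lifting $g'$ in the same way. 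Because $\cA$ is radically pseudocompact, $\Hom{\cA}{B}{R}$ is complete in its radical topology, so the $\hat f_n$ converge to a map $\hat f\colon B\to R$ with $\hat f-\hat f_n\in\radHom[n]{\cA}{B}{R}$ for all $n$; then $f-r\hat f\in\bigcap_n\radHom[n+1]{\cA}{B}{X}=0$ since $\Hom{\cA}{B}{X}$ is Hausdorff in its radical topology. This convergence argument is the only non-formal point, and it is precisely where radical pseudocompactness of $\cA$ is essential — both for completeness of $\Hom{\cA}{B}{R}$ and for the vanishing of $\radHom[\infty]{\cA}{B}{X}$; I expect it to be the main thing requiring care, although it transcribes the corresponding part of Lemma~\ref{l:approx-construction} directly.

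Finally, minimality of $r$ is proved just as there: every indecomposable summand of $R$ has the form $A\otimes\bar\varphi$ with $A\in\indec{\cA}$ and $\bar\varphi\neq0$, and $r$ restricts on it to the lift $\varphi$ of $\bar\varphi$, so if this summand lay in $\ker r$ then $\varphi=0$, hence $\bar\varphi=0$, a contradiction. Thus the inclusion $\ker r\to R$ is a radical morphism, so whenever $r\alpha=r$ for $\alpha\colon R\to R$ the difference $1-\alpha$ factors through $\ker r$ and hence lies in $\rad{}\op{\End{\cA}{R}}$, making $\alpha$ invertible; therefore $r$ is minimal. Applying everything above to the category $\op{\cA}$ then yields the asserted minimal source map $\ell$, completing the proof.
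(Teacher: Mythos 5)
Your proof is correct and is precisely the approach the paper has in mind: the paper omits the proof of Lemma~\ref{l:sink-map-construction} entirely, stating only that it is ``almost identical'' to Lemma~\ref{l:approx-construction}, and your write-up carries out exactly that transcription (replacing \(\Yonfun{\cB}{X}\) by \(\radfun{\cA}{X}\), using local finiteness at \(X\) to make the domain of \(r\) a finite direct sum, radical pseudocompactness of \(\cA\) for convergence of the \(\hat f_n\) and for Hausdorffness, and passing to \(\op{\cA}\) for the source map). The extra care you take over the degree-shift in the iterative estimate \(f - r\hat f_n \in \radHom[n+1]{\cA}{B}{X}\) and over why \(\radfun{\cA}{X}/\radfun[2]{\cA}{X}\in\fd{\cA}\) is exactly the detail the paper is implicitly relying on.
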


When applying Lemma~\ref{l:approx-construction} in practice, the following result is useful.

\begin{proposition}
\label{p:fp-radpc}
If $\cA$ is radically pseudocompact, then so is any $M\in\fpmod{\cA}$.
\end{proposition}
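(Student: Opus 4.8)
The plan is to use the finite presentation of $M$ to realise it objectwise as a cokernel of a continuous map between radically pseudocompact vector spaces, and then invoke the fact that $\pcMod{\bK}$ has cokernels (the relevant half of Proposition~\ref{p:pc-abelian}).

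First I would fix a projective presentation $\Yonfun{\cA}{Y}\xrightarrow{\Yonfun{\cA}{f}}\Yonfun{\cA}{X_0}\xrightarrow{p}M\to0$ as in Definition~\ref{d:fp}, where $f\colon Y\to X_0$ is the morphism corresponding under the Yoneda lemma to $\Yonfun{\cA}{f}$. Equipping each $\Hom{\cA}{X}{Z}$ with the radical topology makes $\cA$ a pseudocompact $\bK$-linear category by hypothesis (continuity of composition being the statement that $\rad{\cA}$ is an ideal, exactly as for the $I$-adic topology on a ring), so $\Yonfun{\cA}{X_0}$ and $\Yonfun{\cA}{Y}$ lie in $\pcMod{\cA}$; by \eqref{eq:radfun} the radical topology on the $\cA$-module $\Yonfun{\cA}{Z}$ agrees objectwise with the radical topology on the Hom-spaces of $\cA$.

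The key bookkeeping step is to check that
\[\rad[n]{\cA}M(X)=p_X\bigl(\radHom[n]{\cA}{X}{X_0}\bigr)\]
for all $X\in\cA$ and all $n\in\nat$. For $n=1$ this is immediate from Definition~\ref{d:radmod} together with naturality of $p$, surjectivity of each $p_Z\colon\Hom{\cA}{Z}{X_0}\to M(Z)$, and the ideal property of $\rad{\cA}$; the general case follows by induction (equivalently, directly, using the variant of Definition~\ref{d:radmod} with $\radHom{\cA}{\blank}{\blank}$ replaced by $\radHom[n]{\cA}{\blank}{\blank}$). Two consequences follow. First, $M(X)/\rad[n]{\cA}M(X)$ is a quotient of the finite-dimensional space $\Hom{\cA}{X}{X_0}/\radHom[n]{\cA}{X}{X_0}$, hence finite-dimensional. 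Second, identifying $M(X)=\Hom{\cA}{X}{X_0}/\ker(p_X)$, the radical topology on $M(X)$ coincides with the quotient topology, since the images of the basic open subspaces $\radHom[n]{\cA}{X}{X_0}$ are precisely the $\rad[n]{\cA}M(X)$.

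Finally I would observe that $\ker(p_X)$ equals the image of the continuous map $f_*\colon\Hom{\cA}{X}{Y}\to\Hom{\cA}{X}{X_0}$, and is therefore closed, this being the ingredient of the proof of Proposition~\ref{p:pc-abelian}. Hence $M(X)$, with its radical topology, is precisely the cokernel of $f_*$ computed in $\pcMod{\bK}$, so it is Hausdorff (equivalently, $\rad[\infty]{\cA}M(X)=0$) and pseudocompact; the structure maps $M(\varphi)$ are continuous because they are induced by passing to quotients from those of $\Yonfun{\cA}{X_0}$ (alternatively, $M$ is the cokernel of $\Yonfun{\cA}{f}$ in the abelian category $\pcMod{\cA}$). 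Thus $M$ is a pseudocompact $\cA$-module in the radical topology, i.e.\ radically pseudocompact as in Definition~\ref{d:rad-pc}. I expect the one genuine obstacle — and the reason finite \emph{presentation} is required rather than mere finite generation — to be the closedness of $\ker(p_X)$: without a finite generating set for $\ker(p)$ one cannot present this subspace as the image of a continuous map of pseudocompact spaces, and then $M$ could fail to be Hausdorff in the radical topology.
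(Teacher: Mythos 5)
Your proof is correct, but it takes a genuinely different route from the paper's. The paper also starts from a finite presentation \(P_1\to P_0\to M\to 0\), but argues at the level of modules: it forms, for each \(n\), the exact sequence of finite-level quotients
\[P_1/\rad[n]{\cA}P_1\to P_0/\rad[n]{\cA}P_0\to M/\rad[n]{\cA}M\to 0,\]
reads off \(\dim_\bK M/\rad[n]{\cA}M<\infty\) from radical pseudocompactness of the projective module \(P_0\), invokes the Mittag--Leffler condition to preserve exactness of this family of sequences under taking inverse limits over \(n\), and extracts the isomorphism \(M\isoto\colim_n M/\rad[n]{\cA}M\) by comparing with a commuting diagram in which the corresponding maps for \(P_0\) and \(P_1\) are isomorphisms. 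You argue objectwise instead, identifying \(\rad[n]{\cA}M(X)=p_X(\radHom[n]{\cA}{X}{X_0})\) --- a fact the paper also uses, implicitly, in order for the finite-level sequence above to be exact at \(P_0/\rad[n]{\cA}P_0\) --- deducing that the radical topology on \(M(X)\) is the quotient topology from \(\Hom{\cA}{X}{X_0}\), and then invoking the existence and pseudocompactness of cokernels in \(\pcMod{\bK}\), which works because \(\ker(p_X)=\image(f_*)\) is the image of a continuous map and hence closed. Your version is logically sound and has the virtue of making the role of finite (rather than merely finitely generated) presentation completely transparent: it is exactly what makes \(\ker(p_X)\) closed and hence \(M(X)\) Hausdorff, as you note at the end. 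One caveat worth flagging is that the paper's proof of Proposition~\ref{p:pc-abelian} explicitly defers the check that cokernels in \(\pcMod{\bK}\) are pseudocompact to ``the style of argument'' in Proposition~\ref{p:fp-radpc}; if your proof were to replace the one in the paper, that Mittag--Leffler argument would need to be written out in the proof of Proposition~\ref{p:pc-abelian} to avoid an apparent circularity, although neither result mathematically presupposes the other.
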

\begin{proof}
Let $M\in\fpmod{\cA}$, with projective presentation $P_1\to P_0\to M\to 0$. Then for any $n\in\nat$, there is an exact sequence
\[\begin{tikzcd}
P_1/\rad[n]{\cA}P_1\arrow{r}&P_0/\rad[n]{\cA}P_0\arrow{r}&M/\rad[n]{\cA}M\arrow{r}&0.
\end{tikzcd}\]
Since $\cA$ is radically pseudocompact, so is the projective $\cA$-module $P_0$ (which lies in the image of $\Yonfun{\cA}$), and so it follows that $\dim_{\bK}M/\rad[n]{\cA}M\leq\dim_{\bK}P_0/\rad[n]{\cA}P_0<\infty$.
Moreover, the terms in this exact sequence form inverse systems which have surjective morphisms, and so in particular satisfy the Mittag-Leffler condition.
The sequence thus remains exact under taking colimits, and so we see from the commutative diagram
\[\begin{tikzcd}[row sep=15pt]
P_1\arrow{r}\arrow{d}{\vsim}&P_0\arrow{r}\arrow{d}{\vsim}&M\arrow{r}\arrow{d}&0\\
\colim_nP_1/\rad[n]{\cA}P_1\arrow{r}&\colim_nP_0/\rad[n]{\cA}P_0\arrow{r}&\colim_nM/\rad[n]{\cA}M\arrow{r}&0.
\end{tikzcd}\]
that $M\isoto\colim_nM/\rad[n]{\cA}M$ is radically pseudocompact.
\end{proof}

\sectionbreak

\footnotesize

\bibliographystyle{habbrv}
\bibliography{biblio}\label{references}

@article {Amiot,
    AUTHOR = {Amiot, Claire},
     TITLE = {Cluster categories for algebras of global dimension $2$ and
              quivers with potential},
   JOURNAL = {Ann. Inst. Fourier (Grenoble)},
  FJOURNAL = {Universit\'e de Grenoble. Annales de l'Institut Fourier},
    VOLUME = {59},
      YEAR = {2009},
    NUMBER = {6},
     PAGES = {2525--2590},
      ISSN = {0373-0956},
     CODEN = {AIFUA7},
   MRCLASS = {16E35 (13F60 16E45 16G20)},
  MRNUMBER = {2640929 (2011c:16026)},
MRREVIEWER = {Gregoire Dupont},
       URL = {http://aif.cedram.org/item?id=AIF_2009__59_6_2525_0},
  eprint = {arXiv:0805.1035},
}

@article {Auslander-RTAA2,
    AUTHOR = {Auslander, Maurice},
     TITLE = {Representation theory of {A}rtin algebras. {II}.},
   JOURNAL = {Comm. Algebra},
  FJOURNAL = {Communications in Algebra},
    VOLUME = {1},
      YEAR = {1974},
     PAGES = {269--310},
      ISSN = {0092-7872},
   MRCLASS = {16A46 (18G05)},
  MRNUMBER = {0349747},
MRREVIEWER = {S. S. Page},
}

@article {AS80,
    AUTHOR = {Auslander, M. and Smal\o , Sverre O.},
     TITLE = {Preprojective modules over {A}rtin algebras},
   JOURNAL = {J. Algebra},
  FJOURNAL = {Journal of Algebra},
    VOLUME = {66},
      YEAR = {1980},
    NUMBER = {1},
     PAGES = {61--122},
      ISSN = {0021-8693},
   MRCLASS = {16A64},
  MRNUMBER = {591246},
MRREVIEWER = {V. Dlab},
       DOI = {10.1016/0021-8693(80)90113-1},
       URL = {https://doi-org.ezproxy.lib.gla.ac.uk/10.1016/0021-8693(80)90113-1},
}

@article {AS81,
    AUTHOR = {Auslander, Maurice and Smal\o , Sverre O.},
     TITLE = {Almost split sequences in subcategories},
   JOURNAL = {J. Algebra},
  FJOURNAL = {Journal of Algebra},
    VOLUME = {69},
      YEAR = {1981},
    NUMBER = {2},
     PAGES = {426--454},
      ISSN = {0021-8693},
   MRCLASS = {16A64},
  MRNUMBER = {617088},
MRREVIEWER = {K. W. Roggenkamp},
       DOI = {10.1016/0021-8693(81)90214-3},
       URL = {https://doi-org.ezproxy.lib.gla.ac.uk/10.1016/0021-8693(81)90214-3},
}

@book {ASS-Book,
    AUTHOR = {Assem, Ibrahim and Simson, Daniel and Skowro{\'n}ski, Andrzej},
     TITLE = {Elements of the representation theory of associative algebras.
              {V}ol. 1},
    SERIES = {London Mathematical Society Student Texts},
    VOLUME = {65},
 PUBLISHER = {Cambridge University Press},
   ADDRESS = {Cambridge},
      YEAR = {2006},
     PAGES = {x+458},
      ISBN = {978-0-521-58423-4; 978-0-521-58631-3; 0-521-58631-3},
   MRCLASS = {16G10 (16-02)},
  MRNUMBER = {2197389 (2006j:16020)},
MRREVIEWER = {P. Donovan},
       DOI = {10.1017/CBO9780511614309},
       URL = {http://dx.doi.org/10.1017/CBO9780511614309},
}

@article {Bautista,
    AUTHOR = {Bautista, Raymundo},
     TITLE = {Irreducible morphisms and the radical of a category},
   JOURNAL = {An. Inst. Mat. Univ. Nac. Aut\'{o}noma M\'{e}xico},
  FJOURNAL = {Anales del Instituto de Matem\'{a}ticas. Universidad Nacional
              Aut\'{o}noma de M\'{e}xico},
    VOLUME = {22},
      YEAR = {1982},
     PAGES = {83--135 (1983)},
      ISSN = {0185-0644},
   MRCLASS = {16A64},
  MRNUMBER = {736555},
MRREVIEWER = {C. M. Ringel},
}

@article {BIRS1,
    AUTHOR = {Bakke Buan, Aslak and Iyama, Osamu and Reiten, Idun and Scott, Jeanne},
     TITLE = {Cluster structures for $2$-{C}alabi--{Y}au categories and
              unipotent groups},
   JOURNAL = {Compos. Math.},
  FJOURNAL = {Compositio Mathematica},
    VOLUME = {145},
      YEAR = {2009},
    NUMBER = {4},
     PAGES = {1035--1079},
      ISSN = {0010-437X},
   MRCLASS = {18E30 (13F60 16G20)},
  MRNUMBER = {2521253 (2010h:18021)},
       DOI = {10.1112/S0010437X09003960},
  eprint = 	 {arXiv:math/0701557}
}

@article {BIRS2,
    AUTHOR = {Bakke Buan, Aslak and Iyama, Osamu and Reiten, Idun and Smith, David},
     TITLE = {Mutation of cluster-tilting objects and potentials},
   JOURNAL = {Amer. J. Math.},
  FJOURNAL = {American Journal of Mathematics},
    VOLUME = {133},
      YEAR = {2011},
    NUMBER = {4},
     PAGES = {835--887},
      ISSN = {0002-9327},
     CODEN = {AJMAAN},
   MRCLASS = {16G20 (16S38 18E30)},
  MRNUMBER = {2823864 (2012i:16027)},
MRREVIEWER = {Jiaqun Wei},
       DOI = {10.1353/ajm.2011.0031},
       URL = {http://dx.doi.org/10.1353/ajm.2011.0031},
    EPRINT = {arXiv:0804.3813}
}

@article {BMR,
    AUTHOR = {Bakke Buan, Aslak and Marsh, Bethany R. and Reiten, Idun},
     TITLE = {Cluster-tilted algebras},
   JOURNAL = {Trans. Amer. Math. Soc.},
  FJOURNAL = {Transactions of the American Mathematical Society},
    VOLUME = {359},
      YEAR = {2007},
    NUMBER = {1},
     PAGES = {323--332},
      ISSN = {0002-9947},
   MRCLASS = {16G20},
  MRNUMBER = {2247893},
MRREVIEWER = {Lidia Angeleri H\"{u}gel},
       DOI = {10.1090/S0002-9947-06-03879-7},
       URL = {https://doi.org/10.1090/S0002-9947-06-03879-7},
       eprint={arXiv:math/0402075}
}

@article {BMRRT,
    AUTHOR = {Bakke Buan, Aslak and Marsh, Bethany and Reineke, Markus and
              Reiten, Idun and Todorov, Gordana},
     TITLE = {Tilting theory and cluster combinatorics},
   JOURNAL = {Adv. Math.},
  FJOURNAL = {Advances in Mathematics},
    VOLUME = {204},
      YEAR = {2006},
    NUMBER = {2},
     PAGES = {572--618},
      ISSN = {0001-8708},
     CODEN = {ADMTA4},
   MRCLASS = {16G20 (16G70 16S99)},
MRREVIEWER = {Lidia Angeleri H{\"u}gel},
       eprint={arXiv:math/0402054}
}

@article {BFZ-CA3,
    AUTHOR = {Berenstein, Arkady and Fomin, Sergey and Zelevinsky, Andrei},
     TITLE = {Cluster algebras. {III}. {U}pper bounds and double {B}ruhat
              cells},
   JOURNAL = {Duke Math. J.},
  FJOURNAL = {Duke Mathematical Journal},
    VOLUME = {126},
      YEAR = {2005},
    NUMBER = {1},
     PAGES = {1--52},
      ISSN = {0012-7094},
     CODEN = {DUMJAO},
   MRCLASS = {16S99 (05E15 14M17 22E46)},
   EPRINT = {arXiv:math/0305434}
}

@incollection {BrennerButler,
    AUTHOR = {Brenner, Sheila and Butler, Michael C. R.},
     TITLE = {Generalizations of the {B}ernstein--{G}el'fand--{P}onomarev reflection functors},
 BOOKTITLE = {Representation theory, {II} ({P}roc. {S}econd {I}nternat.
              {C}onf., {C}arleton {U}niv., {O}ttawa, {O}nt., 1979)},
    SERIES = {Lecture Notes in Math.},
    NUMBER = {832},
     PAGES = {103--169},
 PUBLISHER = {Springer},
   ADDRESS = {Berlin--New York},
      YEAR = {1980},
   MRCLASS = {16A64 (16A46)},
  MRNUMBER = {607151},
MRREVIEWER = {Idun Reiten},
}

@article {BZ-QCA,
    AUTHOR = {Berenstein, Arkady and Zelevinsky, Andrei},
     TITLE = {Quantum cluster algebras},
   JOURNAL = {Adv. Math.},
  FJOURNAL = {Advances in Mathematics},
    VOLUME = {195},
      YEAR = {2005},
    NUMBER = {2},
     PAGES = {405--455},
      ISSN = {0001-8708},
     CODEN = {ADMTA4},
   MRCLASS = {20G42 (14M17 22E46)},
MRREVIEWER = {Oleg V. Ogievetski{\u\i}},
    EPRINT = {arXiv:math/0404446}
}

@article {Buehler,
    AUTHOR = {B{\"u}hler, Theo},
     TITLE = {Exact categories},
   JOURNAL = {Expo. Math.},
  FJOURNAL = {Expositiones Mathematicae},
    VOLUME = {28},
      YEAR = {2010},
    NUMBER = {1},
     PAGES = {1--69},
      ISSN = {0723-0869},
   MRCLASS = {18E10 (18-02 18E30)},
  MRNUMBER = {2606234 (2011e:18020)},
MRREVIEWER = {Sunil K. Chebolu},
       DOI = {10.1016/j.exmath.2009.04.004},
 eprint = {arXiv:0811.1480}
}

@article {Curtis,
    AUTHOR = {Curtis, Charles W.},
     TITLE = {The structure of non-semisimple algebras},
   JOURNAL = {Duke Math. J.},
  FJOURNAL = {Duke Mathematical Journal},
    VOLUME = {21},
      YEAR = {1954},
     PAGES = {79--85},
      ISSN = {0012-7094},
   MRCLASS = {09.1X},
  MRNUMBER = {61095},
MRREVIEWER = {J. Levitzki},
       URL = {http://projecteuclid.org.ezproxy.lib.gla.ac.uk/euclid.dmj/1077465586},
}

@article {DemonetIyama,
    AUTHOR = {Demonet, Laurent and Iyama, Osamu},
     TITLE = {Lifting preprojective algebras to orders and categorifying
              partial flag varieties},
   JOURNAL = {Algebra Number Theory},
  FJOURNAL = {Algebra \& Number Theory},
    VOLUME = {10},
      YEAR = {2016},
    NUMBER = {7},
     PAGES = {1527--1580},
      ISSN = {1937-0652,1944-7833},
   MRCLASS = {16G30 (13F60 16G10 16G50 18E10 18E30)},
  MRNUMBER = {3554240},
MRREVIEWER = {Christof\ Gei\ss},
       DOI = {10.2140/ant.2016.10.1527},
       URL = {https://doi.org/10.2140/ant.2016.10.1527},
    eprint = {arXiv:1503.02362},
}

@unpublished {FGPPP,
     TITLE = {Extriangulated ideal quotients, with applications to cluster theory and gentle algebras},
    AUTHOR = {Fang, Xin and Gorsky, Mikhail and Palu, Yann and Plamondon, Pierre-Guy and Pressland, Matthew},
      YEAR = {2023},
      NOTE = {Preprint},
    EPRINT = {arXiv:2308.05524},
}

@article {Frobenius,
    AUTHOR = {Frobenius, Ferdinand Georg},
     TITLE = {Ueber lineare {S}ubstitutionen und bilineare {F}ormen},
   JOURNAL = {J. Reine Angew. Math.},
  FJOURNAL = {Journal f\"{u}r die Reine und Angewandte Mathematik. [Crelle's
              Journal]},
    VOLUME = {84},
      YEAR = {1878},
     PAGES = {1--63},
      ISSN = {0075-4102},
   MRCLASS = {DML},
  MRNUMBER = {1581640},
       DOI = {10.1515/crelle-1878-18788403},
       URL = {https://doi-org.ezproxy.lib.gla.ac.uk/10.1515/crelle-1878-18788403},
    LANGID = {german}
}

@article {FZ-CA1,
    AUTHOR = {Fomin, Sergey and Zelevinsky, Andrei},
     TITLE = {Cluster algebras. {I}. {F}oundations},
   JOURNAL = {J. Amer. Math. Soc.},
  FJOURNAL = {Journal of the American Mathematical Society},
    VOLUME = {15},
      YEAR = {2002},
    NUMBER = {2},
     PAGES = {497--529 (electronic)},
      ISSN = {0894-0347},
   MRCLASS = {16S99 (14M99 17B99)},
MRREVIEWER = {Eric N. Sommers},
     eprint={arXiv:math/0104151}
}

@article {FZ-CA2,
    AUTHOR = {Fomin, Sergey and Zelevinsky, Andrei},
     TITLE = {Cluster algebras. {II}. {F}inite type classification},
   JOURNAL = {Invent. Math.},
  FJOURNAL = {Inventiones Mathematicae},
    VOLUME = {154},
      YEAR = {2003},
    NUMBER = {1},
     PAGES = {63--121},
      ISSN = {0020-9910},
     CODEN = {INVMBH},
   MRCLASS = {17B20 (05E15 16S99 52B12)},
MRREVIEWER = {Eric N. Sommers},
}

@article {FZ-CA4,
    AUTHOR = {Fomin, Sergey and Zelevinsky, Andrei},
     TITLE = {Cluster algebras. {IV}. {C}oefficients},
   JOURNAL = {Compos. Math.},
  FJOURNAL = {Compositio Mathematica},
    VOLUME = {143},
      YEAR = {2007},
    NUMBER = {1},
     PAGES = {112--164},
      ISSN = {0010-437X},
   MRCLASS = {16S99 (05E15 14M17 22E46)},
  MRNUMBER = {2295199 (2008d:16049)},
MRREVIEWER = {Christof Geiß},
       DOI = {10.1112/S0010437X06002521},
       URL = {http://dx.doi.org/10.1112/S0010437X06002521},
    EPRINT = {arXiv:math/0602259}
}

@article {FockGoncharov,
    AUTHOR = {Fock, Vladimir V. and Goncharov, Alexander B.},
     TITLE = {Cluster ensembles, quantization and the dilogarithm},
   JOURNAL = {Ann. Sci. \'Ec. Norm. Sup\'er. (4)},
  FJOURNAL = {Annales Scientifiques de l'\'Ecole Normale Sup\'erieure.
              Quatri\`eme S\'erie},
    VOLUME = {42},
      YEAR = {2009},
    NUMBER = {6},
     PAGES = {865--930},
      ISSN = {0012-9593},
   MRCLASS = {53D30 (13F60 30F60 32G15 33B30 53D55)},
  MRNUMBER = {2567745 (2011f:53202)},
MRREVIEWER = {Athanase Papadopoulos},
       DOI = {10.1007/978-0-8176-4745-2_15},
       URL = {http://dx.doi.org/10.1007/978-0-8176-4745-2_15},
    EPRINT = {arXiv:math/0311245}
}

@article {GLS-KacMoody,
    AUTHOR = {Gei{\ss}, Christof and Leclerc, Bernard and Schr{\"o}er, Jan},
     TITLE = {Kac-{M}oody groups and cluster algebras},
   JOURNAL = {Adv. Math.},
  FJOURNAL = {Advances in Mathematics},
    VOLUME = {228},
      YEAR = {2011},
    NUMBER = {1},
     PAGES = {329--433},
      ISSN = {0001-8708},
     CODEN = {ADMTA4},
   MRCLASS = {13F60 (16G20 17B67 20G44)},
  MRNUMBER = {2822235},
MRREVIEWER = {Gregoire Dupont},
       DOI = {10.1016/j.aim.2011.05.011},
       URL = {http://dx.doi.org/10.1016/j.aim.2011.05.011},
     eprint={arXiv:1001.3545}
}

@article {GLS-PFV,
    AUTHOR = {Gei{\ss}, Christof and Leclerc, Bernard and Schr{\"o}er, Jan},
     TITLE = {Partial flag varieties and preprojective algebras},
   JOURNAL = {Ann. Inst. Fourier (Grenoble)},
  FJOURNAL = {Universit\'e de Grenoble. Annales de l'Institut Fourier},
    VOLUME = {58},
      YEAR = {2008},
    NUMBER = {3},
     PAGES = {825--876},
      ISSN = {0373-0956},
     CODEN = {AIFUA7},
   MRCLASS = {14M15 (16Gxx 20Gxx)},
     eprint={arXiv:math/0609138}
}

@article {GLS-QuantumPFV,
    AUTHOR = {Gei{\ss}, Christof and Leclerc, Bernard and Schr{\"o}er, Jan},
     TITLE = {Cluster structures on quantum coordinate rings},
   JOURNAL = {Selecta Math. (N.S.)},
  FJOURNAL = {Selecta Mathematica. New Series},
    VOLUME = {19},
      YEAR = {2013},
    NUMBER = {2},
     PAGES = {337--397},
      ISSN = {1022-1824},
   MRCLASS = {13F60 (16G20 17B67)},
  MRNUMBER = {3090232},
MRREVIEWER = {Xueqing Chen},
       DOI = {10.1007/s00029-012-0099-x},
       URL = {http://dx.doi.org/10.1007/s00029-012-0099-x},
     eprint={arXiv:1104.0531}
}

@unpublished {GLS-Unipotent,
     TITLE = {Cluster algebra structures and semicanonical bases for unipotent groups},
    AUTHOR = {Geiß, Christof and Leclerc, Bernard and Schröer, Jan},
      YEAR = {2010},
  note =         {Preprint},
    EPRINT = {arXiv:math/0703039}
}

@book {GSV-Book,
    AUTHOR = {Gekhtman, Michael and Shapiro, Michael and Vainshtein, Alek},
     TITLE = {Cluster algebras and {P}oisson geometry},
    SERIES = {Mathematical Surveys and Monographs},
    VOLUME = {167},
 PUBLISHER = {American Mathematical Society},
   ADDRESS = {Providence, RI},
      YEAR = {2010},
     PAGES = {xvi+246},
      ISBN = {978-0-8218-4972-9},
   MRCLASS = {13F60 (37K20 37K25 53D17)},
  MRNUMBER = {2683456 (2011k:13037)},
MRREVIEWER = {Gregoire Dupont},
}

@article{GradedQCAs,
    AUTHOR = {Grabowski, Jan E. and Launois, St{\'e}phane},
     TITLE = {Graded quantum cluster algebras and an application to quantum
              {G}rassmannians},
   JOURNAL = {Proc. Lond. Math. Soc. (3)},
  FJOURNAL = {Proceedings of the London Mathematical Society. Third Series},
    VOLUME = {109},
      YEAR = {2014},
    NUMBER = {3},
     PAGES = {697--732},
      ISSN = {0024-6115},
   MRCLASS = {13F60 (20G42)},
  MRNUMBER = {3260291},
MRREVIEWER = {Kyungyong Lee},
       DOI = {10.1112/plms/pdu018},
       URL = {http://dx.doi.org/10.1112/plms/pdu018},
    EPRINT = {arXiv:1301.2133}
}

@book {Happelbook,
    AUTHOR = {Happel, Dieter},
     TITLE = {Triangulated categories in the representation theory of
              finite-dimensional algebras},
    SERIES = {London Mathematical Society Lecture Note Series},
    VOLUME = {119},
 PUBLISHER = {Cambridge University Press, Cambridge},
      YEAR = {1988},
     PAGES = {x+208},
      ISBN = {0-521-33922-7},
   MRCLASS = {16A46 (16A48 16A62 16A64 18E30)},
  MRNUMBER = {935124 (89e:16035)},
MRREVIEWER = {Alfred G. Wiedemann},
       DOI = {10.1017/CBO9780511629228},
}

@article {Iyama-HigherAR,
    AUTHOR = {Iyama, Osamu},
     TITLE = {Higher-dimensional {A}uslander--{R}eiten theory on maximal
              orthogonal subcategories},
   JOURNAL = {Adv. Math.},
  FJOURNAL = {Advances in Mathematics},
    VOLUME = {210},
      YEAR = {2007},
    NUMBER = {1},
     PAGES = {22--50},
      ISSN = {0001-8708},
     CODEN = {ADMTA4},
   MRCLASS = {16E30 (16G70)},
  MRNUMBER = {2298819 (2008f:16015)},
MRREVIEWER = {Cristi{\'a}n Novoa},
       DOI = {10.1016/j.aim.2006.06.002},
       URL = {http://dx.doi.org/10.1016/j.aim.2006.06.002},
    EPRINT = {arXiv:math/0407052},

}

@article {JorgensenPalu,
    AUTHOR = {J{\o}rgensen, Peter and Palu, Yann},
     TITLE = {A {C}aldero--{C}hapoton map for infinite clusters},
   JOURNAL = {Trans. Amer. Math. Soc.},
  FJOURNAL = {Transactions of the American Mathematical Society},
    VOLUME = {365},
      YEAR = {2013},
    NUMBER = {3},
     PAGES = {1125--1147},
      ISSN = {0002-9947},
   MRCLASS = {13F60 (16G10 18E30)},
  MRNUMBER = {3003260},
MRREVIEWER = {Xueqing Chen},
       DOI = {10.1090/S0002-9947-2012-05464-X},
       URL = {http://dx.doi.org/10.1090/S0002-9947-2012-05464-X},
    EPRINT = {arXiv:1004.1343}
}

@article {KoenigZhu,
    AUTHOR = {Koenig, Steffen and Zhu, Bin},
     TITLE = {From triangulated categories to abelian categories: cluster
              tilting in a general framework},
   JOURNAL = {Math. Z.},
  FJOURNAL = {Mathematische Zeitschrift},
    VOLUME = {258},
      YEAR = {2008},
    NUMBER = {1},
     PAGES = {143--160},
      ISSN = {0025-5874},
   MRCLASS = {18E30 (16G20 16G70 17B20)},
  MRNUMBER = {2350040},
MRREVIEWER = {Dirk Kussin},
       DOI = {10.1007/s00209-007-0165-9},
       URL = {https://doi.org/10.1007/s00209-007-0165-9},
    eprint = {arXiv:math/0605100}
}

@article{KrauseKS,
  title={{K}rull--{S}chmidt categories and projective covers},
  author={Krause, Henning},
  journal={Expositiones Mathematicae},
  volume={33},
  number={4},
  pages={535--549},
  year={2015},
  publisher={Elsevier},
  eprint =  {arXiv:1410.2822}
}

@article {KS,
    AUTHOR = {Kerner, Otto and Skowro\'{n}ski, Andrzej},
     TITLE = {On module categories with nilpotent infinite radical},
   JOURNAL = {Compositio Math.},
  FJOURNAL = {Compositio Mathematica},
    VOLUME = {77},
      YEAR = {1991},
    NUMBER = {3},
     PAGES = {313--333},
      ISSN = {0010-437X,1570-5846},
   MRCLASS = {16D90 (16G60)},
  MRNUMBER = {1092772},
MRREVIEWER = {Jeremy\ Rickard},
       URL = {http://www.numdam.org/item?id=CM_1991__77_3_313_0},
}

@book{LusztigBook,
    AUTHOR = {Lusztig, George},
     TITLE = {Introduction to quantum groups},
    SERIES = {Progress in Mathematics},
    VOLUME = {110},
 PUBLISHER = {Birkh\"auser Boston Inc.},
   ADDRESS = {Boston, MA},
      YEAR = {1993},
     PAGES = {xii+341},
      ISBN = {0-8176-3712-5},
   MRCLASS = {17B37 (16W30 17-02 17B35 81R50)},
MRREVIEWER = {Jie Du},
  citekey = {LusztigBook}
}

@article {Malcev,
    AUTHOR = {Malcev, Anatoly},
     TITLE = {On the representation of an algebra as a direct sum of the
              radical and a semi-simple subalgebra},
   JOURNAL = {C. R. (Doklady) Acad. Sci. URSS (N.S.)},
    VOLUME = {36},
      YEAR = {1942},
     PAGES = {42--45},
   MRCLASS = {09.1X},
  MRNUMBER = {0007397},
MRREVIEWER = {S. MacLane},
}

@book{Marsh-book,
   title =     {Lecture notes on cluster algebras},
   author =    {Bethany R. Marsh},
   publisher = {European Mathematical Society},
   isbn =      {3037191309,9783037191309},
   year =      {2014},
   series =    {Z\"{u}rich Lectures in Advanced Mathematics}
}

@article {CalderoChapoton,
    AUTHOR = {Caldero, Philippe and Chapoton, Fr{\'e}d{\'e}ric},
     TITLE = {Cluster algebras as {H}all algebras of quiver representations},
   JOURNAL = {Comment. Math. Helv.},
  FJOURNAL = {Commentarii Mathematici Helvetici. A Journal of the Swiss
              Mathematical Society},
    VOLUME = {81},
      YEAR = {2006},
    NUMBER = {3},
     PAGES = {595--616},
      ISSN = {0010-2571},
   MRCLASS = {16G20 (18E30)},
  MRNUMBER = {2250855 (2008b:16015)},
       DOI = {10.4171/CMH/65},
       URL = {http://dx.doi.org/10.4171/CMH/65},
}

@article {Palu,
    AUTHOR = {Palu, Yann},
     TITLE = {Cluster characters for 2-{C}alabi--{Y}au triangulated
              categories},
   JOURNAL = {Ann. Inst. Fourier (Grenoble)},
  FJOURNAL = {Universit\'e de Grenoble. Annales de l'Institut Fourier},
    VOLUME = {58},
      YEAR = {2008},
    NUMBER = {6},
     PAGES = {2221--2248},
      ISSN = {0373-0956},
     CODEN = {AIFUA7},
   MRCLASS = {18E30 (16G20)},
  MRNUMBER = {2473635 (2009k:18013)},
MRREVIEWER = {Apostolos D. Beligiannis},
       URL = {http://aif.cedram.org/item?id=AIF_2008__58_6_2221_0},
  eprint={arXiv:math/0703540}
}

@article {IyamaYoshino,
    AUTHOR = {Iyama, Osamu and Yoshino, Yuji},
     TITLE = {Mutation in triangulated categories and rigid
              {C}ohen--{M}acaulay modules},
   JOURNAL = {Invent. Math.},
  FJOURNAL = {Inventiones Mathematicae},
    VOLUME = {172},
      YEAR = {2008},
    NUMBER = {1},
     PAGES = {117--168},
      ISSN = {0020-9910},
     CODEN = {INVMBH},
   MRCLASS = {16G20 (13C14 18E30)},
  MRNUMBER = {2385669 (2008k:16028)},
MRREVIEWER = {Wolfgang Rump},
       DOI = {10.1007/s00222-007-0096-4},
       URL = {http://dx.doi.org/10.1007/s00222-007-0096-4},
  eprint = 	 {arXiv:math/0607736}
}

@article {Iyama-AusCor,
    AUTHOR = {Iyama, Osamu},
     TITLE = {{A}uslander correspondence},
   JOURNAL = {Adv. Math.},
  FJOURNAL = {Advances in Mathematics},
    VOLUME = {210},
      YEAR = {2007},
    NUMBER = {1},
     PAGES = {51--82},
      ISSN = {0001-8708},
     CODEN = {ADMTA4},
   MRCLASS = {16G10 (16E10)},
  MRNUMBER = {2298820 (2008a:16016)},
MRREVIEWER = {Changchang Xi},
       DOI = {10.1016/j.aim.2006.06.003},
       URL = {http://dx.doi.org/10.1016/j.aim.2006.06.003},
    EPRINT = {arXiv:math/0411631},
}

@article {Palu-Groth-gp,
    AUTHOR = {Palu, Yann},
     TITLE = {Grothendieck group and generalized mutation rule for
              2-{C}alabi--{Y}au triangulated categories},
   JOURNAL = {J. Pure Appl. Algebra},
  FJOURNAL = {Journal of Pure and Applied Algebra},
    VOLUME = {213},
      YEAR = {2009},
    NUMBER = {7},
     PAGES = {1438--1449},
      ISSN = {0022-4049},
     CODEN = {JPAAA2},
   MRCLASS = {18E30 (16E20 16G20)},
  MRNUMBER = {2497588 (2010i:18014)},
MRREVIEWER = {Apostolos D. Beligiannis},
       DOI = {10.1016/j.jpaa.2008.12.012},
       URL = {http://dx.doi.org/10.1016/j.jpaa.2008.12.012},
  eprint={arXiv:0803.3907}
}

@article{JKS,
    AUTHOR = {Jensen, Bernt Tore and King, Alastair D. and Su, Xiuping},
     TITLE = {A categorification of {G}rassmannian cluster algebras},
   JOURNAL = {Proc. Lond. Math. Soc. (3)},
  FJOURNAL = {Proceedings of the London Mathematical Society. Third Series},
    VOLUME = {113},
      YEAR = {2016},
    NUMBER = {2},
     PAGES = {185--212},
      ISSN = {0024-6115},
   MRCLASS = {Preliminary Data},
  MRNUMBER = {3534971},
       DOI = {10.1112/plms/pdw028},
       URL = {http://dx.doi.org/10.1112/plms/pdw028},
    eprint = {arXiv:1309.7301}
}

@article {FuKeller,
    AUTHOR = {Fu, Changjian and Keller, Bernhard},
     TITLE = {On cluster algebras with coefficients and 2-{C}alabi--{Y}au
              categories},
   JOURNAL = {Trans. Amer. Math. Soc.},
  FJOURNAL = {Transactions of the American Mathematical Society},
    VOLUME = {362},
      YEAR = {2010},
    NUMBER = {2},
     PAGES = {859--895},
      ISSN = {0002-9947},
     CODEN = {TAMTAM},
   MRCLASS = {13F60 (18E99)},
  MRNUMBER = {2551509 (2011b:13076)},
MRREVIEWER = {Calin Chindris},
       DOI = {10.1090/S0002-9947-09-04979-4},
       URL = {http://dx.doi.org/10.1090/S0002-9947-09-04979-4},
   eprint={arXiv:0710.3152}
}

@article {KellerReiten,
    AUTHOR = {Keller, Bernhard and Reiten, Idun},
     TITLE = {Cluster-tilted algebras are {G}orenstein and stably
              {C}alabi--{Y}au},
   JOURNAL = {Adv. Math.},
  FJOURNAL = {Advances in Mathematics},
    VOLUME = {211},
      YEAR = {2007},
    NUMBER = {1},
     PAGES = {123--151},
      ISSN = {0001-8708},
     CODEN = {ADMTA4},
   MRCLASS = {18E30 (16D90 18G10 18G40)},
  MRNUMBER = {2313531 (2008b:18018)},
MRREVIEWER = {Andrey {Yu}.\ Lazarev},
       DOI = {10.1016/j.aim.2006.07.013},
       URL = {http://dx.doi.org/10.1016/j.aim.2006.07.013},
    eprint = {arXiv:math/0512471}
}

@Unpublished{Klapproth,
  title={$n$-extension closed subcategories of $n$-exangulated categories},
  author={Klapproth, Carlo},
  note={Preprint},
  eprint={arXiv:2209.01128},
  year={2022}
}

@article{Pressland,
    AUTHOR = {Pressland, Matthew},
     TITLE = {Internally {C}alabi--{Y}au algebras and cluster-tilting
              objects},
   JOURNAL = {Math. Z.},
  FJOURNAL = {Mathematische Zeitschrift},
    VOLUME = {287},
      YEAR = {2017},
    NUMBER = {1-2},
     PAGES = {555--585},
      ISSN = {0025-5874},
   MRCLASS = {13F60 (16G20 16G50 18E30)},
  MRNUMBER = {3694688},
       DOI = {10.1007/s00209-016-1837-0},
       URL = {http://dx.doi.org/10.1007/s00209-016-1837-0},
  eprint = 	 {arXiv:1510.06224}
}

@article{Presslandmutation,
    AUTHOR = {Pressland, Matthew},
     TITLE = {Mutation of frozen {J}acobian algebras},
   JOURNAL = {J. Algebra},
  FJOURNAL = {Journal of Algebra},
    VOLUME = {546},
      YEAR = {2020},
     PAGES = {236--273},
      ISSN = {0021-8693},
   MRCLASS = {16G20 (16S38 18E10 18E30)},
  MRNUMBER = {4033085},
       DOI = {10.1016/j.jalgebra.2019.10.035},
       URL = {https://doi.org/10.1016/j.jalgebra.2019.10.035},
    eprint = {arXiv:1810.01179}
}

@article {Presslandcorrigendum,
    AUTHOR = {Pressland, Matthew},
     TITLE = {Corrigendum to ``{M}utation of frozen {J}acobian algebras''
              [{J}. {A}lgebra 546 (2020) 236--273]},
   JOURNAL = {J. Algebra},
  FJOURNAL = {Journal of Algebra},
    VOLUME = {588},
      YEAR = {2021},
     PAGES = {533--537},
      ISSN = {0021-8693},
   MRCLASS = {16G20 (16S38 18E10 18G80)},
  MRNUMBER = {4318826},
       DOI = {10.1016/j.jalgebra.2021.09.009},
       URL = {https://doi-org.ezproxy.lib.gla.ac.uk/10.1016/j.jalgebra.2021.09.009},
}

@article {PresslandPostnikov,
    AUTHOR = {Pressland, Matthew},
     TITLE = {Calabi--{Y}au properties of {P}ostnikov diagrams},
   JOURNAL = {Forum Math. Sigma},
  FJOURNAL = {Forum of Mathematics. Sigma},
    VOLUME = {10},
      YEAR = {2022},
     PAGES = {e56},
   MRCLASS = {16G20 (13F60 14M15 18N25)},
  MRNUMBER = {4456104},
       DOI = {10.1017/fms.2022.52},
       URL = {https://doi-org.ezproxy.lib.gla.ac.uk/10.1017/fms.2022.52},
  eprint = {arXiv:1912.12475},
}

@article {GoodearlYakimovQCA,
    AUTHOR = {Goodearl, Kenneth R. and Yakimov, Milen T.},
     TITLE = {Quantum cluster algebra structures on quantum nilpotent algebras},
   JOURNAL = {Mem. Amer. Math. Soc.},
  FJOURNAL = {Memoirs of the American Mathematical Society},
    VOLUME = {247},
      YEAR = {2017},
    NUMBER = {1169},
      ISSN = {0065-9266},
       DOI = {10.1090/memo/1169},
    eprint = {arXiv:1309.7869},
       URL = {http://doi.org/10.1090/memo/1169},
}

@article {GoodearlYakimovDoubleBruhat,
    AUTHOR = {Goodearl, Kenneth R. and Yakimov, Milen T.},
     TITLE = {The {B}erenstein--{Z}elevinsky quantum cluster algebra
              conjecture},
   JOURNAL = {J. Eur. Math. Soc. (JEMS)},
  FJOURNAL = {Journal of the European Mathematical Society (JEMS)},
    VOLUME = {22},
      YEAR = {2020},
    NUMBER = {8},
     PAGES = {2453--2509},
      ISSN = {1435-9855,1435-9863},
   MRCLASS = {13F60 (14M15 16T20 17B37 20G42)},
  MRNUMBER = {4118613},
MRREVIEWER = {Fang\ Li},
       DOI = {10.4171/JEMS/969},
       URL = {https://doi.org/10.4171/JEMS/969},
    eprint = {arXiv:1602.00498},
}

@article{GradedFrobeniusClusterCategories,
    AUTHOR = {Grabowski, Jan E. and Pressland, Matthew},
     TITLE = {Graded {F}robenius cluster categories},
   JOURNAL = {Doc. Math.},
  FJOURNAL = {Documenta Mathematica},
    VOLUME = {23},
      YEAR = {2018},
     PAGES = {49--76},
      ISSN = {1431-0635},
   MRCLASS = {13F60 (16G70 18E30)},
  MRNUMBER = {3846063},
MRREVIEWER = {Pin Liu},
  eprint = 	 {arXiv:1609.09670},
}

@article {Qin,
    AUTHOR = {Qin, Fan},
     TITLE = {Quantum cluster variables via {S}erre polynomials},
      NOTE = {With an appendix by B.~Keller},
   JOURNAL = {J. Reine Angew. Math.},
  FJOURNAL = {Journal f\"ur die Reine und Angewandte Mathematik. [Crelle's
              Journal]},
    VOLUME = {668},
      YEAR = {2012},
     PAGES = {149--190},
      ISSN = {0075-4102},
     CODEN = {JRMAA8},
   MRCLASS = {13F60 (81R50)},
  MRNUMBER = {2948875},
MRREVIEWER = {Kyungyong Lee},
   	eprint={arXiv:1004.4171}}

@article {DehyKeller,
    AUTHOR = {Dehy, Raika and Keller, Bernhard},
     TITLE = {On the combinatorics of rigid objects in 2-{C}alabi--{Y}au
              categories},
   JOURNAL = {Int. Math. Res. Not. IMRN},
  FJOURNAL = {International Mathematics Research Notices. IMRN},
      YEAR = {2008},
    VOLUME = {2008},
    NUMBER = {11},
     PAGES = {rnn029},
      ISSN = {1073-7928},
   MRCLASS = {18E30 (16G20)},
  MRNUMBER = {2428855},
MRREVIEWER = {Jun-ichi Miyachi},
       URL = {https://doi.org/10.1093/imrn/rnn029},
    EPRINT = {arXiv:0709.0882}
}

@article {NakanishiZelevinsky,
    AUTHOR = {Nakanishi, Tomoki and Zelevinsky, Andrei},
     TITLE = {On tropical dualities in cluster algebras},
 JOURNAL = {Algebraic groups and quantum groups, Contemp. Math.},
 FJOURNAL = {Algebraic groups and quantum groups, Contemporary Mathematics},
    VOLUME = {565},
     PAGES = {217--226},
 PUBLISHER = {Amer. Math. Soc., Providence, RI},
      YEAR = {2012},
   MRCLASS = {13F60},
  MRNUMBER = {2932428},
MRREVIEWER = {Yu Zhou},
       DOI = {10.1090/conm/565/11159},
       URL = {http://dx.doi.org/10.1090/conm/565/11159},
    EPRINT = {arXiv:1101.3736},
}

@article {NakaokaPalu,
    AUTHOR = {Nakaoka, Hiroyuki and Palu, Yann},
     TITLE = {Extriangulated categories, {H}ovey twin cotorsion pairs and
              model structures},
   JOURNAL = {Cah. Topol. Géom. Différ. Catég.},
  FJOURNAL = {Cahiers de Topologie et Géométrie Différentielle Catégoriques},
    VOLUME = {60},
      YEAR = {2019},
    NUMBER = {2},
     PAGES = {117--193},
   MRCLASS = {18E10 (18E30 18E35 18G15 18G55)},
  MRNUMBER = {3931945},
MRREVIEWER = {Panyue Zhou},
    EPRINT = {arXiv:1605.05607},
}

@article {GellertLampe,
    AUTHOR = {Gellert, Florian and Lampe, Philipp},
     TITLE = {Quantisation spaces of cluster algebras},
   JOURNAL = {Glasg. Math. J.},
  FJOURNAL = {Glasgow Mathematical Journal},
    VOLUME = {60},
      YEAR = {2018},
    NUMBER = {2},
     PAGES = {273--284},
      ISSN = {0017-0895},
   MRCLASS = {13F60},
  MRNUMBER = {3784047},
MRREVIEWER = {Ming Ding},
       DOI = {10.1017/S0017089517000076},
       URL = {https://doi.org/10.1017/S0017089517000076},
    EPRINT = {arXiv:1402.1094}
}

@article {JorgensenYakimov,
    AUTHOR = {J{\o}rgensen, Peter and Yakimov, Milen},
     TITLE = {{$c$}-vectors of 2-{C}alabi--{Y}au categories and {B}orel
              subalgebras of {$\mathfrak{sl}_\infty$}},
   JOURNAL = {Selecta Math. (N.S.)},
  FJOURNAL = {Selecta Mathematica. New Series},
    VOLUME = {26},
      YEAR = {2020},
    NUMBER = {1},
     PAGES = {1},
      ISSN = {1022-1824},
   MRCLASS = {17B22 (17B65 18D 18E)},
  MRNUMBER = {4041503},
       DOI = {10.1007/s00029-019-0525-4},
       URL = {https://doi.org/10.1007/s00029-019-0525-4},
    EPRINT = {arXiv:1710.06492}
}

@article {GHKK,
    AUTHOR = {Gross, Mark and Hacking, Paul and Keel, Sean and Kontsevich,
              Maxim},
     TITLE = {Canonical bases for cluster algebras},
   JOURNAL = {J. Amer. Math. Soc.},
  FJOURNAL = {Journal of the American Mathematical Society},
    VOLUME = {31},
      YEAR = {2018},
    NUMBER = {2},
     PAGES = {497--608},
      ISSN = {0894-0347},
   MRCLASS = {13F60 (14J33)},
  MRNUMBER = {3758151},
MRREVIEWER = {Ralf Schiffler},
       DOI = {10.1090/jams/890},
       URL = {https://doi.org/10.1090/jams/890},
    EPRINT = {arXiv:1411.1394}
}

@article {Bridgeland,
    AUTHOR = {Bridgeland, Tom},
     TITLE = {Scattering diagrams, {H}all algebras and stability conditions},
   JOURNAL = {Algebr. Geom.},
  FJOURNAL = {Algebraic Geometry},
    VOLUME = {4},
      YEAR = {2017},
    NUMBER = {5},
     PAGES = {523--561},
      ISSN = {2313-1691},
   MRCLASS = {16E35 (14D23 16G20)},
  MRNUMBER = {3710055},
MRREVIEWER = {Andrei D. Halanay},
       DOI = {10.14231/2017-027},
       URL = {https://doi.org/10.14231/2017-027},
    EPRINT = {arXiv:1611.03696}
}

@article {Plamondon-ClustCat,
    AUTHOR = {Plamondon, Pierre-Guy},
     TITLE = {Cluster algebras via cluster categories with
              infinite-dimensional morphism spaces},
   JOURNAL = {Compos. Math.},
  FJOURNAL = {Compositio Mathematica},
    VOLUME = {147},
      YEAR = {2011},
    NUMBER = {6},
     PAGES = {1921--1954},
      ISSN = {0010-437X},
   MRCLASS = {13F60 (18E30)},
  MRNUMBER = {2862067},
       DOI = {10.1112/S0010437X11005483},
       URL = {http://dx.doi.org/10.1112/S0010437X11005483},
    EPRINT = {1004.0830},
EPRINTTYPE = {arxiv},
EPRINTCLASS = {math.RT}
}

@article {Plamondon-Generic,
    AUTHOR = {Plamondon, Pierre-Guy},
     TITLE = {Generic bases for cluster algebras from the cluster category},
   JOURNAL = {Int. Math. Res. Not. IMRN},
  FJOURNAL = {International Mathematics Research Notices. IMRN},
      YEAR = {2013},
    VOLUME = {2013},
    NUMBER = {10},
     PAGES = {2368--2420},
      ISSN = {1073-7928},
   MRCLASS = {13F60 (18E30)},
  MRNUMBER = {3061943},
MRREVIEWER = {Olga Kravchenko},
       DOI = {10.1093/imrn/rns102},
       URL = {https://doi.org/10.1093/imrn/rns102},
    EPRINT = {arXiv:1111.4431}
}

@article{JKS-Quantum-Gr,
    AUTHOR = {Jensen, Bernt Tore and King, Alastair and Su, Xiuping},
     TITLE = {Categorification and the quantum {G}rassmannian},
   JOURNAL = {Adv. Math.},
  FJOURNAL = {Advances in Mathematics},
    VOLUME = {406},
      YEAR = {2022},
     PAGES = {108577},
      ISSN = {0001-8708},
   MRCLASS = {13F60 (14M15 16G50 20G42)},
  MRNUMBER = {4456409},
       DOI = {10.1016/j.aim.2022.108577},
       URL = {https://doi-org.ezproxy.lib.gla.ac.uk/10.1016/j.aim.2022.108577},
    EPRINT = {1904.07849},
}

@article {Heller,
    AUTHOR = {Heller, Alex},
     TITLE = {The loop-space functor in homological algebra},
   JOURNAL = {Trans. Amer. Math. Soc.},
  FJOURNAL = {Transactions of the American Mathematical Society},
    VOLUME = {96},
      YEAR = {1960},
     PAGES = {382--394},
      ISSN = {0002-9947},
   MRCLASS = {18.00 (55.00)},
  MRNUMBER = {116045},
MRREVIEWER = {T. Nakayama},
       DOI = {10.2307/1993530},
       URL = {https://doi-org.ezproxy.lib.gla.ac.uk/10.2307/1993530},
}

@article {Wu,
    AUTHOR = {Wu, Yilin},
     TITLE = {Relative cluster categories and {H}iggs categories},
   JOURNAL = {Adv. Math.},
  FJOURNAL = {Advances in Mathematics},
    VOLUME = {424},
      YEAR = {2023},
     PAGES = {109040},
      ISSN = {0001-8708},
   MRCLASS = {13 (14 16G20 18N20)},
  MRNUMBER = {4581969},
       DOI = {10.1016/j.aim.2023.109040},
       URL = {https://doi-org.ezproxy.lib.gla.ac.uk/10.1016/j.aim.2023.109040},
    EPRINT = {arXiv:2109.03707}
}

@article {ShahKRS,
    AUTHOR = {Shah, Amit},
     TITLE = {Krull--{R}emak--{S}chmidt decompositions in {H}om-finite
              additive categories},
   JOURNAL = {Expo. Math.},
  FJOURNAL = {Expositiones Mathematicae},
    VOLUME = {41},
      YEAR = {2023},
    NUMBER = {1},
     PAGES = {220--237},
      ISSN = {0723-0869},
   MRCLASS = {18E05 (16D70 16L30 16U40 18E10)},
  MRNUMBER = {4557280},
       DOI = {10.1016/j.exmath.2022.12.003},
       URL = {https://doi-org.ezproxy.lib.gla.ac.uk/10.1016/j.exmath.2022.12.003},
    EPRINT = {arXiv:2209.00337}
}

@article{CKP,
	TITLE = {Perfect matching modules, dimer partition functions and cluster characters},
	AUTHOR = {{\c C}anak{\c c}{\i}, {\.I}lke and King, Alastair and Pressland, Matthew},
   JOURNAL = {Adv. Math.},
  FJOURNAL = {Advances in Mathematics},
    VOLUME = {406},
      YEAR = {2024},
     PAGES = {109570},
       DOI = {10.1016/j.aim.2024.109570},
       URL = {https://doi.org/10.1016/j.aim.2024.109570},
  eprint = {arXiv:2106.15924}
}

@article {IMacQ,
    AUTHOR = {Iusenko, Kostiantyn and MacQuarrie, John William},
     TITLE = {The path algebra as a left adjoint functor},
   JOURNAL = {Algebr. Represent. Theory},
  FJOURNAL = {Algebras and Representation Theory},
    VOLUME = {23},
      YEAR = {2020},
    NUMBER = {1},
     PAGES = {33--52},
      ISSN = {1386-923X},
   MRCLASS = {16W99 (16G20 16S10)},
  MRNUMBER = {4058423},
MRREVIEWER = {Dawei Shen},
       DOI = {10.1007/s10468-018-9836-y},
       URL = {https://doi-org.ezproxy.lib.gla.ac.uk/10.1007/s10468-018-9836-y},
    EPRINT = {arXiv:1704.06152}
}

@article{IMacQ-survey,
    AUTHOR = {Iusenko, Kostiantyn and MacQuarrie, John William},
     TITLE = {Semisimplicity and separability for pseudocompact algebras},
   JOURNAL = {J. Algebra Appl.},
  FJOURNAL = {Journal of Algebra and its Applications},
    VOLUME = {24},
      YEAR = {2025},
    NUMBER = {3},
     PAGES = {Paper No. 2550078},
      ISSN = {0219-4988,1793-6829},
   MRCLASS = {16W80 (16D60 18A30)},
  MRNUMBER = {4853118},
MRREVIEWER = {Blas Torrecillas Jover},
       DOI = {10.1142/S0219498825500781},
       URL = {https://doi.org/10.1142/S0219498825500781},
 eprint={arXiv:2207.06281}
}

@unpublished{KellerWu,
  author={Keller, Bernhard and Wu, Yilin},
  TITLE = {Relative cluster categories and {H}iggs categories with infinite-dimensional morphism spaces},
  NOTE = {With an appendix by C.~Fraser and B.~Keller, Preprint},
     YEAR = {2023},
    EPRINT = {arXiv:2307.12279},
}

@unpublished{ChenThesis,
  title={On exact dg categories},
  author={Chen, Xiaofa},
  note={Preprint},
     YEAR = {2023},
    EPRINT = {arXiv:2306.08231},
}

@incollection {TT,
    AUTHOR = {Thomason, R. W. and Trobaugh, Thomas},
     TITLE = {Higher algebraic {$K$}-theory of schemes and of derived
              categories},
 BOOKTITLE = {The {G}rothendieck {F}estschrift, {V}ol. {III}},
    SERIES = {Progr. Math.},
    VOLUME = {88},
     PAGES = {247--435},
 PUBLISHER = {Birkh\"{a}user Boston, Boston, MA},
      YEAR = {1990},
      ISBN = {0-8176-3487-8},
   MRCLASS = {19E08 (14C35 19D10)},
  MRNUMBER = {1106918},
MRREVIEWER = {Charles\ Weibel},
       DOI = {10.1007/978-0-8176-4576-2\_10},
       URL = {https://doi.org/10.1007/978-0-8176-4576-2_10},
}

@book {Lefschetz,
    AUTHOR = {Lefschetz, Solomon},
     TITLE = {Algebraic topology},
    SERIES = {American Mathematical Society Colloquium Publications},
    VOLUME = {27},
 PUBLISHER = {American Mathematical Society, New York},
      YEAR = {1942},
     PAGES = {vi+389},
   MRCLASS = {56.0X},
  MRNUMBER = {7093},
MRREVIEWER = {H.\ Whitney},
}

@book {Abe,
    AUTHOR = {Abe, Eiichi},
     TITLE = {{H}opf algebras},
    SERIES = {Cambridge Tracts in Mathematics},
    VOLUME = {74},
 PUBLISHER = {Cambridge University Press, Cambridge--New York},
      YEAR = {1980},
     PAGES = {xii+284},
      ISBN = {0-521-22240-0},
   MRCLASS = {16A24 (14L17)},
  MRNUMBER = {594432},
}

@article {Eckstein,
    AUTHOR = {Eckstein, Frank},
     TITLE = {The {M}alcev theorem},
   JOURNAL = {J. Algebra},
  FJOURNAL = {Journal of Algebra},
    VOLUME = {12},
      YEAR = {1969},
     PAGES = {372--385},
      ISSN = {0021-8693},
   MRCLASS = {16.50 (22.00)},
  MRNUMBER = {242893},
MRREVIEWER = {W.\ E.\ Clark},
       DOI = {10.1016/0021-8693(69)90038-6},
       URL = {https://doi.org/10.1016/0021-8693(69)90038-6},
}

@article {IT,
    AUTHOR = {Igusa, Kiyoshi and Todorov, Gordana},
     TITLE = {Cluster categories coming from cyclic posets},
   JOURNAL = {Comm. Algebra},
  FJOURNAL = {Communications in Algebra},
    VOLUME = {43},
      YEAR = {2015},
    NUMBER = {10},
     PAGES = {4367--4402},
      ISSN = {0092-7872},
   MRCLASS = {18E30 (06A06 16G20)},
  MRNUMBER = {3366582},
MRREVIEWER = {Hong Zhu},
       DOI = {10.1080/00927872.2014.946138},
       URL = {https://doi.org/10.1080/00927872.2014.946138},
    EPRINT = {arXiv:1303.6697},
}

@unpublished{Yin-Schanuel,
  title={Schanuel's lemma for extriangulated categories},
  author={Yin, Hangyu},
  note={Preprint},
     YEAR = {2023},
    EPRINT = {arXiv:2310.07302},
}

@article {MatRos,
    AUTHOR = {Mathieu, Martin and Rosbotham, Michael},
     TITLE = {Schanuel's lemma for exact categories},
   JOURNAL = {Complex Anal. Oper. Theory},
  FJOURNAL = {Complex Analysis and Operator Theory},
    VOLUME = {16},
      YEAR = {2022},
    NUMBER = {5},
     PAGES = {Paper No. 76},
      ISSN = {1661-8254,1661-8262},
   MRCLASS = {18A20 (18E10 18G20)},
  MRNUMBER = {4447380},
MRREVIEWER = {Xuan Yu},
       DOI = {10.1007/s11785-022-01250-7},
       URL = {https://doi.org/10.1007/s11785-022-01250-7},
	EPRINT = {arXiv:2201.03069},
}

@article {LNP,
    AUTHOR = {Liu, Shiping and Ng, Puiman and Paquette, Charles},
     TITLE = {Almost split sequences and approximations},
   JOURNAL = {Algebr. Represent. Theory},
  FJOURNAL = {Algebras and Representation Theory},
    VOLUME = {16},
      YEAR = {2013},
    NUMBER = {6},
     PAGES = {1809--1827},
      ISSN = {1386-923X,1572-9079},
   MRCLASS = {16G10 (18E10 18E40)},
  MRNUMBER = {3127359},
MRREVIEWER = {Wolfgang Rump},
       DOI = {10.1007/s10468-012-9383-x},
       URL = {https://doi.org/10.1007/s10468-012-9383-x},
	EPRINT = {arXiv:1203.5009},
}

@article {MNC,
	AUTHOR = {Melo, Carolina and N\'ajera Ch\'avez, Alfredo},
	TITLE = {The cluster complex for cluster {P}oisson varieties
	and representations of acyclic quivers},
	JOURNAL = {Int. Math. Res. Not. IMRN},
	FJOURNAL = {International Mathematics Research Notices. IMRN},
	YEAR = {2024},
	VOLUME={2024},
	NUMBER = {12},
	PAGES = {9663--9685},
	ISSN = {1073-7928,1687-0247},
	MRCLASS = {99-06},
	MRNUMBER = {4761772},
	DOI = {10.1093/imrn/rnae055},
	URL = {https://doi.org/10.1093/imrn/rnae055},
	eprint={arXiv:2310.03626},
	archivePrefix={arXiv},
	primaryClass={math.RT}
	}

@article {BalmerSchlichting,
    AUTHOR = {Balmer, Paul and Schlichting, Marco},
     TITLE = {Idempotent completion of triangulated categories},
   JOURNAL = {J. Algebra},
  FJOURNAL = {Journal of Algebra},
    VOLUME = {236},
      YEAR = {2001},
    NUMBER = {2},
     PAGES = {819--834},
      ISSN = {0021-8693,1090-266X},
   MRCLASS = {18E30},
  MRNUMBER = {1813503},
       DOI = {10.1006/jabr.2000.8529},
       URL = {https://doi.org/10.1006/jabr.2000.8529},
	}

@article {Kelly,
    AUTHOR = {Kelly, G. M.},
     TITLE = {On the radical of a category},
   JOURNAL = {J. Austral. Math. Soc.},
  FJOURNAL = {J. Austral. Math. Soc.},
    VOLUME = {4},
      YEAR = {1964},
     PAGES = {299--307},
   MRCLASS = {18.10 (18.15)},
  MRNUMBER = {170922},
MRREVIEWER = {A.\ Heller},
}

@unpublished{Chen-exact-dg-2,
  title={Exact dg categories {II}: The embedding theorem},
  author={Chen, Xiaofa},
  note={Preprint},
  eprint={arXiv:2406.11226},
  archivePrefix={arXiv},
  primaryClass={math.RT},
  year={2024}
}

@Article{GSV-Moscow,
	Author = {Gekhtman, Michael and Shapiro, Michael and Vainshtein, Alek},
	Title = {Cluster algebras and {Poisson} geometry},
	FJournal = {Moscow Mathematical Journal},
	Journal = {Mosc. Math. J.},
	ISSN = {1609-3321},
	Volume = {3},
	Number = {3},
	Pages = {899--934},
	Year = {2003},
	Language = {English},
	Keywords = {53D17,14M15,05E15,16S99,17B20},
	URL = {www.ams.org/distribution/mmj/vol3-3-2003/gekhtman-etal.pdf},
	zbMATH = {2100971},
	Zbl = {1057.53064},
	eprint={arXiv:math/0208033}
	}

@book {NakanishiBook,
	AUTHOR = {Nakanishi, Tomoki},
	TITLE = {Cluster algebras and scattering diagrams},
	SERIES = {MSJ Memoirs},
	VOLUME = {41},
	PUBLISHER = {Mathematical Society of Japan, Tokyo},
	YEAR = {2023},
	PAGES = {xiv+279},
	ISBN = {978-4-86497-105-8},
	MRCLASS = {13F60},
	MRNUMBER = {4563311},
	MRREVIEWER = {Ibrahim\ Saleh},
	DOI = {10.1142/e073},
	URL = {https://doi.org/10.1142/e073},
}

@article {BPV,
    AUTHOR = {Baur, Karin and Pasquali, Andrea and Velasco, Diego},
     TITLE = {Orbifold diagrams},
   JOURNAL = {J. Algebra},
  FJOURNAL = {Journal of Algebra},
    VOLUME = {619},
      YEAR = {2023},
     PAGES = {383--418},
      ISSN = {0021-8693,1090-266X},
   MRCLASS = {16G20 (13F60 16G50)},
  MRNUMBER = {4530809},
MRREVIEWER = {Emine\ Y\i ld\i r\i m},
       DOI = {10.1016/j.jalgebra.2022.10.039},
       URL = {https://doi.org/10.1016/j.jalgebra.2022.10.039},
	eprint={arXiv:2010.13812}
}

@article {CheSha,
    AUTHOR = {Chekhov, Leonid and Shapiro, Michael},
     TITLE = {Teichm\"uller spaces of {R}iemann surfaces with orbifold
              points of arbitrary order and cluster variables},
   JOURNAL = {Int. Math. Res. Not. IMRN},
  FJOURNAL = {International Mathematics Research Notices. IMRN},
      YEAR = {2014},
      VOLUME={2014},
    NUMBER = {10},
     PAGES = {2746--2772},
      ISSN = {1073-7928,1687-0247},
   MRCLASS = {32G15 (13F60 30F60)},
  MRNUMBER = {3214284},
       DOI = {10.1093/imrn/rnt016},
       URL = {https://doi.org/10.1093/imrn/rnt016},
       	eprint={arXiv:1111.3963}
}

@article {Demonet,
    AUTHOR = {Demonet, Laurent},
     TITLE = {Categorification of skew-symmetrizable cluster algebras},
   JOURNAL = {Algebr. Represent. Theory},
  FJOURNAL = {Algebras and Representation Theory},
    VOLUME = {14},
      YEAR = {2011},
    NUMBER = {6},
     PAGES = {1087--1162},
      ISSN = {1386-923X,1572-9079},
   MRCLASS = {13F60 (16G10 16G20 17B35 17B67 18E10)},
  MRNUMBER = {2844757},
MRREVIEWER = {Gregoire\ Dupont},
       DOI = {10.1007/s10468-010-9228-4},
       URL = {https://doi.org/10.1007/s10468-010-9228-4},
       	eprint={arXiv:0909.1633}
       	}

@article {CKQ,
    AUTHOR = {Cao, Peigen and Keller, Bernhard and Qin, Fan},
     TITLE = {The valuation pairing on an upper cluster algebra},
   JOURNAL = {J. Reine Angew. Math.},
  FJOURNAL = {Journal f\"ur die Reine und Angewandte Mathematik. [Crelle's
              Journal]},
    VOLUME = {806},
      YEAR = {2024},
     PAGES = {71--114},
      ISSN = {0075-4102,1435-5345},
   MRCLASS = {13F60},
  MRNUMBER = {4685087},
       DOI = {10.1515/crelle-2023-0080},
       URL = {https://doi.org/10.1515/crelle-2023-0080},
       	eprint={arXiv:2204.09576}
       	}

@article {ShenWeng,
    AUTHOR = {Shen, Linhui and Weng, Daping},
     TITLE = {Cyclic sieving and cluster duality of {G}rassmannian},
   JOURNAL = {SIGMA Symmetry Integrability Geom. Methods Appl.},
  FJOURNAL = {SIGMA. Symmetry, Integrability and Geometry. Methods and
              Applications},
    VOLUME = {16},
      YEAR = {2020},
     PAGES = {Paper No. 067},
      ISSN = {1815-0659},
   MRCLASS = {14J33 (05E10 13F60 14M15 14N35 14T15)},
  MRNUMBER = {4126699},
MRREVIEWER = {Ashish\ K.\ Srivastava},
       DOI = {10.3842/SIGMA.2020.067},
       URL = {https://doi.org/10.3842/SIGMA.2020.067},
       	eprint={arXiv:1803.06901}
       	}

@article{Labardini-QPs1,
    AUTHOR = {Labardini-Fragoso, Daniel},
     TITLE = {Quivers with potentials associated to triangulated surfaces},
   JOURNAL = {Proc. Lond. Math. Soc. (3)},
  FJOURNAL = {Proceedings of the London Mathematical Society. Third Series},
    VOLUME = {98},
      YEAR = {2009},
    NUMBER = {3},
     PAGES = {797--839},
      ISSN = {0024-6115},
   MRCLASS = {16G99 (16S99)},
  MRNUMBER = {2500873},
MRREVIEWER = {Gregoire Dupont},
       DOI = {10.1112/plms/pdn051},
       URL = {https://doi.org/10.1112/plms/pdn051},
       	eprint={arXiv:0803.1328}
       	}

@article {Rickard,
    AUTHOR = {Rickard, Jeremy},
     TITLE = {Derived equivalences as derived functors},
   JOURNAL = {J. London Math. Soc. (2)},
  FJOURNAL = {Journal of the London Mathematical Society. Second Series},
    VOLUME = {43},
      YEAR = {1991},
    NUMBER = {1},
     PAGES = {37--48},
      ISSN = {0024-6107,1469-7750},
   MRCLASS = {16P10 (18E30)},
  MRNUMBER = {1099084},
MRREVIEWER = {Dieter Happel},
       DOI = {10.1112/jlms/s2-43.1.37},
       URL = {https://doi.org/10.1112/jlms/s2-43.1.37},
}

@article {APT,
    AUTHOR = {Auslander, Maurice and Platzeck, Mar{\'i}a In{\'e}s and Todorov, Gordana},
     TITLE = {Homological theory of idempotent ideals},
   JOURNAL = {Trans. Amer. Math. Soc.},
  FJOURNAL = {Transactions of the American Mathematical Society},
    VOLUME = {332},
      YEAR = {1992},
    NUMBER = {2},
     PAGES = {667--692},
      ISSN = {0002-9947,1088-6850},
   MRCLASS = {16G10 (16D25 16D90)},
  MRNUMBER = {1052903},
MRREVIEWER = {J.\ Antonio\ de la Pe\~na},
       DOI = {10.2307/2154190},
       URL = {https://doi.org/10.2307/2154190},
}

@incollection {Plamondon-CCs,
    AUTHOR = {Plamondon, Pierre-Guy},
     TITLE = {Cluster characters},
 BOOKTITLE = {Homological methods, representation theory, and cluster
              algebras},
    SERIES = {CRM Short Courses},
     PAGES = {101--125},
 PUBLISHER = {Springer, Cham},
      YEAR = {2018},
      ISBN = {978-3-319-74584-8; 978-3-319-74585-5},
   MRCLASS = {13F60 (16G20)},
  MRNUMBER = {3823389},
MRREVIEWER = {Matthew\ Pressland},
       	eprint={arXiv:1610.07546}
}

@article {KellerYang,
    AUTHOR = {Keller, Bernhard and Yang, Dong},
     TITLE = {Derived equivalences from mutations of quivers with potential},
      NOTE = {With an appendix by B.~Keller},
   JOURNAL = {Adv. Math.},
  FJOURNAL = {Advances in Mathematics},
    VOLUME = {226},
      YEAR = {2011},
    NUMBER = {3},
     PAGES = {2118--2168},
      ISSN = {0001-8708,1090-2082},
   MRCLASS = {16G20 (18E30)},
  MRNUMBER = {2739775},
MRREVIEWER = {Alex\ Martsinkovsky},
       DOI = {10.1016/j.aim.2010.09.019},
       URL = {https://doi.org/10.1016/j.aim.2010.09.019},
       	eprint={arXiv:0906.0761}
       	}

@article{Gabriel-AbCats,
	AUTHOR = {Gabriel, Pierre},
	TITLE = {Des catégories abéliennes},
	JOURNAL = {Bull. Soc. Math. France},
	FJOURNAL = {Bulletin de la Société Mathématique de France},
	VOLUME = {90},
	YEAR = {1962},
	PAGES = {323--448},
	ISSN = {0037-9484},
	MRCLASS = {18.20},
	MRNUMBER = {232821},
	MRREVIEWER = {T.-Y. Lam},
	URL = {http://www.numdam.org/item?id=BSMF_1962__90__323_0},
LANGID = {french}
}

@article{Brumer,
	AUTHOR = {Brumer, Armand},
	TITLE = {Pseudocompact algebras, profinite groups and class formations},
	JOURNAL = {J. Algebra},
	FJOURNAL = {Journal of Algebra},
	VOLUME = {4},
	YEAR = {1966},
	PAGES = {442--470},
	ISSN = {0021-8693},
	MRCLASS = {18.20},
	MRNUMBER = {202790},
	MRREVIEWER = {H. Bass},
	DOI = {10.1016/0021-8693(66)90034-2},
	URL = {https://doi.org/10.1016/0021-8693(66)90034-2},
}

@article{Simson,
	AUTHOR = {Simson, Daniel},
	TITLE = {Coalgebras, comodules, pseudocompact algebras and tame
	comodule type},
	JOURNAL = {Colloq. Math.},
	FJOURNAL = {Colloquium Mathematicum},
	VOLUME = {90},
	YEAR = {2001},
	NUMBER = {1},
	PAGES = {101--150},
	ISSN = {0010-1354},
	MRCLASS = {16W30 (16G99)},
	MRNUMBER = {1874368},
	MRREVIEWER = {Iain G. Gordon},
	DOI = {10.4064/cm90-1-9},
	URL = {https://doi.org/10.4064/cm90-1-9},
}

@article{VdB-CY,
    AUTHOR = {Van den Bergh, Michel},
     TITLE = {Calabi--{Y}au algebras and superpotentials},
   JOURNAL = {Selecta Math. (N.S.)},
  FJOURNAL = {Selecta Mathematica. New Series},
    VOLUME = {21},
      YEAR = {2015},
    NUMBER = {2},
     PAGES = {555--603},
      ISSN = {1022-1824},
   MRCLASS = {16E45 (16E65 16W80)},
  MRNUMBER = {3338683},
MRREVIEWER = {Andrea Solotar},
       DOI = {10.1007/s00029-014-0166-6},
       URL = {https://doi-org.ezproxy.lib.gla.ac.uk/10.1007/s00029-014-0166-6},
    EPRINT = {arXiv:1008.0599},
}

@unpublished{ZSLWW,
  title={Quotients of extriangulated categories induced by selforthogonal subcategories},
  author={Zhang, Peiyu and Shi, Yiwen and Liu, Dajun and Wang, Li and Wei, Jiaqun},
  note={Preprint},
  eprint={arXiv:2408.14098},
  year={2024}
}

@unpublished{JS,
	author = {Jensen, Bernt Tore and Su, Xiuping},
	title = {{A}uslander algebras, flag combinatorics and quantum flag varieties},
	year = {2024},
	note = {Preprint},
	eprint = {arXiv:2408.04753}	
}

@article {GHK,
	AUTHOR = {Gross, Mark and Hacking, Paul and Keel, Sean},
	TITLE = {Birational geometry of cluster algebras},
	JOURNAL = {Algebr. Geom.},
	FJOURNAL = {Algebraic Geometry},
	VOLUME = {2},
	YEAR = {2015},
	NUMBER = {2},
	PAGES = {137--175},
	ISSN = {2313-1691,2214-2584},
	MRCLASS = {14E05 (13F60 14M25)},
	MRNUMBER = {3350154},
	MRREVIEWER = {Sergio\ Mathew\ Da Silva},
	DOI = {10.14231/AG-2015-007},
	URL = {https://doi.org/10.14231/AG-2015-007},
    eprint = {arXiv:1309.2573}	
}

@article {FockGoncharov-ClusterPoisson,
	AUTHOR = {Fock, V. V. and Goncharov, A. B.},
	TITLE = {Cluster {P}oisson varieties at infinity},
	JOURNAL = {Selecta Math. (N.S.)},
	FJOURNAL = {Selecta Mathematica. New Series},
	VOLUME = {22},
	YEAR = {2016},
	NUMBER = {4},
	PAGES = {2569--2589},
	ISSN = {1022-1824,1420-9020},
	MRCLASS = {14T05 (13F60 30F60)},
	MRNUMBER = {3573965},
	MRREVIEWER = {Ralf\ Schiffler},
	DOI = {10.1007/s00029-016-0282-6},
	URL = {https://doi.org/10.1007/s00029-016-0282-6},
    eprint = {arXiv:1104.0407}	
}

@article {Murphy,
    AUTHOR = {Murphy, Dave},
     TITLE = {The {G}rothendieck groups of discrete cluster categories of {D}ynkin type ${A}_\infty$},
   JOURNAL = {J. Algebra},
  FJOURNAL = {Journal of Algebra},
    VOLUME = {662},
      YEAR = {2025},
     PAGES = {545--567},
  eprint = 	 {arXiv:2206.03911}
}

@article {GLS-Rigid,
	AUTHOR = {Gei\ss, Christof and Leclerc, Bernard and Schr\"oer, Jan},
	TITLE = {Rigid modules over preprojective algebras},
	JOURNAL = {Invent. Math.},
	FJOURNAL = {Inventiones Mathematicae},
	VOLUME = {165},
	YEAR = {2006},
	NUMBER = {3},
	PAGES = {589--632},
	ISSN = {0020-9910,1432-1297},
	MRCLASS = {16G20 (16D70 16G70 17B37)},
	MRNUMBER = {2242628},
	MRREVIEWER = {Csaba\ Sz\'ant\'o},
	DOI = {10.1007/s00222-006-0507-y},
	URL = {https://doi.org/10.1007/s00222-006-0507-y},
    eprint = {arXiv:math/0503324}	
    }

@unpublished{Qin-duan-can-basis,
	title={Analogs of the dual canonical bases for cluster algebras from {L}ie theory},
	author={Qin, Fan},
	note = {Preprint},
	eprint = {arXiv:2407.02480},
	year = {2024}
}

@article {GLS-quantum-specialization,
	AUTHOR = {Geiss, Christof and Leclerc, Bernard and Schr\"oer, Jan},
	TITLE = {Quantum cluster algebras and their specializations},
	JOURNAL = {J. Algebra},
	FJOURNAL = {Journal of Algebra},
	VOLUME = {558},
	YEAR = {2020},
	PAGES = {411--422},
	ISSN = {0021-8693,1090-266X},
	MRCLASS = {13F60 (16G20 17B37)},
	MRNUMBER = {4102114},
	MRREVIEWER = {Jiarui\ Fei},
	DOI = {10.1016/j.jalgebra.2019.04.033},
	URL = {https://doi.org/10.1016/j.jalgebra.2019.04.033},
    eprint = {arXiv:1807.09826}	
    }

@article {Hernandez-Leclerc-first-mon-cat,
	AUTHOR = {Hernandez, David and Leclerc, Bernard},
	TITLE = {Cluster algebras and quantum affine algebras},
	JOURNAL = {Duke Math. J.},
	FJOURNAL = {Duke Mathematical Journal},
	VOLUME = {154},
	YEAR = {2010},
	NUMBER = {2},
	PAGES = {265--341},
	ISSN = {0012-7094,1547-7398},
	MRCLASS = {17B37 (13F60)},
	MRNUMBER = {2682185},
	MRREVIEWER = {Jan\ E.\ Grabowski},
	DOI = {10.1215/00127094-2010-040},
	URL = {https://doi.org/10.1215/00127094-2010-040},
    eprint = {arXiv:0903.1452}	
    }

@article {KKOP,
	AUTHOR = {Kashiwara, Masaki and Kim, Myungho and Oh, Se-jin and Park,
	Euiyong},
	TITLE = {Monoidal categorification and quantum affine algebras {II}},
	JOURNAL = {Invent. Math.},
	FJOURNAL = {Inventiones Mathematicae},
	VOLUME = {236},
	YEAR = {2024},
	NUMBER = {2},
	PAGES = {837--924},
	ISSN = {0020-9910,1432-1297},
	MRCLASS = {17B37 (13F60 18M15)},
	MRNUMBER = {4728243},
	DOI = {10.1007/s00222-024-01249-1},
	URL = {https://doi.org/10.1007/s00222-024-01249-1},
    eprint = {arXiv:2103.10067}	
    }

@article{PaqYil,
    AUTHOR = {Paquette, Charles and Yıldırım, Emine},
     TITLE = {Completions of discrete cluster categories of type {$\mathbb{A}$}},
   JOURNAL = {Trans. London Math. Soc.},
  FJOURNAL = {Transactions of the London Mathematical Society},
    VOLUME = {8},
      YEAR = {2021},
    NUMBER = {1},
     PAGES = {35--64},
   MRCLASS = {18G80 (16G20)},
  MRNUMBER = {4440445},
       DOI = {10.1112/tlm3.12025},
       URL = {https://doi-org.ezproxy.lib.gla.ac.uk/10.1112/tlm3.12025},
    EPRINT = {arXiv:2006.07285},
}

@article{Palu2,
    AUTHOR = {Palu, Yann},
     TITLE = {Cluster characters {II}: a multiplication formula},
   JOURNAL = {Proc. Lond. Math. Soc. (3)},
  FJOURNAL = {Proceedings of the London Mathematical Society. Third Series},
    VOLUME = {104},
      YEAR = {2012},
    NUMBER = {1},
     PAGES = {57--78},
      ISSN = {0024-6115},
   MRCLASS = {18E30 (13F60 16G20)},
  MRNUMBER = {2876964},
MRREVIEWER = {Kyungyong Lee},
       DOI = {10.1112/plms/pdr027},
       URL = {http://dx.doi.org/10.1112/plms/pdr027},
    EPRINT = {arXiv:0903.3281},
EPRINTTYPE = {arxiv},
}

@article{CalderoKeller,
    AUTHOR = {Caldero, Philippe and Keller, Bernhard},
     TITLE = {From triangulated categories to cluster algebras},
   JOURNAL = {Invent. Math.},
  FJOURNAL = {Inventiones Mathematicae},
    VOLUME = {172},
      YEAR = {2008},
    NUMBER = {1},
     PAGES = {169--211},
      ISSN = {0020-9910},
     CODEN = {INVMBH},
   MRCLASS = {16G20 (13A99 16G10 17B20 18E30)},
  MRNUMBER = {2385670},
MRREVIEWER = {Dirk Kussin},
       DOI = {10.1007/s00222-008-0111-4},
       URL = {http://dx.doi.org/10.1007/s00222-008-0111-4},
    EPRINT = {arXiv:math/0506018},
}

@article{CKLP,
    AUTHOR = {{Cerulli Irelli}, Giovanni and Keller, Bernhard and
              Labardini-Fragoso, Daniel and Plamondon, Pierre-Guy},
     TITLE = {Linear independence of cluster monomials for skew-symmetric
              cluster algebras},
   JOURNAL = {Compos. Math.},
  FJOURNAL = {Compositio Mathematica},
    VOLUME = {149},
      YEAR = {2013},
    NUMBER = {10},
     PAGES = {1753--1764},
      ISSN = {0010-437X},
   MRCLASS = {13F60 (18E30)},
  MRNUMBER = {3123308},
MRREVIEWER = {Olga Kravchenko},
       DOI = {10.1112/S0010437X1300732X},
       URL = {http://dx.doi.org.ezproxy1.bath.ac.uk/10.1112/S0010437X1300732X},
    EPRINT = {arXiv:1203.1307},
}

@article{Igusa,
    AUTHOR = {Igusa, Kiyoshi},
     TITLE = {Notes on the no loops conjecture},
   JOURNAL = {J. Pure Appl. Algebra},
  FJOURNAL = {Journal of Pure and Applied Algebra},
    VOLUME = {69},
      YEAR = {1990},
    NUMBER = {2},
     PAGES = {161--176},
      ISSN = {0022-4049},
     CODEN = {JPAAA2},
   MRCLASS = {16E10 (18G10 19B99 19D55)},
  MRNUMBER = {1086558 (92b:16013)},
MRREVIEWER = {Jean-Louis Cathelineau},
       DOI = {10.1016/0022-4049(90)90040-O},
       URL = {http://dx.doi.org/10.1016/0022-4049(90)90040-O},
}

@article {ILP,
    AUTHOR = {Igusa, Kiyoshi and Liu, Shiping and Paquette, Charles},
     TITLE = {A proof of the strong no loop conjecture},
   JOURNAL = {Adv. Math.},
  FJOURNAL = {Advances in Mathematics},
    VOLUME = {228},
      YEAR = {2011},
    NUMBER = {5},
     PAGES = {2731--2742},
      ISSN = {0001-8708,1090-2082},
   MRCLASS = {16G10},
  MRNUMBER = {2838056},
MRREVIEWER = {Liangang\ Peng},
       DOI = {10.1016/j.aim.2011.06.042},
       URL = {https://doi.org/10.1016/j.aim.2011.06.042},
    EPRINT = {arXiv:1103.5361},
}

@article {CILF,
	AUTHOR = {Cerulli Irelli, Giovanni and Labardini-Fragoso, Daniel},
	TITLE = {Quivers with potentials associated to triangulated surfaces,
	{P}art {III}: {T}agged triangulations and cluster monomials},
	JOURNAL = {Compos. Math.},
	FJOURNAL = {Compositio Mathematica},
	VOLUME = {148},
	YEAR = {2012},
	NUMBER = {6},
	PAGES = {1833--1866},
	ISSN = {0010-437X,1570-5846},
	MRCLASS = {16S99 (13F60 16G20)},
	MRNUMBER = {2999307},
	MRREVIEWER = {Guiyu\ Yang},
	DOI = {10.1112/S0010437X12000528},
	URL = {https://doi.org/10.1112/S0010437X12000528},
	eprint = {arXiv:1108.1774}
}

@unpublished{Fraser,
  title={Cyclic symmetry loci in {G}rassmannians},
  author={Fraser, Chris},
  year={2020},
  note = {Preprint},
  EPRINT = {arXiv:2010.05972},
}

@article {Nishimura,
    AUTHOR = {Nishimura, Hajime},
     TITLE = {On the unique factorization theorem for formal power series},
   JOURNAL = {J. Math. Kyoto Univ.},
  FJOURNAL = {Journal of Mathematics of Kyoto University},
    VOLUME = {7},
      YEAR = {1967},
     PAGES = {151--160},
      ISSN = {0023-608X},
   MRCLASS = {13.15},
  MRNUMBER = {225766},
       DOI = {10.1215/kjm/1250524274},
       URL = {https://doi.org/10.1215/kjm/1250524274},
}

@article {BeyMul,
    AUTHOR = {Beyer, James and Muller, Greg},
     TITLE = {Deep points of cluster algebras},
   JOURNAL = {Int. Math. Res. Not. IMRN},
  FJOURNAL = {International Mathematics Research Notices. IMRN},
      YEAR = {2025},
     VOLUME={2025},
    NUMBER = {4},
     PAGES = {Paper No. rnaf027},
      ISSN = {1073-7928,1687-0247},
   MRCLASS = {13F60},
  MRNUMBER = {4867940},
MRREVIEWER = {Ibrahim Saleh},
       DOI = {10.1093/imrn/rnaf027},
       URL = {https://doi.org/10.1093/imrn/rnaf027},
  EPRINT = {arXiv:2403.15589},
}

@unpublished{CGSS,
  title={Cluster deep loci and mirror symmetry},
  author={Castronovo, Marco and Gorsky, Mikhail and Simental, Jos{\'e} and Speyer, David},
  year={2024},
  note = {Preprint},
  EPRINT = {arXiv:2402.16970},
}

@article{CKaP,
  author={{\c C}anakçı, {\. I}lke and Kalck, Martin and Pressland, Matthew},
     TITLE = {Cluster categories for completed infinity-gons {I}:
              {C}ategorifying triangulations},
   JOURNAL = {J. Lond. Math. Soc. (2)},
  FJOURNAL = {Journal of the London Mathematical Society. Second Series},
    VOLUME = {111},
      YEAR = {2025},
    NUMBER = {2},
     PAGES = {Paper No. e70092},
      ISSN = {0024-6107,1469-7750},
   MRCLASS = {18G25 (13F60 18G80)},
  MRNUMBER = {4878391},
       DOI = {10.1112/jlms.70092},
       URL = {https://doi.org/10.1112/jlms.70092},
   EPRINT = {arXiv:2401.08378},
}

@article {WWZZ,
    AUTHOR = {Wang, Li and Wei, Jiaqun and Zhang, Haicheng and Zhao, Tiwei},
     TITLE = {Idempotent completion of extriangulated categories},
   JOURNAL = {J. Algebra Appl.},
  FJOURNAL = {Journal of Algebra and its Applications},
    VOLUME = {22},
      YEAR = {2023},
    NUMBER = {4},
     PAGES = {Paper No. 2350095, 22},
      ISSN = {0219-4988,1793-6829},
   MRCLASS = {18E40},
  MRNUMBER = {4553230},
MRREVIEWER = {Septimiu\ Crivei},
       DOI = {10.1142/S0219498823500950},
       URL = {https://doi.org/10.1142/S0219498823500950},
  EPRINT = {arXiv:2007.04788},
}

@article {Msapato,
    AUTHOR = {Msapato, Dixy},
     TITLE = {The {K}aroubi envelope and weak idempotent completion of an
              extriangulated category},
   JOURNAL = {Appl. Categ. Structures},
  FJOURNAL = {Applied Categorical Structures. A Journal Devoted to
              Applications of Categorical Methods in Algebra, Analysis,
              Computer Science, Logic, Order and Topology},
    VOLUME = {30},
      YEAR = {2022},
    NUMBER = {3},
     PAGES = {499--535},
      ISSN = {0927-2852,1572-9095},
   MRCLASS = {18G15 (18E05 18G80)},
  MRNUMBER = {4421094},
MRREVIEWER = {Qilian\ Zheng},
       DOI = {10.1007/s10485-021-09664-8},
       URL = {https://doi.org/10.1007/s10485-021-09664-8},
 EPRINT = {arXiv:2011.09557},
}

\end{document}